\def\inte#1{
\displaystyle\mathop{#1\kern0pt}^\circ }
\let\pa=\partial
\let\f=\frac
\def\pa{\partial}
\def\virgp{\raise 2pt\hbox{,}}
\def\cdotpv{\raise 2pt\hbox{;}}
\def\C{\mathop{\mathbb C\kern 0pt}\nolimits}
\def\DD{\mathop{\mathbb D\kern 0pt}\nolimits}
\def\EE{\mathop{{\mathbb E \kern 0pt}}\nolimits}
\def\K{\mathop{\mathbb K\kern 0pt}\nolimits}
\def\N{\mathop{\mathbb N\kern 0pt}\nolimits}
\def\Q{\mathop{\mathbb Q\kern 0pt}\nolimits}
\def\R{\mathop{\mathbb R\kern 0pt}\nolimits}
\def\SS{\mathop{\mathbb S\kern 0pt}\nolimits}
\def\ZZ{\mathop{\mathbb Z\kern 0pt}\nolimits}
\def\TT{\mathop{\mathbb T\kern 0pt}\nolimits}
\def\P{\mathop{\mathbb P\kern 0pt}\nolimits}
\newcommand{\Z}{{\ZZ}}
\def\na{\nabla}
\def\th{\theta}
\newcommand{\beq}{\begin{equation}}
\newcommand{\eeq}{\end{equation}}
\newcommand{\ben}{\begin{eqnarray}}
\newcommand{\een}{\end{eqnarray}}
\newcommand{\beno}{\begin{eqnarray*}}
\newcommand{\eeno}{\end{eqnarray*}}
\newtheorem{defi}{Definition}[section]
\newtheorem{thm}{Theorem}[section]
\newtheorem{lem}{Lemma}[section]
\newtheorem{rmk}{Remark}[section]
\newtheorem{col}{Corollary}[section]
\newtheorem{prop}{Proposition}[section]
\renewcommand{\theequation}{\thesection.\arabic{equation}}
\begin{document}
\title[Boltzmann equation with Rutherford scattering cross section]
{Boltzmann equation with cutoff Rutherford scattering cross section near Maxwellian}

\author[L.-B. He and Y.-L. Zhou]{Ling-Bing He and Yu-long Zhou}
\address[L.-B. He]{Department of Mathematical Sciences, Tsinghua University\\
Beijing, 100084,  P. R.  China.} \email{hlb@tsinghua.edu.cn}
\address[Y.-L. Zhou]{School of Mathematics, Sun Yat-Sen University, Guangzhou, 510275, P. R.  China.} \email{zhouyulong@mail.sysu.edu.cn}

\begin{abstract}  The well-known Rutherford differential  cross section, denoted by $ d\Omega/d\sigma$, corresponds to a two body interaction with  Coulomb potential.   It leads to the logarithmically divergence of the momentum transfer (or the transport cross section) which is described by $$\int_{\SS^2} (1-\cos\theta) \frac{d\Omega}{d\sigma} d\sigma\sim \int_0^{\pi} \theta^{-1}d\theta. $$ Here $\theta$ is the deviation angle in the scattering event. Due to screening effect, physically one can assume that $\theta_{\min}$ is the order of magnitude of the smallest angles for which the scattering can still be regarded as Coulomb scattering. Under ad hoc cutoff $\theta \geq \theta_{\min}$ on the deviation angle, L. D. Landau derived a new equation in  \cite{landau1936transport} for the weakly interacting gas which is now referred to as the Fokker-Planck-Landau or Landau equation. In the present work, we establish a unified framework to  justify  Landau's formal derivation in  \cite{landau1936transport} and the so-called Landau approximation problem proposed in \cite{alexandre2004landau} in the close-to-equilibrium regime. Precisely,
(i). we prove global well-posedness of the Boltzmann equation with cutoff Rutherford  cross section which is perhaps the most singular kernel both in relative velocity and deviation angle.
(ii). we prove a global-in-time error estimate between solutions to Boltzmann and Landau equations with logarithm accuracy, which is consistent with the famous Coulomb logarithm.
Key ingredients into the proofs of these results include a complete coercivity estimate of the linearized Boltzmann collision operator, a uniform spectral gap estimate and a novel linear-quasilinear method.
\end{abstract}

\maketitle
\setcounter{tocdepth}{1}
\tableofcontents




\noindent {\sl AMS Subject Classification (2010):} {35Q20, 35R11, 75P05.}

\renewcommand{\theequation}{\thesection.\arabic{equation}}
\setcounter{equation}{0}


\section{Introduction}
The present work aims at the mathematical justification of Landau's   derivation of the Landau equation and the Landau's approximation problem from the Boltzmann equation with angular cutoff Rutherford scattering cross section. These problems have a long history and we first recall relevant physical backgrounds.


\subsection{Review of Landau's    derivation} In 1936, Landau published his paper \cite{landau1936transport} on the derivation of the effective equation for weakly interaction by Coulomb field in plasma physics. Loosely speaking, he derived Landau equation from the Boltzmann equation with cutoff Rutherford cross section.

\subsubsection{Boltzmann equation with cutoff Rutherford cross section} The typical Boltzmann equation can be written as follows:
\beno
\pa_tf+v\cdot \na_x f=Q(f,f).
\eeno
Here  $Q$ is the Boltzmann collision operator defined by
\beno Q(f,f)(v):=
\int_{\R^3}\int_{\mathbb{S}^{2}}|v-v_*|\frac{d\Omega}{d\sigma}\left(f(v'_*) f(v')-f(v_*)f(v)\right) dv_* d\sigma,
\eeno
where $d\Omega/d\sigma $ is the differential scattering cross section determined by the potential function $\phi$ for particles, and  $v'$, $v_*'$ are given by
\begin{eqnarray}\label{e3}
v'=\frac{v+v_{*}}{2}+\frac{|v-v_{*}|}{2}\sigma,\quad
v'_{*}=\frac{v+v_{*}}{2}-\frac{|v-v_{*}|}{2}\sigma, \quad \sigma\in\mathbb{S}^{2}.
\end{eqnarray}

In the scattering event between two electrons
governed by the Coulomb potential,
\begin{equation}\label{coulomb-potential-between-electrons}
\phi(r)=\frac{e^{2}}{4 \pi \varepsilon_{0} r},
\end{equation}
the deviation angle $\theta$ of relative velocity is determined through (see \cite{swanson2008plasma} for instance)
\begin{equation}\label{Relationthetab}
\tan \frac{\theta}{2}=\frac{e^{2}}{2 \pi \epsilon_{0} m \left|v-v_{*}\right|^{2} b},
\end{equation}
where $\varepsilon_{0}$ is the permittivity of vacuum, $m$ is the mass of an electron and $e$ its charge, $\left|v-v_{*}\right|$ is the relative velocity before the event and $b$ is the impact parameter which is defined as the distance of closest approach
if the trajectory were undeflected.  The well-known Rutherford differential cross section is computed as
\begin{equation}
\frac{d\Omega}{d\sigma}:= \frac{b}{\sin \theta} \frac{db}{d\theta}
=\frac{\left(e^{2} /\left(4 \pi \varepsilon_{0} m\right)\right)^{2}}{\big(\left|v-v_{*}\right| \sin(\theta / 2)\big)^4}.
\end{equation}
Then the corresponding Boltzmann collision kernel $B$ reads
\ben \label{boltzmann-kernel-Rutherford-cs}B(v-v_{*}, \sigma) = |v-v_*|\frac{d\Omega}{d\sigma}=\frac{\left(e^{2} /\left(4 \pi \varepsilon_{0} m\right)\right)^{2}}{\left|v-v_{*}\right|^{3} \sin ^{4}(\theta / 2)}
 = K|v-v_*|^{-3} b(\cos \theta),
\een
where  \ben \label{definition-K-b}
K:= \left(e^{2} /\left(4 \pi \varepsilon_{0} m\right)\right)^{2}, \quad b(\cos \theta):= \sin^{-4}(\theta / 2),
 \quad \cos \theta := \frac{v-v_{*}}{|v-v_{*}|} \cdot \sigma.\een

\subsubsection{Divergence of the momentum transfer} One may check that the momentum transfer defined by $\int_{\mathbb{S}^{2}} b(\cos \theta){\sin^{2}(\theta/2)}  d\sigma$ is divergent in a logarithmic manner due to the singularity at $\theta =0$. Indeed,
\beno \frac{1}{8\pi} \int_{\mathbb{S}^{2}} b(\cos \theta){\sin^{2}(\theta/2)}  d\sigma = \frac{1}{8\pi} \int_{0}^{2\pi}\int_{0}^{\pi} {\sin^{-2}(\theta/2)}\sin\theta  d\theta d\varphi = \frac{1}{2} \int_{0}^{\pi} \frac{\cos(\theta/2)}{\sin(\theta/2)} d\theta
=  \int_{0}^{1} \frac{1}{u} d u = \infty,
\eeno
where the change of variable $u=\sin(\theta/2)$ is used.

 The reason of the divergence is due to the long-range interaction of Coulomb potential. As indicated in \cite{lifshitz1981physical}, the divergence at the lower limit has a physical cause: the slowness of the decrease of the Coulomb forces, which leads to a high probability of small-angle scattering. However, the phenomenon of screening effect implies that the role of collisions with a high impact parameter is not as important as the Coulomb potential suggests. Thanks to \eqref{Relationthetab}, a rough and artificial approximation is just to ignore grazing collisions. Such an argument can be found in \cite{lifshitz1981physical}:
{\it ``In reality, however, in an electrically neutral plasma the Coulomb field of a particle at sufficiently large distances is screened by other charges; let $\theta_{\min}$ denote the order of magnitude of the smallest angles for which the scattering can still be regarded as Coulomb scattering".}
In this way, one has
\beno  \frac{1}{8\pi} \int_{\mathbb{S}^{2}} b(\cos \theta){\sin^{2}(\theta/2)}\mathrm{1}_{\theta\ge \theta_{\min}}  d\sigma  = -\ln(\sin(\theta_{\min}/2)) \sim \ln (1/\theta_{\min}),
\eeno
which is relevant to the so-called ``Coulomb logarithm''  denoted by $\ln \Lambda$.

\begin{rmk}In most physical books (for instance, see \cite{lifshitz1981physical,montgomery1964plasma,landau1936transport}), $\ln \Lambda$ is derived through the integration with respect to the impact parameter $b$, that is,
\beno \ln \Lambda:= \int_{\lambda_L}^{\lambda_D} b^{-1} db=\ln \f{\lambda_D}{\lambda_L},\eeno
where $\lambda_D$ is the Debye length which characterizes electrostatic screening and $\lambda_L$ is the Landau length which identities strong interactions. In other words,   the ``weak interaction'' is defined through the truncation of the impact parameter $b$ onto the interval $[\lambda_L,\lambda_D]$.  Thanks to \eqref{Relationthetab}, by approximation (see \cite{montgomery1964plasma}), it holds that
\ben\label{Lambdathetamin} \Lambda=\f{2}{\theta_{\min}}=24\pi n_0\lambda_D^3, \een
where $n_0$ is the   density of the particle. Since $n_0\lambda_D^3\gg 1$(for instance, for electron-proton gas), one has
\ben\label{asymthetalambda} \theta_{\min}\ll 1,\qquad  \Lambda\gg 1. \een
\end{rmk}

\subsubsection{Landau's strategy  in \cite{landau1936transport}} For simplicity, we restrict ourselves to one species of particle and neglect the self-consistent electrostatic  field.  Landau's strategy can be summarized as follows:

\begin{itemize}
\item[Step 1:] Based on \eqref{boltzmann-kernel-Rutherford-cs}, the Boltzmann kernel with cutoff Rutherford cross section is defined by
  \beno B^c(v-v_*,\sigma)=B(v-v_*,\sigma)\mathrm{1}_{\sin(\theta/2)\ge \sin(\theta_{\min}/2)}. \eeno Landau's first assumption is  that the Boltzmann equation   with cutoff Rutherford cross section:
\ben\label{BoltzmannCRcs} \left\{ \begin{aligned}
&\partial _t F +  v \cdot \nabla_{x} F=Q^{c}(F,F), ~~t > 0, x \in \mathbb{T}^{3}, v \in\R^3,\\
&F|_{t=0} = F_{0},
\end{aligned} \right.
\een
admits a smooth solution. Here $\mathbb{T}^{3}:=[-\pi, \pi]^{3}$ is the torus. Here
\ben \label{KernelCRcs}
Q^{c}(F,F):=
\int_{\R^3}\int_{\mathbb{S}^{2}}B^c(v-v_*,\sigma) \left(F(v'_*) F(v')-F(v_*)F(v)\right) dv_* d\sigma,\een

\item[Step 2:] To derive an effective equation for weakly coupling particles,   Landau further assumed $v\sim v'$ and $v_*\sim v_*'$. In his language, if $q:= v-v'$, then  $|q|\ll 1$. Thanks to Taylor expansion,
  \ben\label{Ftaylor} F(v')=F(v+q)=F(v)+\na_vF(v)\cdot q+\f12\na^2_vF(v): q\otimes q+ O(|q|^3). \een
Inserting \eqref{Ftaylor} into the Boltzmann collision operator  \eqref{KernelCRcs}
    and taking the truncation of the impact parameter $b$ onto the interval $[\lambda_L,\lambda_D]$, he derived the leading equation, which is now named as the Landau equation:
\ben\label{landauori} \pa_tF +v\cdot\na_xF=(\ln \Lambda )Q^L(F,F),\een
where the Landau collision operator $Q^{L}$ is defined as:
\ben \label{oroginal-definition-Laudau-oprator} Q^{L}(g,h)(v) :=
\nabla_{v}\cdot\bigg\{\int_{\R^3}a(v-v_{*})[g(v_{*})\nabla_{v}h(v)-\nabla_{v_{*}}g(v_{*})h(v)]dv_{*}\bigg\}.
\een
Here the symmetric matrix $a$ is given by
\begin{eqnarray}\label{matrix}
a(z)  = 2 \pi K |z|^{-1}(I_{3} - \frac{z \otimes z}{|z|^{2}}),
\end{eqnarray}
where $I_{3}$ is the $3 \times 3$ identity matrix.
\end{itemize}

\subsection{Mathematical problems on the derivation}
To set up   mathematical problems, thanks to \eqref{asymthetalambda},
 we first introduce a small parameter $\epsilon$, which is related to the physical cutoff for the angle, that is,
\ben\label{Defeps} \epsilon:=\sin(\theta_{\min}/2)\ll1.\een
For simplicity of presentation, we take $K=1$ for the constant $K$ in \eqref{definition-K-b}.

\subsubsection{Mathematical assumptions on the kernel} Throughout the paper, we will consider the kernel  $B^{\epsilon}$ verifying the following  assumptions.
  \smallskip
\begin{itemize}
\item[{\bf  (A1).} ]  The kernel $B^{\epsilon}(v-v_*,\sigma)$  takes the form:
\ben \label{DefScaledBker} B^{\epsilon}(v-v_*,\sigma)&:= &|\ln \epsilon|^{-1}B^c(v-v_*,\sigma)
= \left|v-v_{*}\right|^{-3} b^\epsilon(\cos \theta),
\\\label{DefScaled-b} b^\epsilon(\cos \theta) &:=&|\ln \epsilon|^{-1} \sin^{-4}(\theta/2)\mathrm{1}_{\sin(\theta/2) \geq \epsilon}.
 \een

\item[{\bf  (A2).} ]   The kernel $B^{\epsilon}(v-v_*,\sigma)$ is supported in the set $0\leq \th\leq \pi/2$, i.e. $\cos\theta \geq0$, for otherwise $B^\epsilon$ can be replaced by its symmetrized form:
\ben
\overline{B^\epsilon}(v-v_*,\sigma)=[B^\epsilon(v-v_*,\sigma)+B^\epsilon(v-v_*,-\sigma)]\mathrm{1}_{\cos\theta\>>0}.
\een
\end{itemize}
Associated to $B^{\epsilon}$, the Boltzmann collision operator $Q^{\epsilon}$ is defined by
\ben\label{DefScaledBOP} Q^{\epsilon}(g,h)(v):=
\int_{\R^3}\int_{\mathbb{S}^{2}}B^{\epsilon}(v-v_*,\sigma)\left(g(v'_*) h(v')-g(v_*)h(v)\right)dv_* d\sigma .
\een

\subsubsection{Reformulation of the equation}
It is compulsory to rewrite equations \eqref{BoltzmannCRcs} and \eqref{landauori} by taking into account the parameter $\epsilon$.
To do that, we introduce the scaling:
\ben\label{scalingsol}
\tilde{F}(t,x,v)=F(|\ln \epsilon|^{-1}t, x, |\ln \epsilon|v).\een
Thanks to the facts $Q^{c}(\tilde{F},\tilde{F})(t,x,v) = Q^{c}(F,F)(|\ln \epsilon|^{-1}t, x, |\ln \epsilon|v)$ and $Q^{\epsilon} = |\ln \epsilon|^{-1}Q^{c}$,   \eqref{BoltzmannCRcs} becomes
\begin{equation}\label{coboltzmann} \left\{ \begin{aligned}
&\partial _t \tilde{F} +  v \cdot \nabla_{x} \tilde{F}=Q^{\epsilon}(\tilde{F},\tilde{F}), ~~t > 0, x \in \mathbb{T}^{3}, v \in\R^3 ;\\
&\tilde{F}|_{t=0} = F^{\epsilon}_{0}:= F_{0}(x, |\ln \epsilon|v).
\end{aligned} \right. \end{equation}
Similarly, thanks to $Q^{L}(\tilde{F},\tilde{F})(t,x,v)  = Q^{L}(F,F)(|\ln \epsilon|^{-1}t, x, |\ln \epsilon|v)$, \eqref{landauori} reduces to
 {(thanks to \eqref{Lambdathetamin} and \eqref{Defeps}, we choose $\epsilon$ satisfying $|\ln \epsilon| \sim \ln \Lambda$ and absorb some unimportant constant into the Landau operator)}
\begin{equation}\label{landau-equation} \left\{ \begin{aligned}
&\partial _t \tilde{F}+  v \cdot \nabla_{x} \tilde{F}=Q^{L}(\tilde{F}, \tilde{F}), ~~t > 0, x \in \mathbb{T}^{3}, v \in\R^3 ;\\
&\tilde{F}|_{t=0} = F^{\epsilon}_{0} : = F_{0}(x, |\ln \epsilon|v).
\end{aligned} \right. \end{equation}

Let us give  comments  on the scaling  \eqref{scalingsol}. Roughly speaking, it enables us to consider Landau's derivation and the Landau approximation proposed in \cite{alexandre2004landau} in a unified framework.

\smallskip
\noindent $\bullet$ {\bf Validity of the scaling in physical sense.} In physical books and lectures, one may check that for typical weakly coupled plasmas, Coulomb logarithm $\ln \Lambda$ lies in the range: $[5, 20]$. Recalling \eqref{Lambdathetamin} and \eqref{Defeps}, we have $|\ln \epsilon| \sim \ln \Lambda \in [5, 20]$, which means $\epsilon$ is sufficiently  small (around $[e^{-20}, e^{-5}]$) but $|\ln \epsilon|$ is relatively ``normal'' (around $[5, 20]$). In this range,
     the scaling \eqref{scalingsol} is harmless and thus
       \eqref{BoltzmannCRcs} and \eqref{landauori} are equivalent to  \eqref{coboltzmann}  and \eqref{landau-equation} respectively. For the same reason, it is mathematically equivalent to ignore the dependence of the initial data on the parameter $|\ln \epsilon|$ in \eqref{coboltzmann} and \eqref{landau-equation}, as we will do in \eqref{linearizedBE} and \eqref{linearizedLE}. This reduces the justification of Landau's derivation to the consideration of \eqref{coboltzmann}.

\smallskip

\noindent $\bullet$ {\bf  Relation between the scaling and Landau approximation.}  Landau approximation(or the grazing collsions limit) is a mathematical framework to
derive   Landau equation from Boltzmann equation with general potentials. The main idea is as follows: when the deviation angle is truncated up to the order $O(\epsilon)$ with a proper scaling, the grazing collisions will dominate and then the Boltzmann equation  will formally converge to the Landau equation. The convergence has drawn extensive attention from mathematicians(see   \cite{degond1992fokker,desvillettes1992asymptotics,he2014well,villani1998new}).

In view of \eqref{DefScaled-b}, only grazing collisions can survive in the limit in which $\epsilon$ goes to zero.
The pioneering work \cite{alexandre2004landau} of Alexandre-Villani derived Landau equation \eqref{landau-equation} from Boltzmann equation \eqref{coboltzmann} in the inhomogeneous setting under physical assumptions of finite mass, energy, entropy and entropy production.
We emphasize that the  Coulomb logarithm $\ln \Lambda$ in front of the collision operator $Q^L$ in \eqref{landauori} has been  normalized in \eqref{landau-equation}. In other words,  the derivation through Landau approximation will lose some information from the potential function $\phi$ for particles.

\smallskip

  \noindent $\bullet$ {\bf Effect of the scaling on the linearized Boltzmann collision operator.}  The scaling factor $|\ln \epsilon|^{-1}$ in \eqref{DefScaled-b} and \eqref{scalingsol} plays an essential role  in getting the spectral gap estimates for the linearized operator of $Q^\epsilon$. We refer readers to Theorem \ref{micro-dissipation} for details.
     For Maxwellian molecules, the Boltzmann kernel $B$ takes the special form:
 \ben\label{KernelMaxweM}B(v-v_*,\sigma)=b(\cos\theta).\een
 Chang-Uhlenbeck in \cite{Chang-Uhlenbeck} proved that the first (smallest) positive eigenvalue $\lambda_1$ to the associated linearized Boltzmann collision operator can be computed explicitly:
\ben\label{firstegenvaluemaxwellion} \lambda_1\sim \int_0^\pi b(\cos\theta)(1-\cos\theta)\sin\theta d\theta. \een
 The scaling factor $|\ln \epsilon|^{-1}$ in \eqref{DefScaled-b} ensures that in the limit process($\epsilon\rightarrow0$), it holds that (see Lemma \ref{integral-angular-function})
\ben\label{firstegenvalueLA} \lambda_1^\epsilon:=\int_0^\pi b^\epsilon(\cos\theta)(1-\cos\theta)\sin\theta d\theta\sim 1. \een
This motivates us to link the spectral gap estimates with Chang-Uhlenbeck's work \cite{Chang-Uhlenbeck}.

\subsubsection{Mathematical problems} Our setup enables us to handle the Landau's derivation and   the Landau approximation proposed in \cite{alexandre2004landau} in a unified framework. We consider these problems near Maxwellian (a small perturbation around equilibrium: Maxwellian) since it is widely used in the study of kinetic equations (for instance, wave phenomena in plasma physics).  Our work can be summarized as follows.
\begin{itemize}
\item[(1).] We consider global well-posedness of \eqref{coboltzmann} near Maxwellian. As a result, it shows that all the computation in Landau's paper \cite{landau1936transport} is valid globally in time  and thus we justify his derivation rigorously in mathematics.
\item[(2).] We revisit Landau approximation  near Maxwellian from \eqref{coboltzmann} to \eqref{landau-equation}. Compared to \cite{alexandre2004landau}, we work with classical solution rather than weak solution.
    Mathematically we need to establish  a unified framework to solve Boltzmann and Landau equations simultaneously and obtain an explicit expansion formula for  the approximation.
\end{itemize}

\noindent$\bullet$ {\bf Global wellposedness of \eqref{coboltzmann}  near Maxwellian.} Recall the standard Maxwellian density function $\mu(v) :=(2\pi)^{-3/2}e^{-|v|^{2}/2}$. With the perturbation $\tilde{F}=\mu +\mu^{1/2}f$, \eqref{coboltzmann} yields:
\begin{equation}\label{linearizedBE} \left\{ \begin{aligned}
&\partial_{t}f + v\cdot \nabla_{x} f + \mathcal{L}^{\epsilon}f=  \Gamma^{\epsilon}(f,f), ~~t > 0;\\
&f|_{t=0} = f_{0}.
\end{aligned} \right.\end{equation}
Here the linearized Boltzmann operator $\mathcal{L}^{\epsilon}$ and the nonlinear term $\Gamma^{\epsilon}$ are defined by
\ben\label{DefLep}\Gamma^{\epsilon}(g,h):= \mu^{-1/2} Q^{\epsilon}(\mu^{1/2}g,\mu^{1/2}h), \mathcal{L}^{\epsilon}_{1}g:=-\Gamma^{\epsilon}(\mu^{1/2},g), \mathcal{L}^{\epsilon}_{2}g:=- \Gamma^{\epsilon}(g, \mu^{1/2}), \mathcal{L}^{\epsilon}g:= \mathcal{L}^{\epsilon}_{1}g + \mathcal{L}^{\epsilon}_{2}g.
\een
We aim at not only global well-posedness but also propagation of regularity which holds uniformly in $\epsilon$. They are crucial because of the following reasons:

\begin{itemize}
\item[(1).] 	
Global well-posedness of \eqref{linearizedBE} exactly corresponds to the first step of Landau's original strategy.  Propagation of regularity is necessary in order to apply the Taylor expansion in the second step of Landau's strategy.

\item[(2).]  In order to find asymptotic formula between solutions of the Boltzmann and  Landau equation, some uniform estimates for propagation of regularity are essential.
\end{itemize}

\noindent$\bullet$ {\bf Asymptotics of \eqref{coboltzmann}  near Maxwellian.}
It is relevant to the second step of Landau's strategy and establishes Landau approximation global-in-time in classical solution sense. As we reviewed before, Landau just used the Taylor expansion of the solution to get the desired equation.
It is not formulated by a direct limit from Boltzmann equation to Landau equation while Landau approximation
seeks to do so.

In \cite{he2014well}, it was shown that at least locally in time,
\beno \tilde{F}^\epsilon=\tilde{F}^L+O(|\ln \epsilon|^{-1}),\eeno
where $\tilde{F}^\epsilon$ and $\tilde{F}^L$ are solutions to \eqref{coboltzmann} and \eqref{landau-equation}
with the same initial data. The order $|\ln \epsilon|^{-1}$ reflects the logarithmic accuracy  as derived in \cite{lifshitz1981physical}.

In this paper, we will reconsider such approximation near Maxwellian. More precisely, if we
set $\tilde{F}=\mu +\mu^{1/2}f$, then   \eqref{landau-equation} gives the linearized equation:
\begin{equation}\label{linearizedLE} \left\{ \begin{aligned}
&\partial_{t}f + v\cdot \nabla_{x} f + \mathcal{L}^{L}f= \Gamma^{L}(f,f), ~~t > 0;\\
&f|_{t=0} = f_{0}.
\end{aligned} \right.
\end{equation}
Here the linearized Landau operator $\mathcal{L}^{L}$ and the nonlinear term $\Gamma^{L}$  are defined by:
\ben\label{DefL}\Gamma^{L}(g,h):= \mu^{-1/2} Q^{L}(\mu^{1/2}g,\mu^{1/2}h), \mathcal{L}^{L}_{1}g:=-\Gamma^{L}(\mu^{1/2},g), \mathcal{L}^{L}_{2}g:=- \Gamma^{L}(g, \mu^{1/2}), \mathcal{L}^{L}g:= \mathcal{L}^{L}_{1}g + \mathcal{L}^{L}_{2}g. \een
One may regard \eqref{linearizedLE} as the limit case ($\epsilon=0$) of \eqref{linearizedBE}. Our goal is to establish an asymptotic formula to describe the limit process when $\epsilon$ tends to zero. That is, if $f^\epsilon$ and $f^L$ are solutions to \eqref{linearizedBE} and \eqref{linearizedLE} with the same initial data, then it holds
\ben\label{ASF}  f^\epsilon=f^L+O(|\ln \epsilon|^{-1}),\een
globally in time in weighted Sobolev spaces.

\subsubsection{Basic properties} At the end of this subsection, we recall some special properties of the Boltzmann equations.  The solutions  to  \eqref{coboltzmann} and \eqref{landau-equation} have the fundamental physical properties of conserving total mass, momentum and kinetic energy, that is, for all $t\ge0$,
\ben\label{conserveq}  \int_{\mathbb{T}^{3} \times \R^3} F(t,x,v)\phi(v)dxdv=\int_{\mathbb{T}^{3} \times \R^3} F(0,x,v)\phi(v)dxdv,\quad \phi(v)=1,v_{j},|v|^2, \quad j=1,2,3.\een
Without loss of generality, we assume that the initial data $f_0$ in \eqref{linearizedBE} and \eqref{linearizedLE}
verifies
\ben\label{initial-condition} \int_{\mathbb{T}^{3} \times \R^3} \sqrt{\mu}f_0\phi dxdv=0, \quad \phi(v)=1,v_{j},|v|^2, \quad j=1,2,3.\een
As a result of \eqref{conserveq}, the solutions  to  \eqref{linearizedBE} and \eqref{linearizedLE} verify for all $t\ge0$,
\ben \label{Nuspace}  \int_{\mathbb{T}^{3} \times \R^3} \sqrt{\mu}f(t)\phi dxdv=0, \quad \phi(v)=1,v_{j},|v|^2, \quad j=1,2,3.\een
Recall that $\mathcal{N}(\mathcal{L}^\epsilon)$ and $\mathcal{N}(\mathcal{L}^{L})$, the kernel space of  $\mathcal{L}^\epsilon$ and $\mathcal{L}^{L}$ respectively, verify
 \beno  \mathcal{N}(\mathcal{L}^{L})= \mathcal{N}(\mathcal{L}^\epsilon)= \mathrm{span}\{\sqrt{\mu}, \sqrt{\mu}v_1, \sqrt{\mu}v_2,\sqrt{\mu}v_3, \sqrt{\mu}|v|^2 \} :=\mathcal{N}. \eeno

\subsection{Main results}
Our main results are global well-posedness and propagation of regularity for the Boltzmann equation \eqref{linearizedBE} with cutoff Rutherford cross section. Moreover, we derive the global-in-time asymptotic formula for the Landau approximation from the equation \eqref{linearizedBE} to the equation \eqref{linearizedLE}.  We refer readers to  subsection 1.5 to check details on the function spaces used throughout the paper.

Our results are based on the following energy functional
\ben \label{energy-functional}\mathcal{E}^{N,l}(f):=\sum_{j=0}^{N}
\|f\|^{2}_{H^{N-j}_{x}\dot{H}^{j}_{l+j\gamma}},\een
where $\gamma=-3, l\geq 3N+2$.

\begin{thm}\label{asymptotic-result}
Let $0 \leq \epsilon \leq \epsilon_0$ where $\epsilon_0>0$ is a small constant. Suppose $f_0$ verify \eqref{initial-condition}. There is a  universal constant $\delta_0>0$ such that, if
\beno  \mu+\mu^{\f12}f_0\ge0,\quad \mathcal{E}^{4,14}(f_{0}) \leq \delta_0, \eeno
then \eqref{linearizedBE} admits a unique global strong solution $f^\epsilon$ verifying $\mu+\mu^{\f12}f^\epsilon\ge0$ and
\ben \label{uniform-controlled-by-initial} \sup_{t\ge 0} \mathcal{E}^{4,14}(f^{\epsilon}({t}))\leq C \mathcal{E}^{4,14}(f_{0}),\een
 for some universal constant $C$. Moreover, the family of solution $\{f^{\epsilon}\}_{\epsilon \geq 0}$ verifies
\begin{enumerate}
\item{\bf (Propagation of regularity)} Fix $N\geq 4, l \geq 3N+2$, if additionally $\mathcal{E}^{N,l}(f_{0}) < \infty$ and $\epsilon$ is small enough, then
\ben \label{propagation}\sup_{t\ge0} \mathcal{E}^{N,l}(f^{\epsilon}({t}))\leq P_{N,l}\left(\mathcal{E}^{N,l}(f_{0})\right).\een
Here $P_{N,l}(\cdot)$ is a continuous and increasing function with $P_{N,l}(0)=0$.
\item{\bf (Global asymptotic formula)} Fix $N\geq 4, l \geq 3N+2$, assume $\mathcal{E}^{N+3,l+18}(f_{0}) < \infty$ and $\epsilon$ is small enough, then
\ben \label{error-function-uniform-estimate}\sup_{t\ge0} \mathcal{E}^{N,l}(f^{\epsilon}(t)-f(t)) \leq  |\ln \epsilon|^{-2} U_{N,l}(\mathcal{E}^{N+3,l+18}(f_{0})). \een
Here $U_{N,l}(\cdot)$ is a continuous and increasing function with $U_{N,l}(0)=0$.
\end{enumerate}
\end{thm}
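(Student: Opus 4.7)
The plan is to treat all three conclusions in a single energy-estimate framework centred on the functional $\mathcal{E}^{N,l}$ and closed by the uniform-in-$\epsilon$ coercivity/spectral gap estimate for $\mathcal{L}^\epsilon$ stated earlier in the paper. For a smooth solution of \eqref{linearizedBE}, I would, for every multi-index $(\alpha,\beta)$ with $|\alpha|+|\beta|\le N$, compute
$$ \tfrac{1}{2}\tfrac{d}{dt}\|\partial^\alpha_x\partial^\beta_v f\|_{L^2_{l+|\beta|\gamma}}^2 = -\bigl(\partial^\alpha_x\partial^\beta_v\mathcal{L}^\epsilon f,\partial^\alpha_x\partial^\beta_v f\bigr)_{L^2_{l+|\beta|\gamma}} + \bigl(\partial^\alpha_x\partial^\beta_v\Gamma^\epsilon(f,f),\partial^\alpha_x\partial^\beta_v f\bigr)_{L^2_{l+|\beta|\gamma}}, $$
with $\gamma=-3$, and sum in $(\alpha,\beta)$. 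The conservation identities \eqref{Nuspace} remove the hydrodynamic projection onto $\mathcal{N}$, so the first term yields a uniform anisotropic dissipation; choosing $\delta_0$ small forces the quadratic term to be a small fraction of this dissipation, and a standard Gr\"onwall/continuation argument closes on $\mathcal{E}^{4,14}$, giving \eqref{uniform-controlled-by-initial}. The pointwise positivity $\mu+\mu^{1/2}f^\epsilon\ge 0$ then follows from an exponential Duhamel formula for the gain--loss decomposition of $Q^\epsilon$.

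The propagation bound \eqref{propagation} is obtained by induction on $N$ with the weight schedule $l+j\gamma=l-3j$: the anisotropic loss of three moments per $v$-derivative is dictated by the borderline kinetic factor $|v-v_*|^{-3}$. Commutator and product estimates of the schematic form
$$ \bigl|\bigl([\partial^\beta_v,Q^\epsilon](g,h),\varphi\bigr)\bigr|\lesssim \sum\|g\|_{H^{s_1}_{m_1}}\|h\|_{H^{s_2}_{m_2}}\|\varphi\|_{H^{s_3}_{m_3}} $$
must hold \emph{uniformly} in $\epsilon$, which is exactly where the \emph{linear-quasilinear} method advertised in the abstract enters: the portion of $\Gamma^\epsilon(f,\cdot)$ carrying the worst weight/order is reorganised into a perturbed linear operator $\mathcal{L}^{\epsilon}_{f}$ which inherits the coercivity of $\mathcal{L}^\epsilon$ (thanks to the smallness of $f$ in $\mathcal{E}^{4,14}$), while the genuine nonlinear remainder loses neither a factor of $|\ln\epsilon|$ nor more than three moments per derivative. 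Matching this algebraic split with the bookkeeping $l\ge 3N+2$ is where the argument is most delicate.

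For the asymptotic formula \eqref{error-function-uniform-estimate}, the key input is a quantitative version of the grazing limit: Taylor expansion \eqref{Ftaylor} combined with the fact that $b^\epsilon$ concentrates at $\theta=0$ should give, for smooth enough inputs,
$$ \|(\mathcal{L}^\epsilon-\mathcal{L}^L)g\|_{Y} + \|(\Gamma^\epsilon-\Gamma^L)(g,h)\|_{Y} \lesssim |\ln\epsilon|^{-1}\,\Phi\bigl(\mathcal{E}^{N+3,l+18}(g),\mathcal{E}^{N+3,l+18}(h)\bigr), $$
where $Y$ denotes the norm to which the energy identity is applied; the loss of three derivatives and eighteen moments is precisely the Taylor/commutator cost of the expansion. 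Setting $w^\epsilon:=f^\epsilon-f^L$ one then has
$$ \partial_t w^\epsilon + v\cdot\nabla_x w^\epsilon + \mathcal{L}^\epsilon w^\epsilon = (\mathcal{L}^L-\mathcal{L}^\epsilon)f^L + \Gamma^\epsilon(w^\epsilon,f^\epsilon+f^L)+(\Gamma^\epsilon-\Gamma^L)(f^L,f^L),\qquad w^\epsilon|_{t=0}=0. $$
Running the same energy identity at the $\mathcal{E}^{N,l}$ level and invoking \eqref{propagation} for $f^\epsilon$ and $f^L$ at the higher level $(N+3,l+18)$, the two source terms are bounded by $|\ln\epsilon|^{-1}$ times a continuous function of $\mathcal{E}^{N+3,l+18}(f_0)$, and the quadratic term is absorbed by the dissipation thanks to the smallness of $f^\epsilon+f^L$; Gr\"onwall then closes \eqref{error-function-uniform-estimate}.

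The hardest step will be the uniform-in-$\epsilon$ weighted estimates of the second paragraph: any parasitic factor of $|\ln\epsilon|$ when handling the double singularity of $B^\epsilon$ (the borderline kinetic factor $|v-v_*|^{-3}$ combined with the saturated angular singularity $\sin^{-4}(\theta/2)\mathbf{1}_{\sin(\theta/2)\ge\epsilon}$ rescaled by $|\ln\epsilon|^{-1}$) would destroy both \eqref{propagation} and the clean rate in \eqref{error-function-uniform-estimate}. The linear-quasilinear split must therefore be engineered so that the principal singular symbol of $\Gamma^\epsilon(f,\cdot)$ always stays inside the coercive part of the operator, leaving only genuinely lower-order remainders for the nonlinear machinery.
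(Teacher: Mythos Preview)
Your outline has the right large-scale architecture, but two of the mechanisms you invoke are not the ones that actually close the estimates in this borderline setting, and a third is simply wrong.

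\textbf{Macroscopic dissipation.} The sentence ``the conservation identities \eqref{Nuspace} remove the hydrodynamic projection onto $\mathcal{N}$, so the first term yields a uniform anisotropic dissipation'' is incorrect. The spectral gap of $\mathcal{L}^\epsilon$ controls only $(\mathbb{I}-\mathbb{P})f$; the conservation laws merely say that the $x$-averages of $(a,b,c)$ vanish, not that $\mathbb{P}f$ is zero pointwise. You still need the full micro--macro machinery: a temporal interaction functional $\mathcal{I}^N(f)$ built from the moment equations for $(a,b,c)$, whose time derivative yields $|\nabla_x(a,b,c)|^2_{H^{N-1}_x}$ at the cost of lower-order microscopic terms (cf.\ Guo, Duan). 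Poincar\'e then upgrades this to $|(a,b,c)|^2_{H^N_x}$. Without this ingredient the energy does not close.

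\textbf{The linear--quasilinear method.} Your description---``$\Gamma^\epsilon(f,\cdot)$ is reorganised into a perturbed operator $\mathcal{L}^\epsilon_f$ which inherits coercivity thanks to smallness of $f$''---misses the actual mechanism, and smallness alone would not suffice here. The paper's split is in $|v-v_*|$, not in symbolic order: one writes $\mathcal{L}^\epsilon=\mathcal{L}^{\epsilon,\gamma,\eta}+\mathcal{L}^{\epsilon,\gamma}_\eta$ and $\Gamma^\epsilon=\Gamma^{\epsilon,\gamma,\eta}+\Gamma^{\epsilon,\gamma}_\eta$ according to $|v-v_*|\gtrless\eta$. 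On the regular piece $|v-v_*|\ge\eta$ the upper bounds are $\eta$-dependent but finite, and $\mathcal{L}^{\epsilon,\gamma,\eta}$ retains the full coercivity. On the singular piece one does \emph{not} argue by perturbed coercivity; instead one combines the singular parts of $\mathcal{L}^\epsilon$ and $\Gamma^\epsilon$ into $\langle \mu^{-1/2}Q^{\epsilon,\gamma}_\eta(\mu+\mu^{1/2}f,\mu^{1/2}h),h\rangle$ and uses the \emph{non-negativity} $\mu+\mu^{1/2}f\ge 0$ together with $2h(h'-h)\le (h')^2-h^2$ and a cancellation lemma to bound this by $(\eta^{1/2}+\epsilon^{1/2})(1+\|f\|)\,|h|^2_{\epsilon,\gamma/2}$. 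The smallness of $\eta$ (not of $f$) is what makes this term absorbable. This is the device that prevents any spurious $|\ln\epsilon|$ factor from the non-integrable $|v-v_*|^{-3}$ when the derivative count is saturated.

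\textbf{Positivity.} The Duhamel argument via gain--loss you sketch fails for this kernel: the loss term involves $\int |v-v_*|^{-3}g_* \,dv_*$, which is not defined. Non-negativity is established in \cite{he2014well} by a different route, and the paper simply imports it.

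The asymptotic-formula part of your sketch is essentially on target; note only that the paper organises the difference equation with the Landau operator $\mathcal{L}^L$ on the left (so that the linear estimate at $\epsilon=0$ applies) and again needs the linear--quasilinear device for the term $\Gamma^\epsilon(f^\epsilon,F^\epsilon_R)$.
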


Some remarks are in order.
\begin{rmk} The kernel studied in this work is the most singular one both in relative velocity and deviation angle, and is the borderline for the Boltzmann to be meaningful in the classical sense. To our best knowledge, Landau approximation in the inhomogeneous case has never been touched globally in time within classical solution setting. We manage to establish global-in-time asymptotic formula \eqref{error-function-uniform-estimate} with explicit accuracy order for the first time.
\end{rmk}

\begin{rmk} Our results are consistent with the results  in \cite{guo2002landau} and \cite{guo2012vlasov} when $\epsilon=0$. In particular,
the smallness assumption on initial data with finite regularity and finite weight is a universal constant, which is sufficient to prove propagation of regularity with arbitrary regularity and weight if $\epsilon$ is sufficiently small.
\end{rmk}

\begin{rmk} To keep the paper in a reasonable length, we only consider one species of particle, which enables us to focus more on operator analysis and a so-called linear-quasilinear method to close energy estimate. In the future, we will consider a more physical model: two species Vlasov-Possion-Boltzmann system with cutoff Rutherford cross-section, to derive the Vlasov-Possion-Landau system.
\end{rmk}
\begin{rmk}  Let us summarize the main difference between the Landau approximation  proposed in \cite{alexandre2004landau} and   Landau's original strategy in \cite{landau1936transport} as follows:
\begin{itemize}
\item[(1).] Landau approximation is based on the assumption that $\ln \Lambda$ is sufficiently large. However it is invalid in many physical situations where $\ln \Lambda$ is a relatively normal constant.
\item[(2).] The resulting equation derived from Landau  is different from that by  Landau approximation. We remind readers that the Coulomb logarithm $\ln \Lambda$ appears as a diffusive coefficient in \eqref{landauori} but it does not appear in \eqref{landau-equation}.
\item[(3).]  The error estimate between the solutions to Boltzmann and Landau equations via Landau approximation is   different from that by Landau's  strategy. One has logarithm accuracy  while the other has a high order accuracy thanks to \eqref{Ftaylor}.
\end{itemize}

\end{rmk}

\subsection{Main Difficulties} Boltzmann and Landau equations are well studied near Maxwellian(see \cite{duan2008cauchy,guo2002landau,guo2003classical,guo2012vlasov,gressman2011global,alexandre2011global,alexandre2012boltzmann}).
To explain the main difficulties and the new ideas of the paper, let us consider a typical kinetic equation near Maxwellian:
\beno \pa_t f+v\cdot \na_xf+\mathcal{L}f=\Gamma(f,f), \eeno where $\mathcal{L}$ and $\Gamma$ denote the linearized operator and the nonlinear term. We focus on propagation of regularity (or a priori estimate).
The general approach to prove propagation of regularity
for the equation can be  divided into four steps:

\begin{itemize}
	\item[Step 1:] This step is to describe the behavior of the linearized operator $\mathcal{L}$, including the spectral gap estimate and the coercivity estimate. Roughly speaking, they can be written as follows:
\beno
 \langle \mathcal{L} f, f\rangle_v  \gtrsim
|||(\mathbb{I}-\mathbb{P})f|||^2_{\mathrm{gap}}, \quad \langle \mathcal{L} f, f \rangle_v\gtrsim |||f|||^2_{\mathrm{coercivity}}-|||f|||^2_{\mathrm{gap}},
\eeno
where $\mathbb{P}$ is a projection operator that maps a function into the null space of $\mathcal{L}$, $\mathbb{I}$ is the identity operator. Here $|||\cdot|||_{\mathrm{gap}}$ and $|||\cdot|||_{\mathrm{coercivity}}$ are some explicit or implicit norms.
From these two estimates, one has
\beno  \langle \mathcal{L} f, f \rangle_v\gtrsim |||(\mathbb{I}-\mathbb{P})f|||_{\mathrm{coercivity}}^2.\eeno
\item[Step 2:] The second step is to use the norm $|||\cdot|||_{\mathrm{coercivity}}$ to give the upper bound for the nonlinear term $\Gamma(f,f)$. Roughly, the ideal estimate looks like:
\beno  \big|\langle  \Gamma(f,f), f\rangle_v\big|\lesssim \|f\|\,|||f|||^2_{\mathrm{coercivity}}.\eeno
Here $\|\cdot\|$ is the usual $L^{2}$ norm.
\item[Step 3:] The third step is to derive an evolution equation for $\mathbb{P}f$ and then to get some elliptic estimate for $\mathbb{P}f$ under the control of $|||(\mathbb{I}-\mathbb{P})f|||_{\mathrm{coercivity}}$. This is referred as Micro-Macro decomposition.

\item[Step 4:] The final step is to construct a proper energy functional to close the energy estimates  and then get propagation of regularity for the solution under smallness assumption.
	\end{itemize}

  Now we turn to our case (see \eqref{linearizedBE}) to explain the main difficulties in each step. Due to the definition of the kernel (see \eqref{DefScaledBker}), all the difficulties result from the  strong singularity: not only from the relative velocity $|v-v_*|^{-3}$ and but also from the angular function $b^{\epsilon}(\cos\theta)=|\ln \epsilon|^{-1}\sin^{-4}(\theta/2)\mathrm{1}_{\sin(\theta/2) \geq \epsilon}$. Loosely speaking, (i). on one hand, the high singularity from the relative velocity stops us to treat the equation like the cutoff Boltzmann equation; (ii). on the other hand, the high singularity from the deviation angle is the borderline case for non-cutoff Boltzmann equation in terms of finite  momentum transfer.

Mathematically, we face the following essential difficulties.
 \begin{itemize}
\item[{\bf (D1).}] The first one is concerned with the coercivity estimate of the linearized operator $\mathcal{L}^\epsilon$.
 Thanks to \cite{he2014well}, it was shown that there exists a characteristic function $W^\epsilon$ defined by
\ben\label{charicter function}
W^{\epsilon}(y) := \langle y\rangle\phi(y)+\langle y\rangle\bigg(1-\f{\ln|y|}{|\ln \epsilon|}+\f1{|\ln \epsilon|}\bigg)^{\f12}\big(\phi(\epsilon y)-\phi(y)\big)+\f1{|\ln \epsilon|^{\f12}\epsilon}\big(1-\phi(\epsilon y)\big),
\een
to catch the Sobolev regularity for the collision operator. Here $\phi$ is a smooth defined in \eqref{function-phi-psi}.
It is easily checked that the characteristic function $W^\epsilon$ behaves  quite different when $|y|\sim 1$ and $|y|\sim \epsilon^{-1}$. On one hand, it matches well the limiting operator $\mathcal{L}^L$ as $\epsilon$ goes to zero. On the other hand, it indicates that the behavior of $\mathcal{L}^\epsilon$ will be more complicated, in particular for the proof of  gain of weight and gain of anisotropic regularity (see Theorem \ref{coercivity-structure}). Also because of \eqref{charicter function}, we get stuck in the upper bound estimate of the operator.

\item[{\bf (D2).}]  Once the coercivity estimate is available, we still need to face the following spectral gap estimate: for any smooth function $f$,
\beno \langle \mathcal{L}^\epsilon f,f\rangle_v\ge C\|(\mathbb{I}-\mathbb{P})f\|_{L^2_{-3/2}}^2, \eeno
where $C>0$ is independent of $\epsilon$. The main difficulty here results from the singular factor $|\ln \epsilon|^{-1}$ in the angular function $b^\epsilon$ because it goes to zero when $\epsilon$ tends to zero. Thus to get the desired result we need a constructive proof. The easiest and also the clearest case is the Maxwellian molecules given in \cite{Chang-Uhlenbeck}, where the authors proved that the constant $C$ only depends on the first positive eigenvalue which can be computed explicitly as  \eqref{firstegenvaluemaxwellion}. Fortunately in our case, $b^\epsilon$ satisfies the condition \eqref{firstegenvalueLA} which motivates us to consider \eqref{firstegenvaluemaxwellion}.

\item[{\bf (D3).}]
As mentioned in {\bf (D1)}, \cite{he2014well} gives
\ben\label{CoerQe} \langle -Q^\epsilon(\mu,f),f  \rangle_v\gtrsim |W^\epsilon(D)f|_{L^2_{-3/2}}^2-|f|^2_{L^2_{-3/2}}. \een
Note that we gain one derivative only in the ``low frequency part''(that is, $|\xi|\lesssim 1/\epsilon$). Unfortunately because of the strong singularity of the relative velocity ($|v-v_*|^{-3}$ is a borderline case in 3-dimension), we at least need  one full derivative to give a uniformly upper bound for the collision operator with respect to $\epsilon$. Roughly speaking, \cite{he2014well} provides
\beno \big|\langle Q^\epsilon(g,h),f\rangle_v\big|\lesssim |g|_{H^1_{9/2}}|h|_{H^1}|W^\epsilon(D)f|_{L^2_{-3/2}}, \eeno
which indicates that what we gain from the coercivity is not enough to control the upper bound of the nonlinear term $\langle \Gamma^\epsilon(g,h),f\rangle_v$.
\end{itemize}

  The above three difficulties stop us to implement energy method and prove propagation of regularity, which forces us to figure out new techniques.

\subsection{Ideas and strategies} Before explaining our strategy to  overcome the above difficulties,
we begin with  basic facts on Micro-Macro decomposition and spherical harmonics.

 \begin{itemize}
 	\item {\bf Micro-Macro decomposition:}
Recall $\mathcal{N}= \mathrm{span}\{\sqrt{\mu}, \sqrt{\mu}v_1, \sqrt{\mu}v_2,\sqrt{\mu}v_3, \sqrt{\mu}|v|^2 \}$, an orthonormal basis of which can be chosen as $\{\sqrt{\mu}, \sqrt{\mu}v_1, \sqrt{\mu}v_2,\sqrt{\mu}v_3, \sqrt{\mu}(|v|^2-3) /\sqrt{6} \} := \{e_{j}\}_{1 \leq j \leq 5}$. The projection operator $\mathbb{P}$ on the null space $\mathcal{N}$ is defined as follows:
\ben\label{DefProj} \mathbb{P}f:=\sum_{j=1}^{5}\langle f, e_{j}\rangle e_{j} =(a+b\cdot v+c|v|^2)\sqrt{\mu}, \een
where for $1\le i\le 3$, \ben\label{Defabc}
 a=\int_{\R^3} (\frac{5}{2}-\frac{|v|^{2}}{2})\sqrt{\mu}fdv;\quad b_i=\int_{\R^3} v_i\sqrt{\mu}fdv;\quad c=\int_{\R^3} (\frac{|v|^2}{6}-\frac{1}{2})\sqrt{\mu}fdv.
\een
Generally we call $\mathbb{P}f$ and $(\mathbb{I}-\mathbb{P})f$ the macroscopic part and microscopic part of $f$ respectively.

\item {\bf Spherical harmonics:}
Let $Y_l^m$ with $l\in \mathbb{N}, m \in \mathbb{Z}, -l\le m\le l$ be real spherical harmonics. They are the eigenfunctions of the Laplace-Beltrami operator
$-\triangle_{\mathbb{S}^2}$. Mathematically,
\beno (-\triangle_{\mathbb{S}^2})Y_l^m=l(l+1)Y_l^m. \eeno
These functions are essential to help us to catch the anisotropic property of $\mathcal{L}^\epsilon$. We introduce the operator $W^{\epsilon}((-\Delta_{\mathbb{S}^{2}})^{1/2})$   defined by: if $v = r \sigma$, then
\ben\label{DeltaWe} (W^{\epsilon}((-\Delta_{\mathbb{S}^{2}})^{1/2})f)(v) := \sum_{l=0}^\infty\sum_{m=-l}^{l} W^{\epsilon}((l(l+1))^{1/2}) Y^{m}_{l}(\sigma)f^{m}_{l}(r),
 \een
where
$ f^{m}_{l}(r) = \int_{\mathbb{S}^{2}} Y^{m}_{l}(\sigma) f(r \sigma) d\sigma$.
\end{itemize}

We are ready to present our main ideas and strategies.

\subsubsection{\bf Idea on the coercivity estimates} This part is related to {\bf (D1)}. The coercivity estimate of the linearized operator $\mathcal{L}^{\epsilon}$ plays an essential role in studying \eqref{linearizedBE} and it reads

\begin{thm}\label{coercivity-structure}
There are two positive universal constants $\epsilon_0>0$ and $\nu_{0}>0$ such that for $0 \leq \epsilon \le\epsilon_0$ and any smooth function $f$, it holds
\begin{eqnarray}\label{uniforml2}
\langle \mathcal{L}^{\epsilon}f, f\rangle_v + |f|^{2}_{L^{2}_{-3/2}} \geq \nu_{0} |f|^{2}_{\epsilon,-3/2}.
\end{eqnarray}
Here for $l \in \mathbb{R}$ and $W_{l}(v) := (1+|v|^{2})^{l/2}$, we denote
\beno |f|^{2}_{\epsilon,l} := |W^{\epsilon}((-\Delta_{\mathbb{S}^{2}})^{1/2}) W_{l}f|^{2}_{L^{2}} + |W^{\epsilon}(D)W_{l}f|^2_{L^{2}} + |W^{\epsilon} W_{l}f|^{2}_{L^{2}}. \eeno
\end{thm}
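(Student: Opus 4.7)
\textbf{Reduction to $\mathcal{L}^{\epsilon}_{1}$.} I would begin by decomposing $\mathcal{L}^{\epsilon}=\mathcal{L}^{\epsilon}_{1}+\mathcal{L}^{\epsilon}_{2}$ as in \eqref{DefLep}. Since $\mathcal{L}^{\epsilon}_{2}f=-\mu^{-1/2}Q^{\epsilon}(\mu^{1/2}f,\mu)$ places the full velocity singularity against the smooth Maxwellian slot, standard bilinear estimates for $Q^{\epsilon}$ with one Gaussian argument give $|\langle\mathcal{L}^{\epsilon}_{2}f,f\rangle_{v}|\le C|f|^{2}_{L^{2}_{-3/2}}$, so this term is absorbed by the defect on the left of \eqref{uniforml2}. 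The core task is then to prove the three-fold coercive gain for $\langle\mathcal{L}^{\epsilon}_{1}f,f\rangle_{v}$.

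\textbf{Coercive identity.} Using the pre/post-collisional symmetry and the identity $\mu\mu_{*}=\mu'\mu'_{*}$, I would rewrite
\begin{equation*}
\langle\mathcal{L}^{\epsilon}_{1}f,f\rangle_{v}=\frac{1}{2}\int B^{\epsilon}(v-v_{*},\sigma)\,\mu_{*}\,\bigl(f(v')-f(v)\bigr)^{2}\,dv\,dv_{*}\,d\sigma+R^{\epsilon}(f),
\end{equation*}
where the remainder $R^{\epsilon}(f)$ collects the Gaussian mismatch between $\mu_{*}^{1/2}\mu^{1/2}$ at the pre- and post-collisional states. A cancellation-lemma-type computation shows that $R^{\epsilon}(f)$ splits into a nonnegative pointwise weight contribution, of order $|W^{\epsilon}W_{-3/2}f|^{2}_{L^{2}}$, and an error controlled by $|f|^{2}_{L^{2}_{-3/2}}$.

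\textbf{Three coercive gains.} From the main quadratic form, I would extract the three pieces of $|f|^{2}_{\epsilon,-3/2}$ separately: (i) the \emph{anisotropic gain} $|W^{\epsilon}((-\Delta_{\mathbb{S}^{2}})^{1/2})W_{-3/2}f|^{2}_{L^{2}}$, by freezing $(v,v_{*})$, expanding $f(v')$ in spherical harmonics about the axis $(v-v_{*})/|v-v_{*}|$, and diagonalizing the angular averaging against $b^{\epsilon}(\cos\theta)$ via Funk--Hecke; the scaling $|\ln\epsilon|^{-1}$ in \eqref{DefScaled-b} is precisely what makes the $l$-th eigenvalue behave as $(W^{\epsilon}(\sqrt{l(l+1)}))^{2}$, matching the three-regime profile of \eqref{charicter function}; (ii) the \emph{isotropic gain} $|W^{\epsilon}(D)W_{-3/2}f|^{2}_{L^{2}}$, obtained by transferring to $\mathcal{L}^{\epsilon}_{1}$ the coercive estimate for $-Q^{\epsilon}(\mu,f)$ proved in \cite{he2014well}, via controlled Gaussian conjugation; (iii) the \emph{weight gain} $|W^{\epsilon}W_{-3/2}f|^{2}_{L^{2}}$, already produced by the nonnegative part of $R^{\epsilon}(f)$.

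\textbf{Assembly and main obstacle.} Combining the three gains with small coefficients, and absorbing the contribution of $\mathcal{L}^{\epsilon}_{2}$ together with the lower-order remainder into $|f|^{2}_{L^{2}_{-3/2}}$, one obtains \eqref{uniforml2} with $\nu_{0}>0$ independent of $\epsilon\in[0,\epsilon_{0}]$. The principal difficulty is the uniform-in-$\epsilon$ matching of the three regimes built into $W^{\epsilon}$ with the actual eigenvalue asymptotics of the angular averaging operator and the weight generated by $|v-v_{*}|^{-3}\mu_{*}$: the intermediate logarithmic regime $1\lesssim l\lesssim\epsilon^{-1}$ (resp.\ $1\lesssim|v|\lesssim\epsilon^{-1}$) must be recovered without any logarithmic loss, which requires a careful asymptotic expansion of the Legendre polynomial integrals defining the Funk--Hecke eigenvalues, and dually, of $\int\mu_{*}|v-v_{*}|^{-3}\mathbf{1}_{|v-v_{*}|\sin(\theta/2)\gtrsim\epsilon}dv_{*}$ for the weight. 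All lower-order error terms produced along the way must finally be shown, uniformly in $\epsilon$, to be absorbable by $|f|^{2}_{L^{2}_{-3/2}}$.
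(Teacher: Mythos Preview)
Your reduction to $\mathcal{L}^{\epsilon}_{1}$ and the treatment of $\mathcal{L}^{\epsilon}_{2}$ as lower order are correct and match the paper (Lemma~\ref{l2-full-estimate-geq-eta}). The isotropic gain via the result of \cite{he2014well} is also how the paper proceeds (Lemma~\ref{lowerboundpart1-general-g}). However, two of your three coercive gains are not obtained the way you suggest, and the mechanisms you propose have real difficulties.

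\textbf{Weight gain.} Your identity produces only the single functional $\frac12\int B^{\epsilon}\mu_{*}(f'-f)^{2}=\frac12\,\mathcal{N}^{\epsilon,\gamma,\eta}(\mu^{1/2},f)$ as the main coercive term; the residual $R^{\epsilon}(f)$ you describe is a combination of a genuine cancellation-lemma term and a cross term involving $\mu^{1/2}(\mu^{-1/2})'-1$, neither of which is sign-definite or of order $|W^{\epsilon}W_{-3/2}f|^{2}_{L^{2}}$. In the paper the weight $|W^{\epsilon}W_{\gamma/2}f|^{2}_{L^{2}}$ is extracted from the \emph{second} functional $\mathcal{N}^{\epsilon,\gamma,\eta}(f,\mu^{1/2})=\int B^{\epsilon}f_{*}^{2}\bigl((\mu^{1/2})'-\mu^{1/2}\bigr)^{2}$, in which the roles of $f$ and $\mu^{1/2}$ are swapped (Lemma~\ref{lowerboundpart1}). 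Accessing this functional from $\langle\mathcal{L}^{\epsilon}_{1}f,f\rangle$ requires the inequality of Proposition~2.16 in \cite{alexandre2012boltzmann} (equation~\eqref{L-1-dominate} here), which bounds $\langle\mathcal{L}^{\epsilon}_{1}f,f\rangle$ below by a multiple of $\mathcal{N}(\mu^{1/2},f)+\mathcal{N}(f,\mu^{1/2})$ minus a cancellation error. Your decomposition never produces $\mathcal{N}(f,\mu^{1/2})$, so the weight gain is missing.

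\textbf{Anisotropic gain.} Freezing $(v,v_{*})$ and applying Funk--Hecke diagonalizes the $\sigma$-averaging on the \emph{collision sphere}, centered at $(v+v_{*})/2$ with radius $|v-v_{*}|/2$. But the target operator $(-\Delta_{\mathbb{S}^{2}})^{1/2}$ in $|f|_{\epsilon,-3/2}$ acts on spheres centered at the origin in $v$-space. These geometries do not coincide, so the eigenvalue computation you describe does not directly yield $|W^{\epsilon}((-\Delta_{\mathbb{S}^{2}})^{1/2})W_{-3/2}f|^{2}_{L^{2}}$. The paper resolves this by passing to Fourier via Bobylev's formula and then using the geometric decomposition $\hat f(\xi)-\hat f(\xi^{+})=\bigl(\hat f(\xi)-\hat f(|\xi|\tfrac{\xi^{+}}{|\xi^{+}|})\bigr)+\bigl(\hat f(|\xi|\tfrac{\xi^{+}}{|\xi^{+}|})-\hat f(\xi^{+})\bigr)$ into a spherical part on $|\xi|\mathbb{S}^{2}$ (now origin-centered, in the dual variable) and a controllable radial part (Lemmas~\ref{spherical-part}--\ref{lowerboundpart2-general-g}). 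The spherical part is then linked to $W^{\epsilon}((-\Delta_{\mathbb{S}^{2}})^{1/2})$ through a separate analysis (Proposition~\ref{anisotropic-key-pro}) that uses a partition of unity on $\mathbb{S}^{2}$ and a dyadic argument, precisely to capture the three-regime profile of $W^{\epsilon}$ uniformly in $\epsilon$---the logarithmic matching you correctly flag as the main obstacle.
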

\begin{rmk}   The characteristic  function $W^{\epsilon}$ indicates the gain of  the weight  simultaneously in phase space, frequency space and anisotropic space for $\mathcal{L}^{\epsilon}$. When $\epsilon$ goes to zero, it becomes to the following coercivity estimate for $\mathcal{L}^L$ which shows that our estimate is sharp.
\begin{eqnarray}\label{uniforml2on-landau}
\langle \mathcal{L}^{L}f, f\rangle_v + |f|^{2}_{L^{2}_{-3/2}} \geq \nu_{0} \left(|W((-\Delta_{\mathbb{S}^{2}})^{1/2}) W_{-3/2}f|^{2}_{L^{2}} + |W(D)W_{-3/2}f|^2_{L^{2}} + |W W_{-3/2}f|^{2}_{L^{2}} \right).
\end{eqnarray}
Here $W(v) := (1+|v|^{2})^{1/2}$.
\end{rmk}

\begin{rmk}   We  emphasize that gain of  regularity only happens in the ``low frequency part'' $(|\xi|\lesssim 1/\epsilon)$ while gain of weight in the phase space only happens in the ``big ball'' $(|v|\lesssim 1/\epsilon)$. In other words, $\mathcal{L}^\epsilon$ still keeps a hyperbolic structure due to the cutoff condition on the deviation angle.
\end{rmk}

The intuition behind Theorem \ref{coercivity-structure} comes from the knowledge that  the linearized Boltzmann collision operator without angular cutoff   corresponds to a unique characteristic function, which captures the key structure of the operator in phase, frequency and anisotropic spaces. One may check it in
\cite{he2018sharp}. In \cite{he2014well}, $W^\epsilon$ had been proved to be the symbol in the frequency space for $Q^\epsilon$ (see \eqref{CoerQe}). Therefore $W^\epsilon$ should be the characteristic function of $\mathcal{L}^\epsilon$.

Note that the behavior of $W^\epsilon$ changes on the region $\{|\cdot|\sim 1/\epsilon\}$ with logarithm correction. The key idea to catch this behavior lies in the following two aspects:
\begin{itemize}
\item
The first one is  the  ``geometric decomposition'' introduced in \cite{he2018sharp} resulting from the geometry inherent in an elastic collision event. It can be explained   as follows:
\beno
f(\xi) - f(\xi^{+}) &=& \big(f(\xi) - f(|\xi|\frac{\xi^{+}}{|\xi^{+}|})\big)+ \big(f(|\xi|\frac{\xi^{+}}{|\xi^{+}|}) - f(\xi^{+})\big),
\eeno
where $\xi^{+}:= \f{|\xi|\sigma+\xi}2$. The first difference $f(\xi) - f(|\xi|\frac{\xi^{+}}{|\xi^{+}|})$ is referred as the spherical part since
$\xi$ and $|\xi|\frac{\xi^{+}}{|\xi^{+}|}$ lie in the sphere centered at origin with radius $|\xi|$. The second difference $f(|\xi|\frac{\xi^{+}}{|\xi^{+}|}) - f(\xi^{+})$ is referred as the radial part since $|\xi|\frac{\xi^{+}}{|\xi^{+}|}$, $\xi^{+}$ and origin are on the same line.
We can extract anisotropic information from the spherical part for both the lower and upper bounds of the operator. One can see section 2.3 for more details.
 \item The second one is the development of some localization techniques: the dyadic decomposition in both phase and frequency spaces and also the partition of unity on the unit sphere. They play an essential role in capturing the leading term and the characteristic function. For more details, we refer readers to Section \ref{Coercivity-Estimate}, Section \ref{Upper-Bound-Estimate} and the Appendix.
\end{itemize}

\subsubsection{\bf   Idea on the spectral gap estimates} This sequel aims to overcome the difficulty raised in {\bf (D2)}.
Roughly speaking, the above Theorem \ref{coercivity-structure} successfully catches the leading term of $\langle \mathcal{L}^{\epsilon}f, f\rangle_{v}$. To eliminate the lower order term, we need a so-called spectral gap estimate for $\mathcal{L}^{\epsilon}$ which is required to hold uniformly in $\epsilon$.  We get the following result:

\begin{thm}\label{micro-dissipation}
There exist two positive universal constants $\epsilon_0$ and $\lambda$  such that for $0 \leq \epsilon \le\epsilon_0$ and any smooth function $f  $, it holds
\begin{eqnarray}\label{spectral-gap}
\langle \mathcal{L}^{\epsilon}f, f\rangle_v \geq \lambda|(\mathbb{I}-\mathbb{P})f|^{2}_{\epsilon,-3/2},
\end{eqnarray}
where $\lambda$ depends only on $\lambda_1^\epsilon$(see \eqref{firstegenvalueLA} for definition) and the constant $\nu_{0}$ appearing in Theorem \ref{coercivity-structure}.
\end{thm}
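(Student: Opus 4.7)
The plan is to decouple the statement into two independent ingredients and recombine them. Since $\mathcal{L}^\epsilon \mathbb{P}f = 0$, one has $\langle \mathcal{L}^\epsilon f, f\rangle_v = \langle \mathcal{L}^\epsilon g, g\rangle_v$ for $g := (\mathbb{I}-\mathbb{P})f$, so throughout it suffices to work with $g$ on the left-hand side. The hard ingredient is a weaker spectral gap in the plain weighted $L^2$ norm; the easy one is an algebraic bootstrap that lifts it to the anisotropic norm $|\cdot|_{\epsilon,-3/2}$ using Theorem~\ref{coercivity-structure}.

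\textbf{Step 1 (weak gap).} Establish, uniformly in $\epsilon$, the inequality
\beno \langle \mathcal{L}^\epsilon f, f\rangle_v \geq \lambda' |g|^2_{L^2_{-3/2}}, \eeno
with $\lambda'$ depending only on $\lambda_1^\epsilon$. I would start from the symmetrized Dirichlet form
\beno \langle \mathcal{L}^\epsilon g, g\rangle_v = \frac{1}{4}\iiiint B^\epsilon(v-v_*,\sigma)\, \mu \mu_* \left(h'+h'_*-h-h_*\right)^2 d\sigma\, dv_*\, dv, \qquad h = g/\sqrt{\mu}, \eeno
and exploit the factorization $B^\epsilon = |v-v_*|^{-3}\, b^\epsilon(\cos\theta)$. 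The angular kernel $b^\epsilon$ is normalized precisely so that Chang-Uhlenbeck's explicit eigenvalue computation for the Maxwellian-molecules operator built from $b^\epsilon$ alone has first positive eigenvalue equal to $\lambda_1^\epsilon \sim 1$ (cf.~\eqref{firstegenvaluemaxwellion}--\eqref{firstegenvalueLA}). To transport this to our soft-potential kernel, I would use the pointwise lower bound $|v-v_*|^{-3}\mu_* \gtrsim \langle v\rangle^{-3}\mu_*$ on a well-chosen bulk region in $v_*$ (the loss outside this region is Gaussian-small in $|v_*|$ and harmless), which converts the form into a $\langle v\rangle^{-3}$-weighted Maxwellian-molecules Dirichlet form. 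Chang-Uhlenbeck's decomposition into spherical harmonics on the collisional sphere then produces the bound $\lambda_1^\epsilon \int g^2 \langle v\rangle^{-3}\, dv = \lambda_1^\epsilon |g|^2_{L^2_{-3/2}}$, once the zero eigenmodes are removed; the fact that $\mathbb{P}g = 0$ handles this globally, with a standard compactness/contradiction step confined to a bounded region in $v$.

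\textbf{Step 2 (upgrade).} Apply the coercivity estimate of Theorem~\ref{coercivity-structure} to $g$ and use $\mathcal{L}^\epsilon\mathbb{P}f = 0$:
\beno \langle \mathcal{L}^\epsilon f, f\rangle_v + |g|^2_{L^2_{-3/2}} \geq \nu_0 |g|^2_{\epsilon,-3/2}. \eeno
Substituting $|g|^2_{L^2_{-3/2}} \leq (\lambda')^{-1}\langle \mathcal{L}^\epsilon f, f\rangle_v$ from Step~1 yields
\beno \bigl(1 + (\lambda')^{-1}\bigr)\langle \mathcal{L}^\epsilon f, f\rangle_v \geq \nu_0 |g|^2_{\epsilon,-3/2}, \eeno
which is \eqref{spectral-gap} with $\lambda = \nu_0 \lambda'/(1+\lambda')$, depending only on $\lambda_1^\epsilon$ and $\nu_0$, as claimed.

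\textbf{Main obstacle.} The delicate part is the $\epsilon$-uniformity in Step~1. The angular weight $b^\epsilon = |\ln\epsilon|^{-1}\sin^{-4}(\theta/2)\mathbf{1}_{\sin(\theta/2)\geq \epsilon}$ is calibrated so that $\lambda_1^\epsilon \sim 1$ as $\epsilon\to 0$, and any estimate that treats $b^\epsilon$ as a generic integrable angular weight, or that relies on a pointwise bound, will either blow up or lose the $|\ln\epsilon|^{-1}$ prefactor and destroy the gap. The argument must therefore pass directly through the spherical-harmonic expansion underlying Chang-Uhlenbeck's formula, which is both why the final constant depends on $\lambda_1^\epsilon$ rather than on a generic angular bound and why the explicit normalization \eqref{firstegenvalueLA} is indispensable.
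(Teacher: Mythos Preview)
Your Step 2 is correct and matches the paper exactly. The gap is in Step 1.

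The pointwise bound $|v-v_*|^{-3}\mu_*\gtrsim\langle v\rangle^{-3}\mu_*$ on a bulk $v_*$-region inserts the \emph{asymmetric} weight $\langle v\rangle^{-3}$ into the symmetrized Dirichlet form. This does not produce something to which Chang--Uhlenbeck applies: their eigenbasis diagonalizes the unweighted form $\int b^\epsilon\mu\mu_*(h'+h'_*-h-h_*)^2$, and the extra factor $\langle v\rangle^{-3}$ neither commutes with that decomposition nor is collision-invariant, so it cannot simply be pulled out as $\lambda_1^\epsilon\int g^2\langle v\rangle^{-3}dv$. If you try to absorb the weight into $h$, the symmetrizer $(h'+h'_*-h-h_*)^2$ generates commutator terms that are not small in any obvious way. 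Your proposed ``compactness/contradiction step'' is also dangerous precisely because the whole point is $\epsilon$-uniformity.

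The paper resolves this by using the \emph{symmetric} weight $U_\delta^{\gamma/2}(v)U_\delta^{\gamma/2}(v_*)$ with $U_\delta(v)=(1+\delta^2|v|^2)^{1/2}$ and $|v-v_*|^\gamma\gtrsim\delta^{-\gamma}U_\delta^\gamma(v)U_\delta^\gamma(v_*)$. Because $|v|^2+|v_*|^2$ is conserved, this weight can be moved inside the symmetrizer at the cost of error terms of size $\delta^2|g|^2_{\epsilon,\gamma/2}$; so one obtains not a clean weak gap but the coupled estimate \eqref{ideasg2}, which must then be combined linearly with the coercivity estimate \eqref{ideasg1}. Moreover, at $\gamma=-3$ the error exponent $\delta^{\gamma+2}=\delta^{-1}$ is large, so this combination only closes for $-2<\gamma\le 0$; the paper then writes $-3=\alpha+\beta$ with $-2<\alpha,\beta<0$ and bootstraps through $\mathcal{L}^{\epsilon,\alpha}$ (Step 4 of Theorem~\ref{micro-weight-dissipation}). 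Your outline misses both the symmetric-weight mechanism and this two-stage reduction.
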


To give some illustration on the above theorem, we use  $\mathcal{L}^\gamma$  to denote the linearized Boltzmann collision operator associated to the general kernel
\ben\label{generalkernel} B(v-v_*,\sigma):= |v-v_*|^\gamma b(\cos\theta). \een

  The first spectral gap estimate is attributed to \cite{Chang-Uhlenbeck}. In fact, thanks to the simple structure of $\mathcal{L}^0$, which corresponds to the Maxwellian molecules (see \eqref{KernelMaxweM}), the authors explicitly constructed its eigenvalues and their corresponding eigenfunctions. Then the spectral gap estimate easily follows from the fact that $\mathcal{L}^0$ is a self-adjoint operator:
\ben\label{mxsgthm}
 \langle \mathcal{L}^0 f, f\rangle_v  \ge \lambda_1  |(\mathbb{I}-\mathbb{P})f|_{L^2}^2,
\een
where $\lambda_1$ is the first positive eigenvalue of $\mathcal{L}^0$ satisfying \eqref{firstegenvaluemaxwellion}.

Based on a proper decomposition of the operator and also the dyadic decomposition on the modulus of the relative velocity, the authors in \cite{bargermouhot,mouhot2006explicit}
extended the above result to the general case as follows,
\ben\label{mouhotresult} \langle \mathcal{L}^\gamma f, f\rangle_v  \gtrsim C_\gamma C_b  |(\mathbb{I}-\mathbb{P})f|_{L^2_{\gamma/2}}^2,
\een
where  \beno C_b=\inf_{\sigma_1,\sigma_2 \in \SS^2}\int_{\SS^2}\min\{b(\sigma_1\cdot\sigma_3),b(\sigma_2\cdot\sigma_3)\}d\sigma_3.\eeno
We remind readers that \eqref{mouhotresult} is more general but the estimate depends on $C_b$. As a result, \eqref{mouhotresult} cannot be applied directly if $C_b$ is not bounded from below, which unfortunately happens when the angular function $b$ concentrates on the grazing collisions. There are two typical examples:
\beno b(\cos\theta)=\epsilon^{2s-2}(\sin(\theta/2))^{-2-2s}\mathrm{1}_{\sin(\theta/2)\leq \epsilon},\quad\mbox{or}\quad b(\cos\theta)=b^\epsilon(
\cos\theta),\eeno
where $b^\epsilon(\cos \theta)$ is defined in  \eqref{DefScaled-b}. It is not difficult to check that in both cases $C_b$ tends to zero when $\epsilon$ goes to zero. We
fail to get the desired result through \eqref{mouhotresult}. On the other hand, \eqref{mxsgthm} works well for both cases since the following holds true uniformly in $\epsilon$,
\beno \int_0^\pi b(\cos\theta)(1-\cos\theta)\sin\theta d\theta\sim 1. \eeno

From the above short review on the spectral gap estimate, we need a new and  constructive proof for Theorem \ref{micro-dissipation}, which can be regarded as one of our main contributions in this paper.
The key step lies in finding a link  between the desired result \eqref{spectral-gap} and the well-known result \eqref{mxsgthm} by noting that \eqref{mxsgthm} is stable in the Landau approximation thanks to \eqref{firstegenvalueLA}.

Because of some technical restriction (which will be seen soon), for $-3 \leq \gamma \leq 0$,
we need to consider the general linearized collision operator  $\mathcal{L}^{\epsilon,\gamma}$ associated to the Boltzmann kernel
\ben\label{kernelBgamma} B^{\epsilon,\gamma}(v-v_*,\sigma) := \left|v-v_{*}\right|^{\gamma} b^\epsilon(\cos\theta).\een

Our strategy consists of two parts:
\begin{itemize}
\item  The first part utilizes the coercivity estimate in Theorem \ref{coercivity-structure}. More general than Theorem \ref{coercivity-structure} (see Theorem \ref{strong-coercivity}), for $-3 \leq \gamma \leq 0$, we derive
\ben\label{ideasg1}
\langle \mathcal{L}^{\epsilon,\gamma}f, f\rangle_v =\langle\mathcal{L}^{\epsilon}(\mathbb{I}-\mathbb{P})f, (\mathbb{I}-\mathbb{P})f\rangle_v  \ge \nu_{0} |(\mathbb{I}-\mathbb{P})f|^{2}_{\epsilon,\gamma/2}- |(\mathbb{I}-\mathbb{P})f|^{2}_{L^{2}_{\gamma/2}},
\een

\item The second part is to reduce the case of a general potential to the case of Maxwellian molecules with a small correction term. Roughly speaking, for any $0< \delta \leq 1$, we derive
\ben\label{ideasg2}  \langle \mathcal{L}^{\epsilon,\gamma}f, f\rangle_v\geq C_{1} \delta^{-\gamma}(|(\mathbb{I}-\mathbb{P})f|^{2}_{L^{2}_{\gamma/2}} - C_{2} \delta^{\gamma+2}|(\mathbb{I}-\mathbb{P})f|^{2}_{\epsilon,\gamma/2}). \een
 \end{itemize}

When $-2< \gamma \leq 0$, we can make a suitable combination of
\eqref{ideasg1} and \eqref{ideasg2} to get \eqref{spectral-gap1} in Theorem \ref{micro-dissipation1}. For $-3 \leq \gamma \leq -2$, we choose $-2< \alpha,\beta < 0$ such that $\gamma = \alpha+\beta$ and thus $B^{\epsilon,\gamma} = B^{\epsilon,\alpha} \left|v-v_{*}\right|^{\beta}$. Then $B^{\epsilon,\alpha}$ corresponds to $\mathcal{L}^{\epsilon,\alpha}$ which has spectral gap estimate since $\alpha>-2$. We next reduce $\left|v-v_{*}\right|^{\beta}$ to the  Maxwellian molecules $\left|v-v_{*}\right|^{0}$.
In summary, we can deal with $\gamma \in (-2,0)$ in the first stage, and $\gamma \in [-3,-2]$ in the second stage. For details, one can see the proof of Theorem \ref{micro-weight-dissipation} in Section \ref{Spectral-Gap-Estimate}.

We emphasize that the derivation of  \eqref{ideasg2} is very tricky. To this end, we introduce a proper decomposition on the modulus of the relative velocity and a special weight function $U_{\delta}(v) = (1+ \delta^{2}|v|^{2})^{1/2}$
 (see \eqref{specialweightfun}) to keep the symmetric structure of $\langle \mathcal{L}^{\epsilon,\gamma}f, f\rangle_{v}$.

More general than Theorem \ref{micro-dissipation}, we have
\begin{thm}\label{micro-dissipation1} Let $-3 \leq \gamma \leq 0$. There exist two positive universal constants $\epsilon_0$ and $\lambda$  such that for $0 \leq \epsilon \le\epsilon_0$ and any smooth function $f  $, it holds
\begin{eqnarray}\label{spectral-gap1}
\langle \mathcal{L}^{\epsilon,\gamma}f, f\rangle_{v} \geq \lambda|(\mathbb{I}-\mathbb{P})f|^{2}_{\epsilon,\gamma/2},
\end{eqnarray}
where $\lambda$ depends only on $\gamma, \lambda_1^\epsilon$(see \eqref{firstegenvalueLA} for definition) and the constant $\nu_{0}$ appearing in Theorem \ref{coercivity-structure}.
\end{thm}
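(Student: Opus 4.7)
The plan is to execute the two-stage scheme sketched in the paper. The central idea is to combine two complementary lower bounds for $\langle\mathcal{L}^{\epsilon,\gamma}f,f\rangle_v$: a strong coercivity bound that controls the full $|\cdot|_{\epsilon,\gamma/2}$ norm up to a lower-order $L^2_{\gamma/2}$ loss, and a ``Maxwellian reduction'' bound that controls the $L^2_{\gamma/2}$ norm itself but with a small degeneracy in the $|\cdot|_{\epsilon,\gamma/2}$ direction. Balancing the two yields \eqref{spectral-gap1} for $\gamma\in(-2,0]$, and then a splitting of the exponent bootstraps the result to $\gamma\in[-3,-2]$.

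\textbf{Stage 1: $-2<\gamma\leq 0$.} Since $\mathbb{P}f$ lies in the kernel of $\mathcal{L}^{\epsilon,\gamma}$, one may replace $f$ by $(\mathbb{I}-\mathbb{P})f$ in the quadratic form. I would first prove the $\gamma$-analog of Theorem \ref{coercivity-structure}, which gives \eqref{ideasg1}: the same dyadic localization in phase, frequency and anisotropic spaces that works at $\gamma=-3$ carries over to any $\gamma\in[-3,0]$. Next I would establish \eqref{ideasg2} by reducing to the Chang--Uhlenbeck Maxwellian case \eqref{mxsgthm}. The key pointwise bound is
\[
|v-v_*|^{\gamma}\ge \delta^{-\gamma}\mathbf{1}_{|v-v_*|\le \delta^{-1}},
\]
valid since $\gamma\le 0$. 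Using the symmetric weight $U_\delta(v)=(1+\delta^2|v|^2)^{1/2}$ to preserve the Maxwellian-type symmetry $U_\delta(v)\sim U_\delta(v_*)\sim U_\delta(v')\sim U_\delta(v'_*)$ on $\{|v-v_*|\le \delta^{-1}\}$, one applies \eqref{mxsgthm} with the uniform-in-$\epsilon$ first eigenvalue guaranteed by \eqref{firstegenvalueLA}. The complementary region $\{|v-v_*|\ge \delta^{-1}\}$ is absorbed through a crude upper bound of order $\delta^{\gamma+2}|(\mathbb{I}-\mathbb{P})f|^2_{\epsilon,\gamma/2}$. Finally, I would take a convex combination of \eqref{ideasg1} and a small multiple $\eta$ of \eqref{ideasg2}, then choose $\delta$ small enough so that $C_2\delta^{\gamma+2}$ (small because $\gamma+2>0$) makes the $|\cdot|_{\epsilon,\gamma/2}$ penalty in \eqref{ideasg2} absorbable by the $\nu_0$-term of \eqref{ideasg1}; this closes \eqref{spectral-gap1} in this range.

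\textbf{Stage 2: $-3\leq\gamma\leq -2$.} Choose $\alpha,\beta\in(-2,0)$ with $\alpha+\beta=\gamma$ (possible because $\gamma>-4$) and factor $B^{\epsilon,\gamma}=|v-v_*|^{\beta}\cdot B^{\epsilon,\alpha}$. Repeat the Stage 1 reduction argument, but with $\mathcal{L}^{\epsilon,\alpha}$ playing the role that $\mathcal{L}^0$ played before: since $\alpha>-2$, the spectral gap for $\mathcal{L}^{\epsilon,\alpha}$ is already available from Stage 1 with constant uniform in $\epsilon$, and the extra factor $|v-v_*|^{\beta}$ with $\beta>-2$ admits the same $U_\delta$-symmetrized localization. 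Combining with the $\gamma$-version of \eqref{ideasg1} yields \eqref{spectral-gap1} in the full range.

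The principal obstacle is the derivation of \eqref{ideasg2}, and two technical issues need care. First, the diagonal localization $\mathbf{1}_{|v-v_*|\le\delta^{-1}}$ must be arranged so that the resulting quadratic form retains the symmetric four-variable structure required to invoke a Maxwellian-type spectral gap; the weight $U_\delta$ is introduced precisely for this purpose, since $U_\delta(v)U_\delta(v_*)\sim U_\delta(v')U_\delta(v'_*)$ on the diagonal set. Second, one must ensure that the Chang--Uhlenbeck constant does not degenerate as $\epsilon\to 0$: a naive application along the lines of \eqref{mouhotresult} would force a dependence on $\inf_{\sigma_1,\sigma_2}\int_{\SS^2}\min\{b^\epsilon(\sigma_1\cdot\sigma_3),b^\epsilon(\sigma_2\cdot\sigma_3)\}d\sigma_3$, which vanishes as $\epsilon\to 0$. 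The argument must therefore be based on the explicit first-eigenvalue identity \eqref{firstegenvaluemaxwellion} together with the uniform bound $\lambda_1^\epsilon\sim 1$ of \eqref{firstegenvalueLA}, which is exactly what the scaling factor $|\ln\epsilon|^{-1}$ in $b^\epsilon$ was chosen to provide.
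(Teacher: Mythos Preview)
Your two-stage strategy is exactly the paper's, and your diagnosis of the obstacles---that one must avoid the Mouhot constant $C_b$ in \eqref{mouhotresult} and rely instead on the Chang--Uhlenbeck eigenvalue $\lambda_1^\epsilon\sim 1$ from \eqref{firstegenvalueLA}---is correct. The gap is in the mechanism you propose for \eqref{ideasg2}.

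You localize via $|v-v_*|^\gamma\ge \delta^{-\gamma}\mathbf{1}_{|v-v_*|\le\delta^{-1}}$ and then assert that the complementary set $\{|v-v_*|\ge\delta^{-1}\}$ furnishes the $\delta^{\gamma+2}$ penalty. This is not how the paper argues, and it is unlikely to close as written. Once the far-diagonal region is discarded (free since $\mathbb F\ge 0$), you are left with $\delta^{-\gamma}\int b^\epsilon\mathbf{1}_{|v-v_*|\le\delta^{-1}}\mathbb F(\mu^{1/2},g)$, and to invoke \eqref{mxsgthm} you must compare this with the \emph{unrestricted} Maxwellian form. Bounding the difference crudely via $\mathbb F\lesssim \mu_* g^2$ brings in $\int b^\epsilon\, d\sigma\sim|\ln\epsilon|^{-1}\epsilon^{-2}$, which is large, not $O(\delta^2)$; the exponential decay of $\mu_*$ on $\{|v-v_*|\ge\delta^{-1}\}$ does not damp the part where $|v|$ itself is large.

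What the paper actually uses is the \emph{global} pointwise inequality $|v-v_*|^\gamma\gtrsim \delta^{-\gamma}U_\delta^\gamma(v)U_\delta^\gamma(v_*)$ (no indicator), together with a separate cutoff $\chi_R$ in $v$ and $v_*$ individually. The role of $U_\delta$ is not that its four values are comparable on a diagonal strip, but that the product $\chi_R(v)\chi_R(v_*)U_\delta^{\gamma/2}(v)U_\delta^{\gamma/2}(v_*)$ can be moved \emph{inside} the symmetric functional $\mathbb F$: writing $h=\chi_R U_\delta^{\gamma/2}$ one has $h^2 h_*^2\,\mathbb F(\mu^{1/2},g)\ge \tfrac12\mathbb F(h\mu^{1/2},hg)-(h'h'_*-hh_*)^2(\cdots)$, and Lemma \ref{difference-term-complication} bounds the commutator $(h'h'_*-hh_*)^2$ by $(\delta^2+R^{-2})\theta^2$ via energy conservation $|v|^2+|v_*|^2=|v'|^2+|v'_*|^2$. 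After this transfer one applies \eqref{mxsgthm} to $\chi_R U_\delta^{\gamma/2}g$; the $\delta^2$ error in \eqref{ideasg2} arises from this weight-commutation, from $\mathbb P(\chi_R U_\delta^{\gamma/2}g)$, and from the tail $(1-\chi_R)g$ with $R=\delta^{-1}$---not from any complementary diagonal region. Stage~2 then runs as you say, but with $|v-v_*|^\beta$ again handled by the $U_\delta$ transfer rather than an indicator cutoff.
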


\subsubsection{\bf Idea on the linear-quasilinear method} In this sequel, we deal with the difficulty raised in {\bf (D3).} As explained before, for \eqref{linearizedBE}, it seems that  the dissipation of $\mathcal{L}^\epsilon$ cannot prevail the nonlinear term $\Gamma^\epsilon(f,f)$, which is the biggest challenge for establishing global well-posedness. To overcome this obstacle, we introduce a so-called ``\,linear-quasilinear method\,''.

In order to explain the method,  we first introduce the truncation with respect to the modulus of the relative velocity. We associate $Q^{\epsilon,\gamma,\eta}$ with kernel $B^{\epsilon,\gamma,\eta} = B^{\epsilon,\gamma} 1_{|v-v_{*}| \geq \eta }$ and denote $\mathcal{L}^{\epsilon,\gamma,\eta}, \mathcal{L}^{\epsilon,\gamma,\eta}_{1}, \mathcal{L}^{\epsilon,\gamma,\eta}_{2}, \Gamma^{\epsilon,\gamma,\eta}$ correspondingly. To eradicate ambiguity, we define explicitly the Boltzmann operator $Q^{\epsilon,\gamma,\eta}$ as follows,
\ben \label{Q-ep-ga-geq-eta}
Q^{\epsilon,\gamma,\eta}(g,h)(v):=
\int_{\R^3}\int_{\mathbb{S}^{2}}B^{\epsilon,\gamma,\eta}(v-v_*,\sigma)\left(g(v'_*) h(v')-g(v_*)h(v)\right)dv_* d\sigma,
\een
where
\beno B^{\epsilon,\gamma,\eta}(v-v_*,\sigma) := \left|v-v_{*}\right|^{\gamma}\mathrm{1}_{|v-v_{*}|\geq \eta}
 b^{\epsilon}(\cos \theta) = \left|v-v_{*}\right|^{\gamma}\mathrm{1}_{|v-v_{*}|\geq \eta} |\ln \epsilon|^{-1}\sin^{-4}(\theta/2) \mathrm{1}_{\sin(\theta/2) \geq \epsilon}.\eeno
Similar to \eqref{DefLep}, we define
\ben\label{Gamma-ep-eta-ga}\Gamma^{\epsilon,\gamma,\eta}(g,h):= \mu^{-1/2} Q^{\epsilon,\gamma,\eta}(\mu^{1/2}g,\mu^{1/2}h),
\\ \label{L-ep-eta-ga}
\mathcal{L}^{\epsilon,\gamma,\eta}g:= -\Gamma^{\epsilon,\gamma,\eta}(\mu^{1/2},g) - \Gamma^{\epsilon,\gamma,\eta}(g, \mu^{1/2}),
\\ \label{L1-ep-eta-ga}
\mathcal{L}^{\epsilon,\gamma,\eta}_{1}g:=  -\Gamma^{\epsilon,\gamma,\eta}(\mu^{1/2},g),
\\ \label{L2-ep-eta-ga} \mathcal{L}^{\epsilon,\gamma,\eta}_{2}g:=  - \Gamma^{\epsilon,\gamma,\eta}(g, \mu^{1/2}).
\een
Then we set
\ben\label{etalower} \qquad B^{\epsilon,\gamma}_\eta:= B^{\epsilon,\gamma}-B^{\epsilon,\gamma,\eta},\, Q^{\epsilon,\gamma}_\eta:= Q^{\epsilon, \gamma}-Q^{\epsilon,\gamma,\eta},\, \mathcal{L}^{\epsilon,\gamma}_{\eta}:= \mathcal{L}^{\epsilon,\gamma} -\mathcal{L}^{\epsilon,\gamma,\eta},\, \Gamma^{\epsilon,\gamma}_{\eta}  := \Gamma^{\epsilon,\gamma}-\Gamma^{\epsilon,\gamma, \eta}. \een

With these notations, we rewrite \eqref{linearizedBE} as
\ben\label{linearizedBENew}
\pa_tf+v\cdot\na_xf+\mathcal{L}^{\epsilon,-3,\eta}f=(-\mathcal{L}^{\epsilon,-3}_{\eta}f+\Gamma^{\epsilon,-3}_{\eta}(f,f)) + \Gamma^{\epsilon,-3, \eta}(f,f).
\een
Now we are in a position to illustrate the linear-quasilinear   method in detail. Since
the standard $L^2$ energy estimate is employed to establish global well-posedness, the  method can be explained in terms of   the integration domain of $L^2$ inner product. In fact,   we separate the   integration domain into two parts: $|v-v_*|\le \eta$ and $|v-v_*|\ge \eta$, where $\eta$ is a parameter and principally it should be sufficiently small.  We call them  respectively the singular region and the regular region since $|v-v_*|^{-3}$ in the Boltzmann kernel $B^\epsilon$ is strongly singular near $0$. The spirit of the new method can be summarized as follows:
(i). In the regular region, we employ the standard linear method by showing that the dissipation of the linear term can dominate the nonlinear term; (ii). While in the singular region, we use the quasi-linear method by utilizing the non-negativity of the solution to eliminate the dangerous strong singularity. More precisely,

\begin{itemize}
\item In the regular region, we follow the linear method to show that the dissipation of  $\mathcal{L}^{\epsilon,-3,\eta}f$ dominates the nonlinear term $\Gamma^{\epsilon,-3, \eta}(f,f)$ under smallness assumption on $f$.
To this end,  technically we are forced to show that
\beno
\langle \mathcal{L}^{\epsilon,-3,\eta}f,f\rangle_v+|f|_{L^2_{-3/2}}^2\ge \nu_{0}|f|_{\epsilon,-3/2}^2\quad\mbox{and}\quad
|\langle \Gamma^{\epsilon,-3, \eta}(f,f),f\rangle_v|\lesssim C(\eta^{-1},f)|f|_{\epsilon,-3/2}^2.
 \eeno
That is, when $\eta$ is sufficiently small, $\mathcal{L}^{\epsilon,-3,\eta} $ yields the same dissipation as $\mathcal{L}^{\epsilon}$ in Theorem \ref{coercivity-structure}.
	\item In the singular region, we use the identity that
\beno &&\langle  -\mathcal{L}^{\epsilon,-3}_{\eta}f+\Gamma^{\epsilon,-3}_{\eta}(f,f), f\rangle_v=
\langle Q^{\epsilon,-3}_\eta(\mu+\mu^\f12f,\mu^{\f12}f),\mu^{-\f12}f\rangle_v+\langle \Gamma^{\epsilon,-3}_{\eta}(f, \mu^{\f12}),f\rangle_{v}\\
&&=\langle Q^{\epsilon,-3}_\eta(\mu+\mu^\f12f, f), f\rangle_v+\int B^{\epsilon,-3}_\eta (\mu^{\f12}+f)_*
(\mu_*'^{\f12}-\mu_*^{\f12})ff'd\sigma dv_*dv+\langle \Gamma^{\epsilon,-3}_{\eta}(f, \mu^{\f12}),f\rangle_{v}.
\eeno
In Theorem \ref{small-part-L+gamma}, we have
\beno \langle  -\mathcal{L}^{\epsilon,-3}_{\eta}f+\Gamma^{\epsilon,-3}_{\eta}(f,f), f\rangle_v \lesssim (\eta^{1/2}+\epsilon^{1/2})(1+|f|_{H^{2}})|f|_{L^2_{\epsilon,-3/2}}^2. \eeno
Let us explain the idea. There are three terms in the second line of the previous identity.
For the latter two terms, we make use of the regularity of $\mu^{\f12}$ to cancel the singularity.
For the first term we shall
use the quasi-linear method to give the estimate. In fact, thanks to the fact that $\mu+\mu^{\f12}f$ is a solution to the original Boltzmann equation,  it holds that $\mu+\mu^{\f12}f\ge0$ which implies that  the   coercivity type estimate \eqref{CoerQe}  holds for $\langle -Q^{\epsilon,-3}_\eta(\mu+\mu^{\f12}f, f), f\rangle_v$. Here we only use the good sign and Cancellation Lemma \ref{cancellation-lemma-general-gamma-minus3-mu} to get
\beno \langle Q^{\epsilon,-3}_\eta(\mu+\mu^{\f12} f, f), f\rangle_v \lesssim (\eta^{1/2}+\epsilon^{1/2})(1+|f|_{H^{2}})|f|_{L^2_{\epsilon,-3/2}}^2.\eeno
One can see subsection 4.3 for details.
\end{itemize}

Using such kind of treatment, we can deal with the highest order derivative in the energy estimates and
capture the highest order dissipation, which is crucial for the near Maxwellian framework. For other lower order derivatives, we have one order derivative to kill the strong singularity in $|v-v_{*}|^{-3}$ near $0$. In order to implement this plan, we need two types of upper bound estimates for the non-linear term, see Table \ref{ResultSummary} at the beginning of Section \ref{Upper-Bound-Estimate} for a summary.



\subsection{Notations, function spaces and organization of the paper}
We list notations and state the organization of the paper in this subsection.

\subsubsection{Notations} Here are the list:

\begin{itemize}
 \item We denote the multi-index $\alpha =(\alpha _1,\alpha _2,\alpha _3)$ with
$|\alpha |=\alpha _1+\alpha _2+\alpha _3$.

 \item We write $a\lesssim b$ to indicate that  there is a
universal constant $C,$ which may be different on different lines,
such that $a\leq Cb$.  We use the notation $a\sim b$ whenever $a\lesssim b$ and $b\lesssim
a$.

\item The notation $a^+$ means the maximum value of $a$ and $0$ and $[a]$ denotes the maximum integer which does not exceed $a$.

\item   The Japanese bracket $\langle \cdot\rangle$ is defined by $\langle v\rangle=(1+|v|^2)^{\frac{1}{2}}$. Then the weight function $W_l$ is defined by $W_l(v):= \langle v\rangle^l $. When $l=1$, we write $W(v):= W_{1}(v) = \langle v\rangle$.

\item   We denote $C(\lambda_1,\lambda_2,\cdots, \lambda_n)$ or $C_{\lambda_1,\lambda_2,\cdots, \lambda_n}$ by a constant depending on   parameters $\lambda_1,\lambda_2,\cdots, \lambda_n$.

\item The notations  $\langle f,g\rangle:= \int_{\R^3}f(v)g(v)dv$ and $(f,g):= \int_{\TT^3\times\R^3} fgdxdv$ are used to denote the inner products for $v$ variable and for $x,v$ variables respectively.

\item  The translator operator $T_u$ is defined by $(T_u f)(v):= f(u+v)$, for any $u,v \in \R^{3}$.

\item As usual, $\mathrm{1}_A$ is the characteristic function of a set $A$.

\item If $A,B$ are two operators, then their commutator $[A,B]:= AB-BA$.

\end{itemize}

\subsubsection{Function spaces} For simplicity, we set $\partial^{\alpha}:=\partial^{\alpha}_x, \partial_{\beta}:= \pa^{\beta}_v, \partial^{\alpha}_{\beta}:=\partial^{\alpha}_x\pa^{\beta}_v$. We will use the following spaces.

\begin{itemize}
\item  For   real number $n, l $, we define the weighted Sobolev space on $\R^3$
\begin{equation*}
H^{n}_l :=\bigg\{f(v)\big| |f|^2_{H^n_l}:= |\langle D\rangle^n W_{l} f|^{2}_{L^{2}} = \int_{\R^3} |(\langle D\rangle^n W_{l} f)(v)|^2 dv
 <+\infty\bigg\},
\end{equation*} Here $a(D)$ is a   differential operator with the symbol
$a(\xi)$ defined by
\beno  \big(a(D)f\big)(v):=\f1{(2\pi)^3}\int_{\R^3}\int_{\R^3} e^{i(v-y)\xi}a(\xi)f(y)dyd\xi.\eeno

\item
 For $n \in \N, l \in \R$, the weighted Sobolev space on $\R^3$ is defined by
\beno H^{n}_{l} := \bigg\{f(v) \big| |f|^{2}_{H^{n}_{l}} :=
\sum_{|\beta| \leq n}  |\pa_{\beta}f|^{2}_{L^{2}_l}< \infty  \bigg\},\eeno
where $|f|_{L^{2}_{l}} := |W_l f|_{L^{2}}$ is the usual $L^{2}$ norm with weight $W_l$.

\item For $n \in \N, l \in \R$, we denote the weighted pure order-$n$ space on $\R^3$ by
\ben \label{pure-order} \dot{H}^{n}_l := \bigg\{f(v) \big| |f|^{2}_{\dot{H}^{n}_{l}} :=
\sum_{|\beta| = n}  |\pa_{\beta} f|^{2}_{L^{2}_l}< \infty  \bigg\}.\een

\item For $m\in\N$, we denote the Sobolev space on $\mathbb{T}^{3}$ by
\begin{equation*} H^{m}_{x} :=\bigg\{f(x)\big| |f|^{2}_{H^{m}_{x}}:= \sum_{|\alpha | \leq m}|\partial^{\alpha} f|^{2}_{L^{2}_{x}}<\infty\bigg\}.
\end{equation*}

\item For $m,n \in \N, l \in \R$, the weighted Sobolev space on $\mathbb{T}^{3}\times\R^{3}$ is defined by
\beno H^{m}_xH^{n}_{l} := \bigg\{f(x,v) \big| \|f\|^{2}_{ H^{m}_xH^{n}_{l}} :=
\sum_{|\alpha| \leq m, |\beta| \leq n}  || \partial^{\alpha}_{\beta} f|_{L^{2}_l} |^{2}_{L^{2}_{x}} < \infty\bigg\}.\eeno
We write $\|f\|_{H^{m}_{x}L^{2}_{l}} := \|f\|_{ H^{m}_xH^{0}_{l} }$ if $n=0$ and   $\|f\|_{L^{2}_{x}L^{2}_{l}} := \|f\|_{ H^{0}_xH^{0}_{l} }$ if $m=n=0$. The space $H^{m}_x\dot{H}^{n}_{l}$ can be similarly defined.

\end{itemize}

\subsubsection{Dyadic decompositions} We now introduce the dyadic
decomposition. Let $B_{4/3} := \{x\in\R^{3}: |x| \leq 4/3\}$ and $C := \{x\in\R^{3}: 3/4 \leq |x| \leq 8/3\}$.  Then one may introduce two
radial functions $\phi \in C_{0}^{\infty}(B_{4/3})$ and $\psi \in C_{0}^{\infty}(C)$ which satisfy
\ben \label{function-phi-psi} 0\leq \phi, \psi \leq 1, \text{ and } \phi(x) + \sum_{j \geq 0} \psi(2^{-j}x) =1, \text{ for all } x \in \R^{3}. \een
Now define $\varphi_{-1}(x) :=  \phi(x)$ and $\varphi_{j}(x) :=  \psi(2^{-j}x)$ for any $x \in \R^{3}$ and $j \geq 0$. Then one has the following dyadic decomposition
\ben \label{dyadic-decomposition} f =\sum_{j=-1}^\infty \mathcal{P}_jf:= \sum_{j=-1}^{\infty} \varphi_{j}f, \een
for any function $f$ defined on $\R^{3}$.
We will  use the notations
\ben\label{defphilh} \mathfrak{F}_{\phi} f:= \phi(\epsilon D) f,\quad   \mathfrak{F}^{\phi} f:=(1-\phi(\epsilon D))f.\een

\subsubsection{Function spaces related to the collision operator}

Now we  introduce some spaces related to the coercivity estimate of $\mathcal{L}^\epsilon$.

\begin{itemize}
\item
{\it Space $L^2_{\epsilon,l}$.} For functions  defined on $\R^3$,
the space $L^2_{\epsilon,l}$ with $l\in\R$ is defined by
\beno
L^2_{\epsilon,l}:=\bigg\{f(v)\big||f|^{2}_{L^2_{\epsilon,l}}:= |W^{\epsilon}((-\Delta_{\mathbb{S}^{2}})^{1/2})W_{l}f|^{2}_{L^{2}} + |W^{\epsilon}(D)W_{l}f|^{2}_{L^{2}} + |W^{\epsilon}W_{l}f|^{2}_{L^{2}}<\infty\bigg\}.
\eeno

\item {\it Space  $H^{m}_xH^{n}_{\epsilon,l}$.} For functions defined on $\mathbb{T}^{3}\times\R^{3}$, the space $H^{m}_xH^{n}_{\epsilon,l}$   with $m,n\in\N$ is defined  by
\beno  H^{m}_xH^{n}_{\epsilon,l}:=\bigg\{f(x,v)\big|
\|f\|^{2}_{H^{m}_xH^{n}_{\epsilon,l}} := \sum_{|\alpha| \leq m, |\beta| \leq n} || \partial^{\alpha}_{\beta} f|_{L^{2}_{\epsilon,l}} |^{2}_{L^{2}_{x}} <\infty\bigg\}.
\eeno
\end{itemize}
We set $\|f\|_{H_x^{m}L^2_{\epsilon,l}}:= \|f\|_{H^{m}_xH^{0}_{\epsilon,l}}$ if $n=0$ and $\|f\|_{L^{2}_{x}L^2_{\epsilon,l}}:= \|f\|_{H^{0}_xH^{0}_{\epsilon,l}}$ if $m=n=0$. Again, the space $H^{m}_x\dot{H}^{n}_{\epsilon,l}$ can be defined accordingly.


\subsubsection{Organization of the paper} In Section \ref{Coercivity-Estimate}, we first give some elementary results and then endeavor to prove the coercivity estimate in Theorem \ref{coercivity-structure}. Section \ref{Spectral-Gap-Estimate} is devoted to the spectral gap estimate in Theorem \ref{micro-dissipation} and \ref{micro-dissipation1}. In Section \ref{Upper-Bound-Estimate}, the upper bound estimates of the nonlinear term $\Gamma^\epsilon$ are provided.
Some commutator estimates are given in Section \ref{Commutator-Estimate}.
In Section \ref{Energy-Estimate}, we prove our main Theorem \ref{asymptotic-result}. In the Appendix, we give some useful results for the sake of completeness.

\setcounter{equation}{0}

\section{Coercivity   estimate} \label{Coercivity-Estimate}
In this section, we will prove Theorem \ref{coercivity-structure}.  In fact, we want to capture the coercivity estimate of $\mathcal{L}^{\epsilon,\gamma,\eta}$ for $-3 \leq \gamma \leq 0$ and small $\eta \geq 0$, which is more general than  Theorem \ref{coercivity-structure}.

Our strategy  relies on the following relation (see \eqref{L-dominate-lower-bound} in the proof of Theorem \ref{strong-coercivity}):
\ben \label{equivalence-relation} \langle \mathcal{L}^{\epsilon,\gamma,\eta} f,f \rangle+|f|_{L^2_{\gamma/2}}^2 \gtrsim \mathcal{N}^{\epsilon,\gamma,\eta}(\mu^{1/2},f) + \mathcal{N}^{\epsilon,\gamma,\eta}(f,\mu^{1/2}), \een
where
\ben  \label{definition-of-N-ep-ga-eta}
\mathcal{N}^{\epsilon,\gamma,\eta}(g,h) := \int_{\mathbb{S}^2 \times \R^6} b^{\epsilon}(\cos\theta)| v-v_{*} |^{\gamma}
\mathrm{1}_{|v-v_{*}|\geq \eta}
g^{2}_{*} (h^{\prime}-h)^{2} d\sigma dv dv_{*}.\een
We remind that   $\eta \geq 0$ is an additional parameter to deal with the high singularity $|v-v_{*}|^{-3}$.
Thanks to \eqref{equivalence-relation},
to get the coercivity estimate of $\mathcal{L}^{\epsilon,\gamma,\eta}$, it suffices to estimate from below the two functionals
$\mathcal{N}^{\epsilon,\gamma,\eta}(\mu^{1/2},f)$  and $ \mathcal{N}^{\epsilon,\gamma,\eta}(f,\mu^{1/2})$.
 We will study $ \mathcal{N}^{\epsilon,\gamma,\eta}(f,\mu^{1/2})$ in  subsection 2.2 and $\mathcal{N}^{\epsilon,\gamma,\eta}(\mu^{1/2},f)$ in subsection 2.3.
The coercivity estimate is obtained in subsection 2.4 by utilizing \eqref{equivalence-relation}.

In the following, we will omit the range of some frequently used variables in the integrals. Usually, $\sigma \in \mathbb{S}^{2}, v, v_{*}, u, \xi \in \mathbb{R}^{3}$.  For example, we set $\int (\cdots) d\sigma := \int_{\mathbb{S}^{2}} (\cdots) d\sigma,  \int (\cdots) d\sigma dv dv_{*} := \int_{\mathbb{S}^{2} \times \mathbb{R}^{3} \times \mathbb{R}^{3}} (\cdots) d\sigma dv dv_{*}$. Integration w.r.t. other variables should be understood in a similar way. Whenever a new variable appears, we will specify its range once and then omit it thereafter.

We begin with some elementary results which will be used frequently throughout the paper.
\subsection{Elementary results}
We collect some properties of the function $W^{\epsilon}$ defined in \eqref{charicter function}.
Note that $W^{\epsilon}$ is a radial function defined on $\mathbb{R}^{3}$.
\begin{lem}\label{property-of-symbol}
For any $ x, y \in \mathbb{R}^{3}$, we have
\ben  
\label{upper-bound-cf} W^{\epsilon}(x) \leq  \min \{\langle x \rangle, |\ln \epsilon|^{-1/2}\epsilon^{-1}\}.
\\
\label{lower-bound-when-small-cf} W^{\epsilon}(x) \gtrsim   \phi(\epsilon^{1/2} x)\langle x \rangle.
\\
\label{lower-bound-when-large-cf} W^{\epsilon}(x) \gtrsim   (1- \phi(\epsilon^{1/2} x))\epsilon^{-1/2}.
\\
\label{low-frequency-lb-cf}  W^{\epsilon}(x) \geq  \phi(\epsilon x) |\ln \epsilon|^{-1/2} \langle x \rangle.
\\
\label{high-frequency-lb-cf}  W^{\epsilon}(x) \gtrsim \left(1-\phi(\epsilon x)\right) |\ln \epsilon|^{-1/2} \epsilon^{-1}.
\\
\label{separate-into-2-cf} W^{\epsilon}(x-y) \leq  W^{\epsilon}(x)W^{\epsilon}(y).
\een
\end{lem}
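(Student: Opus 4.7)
The plan is to verify each of the six bounds by working piece by piece through the three-part definition of $W^{\epsilon}$ in \eqref{charicter function} on the natural regions $\{|x| \lesssim 1\}$, $\{|x| \in [3/4, 4/(3\epsilon)]\}$, and $\{|x| \gtrsim 1/\epsilon\}$, which correspond to the effective supports of $\phi(x)$, $\phi(\epsilon x) - \phi(x)$, and $1 - \phi(\epsilon x)$ (using that the standard choice of $\phi$ in \eqref{function-phi-psi} is radial decreasing so each piece is nonnegative). The key preliminary observation is that on the support of the middle piece the logarithmic amplitude
\[
\Lambda^{\epsilon}(x) := \bigl(1 - \ln|x|/|\ln\epsilon| + 1/|\ln\epsilon|\bigr)^{1/2}
\]
is nonnegative and sandwiched between $c\,|\ln\epsilon|^{-1/2}$ (attained near $|x| \sim 1/\epsilon$) and a universal constant (attained near $|x| \sim 1$), so the middle piece is comparable to $\langle x \rangle\, \Lambda^{\epsilon}(x)\,(\phi(\epsilon x) - \phi(x))$.

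Granted this, the upper bound \eqref{upper-bound-cf} is a term-by-term check: the first and second pieces combine to at most $\langle x \rangle$ via $\phi(x) + (\phi(\epsilon x) - \phi(x)) = \phi(\epsilon x) \leq 1$ modulated by the $O(1)$ amplitude $\Lambda^{\epsilon}$, and the third piece is trivially $\leq |\ln \epsilon|^{-1/2}\epsilon^{-1}$, while on its support $\langle x \rangle \gtrsim \epsilon^{-1}$ yields the $\langle x \rangle$ half. The lower bounds \eqref{lower-bound-when-small-cf}--\eqref{high-frequency-lb-cf} are also region-based. For $|x| \leq \epsilon^{-1/2}$ one has $\ln|x|/|\ln\epsilon| \leq 1/2$, whence $\Lambda^{\epsilon}(x) \gtrsim 1$, so the first two pieces add to $\gtrsim \langle x\rangle\, \phi(\epsilon x) \gtrsim \langle x\rangle$, giving \eqref{lower-bound-when-small-cf}. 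For $|x| \geq \epsilon^{-1/2}$, one shows that the middle piece $\langle x\rangle\,\Lambda^{\epsilon}(x)$ stays $\gtrsim \epsilon^{-1/2}$ on $[\epsilon^{-1/2},\epsilon^{-1}]$ by a direct interval estimate (the critical-point analysis of $\langle x\rangle(1-\ln|x|/|\ln\epsilon|)^{1/2}$ places its minimum on the interval precisely where the third piece takes over), while the third piece yields $\gtrsim \epsilon^{-1}|\ln\epsilon|^{-1/2} \gtrsim \epsilon^{-1/2}$ for $|x| \geq \epsilon^{-1}$, proving \eqref{lower-bound-when-large-cf} and \eqref{high-frequency-lb-cf}. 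The bound \eqref{low-frequency-lb-cf} uses the weaker uniform estimate $\Lambda^{\epsilon}(x) \gtrsim |\ln\epsilon|^{-1/2}$ valid throughout the middle region.

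The main obstacle is the submultiplicative inequality \eqref{separate-into-2-cf}, which is not transparent from a term-by-term comparison. My plan is to first record two structural features of $W^{\epsilon}$: (a) $W^{\epsilon}(0) = 1$ (since $\phi(0) = 1$ and the other two pieces vanish at $0$), combined with the lower bounds above to give $W^{\epsilon} \geq 1$ pointwise; and (b) a radial near-monotonicity property $W^{\epsilon}(x) \lesssim W^{\epsilon}(x')$ whenever $|x| \leq |x'|$, obtained by checking monotonicity in $|x|$ of each piece on its support and continuous matching at the two interfaces $|x|\sim 1$ and $|x|\sim 1/\epsilon$. The one computational step is verifying that $\langle x\rangle\,\Lambda^{\epsilon}(x)$ is essentially nondecreasing on the middle region, by differentiating in $|x|$; the derivative stays positive except very close to $|x|=1/\epsilon$, where the continuous match with the third piece (which is much larger) absorbs the deficit. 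Once (a) and (b) are in hand, \eqref{separate-into-2-cf} follows by assuming $|x| \geq |y|$ without loss of generality, using the triangle inequality $|x-y| \leq 2|x|$ and (b) to get $W^{\epsilon}(x-y) \lesssim W^{\epsilon}(x)$, and finally absorbing the constant into $W^{\epsilon}(y) \geq 1$ from (a).
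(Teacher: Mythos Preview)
The paper states Lemma~\ref{property-of-symbol} without proof, so there is no argument to compare against. Your region-by-region verification is the natural approach, and the outline is sound: the three-piece structure of $W^{\epsilon}$ together with the elementary bounds $c\,|\ln\epsilon|^{-1/2}\le \Lambda^{\epsilon}(x)\le C$ on the middle region gives \eqref{upper-bound-cf}--\eqref{high-frequency-lb-cf} directly. For \eqref{lower-bound-when-large-cf} you can simplify your critical-point remark: the function $t\mapsto (1+t^{2})(L-\ln t+1)/L$ is actually increasing on $[\epsilon^{-1/2},\,3/(4\epsilon)]$, so the minimum is at $t=\epsilon^{-1/2}$, which is $\gtrsim \epsilon^{-1}$; no handoff to the third piece is needed there.

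One point deserves flagging. The inequalities \eqref{upper-bound-cf}, \eqref{low-frequency-lb-cf}, and \eqref{separate-into-2-cf} are written with $\le$ or $\ge$, but as stated with sharp constants they are \emph{false} for general cutoff $\phi$ satisfying \eqref{function-phi-psi}. For \eqref{separate-into-2-cf} this is easy to see: when $|x|,|y|,|x-y|<3/4$ one has $W^{\epsilon}=\langle\cdot\rangle$, and $\langle x-y\rangle\le\langle x\rangle\langle y\rangle$ fails already for $x=-y$ with $|x|$ small (take $x=(0.3,0,0)$: then $\langle x-y\rangle=\sqrt{1.36}>1.09=\langle x\rangle\langle y\rangle$). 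Your monotonicity-plus-lower-bound argument yields $W^{\epsilon}(x-y)\lesssim W^{\epsilon}(x)W^{\epsilon}(y)$, which is the correct statement and is all that the paper ever uses (every invocation of \eqref{separate-into-2-cf} in the paper carries a $\lesssim$, e.g.\ \eqref{translation-out-cf}). Your closing phrase ``absorbing the constant into $W^{\epsilon}(y)\ge 1$'' is therefore slightly misleading: the constant is not absorbed, it survives as the implicit constant in $\lesssim$, and that is exactly right.
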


The following is an interpolation result.
\begin{lem}\label{interWsd} Let $m \geq 0$. For any $\eta>0$, one has
\beno |f|_{H^m}^2\lesssim (\eta+\epsilon)|W^\epsilon(D)f|_{H^m}^2+C(\eta,m)|f|_{L^2}^2. \eeno
\end{lem}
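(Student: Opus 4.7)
The plan is a Fourier-space decomposition that exploits the two qualitatively different regimes of the symbol $W^\epsilon$ established in Lemma~2.1. By Plancherel, $|f|_{H^m}^2=\int_{\R^3}\langle\xi\rangle^{2m}|\widehat f(\xi)|^2\,d\xi$, and what we need is to control the integrand pointwise by $(\eta+\epsilon)W^\epsilon(\xi)^2\langle\xi\rangle^{2m}|\widehat f|^2$ plus an $L^2$-type term with a constant that may blow up in $\eta$ but not in $\epsilon$. The key inputs are \eqref{lower-bound-when-small-cf}, which gives $W^\epsilon(\xi)\gtrsim\langle\xi\rangle$ on the support of $\phi(\epsilon^{1/2}\xi)$ (i.e.\ essentially for $|\xi|\lesssim \epsilon^{-1/2}$), and \eqref{lower-bound-when-large-cf}, which gives $W^\epsilon(\xi)\gtrsim\epsilon^{-1/2}$ once $1-\phi(\epsilon^{1/2}\xi)$ is bounded away from $0$ (i.e.\ essentially for $|\xi|\gtrsim\epsilon^{-1/2}$).

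Concretely, I fix a threshold $R=R(\eta)\ge 1$ and split the integral into three zones: (i) $|\xi|\le R$, (ii) $|\xi|>R$ with $\phi(\epsilon^{1/2}\xi)\ge \tfrac12$, and (iii) the complement of (i)$\cup$(ii), on which $1-\phi(\epsilon^{1/2}\xi)\ge \tfrac12$. On zone (i) the contribution is at most $(1+R^2)^m|f|_{L^2}^2$, which produces the $C(\eta,m)|f|_{L^2}^2$ term. On zone (ii), applying \eqref{lower-bound-when-small-cf} yields $\langle\xi\rangle^2\lesssim W^\epsilon(\xi)^2$, so $\langle\xi\rangle^{2m}|\widehat f|^2\lesssim\langle\xi\rangle^{-2}W^\epsilon(\xi)^2\langle\xi\rangle^{2m}|\widehat f|^2\le R^{-2}W^\epsilon(\xi)^2\langle\xi\rangle^{2m}|\widehat f|^2$, contributing $\lesssim R^{-2}|W^\epsilon(D)f|_{H^m}^2$. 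On zone (iii), \eqref{lower-bound-when-large-cf} gives $1\lesssim \epsilon W^\epsilon(\xi)^2$, contributing $\lesssim \epsilon|W^\epsilon(D)f|_{H^m}^2$.

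Choosing $R:=\max(1,\eta^{-1/2})$ yields $R^{-2}+\epsilon\lesssim \eta+\epsilon$ in front of $|W^\epsilon(D)f|_{H^m}^2$ and $C(\eta,m):=(1+R^2)^m$ in front of $|f|_{L^2}^2$, giving the claimed inequality. I do not expect any genuine obstacle here: the only mild technicality is to verify that the three zones actually cover $\R^3$ for every $\epsilon$ (handled by the explicit properties of $\phi$, which equals $1$ on $B_{3/4}$ and $0$ outside $B_{4/3}$), and to check the degenerate case $\eta\le\epsilon$, where zone (ii) may be empty and the entire estimate reduces to zones (i) and (iii) with coefficient $\epsilon\le\eta+\epsilon$. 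The argument is genuinely an interpolation/bootstrap on frequency, tailored to the piecewise behavior of $W^\epsilon$.
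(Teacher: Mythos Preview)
Your proposal is correct and is essentially the same argument as the paper's, relying on the same two inputs \eqref{lower-bound-when-small-cf} and \eqref{lower-bound-when-large-cf} and the same $\epsilon^{-1/2}$ frequency threshold. The only cosmetic difference is that the paper first splits $f=f_\epsilon+f^\epsilon$ at $|\xi|\sim\epsilon^{-1/2}$ and then applies the standard interpolation $|f_\epsilon|_{H^m}^2\lesssim \eta|f_\epsilon|_{H^{m+1}}^2+C(\eta,m)|f_\epsilon|_{L^2}^2$ to the low-frequency piece, whereas you unpack that interpolation explicitly by introducing the extra cutoff at $|\xi|=R(\eta)$; the three-zone and two-zone-plus-interpolation versions are equivalent.
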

\begin{proof} Let $f_{\epsilon} := \phi(\epsilon^{1/2} D)f, f^{\epsilon} := f-f_{\epsilon}$. That is, $f_{\epsilon}$ and $f^{\epsilon}$ are the low and high frequency parts respectively. Note that the threshold is $|\xi| \sim \epsilon^{-1/2}$.
By interpolation inequality, it is easy to check that
\beno |f|_{H^m}^2\lesssim |f^{\epsilon}|_{H^m}^2+|f_{\epsilon}|_{H^m}^2\lesssim |f^{\epsilon}|_{H^m}^2+\eta |f_{\epsilon}|_{H^{m+1}_l}^{2}+C(\eta,m)|f_{\epsilon}|_{L^2}^2.\eeno
Then the lemma follows from \eqref{lower-bound-when-small-cf} and \eqref{lower-bound-when-large-cf} in Lemma \ref{property-of-symbol}.
\end{proof}
We remark that $W^{\epsilon}$  is a symbol of type $S^{1}_{1,0}$ (see Definition \ref{psuopde} for the definition of a symbol). From which together with
Lemma \ref{operatorcommutator1}, we have
\begin{lem}\label{equivalence} Let $l, m \in \R$, then
\beno|W^\epsilon(D)W_{l}f|_{H^m}^2 \sim |W_{l} W^\epsilon(D)f|_{H^m}^2. \eeno
\end{lem}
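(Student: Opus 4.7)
The plan is to reduce the equivalence to a commutator estimate. Writing
\[
W^\epsilon(D) W_l f - W_l W^\epsilon(D) f = [W^\epsilon(D), W_l] f,
\]
the two sides differ by a commutator, so it suffices to show that this commutator is strictly lower order than either composition. Since $W^\epsilon$ is, uniformly in $\epsilon$, a symbol of the class $S^1_{1,0}$ as noted in the remark preceding the lemma, and $W_l$ is a smooth position-space multiplier of polynomial growth $\langle v\rangle^l$, standard pseudo-differential calculus identifies the principal symbol of $[W^\epsilon(D), W_l]$ as $-i(\nabla_\xi W^\epsilon)(\xi)\cdot(\nabla_v W_l)(v)$, which loses one order in each of $\xi$ and $v$.

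Invoking Lemma \ref{operatorcommutator1} (the pseudo-differential commutator estimate), I would extract, with a constant uniform in $\epsilon\in(0,\epsilon_0]$,
\[
\big|[W^\epsilon(D), W_l] f\big|_{H^m} \lesssim |f|_{H^m_{l-1}}.
\]
Combined with the triangle inequality applied to the commutator identity, this yields
\[
\big|\,|W^\epsilon(D) W_l f|_{H^m} - |W_l W^\epsilon(D) f|_{H^m}\,\big| \lesssim |f|_{H^m_{l-1}}.
\]

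To upgrade this difference bound to the equivalence $\sim$, I absorb $|f|_{H^m_{l-1}}$ into either side using the uniform lower bound $W^\epsilon(\xi) \gtrsim 1$, which is visible from the definition \eqref{charicter function} and follows rigorously by combining \eqref{lower-bound-when-small-cf} and \eqref{lower-bound-when-large-cf} in Lemma \ref{property-of-symbol}. By Plancherel this bound implies $|g|_{H^m}\leq |W^\epsilon(D) g|_{H^m}$ for every $g$; applied to $g=W_l f$ and using the pointwise comparison $W_{l-1}\leq W_l$, it yields $|f|_{H^m_{l-1}} \lesssim |W^\epsilon(D) W_l f|_{H^m}$. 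A symmetric argument, after one further use of the commutator estimate to switch the order of $W_l$ and $W^\epsilon(D)$ in the lower-order piece, shows $|f|_{H^m_{l-1}} \lesssim |W_l W^\epsilon(D) f|_{H^m}$. Substituting back into the triangle-inequality bound then gives both directions of the equivalence.

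The step that will require the most care is verifying the uniform-in-$\epsilon$ $S^1_{1,0}$ control of $W^\epsilon$ that feeds into Lemma \ref{operatorcommutator1}: the piecewise definition \eqref{charicter function} involves cut-offs at the widely separated scales $|y|\sim 1$ and $|y|\sim \epsilon^{-1}$ together with a $|\ln\epsilon|^{-1}$-weighted transition region, and one must check by a Leibniz-rule computation on each summand that $|\partial^\alpha W^\epsilon(y)|\lesssim \langle y\rangle^{1-|\alpha|}$ holds without any inverse powers of $\epsilon$. Once this symbol bound is in hand the remaining argument is a textbook exercise in pseudo-differential calculus.
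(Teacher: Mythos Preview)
Your overall approach coincides with the paper's: reduce to the commutator bound $|[W^\epsilon(D),W_l]f|_{H^m}\lesssim|f|_{H^m_{l-1}}$ from Lemma~\ref{operatorcommutator1} and then absorb the remainder using $W^\epsilon\gtrsim 1$. The paper records the lemma as an immediate consequence of that commutator estimate and gives no further detail.

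There is, however, a gap in your absorption step for the direction $|f|_{H^m_{l-1}}\lesssim |W_lW^\epsilon(D)f|_{H^m}$. Your ``symmetric argument, after one further use of the commutator estimate'' is circular: from $|f|_{H^m_{l-1}}\le|W_lf|_{H^m}\lesssim|W^\epsilon(D)W_lf|_{H^m}$, commuting once more only yields
\[
|f|_{H^m_{l-1}}\lesssim|W_lW^\epsilon(D)f|_{H^m}+C\,|f|_{H^m_{l-1}}
\]
with no reason for $C<1$, so nothing can be absorbed. One clean fix is to note that $(W^\epsilon)^{-1}$ is itself a symbol in $S^0_{1,0}$ uniformly in $\epsilon$: the lower bound $W^\epsilon\gtrsim 1$ gives $(W^\epsilon)^{-1}\lesssim 1$, and the derivative bounds follow because on $\{|y|\gtrsim\epsilon^{-1}\}$ the function $W^\epsilon$ is constant (so all derivatives vanish there), while on $\{|y|\lesssim\epsilon^{-1}\}$ one has $W^\epsilon\gtrsim|\ln\epsilon|^{-1/2}\langle y\rangle$ by \eqref{low-frequency-lb-cf}, which is enough to control $(W^\epsilon)^{-k-1}\prod\partial^{\alpha_j}W^\epsilon$. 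With $g=W^\epsilon(D)f$ and Lemma~\ref{operatorcommutator1} applied to $M=(W^\epsilon)^{-1}\in S^0_{1,0}$, $\Phi=W_{l-1}$, one gets directly
\[
|f|_{H^m_{l-1}}=\big|W_{l-1}(W^\epsilon(D))^{-1}g\big|_{H^m}\lesssim \big|(W^\epsilon(D))^{-1}W_{l-1}g\big|_{H^m}+|g|_{H^{m-1}_{l-2}}\lesssim |W_lg|_{H^m},
\]
which closes the estimate without circularity.
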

Thanks to Lemma \ref{equivalence}, we interchangeably use $|W^\epsilon(D)W_{l}f|_{H^m}^2$ and $|W^\epsilon(D)f|_{H^m_{l}}^2$ in the rest of the paper.

We collect some properties of the translation operator $T_{u}$ defined through $(T_{u}f)(v) := f(u+v)$.
\begin{lem}Thanks to \eqref{separate-into-2-cf}, for $u \in \mathbb{R}^{3}$, we have
\ben \label{translation-out-cf}
|W^{\epsilon}T_{u}f|_{L^{2}} \lesssim W^{\epsilon}(u) |W^{\epsilon}f|_{L^{2}}.
\een
For $u \in \mathbb{R}^{3}, l \in \mathbb{R}$, one has $
(T_{u} W_{l})(v) = \langle v + u \rangle^{l} \lesssim C(l)\langle u \rangle^{|l|} \langle v \rangle^{l}
$
 and thus
\ben \label{translation-out-weight}
|T_{u}f|_{L^{2}_{l}} \lesssim C(l) \langle u \rangle^{|l|} |f|_{L^{2}_{l}}.
\een
\end{lem}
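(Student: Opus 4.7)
The plan is to prove both estimates by direct unwinding of the definitions, reducing matters to the pointwise inequality \eqref{separate-into-2-cf} for the first part and to a Peetre-type inequality for the second.

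For the first inequality, I would expand the norm and perform the translation change of variables. Writing out $(W^{\epsilon}T_u f)(v) = W^{\epsilon}(v) f(v+u)$ and setting $w = v + u$, one obtains
\begin{equation*}
|W^{\epsilon} T_u f|_{L^2}^2 = \int_{\R^3} |W^{\epsilon}(w - u)|^2 |f(w)|^2 \, dw.
\end{equation*}
Then I would invoke \eqref{separate-into-2-cf} in the form $W^{\epsilon}(w-u) \leq W^{\epsilon}(w) W^{\epsilon}(u)$ to factor out $W^{\epsilon}(u)^2$, yielding the claimed bound $W^{\epsilon}(u)^2 |W^{\epsilon} f|_{L^2}^2$.

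For the second inequality, the pointwise bound $(T_u W_l)(v) = \langle v+u\rangle^l \lesssim C(l) \langle u\rangle^{|l|}\langle v\rangle^l$ stated in the lemma is just the Peetre inequality $\langle v+u\rangle \leq \sqrt{2}\langle v\rangle\langle u\rangle$ (for $l\geq 0$) combined with its reciprocal form $\langle v\rangle^{-1} \leq \sqrt{2}\langle v+u\rangle^{-1}\langle u\rangle$ (for $l<0$, apply to $\langle v\rangle = \langle (v+u)-u\rangle$), both of which follow from $|v+u|^2 \leq 2|v|^2+2|u|^2$. With this pointwise inequality in hand, I would write $|T_u f|_{L^2_l}^2 = \int |\langle v\rangle^l f(u+v)|^2 dv$, change variables $w=v+u$ to rewrite the integrand as $\langle w-u\rangle^{2l}|f(w)|^2$, apply the Peetre bound with $u$ replaced by $-u$ (noting $\langle -u\rangle = \langle u\rangle$), and pull the factor $C(l)^2\langle u\rangle^{2|l|}$ outside.

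Neither step presents a genuine obstacle: the first is a direct consequence of submultiplicativity of $W^\epsilon$ already recorded in Lemma \ref{property-of-symbol}, while the second is a standard Peetre inequality combined with translation invariance of Lebesgue measure. The only mild subtlety is keeping track of the sign of $l$ in order to bound $\langle v+u\rangle^l$ correctly in both regimes, but this is handled uniformly by writing $|l|$ in the exponent on the right-hand side.
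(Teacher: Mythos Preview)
Your proposal is correct and matches exactly the approach the paper intends: the lemma is stated without a separate proof, with the paper indicating only ``Thanks to \eqref{separate-into-2-cf}'' for the first estimate and writing out the Peetre-type pointwise bound explicitly for the second, so your unwinding via change of variables and pointwise submultiplicativity is precisely the expected justification.
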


Let us prepare some integrals regrading to the angular function $b^{\epsilon}$ over the unit sphere $\mathbb{S}^{2}$.
\begin{lem} \label{integral-angular-function} If $\epsilon \leq 1/4$, then
\ben \label{order-0}
2\pi |\ln \epsilon|^{-1}\epsilon^{-2} \leq \int b^{\epsilon}  d\sigma \leq 4\pi |\ln \epsilon|^{-1}\epsilon^{-2}.
\\ \label{order-1}
4\pi |\ln \epsilon|^{-1}\epsilon^{-1} \leq \int b^{\epsilon}\sin(\theta/2)  d\sigma \leq 8\pi |\ln \epsilon|^{-1}\epsilon^{-1}.
 \\ \label{order-2}  4\pi \leq \int b^{\epsilon} \sin^{2}(\theta/2) d\sigma \leq 8\pi
.
\een
\end{lem}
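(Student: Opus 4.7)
The proof is a direct computation. The plan is to reduce each of the three spherical integrals to an elementary power integral on an interval, then verify that the numerical constants are compatible with the hypothesis $\epsilon \leq 1/4$.

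First I would use the standard spherical coordinates adapted to $\cos\theta = \tfrac{v-v_*}{|v-v_*|}\cdot\sigma$ to write, for any function $F$ of $\theta$,
\[
\int_{\mathbb{S}^2} F(\theta)\,d\sigma = 2\pi\int_0^{\pi/2} F(\theta)\sin\theta\,d\theta,
\]
where the upper limit $\pi/2$ comes from assumption (A2). Then I would perform the change of variable $u=\sin(\theta/2)$, which gives $\sin\theta\,d\theta = 4u\,du$ and turns the cutoff $\sin(\theta/2)\ge\epsilon$ into $u\ge\epsilon$, with upper limit $u=1/\sqrt{2}$. This reduces the three integrals to
\[
\int b^{\epsilon}\,d\sigma = 8\pi|\ln\epsilon|^{-1}\!\!\int_{\epsilon}^{1/\sqrt{2}}\!\!u^{-3}\,du,\quad
\int b^{\epsilon}\sin(\theta/2)\,d\sigma = 8\pi|\ln\epsilon|^{-1}\!\!\int_{\epsilon}^{1/\sqrt{2}}\!\!u^{-2}\,du,
\]
\[
\int b^{\epsilon}\sin^2(\theta/2)\,d\sigma = 8\pi|\ln\epsilon|^{-1}\!\!\int_{\epsilon}^{1/\sqrt{2}}\!\!u^{-1}\,du.
\]

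Next I would evaluate each integral in closed form: the first equals $4\pi|\ln\epsilon|^{-1}(\epsilon^{-2}-2)$, the second equals $8\pi|\ln\epsilon|^{-1}(\epsilon^{-1}-\sqrt{2})$, and the third equals $8\pi - 4\pi(\ln 2)/|\ln\epsilon|$. Finally I would check the claimed two-sided bounds. The upper bounds are immediate since each subtracted quantity is nonnegative. For the lower bounds, one needs $\epsilon^{-2}-2\ge \tfrac{1}{2}\epsilon^{-2}$, $\epsilon^{-1}-\sqrt{2}\ge \tfrac{1}{2}\epsilon^{-1}$, and $8\pi - 4\pi(\ln 2)/|\ln\epsilon|\ge 4\pi$; these three conditions translate to $\epsilon\le 1/2$, $\epsilon\le 1/(2\sqrt{2})$, and $\epsilon\le 1/2$ respectively, all of which are implied by the hypothesis $\epsilon\le 1/4$.

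There is no real obstacle here; the computation is linear and the only care required is in keeping track of the constants $8\pi$ versus $4\pi$ arising from the change of variable, and in checking that $\epsilon\le 1/4$ is enough to absorb the subtractive lower-order terms. If the paper's convention treats $b^{\epsilon}$ in its symmetrized form from (A2), the extra term $b^{\epsilon}(-\cos\theta)$ on $[0,\pi/2]$ is pointwise bounded by $4|\ln\epsilon|^{-1}$ (since $\cos(\theta/2)\ge 1/\sqrt{2}$ there), contributing at most an $O(|\ln\epsilon|^{-1})$ perturbation which is negligible compared to the leading $|\ln\epsilon|^{-1}\epsilon^{-2}$ and $|\ln\epsilon|^{-1}\epsilon^{-1}$ terms and can be absorbed in the same way.
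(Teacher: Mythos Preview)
Your proposal is correct and follows essentially the same route as the paper: the same change of variable $u=\sin(\theta/2)$, the same three closed-form evaluations $4\pi|\ln\epsilon|^{-1}(\epsilon^{-2}-2)$, $8\pi|\ln\epsilon|^{-1}(\epsilon^{-1}-\sqrt{2})$, $8\pi|\ln\epsilon|^{-1}(|\ln\epsilon|-\tfrac12\ln 2)$, and the same elementary checks that $\epsilon\le 1/4$ absorbs the subtractive terms. Your closing remark about the symmetrized form is a reasonable sanity check but is not needed here, since the paper works directly with $b^\epsilon$ supported on $0\le\theta\le\pi/2$.
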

\begin{proof}
By Assumption {\bf  (A2)}, we can write
$b^{\epsilon}(\cos\theta) = |\ln \epsilon|^{-1} \sin^{-4}(\theta/2) \mathrm{1}_{\sqrt{2}/2 \geq \sin(\theta/2) \geq \epsilon}$. Note that $d\sigma =\sin\theta d\theta d\varphi= 4 \sin (\theta/2) d \sin (\theta/2) d\varphi$, we have
\beno \int b^{\epsilon}  d\sigma &=&  4 |\ln \epsilon|^{-1}\int_{0}^{\pi} \int_{0}^{2\pi} \mathrm{1}_{\sqrt{2}/2 \geq \sin(\theta/2)\geq \epsilon}\sin^{-3}(\theta/2)  d \sin (\theta/2) d\varphi
\\&=&8\pi |\ln \epsilon|^{-1} \int_{\epsilon}^{\sqrt{2}/2} t^{-3} dt
=4\pi |\ln \epsilon|^{-1} (\epsilon^{-2}-2).
\eeno
If $\epsilon \leq 1/2$, then $ \epsilon^{-2}\geq 4$, which gives \eqref{order-0}. Similarly, we have
\beno \int b^{\epsilon} \sin(\theta/2) d\sigma
= 8\pi |\ln \epsilon|^{-1} \int_{\epsilon}^{\sqrt{2}/2} t^{-2} dt
=8\pi |\ln \epsilon|^{-1} (\epsilon^{-1}-\sqrt{2}).
\eeno
If $\epsilon \leq 1/4$, then $ \epsilon^{-1}\geq 4$, which gives \eqref{order-1}. Similarly, we have
\beno \int b^{\epsilon} \sin^{2}(\theta/2) d\sigma
= 8\pi |\ln \epsilon|^{-1} \int_{\epsilon}^{\sqrt{2}/2} t^{-1} dt =8\pi |\ln \epsilon|^{-1} (|\ln \epsilon| - \frac{1}{2}\ln 2) .
\eeno
If $\epsilon \leq 1/2$, then $\ln 2 \leq |\ln \epsilon|$ which gives \eqref{order-2}.
\end{proof}

Cancellation lemma is very important especially when we need to shift regularity between the three functions $g,h,f$ of the inner product $\langle Q(g, h), f \rangle$. Depending on the parameter $\gamma$ in the kernel $B(v-v_{*}, \sigma)=|v-v_{*}|^{\gamma}b(\cos\theta)$, there are two types
of cancellation lemma.  For the case $\gamma>-3$, one may refer \cite{alexandre2000entropy}.
We mainly concern the case $\gamma=-3$ and recall the following cancellation lemma in \cite{alexandre2002boltzmann}.

\begin{lem} [Cancellation Lemma]
Recalling from Proposition 3 in \cite{alexandre2002boltzmann}, we define
\beno
J^{\epsilon}(z) := \mathrm{1}_{|z|\leq 1}\frac{2\pi}{|z|^{3}} \int_{2 \cos^{-1}(|z|)}^{\pi/2} b^{\epsilon}(\cos\theta) \sin\theta d\theta,
\eeno
whose $L^{1}$ norm is  bounded uniformly in $\epsilon$,
\ben \label{uniform-l1-estimate}
|J^{\epsilon}|_{L^{1}} = -8\pi^{2} \int_{0}^{\pi/2} b^{\epsilon}(\cos\theta) \left(\ln\cos(\theta/2)\right)\sin\theta d\theta \lesssim 1.
\een
Associated to $B^{\epsilon,\gamma,\delta} = \mathrm{1}_{|v-v_{*}|\geq \delta} |v-v_{*}|^{-3}b^{\epsilon}(\cos\theta)$, the convolution kernel is  $S^{\epsilon}_{\delta}(z) := \delta^{-3}J^{\epsilon}(z/\delta)$. That is,
\ben \label{cancellation-result-convolution}
\int B^{\epsilon,\gamma,\delta} g_{*}(h^{\prime}-h)d\sigma dv dv_{*} = \int S^{\epsilon}_{\delta}(v-v_{*}) g_{*}h dv dv_{*}.
\een
\end{lem}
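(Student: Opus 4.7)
My plan is as follows. For the convolution identity (which is essentially recalled from Proposition 3 of \cite{alexandre2002boltzmann}) I would start with $\int B^{\epsilon,\gamma,\delta}(v-v_*,\sigma)g_*h'\,d\sigma\,dv\,dv_*$ and apply the substitution $v\to v'$ with $v_*,\sigma$ fixed. An elementary rank-one determinant computation on $v'-v_* = \tfrac12(v-v_*+|v-v_*|\sigma)$ yields the Jacobian $dv = (4/\cos^2(\theta/2))\,dv'$, together with $|v-v_*| = |v'-v_*|/\cos(\theta/2)$. Since this map is only a bijection onto the half-space $\{(v',v_*,\sigma):(v'-v_*)\cdot\sigma\geq 0\}$, I would compose it with an angle-doubling substitution $\sigma\to\tilde\sigma$ on $\mathbb{S}^2$ (doubling the polar angle of $\sigma$ measured from $\widehat{v'-v_*}$), of Jacobian $d\sigma = (1/(4\cos(\theta/2)))\,d\tilde\sigma$, which promotes $\theta/2$ back to $\theta$ in the new angular parameter. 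The combined factor $1/\cos^3(\theta/2)$ then cancels exactly against the $\cos^3(\theta/2)$ coming from $|v-v_*|^{-3}$, leaving the integrand $\mathrm{1}_{|v-v_*|\geq\delta\cos(\theta/2)}\,|v-v_*|^{-3}\,b^\epsilon(\cos\theta)$ in the renamed variables. Subtracting the unchanged $\int B^{\epsilon,\gamma,\delta}g_* h\,d\sigma\,dv\,dv_*$ produces the convolution kernel
\[
S^\epsilon_\delta(z) = |z|^{-3}\int_{\mathbb{S}^2} b^\epsilon(\cos\theta)\bigl[\mathrm{1}_{|z|\geq\delta\cos(\theta/2)} - \mathrm{1}_{|z|\geq\delta}\bigr]\,d\sigma,
\]
which vanishes for $|z|\geq\delta$ and, for $|z|<\delta$, collapses to $\delta^{-3}J^\epsilon(z/\delta)$ upon rewriting the constraint $\cos(\theta/2)\leq|z|/\delta$ as $\theta\geq 2\cos^{-1}(|z|/\delta)$.

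Next I would verify the identity for $|J^\epsilon|_{L^1}$ by a direct computation. Since $J^\epsilon\geq 0$, passing to spherical coordinates $dz = r^2\,dr\,d\sigma'$ gives
\[
|J^\epsilon|_{L^1} = 8\pi^2\int_0^1 \frac{1}{r}\int_{2\cos^{-1}r}^{\pi/2}b^\epsilon(\cos\theta)\sin\theta\,d\theta\,dr.
\]
The integration domain $\{0<r<1,\ 2\cos^{-1}r\leq\theta\leq\pi/2\}$ is nonempty only for $r\geq\sqrt{2}/2$, and exchanging the order of integration via Fubini (the integrand is non-negative) reparameterizes it as $\{0\leq\theta\leq\pi/2,\ \cos(\theta/2)\leq r\leq 1\}$, yielding
\[
|J^\epsilon|_{L^1} = 8\pi^2\int_0^{\pi/2}b^\epsilon(\cos\theta)\sin\theta\int_{\cos(\theta/2)}^1\frac{dr}{r}\,d\theta = -8\pi^2\int_0^{\pi/2}b^\epsilon(\cos\theta)\sin\theta\,\ln\cos(\theta/2)\,d\theta,
\]
which is the claimed identity.

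For the uniform bound, I would exploit the elementary inequality $-\ln\cos(\theta/2) = -\tfrac12\ln(1-\sin^2(\theta/2))\leq\sin^2(\theta/2)$ on $[0,\pi/2]$, which follows from $\sin^2(\theta/2)\leq 1/2$ on this range and the inequality $-\ln(1-x)\leq 2x$ on $[0,1/2]$. Substituting this bound gives
\[
|J^\epsilon|_{L^1}\leq 8\pi^2\int_0^{\pi/2}b^\epsilon(\cos\theta)\sin\theta\,\sin^2(\theta/2)\,d\theta = 4\pi\int_{\mathbb{S}^2}b^\epsilon(\cos\theta)\sin^2(\theta/2)\,d\sigma\lesssim 1
\]
by estimate \eqref{order-2} of Lemma \ref{integral-angular-function}. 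The only genuine technical point in the argument is the bijection issue in $v\mapsto v'$, which is naturally resolved by the angle-doubling substitution on $\sigma$; after that the entire proof reduces to the Fubini calculation above and the angular estimates already established.
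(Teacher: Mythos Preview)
Your proposal is correct. The paper does not prove this lemma at all --- it is stated as a direct recall from Proposition~3 of \cite{alexandre2002boltzmann}, and your argument (the $v\mapsto v'$ change of variables with the angle-doubling trick, the Fubini swap to obtain the $L^1$ identity, and the bound $-\ln\cos(\theta/2)\le\sin^2(\theta/2)$ combined with \eqref{order-2}) is precisely the standard computation underlying that reference.
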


For our purpose, we derive the following
various types of cancellation effect.
\begin{lem} [Cancellation Lemma Continued] \label{cancellation-lemma-general-gamma-minus3-mu} Let $p,q\ge1$ satisfying $1/p+1/q=1$.
\begin{itemize}
\item  Let $\delta=0, a \in \mathbb{R}$, then
\ben \label{classical-case} |\int B^{\epsilon,\gamma,0} g_{*}(h^{\prime}-h)d\sigma dv dv_{*}| \lesssim |\mu^{-a}g|_{L^{p}}|\mu^{a}h|_{L^{q}}.
\een
\item Let $1 \geq \delta \geq 0, a \geq 0$, then
\ben \label{geq-delta-mu} |\int B^{\epsilon,\gamma,\delta} g_{*}(h^{\prime}-h)d\sigma dv dv_{*}| \lesssim \mathrm{e}^{a}|\mu^{-2a}g|_{L^{p}}|\mu^{a}h|_{L^{q}}.
\een
\item Let $1 \geq \eta>\delta \geq 0, a \geq 0$, set $B^{\epsilon,\gamma,\delta}_{\eta} := B^{\epsilon,\gamma}\mathrm{1}_{\delta \leq |v-v_{*}|< \eta}$,
then
\ben \label{0-delta-eta-mu} &&|\int B^{\epsilon,\gamma,\delta}_{\eta} g_{*}(h^{\prime}-h)d\sigma dv dv_{*}| \lesssim  \mathrm{e}^{a}|\mu^{-2a}g|_{L^{p}}|\mu^{a}h|_{L^{q}},
\\ \label{0-delta-eta-small-eta-epsilon-mu} &&|\int B^{\epsilon,\gamma,\delta}_{\eta} g_{*}(h^{\prime}-h)d\sigma dv dv_{*}| \lesssim  (\eta + \epsilon^{1/2})(1+a)\mathrm{e}^{3a}|W^{\epsilon}(D)\mu^{-2a}g|_{L^{2}}|\mu^{a}h|_{L^{2}},
\\ \label{0-delta-eta-small-eta-epsilon-hf-mu} &&|\int B^{\epsilon,\gamma,\delta}_{\eta} g_{*}\left((h f)^{\prime}-hf\right)d\sigma dv dv_{*}|
\\ &\lesssim&  (\eta + \epsilon^{1/2})(1+a)\mathrm{e}^{3a}|\mu^{-2a}g|_{L^{\infty}}(|W^{\epsilon}(D)\mu^{a/2}h|_{L^{2}}|\mu^{a/2}f|_{L^{2}}
+|\mu^{a/2}h|_{L^{2}}|W^{\epsilon}(D)\mu^{a/2}f|_{L^{2}}). \nonumber
\een
\end{itemize}
\end{lem}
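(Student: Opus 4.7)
My plan is to derive all four estimates from the Cancellation Lemma stated just above (in particular the convolution identity \eqref{cancellation-result-convolution} together with the uniform $L^1$ bound \eqref{uniform-l1-estimate}), combined with weighted H\"older, Young, and Parseval-type estimates.

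For \eqref{classical-case}, I would pass $\delta\to 0^+$ in \eqref{cancellation-result-convolution}. Since $S^\epsilon_\delta = \delta^{-3}J^\epsilon(\cdot/\delta)$ is an approximate identity of total mass $|J^\epsilon|_{L^1}\lesssim 1$, the right-hand side converges to $|J^\epsilon|_{L^1}\int g(v)h(v)\,dv$, and the inequality then follows from the weighted H\"older decomposition $\int gh = \int(\mu^{-a}g)(\mu^a h) \leq |\mu^{-a}g|_{L^p}|\mu^a h|_{L^q}$. For \eqref{geq-delta-mu} and \eqref{0-delta-eta-mu} (both with $a\geq 0$), the cancellation identity yields
\[
\text{LHS} = \int S(v-v_*)g_*h\,dvdv_*,
\]
where $S = S^\epsilon_\delta$ in the first case and $S = S^\epsilon_\delta - S^\epsilon_\eta$ in the second; in both cases $|S|_{L^1}\lesssim 1$ and $S$ is supported in $\{|z|\leq 1\}$. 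On this support, the inequality $|v_*|^2 \geq |v|^2/2 - 1$ gives the pointwise comparison $\mu_*^{2a} \leq \mathrm{e}^a \mu^a(v)$ for $a\geq 0$. Rewriting $g_*h = (\mu^{-2a}g)_*\mu_*^{2a}h$ and applying this comparison pointwise, Young's convolution inequality delivers both $L^p$--$L^q$ bounds.

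The refined estimates \eqref{0-delta-eta-small-eta-epsilon-mu} and \eqref{0-delta-eta-small-eta-epsilon-hf-mu} require a Fourier-side bound on $T := S^\epsilon_\delta - S^\epsilon_\eta$, namely
\[
|\widehat T(\xi)| \lesssim (\eta + \epsilon^{1/2})\,W^\epsilon(\xi) \quad \text{for all } \xi \in \R^3.
\]
To prove this I would combine: (i) the trivial bound $|\widehat T|_{L^\infty}\leq |T|_{L^1}\lesssim 1$; and (ii) the vanishing-moments identities $\int T\,dz = 0$ (equal masses, so $\widehat T(0)=0$) and $\int z\,T(z)\,dz = 0$ (radiality), which via Taylor expansion of $e^{-iz\cdot\xi}$ yield $|\widehat T(\xi)| \lesssim |\xi|^2 \int |z|^2|T(z)|\,dz \lesssim \eta^2|\xi|^2$; here the bound $\int |z|^2 S^\epsilon_\eta\,dz \sim \eta^2$ follows from a direct spherical-polar computation using \eqref{order-2}. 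A case analysis in $\epsilon^{1/2}|\xi|$ (invoking \eqref{lower-bound-when-small-cf} when $\epsilon^{1/2}|\xi|\lesssim 1$ and \eqref{lower-bound-when-large-cf} otherwise) then converts $\min(1,\eta^2|\xi|^2)$ into the claimed bound by $(\eta+\epsilon^{1/2})W^\epsilon(\xi)$.

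To incorporate the weights I would write the integral as $\int h\,(T*g)\,dv \leq |\mu^a h|_{L^2}\cdot |\mu^{-a}(T*g)|_{L^2}$, set $G := \mu^{-2a}g$, and note that on the support $\{|v-v_*|\leq 1\}$ of $T$ the position-dependent weight $\mu^{-a}(v)\mu^{2a}(v_*)$ is bounded pointwise by $\mathrm{e}^{2a}$ (a variant of the above inequality $|v_*|^2\geq|v|^2/2-1$). A Parseval argument combining this weight bound with the Fourier estimate on $\widehat T$ then gives $|\mu^{-a}(T*g)|_{L^2} \lesssim (\eta+\epsilon^{1/2})(1+a)\mathrm{e}^{3a}|W^\epsilon(D)G|_{L^2}$, where the factor $(1+a)\mathrm{e}^{3a}$ absorbs both the exponential loss from the weight and the polynomial loss from commuting $\mu^{-a}$ past the convolution. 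Finally, \eqref{0-delta-eta-small-eta-epsilon-hf-mu} reduces to \eqref{0-delta-eta-small-eta-epsilon-mu} after expanding $(hf)'-hf = h'(f'-f) + (h'-h)f$ and bounding one factor in $L^\infty$. The main technical obstacle I anticipate is precisely this last step: transferring the $(\eta+\epsilon^{1/2})$ smallness through the non-translation-invariant weight $\mu_*^{2a}\mu^{-a}(v)$, which breaks the convolution structure that Parseval requires. The resolution hinges on the compact support of $T$ in $\{|v-v_*|\leq 1\}$, on which the weight is controlled by a constant depending only on $a$, preserving the Fourier decay at the cost of the explicit factor $(1+a)\mathrm{e}^{3a}$.
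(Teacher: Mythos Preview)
Your treatment of \eqref{classical-case}, \eqref{geq-delta-mu} and \eqref{0-delta-eta-mu} is correct and essentially the same as the paper's: rewrite via the convolution identity \eqref{cancellation-result-convolution} in the rescaled form $\int J^\epsilon(u)\,g(\delta u+v)\,h(v)\,du\,dv$, use that $J^\epsilon$ is supported in $|u|\le 1$, and exploit $\mu_*^{2a}\lesssim e^a\mu^a(v)$ on that support together with H\"older/Young.

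Your Fourier bound $|\widehat T(\xi)|\lesssim(\eta+\epsilon^{1/2})W^\epsilon(\xi)$ for $T=S^\epsilon_\delta-S^\epsilon_\eta$ is correct (indeed $J^\epsilon$ is supported in the annulus $1/\sqrt2\le|u|\le 1$, so the second moment is finite), and for $a=0$ it gives a clean alternative proof of \eqref{0-delta-eta-small-eta-epsilon-mu} via Parseval. However, for $a>0$ your stated resolution has a genuine gap. Once you insert the position-dependent factor $\mu^{-a}(v)\mu^{2a}(v_*)$ into $\int T(v-v_*)g_*h\,dv\,dv_*$, bounding that factor by a constant on $\{|v-v_*|\le 1\}$ forces you to replace $T$ by $|T|$, and $|T|_{L^1}\sim 1$ carries \emph{no} smallness: the cancellation $\int T=0$ (which is the entire source of the gain $\eta^2|\xi|^2$) is destroyed. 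A commutator-type correction does not save this either, since the first-order term produces a factor $|vH|_{L^2}$ or $|vG|_{L^2}$ that is not controlled by $|H|_{L^2}$ or $|W^\epsilon(D)G|_{L^2}$. The paper avoids this by working in physical space: it splits $G=\mu^{-2a}g$ at frequency $\epsilon^{-1/2}$, applies a first-order Taylor expansion to the full product $(\mu^{2a}G_\epsilon)(\delta u+v)-(\mu^{2a}G_\epsilon)(\eta u+v)$ (so weight and function are differentiated together, producing the $(1+a)$), bounds the residual weight $\mu^{3a/2}(v(\kappa))\mu^{-a}(v)\le e^{3a}$ on $|v(\kappa)-v|\le 1$, and only \emph{then} uses Cauchy--Schwarz. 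The high-frequency piece $G^\epsilon$ is handled by the crude bound \eqref{0-delta-eta-mu} plus $|G^\epsilon|_{L^2}\lesssim\epsilon^{1/2}|W^\epsilon(D)G|_{L^2}$.

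Your reduction of \eqref{0-delta-eta-small-eta-epsilon-hf-mu} to \eqref{0-delta-eta-small-eta-epsilon-mu} via $(hf)'-hf=h'(f'-f)+(h'-h)f$ also fails: in $\int B^{\epsilon,\gamma,\delta}_\eta g_*\,h'(f'-f)\,d\sigma\,dv\,dv_*$ the factor $h'=h(v')$ depends on $(v,v_*,\sigma)$, so taking $|h|_{L^\infty}$ out again kills the cancellation in $f'-f$. The paper instead passes to the form $\int J^\epsilon(u)\big[(hf)(v-\delta u)-(hf)(v-\eta u)\big]g(v)\,du\,dv$, pulls $|\mu^{-2a}g|_{L^\infty}$ out, splits $H=\mu^{a/2}h$ and $F=\mu^{a/2}f$ each at frequency $\epsilon^{-1/2}$, and Taylor-expands the low--low product $\mu^{-a}H_\epsilon F_\epsilon$ in physical space; the product rule then yields the symmetric combination $|W^\epsilon(D)H|_{L^2}|F|_{L^2}+|H|_{L^2}|W^\epsilon(D)F|_{L^2}$.
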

\begin{proof} Recalling from \eqref{cancellation-result-convolution} and $S^{\epsilon}_{\delta}(z) = \delta^{-3}J^{\epsilon}(z/\delta)$, we have
\ben \label{cancellation-result-convolution-2} \int B^{\epsilon,\gamma,\delta} g_{*}(h^{\prime}-h)d\sigma dv dv_{*}  = \int J^{\epsilon}(u)g(\delta u +v)h(v) dv du.
\een
We set to prove \eqref{classical-case}. By \eqref{cancellation-result-convolution-2}, H\"{o}lder's inequality and \eqref{uniform-l1-estimate}, we have
\beno |\int B^{\epsilon,\gamma,0} g_{*}(h^{\prime}-h) d\sigma dv dv_{*}| = |\lim_{\delta \rightarrow 0} \int J^{\epsilon}(u)g(\delta u +v)h(v) dv du| \leq |J^{\epsilon}|_{L^{1}}|g h|_{L^{1}} \lesssim |\mu^{-a}g|_{L^{p}}|\mu^{a}h|_{L^{q}}.
\eeno

We now go to prove \eqref{geq-delta-mu}. It is easy to check that for any $ 0 \leq  \alpha < 1$,  there holds
\ben  \label{basic-inequality-1}
|x|^{2} \geq \alpha |y|^{2}- \frac{\alpha}{1-\alpha}|x-y|^{2}.
\een
Taking $\alpha = 1/2$ in \eqref{basic-inequality-1}, if $|x-y| \leq 1$, we have $|x|^{2} \geq \frac{1}{2} |y|^{2}-|x-y|^{2} \geq \frac{1}{2} |y|^{2}-1$, and thus
\ben \label{to-1-2} |x-y| \leq 1 \Rightarrow \mu(x) = (2\pi)^{-3/2} e^{-|x|^{2}/2} \leq \mathrm{e}^{1/2}\mu^{1/2}(y). \een
From which, we get $ \mu^{a}(v) \leq e^{a/2} \mu^{a/2}(v+\delta u)$ for $a \geq 0$.
That is, $1 \leq  e^{a} \mu^{-2a}(v) \mu^{a}(v+\delta u)$. By symmetry, $1 \leq  e^{a} \mu^{a}(v) \mu^{-2a}(v+\delta u)$.
Together with H\"{o}lder's inequality, we get for any $|u| \leq 1, \delta \leq 1$,
\ben \label{l-infty-estimate} |\int_{\R^{3}} g(\delta u +v)h(v) dv| \leq \mathrm{e}^{a} \int_{\R^{3}} \mu^{a}(v) \mu^{-2a}(v+\delta u) |g(\delta u +v)| |h(v)| d v \leq \mathrm{e}^{a}|\mu^{-2a}g|_{L^{p}}|\mu^{a}h|_{L^{q}}. \een
Recalling $J^{\epsilon}(u)$ has support in $|u|\leq 1$ and the parameter $ \delta \leq 1$,  plugging \eqref{l-infty-estimate}  into \eqref{cancellation-result-convolution-2},
 together with \eqref{uniform-l1-estimate},
we get \eqref{geq-delta-mu}.

Since $ B^{\epsilon,\gamma,\delta}_{\eta} =  B^{\epsilon,\gamma,\delta}- B^{\epsilon,\gamma,\eta}$, we have
\beno  |\int B^{\epsilon,\gamma,\delta}_{\eta} g_{*}(h^{\prime}-h) d\sigma dv dv_{*}| \leq  |\int B^{\epsilon,\gamma,\delta}g_{*}(h^{\prime}-h)d\sigma dv dv_{*}| +|\int B^{\epsilon,\gamma,\eta} g_{*}(h^{\prime}-h)d\sigma dv dv_{*}| .
\eeno
Then we get \eqref{0-delta-eta-mu} by using \eqref{geq-delta-mu}.

We now turn to \eqref{0-delta-eta-small-eta-epsilon-mu}.
For notational brevity, we set $G = \mu^{-2a} g, H = \mu^{a} h$ and then have
\beno \int B^{\epsilon,\gamma,\delta}_{\eta} g_{*}(h^{\prime}-h)d\sigma dv dv_{*} &=& \int  J^{\epsilon}(u) \left(g(\delta u +v) - g(\eta u +v)\right) h(v) dv du
\\&=& \int J^{\epsilon}(u) \left(  (\mu^{2a}G)(\delta u +v) - (\mu^{2a}G)(\eta u +v)\right) (\mu^{-a} H)(v) dv du
:= \mathcal{I}(G,H).
\eeno
We decompose $G = G_{\epsilon} + G^{\epsilon}$, where
\ben \label{low-high-separation-ep-1/2}
G_{\epsilon}:= \phi(\epsilon^{1/2} D) G, G^{\epsilon}:= G- G_{\epsilon} = (1- \phi(\epsilon^{1/2} D))G. \een
From which we have
$\mathcal{I}(G,H) = \mathcal{I}(G_{\epsilon},H) + \mathcal{I}(G^{\epsilon},H).$
Then by Taylor expansion, we get
\ben \label{taylor-expansion-G-epsilon}
(\mu^{2a}G_{\epsilon})(\delta u +v) - (\mu^{2a}G_{\epsilon})(\eta u +v) = (\delta-\eta)\int_{0}^{1} (\nabla (\mu^{2a}G_{\epsilon}) )(v(\kappa)) \cdot u d\kappa.
\een
where $v(\kappa) := \left(\kappa \delta + (1-\kappa)\eta\right)u + v$. Noting that $(a|v|+1) \mu^{a/2} \leq C(1+a)$ for some universal constant $C$, we have
\ben\label{derivative-estimate}
 |\nabla \left( \mu^{2a}G_{\epsilon} \right)|   \leq (a|v|+1) \mu^{2a}(|G_{\epsilon}| + |\nabla G_{\epsilon} |) \leq C(1+a)\mu^{3a/2}(|G_{\epsilon}| + |\nabla G_{\epsilon} |).\een
Plugging \eqref{taylor-expansion-G-epsilon} and \eqref{derivative-estimate} into the definition of $\mathcal{I}(G_{\epsilon},H)$, we get
\ben \label{inter-G-epsilon}
|\mathcal{I}(G_{\epsilon},H)|&\leq&  C(1+a)(\eta-\delta)\int J^{\epsilon}(u) (|G_{\epsilon}(v(\kappa)) | + |(\nabla G_{\epsilon}) (v(\kappa))|)
\nonumber \\&&\times \mu^{3a/2}(v(\kappa)) \mu^{-a}(v)|H(v)|dudvd\kappa.
\een
Taking $\alpha = 3/4$ in \eqref{basic-inequality-1}, if $|x-y| \leq 1$, we have $|x|^{2} \geq \frac{3}{4} |y|^{2}-3|x-y|^{2} \geq \frac{3}{4} |y|^{2}-3$, and thus
\ben \label{to-3-4} |x-y| \leq 1 \Rightarrow  \mu(x) \leq \mathrm{e}^{3/2}\mu^{3/4}(y). \een
Since $|v(\kappa)-v| = |\left(\kappa \delta + (1-\kappa)\eta\right)u| \leq \eta|u| \leq 1$, by \eqref{to-3-4},
we get $ \mu^{3a/2}(v(\kappa)) \mu^{-a}(v) \leq \mathrm{e}^{3a}$ for $a \geq 0$.
Plugging which into \eqref{inter-G-epsilon}, we get
\ben \label{eliminate-weight}
|\mathcal{I}(G_{\epsilon},H)| \leq  C(1+a) \mathrm{e}^{3a}(\eta-\delta) \int J^{\epsilon}(u) (|G_{\epsilon}(v(\kappa))| + |(\nabla G_{\epsilon}) (v(\kappa))|)|H(v)|dudvd\kappa.
\een
By the change of variable $v \rightarrow v(\kappa)$ and Cauchy-Schwartz inequality, we get
\beno |\mathcal{I}(G_{\epsilon},H)| \leq C(1+a)\mathrm{e}^{3a} (\eta-\delta)|J^{\epsilon}|_{L^{1}}|G_{\epsilon}|_{H^{1}}|H|_{L^{2}}.
\eeno
By \eqref{lower-bound-when-small-cf}, one has $|G_{\epsilon}|^{2}_{H^{1}} \lesssim |W^{\epsilon}(D)G|^{2}_{L^{2}}. $
From which together with \eqref{uniform-l1-estimate}, we have
\ben \label{G-lower-frequency}
|\mathcal{I}(G_{\epsilon},H)| \lesssim (1+a)\mathrm{e}^{3a} \eta |W^{\epsilon}(D)G|_{L^{2}}|H|_{L^{2}}.
\een
By \eqref{lower-bound-when-large-cf}, one has
$|G^{\epsilon}|_{L^{2}} \lesssim \epsilon^{1/2} |W^{\epsilon}(D)G|_{L^{2}}. $
From which together with \eqref{0-delta-eta-mu}, we get
\ben\label{G-higher-frequency}
 |\mathcal{I}(G^{\epsilon},H)| \lesssim   \mathrm{e}^{a}|G^{\epsilon}|_{L^{2}}|H|_{L^{2}} \lesssim \epsilon^{1/2}\mathrm{e}^{a}|W^{\epsilon}(D)G|_{L^{2}}|H|_{L^{2}}. \een
Patching  together \eqref{G-lower-frequency} and  \eqref{G-higher-frequency}, we get \eqref{0-delta-eta-small-eta-epsilon-mu}.

In the last, we go to prove \eqref{0-delta-eta-small-eta-epsilon-hf-mu}. Thanks to the convolution structure in \eqref{cancellation-result-convolution-2}, we have
\beno \int B^{\epsilon,\gamma,\delta}_{\eta} g_{*}\left((h f)^{\prime}-h f\right) d\sigma dv dv_{*} = \int J^{\epsilon}(u) \left( (h f)(v-\delta u) - (h f)(v-\eta u)\right) g(v) dv du.
\eeno
In this part, we set $G = \mu^{-2a} g, H = \mu^{a/2} h, F = \mu^{a/2} f$ and thus have
\beno \int B^{\epsilon,\gamma,\delta}_{\eta} g_{*}\left((h f)^{\prime}-h f\right) d\sigma dv dv_{*}&=&\int J^{\epsilon}(u) \left((\mu^{-a} H F)(v-\delta u) - (\mu^{-a}H F)(v-\eta u)\right) (\mu^{2a}G)(v) dv du
\\&:=&\mathcal{J}(H,F).
\eeno
Recalling from \eqref{low-high-separation-ep-1/2} for the definition of $H_{\epsilon}, H^{\epsilon}, F_{\epsilon}, F^{\epsilon}$.
Decomposing $H = H_{\epsilon} + H^{\epsilon}, F = F_{\epsilon} + F^{\epsilon}$, we get
\beno \mathcal{J}(H,F)= \mathcal{J}(H_{\epsilon},F_{\epsilon})+\mathcal{J}(H_{\epsilon},F^{\epsilon})+\mathcal{J}(H^{\epsilon},F_{\epsilon})
+\mathcal{J}(H^{\epsilon},F^{\epsilon}).
\eeno
For $\mathcal{J}(H_{\epsilon},F_{\epsilon})$, we
apply Taylor expansion to $\mu^{-a}H_{\epsilon}F_{\epsilon}$. Similar to \eqref{taylor-expansion-G-epsilon}, \eqref{derivative-estimate}, \eqref{inter-G-epsilon} and \eqref{eliminate-weight},
we get
\beno |\mathcal{J}(H_{\epsilon},F_{\epsilon})| &\leq&  C(|a|+1)\mathrm{e}^{3a} (\eta-\delta) \int J^{\epsilon}(u) (|(H_{\epsilon} F_{\epsilon})(v(\kappa)) | + |( F_{\epsilon} \nabla H_{\epsilon} ) (v(\kappa))  |+ |( H_{\epsilon} \nabla F_{\epsilon} ) (v(\kappa))|)
\\&&\times|G(v)|dudvd\kappa
\\&\leq&   C(|a|+1)\mathrm{e}^{3a} (\eta-\delta)|J^{\epsilon}|_{L^{1}}|G|_{L^{\infty}}(|H_{\epsilon}|_{L^{2}}|F_{\epsilon}|_{L^{2}}
+|H_{\epsilon}|_{H^{1}}|F_{\epsilon}|_{L^{2}}+|H_{\epsilon}|_{L^{2}}|F_{\epsilon}|_{H^{1}})
\\&\lesssim&   (|a|+1)\mathrm{e}^{3a}\eta|G|_{L^{\infty}}(|W^{\epsilon}(D)H|_{L^{2}}|F|_{L^{2}}
+|H|_{L^{2}}|W^{\epsilon}(D)F|_{L^{2}}).
\eeno
where we take out $|G|_{L^{\infty}}$ and use the change of variable $v \rightarrow v(\kappa)$, Cauchy-Schwartz inequality and \eqref{lower-bound-when-small-cf}. Using \eqref{to-1-2} to deal with the $\mu$-type weight, taking out $|G|_{L^{\infty}}$, applying Cauchy-Schwartz inequality and using \eqref{lower-bound-when-large-cf}, we get
\beno |\mathcal{J}(H_{\epsilon},F^{\epsilon})+ \mathcal{J}(H^{\epsilon},F_{\epsilon})+\mathcal{J}(H^{\epsilon},F^{\epsilon})| &\leq&  2\mathrm{e}^{a}|J^{\epsilon}|_{L^{1}}|G|_{L^{\infty}}(|H_{\epsilon}|_{L^{2}}|F^{\epsilon}|_{L^{2}}
+|H^{\epsilon}|_{L^{2}}|F_{\epsilon}|_{L^{2}}+|H^{\epsilon}|_{L^{2}}|F^{\epsilon}|_{L^{2}})
\\&\leq&  2^{4}\epsilon^{1/2} \mathrm{e}^{a}|J^{\epsilon}|_{L^{1}}|G|_{L^{\infty}}(|W^{\epsilon}(D)H|_{L^{2}}|F|_{L^{2}}
+|H|_{L^{2}}|W^{\epsilon}(D)F|_{L^{2}}).
\eeno
Patching together the estimates of $|\mathcal{J}(H_{\epsilon},F_{\epsilon})|$ and $|\mathcal{J}(H_{\epsilon},F^{\epsilon})+ \mathcal{J}(H^{\epsilon},F_{\epsilon})+\mathcal{J}(H^{\epsilon},F^{\epsilon})|$, we get \eqref{0-delta-eta-small-eta-epsilon-hf-mu}.
\end{proof}

\subsection{Gain of weight from $\mathcal{N}^{\epsilon,\gamma,\eta}(f,\mu^{1/2}) $}
The functional  $\mathcal{N}^{\epsilon,\gamma,\eta}(f,\mu^{1/2})$ produces gain of the weight $W^{\epsilon}$ in the phase space.
\begin{lem}\label{lowerboundpart1} Let $-3 \leq \gamma \leq 0$.
There exists $\epsilon_{0} >0$ such that for  $0< \epsilon \leq \epsilon_{0}$ and $0 \leq  \eta \leq 1$,
\beno
\mathcal{N}^{\epsilon,\gamma,\eta}(f,\mu^{1/2}) + |f|^{2}_{L^{2}_{\gamma/2}} \geq  C_{\gamma}|W^{\epsilon}f|^{2}_{L^{2}_{\gamma/2}},
\eeno
where $C_{\gamma}$ is a positive constant depending only on $\gamma$.
\end{lem}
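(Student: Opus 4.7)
The plan is to freeze $v_*$ and prove the pointwise lower bound
\[
J(v_*) := \int b^\epsilon(\cos\theta) |v-v_*|^\gamma \mathrm{1}_{|v-v_*|\geq \eta}(\mu^{1/2}(v')-\mu^{1/2}(v))^2 d\sigma dv \gtrsim W^\epsilon(v_*)^2\langle v_*\rangle^\gamma - C\langle v_*\rangle^\gamma,
\]
uniformly in $\epsilon \in (0,\epsilon_0]$ and $\eta \in [0,1]$. Since $\mathcal{N}^{\epsilon,\gamma,\eta}(f,\mu^{1/2}) = \int f^2(v_*) J(v_*) dv_*$, integration against $f^2$ will then yield the claim.

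For $|v_*|\lesssim 1$, $W^\epsilon(v_*)\lesssim 1$ by \eqref{upper-bound-cf} and the estimate is immediate from the $|f|^2_{L^2_{\gamma/2}}$ term alone. For $|v_*|\gg 1$, I would restrict the $v$-integral to a fixed small ball $|v|\leq R_0$, where $\mu^{1/2}(v)\gtrsim 1$, $|v-v_*|\sim|v_*|$, and the $\eta$-cutoff is automatic. The angular integration is then split at the natural threshold $\sin(\theta/2)\sim 1/|v_*|$, motivated by the identity $|v'-v|=|v-v_*|\sin(\theta/2)$.

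In the non-grazing regime $\sin(\theta/2)\geq 1/(C|v_*|)$, the geometry of the elastic collision gives $|v'|\gtrsim |v_*|\sin(\theta/2)\gtrsim 1$ (using $|v|\leq R_0$), hence $\mu^{1/2}(v')\leq \mu^{1/2}(v)/2$ and $(\mu^{1/2}(v')-\mu^{1/2}(v))^2\gtrsim \mu(v)$; Lemma \ref{integral-angular-function} then yields $\int_{\sin(\theta/2)\geq \max(1/(C|v_*|),\epsilon)} b^\epsilon d\sigma \gtrsim |\ln\epsilon|^{-1}\min(|v_*|^2,\epsilon^{-2})$, which matches $W^\epsilon(v_*)^2$ for $|v_*|\gtrsim 1/\epsilon$. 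In the grazing regime $\epsilon\leq \sin(\theta/2)\leq 1/(C|v_*|)$, since $|v'-v|\lesssim 1$ I apply Taylor expansion $\mu^{1/2}(v')-\mu^{1/2}(v)=\nabla\mu^{1/2}(v)\cdot(v'-v)+O(|v'-v|^2)$; after squaring and azimuthal integration at fixed $\theta$, the leading contribution reduces to $|v-v_*|^2\sin^2(\theta/2)$ times the squared norm of the projection of $\nabla\mu^{1/2}(v)$ perpendicular to $e=(v-v_*)/|v-v_*|$, while the remaining radial integration
\[
\int_{\sin(\theta/2)\in[\epsilon,1/(C|v_*|)]} b^\epsilon \sin^2(\theta/2) d\sigma = 8\pi |\ln\epsilon|^{-1}\bigl(|\ln\epsilon|-\ln(C|v_*|)\bigr) \sim 1-\frac{\ln|v_*|}{|\ln\epsilon|}
\]
reproduces exactly the logarithmic factor appearing in the definition \eqref{charicter function} of $W^\epsilon$.

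Assembling both regimes produces $J(v_*)\gtrsim |v_*|^{\gamma+2}(1-\ln|v_*|/|\ln\epsilon|)$ for $1\ll|v_*|\lesssim 1/\epsilon$ and $J(v_*)\gtrsim |v_*|^\gamma|\ln\epsilon|^{-1}\epsilon^{-2}$ for $|v_*|\gtrsim 1/\epsilon$; both are comparable to $W^\epsilon(v_*)^2\langle v_*\rangle^\gamma$ by the estimates in Lemma \ref{property-of-symbol}. The main obstacle I anticipate is the sharp matching in the intermediate range $|v_*|\sim \epsilon^{-\alpha}$, $\alpha\in(0,1)$, where the logarithmic factor generated by the grazing integral must precisely reproduce the $(1-\ln|y|/|\ln\epsilon|)^{1/2}$ piece in \eqref{charicter function}; this forces a careful choice of the splitting threshold and, crucially, a lower (not merely upper) bound on the azimuthal projection of $\nabla\mu^{1/2}(v)$ onto $e^\perp$, which is secured by averaging in $v$ over a suitably chosen ball rather than for a pointwise $v$.
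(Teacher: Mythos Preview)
Your approach is essentially the paper's: freeze $v_*$, restrict $v$ to a bounded ball, split the $\theta$-integration at $\sin(\theta/2)\sim 1/|v_*|$, use Taylor expansion and azimuthal averaging in the grazing regime to generate the factor $(1-\ln|v_*|/|\ln\epsilon|)$, and exploit the largeness of $|v'|$ for $|v_*|\gtrsim 1/\epsilon$. The paper carries out exactly this program, with the same identification that the lower bound on the projection of $\nabla\mu^{1/2}(v)$ onto $e^\perp$ requires $v$-averaging (the paper uses the law of sines to convert).

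One point of comparison: for the large-$|v_*|$ regime the paper expands $(\mu'^{1/2}-\mu^{1/2})^2\geq\mu-2\mu'^{1/2}\mu^{1/2}$ over the \emph{full} angular range and bounds the cross term by $e^{-R^2/64}$ via $|v'|+|v|\geq\epsilon|v_*|\geq R$; this forces $R$ large, creating a gap between the grazing regime (valid for $|v_*|\leq\delta/\epsilon$, $\delta$ small) and the large regime (valid for $|v_*|\geq R/\epsilon$). The paper bridges this gap by a rescaling trick (applying the large-$|v_*|$ estimate at cutoff $N\epsilon$ instead of $\epsilon$). Your non-grazing argument instead restricts to $\sin(\theta/2)\geq C'/|v_*|$ so that $|v'|\gtrsim 1$ is guaranteed; the resulting bound $J(v_*)\gtrsim|v_*|^{\gamma+2}|\ln\epsilon|^{-1}$ is valid for \emph{all} large $|v_*|$, and since $W^\epsilon(v_*)^2\leq(|\ln\delta|+1)|v_*|^2/|\ln\epsilon|$ on the gap $(\delta/\epsilon,C'/\epsilon)$, this suffices without rescaling. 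That is a mild simplification.

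One omission: you do not spell out the control of the second-order Taylor remainder in the grazing regime. In the paper this is what forces the angular cutoff to $\sin(\theta/2)\leq 4\delta/|v_*|$ with $\delta$ \emph{small} (so the error $\mathcal I_2$ scales like $\delta^2$ relative to the leading term $\mathcal I_1$), which in turn limits the grazing regime to $|v_*|\leq\delta/\epsilon$. Your anticipated obstacle at $|v_*|\sim\epsilon^{-\alpha}$, $\alpha\in(0,1)$, is not really the issue; the genuine matching point is $|v_*|\sim 1/\epsilon$, where the grazing estimate degenerates and the non-grazing estimate takes over.
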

\begin{proof} If $0 \leq  \eta \leq 1$, one has $\mathcal{N}^{\epsilon,\gamma,1}(f,\mu^{1/2}) \leq \mathcal{N}^{\epsilon,\gamma,\eta}(f,\mu^{1/2})$. Therefore it suffices to consider the lower bound of $\mathcal{N}^{\epsilon,\gamma,1}(f,\mu^{1/2})$. The proof is divided into four steps. Let $0 <\delta <1 \leq R$ be two constants which will be determined later.

{\it{Step 1: $16/\pi \leq |v_{*}| \leq \delta/\epsilon.$}}
One has $\nabla \mu^{1/2} = -\frac{\mu^{1/2}}{2} v$ and $\nabla^{2} \mu^{1/2} = \frac{\mu^{1/2}}{4} (-2I_{3}+v \otimes v)$. By Taylor expansion, we have
\beno
\mu^{1/2}(v^{\prime}) - \mu^{1/2}(v) = -\frac{\mu^{1/2}(v)}{2} v \cdot (v^{\prime}-v) + \int_{0}^{1} \frac{1-\kappa}{2} (\nabla^{2} \mu^{1/2}) (v(\kappa)):(v^{\prime}-v)\otimes(v^{\prime}-v) d \kappa,
\eeno
where $v(\kappa) := v+\kappa (v^{\prime}-v)$.
Thanks to the fact $(a-b)^{2} \geq \frac{a^{2}}{2} - b^{2}$, we have
\ben \label{main-term-and-lower-order}
(\mu^{1/2}(v^{\prime}) - \mu^{1/2}(v))^{2} \geq \frac{\mu(v)}{8} |v \cdot (v^{\prime}-v)|^{2} - \frac{1}{4}\int_{0}^{1} (1-\kappa)^{2} |(\nabla^{2} \mu^{1/2}) (v(\kappa))|^{2}|v^{\prime}-v|^{4} d \kappa.
\een

For $\epsilon \leq \pi\delta/16$,  we set $A(\epsilon, \delta) = \{(v_{*},v,\sigma): 16/\pi \leq |v_{*}| \leq \delta/\epsilon, |v| \leq 8/\pi, \epsilon \leq \sin(\theta/2) \leq 4\delta|v_{*}|^{-1}\}$. It is easy to check that $A(\epsilon, \delta) $ is  non-empty. Thus we have
\begin{eqnarray}\label{vsmallvstarsmall}
\mathcal{N}^{\epsilon,\gamma,1}(f,\mu^{1/2}) &\geq& \int B^{\epsilon,\gamma,1} \mathrm{1}_{A(\epsilon, \delta)} f_{*}^{2} (\mu^{\prime 1/2}-\mu^{1/2})^{2} d\sigma dv dv_{*}
\nonumber \\&\geq& \frac{1}{8}\int B^{\epsilon,\gamma,1} \mathrm{1}_{A(\epsilon, \delta)} \mu(v)|v \cdot (v^{\prime}-v)|^{2} f_{*}^{2}  d\sigma dv dv_{*} \nonumber
\\&& - \frac{1}{4} \int B^{\epsilon,\gamma,1} \mathrm{1}_{A(\epsilon, \delta)} |(\nabla^{2} \mu^{1/2}) (v(\kappa))|^{2}|v^{\prime}-v|^{4} f_{*}^{2}  d\sigma dv dv_{*} d\kappa \nonumber
\\&:=& \frac{1}{8}\mathcal{I}_{1}^{\epsilon,\gamma,1} (\delta) - \frac{1}{4}\mathcal{I}_{2}^{\epsilon,\gamma,1} (\delta).
\end{eqnarray}

\underline{Estimate of $\mathcal{I}_{1}^{\epsilon,\gamma,1} (\delta)$.} For fixed $v, v_*$, we introduce an orthogonal basis $(h^{1}_{v,v_{*}},h^{2}_{v,v_{*}}, \frac{v-v_{*}}{|v-v_{*}|})$ such that $d\sigma= \sin\theta d\theta d\varphi$. Then one has
\beno
\frac{v^{\prime}-v}{|v^{\prime}-v|} = \cos\frac{\theta}{2}\cos\varphi h^{1}_{v,v_{*}} + \cos\frac{\theta}{2}\sin\varphi h^{2}_{v,v_{*}} -\sin\frac{\theta}{2} \frac{v-v_{*}}{|v-v_{*}|},
\\
\frac{v}{|v|} = a_{1} h^{1}_{v,v_{*}} + a_{2} h^{2}_{v,v_{*}} + a_{3} \frac{v-v_{*}}{|v-v_{*}|},
\eeno
where $a_{3}=\frac{v}{|v|}\cdot \frac{v-v_{*}}{|v-v_{*}|}$ and $a_{1}, a_{2}$ are  constants independent of $\theta$ and $\varphi$. Then we have
\beno
|\frac{v}{|v|} \cdot \frac{v^{\prime}-v}{|v^{\prime}-v|}|^{2}  &=& |a_{1}\cos\frac{\theta}{2}\cos\varphi + a_{2}\cos\frac{\theta}{2}\sin\varphi - a_{3}\sin\frac{\theta}{2}|^{2}
\\&=& a^{2}_{1}\cos^{2}\frac{\theta}{2}\cos^{2}\varphi + a^{2}_{2}\cos^{2}\frac{\theta}{2}\sin^{2}\varphi + a^{2}_{3}\sin^{2}\frac{\theta}{2}
\\&& + 2a_{1}a_{2}\cos^{2}\frac{\theta}{2}\cos\varphi\sin\varphi - 2a_{3}\cos\frac{\theta}{2}\sin\frac{\theta}{2}(a_{1}\cos\varphi + a_{2}\sin\varphi).
\eeno
Integrating with respect to $\sigma$, we have
\ben \label{key-quantity}
\int b^{\epsilon}(\cos\theta)\mathrm{1}_{A(\epsilon, \delta)}|v \cdot (v^{\prime}-v)|^{2}d\sigma &=& \int_{0}^{\pi}\int_{0}^{2\pi}b^{\epsilon}(\cos\theta)\sin\theta \mathrm{1}_{A(\epsilon, \delta)}|v \cdot (v^{\prime}-v)|^{2} d\theta d\varphi
\\ &\geq& \pi(a^{2}_{1}+a^{2}_{2})|v|^{2}|v-v_{*}|^{2}
\int_{0}^{\pi} b^{\epsilon}(\cos\theta)\sin\theta \cos^{2}\frac{\theta}{2} \sin^{2}\frac{\theta}{2}\mathrm{1}_{A(\epsilon, \delta)}d\theta. \nonumber
\een
Let $B(\epsilon, \delta) := \{(v_{*},v): 16/\pi \leq |v_{*}| \leq \delta/\epsilon, |v| \leq 8/\pi\}$. If $(v,v_{*}) \in B(\epsilon, \delta)$,
direct computation gives
\beno &&\int_{0}^{\pi} b^{\epsilon}(\cos\theta)\sin\theta \cos^{2}\frac{\theta}{2} \sin^{2}\frac{\theta}{2}\mathrm{1}_{\epsilon \leq \sin(\theta/2) \leq 4\delta|v_{*}|^{-1}}d\theta
\\&=& 4 |\ln \epsilon|^{-1} \int_{0}^{\pi}  \cos^{2}\frac{\theta}{2} \sin^{-1}\frac{\theta}{2}\mathrm{1}_{\epsilon \leq \sin(\theta/2) \leq 4\delta|v_{*}|^{-1}}d(\sin\frac{\theta}{2})
= 4 |\ln \epsilon|^{-1} \int_{0}^{1}  (1-t^{2}) t^{-1}\mathrm{1}_{\epsilon \leq t \leq 4\delta|v_{*}|^{-1}}dt
\\&\geq& 2 |\ln \epsilon|^{-1} \int_{\epsilon}^{4\delta|v_{*}|^{-1}}   t^{-1} dt
= 2 |\ln \epsilon|^{-1} (\ln (4\delta)-\ln |v_*|-\ln \epsilon),  \eeno
where we use $t \leq 4 \delta |v_{*}|^{-1} \leq  \sqrt{2}/2$ and $1-t^{2} \geq 1/2$. Back to \eqref{key-quantity}, we get
\beno
&& \int b^{\epsilon}(\cos\theta)\mathrm{1}_{A(\epsilon, \delta)}|v \cdot (v^{\prime}-v)|^{2}d\sigma
\\&\geq& 2\pi(a^{2}_{1}+a^{2}_{2})|v|^{2}|v-v_{*}|^{2}\mathrm{1}_{B(\epsilon, \delta)}|\ln \epsilon|^{-1}(\ln (4\delta)-\ln |v_*|-\ln \epsilon).
\eeno
If $(v,v_{*}) \in B(\epsilon, \delta)$, then $|v-v_{*}| \geq 8/\pi \geq  1$, which gives
\beno
\mathcal{I}_{1}^{\epsilon,\gamma,1} (\delta) &\geq&  2\pi|\ln \epsilon|^{-1} \int (a^{2}_{1}+a^{2}_{2})|v-v_{*}|^{\gamma+2}|v|^{2}\mathrm{1}_{B(\epsilon, \delta)}(\ln (4\delta)-\ln |v_*|-\ln \epsilon) \mu(v) f_{*}^{2}dv dv_{*}
\\&=& 2\pi|\ln \epsilon|^{-1}\int (1-(\frac{v}{|v|}\cdot\frac{v_{*}}{|v_{*}|})^{2})|v_{*}|^{2}|v-v_{*}|^{\gamma}|v|^{2}(\ln (4\delta)-\ln |v_*|-\ln \epsilon) \mathrm{1}_{B(\epsilon, \delta)} \mu(v) f_{*}^{2}dv dv_{*},
\eeno
where we use the fact $a_1^2+a_2^2+a_3^2=1$ and the law of sines $$(1-(\frac{v}{|v|}\cdot\frac{v_{*}}{|v_{*}|})^{2})^{-1}|v-v_{*}|^{2}= (1-a_3^2)^{-1}|v_{*}|^{2}.$$
If $(v,v_{*}) \in B(\epsilon, \delta)$, one has $ | v_* |/2 \leq |v-v_{*}| \leq 3| v_* |/2$, which yields
\ben \label{I-1-lower-bound}
\mathcal{I}_{1}^{\epsilon,\gamma,1} (\delta) &\geq&
2\pi (3/2)^{\gamma}|\ln \epsilon|^{-1}\int (1-(\frac{v}{|v|}\cdot\frac{v_{*}}{|v_{*}|})^{2})|v_{*}|^{\gamma+2} |v|^{2}(\ln (4\delta)-\ln |v_*|-\ln \epsilon) \mathrm{1}_{B(\epsilon, \delta)} \mu(v) f_{*}^{2}dv dv_{*}
\nonumber \\&=& 2\pi (3/2)^{\gamma} c_{1}
|\ln \epsilon|^{-1} \int (\ln (4\delta)-\ln |v_*|-\ln \epsilon) | v_{*} |^{\gamma+2}\mathrm{1}_{16/\pi \leq |v_{*}|\leq \delta/\epsilon}  f_{*}^{2} dv_{*}
\een
where we denote $c_{1} := \int (1-(\frac{v}{|v|}\cdot\frac{v_{*}}{|v_{*}|})^{2}) |v|^{2} \mu(v) \mathrm{1}_{|v|\leq 8/\pi}dv$.

\underline{Estimate $\mathcal{I}_{2}^{\epsilon,\gamma,1} (\delta)$.} Recalling $B^{\epsilon,\gamma,1} = \mathrm{1}_{|v-v_{*}| \geq 1, \sin(\theta/2) \geq \epsilon} |\ln \epsilon|^{-1}|v-v_{*}|^{\gamma} \sin^{-4}(\theta/2), |v^{\prime}-v| = |v-v_{*}|\sin(\theta/2)$, we have
\beno
\mathcal{I}_{2}^{\epsilon,\gamma,1} (\delta) &=& \int B^{\epsilon,\gamma,1} \mathrm{1}_{A(\epsilon, \delta)} |(\nabla^{2} \mu^{1/2}) (v(\kappa))|^{2}|v^{\prime}-v|^{4} f_{*}^{2}  d\sigma dv dv_{*} d\kappa
\\&=& |\ln \epsilon|^{-1}\int \mathrm{1}_{A(\epsilon, \delta)} |(\nabla^{2} \mu^{1/2}) (v(\kappa))|^{2}|v-v_{*}|^{\gamma+4} f_{*}^{2}  d\sigma dv dv_{*} d\kappa
\\&\leq& (3/2)^{\gamma+4} |\ln \epsilon|^{-1} \int \mathrm{1}_{A(\epsilon, \delta)} |(\nabla^{2} \mu^{1/2}) (v(\kappa))|^{2}|v_{*}|^{\gamma+4} f_{*}^{2}  d\sigma dv dv_{*} d\kappa.
\eeno
Given $\kappa \in [0,1], v_{*} \in \R^{3}$, we will use the following change of variables:
\ben \label{change-exact-formula}
\left( \sigma = (\theta, \varphi), v \right) \rightarrow \left( \sigma = (\theta(\kappa), \varphi), v(\kappa) \right)
\een
Here $\theta(\kappa)$ is the angle between $v(\kappa)-v_{*}$ and $\sigma$.
In the change, one has
\ben \label{relation-in-the-change}
\theta/2 \leq \theta(\kappa) \leq \theta, \sin\theta \leq 2\sin\theta(\kappa), |v-v_{*}|/\sqrt{2} \leq |v(\kappa)-v_{*}| \leq |v-v_{*}|.
\een
From which together with $|\frac{\partial (v(\kappa), \theta(\kappa))}{\partial (v, \theta)}|^{-1} \leq  (1-\frac{\kappa}{2})^{-5} \leq 32$, we have
\ben \label{change-Jacobean-bound}
 d\sigma dv = \sin \theta d\theta d\varphi dv \leq 2^{6} \sin\theta(\kappa) d\theta(\kappa) d\varphi dv(\kappa) = 2^{6} d\sigma d v(\kappa),
\een
and $\mathrm{1}_{A(\epsilon, \delta)} \leq \mathrm{1}_{16/\pi \leq |v_{*}| \leq \delta/\epsilon, \sin(\theta(\kappa)/2) \leq 4\delta|v_{*}|^{-1}} := \mathrm{1}_{C(\epsilon, \delta)}$. After the change, we get
\ben \label{to-be-refered-I2}
\mathcal{I}_{2}^{\epsilon,\gamma,1} (\delta)
\leq 2^{6} (3/2)^{\gamma+4} |\ln \epsilon|^{-1}  \int \mathrm{1}_{C(\epsilon, \delta)} |(\nabla^{2} \mu^{1/2}) (v(\kappa))|^{2}|v_{*}|^{\gamma+4} f_{*}^{2} d\sigma dv(\kappa) dv_{*} d\kappa.
\een
In the region $C(\epsilon, \delta)$, one has $\sin(\theta(\kappa)/2) \leq 4\delta|v_{*}|^{-1}$, then we have
\beno
\int  \mathrm{1}_{A(\epsilon, \delta)}   d \sigma  \leq 2\pi
\int_{0}^{\pi}  \mathrm{1}_{16/\pi \leq |v_{*}| \leq \delta/\epsilon}  \mathrm{1}_{\sin(\theta(\kappa)/2) \leq 4\delta|v_{*}|^{-1}} \sin\theta(\kappa) d\theta(\kappa)
=
2^{6}\pi \times \mathrm{1}_{16/\pi \leq |v_{*}| \leq \delta/\epsilon} \delta^{2}|v_{*}|^{-2}.
\eeno
Plugging which into \eqref{to-be-refered-I2}, we get
\ben \label{I-2-upper-bound}
\mathcal{I}_{2}^{\epsilon,\gamma,1} (\delta)
&\leq& 2^{12}\pi(3/2)^{\gamma+4} \delta^{2}|\ln \epsilon|^{-1}  \int \mathrm{1}_{16/\pi \leq |v_{*}| \leq \delta/\epsilon}|(\nabla^{2} \mu^{1/2}) (v(\kappa))|^{2}|v_{*}|^{\gamma+2} f_{*}^{2}  dv(\kappa) dv_{*} d\kappa
\nonumber \\&\leq&
2^{12}\pi(3/2)^{\gamma+4}c_{2} \delta^{2}|\ln \epsilon|^{-1}  \int \mathrm{1}_{16/\pi \leq |v_{*}| \leq \delta/\epsilon}|v_{*}|^{\gamma+2} f_{*}^{2} dv_{*},
\een
where $c_{2} = \int |(\nabla^{2} \mu^{1/2}) (v)|^{2} dv$. Plugging \eqref{I-1-lower-bound} and \eqref{I-2-upper-bound} into \eqref{vsmallvstarsmall}, thanks to $(\ln (4\delta)-\ln |v_*|-\ln \epsilon) \geq \ln4$ when $|v_{*}| \leq \delta/\epsilon$, we get
\beno
\mathcal{N}^{\epsilon,\gamma,1}(f,\mu^{1/2}) &\geq& \left(2^{-2}\pi (3/2)^{\gamma} c_{1} - 2^{10}\pi(3/2)^{\gamma+4}c_{2} (\ln4)^{-1} \delta^{2}\right)
\\&& \times |\ln \epsilon|^{-1} \int (\ln (4\delta)-\ln |v_*|-\ln \epsilon) \mathrm{1}_{16/\pi \leq |v_{*}| \leq \delta/\epsilon}|v_{*}|^{\gamma+2} f_{*}^{2} dv_{*}.
\eeno
For brevity, let $C_{1}=2^{-2}\pi (3/2)^{\gamma} c_{1}, C_{2}=2^{10}\pi(3/2)^{\gamma+4}c_{2} (\ln4)^{-1}$. We choose $\delta$ such that $C_{2} \delta^{2} = C_{1}/2$ and thus
\ben \label{lowerboundvstarsmall}
\mathcal{N}^{\epsilon,\gamma,1}(f,\mu^{1/2}) \geq (C_{1}/2)
|\ln \epsilon|^{-1} \int (\ln (4\delta)-\ln |v_*|-\ln \epsilon)\mathrm{1}_{16/\pi \leq |v_{*}| \leq \delta/\epsilon}|v_{*}|^{\gamma+2} f_{*}^{2} dv_{*}.
\een

{\it{Step 2: $|v_{*}| \geq R/\epsilon.$}} Here $R \geq 1, \epsilon \leq 1/2$. By direct computation, we have
\ben \label{large-part-lower-bound}
\mathcal{N}^{\epsilon,\gamma,1}(f,\mu^{1/2}) &=& \int B^{\epsilon,\gamma,1} f_{*}^{2} (\mu^{\prime 1/2}-\mu^{1/2})^{2} d\sigma dv dv_{*}
\nonumber \\&\geq& \int B^{\epsilon,\gamma,1} \mathrm{1}_{|v_{*}|\geq R/\epsilon} \mathrm{1}_{|v|\leq 1} f_{*}^{2} (\mu^{\prime 1/2}-\mu^{1/2})^{2} d\sigma dv dv_{*}
\nonumber \\&=& \int b^{\epsilon} |v-v_{*}|^{\gamma} \mathrm{1}_{|v_{*}|\geq R/\epsilon} \mathrm{1}_{|v|\leq 1} f_{*}^{2} (\mu^{\prime 1/2}-\mu^{1/2})^{2} d\sigma dv dv_{*}
\nonumber \\&\geq&  \int b^{\epsilon} |v-v_{*}|^{\gamma} \mathrm{1}_{|v_{*}|\geq R/\epsilon} \mathrm{1}_{|v|\leq 1} f_{*}^{2} \mu d\sigma dv dv_{*}
\nonumber \\&&-2\int b^{\epsilon} |v-v_{*}|^{\gamma} \mathrm{1}_{|v_{*}|\geq R/\epsilon} \mathrm{1}_{|v|\leq 1} f_{*}^{2}  \mu^{\prime 1/2}\mu^{1/2} d\sigma dv dv_{*} :=\mathcal{J}_{1}^{\epsilon,\gamma}(R) - \mathcal{J}_{2}^{\epsilon,\gamma}(R).
\een
By \eqref{order-0}, we have
\ben \label{J-1-lower-bound}
\mathcal{J}_{1}^{\epsilon,\gamma}(R) &\geq& 2\pi |\ln \epsilon|^{-1}\epsilon^{-2} \int |v-v_{*}|^{\gamma} \mathrm{1}_{|v_{*}|\geq R/\epsilon} \mathrm{1}_{|v|\leq 1} f_{*}^{2} \mu d\sigma dv dv_{*}
\nonumber \\&\geq& 2\pi (3/2)^{\gamma} |\ln \epsilon|^{-1}\epsilon^{-2}\int |v_{*}|^{\gamma} \mathrm{1}_{|v_{*}|\geq R/\epsilon} \mathrm{1}_{|v|\leq 1} f_{*}^{2} \mu  dv dv_{*}
\nonumber \\&=& 2\pi  (3/2)^{\gamma} c_{3}
|\ln \epsilon|^{-1}\epsilon^{-2}\int |v_{*}|^{\gamma} \mathrm{1}_{|v_{*}|\geq R/\epsilon} f_{*}^{2}  dv_{*}.
\een
where $c_{3} = \int \mathrm{1}_{|v|\leq 1} \mu  dv$ and we
use the following relation for $|v_{*}|\geq R/\epsilon \geq 2, |v| \leq 1$,
\ben \label{v-vstar-is-vstar} |v_{*}|/2 \leq |v-v_{*}| \leq 3|v_{*}|/2.\een

Since $\sin(\theta/2) \geq \epsilon$, there holds $|v^{\prime}|+|v| \geq |v^{\prime}-v| = \sin\frac{\theta}{2}|v-v_{*}|\geq\epsilon|v-v_{*}|\geq \epsilon(|v_{*}|-|v|)$, and then $|v^{\prime}|+(1+\epsilon)|v| \geq \epsilon|v_{*}| \geq R$. From which we have
$R^{2} \leq (|v^{\prime}|+2|v|)^{2} \leq 8 (|v^{\prime}|^{2}+|v|^{2})$ and
\ben \label{relax-cross-term}
\mu^{\prime 1/2}\mu^{1/2} = (2\pi)^{-3/2} e^{-\frac{|v^{\prime}|^{2}+|v|^{2}}{4}} \leq (2\pi)^{-3/2}e^{-\frac{|v|^{2}}{8}}e^{-\frac{R^{2}}{2^{6}}}.
\een
Then by \eqref{relax-cross-term}, the upper bound in \eqref{order-0}, the lower bound in \eqref{v-vstar-is-vstar},
we have
\ben \label{J-2-upper-bound}
\mathcal{J}_{2}^{\epsilon,\gamma}(R) &=&2\int b^{\epsilon} |v-v_{*}|^{\gamma} \mathrm{1}_{|v_{*}|\geq R/\epsilon} \mathrm{1}_{|v|\leq 1} f_{*}^{2}  \mu^{\prime 1/2}\mu^{1/2} d\sigma dv dv_{*}
\nonumber \\ &\leq&2 (2\pi)^{-3/2} e^{-\frac{R^{2}}{2^{6}}} \int b^{\epsilon} |v-v_{*}|^{\gamma} \mathrm{1}_{|v_{*}|\geq R/\epsilon} \mathrm{1}_{|v|\leq 1} f_{*}^{2}  e^{-\frac{|v|^{2}}{8}} d\sigma dv dv_{*}
\nonumber \\ &\leq& 8\pi (2\pi)^{-3/2} e^{-\frac{R^{2}}{2^{6}}}  |\ln \epsilon|^{-1}\epsilon^{-2}
\int |v-v_{*}|^{\gamma} \mathrm{1}_{|v_{*}|\geq R/\epsilon} \mathrm{1}_{|v|\leq 1} f_{*}^{2}  e^{-\frac{|v|^{2}}{8}} dv dv_{*}
\nonumber \\ &\leq& 8\pi (2\pi)^{-3/2}e^{-\frac{R^{2}}{2^{6}}} (1/2)^{\gamma} |\ln \epsilon|^{-1}\epsilon^{-2}
\int |v_{*}|^{\gamma} \mathrm{1}_{|v_{*}|\geq R/\epsilon} \mathrm{1}_{|v|\leq 1} f_{*}^{2}  e^{-\frac{|v|^{2}}{8}} dv dv_{*}
\nonumber \\ &=& 8\pi (2\pi)^{-3/2}e^{-\frac{R^{2}}{2^{6}}} (1/2)^{\gamma}c_{4}|\ln \epsilon|^{-1}\epsilon^{-2}
\int |v_{*}|^{\gamma} \mathrm{1}_{|v_{*}|\geq R/\epsilon} f_{*}^{2} dv_{*},
\een
where $c_{4} = \int \mathrm{1}_{|v|\leq 1}  e^{-\frac{|v|^{2}}{8}} dv$.
Plugging \eqref{J-1-lower-bound} and \eqref{J-2-upper-bound} into \eqref{large-part-lower-bound}, we arrive at for any $\epsilon \leq 1/2, R \geq 1$,
\ben \label{general-result-large}
\mathcal{N}^{\epsilon,\gamma,1}(f,\mu^{1/2}) \geq  (C_{3} - C_{4} e^{-\frac{R^{2}}{2^{6}}})|\ln \epsilon|^{-1} \epsilon^{-2}\int | v_{*} |^{\gamma}\mathrm{1}_{|v_{*}|\geq R/\epsilon}  f_{*}^{2} dv_{*},
\een
where $C_{3}=2\pi  (3/2)^{\gamma} c_{3}, C_{4}= 8\pi (2\pi)^{-3/2} (1/2)^{\gamma}c_{4}$.

{\it{Step 3: $|v_{*}| \geq \delta/\epsilon.$}}
Note that the above estimate \eqref{general-result-large} is valid for any $R \geq 1$ and $\epsilon \leq 1/2$.
For the fixed $\delta>0$ in {\it Step 1}, we choose $N$ large enough such that $N\delta \geq 1$ and $C_{3} - C_{4} e^{-\frac{(N\delta)^{2}}{2^{6}}} \geq C_{3}/2$.
Taking $R=N\delta$ in \eqref{general-result-large}, when $\epsilon$ is small such that $N\epsilon \leq 1/2$,
we have
\ben \label{geq-delta-ep-to-minus-1}
\mathcal{N}^{\epsilon,\gamma,1}(f,\mu^{1/2}) \geq |\ln \epsilon|^{-1} |\ln (N\epsilon)|
\mathcal{N}^{N\epsilon,\gamma,1}(f,\mu^{1/2})
\geq (C_{3}/2)N^{-2}|\ln \epsilon|^{-1} \epsilon^{-2}\int | v_{*} |^{\gamma}\mathrm{1}_{|v_{*}|\geq \delta/\epsilon}  f_{*}^{2} dv_{*}.
\een

{\it{Step 4: gain of the weight $W^{\epsilon}$.}} When  $|v_{*}| \geq 16/\pi \geq 4$, we get $|v_{*}|^{2} \leq 1+ |v_{*}|^{2} \leq \frac{17}{16}|v_{*}|^{2}$, which gives
\beno
| v_{*} |^{\gamma+2} \geq \min \{ 1, (17/16)^{-\gamma/2-1}\} \langle v_{*} \rangle^{\gamma+2},  | v_{*} |^{\gamma} \geq \min \{ 1, (17/16)^{-\gamma/2}\} \langle v_{*} \rangle^{\gamma}
\eeno
For simplicity, set $C_{1}(\gamma)=\min \{ 1, (17/16)^{-\gamma/2-1}\}, C_{2}(\gamma)= \{ 1, (17/16)^{-\gamma/2}\}$.
It is obvious
$|f|^{2}_{L^{2}_{\gamma/2}} \geq  \int \mathrm{1}_{|v_{*}|\leq 16/\pi}  \langle v_{*} \rangle^{\gamma} f_{*}^{2} dv_{*}.$
From these facts together with
\eqref{lowerboundvstarsmall}, \eqref{geq-delta-ep-to-minus-1},
we arrive at
\ben \label{N-1-lower-bound}
\mathcal{N}^{\epsilon,\gamma,1}(f,\mu^{1/2}) + |f|^{2}_{L^{2}_{\gamma/2}}&\geq&
\int \mathrm{1}_{|v_{*}|\leq 16/\pi}  \langle v_{*} \rangle^{\gamma}  f_{*}^{2} dv_{*}
\nonumber \\&& +(C_{1}/4)C_{1}(\gamma)|\ln \epsilon|^{-1}\int (\ln (4\delta)-\ln |v_*|-\ln \epsilon) \langle v_{*} \rangle^{\gamma+2}\mathrm{1}_{16/\pi \leq |v_{*}|\leq \delta/\epsilon}  f_{*}^{2} dv_{*}
\nonumber  \\&& +(C_{3}/4)N^{-2}C_{2}(\gamma)|\ln \epsilon|^{-1}\epsilon^{-2}\int \langle v_{*} \rangle^{\gamma}\mathrm{1}_{|v_{*}|\geq \delta/\epsilon}  f_{*}^{2} dv_{*}.
\een

By \eqref{upper-bound-cf}, we have $\mathrm{1}_{|v_{*}| \leq 16/\pi}W^{\epsilon}(v_{*}) \leq (1+16^{2}/\pi^{2})^{1/2},$ which gives
\ben \label{reduce-to-the-exact-form-near-origin}  \mathrm{1}_{|v_{*}| \leq 16/\pi}  \geq  (1+16^{2}/\pi^{2})^{-1}\mathrm{1}_{|v_{*}| \leq 16/\pi} (W^{\epsilon})^{2}(v_{*}).\een
In the middle region $16/\pi \leq |v_{*}|\leq \delta/\epsilon$, we have
\ben \label{reduce-to-the-exact-form}  \langle v_{*} \rangle^{2}|\ln \epsilon|^{-1}(\ln (4\delta)-\ln |v_*|-\ln \epsilon)
\geq \frac{\ln 4}{1-\ln
\delta} \langle v_{*} \rangle^{2}|\ln \epsilon|^{-1}(1-\ln |v_*|-\ln \epsilon)
\geq \frac{\ln 4}{1-\ln
\delta}(W^{\epsilon})^{2}(v_{*}). \een
In the large velocity region $|v_{*}|\geq \delta/\epsilon$, by \eqref{upper-bound-cf}, we have
\ben \label{reduce-to-the-exact-form-far-origin} |\ln \epsilon|^{-1}\epsilon^{-2} \geq (W^{\epsilon})^{2}(v_{*}). \een
Plugging \eqref{reduce-to-the-exact-form-near-origin}, \eqref{reduce-to-the-exact-form} and \eqref{reduce-to-the-exact-form-far-origin} into \eqref{N-1-lower-bound}, we get
\beno
\mathcal{N}^{\epsilon,\gamma,1}(f,\mu^{1/2}) + |f|^{2}_{L^{2}_{\gamma/2}}
\geq C(\gamma)|W^{\epsilon}f|^{2}_{L^{2}_{\gamma/2}},
\eeno
where $C(\gamma) = \min\{(1+16^{2}/\pi^{2})^{-1}, (C_{1}/4)C_{1}(\gamma)\frac{\ln 4}{1-\ln
\delta}, (C_{3}/4)N^{-2}C_{2}(\gamma)\}$ is a positive constant depending only on $\gamma$.
The proof of the lemma is complete.
\end{proof}

We next show that the lower bound in Lemma \ref{lowerboundpart1} is sharp in the following sense.
\begin{lem}\label{upperboundpart1} Let $-3 \leq \gamma \leq 0$. There exists $\epsilon_{0} >0$ such that for  $0< \epsilon \leq \epsilon_{0}$ and $ \eta\ge0$,
\beno
\mathcal{N}^{\epsilon,\gamma,\eta}(f,\mu^{1/2}) \lesssim |W^{\epsilon}f|^{2}_{L^{2}_{\gamma/2}}.
\eeno\end{lem}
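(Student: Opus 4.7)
The plan is to reduce to the case $\eta = 0$ (since dropping $\mathrm{1}_{|v-v_{*}|\ge\eta}\le 1$ only enlarges the integrand) and then prove the pointwise bound
$$\Phi(v_{*}) := \int b^{\epsilon}(\cos\theta)|v-v_{*}|^{\gamma}(\mu^{\prime 1/2}-\mu^{1/2})^{2}d\sigma\,dv \lesssim (W^{\epsilon}(v_{*}))^{2}\langle v_{*}\rangle^{\gamma},$$
from which the lemma follows by Fubini. The splitting that matches $W^{\epsilon}$ exactly is the angular threshold $s_{*}(v_{*}) := \min(1/\langle v_{*}\rangle, \sqrt{2}/2)$, namely the scale above which Taylor's formula ceases to beat the trivial estimate $|\mu^{\prime 1/2}-\mu^{1/2}|\le \mu^{\prime 1/2}+\mu^{1/2}$.

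On the small-angle piece $\sin(\theta/2)\le s_{*}$, Taylor's formula combined with Cauchy--Schwarz and $|\nabla\mu^{1/2}(w)|^{2} = |w|^{2}\mu(w)/4 \lesssim \mu^{1/2}(w)$ gives $(\mu^{\prime 1/2}-\mu^{1/2})^{2} \lesssim \sin^{2}(\theta/2)|v-v_{*}|^{2}\int_{0}^{1}\mu^{1/2}(v_{\kappa})\,d\kappa$ with $v_{\kappa} = v+\kappa(v'-v)$. For each fixed $v_{*},\sigma,\kappa$ I then change variables $v\mapsto w = v_{\kappa}$; the Jacobian computation already carried out in the proof of Lemma \ref{lowerboundpart1} (cf.\ \eqref{change-exact-formula}--\eqref{change-Jacobean-bound}), together with Assumption (A2) ensuring $\cos\theta\ge 0$, yields $|\det(\partial w/\partial v)|=(1-\kappa/2)^{2}(1-\kappa(1-\cos\theta)/2)\ge 1/8$ and $|w-v_{*}|^{2} = |v-v_{*}|^{2}(1-2\kappa(1-\kappa)\sin^{2}(\theta/2))$, so $|v-v_{*}|\sim|w-v_{*}|$ uniformly. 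Fubini then separates
$$\Phi_{\text{small}}(v_{*}) \lesssim \Big(\int|w-v_{*}|^{\gamma+2}\mu^{1/2}(w)\,dw\Big) \Big(\int b^{\epsilon}\mathrm{1}_{\sin(\theta/2)\le s_{*}}\sin^{2}(\theta/2)\,d\sigma\Big).$$
The first factor is $\lesssim \langle v_{*}\rangle^{\gamma+2}$ (split $|w-v_{*}|\lessgtr|v_{*}|/2$: the inner part is super-exponentially small via the Gaussian, the outer is bounded by $(|v_{*}|/2)^{\gamma+2}$ since $\gamma+2\in[-1,2]$); the second equals $4|\ln\epsilon|^{-1}[\ln(s_{*}/\epsilon)]_{+}$ by a direct integration in $d\sigma = 4\sin(\theta/2)d\sin(\theta/2)d\varphi$, which is $\lesssim 1-\ln\langle v_{*}\rangle/|\ln\epsilon|$ for $\langle v_{*}\rangle\le 1/\epsilon$ and vanishes otherwise. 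Multiplying, the product matches the three pieces of \eqref{charicter function} defining $(W^{\epsilon})^{2}\langle v_{*}\rangle^{\gamma}$ by inspection.

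On the large-angle piece $\sin(\theta/2)\ge s_{*}$, I further split by $|v-v_{*}|\lessgtr 1$. For $|v-v_{*}|\le 1$, Taylor still applies, and since $|v_{\kappa}-v_{*}|\le 2$, the factor $\mu^{1/2}(v_{\kappa})$ contributes super-exponential decay in $\langle v_{*}\rangle$, giving a negligible piece. For $|v-v_{*}|\ge 1$, I use $|v-v_{*}|^{\gamma}\le 1$ (since $\gamma\le 0$) and $(\mu^{\prime 1/2}-\mu^{1/2})^{2}\le 2(\mu+\mu')$; the $\mu$-part factorizes into $\int b^{\epsilon}\mathrm{1}_{\sin(\theta/2)\ge s_{*}}d\sigma \cdot \int_{|v-v_{*}|\ge 1}|v-v_{*}|^{\gamma}\mu\,dv \lesssim |\ln\epsilon|^{-1}\langle v_{*}\rangle\cdot\langle v_{*}\rangle^{\gamma}$, which is dominated by $(W^{\epsilon})^{2}\langle v_{*}\rangle^{\gamma}$, while the $\mu'$-part is handled identically after the substitution $v\mapsto v'$ whose Jacobian equals $\cos^{2}(\theta/2)/4\ge 1/8$ (again via Assumption (A2)) and under which $|v-v_{*}|=|v'-v_{*}|/\cos(\theta/2)\sim|v'-v_{*}|$.

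The main technical obstacle is the careful Jacobian calculation for the change $v\mapsto v_{\kappa}$ uniformly in $\sigma\in\SS^{2}$ and $\kappa\in[0,1]$, and then aligning the angular-integral asymptotic $1-\ln\langle v_{*}\rangle/|\ln\epsilon|$ with the three-regime structure of $W^{\epsilon}$ at $|v_{*}|\lesssim 1$, $1\lesssim|v_{*}|\lesssim 1/\epsilon$, and $|v_{*}|\gtrsim 1/\epsilon$, which must be checked piecewise to confirm that the small-angle Taylor contribution is sharp on its natural scale and the large-angle crude contribution is subdominant.
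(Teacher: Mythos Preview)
Your approach is correct and genuinely different from the paper's. The paper avoids the explicit angular threshold entirely by the factorization
\[
(\mu^{\prime 1/2}-\mu^{1/2})^{2}\lesssim (\mu^{\prime 1/4}-\mu^{1/4})^{2}(\mu^{\prime 1/2}+\mu^{1/2}),
\]
which separates the problem into two pieces, each carrying a Gaussian weight in $v$ (resp.\ $v'$). It then bounds $(\mu^{\prime 1/4}-\mu^{1/4})^{2}\lesssim\min\{1,|v-v_{*}|^{2}\theta^{2}\}$ and invokes Proposition~\ref{symbol} as a black box to obtain $\int b^{\epsilon}\min\{1,|v-v_{*}|^{2}\theta^{2}\}\,d\sigma\sim (W^{\epsilon})^{2}(|v-v_{*}|)$ for $|v-v_{*}|\ge 2$; the submultiplicativity \eqref{separate-into-2-cf} then transfers $W^{\epsilon}$ to $v_{*}$, with the Gaussian absorbing the residual weight. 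Your route instead tunes the threshold $s_{*}(v_{*})=1/\langle v_{*}\rangle$ by hand and verifies piecewise that the small-angle Taylor piece reproduces the logarithmic correction in $W^{\epsilon}$ while the large-angle crude piece is subdominant. The paper's argument is shorter and reuses existing machinery; yours is more self-contained and makes the origin of the logarithmic structure of $W^{\epsilon}$ transparent.

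One minor correction: in your large-angle, $|v-v_{*}|\ge 1$ piece, the angular integral $\int b^{\epsilon}\mathrm{1}_{\sin(\theta/2)\ge s_{*}}\,d\sigma$ is $\sim|\ln\epsilon|^{-1}s_{*}^{-2}=|\ln\epsilon|^{-1}\langle v_{*}\rangle^{2}$ (not $|\ln\epsilon|^{-1}\langle v_{*}\rangle$). The conclusion is unaffected, since $|\ln\epsilon|^{-1}\langle v_{*}\rangle^{2}\lesssim (W^{\epsilon}(v_{*}))^{2}$ by \eqref{low-frequency-lb-cf} when $|v_{*}|\lesssim 1/\epsilon$, and for $|v_{*}|\gtrsim 1/\epsilon$ the full angular integral $\sim|\ln\epsilon|^{-1}\epsilon^{-2}$ matches the third branch of \eqref{charicter function}.
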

\begin{proof} It is obvious that $\mathcal{N}^{\epsilon,\gamma,\eta}(f,\mu^{1/2}) \leq \mathcal{N}^{\epsilon,\gamma,0}(f,\mu^{1/2})$. Therefore it suffices to consider the upper bound of $\mathcal{N}^{\epsilon,\gamma,0}(f,\mu^{1/2})$.
First we have
\beno
\mathcal{N}^{\epsilon,\gamma,0}(f,\mu^{1/2}) &\lesssim& \int B^{\epsilon,\gamma} f_{*}^{2} (\mu^{\prime 1/4}-\mu^{1/4})^{2}(\mu^{\prime 1/2}+\mu^{1/2}) d\sigma dv dv_{*}
\\&\lesssim& \int  B^{\epsilon,\gamma} f_{*}^{2} (\mu^{\prime 1/4}-\mu^{1/4})^{2}\mu^{\prime 1/2} d\sigma dv dv_{*} + \int  B^{\epsilon,\gamma} f_{*}^{2} (\mu^{\prime 1/4}-\mu^{1/4})^{2}\mu^{1/2} d\sigma dv dv_{*}
\\&:=& \mathcal{K}^{\epsilon,\gamma}_{1}(f) + \mathcal{K}^{\epsilon,\gamma}_{2}(f).
\eeno
By Taylor expansion, one has
$(\mu^{\prime 1/4} - \mu^{1/4})^{2} \lesssim \min\{1,|v-v_{*}|^{2}\theta^{2}\} \sim \min\{1,|v^{\prime}-v_{*}|^{2}\theta^{2}\}.$
By Proposition \ref{symbol}, we have
$
\int b^{\epsilon}(\cos\theta) \min\{1,|v-v_{*}|^{2}\theta^{2}\} d\sigma \sim |v-v_{*}|^{2} \mathrm{1}_{|v-v_{*}|\leq 2} + (W^{\epsilon})^2(|v-v_*|) \mathrm{1}_{|v-v_{*}|\ge 2}.
$
From which together with \eqref{separate-into-2-cf}, we have
\beno
\mathcal{K}^{\epsilon,\gamma}_{2}(f) &\lesssim& \int f_{*}^{2} \mathrm{1}_{|v-v_{*}|\leq 2}|v-v_{*}|^{\gamma+2}  \mu^{1/2} dv dv_{*} \\&&+
\int f_{*}^{2} \mathrm{1}_{|v-v_{*}|\geq 2} |v-v_{*}|^{\gamma}(W^{\epsilon})^{2}(v)(W^{\epsilon})^{2}(v_{*})  \mu^{1/2} dv dv_{*} \lesssim  |W^\epsilon f|_{L^2_{\gamma/2}}^2.
\eeno
The term $\mathcal{K}^{\epsilon,\gamma}_{1}(f)$ can be similarly estimated by the change of variable $v \rightarrow v^{\prime}$ (take $\kappa=1$ in \eqref{change-exact-formula}). Indeed, one has $\mathcal{K}^{\epsilon,\gamma}_{1}(f) \lesssim \int  b^{\epsilon}(\cos(2\theta^{\prime})) |v^{\prime}-v_{*}|^{\gamma} f_{*}^{2} (\mu^{\prime 1/4}-\mu^{1/4})^{2}\mu^{\prime 1/2} d\sigma dv^{\prime} dv_{*},$
where $\theta^{\prime}$ is the angle between $v^{\prime}-v_{*}$ and $\sigma$. With the fact $\theta^{\prime} = \theta/2$, we also have
\beno
\int b^{\epsilon}(\cos(2\theta^{\prime})) \min\{1,|v^{\prime}-v_{*}|^{2}(\theta^{\prime})^{2}\} d\sigma \lesssim  \mathrm{1}_{|v^{\prime}-v_{*}|\leq 2} |v^{\prime}-v_{*}|^{2}  +\mathrm{1}_{|v^{\prime}-v_{*}|\ge 2}(W^{\epsilon})^{2}(v^{\prime})(W^{\epsilon})^{2}(v_{*}).
\eeno
Thus by exactly the same argument as that for $\mathcal{K}^{\epsilon,\gamma}_{2}(f)$, we have $\mathcal{K}^{\epsilon,\gamma}_{1}(f) \lesssim |W^\epsilon f|_{L^2_{\gamma/2}}^2$.
The proof of the lemma is complete.
\end{proof}

\subsection{Gain of  regularity from $\mathcal{N}^{\epsilon,\gamma,\eta}(g,f)$ }
In what follows, we will focus on gain of regularity from  $\mathcal{N}^{\epsilon,\gamma,\eta}(g,f)$. Our strategy can be concluded as follows:
\begin{enumerate}
\item gain of regularity from $\mathcal{N}^{\epsilon,0,0}(g,f)$.

\item  gain of regularity from $\mathcal{N}^{\epsilon,0,\eta}(g,f)$ by reducing to  $\mathcal{N}^{\epsilon,0,0}(g,f)$.

\item  gain of regularity from $\mathcal{N}^{\epsilon,\gamma,\eta}(g,f)$ by reducing  to $\mathcal{N}^{\epsilon,0,\eta}(g,f)$.
\end{enumerate}

\subsubsection{Gain of  regularity from $\mathcal{N}^{\epsilon,0,0}(g,f)$ } We first show gain of Sobolev regularity. We recall from  \cite{alexandre2000entropy}
that  for $g\ge0$ with $|g|_{L^1}\ge\delta>0$ and $|g|_{L^1_1}\le \lambda$,
\beno \int b(\cos\theta)g_*(f'-f)^2d\sigma dv_*dv+|f|_{L^2}^2\ge C(\delta, \lambda)|a(D)f|_{L^2}^2, \eeno
where $a(\xi):= \int b(\f{\xi}{|\xi|}\cdot \sigma)\min\{ |\xi|^2\sin^2(\theta/2),1\} d\sigma + 1$.
Thanks to Proposition \ref{symbol}, we get
\begin{lem}\label{lowerboundpart1-general-g}
Let $g$ be a function such that $|g^2|_{L^{1}} \geq \delta >0, |g^2|_{L^{1}_{1}} \leq \lambda < \infty$, then there exists a constant $C(\delta, \lambda)$ such that
\ben\label{sobolev-regularity-general-g}
\mathcal{N}^{\epsilon,0,0}(g,f)+ |f|^{2}_{L^{2}} \geq  C(\delta, \lambda)|W^{\epsilon}(D)f|^{2}_{L^{2}} .
\een
\end{lem}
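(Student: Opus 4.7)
The plan is to reduce the estimate to a direct application of the Alexandre--Desvillettes--Villani--Wennberg coercivity inequality recalled from \cite{alexandre2000entropy}, specialised to the cross section $b^\epsilon$, and then to identify the resulting Fourier symbol with the characteristic function $W^\epsilon$ via Proposition \ref{symbol}.

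First I would set $G:=g^{2}$, which is nonnegative. The hypotheses $|g^{2}|_{L^{1}}\ge\delta$ and $|g^{2}|_{L^{1}_{1}}\le\lambda$ translate verbatim into $|G|_{L^{1}}\ge\delta$ and $|G|_{L^{1}_{1}}\le\lambda$, which are precisely the prerequisites for the entropy-dissipation bound quoted above. Applying that bound with angular kernel $b^{\epsilon}$ and noting that $\mathcal{N}^{\epsilon,0,0}(g,f)=\int b^{\epsilon}(\cos\theta)\,G_{*}(f'-f)^{2}\,d\sigma\,dv_{*}\,dv$, I obtain
\begin{equation*}
\mathcal{N}^{\epsilon,0,0}(g,f)+|f|_{L^{2}}^{2}\ \geq\ C(\delta,\lambda)\,|a^{\epsilon}(D)f|_{L^{2}}^{2},
\end{equation*}
where
\begin{equation*}
a^{\epsilon}(\xi)\ :=\ \int_{\SS^{2}}b^{\epsilon}\!\Bigl(\tfrac{\xi}{|\xi|}\cdot\sigma\Bigr)\min\bigl\{|\xi|^{2}\sin^{2}(\theta/2),1\bigr\}\,d\sigma\ +\ 1.
\end{equation*}
The constant $C(\delta,\lambda)$ produced by \cite{alexandre2000entropy} is independent of the angular shape of the cross section; all $\epsilon$-dependence is pushed into the symbol $a^{\epsilon}$.

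The remaining step is the symbolic comparison $a^{\epsilon}(\xi)\gtrsim (W^{\epsilon}(\xi))^{2}$ pointwise in $\xi$, after which Plancherel concludes. This is exactly the content of Proposition \ref{symbol}, which the authors already use (in the $(v-v_{*})$ variable) inside the proof of Lemma \ref{upperboundpart1}: the angular integral behaves like $|\xi|^{2}$ when $|\xi|\le 2$ and like $(W^{\epsilon}(\xi))^{2}$ when $|\xi|\ge 2$, so adding the $+1$ term and invoking $W^{\epsilon}(\xi)\lesssim\langle\xi\rangle$ gives $a^{\epsilon}(\xi)\gtrsim (W^{\epsilon}(\xi))^{2}$ uniformly in $\epsilon$. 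Since both ingredients are available off the shelf, there is no serious obstacle; the only verification that needs care is that the constants in \cite{alexandre2000entropy} do not degenerate as $\epsilon\to 0$, which is guaranteed by the normalisation $\lambda_{1}^{\epsilon}\sim 1$ built into $b^{\epsilon}$ through \eqref{firstegenvalueLA}, and by the fact that the whole $\epsilon$-dependence of the lower bound is absorbed into the symbol $a^{\epsilon}$.
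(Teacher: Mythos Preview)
Your proposal is correct and follows exactly the paper's own argument: set $G=g^{2}\ge 0$, apply the ADVW coercivity bound from \cite{alexandre2000entropy} with angular kernel $b^{\epsilon}$ to obtain the lower bound by $|a^{\epsilon}(D)f|_{L^{2}}^{2}$, and then use Proposition~\ref{symbol} to identify $a^{\epsilon}(\xi)\gtrsim (W^{\epsilon}(\xi))^{2}$. The paper's proof is in fact just these two sentences; your only superfluous addition is the appeal to $\lambda_{1}^{\epsilon}\sim 1$ at the end, which is unnecessary here since the ADVW constant $C(\delta,\lambda)$ is by construction independent of the angular function (all $b^{\epsilon}$-dependence already sits in $a^{\epsilon}$).
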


Next we want to derive gain of anisotropic regularity from $\mathcal{N}^{\epsilon,0,0}(g,f)$.   In this part, we derive anisotropic regularity from  $\mathcal{N}^{\epsilon,\gamma,\eta}(\mu^{1/2},f)$. To this end, our strategy is to apply the geometric decomposition in the frequency space. More precisely, we  will use the following decomposition (see \eqref{geo-deco-frequency-space-another} in the proof of Lemma \ref{lowerboundpart2-general-g})
\ben
\hat{f}(\xi) - \hat{f}(\xi^{+}) &=& \left(\hat{f}(\xi) - \hat{f}(|\xi|\frac{\xi^{+}}{|\xi^{+}|})\right)+ \left(\hat{f}(|\xi|\frac{\xi^{+}}{|\xi^{+}|}) - \hat{f}(\xi^{+})\right) \nonumber
\\ \label{sphere-radius}&=& \text{ spherical part } + \text{ radial part}.
\een
We shall show that the ``spherical part'' produces anisotropic regularity. Namely,
\begin{lem}\label{spherical-part} Let $\mathcal{A}^{\epsilon}(f) := \int b^{\epsilon}(\frac{\xi}{|\xi|} \cdot \sigma)|\hat{f}(\xi) - \hat{f}(|\xi|\frac{\xi^{+}}{|\xi^{+}|})|^{2} d\sigma d\xi $ where $\xi^{+} = \frac{\xi + |\xi|\sigma}{2}$, then
\beno
\mathcal{A}^{\epsilon}(f) +|f|_{L^2}^2
\sim |W^{\epsilon}((-\Delta_{\mathbb{S}^{2}})^{1/2})f|^{2}_{L^{2}}+|f|^{2}_{L^{2}}.
\eeno
\end{lem}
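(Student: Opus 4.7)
The plan is to diagonalize $\mathcal{A}^\epsilon(f)$ in the spherical-harmonic basis on the Fourier side. By Plancherel, and the fact that $W^\epsilon((-\Delta_{\SS^2})^{1/2})$ commutes with the Fourier transform (since radial Hankel transforms preserve each spherical-harmonic degree), both sides of the asserted equivalence may be rewritten in terms of $\hat f$. Writing $\xi = r\tau$ with $r > 0$ and $\tau \in \SS^2$, one has $|\xi|\xi^+/|\xi^+| = r\tau'$ with $\tau' := (\tau+\sigma)/|\tau+\sigma|$; a direct geometric check shows the angle between $\tau$ and $\tau'$ is exactly $\theta/2$ where $\cos\theta = \tau\cdot\sigma$. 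Writing $d\xi = r^2\,dr\,d\tau$ reduces $\mathcal{A}^\epsilon(f)$ to a one-parameter family (in $r$) of quadratic forms on $L^2(\SS^2)$,
\[
Q^\epsilon(F) := \int_{\SS^2}\int_{\SS^2} b^\epsilon(\tau\cdot\sigma)|F(\tau) - F(\tau')|^2\, d\sigma\, d\tau,
\]
integrated against $r^2\,dr$ with $F(\tau) = \hat f(r\tau)$.

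Next, the symmetric operator $T$ associated to $Q^\epsilon$ commutes with the regular representation of $SO(3)$ on $L^2(\SS^2)$, because the map $(\tau,\sigma)\mapsto\tau'$ is $SO(3)$-equivariant and the kernel $b^\epsilon(\tau\cdot\sigma)$ is rotationally invariant. By Schur's lemma applied to the irreducible decomposition $L^2(\SS^2) = \bigoplus_{l\geq 0}\mathrm{span}\{Y_l^m\}_{|m|\leq l}$, $T$ acts as multiplication by a scalar $\lambda_l^\epsilon$ on each degree-$l$ component, and with the expansion $\hat f(r\tau) = \sum_{l,m} h_l^m(r) Y_l^m(\tau)$ this gives
\[
\mathcal{A}^\epsilon(f) = \int_0^\infty r^2\,dr \sum_{l,m} \lambda_l^\epsilon |h_l^m(r)|^2.
\]
Summing the identity $Q^\epsilon(Y_l^m) = \lambda_l^\epsilon$ over $|m|\leq l$ and invoking the addition theorem $\sum_m Y_l^m(\tau)\overline{Y_l^m(\tau')} = \tfrac{2l+1}{4\pi}P_l(\tau\cdot\tau')$ together with $\tau\cdot\tau' = \cos(\theta/2)$ produces the explicit value
\[
\lambda_l^\epsilon = 2\int_{\SS^2} b^\epsilon(\cos\theta)\bigl(1 - P_l(\cos(\theta/2))\bigr)\, d\sigma.
\]
In parallel, \eqref{DeltaWe} and Plancherel give $|W^\epsilon((-\Delta_{\SS^2})^{1/2})f|_{L^2}^2 = \int_0^\infty r^2\,dr \sum_{l,m}[W^\epsilon(\sqrt{l(l+1)})]^2 |h_l^m(r)|^2$, so the lemma reduces to the pointwise estimate
\[
\lambda_l^\epsilon + 1 \sim [W^\epsilon(\sqrt{l(l+1)})]^2 + 1, \qquad \textrm{uniformly in } l\in \N \textrm{ and small } \epsilon.
\]

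Finally, to verify this two-sided bound, I would use the classical estimate $1 - P_l(\cos(\theta/2)) \sim \min\{1, l(l+1)\theta^2\}$ (stemming from $P_l(\cos\phi) = 1 - \tfrac{l(l+1)}{4}\phi^2 + O(\phi^4)$ and $|P_l|\leq 1$) together with $b^\epsilon(\cos\theta)\sin\theta \sim |\ln\epsilon|^{-1}\theta^{-3}\mathrm{1}_{2\epsilon \lesssim \theta \leq \pi/2}$; splitting the integral $\int_\epsilon^{\pi/2}\theta^{-3}\min\{1,l(l+1)\theta^2\}\,d\theta$ at $\theta \sim (l(l+1))^{-1/2}$ reproduces the three regimes encoded in \eqref{charicter function}: quadratic behavior $\sim \langle\sqrt{l(l+1)}\rangle^2$ for $\sqrt{l(l+1)}\lesssim 1$, intermediate logarithmic behavior $\sim l(l+1)\bigl(1 - \ln\sqrt{l(l+1)}/|\ln\epsilon|\bigr)$ for $1\lesssim \sqrt{l(l+1)}\lesssim \epsilon^{-1}$, and saturation $\sim |\ln\epsilon|^{-1}\epsilon^{-2}$ for $\sqrt{l(l+1)}\gtrsim \epsilon^{-1}$. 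The main technical obstacle lies in the transition at $\theta \sim 1/\sqrt{l(l+1)}$, where tight two-sided bounds on $1 - P_l(\cos(\theta/2))$ are needed to reproduce the exponent $\tfrac12$ in the correction factor $(1 - \ln|y|/|\ln\epsilon| + 1/|\ln\epsilon|)^{1/2}$ built into $W^\epsilon$; bounded-in-$l$ discrepancies, and in particular the $l = 0$ mode for which $\lambda_0^\epsilon = 0$ while $[W^\epsilon(0)]^2 = 1$, are harmlessly absorbed by the $|f|_{L^2}^2$ term present on both sides of the equivalence.
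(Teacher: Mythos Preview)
Your approach is correct and takes a genuinely different route from the paper. Both arguments ultimately diagonalize an $SO(3)$-invariant quadratic form on $L^2(\SS^2)$ via spherical harmonics, but they estimate the eigenvalues $\lambda_l^\epsilon$ very differently. You compute $\lambda_l^\epsilon$ in closed form via the addition theorem and then invoke the classical two-sided estimate $1 - P_l(\cos\phi) \sim \min\{1,\, l(l+1)\phi^2\}$ for Legendre polynomials; the paper instead first bounds $b^\epsilon(\cos\theta)$ by $|\ln\epsilon|^{-1}|\tau-\varsigma|^{-4}\mathrm{1}_{|\tau-\varsigma|\gtrsim\epsilon}$ after the change $(\xi,\sigma)\mapsto(r,\tau,\varsigma)$ and defers the hard work to Proposition~\ref{anisotropic-key-pro}, whose proof localizes $Y^m_l$ to coordinate patches $\vartheta_{ij}$ on $\SS^2$, applies a second-order Taylor expansion in each chart for the regime $\epsilon^2 l(l+1)\lesssim 1$, and integrates by parts against $(-\Delta_{\SS^2})Y^m_l = l(l+1)Y^m_l$ for the regime $\epsilon^2 l(l+1)\gtrsim 1$. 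Your route is shorter and more transparent, provided the Legendre lower bound is justified carefully: the inequality $1 - P_l(\cos\phi)\gtrsim c$ for $\phi\gtrsim 1/l$ does \emph{not} follow from $|P_l|\leq 1$ alone and needs something like the Hilb asymptotic $P_l(\cos\phi)\approx J_0\bigl((l+\tfrac12)\phi\bigr)$ or an equivalent direct argument. The paper's localization machinery trades this special-function input for more hands-on real-variable analysis and is reused elsewhere (e.g.\ in upper-bound commutator estimates), but for this lemma in isolation your spectral computation is cleaner.
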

\begin{proof}
Let $ r= |\xi|, \tau = \xi/|\xi|$ and $\varsigma = \frac{\tau+\sigma}{|\tau+\sigma|}$, then $\frac{\xi}{|\xi|} \cdot \sigma = 2(\tau\cdot\varsigma)^{2} - 1$ and $|\xi|\frac{\xi^{+}}{|\xi^{+}|} = r \varsigma$. For the change of variable $(\xi, \sigma) \rightarrow (r, \tau, \varsigma)$, one has
$
d\xi d\sigma = 4  (\tau\cdot\varsigma) r^{2} dr d \tau d \varsigma.
$ Let $\theta$ be the angle between $\tau$ and $\sigma$, then $2 \sin\frac{\theta}{2} = |\tau-\sigma|, |\tau - \varsigma| = 2(1-\cos \frac{\theta}{2})$ and thus
$\sin\frac{\theta}{2} = \frac{1}{2} |\tau-\sigma| \leq |\tau - \varsigma| \leq |\tau-\sigma| = 2\sin\frac{\theta}{2}$.
Therefore
\beno
|\ln \epsilon |^{-1} |\tau - \varsigma|^{-4} \mathrm{1}_{|\tau - \varsigma| \geq 2\epsilon} \leq  b^{\epsilon}(\cos\theta) \leq  |\ln \epsilon |^{-1} 2^{4} |\tau - \varsigma|^{-4} \mathrm{1}_{|\tau - \varsigma| \geq \epsilon}.
\eeno
With the help of \eqref{similarlemma5.6} in Proposition \ref{anisotropic-key-pro}, we have
\beno
\mathcal{A}^{\epsilon}(f) +|f|_{L^2}^2 &=& 4 \int_{0}^{\infty} \int_{\mathbb{S}^2 \times \mathbb{S}^2} b^{\epsilon}(\cos\theta)|\hat{f}(r\tau) - \hat{f}(r\varsigma)|^{2} (\tau\cdot\varsigma) r^{2} dr d \tau d \varsigma+|f|_{L^2}^2
\\&\lesssim& |\ln \epsilon|^{-1} \int |\hat{f}(r\tau) - \hat{f}(r\varsigma)|^{2}|\tau - \varsigma|^{-4}\mathrm{1}_{|\tau-\varsigma| \geq \epsilon}  r^{2} dr d \tau d \varsigma+|f|_{L^2}^2
\\&\lesssim& |W^{\epsilon}((-\Delta_{\mathbb{S}^{2}})^{1/2})\hat{f}|^{2}_{L^{2}}+ |\hat{f}|^{2}_{L^{2}}
= |W^{\epsilon}((-\Delta_{\mathbb{S}^{2}})^{1/2})f|^{2}_{L^{2}}+|f|^{2}_{L^{2}},
\eeno
where we use Lemma \ref{comWep} and Plancherel's  theorem in the last line. Thanks to \eqref{similarlemma5.6} in Proposition \ref{anisotropic-key-pro} and Remark \ref{2-epsilon-statement}, we similarly get the $\gtrsim$ direction.
\end{proof}

The ``radial part'' in \eqref{sphere-radius} is controllable by gain of $W^{\epsilon}$   in the phase and frequency space. Namely,

\begin{lem}\label{gammanonzerotozero}
	Let $
	\mathcal{Z}^{\epsilon,\gamma}(f) := \int b^{\epsilon}(\frac{\xi}{|\xi|}\cdot\sigma)\langle \xi\rangle^{\gamma} |\hat{f}(|\xi|\frac{\xi^{+}}{|\xi^{+}|}) - \hat{f}(\xi^{+})|^{2} d\sigma d\xi
	$ with $\xi^{+} = \frac{\xi + |\xi|\sigma}{2}$ and $\gamma\le0$. Then
	\beno
	\mathcal{Z}^{\epsilon,\gamma}(f) \lesssim |W^{\epsilon}(D)W_{\gamma/2}f|^{2}_{L^{2}} + |W^{\epsilon}W_{\gamma/2}f|^{2}_{L^{2}}.
	\eeno
\end{lem}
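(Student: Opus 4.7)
The crucial observation is geometric: setting $\omega:=\xi^+/|\xi^+|$, we have $|\xi|\xi^+/|\xi^+|=|\xi|\omega$ and $\xi^+=|\xi|\cos(\theta/2)\omega$, so both arguments lie on the common ray from the origin through $\omega$, with radial separation $|\xi|(1-\cos(\theta/2))\sim|\xi|\sin^2(\theta/2)$. Thus $\mathcal Z^{\epsilon,\gamma}(f)$ measures a purely \emph{radial} variation of $\hat f$ along each ray, and the plan is to reduce to a one-dimensional problem. Passing to polar coordinates $\xi=r\tau$ and applying the substitution $(\xi,\sigma)\to(r,\tau,\omega)$ from the proof of Lemma~\ref{spherical-part} (with $d\xi\,d\sigma=4r^2(\tau\cdot\omega)\,dr\,d\tau\,d\omega$), and noting that the integrand depends on $\tau$ only through $\alpha:=\tau\cdot\omega=\cos(\theta/2)\in[\sqrt2/2,\sqrt{1-\epsilon^2}]$ (by Assumption (A2) and $\sin(\theta/2)\ge\epsilon$), the azimuthal integration yields
\beno
\mathcal Z^{\epsilon,\gamma}(f)\sim|\ln\epsilon|^{-1}\int_{\mathbb S^2}\int_0^\infty\int_{\sqrt2/2}^{\sqrt{1-\epsilon^2}}\frac{\alpha\,\langle r\rangle^\gamma r^2}{(1-\alpha^2)^2}|G_\omega(r)-G_\omega(r\alpha)|^2\,d\alpha\,dr\,d\omega,
\eeno
where $G_\omega(r):=\hat f(r\omega)$.

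For each fixed $\omega$, I would split the $(r,\alpha)$-domain according to whether $r(1-\alpha)\lesssim 1$ (Taylor regime) or $r(1-\alpha)\gtrsim 1$ (triangle regime). In the triangle regime, the bound $|G_\omega(r)-G_\omega(r\alpha)|^2\le 2|G_\omega(r)|^2+2|G_\omega(r\alpha)|^2$ combined with the change of variable $\zeta=r\omega$ or $\zeta=r\alpha\omega$ (bounded Jacobian since $\alpha\ge\sqrt2/2$) and the elementary estimate $\int(1-\alpha^2)^{-2}\alpha\,\mathrm 1_{r(1-\alpha)\ge1}\,d\alpha\sim\min(r,\epsilon^{-2})$ produces a contribution $\int\langle\zeta\rangle^\gamma|\ln\epsilon|^{-1}\min(|\zeta|,\epsilon^{-2})|\hat f(\zeta)|^2\,d\zeta$, which is absorbed into $|W^\epsilon(D)W_{\gamma/2}f|_{L^2}^2$ via the symbol estimate $|\ln\epsilon|^{-1}\min(|\zeta|,\epsilon^{-2})\lesssim W^\epsilon(\zeta)^2$ from Lemma~\ref{property-of-symbol} together with $\langle\zeta\rangle^\gamma\le 1$ for $\gamma\le 0$. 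In the Taylor regime I would write
\beno
G_\omega(r)-G_\omega(r\alpha)=r(1-\alpha)\int_0^1 G_\omega'(r(\alpha+s(1-\alpha)))\,ds,\qquad G_\omega'(\rho)=-i\,\omega\cdot\widehat{vf}(\rho\omega),
\eeno
apply Cauchy-Schwarz in $s$, the bounded change $r\mapsto\rho_s=r(\alpha+s(1-\alpha))$, and the algebraic cancellation $(1-\alpha^2)^{-2}(1-\alpha)^2=(1+\alpha)^{-2}\sim 1$ to arrive at a bound
$\int|\ln\epsilon|^{-1}\langle\zeta\rangle^\gamma\min(|\zeta|,|\zeta|^2)|\widehat{vf}(\zeta)|^2\,d\zeta$ (the $\alpha$-integration constraint forces $|\zeta|\lesssim\epsilon^{-2}$, on which $|\ln\epsilon|^{-1}|\zeta|^2\lesssim W^\epsilon(\zeta)^2$).

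The final step is to identify the Taylor contribution with a piece of the target RHS. Since $|\eta|=|\xi|$ gives $\langle\xi\rangle^{\gamma/2}\hat f(\eta)=\widehat{\langle D\rangle^{\gamma/2}f}(\eta)$ and $\langle\eta'\rangle\sim\langle\xi\rangle$ (as $\cos(\theta/2)\in[\sqrt 2/2,1]$), the combined estimate can be recast as $\mathcal Z^{\epsilon,0}(\langle D\rangle^{\gamma/2}f)$ and reduced to the case $\gamma=0$, after which one controls the output via the commutator identity $W^\epsilon(D)(vg)=v\,W^\epsilon(D)g+[W^\epsilon(D),v]g$ (the commutator being bounded since $\nabla W^\epsilon\in S^0_{1,0}$) together with the norm equivalence of Lemma~\ref{equivalence}, bridging the Fourier-side weight $\langle D\rangle^{\gamma/2}$ to the physical-side weight $W_{\gamma/2}$. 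The principal obstacle lies precisely here: the Taylor step naturally generates the Fourier-side derivative $\widehat{vf}$, whereas the target features $W_{\gamma/2}$ as a physical-space weight, and the logarithmic corrections built into $W^\epsilon$ (cf.~\eqref{charicter function}) must be tracked carefully, with the sign restriction $\gamma\le 0$ exploited to dispose of $\langle\zeta\rangle^\gamma$ where convenient, to close this final reconciliation.
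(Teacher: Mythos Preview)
Your radial reduction via the change of variables $(\xi,\sigma)\to(r,\omega,\alpha)$ is exactly the paper's Step~1, and your Taylor/triangle split on the resulting one-dimensional integral would essentially reproduce what the paper packages as the black-box radial estimate Lemma~\ref{a-technical-lemma}. The real divergence is in how the weight $\langle\xi\rangle^\gamma$ is handled. The paper does the reduction $\gamma\to 0$ \emph{algebraically on the integrand, before any radial analysis}: since $w:=|\xi|\xi^+/|\xi^+|$ satisfies $|w|=|\xi|$, one has $\langle\xi\rangle^{\gamma/2}\hat f(w)=(W_{\gamma/2}\hat f)(w)$, and writing $\langle\xi\rangle^{\gamma/2}\hat f(\xi^+)=(W_{\gamma/2}\hat f)(\xi^+)\cdot W_{\gamma/2}(w)W_{-\gamma/2}(\xi^+)$ with $|W_{\gamma/2}(w)W_{-\gamma/2}(\xi^+)-1|\lesssim\theta^2$ gives at once
\[
\mathcal Z^{\epsilon,\gamma}(f)\lesssim \mathcal Z^{\epsilon,0}\bigl[W_{\gamma/2}\hat f\bigr]+|W_{\gamma/2}\hat f|_{L^2}^2,
\]
so the $\gamma=0$ case, applied to the weighted function, closes the argument immediately with no reconciliation needed.

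Your plan instead carries $\langle r\rangle^\gamma$ through the split and tries to reconcile at the end, and this creates a genuine gap. In your triangle regime you arrive at $\int\langle\zeta\rangle^\gamma|\ln\epsilon|^{-1}\min(|\zeta|,\epsilon^{-2})|\hat f(\zeta)|^2\,d\zeta$ and propose to discard $\langle\zeta\rangle^\gamma\le 1$ to land in $|W^\epsilon(D)f|_{L^2}^2$. But the inequality goes the wrong way: for $\gamma<0$, Lemma~\ref{equivalence} gives $|W^\epsilon(D)W_{\gamma/2}f|_{L^2}^2\sim|W_{\gamma/2}W^\epsilon(D)f|_{L^2}^2\le|W^\epsilon(D)f|_{L^2}^2$, so your bound is \emph{larger} than the target and is not absorbed by it. The same mismatch recurs in your final bridging step: the commutator $[W^\epsilon(D),v]$ being order zero is irrelevant to passing from the Fourier multiplier $\langle D\rangle^{\gamma/2}$ to the physical multiplication $W_{\gamma/2}$---these are simply different operators, and there is no general inequality $|W^\epsilon W_{\gamma/2}(D)f|_{L^2}\lesssim|W^\epsilon W_{\gamma/2}f|_{L^2}$. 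The resolution is not analytic but notational: in the paper's usage (see Proposition~\ref{ubqepsilonnonsingular}, where the lemma is applied with $T_{v_*}f$ in place of $\hat f$), the symbol $\hat f$ in the definition is just a label for the integrand function, and the $W_{\gamma/2}$ on the right-hand side acts on that same function. Under this reading the paper's algebraic reduction makes the Fourier-versus-physical bridging a non-issue.
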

\begin{proof} We divide the proof into two steps.
	
{\it Step 1: $\gamma=0$.}
By the change of variable $(\xi, \sigma) \rightarrow (r, \tau, \varsigma)$ with $\xi=r\tau$ and $\varsigma=\f{\sigma+\tau}{|\sigma+\tau|}$, we have
	\beno
	 \mathcal{Z}^{\epsilon, 0}(f) = 4 \int b^{\epsilon}(2(\tau\cdot\varsigma)^{2} - 1)|f(r\varsigma) - f((\tau\cdot\varsigma)r\varsigma)|^{2} (\tau\cdot\varsigma) r^{2} dr d \tau d \varsigma.
	\eeno
	Let $u = r\varsigma$, and $\theta$ be the angle between $\tau$ and $\varsigma$. Since $b^{\epsilon}(2(\tau\cdot\varsigma)^{2} - 1) = b^{\epsilon}(\cos2\theta) \lesssim |\ln \epsilon|^{-1} \theta^{-4}$, and $r^{2} dr d \tau d \varsigma = \sin\theta du d\theta d\mathbb{S}$, we have
	\beno
	\mathcal{Z}^{\epsilon,0}(f) &\lesssim&  |\ln \epsilon|^{-1} \int_{\R^{3}} \int_{\epsilon/2}^{\pi/4} \theta^{-3}|\hat{f}(u) - \hat{f}(u\cos\theta)|^{2} du d\theta
	\\&\lesssim& |\ln \epsilon|^{-1} \int \theta^{-3}|f(u) - f(u/\cos\theta)|^{2} du d\theta
	\lesssim |W^{\epsilon}(D) f|^{2}_{L^{2}} + |W^{\epsilon} f|^{2}_{L^{2}},
	\eeno
where we use Plancherel's  theorem and Lemma \ref{a-technical-lemma} in the last line.

{\it Step 2: general cases.}	We  reduce the general case to the special case $\gamma = 0$. For simplicity, we denote $w = |u|\frac{u^{+}}{|u^{+}|}$, then $W_{\gamma}(u) = W_{\gamma}(w)$. From which we have
	\beno
	\langle u\rangle^{\gamma} (f(w)-f(u^{+}))^{2} &=& \left((W_{\gamma/2}f)(w)-(W_{\gamma/2}f)(u^{+})
	 + (W_{\gamma/2}f)(u^{+})(1-W_{\gamma/2}(w)W_{-\gamma/2}(u^{+}))\right)^{2}
	\\&\leq& 2 |(W_{\gamma/2}f)(u^{+})-(W_{\gamma/2}f)(w)|^{2}
	+ 2 |(W_{\gamma/2}f)(u^{+})|^{2}|1-W_{\gamma/2}(w)W_{-\gamma/2}(u^{+})|^{2}.
	\eeno
	Thus we have
	\beno
	\mathcal{Z}^{\epsilon,\gamma}(f) &\lesssim& \mathcal{Z}^{\epsilon,0}(W_{\gamma/2}f)
	+  \int b^{\epsilon}(\frac{u}{|u|}\cdot\sigma)|(W_{\gamma/2}f)(u^{+})|^{2}|1-W_{\gamma/2}(w)W_{-\gamma/2}(u^{+})|^{2}  d\sigma du
	\\&:=& \mathcal{Z}^{\epsilon,0}(W_{\gamma/2}f) + \mathcal{A}.
	\eeno
	By noticing that
	$
	|W_{\gamma/2}(w)W_{-\gamma/2}(u^{+}) - 1| \lesssim \theta^{2},
	$
	we have
	$
	|\mathcal{A}| \lesssim  |W_{\gamma/2}f|^{2}_{L^{2}},
	$
	where the change of variable $u \rightarrow u^{+}$ is used. Together with the estimate in {\it Step 1}, we get
	the desired result.
\end{proof}

Now we are in a position to get $|W^{\epsilon}((-\Delta_{\mathbb{S}^{2}})^{1/2})f|^{2}_{L^{2}_{\gamma/2}}$ from $\mathcal{N}^{\epsilon,0,0 }(g,f)$.

\begin{lem}\label{lowerboundpart2-general-g}
For any smooth functions $g, f$, the follow two estimates hold true.
\ben\label{anisotropic-regularity-general-g} &&
\mathcal{N}^{\epsilon,0,0}(g,f) + |g|^{2}_{L^{2}_{1}}|W^{\epsilon}(D)f|^{2}_{L^{2}}+ |g|^{2}_{L^{2}} |W^{\epsilon}f|^{2}_{L^{2}} \gtrsim  |g|^{2}_{L^{2}} |W^{\epsilon}((-\Delta_{\mathbb{S}^{2}})^{1/2})f|^{2}_{L^{2}},\\
\label{anisotropic-regularity-general-g-up-bound}&&
\mathcal{N}^{\epsilon,0,0}(g,f) \lesssim  |g|^{2}_{L^{2}} |W^{\epsilon}((-\Delta_{\mathbb{S}^{2}})^{1/2})f|^{2}_{L^{2}} + |g|^{2}_{L^{2}_{1}}|W^{\epsilon}(D)f|^{2}_{L^{2}}+ |g|^{2}_{L^{2}} |W^{\epsilon}f|^{2}_{L^{2}}.
\een
\end{lem}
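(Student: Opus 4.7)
The plan is to recast $\mathcal{N}^{\epsilon,0,0}(g,f)$ into Fourier space via a Bobylev-type identity and then reduce to the spherical--radial geometric decomposition \eqref{sphere-radius}, applying Lemmas \ref{spherical-part} and \ref{gammanonzerotozero} already established above. Setting $G := g^{2}$, I would combine the pre-post collision involution $(v,v_{*},\sigma)\leftrightarrow(v',v_{*}',(v-v_{*})/|v-v_{*}|)$ (which preserves $b^\epsilon(\cos\theta)$ under Assumption $(\mathbf{A2})$) with the classical Bobylev identity applied to the cross term $\int b^\epsilon G(v_{*})f(v')f(v)\,dvdv_{*}d\sigma$ to derive the Fourier-side representation
\begin{equation*}
\mathcal{N}^{\epsilon,0,0}(g,f) = (2\pi)^{-3}|g|^{2}_{L^{2}}\!\int b^\epsilon\!\bigl(\tfrac{\xi}{|\xi|}\cdot\sigma\bigr)\,|\hat{f}(\xi)-\hat{f}(\xi^{+})|^{2}\,d\xi\,d\sigma + \mathcal{R}(g,f),
\end{equation*}
with $\xi^{\pm} := (\xi \pm |\xi|\sigma)/2$ and a remainder governed by $|\mathcal{R}(g,f)| \lesssim \int b^\epsilon|\hat{G}(0)-\hat{G}(\xi^{-})|\,|\hat{f}(\xi^{+})||\hat{f}(\xi)|\,d\sigma d\xi$.

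For the main integral, apply the decomposition \eqref{sphere-radius} to write $\hat{f}(\xi)-\hat{f}(\xi^{+}) = S(\xi,\sigma) + R(\xi,\sigma)$ with spherical part $S := \hat{f}(\xi)-\hat{f}(|\xi|\xi^{+}/|\xi^{+}|)$ and radial part $R := \hat{f}(|\xi|\xi^{+}/|\xi^{+}|)-\hat{f}(\xi^{+})$. For the lower bound \eqref{anisotropic-regularity-general-g}, use $|S+R|^{2} \geq \tfrac{1}{2}|S|^{2} - |R|^{2}$: Lemma \ref{spherical-part} gives $\int b^\epsilon|S|^{2}d\sigma d\xi \gtrsim |W^\epsilon((-\Delta_{\mathbb{S}^{2}})^{1/2})f|^{2}_{L^{2}} - |f|^{2}_{L^{2}}$, and Lemma \ref{gammanonzerotozero} with $\gamma=0$ gives $\int b^\epsilon|R|^{2}d\sigma d\xi \lesssim |W^\epsilon(D)f|^{2}_{L^{2}} + |W^\epsilon f|^{2}_{L^{2}}$; multiplying by $|g|^{2}_{L^{2}}$ and rearranging produces \eqref{anisotropic-regularity-general-g} once $\mathcal{R}$ is absorbed. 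The upper bound \eqref{anisotropic-regularity-general-g-up-bound} is dual: apply $|S+R|^{2} \leq 2|S|^{2} + 2|R|^{2}$ together with the matching upper estimates in the same two lemmas.

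The principal obstacle is controlling $\mathcal{R}$ by $|g|^{2}_{L^{2}_{1}}|W^\epsilon(D)f|^{2}_{L^{2}} + |g|^{2}_{L^{2}}|W^\epsilon f|^{2}_{L^{2}}$. The central input is the $\sigma$-integrated bound $\int b^\epsilon|\hat{G}(0)-\hat{G}(\xi^{-})|^{2}d\sigma \lesssim |g|^{2}_{L^{2}}|g|^{2}_{L^{2}_{1}}(W^\epsilon)^{2}(\xi)$, which follows from the Cauchy--Schwarz bound $|\hat{G}(0)-\hat{G}(\xi^{-})|^{2} \leq |g|^{2}_{L^{2}}\int g^{2}|1-e^{-iv\xi^{-}}|^{2}dv$ combined with the anisotropic symbol estimate $\int b^\epsilon\min\{|v|^{2}|\xi|^{2}\sin^{2}(\theta/2),1\}d\sigma \lesssim \langle v\rangle^{2}(W^\epsilon)^{2}(\xi)$ (following Proposition \ref{symbol} together with the submultiplicative property $(W^\epsilon)^{2}(|v||\xi|) \lesssim \langle v\rangle^{2}(W^\epsilon)^{2}(\xi)$). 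The subtlety is that individual Fourier pieces such as $\int b^\epsilon|\hat{f}(\xi^{+})|^{2}d\sigma d\xi$ blow up like $|\ln\epsilon|^{-1}\epsilon^{-2}\|f\|^{2}_{L^{2}}$ and only cancel in the combination $|\hat{f}(\xi)-\hat{f}(\xi^{+})|^{2}$; in estimating $\mathcal{R}$ one must preserve this cancellation, typically by using the change of variable $\xi \to 2\xi^{+}$ (with Jacobian $\cos^{2}(\theta/2)/4$) to convert a $\hat{f}(\xi^{+})$ factor back to a standard $\hat{f}$ factor absorbed into $|W^\epsilon(D)f|^{2}_{L^{2}}$.
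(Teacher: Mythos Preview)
Your overall architecture is exactly the paper's: Bobylev identity to pass to Fourier variables, then the geometric decomposition \eqref{sphere-radius} on the main term, invoking Lemma~\ref{spherical-part} for the spherical part and Lemma~\ref{gammanonzerotozero} (case $\gamma=0$) for the radial part. The treatment of the main integral $\int b^\epsilon|\hat f(\xi)-\hat f(\xi^+)|^2\,d\sigma\,d\xi$ via $|S+R|^2\ge \tfrac12|S|^2-|R|^2$ and $|S+R|^2\le 2|S|^2+2|R|^2$ is precisely what the paper does.

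The genuine gap is in your handling of the remainder $\mathcal R$ (the paper's $\mathcal I_2$). Your ``central input'' --- the $\sigma$-integrated bound $\int b^\epsilon|\hat G(0)-\hat G(\xi^-)|^2\,d\sigma \lesssim |g|^2_{L^2}|g|^2_{L^2_1}(W^\epsilon)^2(\xi)$ obtained via Cauchy--Schwarz in $v$ --- cannot close the argument. First, it has the wrong homogeneity: it is quartic in $g$, whereas $\mathcal N^{\epsilon,0,0}(g,f)$ and every term in the statement are quadratic in $g$. Second, once you integrate out $\sigma$ on the $\hat G$ side, all the angular decay $\sin^2(\theta/2)$ is consumed there; the remaining product $|\hat f(\xi^+)||\hat f(\xi)|$ then faces the raw $\int b^\epsilon\,d\sigma\sim|\ln\epsilon|^{-1}\epsilon^{-2}$, and the change of variable $\xi\mapsto\xi^+$ you propose only reparametrises this divergent integral, it does not tame it.

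The paper's remedy is to \emph{not} Cauchy--Schwarz on $\hat G$. Instead one writes the exact identity $\hat G(0)-\hat G(\xi^-)=\int(1-\cos(v\cdot\xi^-))\,g^2(v)\,dv$ (which is real and non-negative), substitutes this back, and then applies Cauchy--Schwarz in $(\sigma,\xi,v)$ with $(1-\cos(v\cdot\xi^-))g^2(v)\,b^\epsilon$ serving as the \emph{shared} measure, so that each of $|\hat f(\xi^+)|^2$ and $|\hat f(\xi)|^2$ retains its own copy of the decay factor. Only then does $\sigma$-integration on each half produce $(W^\epsilon)^2(|v||\xi|)\lesssim\langle v\rangle^2(W^\epsilon)^2(\xi)$ via Proposition~\ref{symbol}, yielding $|g|^2_{L^2_1}|W^\epsilon(D)f|^2_{L^2}$ for each factor (with your $\xi\mapsto\xi^+$ change used on the $|\hat f(\xi^+)|^2$ half, exactly as you say). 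The key point is that the decay must be distributed, not concentrated.
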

\begin{proof}
By Bobylev's formula, we have
\beno
\mathcal{N}^{\epsilon,0,0}(g,f) &=& \frac{1}{(2\pi)^{3}}\int  b^{\epsilon}(\frac{\xi}{|\xi|} \cdot \sigma)(\widehat{g^{2}}(0)|\hat{f}(\xi) - \hat{f}(\xi^{+})|^{2} + 2\Re((\widehat{g^{2}}(0) - \widehat{g^{2}}(\xi^{-}))\hat{f}(\xi^{+})\bar{\hat{f}}(\xi)) d\sigma d\xi
\\ &:=& \frac{|g|^{2}_{L^{2}}}{(2\pi)^{3}}\mathcal{I}_{1} + \frac{2}{(2\pi)^{3}}\mathcal{I}_{2},
\eeno
where $\xi^{+} = \frac{\xi+|\xi|\sigma}{2}$ and $\xi^{-} = \frac{\xi-|\xi|\sigma}{2}$.
Thanks to the fact $\widehat{g^{2}}(0) - \widehat{g^{2}}(\xi^{-}) = \int (1-\cos(v \cdot \xi^{-}))g^{2}(v) dv$, we have
\beno
|\mathcal{I}_{2}| &=& |\int b^{\epsilon}(\frac{\xi}{|\xi|} \cdot \sigma) (1-\cos(v \cdot \xi^{-}))g^{2}(v) \Re(\hat{f}(\xi^{+})\bar{\hat{f}}(\xi)) d\sigma d\xi dv |
\\ &\leq& \big(\int b^{\epsilon}(\frac{\xi}{|\xi|} \cdot \sigma) (1-\cos(v \cdot \xi^{-}))g^{2}(v) |\hat{f}(\xi^{+})|^{2} d\sigma d\xi dv \big)^{1/2}
\\&& \times \big(\int b^{\epsilon}(\frac{\xi}{|\xi|} \cdot \sigma) (1-\cos(v \cdot \xi^{-}))g^{2}(v) |\bar{\hat{f}}(\xi)|^{2} d\sigma d\xi dv\big)^{1/2}.
\eeno
Observe that
$
1-\cos(v \cdot \xi^{-}) \lesssim |v|^{2}|\xi^{-}|^{2} = \frac{1}{4}|v|^{2}|\xi|^{2}|\frac{\xi}{|\xi|} - \sigma|^{2} \sim |v|^{2}|\xi^{+}|^{2}|\frac{\xi^{+}}{|\xi^{+}|} - \sigma|^{2},
$
thus
$
1-\cos(v \cdot \xi^{-}) \lesssim \min\{|v|^{2}|\xi|^{2}|\frac{\xi}{|\xi|} - \sigma|^{2},1\} \sim  \min\{|v|^{2}|\xi^{+}|^{2}|\frac{\xi^{+}}{|\xi^{+}|} - \sigma|^{2},1\}.
$
By the fact
$
\frac{\xi}{|\xi|} \cdot \sigma  = 2(\frac{\xi^{+}}{|\xi^{+}|} \cdot \sigma)^{2} - 1,
$
and the change of variable $\xi \rightarrow \xi^{+}$, and the property $W^{\epsilon}(|v||\xi|) \lesssim W^{\epsilon}(|v|)W^{\epsilon}(|\xi|)$, we have
\ben \label{upper-I-2-another}
|\mathcal{I}_{2}| \lesssim \int (W^{\epsilon})^{2}(|v||\xi|)|\hat{f}(\xi)|^{2}g^{2}(v)dvd\xi
\lesssim |W^{\epsilon}g|^{2}_{L^{2}} |W^{\epsilon}(D)f|^{2}_{L^{2}} \lesssim |g|^{2}_{L^{2}_{1}} |W^{\epsilon}(D)f|^{2}_{L^{2}}.
\een
Now we set to investigate $\mathcal{I}_{1}$. By the geometric decomposition
\ben \label{geo-deco-frequency-space-another}
\hat{f}(\xi) - \hat{f}(\xi^{+}) = \hat{f}(\xi) - \hat{f}(|\xi|\frac{\xi^{+}}{|\xi^{+}|})+ \hat{f}(|\xi|\frac{\xi^{+}}{|\xi^{+}|}) - \hat{f}(\xi^{+}),
\een
we have
\beno
\mathcal{I}_{1} = \int b^{\epsilon}(\frac{\xi}{|\xi|} \cdot \sigma)|\hat{f}(\xi) - \hat{f}(\xi^{+})|^{2} d\sigma d\xi
&\geq& \frac{1}{2} \int b^{\epsilon}(\frac{\xi}{|\xi|} \cdot \sigma)|\hat{f}(\xi) - \hat{f}(|\xi|\frac{\xi^{+}}{|\xi^{+}|})|^{2} d\sigma d\xi
\\&&- \int b^{\epsilon}(\frac{\xi}{|\xi|} \cdot \sigma)|\hat{f}(|\xi|\frac{\xi^{+}}{|\xi^{+}|}) - \hat{f}(\xi^{+})|^{2} d\sigma d\xi
:= \frac{1}{2}\mathcal{I}_{1,1} - \mathcal{I}_{1,2}.
\eeno
By Lemma \ref{spherical-part}, we have
\ben \label{upper-lower-I-11-another}
\mathcal{I}_{1,1}+|f|_{L^2}^2 \sim |W^{\epsilon}((-\Delta_{\mathbb{S}^{2}})^{1/2})f|^{2}_{L^{2}}+|f|^{2}_{L^{2}}.
\een
By Lemma \ref{gammanonzerotozero} in the special case of $\gamma=0$, there holds
\ben \label{upper-I-12-another} \mathcal{I}_{1,2} \lesssim |W^{\epsilon}(D)f|^{2}_{L^{2}} + |W^{\epsilon}f|^{2}_{L^{2}}.\een
Patching together \eqref{upper-I-2-another}, \eqref{upper-I-12-another} and \eqref{upper-lower-I-11-another}, we get \eqref{anisotropic-regularity-general-g} and \eqref{anisotropic-regularity-general-g-up-bound}.
\end{proof}

\subsubsection{Gain of  regularity from $\mathcal{N}^{\epsilon,0,\eta}(g,f)$}
We first introduce some notations. Let $\chi$ be a smooth function such that $0 \leq \chi \leq 1, \chi=1$ on $B_{1}$ and $\mathrm{Supp} \chi \subset B_{2}$. Here $B_{1}$ is the ball centered at origin with radius $1$. $B_{2}$ is interpreted in a similar way.
Let $\chi_{R}(v):=\chi(v/R), \chi_{r,u}(v) := \chi_{r}(v-u)$ and $\phi_{R,r,u} := \chi_{7R} -\chi_{3r,u}$ for some $r,R>0$ and $u \in \mathbb{R}^{3}$. The following lemma
bounds $\mathcal{N}^{\epsilon,0,\eta}(g,f)$ by $\mathcal{N}^{\epsilon,0,0}(g,f)$ from below once the distance between supports of $g$ and $f$ is larger than some multiples of $\eta$.

\begin{lem}\label{reduce-eta-to-0} For  $0 \leq \eta \leq r \leq 1 \leq R,  u \in B_{7R}$, the following two estimates hold true.
\ben \label{eta-to-0-part1}
\mathcal{N}^{\epsilon,0,\eta}(g,f) + |g|^{2}_{L^{2}}|f|^{2}_{L^{2}}  \gtrsim \mathcal{N}^{\epsilon,0,0}(\chi_{R}g,(1-\chi_{3R})f).
\\ \label{eta-to-0-part2}
\mathcal{N}^{\epsilon,0,\eta}(g,f) + r^{-2}R^{2}|g|^{2}_{L^{2}}|f|^{2}_{L^{2}}  \gtrsim \mathcal{N}^{\epsilon,0,0}(\phi_{R,r,u}g,\chi_{r,u}f).
\een
\end{lem}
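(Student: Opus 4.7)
The proof will proceed in parallel for the two inequalities; I describe Part 2 (the harder case) in detail, since Part 1 follows by the same template after replacing $(\phi_{R,r,u},\chi_{r,u})$ by $(\chi_R,1-\chi_{3R})$ and $r$ by $R$. Set $\tilde g := \phi_{R,r,u}g$ and $\tilde f := \chi_{r,u}f$. The plan is to bound $\mathcal{N}^{\epsilon,0,0}(\tilde g,\tilde f)$ from above by $\mathcal{N}^{\epsilon,0,\eta}(g,f)$ plus an $L^2\otimes L^2$ error of size $r^{-2}R^2|g|_{L^2}^2|f|_{L^2}^2$.

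The first step will be a geometric analysis of the support of the integrand. If $\phi_{R,r,u}(v_*)\neq 0$ then $|v_*|\le 14R$, and if $\chi_{r,u}(v')\neq\chi_{r,u}(v)$ then at least one of $v,v'$ lies in $\overline{B(u,2r)}$. Using Assumption \textbf{(A2)}, $\theta\le\pi/2$ so $\cos(\theta/2)\ge\sqrt{2}/2$, to pass from a bound on $|v'-v_*|$ to a bound on $|v-v_*|=|v'-v_*|/\cos(\theta/2)$, I will conclude that on the ``generic'' support where $v_*\notin B(u,3r)$ one has $|v-v_*|\ge r\ge\eta$, and that on the (strictly smaller) support where $\chi_{r,u}(v')-\chi_{r,u}(v)\neq 0$ one has $|v-v_*|\lesssim R$ and hence $|v|\lesssim R$.

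Next I will perform the algebraic splitting
\[
\tilde f(v')-\tilde f(v)=\chi_{r,u}(v')\bigl(f(v')-f(v)\bigr)+\bigl(\chi_{r,u}(v')-\chi_{r,u}(v)\bigr)f(v),
\]
square it, and apply $(a+b)^2\le 2a^2+2b^2$ to decompose $\mathcal{N}^{\epsilon,0,0}(\tilde g,\tilde f)\le 2\mathcal{M}_1+2\mathcal{M}_2$ into a \emph{main piece} $\mathcal{M}_1$ carrying $(f'-f)^2$ and a \emph{commutator piece} $\mathcal{M}_2$ carrying $(\chi_{r,u}(v')-\chi_{r,u}(v))^2 f^2(v)$. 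For $\mathcal{M}_1$, the support step allows me to insert $\mathrm{1}_{|v-v_*|\ge\eta}$ at essentially no cost, after which the pointwise bounds $\tilde g^2\le g^2$ and $\chi_{r,u}^2\le 1$ give $\mathcal{M}_1\lesssim\mathcal{N}^{\epsilon,0,\eta}(g,f)$. For $\mathcal{M}_2$, the Lipschitz bound $|\chi_{r,u}(v')-\chi_{r,u}(v)|\lesssim r^{-1}\sin(\theta/2)|v-v_*|$, combined with the uniform angular estimate $\int b^\epsilon(\cos\theta)\sin^2(\theta/2)\,d\sigma\lesssim 1$ from Lemma \ref{integral-angular-function} and the support bound $|v-v_*|,|v|\lesssim R$ from Step 1, yields
\[
\mathcal{M}_2 \lesssim r^{-2}\!\int g^2(v_*)\,f^2(v)\,|v-v_*|^2\,\mathrm{1}_{|v|\lesssim R,\,|v_*|\le 14R}\,dv\,dv_* \lesssim r^{-2}R^2\,|g|_{L^2}^2\,|f|_{L^2}^2.
\]

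The principal obstacle will be the support reduction at Step 1 for Part 2: there is a narrow exceptional regime where $v_*\in B(u,3r)$ with $|v_*|>7R$ (forcing $|u|>4R$, so that $\phi_{R,r,u}(v_*)=\chi_{7R}(v_*)-1$ is nonzero), in which $|v-v_*|$ need not exceed $\eta$ and the main-piece dominance $\mathcal{M}_1\lesssim\mathcal{N}^{\epsilon,0,\eta}(g,f)$ is not directly available; here I will reshuffle the splitting so that in this small region the short-range factor $\chi_{r,u}$ at the appropriate velocity is estimated by the Lipschitz bound and the contribution is absorbed into $\mathcal{M}_2$. Part 1 is simpler since $\chi_R$ and $1-\chi_{3R}$ have supports separated by distance $R$, whence $|v-v_*|\ge R\ge\eta$ on the \emph{entire} support and the Lipschitz constant $R^{-1}$ of $\chi_{3R}$ produces the cleaner error $R^{-2}\cdot R^2\cdot|g|_{L^2}^2|f|_{L^2}^2=|g|_{L^2}^2|f|_{L^2}^2$, as required in \eqref{eta-to-0-part1}.
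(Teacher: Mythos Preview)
Your approach is essentially the same as the paper's: both rely on the support observation that the localizers separate $v_*$ and $v$ (or $v'$) by at least $r\ge\eta$, an algebraic splitting that isolates a main piece $\lesssim\mathcal N^{\epsilon,0,\eta}(g,f)$ from a Lipschitz--controlled commutator piece, and Lemma~\ref{integral-angular-function} to integrate out the angular singularity. The only cosmetic difference is direction: the paper starts from $\mathcal N^{\epsilon,0,\eta}$, inserts the localizers, and uses $(a+b)^2\ge a^2/2-b^2$ (so its commutator carries $(f')^2$ and requires the change $v\to v'$), whereas you start from $\mathcal N^{\epsilon,0,0}(\tilde g,\tilde f)$ and use $(a+b)^2\le 2a^2+2b^2$ (your commutator carries $f^2$, no change needed). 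Part~1 is clean in both versions.

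You are right to flag the exceptional regime in Part~2: the paper's assertion ``if $v\in\mathrm{supp}\,\chi_{r,u}$ and $v_*\in\mathrm{supp}\,\phi_{R,r,u}$ then $|v-v_*|\ge r$'' is not literally true for arbitrary $u\in B_{7R}$---take $|u|$ close to $7R$ and $v_*\in B(u,3r)$ with $|v_*|>7R$, so that $\chi_{3r,u}(v_*)=1>\chi_{7R}(v_*)$ and $\phi_{R,r,u}(v_*)<0$ while $|v-v_*|$ can be arbitrarily small. However, your proposed ``reshuffling'' is too vague to close this gap: on that set the factor $\chi_{r,u}(v')$ can equal $1$, so no Lipschitz smallness is available for it; the genuinely small quantity is $|\phi_{R,r,u}(v_*)|=1-\chi_{7R}(v_*)\lesssim r/R$, but that still leaves an uncontrolled $\int b^\epsilon g_*^2(f'-f)^2$ on a set where $|v-v_*|$ may be below $\eta$. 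The cleanest repair is to observe that this regime is \emph{empty} whenever $|u|+3r\le 7R$ (then $v_*\in B(u,3r)$ forces $|v_*|\le 7R$, hence $\chi_{7R}(v_*)=1=\chi_{3r,u}(v_*)$ and $\phi_{R,r,u}(v_*)=0$). In the only application (Lemma~\ref{lowerboundpart1-gamma-eta}) one has $R=1$, $u\in B_{6R}$, and $r=r_0<1/3$, so this extra constraint is automatic; add it as a hypothesis and your argument (and the paper's) goes through unchanged.
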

\begin{proof} We proceed in the spirit of \cite{he2014well}. If $|v_{*}| \leq 2R$ and $|v| \geq 3R$, then $|v-v_{*}| \geq R \geq \eta$. Then we have
\beno  \mathcal{N}^{\epsilon,0,\eta}(g,f) &=& \int b^{\epsilon}(\cos\theta) \textrm{1}_{|v-v_{*}|\geq \eta} g^{2}_{*} (f^{\prime}-f)^{2} d\sigma dv dv_{*}
\\
&\geq& \int b^{\epsilon}(\cos\theta)  (\chi_{R}g)^{2}_{*} (f^{\prime}-f)^{2} (1-\chi_{3R})^{2} d\sigma dv dv_{*}
\\&\geq& \frac{1}{2} \int b^{\epsilon}(\cos\theta)  (\chi_{R}g)^{2}_{*}  (((1-\chi_{3R})f)^{\prime}-(1-\chi_{3R})f)^{2} d\sigma dv dv_{*}
\\&&-\int b^{\epsilon}(\cos\theta)   (\chi_{R}g)^{2}_{*} (f^{\prime})^{2} (\chi_{3R}^{\prime}-\chi_{3R})^{2} d\sigma dv dv_{*}
:= \frac{1}{2}  \mathcal{I}_{1} - \mathcal{I}_{2}.
\eeno
Observe that $\mathcal{I}_{1}  = \mathcal{N}^{\epsilon,0,0}(\chi_{R}g,(1-\chi_{3R})f)$. Since $|\nabla \chi_{3R}|_{L^{\infty}} \lesssim R^{-1} |\nabla \chi|_{L^{\infty}} \lesssim R^{-1}$, we get $(\chi_{R})^{2}_{*}(\chi_{3R}^{\prime}-\chi_{3R})^{2} \lesssim R^{-2}|v^{\prime}-v|^{2} = R^{-2}|v-v_{*}|^{2}\sin^{2}(\theta/2)$. If $|v_{*}| \leq 2R, |v|\geq 20R, \theta \leq \pi/2$, we have
\beno
|v^{\prime}-v_{*}|=\cos(\theta/2)|v-v_{*}|\geq \cos(\theta/2)(|v|-|v_{*}|) \geq  9 \sqrt{2}R.
\eeno
Then we have
$ |v^{\prime}| \geq |v^{\prime}-v_{*}| -|v_{*}| \geq 9 \sqrt{2}R -2R \geq 6R,
$
which gives $\chi_{3R}^{\prime} =0$. Since $\theta \leq \pi/2$, we have
\beno
(\chi_{R})^{2}_{*}(\chi_{3R}^{\prime}-\chi_{3R})^{2} \leq \mathrm{1}_{|v|\leq 20R, |v_{*}|\leq 2R} R^{-2}|\nabla \chi|^{2}_{L^{\infty}}|v-v_{*}|^{2}\sin^{2}(\theta/2) \lesssim \sin^{2}(\theta/2).
\eeno
By the change of variable $(v,\theta) \rightarrow (v^{\prime},\theta/2)$ and using \eqref{order-2}, we get
\beno  \mathcal{I}_{2} \lesssim  \int g^{2}_{*}  f^{2} dv dv_{*}\lesssim |g|^{2}_{L^{2}}|f|^{2}_{L^{2}}.\eeno
From which together with the fact $\mathcal{I}_{1}  = \mathcal{N}^{\epsilon,0,0}(\chi_{R}g,(1-\chi_{3R})f)$, we get \eqref{eta-to-0-part1}.

If $v \in \mathrm{supp} \chi_{r,u}, v_{*} \in \mathrm{supp}\phi_{R,r,u}$, then $|v-v_{*}| \geq r \geq \eta$, which gives
\beno  \mathcal{N}^{\epsilon,0,\eta}(g,f) &=& \int b^{\epsilon}(\cos\theta) \textrm{1}_{|v-v_{*}|\geq \eta} g^{2}_{*} (f^{\prime}-f)^{2} d\sigma dv dv_{*}
\\&\geq& \int b^{\epsilon}(\cos\theta)   (\phi_{R,r,u}g)^{2}_{*} (f^{\prime}-f)^{2}  \chi^{2}_{r,u} d\sigma dv dv_{*}
\\&\geq& \frac{1}{2} \int b^{\epsilon}(\cos\theta)  (\phi_{R,r,u}g)^{2}_{*}  ((\chi_{r,u}f)^{\prime}-\chi_{r,u}f)^{2} d\sigma dv dv_{*}
\\&&-\int b^{\epsilon}(\cos\theta)   (\phi_{R,r,u}g)^{2}_{*} (f^{\prime})^{2} (\chi_{r,u}^{\prime}-\chi_{r,u})^{2} d\sigma dv dv_{*}
:= \frac{1}{2}\mathcal{J}_{1} - \mathcal{J}_{2}.
\eeno
Observe that $\mathcal{J}_{1}  =  \mathcal{N}^{\epsilon,0,0}(\phi_{R,r,u}g,\chi_{r,u}f)$.
Since $|\nabla \chi_{r,u} (v)| \lesssim r^{-1} |\nabla \chi|_{L^{\infty}}\mathrm{1}_{r \leq |v-u|\leq 2r} \lesssim r^{-1}$, together with Taylor expansion, we get \beno
|\chi_{r,u}^{\prime}-\chi_{r,u}|^{2} = |\int_{0}^{1} \nabla \chi_{r,u} \left(v(\kappa)\right)\cdot(v^{\prime}-v) d\kappa|^{2} \lesssim r^{-2}|v-v_{*}|^{2}\sin^{2}(\theta/2) \int_{0}^{1} \mathrm{1}_{r \leq |v(\kappa)-u|\leq 2r} d\kappa.\eeno
For $u \in B_{7R}, |v_{*}| \leq 14R, r \leq |v(\kappa)-u|\leq 2r$, we have
\beno|v-v_{*}| \leq \sqrt{2}|v(\kappa)-v_{*}| \leq \sqrt{2}|v(\kappa)-u| +\sqrt{2}|u-v_{*}| \leq 2\sqrt{2}r+\sqrt{2}(7R+14R) \leq 23\sqrt{2}R,\eeno
 and thus
\beno (\phi_{R,r,u})^{2}_{*}(\chi_{r,u}^{\prime}-\chi_{r,u})^{2} \lesssim r^{-2}|v-v_{*}|^{2}\mathrm{1}_{|v(\kappa)-u|\sim r}\theta^{2} \lesssim r^{-2}R^{2}\sin^{2}(\theta/2).\eeno
By the change of variable $v \rightarrow v^{\prime}$ and \eqref{order-2}, we get
\beno  \mathcal{J}_{2} \lesssim  r^{-2}R^{2} \int g^{2}_{*}  f^{2} dv dv_{*} \lesssim r^{-2}R^{2}|g|^{2}_{L^{2}}|f|^{2}_{L^{2}}.\eeno
From which together with the fact $\mathcal{J}_{1}  = \mathcal{N}^{\epsilon,0,0}(\phi_{R,r,u}g,\chi_{r,u}f)$, we get \eqref{eta-to-0-part2}.
\end{proof}

\subsubsection{Gain of  regularity from $\mathcal{N}^{\epsilon,\gamma,\eta}(g,f)$}
To reduce $\mathcal{N}^{\epsilon,\gamma,\eta}$ to $\mathcal{N}^{\epsilon,0,\eta}$, we introduce an intermediate quantity
\beno \tilde{\mathcal{N}}^{\epsilon,\gamma,\eta}(g,h) := \int b^{\epsilon}(\cos\theta)\textrm{1}_{|v-v_{*}|\geq \eta} \langle v-v_{*} \rangle^{\gamma} g^{2}_{*} (h^{\prime}-h)^{2} d\sigma dv dv_{*}.\eeno

The next lemma reduces $\tilde{\mathcal{N}}^{\epsilon,\gamma,\eta}$ to $\mathcal{N}^{\epsilon,0,\eta}$.
\begin{lem}\label{reduce-gamma-to-0-no-sigularity} For  $\gamma \leq 0 \leq \eta$, one has
\begin{eqnarray}\label{gamma-to-0-no-sigularity}
 &&\frac{1}{2}C_{1}\mathcal{N}^{\epsilon,0,\eta}(W_{\gamma/2}g,W_{\gamma/2}f) - C_{3}|g|^{2}_{L^{2}_{|\gamma/2+1|}}|f|^{2}_{L^{2}_{\gamma/2}}
 \\&\leq& \tilde{\mathcal{N}}^{\epsilon,\gamma,\eta}(g,f)  \leq  2C_{2}\mathcal{N}^{\epsilon,0,\eta}(W_{-\gamma/2}g,W_{\gamma/2}f) + 2C_{3}|g|^{2}_{L^{2}_{|\gamma/2+1|}}|f|^{2}_{L^{2}_{\gamma/2}}, \nonumber
\end{eqnarray}
where $C_{1},C_{2},C_{3}$ are constants depending only on $\gamma$. They are universally bounded if $-3 \leq \gamma \leq 0$.
\end{lem}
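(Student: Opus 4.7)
My plan is to reduce $\tilde{\mathcal{N}}^{\epsilon,\gamma,\eta}(g,f)$ to $\mathcal{N}^{\epsilon,0,\eta}$ acting on weighted functions $W_{\pm\gamma/2}g$ and $W_{\gamma/2}f$ in two steps: first transfer the relative-velocity weight $\langle v-v_{*}\rangle^{\gamma}$ onto $g_{*}$ and onto the difference $(f^{\prime}-f)$, and then commute $W_{\gamma/2}$ through that difference, picking up a controlled remainder. Setting $F := W_{\gamma/2}f$, the algebraic identity $W_{\gamma/2}(v)(f^{\prime}-f) = (F^{\prime}-F)+\bigl(W_{\gamma/2}(v)-W_{\gamma/2}(v^{\prime})\bigr)f^{\prime}$ combined with $a^{2}/2-b^{2}\leq (a+b)^{2}\leq 2a^{2}+2b^{2}$ gives the two-sided pointwise inequality
\[
\tfrac{1}{2}(F^{\prime}-F)^{2}-R(v,v^{\prime})(f^{\prime})^{2} \;\leq\; W_{\gamma/2}^{2}(v)(f^{\prime}-f)^{2} \;\leq\; 2(F^{\prime}-F)^{2}+2R(v,v^{\prime})(f^{\prime})^{2},
\]
where $R(v,v^{\prime}) := (W_{\gamma/2}(v^{\prime})-W_{\gamma/2}(v))^{2}$.

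For the weight redistribution, Peetre's inequality $\langle v-v_{*}\rangle\leq\sqrt{2}\langle v\rangle\langle v_{*}\rangle$ and its reverse $\langle v\rangle\leq\sqrt{2}\langle v-v_{*}\rangle\langle v_{*}\rangle$, raised to the nonpositive power $\gamma$, yield $c_{\gamma}\langle v\rangle^{\gamma}\langle v_{*}\rangle^{\gamma}\leq \langle v-v_{*}\rangle^{\gamma}\leq C_{\gamma}\langle v\rangle^{\gamma}\langle v_{*}\rangle^{-\gamma}$. Using the left inequality (one factor $\langle v_{*}\rangle^{\gamma}$ to build $W_{\gamma/2}g$, and $\langle v\rangle^{\gamma}=W_{\gamma/2}^{2}(v)$ on the difference) followed by the lower pointwise bound above produces the main term $\tfrac{1}{2}C_{1}\mathcal{N}^{\epsilon,0,\eta}(W_{\gamma/2}g,W_{\gamma/2}f)$ minus a commutator error; the right inequality together with the upper pointwise bound produces $2C_{2}\mathcal{N}^{\epsilon,0,\eta}(W_{-\gamma/2}g,W_{\gamma/2}f)$ plus a commutator error, which explains the asymmetric appearance of $W_{\gamma/2}g$ in the lower bound versus $W_{-\gamma/2}g$ in the upper bound of the Lemma. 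The constants $C_{1},C_{2}$ come from the explicit Peetre constants and are uniformly bounded on $\gamma\in[-3,0]$.

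The remaining task is to estimate the commutator error $\mathcal{E}=\int b^{\epsilon}\mathbf{1}_{|v-v_{*}|\geq\eta}\langle v_{*}\rangle^{\alpha}g_{*}^{2}R(v,v^{\prime})(f^{\prime})^{2}d\sigma dvdv_{*}$ (with $\alpha=\gamma$ or $-\gamma$) by $C_{3}|g|^{2}_{L^{2}_{|\gamma/2+1|}}|f|^{2}_{L^{2}_{\gamma/2}}$. Taylor's formula with $|\nabla W_{\gamma/2}(\xi)|\lesssim\langle\xi\rangle^{\gamma/2-1}$ yields $R(v,v^{\prime})\lesssim |v-v_{*}|^{2}\sin^{2}(\theta/2)\,\sup_{\xi\in[v,v^{\prime}]}\langle\xi\rangle^{\gamma-2}$. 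Under Assumption (A2), $\theta\leq\pi/2$ guarantees the Jacobian of $v\mapsto v^{\prime}$ is bounded and $|v-v_{*}|\leq\sqrt{2}|v^{\prime}-v_{*}|$, while Peetre converts $\sup_{\xi}\langle\xi\rangle^{\gamma-2}$ into a product of powers of $\langle v^{\prime}\rangle$ and $\langle v_{*}\rangle$. Integrating $\int b^{\epsilon}\sin^{2}(\theta/2)d\sigma\lesssim 1$ via \eqref{order-2} leaves an integral of the form $\int \langle v_{*}\rangle^{\gamma+2}g_{*}^{2}\langle v^{\prime}\rangle^{\gamma}(f^{\prime})^{2}dv^{\prime}dv_{*}$, which is bounded by $|g|^{2}_{L^{2}_{|\gamma/2+1|}}|f|^{2}_{L^{2}_{\gamma/2}}$ since $\gamma+2\leq|\gamma+2|$ for every $\gamma\in[-3,0]$. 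The main obstacle is the careful bookkeeping in this last step: the Taylor remainder mixes $v$, $v^{\prime}$, and intermediate points, so Peetre must be applied several times to decouple the $v_{*}$ and $v^{\prime}$ variables; the sharp exponent $|\gamma/2+1|$ on the $g$-side reflects the one-derivative cost of the Taylor expansion, while $\gamma/2$ on the $f$-side is dictated by the target norm $\mathcal{N}^{\epsilon,0,\eta}(\cdot,W_{\gamma/2}f)$.
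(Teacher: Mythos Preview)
Your overall strategy is close to the paper's, but the specific decomposition you chose creates a genuine gap in the upper-bound error estimate. You write
\[
W_{\gamma/2}(v)(f'-f)=(F'-F)+\bigl(W_{\gamma/2}(v)-W_{\gamma/2}(v')\bigr)f',
\]
so your commutator error carries $(f')^{2}$ together with the remainder $R(v,v')=(W_{\gamma/2}(v')-W_{\gamma/2}(v))^{2}$, whose Taylor bound involves $\sup_{\xi\in[v,v']}\langle\xi\rangle^{\gamma-2}$. Because $\gamma-2<0$, this supremum is \emph{largest} when $|v|$ is small, and in that regime $R(v,v')$ is of order~$1$ rather than decaying. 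Concretely, take $\gamma=-3$, let $g$ be a unit bump near $N e_{1}$ and $f$ a unit bump near some $v_{0}$ with $|v_{0}|\sim N$. In the region $|v|\lesssim 1$, $|v_{*}|,|v'|\sim N$, $\theta\sim 1$, one has $R\sim 1$, and since you have already replaced $\langle v-v_{*}\rangle^{\gamma}$ by $\langle v_{*}\rangle^{-\gamma}\langle v\rangle^{\gamma}$ via Peetre, the error integrand is $\sim N^{3}g_{*}^{2}(f')^{2}$. A direct computation of the measure of this region shows the contribution to your error is $\gtrsim |\ln\epsilon|^{-1}N$, whereas the claimed bound $|g|^{2}_{L^{2}_{1/2}}|f|^{2}_{L^{2}_{-3/2}}\sim N^{-2}$. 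So the bound you assert for the error term is not valid.

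The paper avoids this by making two different choices. First, it applies the quadratic inequality \emph{before} any Peetre splitting, keeping $\langle v-v_{*}\rangle^{\gamma}$ intact in both the main and error terms. Second, it uses the decomposition
\[
(W_{-\gamma/2}F)'-W_{-\gamma/2}F=(W_{-\gamma/2})'(F'-F)+F\bigl((W_{-\gamma/2})'-W_{-\gamma/2}\bigr),
\]
so the error term carries $F^{2}(v)$ rather than $(f')^{2}$, and the weight difference is $W_{-\gamma/2}$ rather than $W_{\gamma/2}$. Taylor then gives $\langle v(\kappa)\rangle^{-\gamma-2}$, and the still-available factor $\langle v-v_{*}\rangle^{\gamma}|v-v_{*}|^{2}\sim\langle v(\kappa)-v_{*}\rangle^{\gamma+2}$ combines with it via a single Peetre step to yield exactly $\langle v_{*}\rangle^{|\gamma+2|}$, with no residual weight on $F$. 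The order of operations (quadratic inequality first, Peetre only on the main term) and the choice of which weight to commute ($W_{-\gamma/2}$ onto $F$ rather than $W_{\gamma/2}$ onto $f$) are both essential.
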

\begin{proof} Set $F= W_{\gamma/2}f$. By definition, we have
\beno  \tilde{\mathcal{N}}^{\epsilon,\gamma,\eta}(g,f) &=& \int b^{\epsilon}(\cos\theta) \textrm{1}_{|v-v_{*}|\geq \eta}\langle v-v_{*} \rangle^{\gamma}  g^{2}_{*} ((W_{-\gamma/2}F)^{\prime}-W_{-\gamma/2}F)^{2} d\sigma dv dv_{*}.
\eeno
We make the following decomposition
\beno
(W_{-\gamma/2}F)^{\prime}-W_{-\gamma/2}F = (W_{-\gamma/2})^{\prime} (F^{\prime}-F) + F(W_{-\gamma/2}^{\prime}-W_{-\gamma/2}) :=  A + B.
\eeno
By $\frac{1}{2}A^{2} - B^{2} \leq (A+B)^{2} \leq 2A^{2}+2B^{2}$, we get
\beno  \frac{1}{2}\mathcal{I}_{1}- \mathcal{I}_{2} \leq \mathcal{N}^{\epsilon,\gamma,\eta}(g,f) \leq 2(\mathcal{I}_{1} + \mathcal{I}_{2}),
\eeno
where
\beno
\mathcal{I}_{1} &:=& \int b^{\epsilon}(\cos\theta) \textrm{1}_{|v-v_{*}|\geq \eta}\langle v-v_{*} \rangle^{\gamma}  g^{2}_{*} W_{-\gamma}^{\prime} (F^{\prime}-F)^{2} d\sigma dv dv_{*},
\\
\mathcal{I}_{2} &:=& \int b^{\epsilon}(\cos\theta) \textrm{1}_{|v-v_{*}|\geq \eta}\langle v-v_{*} \rangle^{\gamma}  g^{2}_{*}F^{2} (W_{-\gamma/2}^{\prime}-W_{-\gamma/2})^{2} d\sigma dv dv_{*}.
\eeno
Since $ \langle v_{*} \rangle^{\gamma} \lesssim \langle v_{*}-v^{\prime} \rangle^{\gamma} \langle v^{\prime} \rangle^{-\gamma} \lesssim \langle v_{*} \rangle^{-\gamma}$, we get
\beno \mathcal{N}^{\epsilon,0,\eta}(W_{\gamma/2}g,W_{\gamma/2}f) \lesssim \mathcal{I}_{1} \lesssim  \mathcal{N}^{\epsilon,0,\eta}(W_{-\gamma/2}g,W_{\gamma/2}f).\eeno
By Taylor expansion, one has $(W_{-\gamma/2}^{\prime}-W_{-\gamma/2})^{2} \lesssim \int \langle v(\kappa) \rangle^{-\gamma-2}|v-v_{*}|^{2}\sin^{2}(\theta/2) d\kappa$. Note that
\beno\langle v-v_{*} \rangle^{\gamma} |v-v_{*}|^{2} \langle v(\kappa) \rangle^{-\gamma-2} \lesssim
 \langle v-v_{*} \rangle^{\gamma+2} \langle v(\kappa) \rangle^{-\gamma-2} \lesssim
 \langle v(\kappa)-v_{*} \rangle^{\gamma+2} \langle v(\kappa) \rangle^{-\gamma-2} \lesssim \langle v_{*} \rangle^{|\gamma+2|}.\eeno
From which together with \eqref{order-2}, we get
\beno  \mathcal{I}_{2} \lesssim  \int g^{2}_{*}\langle v_{*} \rangle^{|\gamma+2|} F^{2} dv dv_{*} \lesssim |g|^{2}_{L^{2}_{|\gamma/2+1|}}|F|^{2}_{L^{2}}.\eeno
Patching  together the above estimates for $\mathcal{I}_{1}$ and $\mathcal{I}_{2}$, we get the lemma.
\end{proof}
If $\gamma \leq  0$, then $|v-v_{*}|^{\gamma} \geq \langle v-v_{*} \rangle^{\gamma}$, and thus $\mathcal{N}^{\epsilon,\gamma,\eta}(g,f) \geq \tilde{\mathcal{N}}^{\epsilon,\gamma,\eta}(g,f)$. From which together with Lemma \ref{reduce-gamma-to-0-no-sigularity}, we have
\begin{lem}\label{reduce-gamma-to-0} For  $\gamma  \leq 0 \leq \eta$, one has
\begin{eqnarray}\label{gamma-to-0}
\mathcal{N}^{\epsilon,\gamma,\eta}(g,f) + |g|^{2}_{L^{2}_{|\gamma/2+1|}}|f|^{2}_{L^{2}_{\gamma/2}}  \geq  C\mathcal{N}^{\epsilon,0,\eta}(W_{\gamma/2}g,W_{\gamma/2}f),
\end{eqnarray}
where $C$ is a constant depending only on $\gamma$. The constant is universally bounded if $-3 \leq \gamma \leq 0$.
\end{lem}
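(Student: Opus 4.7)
The plan is to deduce this lemma as an essentially immediate corollary of Lemma~\ref{reduce-gamma-to-0-no-sigularity} by means of a trivial pointwise comparison of the weight $|v-v_*|^\gamma$ with the Japanese bracket $\langle v-v_*\rangle^\gamma$. The key observation is that for any $z\in\R^3$ we have $|z|\le \langle z\rangle$, so when $\gamma\le 0$ this reverses to $|z|^\gamma \ge \langle z\rangle^\gamma$ (with the convention that the left-hand side equals $+\infty$ on the diagonal $z=0$, which is harmless since the factor $\mathrm{1}_{|v-v_*|\ge\eta}$ truncates this region for $\eta>0$; for $\eta=0$ we interpret the integrals in the obvious limiting sense since the angular factor $b^\epsilon$ is integrable away from the diagonal).

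Applying this pointwise inequality inside the integrand defining $\mathcal{N}^{\epsilon,\gamma,\eta}(g,f)$, and noting that the remaining factors $b^{\epsilon}(\cos\theta)\mathrm{1}_{|v-v_*|\ge\eta}g_*^2(f'-f)^2$ are non-negative, I would first conclude
\begin{equation*}
\mathcal{N}^{\epsilon,\gamma,\eta}(g,f) \;\ge\; \tilde{\mathcal{N}}^{\epsilon,\gamma,\eta}(g,f).
\end{equation*}
Next, I would invoke the lower-bound half of Lemma~\ref{reduce-gamma-to-0-no-sigularity}, namely
\begin{equation*}
\tilde{\mathcal{N}}^{\epsilon,\gamma,\eta}(g,f) \;\ge\; \tfrac12 C_1 \,\mathcal{N}^{\epsilon,0,\eta}(W_{\gamma/2}g,W_{\gamma/2}f) \;-\; C_3\,|g|^{2}_{L^{2}_{|\gamma/2+1|}}|f|^{2}_{L^{2}_{\gamma/2}},
\end{equation*}
and move the $C_3$ term to the left-hand side, absorbing it into the error term; setting $C:=\tfrac12 C_1$ (and adjusting the implicit constant on the error by $\max\{1,2C_3/C_1\}$) yields the claimed inequality.

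The uniform boundedness statement for $-3\le\gamma\le 0$ is inherited directly from the corresponding uniform boundedness of $C_1$ and $C_3$ asserted at the end of Lemma~\ref{reduce-gamma-to-0-no-sigularity}. There is no genuine obstacle here: all of the real analytic work (the Taylor expansion of $W_{-\gamma/2}'-W_{-\gamma/2}$, the control of weight ratios $\langle v_*\rangle^\gamma/\langle v'\rangle^\gamma$, and the use of \eqref{order-2}) was absorbed into the proof of Lemma~\ref{reduce-gamma-to-0-no-sigularity}, and this corollary merely trades the bracket weight for the genuine $|v-v_*|^\gamma$ weight, a move that is always free in the non-positive-power regime.
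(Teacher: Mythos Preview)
Your proposal is correct and takes essentially the same approach as the paper: the paper's proof is precisely the one-line observation that $|v-v_*|^\gamma\ge\langle v-v_*\rangle^\gamma$ for $\gamma\le0$, giving $\mathcal{N}^{\epsilon,\gamma,\eta}(g,f)\ge\tilde{\mathcal{N}}^{\epsilon,\gamma,\eta}(g,f)$, followed by an appeal to the lower-bound half of Lemma~\ref{reduce-gamma-to-0-no-sigularity}. Your handling of the constants (dividing through by $\max\{1,C_3\}$) is a harmless bookkeeping detail that the paper leaves implicit.
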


We are now ready to derive gain of regularity from $\mathcal{N}^{\epsilon,\gamma,\eta}(\mu^{1/2},f)$.

\begin{lem}\label{lowerboundpart1-gamma-eta} For $-3 \leq \gamma \leq 0 \leq \eta \leq  6^{-1} 2^{-7/6} e^{-1/6} := r_{0}$, we have
\ben \label{sobolev-regularity-gamma-eta} \mathcal{N}^{\epsilon,\gamma,\eta}(\mu^{1/2},f) +|f|_{L^2_{\gamma/2}}^2\gtrsim |W^\epsilon(D)W_{\gamma/2}f|_{L^2}^2.
\\ \label{anisotropic-regularity-gamma-eta} \mathcal{N}^{\epsilon,\gamma,\eta}(\mu^{1/2},f) +  |W^\epsilon(D)W_{\gamma/2}f|_{L^2}^2 + |W^\epsilon W_{\gamma/2}f|_{L^2}^2\gtrsim |W^{\epsilon}((-\Delta_{\mathbb{S}^{2}})^{1/2})W_{\gamma/2}f|^{2}_{L^{2}}. \een
Making suitable combination, we have
\ben \label{full-regularity-gamma-eta} \mathcal{N}^{\epsilon,\gamma,\eta}(\mu^{1/2},f) + |W^\epsilon W_{\gamma/2}f|_{L^2}^2\gtrsim |W^{\epsilon}((-\Delta_{\mathbb{S}^{2}})^{1/2})W_{\gamma/2}f|^{2}_{L^{2}} +  |W^\epsilon(D)W_{\gamma/2}f|_{L^2}^2. \een
\end{lem}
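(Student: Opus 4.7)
The plan is to chain three reductions. First, I would reduce the weight exponent from $\gamma$ to $0$ using Lemma~\ref{reduce-gamma-to-0}; second, I would eliminate the cutoff $\mathrm{1}_{|v-v_*|\geq \eta}$ using the localization Lemma~\ref{reduce-eta-to-0}; and third, I would apply the regularity estimates of Lemmas~\ref{lowerboundpart1-general-g} and \ref{lowerboundpart2-general-g}, which are stated for $\mathcal{N}^{\epsilon,0,0}$. Setting $G := W_{\gamma/2}\mu^{1/2}$ and $F := W_{\gamma/2}f$, Lemma~\ref{reduce-gamma-to-0} applied with $g=\mu^{1/2}$ yields
\[
\mathcal{N}^{\epsilon,\gamma,\eta}(\mu^{1/2},f) + |f|^2_{L^2_{\gamma/2}} \gtrsim \mathcal{N}^{\epsilon,0,\eta}(G,F),
\]
because the Maxwellian moment $|\mu^{1/2}|^2_{L^2_{|\gamma/2+1|}}$ is a harmless constant. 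It therefore suffices to establish $\mathcal{N}^{\epsilon,0,\eta}(G,F) + |F|^2_{L^2} \gtrsim |W^\epsilon(D)F|^2_{L^2}$ for \eqref{sobolev-regularity-gamma-eta}, and the analogous anisotropic bound for \eqref{anisotropic-regularity-gamma-eta}.

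To prove \eqref{sobolev-regularity-gamma-eta}, I would fix constants $R\geq 1$ (large, chosen below) and $r \in [\eta, r_0]$, which is possible since the hypothesis gives $\eta \leq r_0 \leq 1$. Decomposing $F = (1-\chi_{3R})F + \chi_{3R}F$, the exterior term is handled by part one of Lemma~\ref{reduce-eta-to-0} followed by Lemma~\ref{lowerboundpart1-general-g} with $g = \chi_R G$; for $R$ large, the quantities $|\chi_R G|^2_{L^1} \gtrsim 1$ and $|\chi_R G|^2_{L^2_1} \lesssim 1$ hold uniformly, thanks to the Gaussian decay of $G$. For the interior piece $\chi_{3R}F$, I would select a finite partition of unity $\sum_k \chi_{r,u_k}^2 \equiv 1$ on $B_{6R}$ with centers $u_k \in B_{7R}$ (only finitely many are needed since $R,r$ are fixed) and apply part two of Lemma~\ref{reduce-eta-to-0} followed by Lemma~\ref{lowerboundpart1-general-g} with $g = \phi_{R,r,u_k} G$, whose $L^1$ norm is uniformly bounded below once $r$ is small and $R$ is large (this is where the precise constant $r_0$ enters). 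Since $W^\epsilon$ is a symbol of type $S^1_{1,0}$ (Lemma~\ref{equivalence}), writing $F = \sum_k \chi_{r,u_k}^2 F$ on $B_{6R}$ and estimating the commutators $[W^\epsilon(D),\chi_{r,u_k}]$ as order-zero operators bounded on $L^2$ gives
\[
|W^\epsilon(D)\chi_{3R}F|^2_{L^2} \lesssim \sum_k |W^\epsilon(D)(\chi_{r,u_k}F)|^2_{L^2} + |F|^2_{L^2} \lesssim \mathcal{N}^{\epsilon,0,\eta}(G,F) + |F|^2_{L^2},
\]
which combined with the exterior bound proves \eqref{sobolev-regularity-gamma-eta}.

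For \eqref{anisotropic-regularity-gamma-eta}, the scheme is identical with Lemma~\ref{lowerboundpart1-general-g} replaced by Lemma~\ref{lowerboundpart2-general-g}. The main obstacle here will be the commutator analysis: the angular Laplacian $-\Delta_{\mathbb{S}^2}$ does not commute cleanly with the Cartesian localizers $\chi_{r,u_k}$, so recombining local anisotropic norms into the global one $|W^\epsilon((-\Delta_{\mathbb{S}^2})^{1/2}) F|^2_{L^2}$ is more delicate than in the Sobolev case. The residual commutator terms, of lower order in radial frequency but not in the angular variable, will be absorbed by the two additional terms $|W^\epsilon(D)W_{\gamma/2}f|^2_{L^2}$ and $|W^\epsilon W_{\gamma/2}f|^2_{L^2}$ already present on the left-hand side of \eqref{anisotropic-regularity-gamma-eta}, together with the $|g|^2_{L^2_1}|W^\epsilon(D)f|^2_{L^2}+|g|^2_{L^2}|W^\epsilon f|^2_{L^2}$ terms built into Lemma~\ref{lowerboundpart2-general-g}.

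Finally, \eqref{full-regularity-gamma-eta} will follow by combining the two preceding estimates. Since $W^\epsilon \gtrsim 1$ pointwise, which is a direct consequence of the definition \eqref{charicter function} and Lemma~\ref{property-of-symbol}, one has $|f|^2_{L^2_{\gamma/2}} \lesssim |W^\epsilon W_{\gamma/2}f|^2_{L^2}$; inserting this into \eqref{sobolev-regularity-gamma-eta} upgrades its error term to $|W^\epsilon W_{\gamma/2}f|^2_{L^2}$. Plugging the resulting bound for $|W^\epsilon(D)W_{\gamma/2}f|^2_{L^2}$ back into \eqref{anisotropic-regularity-gamma-eta} then controls both regularity norms by $\mathcal{N}^{\epsilon,\gamma,\eta}(\mu^{1/2},f) + |W^\epsilon W_{\gamma/2}f|^2_{L^2}$, which is exactly \eqref{full-regularity-gamma-eta}.
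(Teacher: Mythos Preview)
Your proposal is essentially correct and follows the same three-step reduction as the paper: Lemma~\ref{reduce-gamma-to-0} to pass from $\gamma$ to $0$, Lemma~\ref{reduce-eta-to-0} to remove the cutoff $\eta$ via exterior/interior localization, and then Lemmas~\ref{lowerboundpart1-general-g} and \ref{lowerboundpart2-general-g} on each localized piece. A few minor discrepancies with the paper's execution: the paper takes $R=1$ rather than $R$ large, and after Lemma~\ref{reduce-gamma-to-0} it further replaces $G=W_{\gamma/2}\mu^{1/2}$ by the smaller $W_{-3/2}\mu^{1/2}$ so that the $L^2$ lower bounds on $\chi_R G$ and $\phi_{R,r,u}G$ are computed once, independently of $\gamma$; it is this explicit calculation (with $R=1$) that produces the constant $r_0=6^{-1}2^{-7/6}e^{-1/6}$. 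Also, the hypotheses of Lemma~\ref{lowerboundpart1-general-g} are on $|g^2|_{L^1}=|g|_{L^2}^2$ and $|g^2|_{L^1_1}$, not on $|g|_{L^1}$ as you wrote.

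On the recombination step, the paper uses the crude cover inequality $\chi_{3R}\le\sum_j\chi_{r,u_j}$ and asserts $|W^\epsilon(D)\chi_{3R}F|_{L^2}^2\le N\sum_j|W^\epsilon(D)\chi_{r,u_j}F|_{L^2}^2$ directly, then says ``a similar argument'' handles the anisotropic case; your more careful route via $S^1_{1,0}$ commutator bounds is a legitimate way to justify the same reassembly, and your remark that the angular case is more delicate is fair (the paper does not spell it out either).
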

\begin{proof} Taking $g=\mu^{1/2}$ in Lemma \ref{reduce-gamma-to-0}, we have
\beno
\mathcal{N}^{\epsilon,\gamma,\eta}(\mu^{1/2},f) +|f|^{2}_{L^{2}_{\gamma/2}}  \gtrsim  \mathcal{N}^{\epsilon,0,\eta}(W_{\gamma/2}\mu^{1/2},W_{\gamma/2}f) \geq \mathcal{N}^{\epsilon,0,\eta}(W_{-3/2}\mu^{1/2},W_{\gamma/2}f).
\eeno
Taking $g=W_{-3/2}\mu^{1/2}, f = W_{\gamma/2}f = F$ in Lemma \ref{reduce-eta-to-0}, we have for $\eta \leq r \leq 1 \leq R,  u \in B_{7R}$,
\begin{eqnarray}\label{eta-to-0-part1-special-g}
\mathcal{N}^{\epsilon,0,\eta}(W_{-3/2}\mu^{1/2}, F) + |F|^{2}_{L^{2}}  \gtrsim \mathcal{N}^{\epsilon,0,0}(\chi_{R}W_{-3/2}\mu^{1/2},(1-\chi_{3R})F).
\\ \label{eta-to-0-part2-special-g}
\mathcal{N}^{\epsilon,0,\eta}(W_{-3/2}\mu^{1/2},F) + r^{-2}R^{2}|F|^{2}_{L^{2}}  \gtrsim \mathcal{N}^{\epsilon,0,0}(\phi_{R,r,u}W_{-3/2}\mu^{1/2},\chi_{r,u}F).
\end{eqnarray}
Taking $R = 1$, then $\chi_{R} = \chi$, we get
\beno |\chi_{R}W_{-3/2}\mu^{1/2}|^{2}_{L^{2}} = \int \chi^{2}W_{-3}\mu dv \geq \frac{4 \pi}{3} 2^{-3/2} (2\pi)^{-\frac{3}{2}}e^{-1/2} :=\delta^{2}_{*}. \eeno
Recalling $\phi_{R,r,u} = \chi_{7R} -\chi_{3r,u}$ and $\chi_{7R} \geq \chi_{R}$, we have
\beno  \int \phi^{2}_{R,r,u}W_{-3}\mu dv \geq \frac{1}{2} \int \chi^{2}_{7R}W_{-3}\mu dv
-  \int \chi^{2}_{3r,u}W_{-3}\mu dv \geq \frac{1}{2}\delta^{2}_{*}
-  \int \chi^{2}_{3r,u}W_{-3}\mu dv. \eeno
Note that $\int \chi^{2}_{3r,u}W_{-3}\mu dv \leq \frac{4\pi}{3}(6r)^{3} (2\pi)^{-\frac{3}{2}}  := C r^{3}$.
By choosing $r$ such that $C r^{3} = \frac{1}{4}\delta^{2}_{*}$, we get
\beno |\phi_{R,r,u}W_{-3/2}\mu^{1/2}|^{2}_{L^{2}} \geq \delta^{2}_{*}/4. \eeno
It is easy to check $r = 6^{-1} 2^{-7/6} e^{-1/6}$.
Therefore we have
\ben \label{lower-bound-mu-l2}\min\{|\phi_{R,r,u}W_{-3/2}\mu^{1/2}|_{L^{2}}, |\chi_{R}W_{-3/2}\mu^{1/2}|_{L^{2}}\} \geq \delta_{*}/2.\een
On the other hand, it is obvious to see
\ben \label{upper-bound-mu-l21}
\max\{|\chi_{R}W_{-3/2}\mu^{1/2}|_{L^{2}_{1}},|\phi_{R,r,u}W_{-3/2}\mu^{1/2}|_{L^{2}_{1}}\}  \leq |\mu|_{L^{1}_{2}} := \lambda_{*}. \een

Thanks to \eqref{lower-bound-mu-l2} and \eqref{upper-bound-mu-l21}, by Lemma \ref{lowerboundpart1-general-g},
we get
\begin{eqnarray}\label{special-g-r-R-large-v} \mathcal{N}^{\epsilon,0,0}(\chi_{R}W_{-3/2}\mu^{1/2},(1-\chi_{3R})F) + |(1-\chi_{3R})F|^{2}_{L^{2}} \geq  C(\delta_{*}/2, \lambda_{*})|W^{\epsilon}(D)(1-\chi_{3R})F|^{2}_{L^{2}} .
\\ \label{special-g-r-R-small-v} \mathcal{N}^{\epsilon,0,0}(\phi_{R,r,u}W_{-3/2}\mu^{1/2},\chi_{r,u}F) + |\chi_{r,u}F|^{2}_{L^{2}} \geq  C(\delta_{*}/2, \lambda_{*})|W^{\epsilon}(D)\chi_{r,u}F|^{2}_{L^{2}}.
\end{eqnarray}
There is a finite cover of $B_{6R}$ with open ball $B_{r}(u_{j})$ for $u_{j} \in B_{6R}$. More precisely,
there exists $\{u_{j}\}_{j=1}^{N} \subset B_{6R}$ such that $B_{6R} \subset \cup_{j=1}^{N}B_{r}(u_{j})$, where $N \sim \frac{1}{r^{3}}$ is a universal constant. We then have $\chi_{3R} \leq \sum_{j=1}^{N}\chi_{r,u_{j}}$ and thus
$
|W^{\epsilon}(D)\chi_{3R}F|^{2}_{L^{2}} \leq N \sum_{j=1}^{N} |W^{\epsilon}(D)\chi_{r,u_{j}}F|^{2}_{L^{2}}.
$
From which together with \eqref{eta-to-0-part1-special-g}, \eqref{eta-to-0-part2-special-g}, \eqref{special-g-r-R-large-v}, \eqref{special-g-r-R-small-v},
we get for any $0 \leq \eta \leq r$,
\beno \mathcal{N}^{\epsilon,\gamma,\eta}(\mu^{1/2},f) +|f|_{L^2_{\gamma/2}}^2 \gtrsim r^{8} |W^\epsilon(D)W_{\gamma/2}f|_{L^2}^2. \eeno
Since $r$ is a universal constant, we get \eqref{sobolev-regularity-gamma-eta}.

Thanks to \eqref{lower-bound-mu-l2} and \eqref{upper-bound-mu-l21},
by \eqref{anisotropic-regularity-general-g} in Lemma \ref{lowerboundpart2-general-g}, we get
\beno
\mathcal{N}^{\epsilon,0,0}(\chi_{R}W_{-3/2}\mu^{1/2},(1-\chi_{3R})F) + \lambda^{2}_{*}(|W^{\epsilon}(D)(1-\chi_{3R})F|^{2}_{L^{2}}+|W^{\epsilon}(1-\chi_{3R})F|^{2}_{L^{2}}) \\ \gtrsim  \delta^{2}_{*}|W^{\epsilon}((-\Delta_{\mathbb{S}^{2}})^{1/2}) (1-\chi_{3R})F|^{2}_{L^{2}}, \\
\mathcal{N}^{\epsilon,0,0}(\phi_{R,r,u}W_{-3/2}\mu^{1/2},\chi_{r,u}F) + \lambda^{2}_{*}(|W^{\epsilon}(D)\chi_{r,u}F|^{2}_{L^{2}}+|W^{\epsilon}\chi_{r,u}F|^{2}_{L^{2}}) \gtrsim \delta^{2}_{*}|W^{\epsilon}((-\Delta_{\mathbb{S}^{2}})^{1/2}) \chi_{r,u}f|^{2}_{L^{2}} .
\eeno
Then a similar argument yields
\eqref{anisotropic-regularity-gamma-eta}.
\end{proof}


\subsection{Lower bound of $\langle \mathcal{L}^{\epsilon,\gamma,\eta}f,f\rangle$}
Thanks to Lemma \ref{lowerboundpart1} and  \eqref{full-regularity-gamma-eta} in Lemma \ref{lowerboundpart1-gamma-eta},  we get

\begin{lem}\label{two-parts-together} Let $-3 \leq \gamma \leq 0 \leq \eta \leq r_{0}$ where $r_{0}$ is the constant in  Lemma \ref{lowerboundpart1-gamma-eta}. We have
\ben \label{two-parts-together-N-N}
&& \mathcal{N}^{\epsilon,\gamma,\eta}(\mu^{1/2},f) + \mathcal{N}^{\epsilon,\gamma,\eta}(\mu^{1/2},f) + |f|^{2}_{L^{2}_{\gamma/2}} \\&\gtrsim&  |W^{\epsilon}((-\Delta_{\mathbb{S}^{2}})^{1/2})W_{\gamma/2}f|_{L^2}^2+ |W^{\epsilon}(D)W_{\gamma/2}f|_{L^2}^2 + |W^{\epsilon}W_{\gamma/2}f|_{L^2}^2=|f|_{\epsilon,\gamma/2}^2 . \nonumber
\een
\end{lem}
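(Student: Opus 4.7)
The statement is a direct combination of the two ``halves'' established earlier, one giving gain of the weight $W^{\epsilon}$ in phase space (from $\mathcal{N}^{\epsilon,\gamma,\eta}(f,\mu^{1/2})$) and the other giving gain of the symbol $W^{\epsilon}$ in frequency and anisotropic spaces (from $\mathcal{N}^{\epsilon,\gamma,\eta}(\mu^{1/2},f)$). The plan is simply to add them with the right constants so that the weight term extracted from the first estimate feeds into the second.

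Concretely, I first apply Lemma \ref{lowerboundpart1} (valid for $0\le\eta\le 1$ and hence in particular for $0\le\eta\le r_{0}\le 1$) to obtain
\[
\mathcal{N}^{\epsilon,\gamma,\eta}(f,\mu^{1/2}) + |f|^{2}_{L^{2}_{\gamma/2}} \;\gtrsim\; |W^{\epsilon}f|^{2}_{L^{2}_{\gamma/2}} \;\sim\; |W^{\epsilon}W_{\gamma/2}f|^{2}_{L^{2}},
\]
the last equivalence following from Lemma \ref{equivalence} (or directly since $W^{\epsilon}$ acts pointwise on the phase variable here). In parallel, by \eqref{full-regularity-gamma-eta} of Lemma \ref{lowerboundpart1-gamma-eta}, valid precisely in the range $0\le\eta\le r_{0}$,
\[
\mathcal{N}^{\epsilon,\gamma,\eta}(\mu^{1/2},f) + |W^{\epsilon}W_{\gamma/2}f|^{2}_{L^{2}} \;\gtrsim\; |W^{\epsilon}((-\Delta_{\mathbb{S}^{2}})^{1/2})W_{\gamma/2}f|^{2}_{L^{2}} + |W^{\epsilon}(D)W_{\gamma/2}f|^{2}_{L^{2}}.
\]

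The remaining step is to eliminate the term $|W^{\epsilon}W_{\gamma/2}f|^{2}_{L^{2}}$ from the left-hand side of the second inequality by feeding in the first: choose a sufficiently small universal constant $\kappa>0$ depending only on the implicit constants and on the lower bound $C_{\gamma}$ from Lemma \ref{lowerboundpart1}, and add $\kappa$ times the first inequality to the second. This yields
\[
\mathcal{N}^{\epsilon,\gamma,\eta}(\mu^{1/2},f) + \mathcal{N}^{\epsilon,\gamma,\eta}(f,\mu^{1/2}) + |f|^{2}_{L^{2}_{\gamma/2}} \;\gtrsim\; |W^{\epsilon}((-\Delta_{\mathbb{S}^{2}})^{1/2})W_{\gamma/2}f|^{2}_{L^{2}} + |W^{\epsilon}(D)W_{\gamma/2}f|^{2}_{L^{2}} + |W^{\epsilon}W_{\gamma/2}f|^{2}_{L^{2}},
\]
where the right-hand side is exactly $|f|^{2}_{\epsilon,\gamma/2}$ by the definition of this norm.

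There is no genuine obstacle here: the substance of the lemma is already contained in Lemmas \ref{lowerboundpart1} and \ref{lowerboundpart1-gamma-eta}. The only points worth checking are (i) the compatibility of the ranges of $\gamma$ and $\eta$ in the two inputs (both hold for $-3\le\gamma\le 0$ and $0\le\eta\le r_{0}$), (ii) that the gain-of-weight in Lemma \ref{lowerboundpart1} is stated with weight $L^{2}_{\gamma/2}$ exactly matching the weighted $\epsilon$-norm in the target, and (iii) that $|W^{\epsilon}f|_{L^{2}_{\gamma/2}}$ and $|W^{\epsilon}W_{\gamma/2}f|_{L^{2}}$ are comparable uniformly in $\epsilon$; this last equivalence (up to lower-order $L^{2}_{\gamma/2}$ errors which are already present on the left-hand side) follows from Lemma \ref{equivalence} applied to the symbol $W^{\epsilon}\in S^{1}_{1,0}$, after which the combination goes through cleanly.
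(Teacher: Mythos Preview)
Your proposal is correct and follows exactly the approach the paper takes: the paper's proof is just the one-line remark ``Thanks to Lemma~\ref{lowerboundpart1} and \eqref{full-regularity-gamma-eta} in Lemma~\ref{lowerboundpart1-gamma-eta}, we get'', and you have simply spelled out how to combine those two inputs. One small simplification: since $W^{\epsilon}$ in $|W^{\epsilon}f|_{L^{2}_{\gamma/2}}$ is pointwise multiplication (not $W^{\epsilon}(D)$), the identity $|W^{\epsilon}f|_{L^{2}_{\gamma/2}}=|W^{\epsilon}W_{\gamma/2}f|_{L^{2}}$ is exact and Lemma~\ref{equivalence} is not needed (as you yourself noted parenthetically).
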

Now we are ready to prove the following coercivity estimate, which a stronger version of
Theorem \ref{coercivity-structure}.
\begin{thm}\label{strong-coercivity} Let $-3 \leq \gamma \leq 0 \leq \eta \leq r_{0}$. We have
\beno
\langle \mathcal{L}^{\epsilon,\gamma,\eta}f, f\rangle + |f|^{2}_{L^{2}_{\gamma/2}} \gtrsim  |f|_{\epsilon,\gamma/2}^2.
\eeno
\end{thm}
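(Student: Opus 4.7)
The theorem is a direct consequence of two pieces already in place in Section 2. The first is the identity referenced as \eqref{equivalence-relation},
\beno
\langle \mathcal{L}^{\epsilon,\gamma,\eta}f,f\rangle + |f|^{2}_{L^{2}_{\gamma/2}} \gtrsim \mathcal{N}^{\epsilon,\gamma,\eta}(\mu^{1/2},f) + \mathcal{N}^{\epsilon,\gamma,\eta}(f,\mu^{1/2}),
\eeno
which is a purely algebraic/symmetry statement about the linearized collision operator and does not depend on any special structure of $b^{\epsilon}$. The second is Lemma \ref{two-parts-together}, which shows that the right-hand side controls $|f|^{2}_{\epsilon,\gamma/2} - |f|^{2}_{L^{2}_{\gamma/2}}$ from below. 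Chaining these two estimates and absorbing the $|f|^{2}_{L^{2}_{\gamma/2}}$ correction into the term already present on the left finishes the proof; the entire work therefore lies in establishing the identity above.

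To derive it I would set $h := \mu^{-1/2}f$ and exploit the two invariances of the measure $B^{\epsilon,\gamma,\eta}d\sigma dv dv_*$, namely the swap $v \leftrightarrow v_*$ and the pre/post-collisional change $(v,v_*,\sigma) \leftrightarrow (v',v'_*,(v-v_*)/|v-v_*|)$, together with the microscopic conservation law $\mu\mu_* = \mu'\mu'_*$, to rewrite $\langle \mathcal{L}^{\epsilon,\gamma,\eta}f,f\rangle$ as the standard quadratic form
\beno
\langle \mathcal{L}^{\epsilon,\gamma,\eta}f,f\rangle = \tfrac14 \int B^{\epsilon,\gamma,\eta}\, \mu\mu_* \,(h + h_* - h' - h'_*)^2 \, d\sigma dv dv_*.
\eeno
Using $\mu\mu_* = \mu'\mu'_*$ to expand $(\mu\mu_*)^{1/2}(h - h') = \mu^{1/2}_*(f - f') - (\mu'^{1/2}_* - \mu^{1/2}_*)f'$ (and its $*$-analogue), squaring, and applying the inequality $(a-b)^2 \ge \frac12 a^2 - b^2$ to isolate the two ``good'' squares $\mu_*(f-f')^2$ and $f_*^2(\mu'^{1/2}-\mu^{1/2})^2$, followed by $v \leftrightarrow v_*$ and pre/post symmetrization, produces a lower bound of the form $c\bigl(\mathcal{N}^{\epsilon,\gamma,\eta}(\mu^{1/2},f) + \mathcal{N}^{\epsilon,\gamma,\eta}(f,\mu^{1/2})\bigr) - \mathcal{R}$. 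The remainder $\mathcal{R}$ consists of integrals such as $\int B^{\epsilon,\gamma,\eta}(\mu'^{1/2}-\mu^{1/2})^2 d\sigma dv dv_*$ weighted by $f_*^2$, and mixed products that can be redistributed via Cancellation Lemma \ref{cancellation-lemma-general-gamma-minus3-mu} (through \eqref{geq-delta-mu}). Each of these is controlled by $|f|^{2}_{L^{2}_{\gamma/2}}$ using the pointwise Taylor bound $(\mu'^{1/2} - \mu^{1/2})^2 \lesssim \min\{1, |v-v_*|^2 \sin^2(\theta/2)\}\, \mu^{1/2}$ combined with Proposition \ref{symbol}, which provides precisely the angular integrability needed.

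The principal obstacle is the borderline case $\gamma = -3$: the singularity $|v-v_*|^{-3}$ sits exactly at the edge of integrability, so every Taylor expansion of $\mu^{1/2}$ at $v'$ must extract exactly enough $|v-v_*|\sin(\theta/2)$ cancellation to tame the angular singularity $\sin^{-4}(\theta/2)$ of $b^{\epsilon}$; the residual logarithmic divergence is precisely what the normalization $|\ln\epsilon|^{-1}$ built into $b^{\epsilon}$ compensates for, making the bounds uniform in $\epsilon$. Once the identity \eqref{equivalence-relation} has been set up in this manner, Lemma \ref{two-parts-together} closes the coercivity estimate at once and delivers $\langle \mathcal{L}^{\epsilon,\gamma,\eta}f,f\rangle + |f|^{2}_{L^{2}_{\gamma/2}} \gtrsim |f|^{2}_{\epsilon,\gamma/2}$.
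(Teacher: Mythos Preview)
Your plan is essentially the paper's proof. Both arguments reduce the theorem to the relation \eqref{equivalence-relation} and then invoke Lemma \ref{two-parts-together}. The only difference is in how \eqref{equivalence-relation} is obtained: the paper splits $\mathcal{L}^{\epsilon,\gamma,\eta}=\mathcal{L}^{\epsilon,\gamma,\eta}_{1}+\mathcal{L}^{\epsilon,\gamma,\eta}_{2}$, cites Proposition~2.16 of \cite{alexandre2012boltzmann} for the lower bound of $\langle \mathcal{L}^{\epsilon,\gamma,\eta}_{1}f,f\rangle$ in terms of the two $\mathcal{N}$ functionals minus the cancellation term $\int B^{\epsilon,\gamma,\eta}\mu_{*}(f^{2}-f'^{2})$, bounds that cancellation term via \eqref{geq-delta-mu} (or \cite{alexandre2000entropy} when $\gamma>-3$), and disposes of $\mathcal{L}^{\epsilon,\gamma,\eta}_{2}$ separately by Lemma \ref{l2-full-estimate-geq-eta}. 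Your proposal instead works directly with the full symmetric quadratic form $\tfrac14\int B\,\mu\mu_{*}(h+h_{*}-h'-h'_{*})^{2}$ and redoes the AMUXY computation by hand; this is more self-contained but longer, and the remainder you describe (integrals of $(\mu'^{1/2}-\mu^{1/2})^{2}f_{*}^{2}$) is actually one of the \emph{good} terms $\mathcal{N}^{\epsilon,\gamma,\eta}(f,\mu^{1/2})$, not a residual---the genuine remainder is the cancellation-type term, exactly as in the paper.
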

\begin{proof}
We recall that $\mathcal{N}^{\epsilon,\gamma,\eta}(\mu^{1/2},f) + \mathcal{N}^{\epsilon,\gamma,\eta}(f,\mu^{1/2})$ corresponds to
the anisotropic norm $|||\cdot|||$ introduced in \cite{alexandre2012boltzmann}.
By the proof of Proposition 2.16 in \cite{alexandre2012boltzmann}, there holds
\beno
\langle \mathcal{L}^{\epsilon,\gamma,\eta}_{1} f,f \rangle \geq \frac{1}{10}(\mathcal{N}^{\epsilon,\gamma,\eta}(\mu^{1/2},f) + \mathcal{N}^{\epsilon,\gamma,\eta}(f,\mu^{1/2})) - \frac{3}{10} \big|\int B^{\epsilon,\gamma,\eta} \mu_{*} (f^{2} -f^{\prime 2}) d\sigma dv dv_{*} \big|.
\eeno
If $\gamma=-3$, by \eqref{geq-delta-mu} in the Cancellation Lemma \ref{cancellation-lemma-general-gamma-minus3-mu}
with $a=1/2, p=\infty, q=1$, we  have
\beno  \big|\int B^{\epsilon,\gamma,\eta} \mu_{*} (f^{2} -f^{\prime 2}) d\sigma dv dv_{*} \big| \leq C |\mu^{1/2}f^{2}|_{L^{1}}
\leq C |f|^{2}_{L^{2}_{\gamma/2}}.
\eeno
If $\gamma>-3$, referring to \cite{alexandre2000entropy}, we have
\beno \big|\int B^{\epsilon,\gamma,\eta} \mu_{*} (f^{2} -f^{\prime 2}) d\sigma dv dv_{*} \big| \leq C \int|v-v_{*}|^{\gamma}\mu_{*} f^{2} dv dv_{*}
\leq C |f|^{2}_{L^{2}_{\gamma/2}}.
\eeno
Therefore we have
\ben \label{L-1-dominate}
\langle \mathcal{L}^{\epsilon,\gamma,\eta}_{1} f,f \rangle \geq \frac{1}{10}(\mathcal{N}^{\epsilon,\gamma,\eta}(\mu^{1/2},f) + \mathcal{N}^{\epsilon,\gamma,\eta}(f,\mu^{1/2})) - C |f|^{2}_{L^{2}_{\gamma/2}}.
\een
By Lemma \ref{l2-full-estimate-geq-eta}, we have
\ben \label{L-2-lower}
|\langle \mathcal{L}^{\epsilon,\gamma,\eta}_{2} f,f \rangle| \lesssim |\mu^{1/8}f|_{L^2}^2 \lesssim |f|_{L^2_{\gamma/2}}^2.
\een
Patching \eqref{L-1-dominate} and \eqref{L-2-lower}, we arrive at \eqref{equivalence-relation}.
\ben \label{L-dominate-lower-bound}
\langle \mathcal{L}^{\epsilon,\gamma,\eta} f,f \rangle + |f|_{L^2_{\gamma/2}}^2
\gtrsim \mathcal{N}^{\epsilon,\gamma,\eta}(\mu^{1/2},f) + \mathcal{N}^{\epsilon,\gamma,\eta}(f,\mu^{1/2}).
\een
From which together with Lemma \ref{two-parts-together}, we finish the proof.
\end{proof}

\setcounter{equation}{0}

\section{Spectral gap estimate} \label{Spectral-Gap-Estimate}
In this section, we will consider the  spectral gap estimate of $\mathcal{L}^{\epsilon,\gamma,\eta}$. As we explained in the introduction, it will yield  Theorem \ref{micro-dissipation} and Theorem   \ref{micro-dissipation1}. In order to get \eqref{ideasg2},
we first prove the smallness of $\langle  \mathcal{L}^{\epsilon,0}_{\eta}f, f\rangle$ when $\eta$ is small.
\begin{lem}\label{gamma-0-eat-lb} Let $0 \leq \eta \leq 1$, then
\beno |\langle  \mathcal{L}^{\epsilon,0}_{\eta}f, f\rangle| \lesssim \eta^{3} |W^{\epsilon}(D)(\mathbb{I}-\mathbb{P})f|^{2}_{L^{2}}. \eeno
\end{lem}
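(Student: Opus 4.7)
The plan is to first exploit that $\mathcal{L}^{\epsilon,0}_{\eta}$ annihilates $\mathcal{N}$ and is self-adjoint, then to extract the smallness $\eta^{3}$ via Plancherel's theorem combined with the submultiplicativity of $W^{\epsilon}$ from \eqref{separate-into-2-cf}. Since $|v-v_{*}|=|v'-v'_{*}|$, the truncation $\mathrm{1}_{|v-v_{*}|<\eta}$ respects the elastic-collision symmetry, so $\mathcal{L}^{\epsilon,0}_{\eta}$ is self-adjoint and non-negative with the usual quadratic representation
\[
\langle \mathcal{L}^{\epsilon,0}_{\eta} h, h\rangle = \tfrac{1}{4}\int B^{\epsilon,0}_{\eta}\,\mu\mu_{*}[\phi'+\phi'_{*}-\phi-\phi_{*}]^{2}\,d\sigma dv dv_{*}, \qquad \phi := \mu^{-1/2}h,
\]
which vanishes whenever $h\in\mathcal{N}$ by the collision invariant identity. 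Consequently $\mathbb{P}\mathcal{L}^{\epsilon,0}_{\eta}=\mathcal{L}^{\epsilon,0}_{\eta}\mathbb{P}=0$, so $\langle \mathcal{L}^{\epsilon,0}_{\eta}f,f\rangle=\langle \mathcal{L}^{\epsilon,0}_{\eta}h,h\rangle$ with $h:=(\mathbb{I}-\mathbb{P})f$, and it suffices to prove the stated bound for this $h$.

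Using the triangle inequality on $\phi'+\phi'_{*}-\phi-\phi_{*} = (\phi'-\phi)+(\phi'_{*}-\phi_{*})$ and the $v\leftrightarrow v_{*}$ symmetry of $B^{\epsilon,0}_{\eta}$, the problem reduces to controlling $\int B^{\epsilon,0}_{\eta}\,\mu\mu_{*}(\phi'-\phi)^{2} d\sigma dv dv_{*}$. Splitting $\phi'-\phi=\mu^{-1/2}(v')(h'-h)+h(v)[\mu^{-1/2}(v')-\mu^{-1/2}(v)]$, using $\mu\mu_{*}/\mu(v')=\mu'_{*}$, and the Gaussian comparison $\mu'_{*}\lesssim \mu^{1/4}(v)$ on $|v-v_{*}|\le 1$, the task boils down to
\[
\mathcal{I}_{1} := \int B^{\epsilon,0}_{\eta}(h'-h)^{2} d\sigma dv dv_{*} \lesssim \eta^{3}|W^{\epsilon}(D)h|_{L^{2}}^{2}
\]
and an analogous bound for the ``weight-difference'' piece $\mathcal{I}_{2}$ coming from Taylor expansion of $\mu^{-1/2}$.

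The key step is $\mathcal{I}_{1}$. After the change of variable $u=v-v_{*}$ (so $v'-v=a$ with $a:=(|u|\sigma-u)/2$) and Plancherel in $v$,
\[
\mathcal{I}_{1} = 4\int |\hat h(\xi)|^{2}\left[\int_{|u|<\eta}\int_{\mathbb{S}^{2}} b^{\epsilon}(\cos\theta)\sin^{2}(a\cdot\xi/2)\,d\sigma du\right]d\xi.
\]
Bounding $\sin^{2}(a\cdot\xi/2)\le \min\{1,|u|^{2}|\xi|^{2}\sin^{2}(\theta/2)\}$ and invoking Proposition \ref{symbol} give $\int b^{\epsilon}\sin^{2}(a\cdot\xi/2) d\sigma \lesssim (W^{\epsilon}(|u||\xi|))^{2}$; since $|u|\le\eta\le 1$, \eqref{separate-into-2-cf} then yields $W^{\epsilon}(|u||\xi|)\le W^{\epsilon}(u)W^{\epsilon}(\xi) \le \sqrt{2}\,W^{\epsilon}(\xi)$. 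The inner bracket is therefore bounded by $|\{|u|<\eta\}|\cdot C(W^{\epsilon}(\xi))^{2}\sim \eta^{3}(W^{\epsilon}(\xi))^{2}$, proving the desired estimate on $\mathcal{I}_{1}$.

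For $\mathcal{I}_{2}$, Taylor expansion gives $|\mu^{-1/2}(v')-\mu^{-1/2}(v)|^{2}\lesssim \langle v\rangle^{2}\mu^{-1}(v)|v-v_{*}|^{2}\sin^{2}(\theta/2)$. Combining with $\mu\mu_{*}\mu^{-1}(v)=\mu_{*}\lesssim \mu^{1/2}(v)$, $\langle v\rangle^{2}\mu^{1/2}(v)\lesssim 1$, $|v-v_{*}|^{2}\le\eta^{2}$, $\int b^{\epsilon}\sin^{2}(\theta/2) d\sigma\lesssim 1$ from \eqref{order-2}, and $|\{v_{*}:|v-v_{*}|<\eta\}|=\tfrac{4\pi}{3}\eta^{3}$, this yields $\mathcal{I}_{2}\lesssim \eta^{5}|h|_{L^{2}}^{2}\le \eta^{3}|W^{\epsilon}(D)h|_{L^{2}}^{2}$ (since $W^{\epsilon}\ge 1$). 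The hard part will be the Fourier estimate in step three: the clean submultiplicativity $W^{\epsilon}(|u||\xi|)\le W^{\epsilon}(u)W^{\epsilon}(\xi)$, together with $|u|\le\eta\le 1$, is precisely what decouples the volume factor $\eta^{3}$ from the symbol $W^{\epsilon}(\xi)$ in a manner independent of $\epsilon$.
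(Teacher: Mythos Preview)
Your proof is correct and takes a genuinely simpler route than the paper's. After the common reduction to $h=(\mathbb{I}-\mathbb{P})f$, the paper bounds $\langle \mathcal{L}^{\epsilon,0}_{\eta}h,h\rangle$ by $\mathcal{N}^{\epsilon,0}_{\eta}(\mu^{1/2},h)+\mathcal{N}^{\epsilon,0}_{\eta}(h,\mu^{1/2})$ and then, for the hard piece $\mathcal{N}^{\epsilon,0}_{\eta}(\mu^{1/2},h)=\int b^{\epsilon}\mathrm{1}_{|v-v_*|<\eta}\mu_*(h'-h)^2$, performs a frequency splitting $h=\mathfrak{F}_{\phi}h+\mathfrak{F}^{\phi}h$ at the threshold $|\xi|\sim\epsilon^{-1}$, treats the high part crudely, and for the low part writes $(h'-h)^2=(h')^2-h^2+2h(h-h')$ followed by a \emph{further} dyadic decomposition in frequency with a case-by-case analysis of $\mathcal{Y}(F_j,F_k)$ according to $j\lessgtr k$ and $\theta\lessgtr 2^{-k}$. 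Your argument replaces all of this by a single application of Plancherel in $v$: after the change $u=v-v_*$ the quantity $\mathcal{I}_1$ becomes $\int|\hat h(\xi)|^2\bigl[\int_{|u|<\eta}\int b^{\epsilon}\sin^2(a\cdot\xi/2)\,d\sigma\,du\bigr]d\xi$, and Proposition~\ref{symbol} together with the submultiplicativity $W^{\epsilon}(|u||\xi|)\lesssim W^{\epsilon}(u)W^{\epsilon}(\xi)$ bounds the bracket by $C\eta^3(W^{\epsilon}(\xi))^2$ in one stroke. This is exactly the Bobylev-type shortcut available for $\gamma=0$; the paper's longer argument stays in physical space and reuses its dyadic machinery, at the cost of a page of casework.

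One minor citation slip: \eqref{separate-into-2-cf} is the \emph{difference} inequality $W^{\epsilon}(x-y)\le W^{\epsilon}(x)W^{\epsilon}(y)$, not the product inequality you need. The product version $W^{\epsilon}(|u||\xi|)\lesssim W^{\epsilon}(|u|)W^{\epsilon}(|\xi|)$ is indeed a property of the symbol (the paper invokes it without a label in the estimate of $\mathcal{I}_2$ in the proof of Lemma~\ref{lowerboundpart2-general-g}), so your use of it is legitimate; just point to the right place. Also, in the Taylor bound for $\mathcal{I}_2$ you implicitly absorb an $e^{c|v|}$ coming from $\mu^{-1/2}(v(\kappa))$; this is harmless since it is killed by the remaining $\mu^{1/2}(v)$ factor, but it is worth making explicit.
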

\begin{proof} The null space of $\mathcal{L}^{\epsilon,0}_{\eta}$ is $\mathcal{N}$, and $\mathcal{L}^{\epsilon,0}_{\eta}$ is a self-joint operator. Therefore it suffices to consider $f \in \mathcal{N}^{\perp}$.
 Similar to \eqref{definition-of-N-ep-ga-eta}, we define $\mathcal{N}^{\epsilon,\gamma}_{\eta}(g,h)$ with the restriction
$|v-v_{*}|\leq \eta$. It is easy to check
 $\langle \mathcal{L}^{\epsilon,0}_{\eta} f,f \rangle \leq 2\mathcal{N}^{\epsilon,0}_{\eta}(\mu^{1/2},f) + 2\mathcal{N}^{\epsilon,0}_{\eta}(f,\mu^{1/2})$. We divide the proof into two steps.

\noindent{\it Step 1: Estimate of $\mathcal{N}^{\epsilon,0}_{\eta}(f,\mu^{1/2})$.}  Recall
\beno \mathcal{N}^{\epsilon,0}_{\eta}(f,\mu^{1/2}) = \int b^{\epsilon}(\cos\theta)
\mathrm{1}_{|v-v_{*}|\leq \eta}
f^{2}_{*} ((\mu^{1/2})^{\prime}-\mu^{1/2})^{2} d\sigma dv dv_{*}. \eeno
By Taylor expansion,
 $|(\mu^{1/2})^{\prime}-\mu^{1/2}| \leq |\nabla \mu^{1/2}|_{L^\infty}|v^{\prime}-v| \lesssim |v-v_{*}|\theta$.
 From which together with \eqref{order-2}, we have
\beno \mathcal{N}^{\epsilon,0}_{\eta}(f,\mu^{1/2}) &\lesssim&  \int b^{\epsilon}(\cos\theta)
\mathrm{1}_{|v-v_{*}|\leq \eta}
f^{2}_{*} |v-v_{*}|^{2}\theta^{2} d\sigma dv dv_{*}
\\&\lesssim&  \int
\mathrm{1}_{|v-v_{*}|\leq \eta}
f^{2}_{*} |v-v_{*}|^{2} dv dv_{*} \lesssim \eta^{5} |f|^{2}_{L^{2}}. \eeno

\noindent{\it Step 2: Estimate of $\mathcal{N}^{\epsilon,0}_{\eta}(\mu^{1/2},f)$.}  Recalling the decomposition $f=\mathfrak{F}_{\phi}f+\mathfrak{F}^{\phi}f$ in \eqref{defphilh}, we have
\beno \mathcal{N}^{\epsilon,0}_{\eta}(\mu^{1/2},f) &=& \int b^{\epsilon}(\cos\theta)
\mathrm{1}_{|v-v_{*}|\leq \eta}
\mu_{*} (f^{\prime}-f)^{2} d\sigma dv dv_{*}
\\&\leq& 2\int b^{\epsilon}(\cos\theta)
\mathrm{1}_{|v-v_{*}|\leq \eta}
\mu_{*} ((\mathfrak{F}_{\phi}f)^{\prime}-\mathfrak{F}_{\phi}f)^{2} d\sigma dv dv_{*}
\\&&+ 2\int b^{\epsilon}(\cos\theta)
\mathrm{1}_{|v-v_{*}|\leq \eta}
\mu_{*} ((\mathfrak{F}^{\phi}f)^{\prime}-\mathfrak{F}^{\phi}f)^{2} d\sigma dv dv_{*}
:= 2I_{\mathrm{low}}  + 2 I_{\mathrm{high}}.
\eeno

 {\it Step 2.1: Estimate of   $I_{\mathrm{high}}$.} By  $((\mathfrak{F}^{\phi}f)^{\prime}-\mathfrak{F}^{\phi}f)^{2} \leq  2 \left(((\mathfrak{F}^{\phi}f)^{\prime})^{2} + (\mathfrak{F}^{\phi}f)^{2}\right)$, the change of variable $v \rightarrow v^{\prime}$,
 the estimate \eqref{order-0} and the fact \eqref{high-frequency-lb-cf},
 we have
\beno I_{\mathrm{high}}&\lesssim& \int b^{\epsilon}(\cos\theta)
\mathrm{1}_{|v-v_{*}|\leq \eta}
\mu_{*} (\mathfrak{F}^{\phi}f)^{2} d\sigma dv dv_{*}
\\&\lesssim&  |\ln \epsilon|^{-1} \epsilon^{-2} \int
\mathrm{1}_{|v-v_{*}|\leq \eta}
\mu_{*} (\mathfrak{F}^{\phi}f)^{2} dv dv_{*}
\lesssim \eta^{3}  |\ln \epsilon|^{-1} \epsilon^{-2} |\mathfrak{F}^{\phi}f|^{2}_{L^{2}} \lesssim \eta^{3}  |W^{\epsilon}(D)f|^{2}_{L^{2}}.
\eeno

 {\it Step 2.2: Estimate of   $I_{\mathrm{low}}$.}
   Observe $((\mathfrak{F}_{\phi}f)^{\prime}-\mathfrak{F}_{\phi}f)^{2} = ((\mathfrak{F}_{\phi}f)^{\prime})^{2} - (\mathfrak{F}_{\phi}f)^{2} + 2 \mathfrak{F}_{\phi}f (\mathfrak{F}_{\phi}f - (\mathfrak{F}_{\phi}f)^{\prime})$. We get
\beno I_{\mathrm{low}}&=& \int b^{\epsilon}(\cos\theta)
\mathrm{1}_{|v-v_{*}|\leq \eta}
\mu_{*} \left(((\mathfrak{F}_{\phi}f)^{\prime})^{2} - (\mathfrak{F}_{\phi}f)^{2}\right) d\sigma dv dv_{*}
\\&&+ 2\int b^{\epsilon}(\cos\theta)
\mathrm{1}_{|v-v_{*}|\leq \eta}
\mu_{*} \mathfrak{F}_{\phi}f \left(\mathfrak{F}_{\phi}f - (\mathfrak{F}_{\phi}f)^{\prime}\right) d\sigma dv dv_{*}
:=  I_{\mathrm{low,cancell}} +2 I_{\mathrm{low,dydadic}}.
\eeno

\underline{Estimate of $I_{\mathrm{low,cancell}}$.} Thanks to \eqref{order-2},
by cancellation lemma in \cite{alexandre2000entropy},
we get
\ben \label{cancellation-gamma-0}
I_{\mathrm{low,cancell}} \lesssim \int
\mathrm{1}_{|v-v_{*}|\leq \eta}
\mu_{*} (\mathfrak{F}_{\phi}f)^{2} dv dv_{*}
\lesssim \eta^{3} |\mathfrak{F}_{\phi}f|^{2}_{L^{2}} \lesssim \eta^{3}  |f|^{2}_{L^{2}}.
\een

\underline{Estimate of $I_{\mathrm{low,dydadic}}$.}  For simplicity, we define
\beno \mathcal{Y}(g,h):= \int b^{\epsilon}(\cos\theta)
\mathrm{1}_{|v-v_{*}|\leq \eta}
\mu_{*} g \left(h^{\prime}-h\right) d\sigma dv dv_{*}. \eeno
By the dyadic decomposition, we have
\ben \label{low-dydadic} I_{\mathrm{low,dydadic}} &=& -\mathcal{Y}(\mathfrak{F}_{\phi}f,\mathfrak{F}_{\phi}f)
= -\sum_{j,k=-1}^{\infty} \mathcal{Y}(\mathfrak{F}_{j}\mathfrak{F}_{\phi}f, \mathfrak{F}_{k}\mathfrak{F}_{\phi}f)
\\&=& -\sum_{-1\leq j\leq k \lesssim |\ln \epsilon| }\mathcal{Y}(\mathfrak{F}_{j}\mathfrak{F}_{\phi}f, \mathfrak{F}_{k}\mathfrak{F}_{\phi}f)
-\sum_{-1\leq k < j \lesssim |\ln \epsilon| } \mathcal{Y}(\mathfrak{F}_{j}\mathfrak{F}_{\phi}f, \mathfrak{F}_{k}\mathfrak{F}_{\phi}f).  \nonumber
\een
For simplicity, we set $F_{k}=\mathfrak{F}_{k}\mathfrak{F}_{\phi}f$.

\underline{\it Case 1: $k<j$.}
Note that
\beno
\mathcal{Y}(F_{j},F_{k}) &=& \int B^{\epsilon,0}_{\eta} \phi(\sin(\theta/2)/2^{k}) \mu_{*} F_{j} ((F_{k})^{\prime}-F_{k}) d\sigma dv dv_{*}
\\&&+ \int B^{\epsilon,0}_{\eta} (1-\phi(\sin(\theta/2)/2^{k})) \mu_{*}  F_{j} ((F_{k})^{\prime}-F_{k}) d\sigma dv dv_{*}
:=\mathcal{Y}_{1}(F_{j},F_{k}) +\mathcal{Y}_{2}(F_{j},F_{k}).
\eeno
Let us first consider $\mathcal{Y}_{1}(F_{j},F_{k})$ in which $\epsilon \leq \sin(\theta/2) \leq \frac{4}{3} \times 2^{-k} \leq 2^{-k+1}$.
By Taylor expansion,
\beno (F_{k})^{\prime}-F_{k} = (\nabla F_{k}) \cdot (v^{\prime}-v) + \int_{0}^{1} \frac{1-\kappa}{2}  (\nabla^{2}F_{k})(v(\kappa)):(v^\prime - v)\otimes (v^\prime - v)
 d\kappa. \eeno
We separate $\mathcal{Y}_{1}(F_{j},F_{k}) = \mathcal{Y}_{1,1}(F_{j},F_{k}) + \mathcal{Y}_{1,2}(F_{j},F_{k})$ according to the previous expansion with
\beno
\mathcal{Y}_{1,1}(F_{j},F_{k}) := \int B^{\epsilon,0}_{\eta} \phi(\sin(\theta/2)/2^{k}) \mu_{*} F_{j} (\nabla F_{k}) \cdot (v^{\prime}-v) d\sigma dv dv_{*},
\\
\mathcal{Y}_{1,2}(F_{j},F_{k}) := \int_{0}^{1} \int B^{\epsilon,0}_{\eta} \phi(\sin(\theta/2)/2^{k}) \mu_{*} F_{j} \int_{0}^{1} \frac{1-\kappa}{2}  (\nabla^{2}F_{k})(v(\kappa)):(v^\prime - v)\otimes (v^\prime - v)
d\sigma dv dv_{*} d\kappa.
\eeno
We first estimate $\mathcal{Y}_{1,1}(F_{j},F_{k})$. Note that
\ben \label{around-mode-k}
|\int b^{\epsilon}\phi\left(\sin(\theta/2)/2^{k}\right) (v^{\prime}-v) d\sigma| &=& |\int b^{\epsilon}\phi\left(\sin(\theta/2)/2^{k}\right)  \sin^{2}\frac{\theta}{2} d\sigma (v_{*}-v)| \\&\lesssim& |\ln \epsilon|^{-1}(|\ln \epsilon|-k \ln 2 + \ln2) |v_{*}-v|, \nonumber
\een
which yields
\beno
|\mathcal{Y}_{1,1}(F_{j},F_{k})|&\lesssim& |\ln \epsilon|^{-1}(|\ln \epsilon|-k \ln 2 + \ln2) \int 1_{|v - v_{*}|\leq \eta} |v - v_{*}|\mu_{*} |F_{j} (\nabla F_{k})|dv dv_{*}
\\&\lesssim& \eta^{4}|\ln \epsilon|^{-1}(|\ln \epsilon|-k \ln 2 + \ln2)2^{k}|F_{j}|_{L^{2}}|F_{k}|_{L^{2}}.
\eeno
We go to estimate $\mathcal{Y}_{1,2}(F_{j},F_{k})$. By Cauchy-Schwartz inequality, and the change \eqref{change-exact-formula}-\eqref{change-Jacobean-bound},
we get
\beno
|\mathcal{Y}_{1,2}(F_{j},F_{k})|&\lesssim&  | \int b^{\epsilon} \phi\left(\sin(\theta/2)/2^{k}\right)\theta^{2} 1_{|v - v_{*}|\leq \eta} |v - v_{*}|^{2}\mu_{*} F_{j} |(\nabla^{2} F_{k})(v(\kappa))| d\sigma dv dv_{*} d\kappa
\\&\lesssim&  \left(  \int b^{\epsilon} \phi\left(\sin(\theta/2)/2^{k}\right)\theta^{2} 1_{|v - v_{*}|\leq \eta} |v - v_{*}|^{2}\mu_{*}
|F_{j}|^{2} d\sigma dv dv_{*}  \right)^{1/2}
\\&&\times \left(  \int b^{\epsilon} \phi\left(\sin(\theta/2)/2^{k}\right)\theta^{2} 1_{|v - v_{*}|\leq \eta} |v - v_{*}|^{2}\mu_{*}|\nabla^{2} F_{k}|^{2} d\sigma dv dv_{*} \right)^{1/2}
\\&\lesssim&  |\ln \epsilon|^{-1}(|\ln \epsilon|-k \ln 2 + \ln2) \eta^{5} |F_{j}|_{L^{2}}|F_{k}|_{H^{2}}
\\&\lesssim&  \eta^{5}|\ln \epsilon|^{-1}(|\ln \epsilon|-k \ln 2 + \ln2)2^{2k}|F_{j}|_{L^{2}}|F_{k}|_{L^{2}}.
\eeno
Patching together the estimates of $\mathcal{Y}_{1,1}(F_{j},F_{k})$ and $\mathcal{Y}_{1,2}(F_{j},F_{k})$, we have
\beno  |\mathcal{Y}_{1}(F_{j},F_{k})| \lesssim  \eta^{4}|\ln \epsilon|^{-1}(|\ln \epsilon|-k \ln 2 + \ln2)2^{2k}|F_{j}|_{L^{2}}|F_{k}|_{L^{2}}. \eeno

We now turn to $\mathcal{Y}_{2}(F_{j},F_{k})$ in which $\theta \gtrsim 2^{-k}$.
By Taylor expansion up to order 1, we have
\beno  |(F_{k})^{\prime}-F_{k}|  = |\int_{0}^{1}(\nabla (F_{k}))(v(\kappa))\cdot (v^{\prime}-v)d\kappa| \lesssim
\theta|v-v_{*}|\int_{0}^{1}|(\nabla (F_{k}))(v(\kappa))|d\kappa.\eeno
Plugging the above inequality into the definition of $\mathcal{Y}_{2}(F_{j},F_{k})$, by Cauchy-Schwartz inequality, the change \eqref{change-exact-formula}-\eqref{change-Jacobean-bound}, and the fact $\int_{2^{-k}}^{\pi/2} \theta^{-2} d\theta \lesssim 2^{k}$
, we get
\beno
|\mathcal{Y}_{2}(F_{j},F_{k})|&\lesssim&   \int b^{\epsilon} (1-\phi\left(\sin(\theta/2)/2^{k}\right))\theta 1_{|v - v_{*}|\leq \eta} |v - v_{*}|\mu_{*} | F_{j} (\nabla F_{k})(v(\kappa))|
d\sigma dv dv_{*} d\kappa
\\&\lesssim&  \left(  \int b^{\epsilon} \phi\left(\sin(\theta/2)/2^{k}\right)\theta 1_{|v - v_{*}|\leq \eta} |v - v_{*}| \mu_{*} (F_{j})^{2}  d\sigma dv dv_{*} \right)^{1/2}
\\&&\times \left(  \int b^{\epsilon} \phi\left(\sin(\theta/2)/2^{k}\right)\theta 1_{|v - v_{*}|\leq \eta} |v - v_{*}|\mu_{*}|\nabla F_{k}|^{2}  d\sigma dv dv_{*}  \right)^{1/2}
\\&\lesssim&  |\ln \epsilon|^{-1}2^{k}  |F_{j}|_{L^{2}}|F_{k}|_{H^{1}}
\lesssim  \eta^{4}|\ln \epsilon|^{-1}2^{2k} |F_{j}|_{L^{2}}|F_{k}|_{L^{2}}.
\eeno

Patching together the estimates of $\mathcal{Y}_{1}(F_{j},F_{k})$ and $\mathcal{Y}_{2}(F_{j},F_{k})$, we have when $k<j$,
\ben \label{k-less-j-low-low-gamma-0}  \mathcal{Y}(F_{j},F_{k}) \lesssim   \eta^{4}|\ln \epsilon|^{-1}(1 + |\ln \epsilon|-k \ln 2)2^{2k} |F_{j}|_{L^{2}}|F_{k}|_{L^{2}}. \een

\underline{\it Case 2: $j \leq  k$.} We have
\beno
\mathcal{Y}(F_{j},F_{k}) &=& \int B^{\epsilon,0}_{\eta}  \mu_{*} F_{j} ((F_{k})^{\prime}-F_{k}) d\sigma dv dv_{*}
\\&=& \int B^{\epsilon,0}_{\eta} \mu_{*} ((F_{j}F_{k})^{\prime}-F_{j}F_{k})d\sigma dv dv_{*}
+\int B^{\epsilon,0}_{\eta} \mu_{*} \left(F_{j}-(F_{j})^{\prime}\right)(F_{k})^{\prime} d\sigma dv dv_{*}
\\&:=& \mathcal{X}_{1}(F_{j},F_{k}) + \mathcal{X}_{2}(F_{j},F_{k}).
\eeno
Similar to
 \eqref{cancellation-gamma-0}, using cancellation lemma in \cite{alexandre2000entropy}, we get
\ben \label{with-eta-small-gamma-0} |\mathcal{X}_{1}(F_{j},F_{k})|  \lesssim \eta^{3}  |F_{j}|_{L^{2}}|F_{k}|_{L^{2}}.\een
Similar to the estimate of $\mathcal{Y}(F_{j},F_{k})$ in {\it {Case 1}} where $k<j$,  here we apply Taylor expansion to $F_{j}$, similar to \eqref{k-less-j-low-low-gamma-0}, we can get
\ben \label{without-eta-small-y2-gamma-0} |\mathcal{X}_{2}(F_{j},F_{k})| \lesssim \eta^{4}|\ln \epsilon|^{-1}(1 + |\ln \epsilon|-j \ln 2)2^{2j} |F_{j}|_{L^{2}}|F_{k}|_{L^{2}}. \een
Patching together, we get for $j \leq  k$,
\ben \label{with-small-eta-k-less-j-gamma-0} |\mathcal{Y}(F_{j},F_{k})| \lesssim \eta^{4}|\ln \epsilon|^{-1}(1 + |\ln \epsilon|-j \ln 2)2^{2j} |F_{j}|_{L^{2}}|F_{k}|_{L^{2}} + \eta^{3}  |F_{j}|_{L^{2}}|F_{k}|_{L^{2}}.\een

By \eqref{k-less-j-low-low-gamma-0} and \eqref{with-small-eta-k-less-j-gamma-0},  recalling \eqref{low-dydadic}, we have
\ben \label{with-eta-small-low-low-final-gamma-0} |I_{\mathrm{low,dydadic}}| &\lesssim& \eta^{4}\sum_{-1\leq k < j \lesssim |\ln \epsilon| } 2^{2k} \frac{|\ln \epsilon|- k\ln 2 +1}{|\ln \epsilon|} |F_{j}|_{L^{2}} |F_{k}|_{L^{2}} \nonumber
\\&&+\eta^{4}\sum_{-1\leq j\leq k \lesssim |\ln \epsilon| } 2^{2j} \frac{|\ln \epsilon|- j\ln 2 +1}{|\ln \epsilon|} |F_{j}|_{L^{2}} |F_{k}|_{L^{2}} \nonumber
\\&&+ \eta^{3}\sum_{-1\leq j \leq k \lesssim |\ln \epsilon| }|F_{j}|_{L^{2}}|F_{k}|_{L^{2}}
\lesssim   \eta^{3} |W^{\epsilon}(D)f|^{2}_{L^{2}}.
\een

The lemma follows by patching together all the estimates.
\end{proof}

Before giving the spectral gap result, we first introduce a special weight function $U_{\delta}$ defined by
\ben\label{specialweightfun} U_{\delta}(v) := (1+ \delta^{2}|v|^{2})^{1/2} \geq \max\{\delta|v|,1\}. \een
We remark that  $U_{\delta}$ plays an important role in deriving \eqref{ideasg2} and here $\delta$ is a sufficiently small parameter.
We recall the function $\chi$ and its dilation $\chi_{R}$ at the beginning of section 2.3.2 (right before Lemma \ref{reduce-eta-to-0}).
\begin{lem}\label{difference-term-complication} Set $X(\gamma,R,\delta):=\delta^{-\gamma}\left((\chi_{R})^{\prime}(\chi_{R})^{\prime}_{*}(U^{\gamma/2}_{\delta})^{\prime}(U^{\gamma/2}_{\delta})^{\prime}_{*}-
\chi_{R}(\chi_{R})_{*}U^{\gamma/2}_{\delta}(U^{\gamma/2}_{\delta})_{*}\right)^{2}$ with $\gamma \leq 0 < \delta \leq 1 \leq R$, then
\ben \label{chi-W-difference-part}&&X(\gamma,R,\delta)
\lesssim(\delta^{2}+R^{-2})\theta^{2}\langle v \rangle^{\gamma+2}\langle v_{*} \rangle^{2} \mathrm{1}_{|v|\leq 4R}.\een
\end{lem}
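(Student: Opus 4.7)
The plan is to treat $A(v,v_*):=\chi_R(v)\chi_R(v_*)U^{\gamma/2}_\delta(v)U^{\gamma/2}_\delta(v_*)$ as a single smooth function of six variables and estimate $|A(v',v_*')-A(v,v_*)|^2$ by the fundamental theorem of calculus along the straight line $\phi(s):=(v+s(v'-v),v_*+s(v_*'-v_*))$ for $s\in[0,1]$. The indicator $\mathrm{1}_{|v|\leq 4R}$ comes for free: if $|v|>4R$ then $\chi_R(v)=0$ kills the pre-collisional product, while energy conservation $|v'|^2+|v_*'|^2=|v|^2+|v_*|^2>16R^2$ forces $\max(|v'|,|v_*'|)>2\sqrt{2}\,R>2R$, so $\chi_R(v')\chi_R(v_*')=0$ as well and $X\equiv 0$ outside the claimed support.

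For the pointwise bound, direct computation yields $|\nabla\chi_R|_{L^\infty}\leq CR^{-1}$ and, using $U_\delta\geq 1$ together with $\delta|w|\leq U_\delta(w)$,
\[
|\nabla U^{\gamma/2}_\delta|(w)=\tfrac{|\gamma|}{2}\delta^2|w|\,U^{\gamma/2-2}_\delta(w)\leq \tfrac{|\gamma|}{2}\delta\,U^{\gamma/2}_\delta(w).
\]
Combining these via the Leibniz rule and using $|v'-v|=|v_*'-v_*|=\sin(\theta/2)|v-v_*|$ with the FTC produces
\[
|A(v',v_*')-A(v,v_*)|^{2}\lesssim (\delta^{2}+R^{-2})\,\theta^{2}|v-v_*|^{2}\sup_{s\in[0,1]}U^{\gamma}_\delta(v(s))\,U^{\gamma}_\delta(v_*(s)).
\]

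Multiplying by $\delta^{-\gamma}$ and invoking the identity
$\delta^{-\gamma}U^{\gamma}_\delta(w)=(\delta^{-2}+|w|^{2})^{\gamma/2}\leq \langle w\rangle^{\gamma}$
(valid since $\delta\leq 1$ and $\gamma\leq 0$) together with $U^\gamma_\delta\leq 1$, I reduce the weight factor to
$\min(\langle v(s)\rangle^{\gamma},\langle v_*(s)\rangle^{\gamma})=\max(\langle v(s)\rangle,\langle v_*(s)\rangle)^{\gamma}$. The decisive geometric input is that the chord approximately conserves the relative speed: a short calculation shows
\[
|v(s)-v_*(s)|^{2}=|v-v_*|^{2}\bigl[1-2s(1-s)(1-\cos\theta)\bigr]\geq |v-v_*|^{2}/2
\]
under the cutoff $\theta\leq\pi/2$ from Assumption (A2). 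Hence $\max(\langle v(s)\rangle,\langle v_*(s)\rangle)\gtrsim \langle v-v_*\rangle$, so the supremum is bounded by $\langle v-v_*\rangle^{\gamma}$.

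Putting it together,
$X\lesssim (\delta^{2}+R^{-2})\theta^{2}|v-v_*|^{2}\langle v-v_*\rangle^{\gamma}\leq (\delta^{2}+R^{-2})\theta^{2}\langle v-v_*\rangle^{\gamma+2}$,
and the elementary inequality $\langle v-v_*\rangle^{\gamma+2}\lesssim \langle v\rangle^{\gamma+2}\langle v_*\rangle^{2}$ (checked by separating the regimes $\langle v\rangle\geq 2\langle v_*\rangle$ and $\langle v\rangle<2\langle v_*\rangle$ and using $\gamma+2\leq 2$) closes the proof. The main obstacle is extracting the path-independent lower bound $\max(\langle v(s)\rangle,\langle v_*(s)\rangle)\gtrsim \langle v-v_*\rangle$: this is what lets the separate weights $U^{\gamma}_\delta(v(s))$ and $U^{\gamma}_\delta(v_*(s))$ collapse onto a single moment $\langle v-v_*\rangle^{\gamma}$ irrespective of how small either individual velocity may become along the chord.
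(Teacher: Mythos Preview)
Your argument is correct and takes a genuinely different route from the paper. The paper telescopes the product $\chi_R\chi_{R*}U^{\gamma/2}_\delta U^{\gamma/2}_{\delta,*}$ into four pieces $A_1,\dots,A_4$, each varying one factor at a time; the geometric input is then that the \emph{partial} chord $v(\kappa)=(1-\kappa)v+\kappa v'$ (with $v'_*$ held fixed) approximately conserves total energy, $|v'_*|^2+|v(\kappa)|^2\sim|v|^2+|v_*|^2$, which feeds directly into $\delta^{-\gamma}U^\gamma_\delta(v'_*)U^\gamma_\delta(v(\kappa))\lesssim\langle v\rangle^\gamma$. You instead differentiate the full product along the \emph{joint} chord $(v(s),v_*(s))$, and your geometric lemma---that the relative speed $|v(s)-v_*(s)|$ is preserved up to a factor $\sqrt{2}$ under $\theta\le\pi/2$---is an elegant substitute. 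Your approach is shorter and more unified; the paper's decomposition is more pedestrian but makes the role of each factor explicit.

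One small remark: your closing inequality $\langle v-v_*\rangle^{\gamma+2}\lesssim\langle v\rangle^{\gamma+2}\langle v_*\rangle^{2}$ actually requires $\gamma\ge -4$ (take $v=v_*$ large when $\gamma<-4$), so as written your proof covers $-4\le\gamma\le 0$ rather than all $\gamma\le 0$ as the lemma states. This is harmless for the paper's application ($-3\le\gamma\le 0$), but if you want the full range there is a cleaner endgame available within your own framework: since momentum is conserved along the chord, $v(s)+v_*(s)=v+v_*$, the parallelogram law and your relative-speed bound give $|v(s)|^2+|v_*(s)|^2=\tfrac12|v+v_*|^2+\tfrac12|v(s)-v_*(s)|^2\ge\tfrac12(|v|^2+|v_*|^2)$. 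Hence $\delta^{-\gamma}U^\gamma_\delta(v(s))U^\gamma_\delta(v_*(s))\le(\delta^{-2}+|v(s)|^2+|v_*(s)|^2)^{\gamma/2}\lesssim\langle v\rangle^\gamma$ directly, and combining with $|v-v_*|^2\le 2\langle v\rangle^2\langle v_*\rangle^2$ yields the stated bound for every $\gamma\le 0$.
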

\begin{proof} Recall that $\chi_{R}$ has support in $|v| \leq 2R$.
If $|v|^{2}+|v_{*}|^{2} \geq 8 R^{2}$, then either $|v| \geq 2R$ or $|v_{*}| \geq 2R$, which implies $(\chi_{R})^{2}_{*} \chi^{2}_{R}=0$. Note that $|v|^{2}+|v_{*}|^{2}=|v^{\prime}|^{2}+|v^{\prime}_{*}|^{2}$, then $|v|^{2}+|v_{*}|^{2} \geq 8 R^{2}$ also implies $(\chi_{R})^{\prime}(\chi_{R})^{\prime}_{*}=0$. Therefore, we have
\ben \label{restriction-to-les-R} X(\gamma,R,\delta) =  X(\gamma,R,\delta) \mathrm{1}_{|v|^{2}+|v_{*}|^{2} \leq 8 R^{2}} \leq X(\gamma,R,\delta) \mathrm{1}_{|v|\leq 4R}.  \een
By adding and subtracting terms, we get
\beno X(\gamma,R,\delta) &\lesssim& \delta^{-\gamma}((\chi_{R})^{\prime}- \chi_{R})^{2}(\chi^{2}_{R})^{\prime}_{*}(U^{\gamma}_{\delta})^{\prime}(U^{\gamma}_{\delta})^{\prime}_{*}
+ \delta^{-\gamma}( (\chi_{R})^{\prime}_{*}- (\chi_{R})_{*})^{2}\chi^{2}_{R}(U^{\gamma}_{\delta})^{\prime}(U^{\gamma}_{\delta})^{\prime}_{*}
\\&&+ \delta^{-\gamma}\chi^{2}_{R}(\chi^{2}_{R})_{*}(U^{\gamma}_{\delta})^{\prime}_{*}\left((U^{\gamma/2}_{\delta})^{\prime}-(U^{\gamma/2}_{\delta})\right)^{2}
+\delta^{-\gamma}\chi^{2}_{R}(\chi^{2}_{R})_{*}U^{\gamma}_{\delta}\left((U^{\gamma/2}_{\delta})^{\prime}_{*}-(U^{\gamma/2}_{\delta})_{*}\right)^{2}
 \\&:=& A_{1} + A_{2} +  A_{3} + A_{4}. \eeno

 {\it  \underline{Estimate of $A_{1}$ and $A_{2}$}.} Since $\gamma \leq 0$ and $|v^{\prime}_{*}|^{2}+|v^{\prime}|^{2}=|v_*|^2+|v|^2$, we derive
\beno (U^{\gamma}_{\delta})^{\prime}(U^{\gamma}_{\delta})^{\prime}_{*}  =  (1+\delta^{2}|v^{\prime}|^{2}+\delta^{2}|v^{\prime}_{*}|^{2}+\delta^{4}|v^{\prime}|^{2}|v^{\prime}_{*}|^{2} )^{\gamma/2}
  \leq (1+\delta^{2}|v|^{2})^{\gamma/2},
 \eeno
which yields
\beno \delta^{-\gamma}(U^{\gamma}_{\delta})^{\prime}(U^{\gamma}_{\delta})^{\prime}_{*} \leq  (\delta^{-2}+|v|^{2})^{\gamma/2} \leq \langle v \rangle^{\gamma}.\eeno
Since $|\nabla \chi_{R}| \lesssim R^{-1}, |v^{\prime}-v|=|v^{\prime}_{*}-v_{*}| = |v-v_{*}| \sin(\theta/2)$, we get
\beno ((\chi_{R})^{\prime}- \chi_{R})^{2} + ( (\chi_{R})^{\prime}_{*}- (\chi_{R})_{*})^{2} \lesssim R^{-2}\theta^{2}|v-v_{*}|^{2}\lesssim R^{-2}\theta^{2}\langle v \rangle^{2}\langle v_{*} \rangle^{2}. \eeno
Therefore we deduce that
$ A_{1} + A_{2} \lesssim R^{-2}\theta^{2}\langle v \rangle^{\gamma+2}\langle v_{*} \rangle^{2}.$

 {\it \underline{Estimate of $A_{3}$ and $A_{4}$}.}
We now go to estimate $A_{3}$.
Noting that $|\nabla U^{\gamma/2}_{\delta}| \lesssim  \delta U^{\gamma/2}_{\delta}$, we get
\beno \big((U^{\gamma/2}_{\delta})^{\prime}-(U^{\gamma/2}_{\delta})\big)^{2}  = \big|\int_{0}^{1} (\nabla U^{\gamma/2}_{\delta})(v(\kappa))\cdot (v^{\prime}-v)d\kappa\big|^{2}
\lesssim\delta^{2} \theta^{2}|v-v_{*}|^{2}  \int_{0}^{1}  U^{\gamma}_{\delta}(v(\kappa))d\kappa.
\eeno
Thanks to $|v^{\prime}_{*}|^{2}+|v(\kappa)|^{2} \sim |v|^{2}+|v_{*}|^{2}$, we have
$\delta^{-\gamma}(U^{\gamma}_{\delta})^{\prime}_{*} U^{\gamma}_{\delta}(v(\kappa)) \lesssim \langle v \rangle^{\gamma}$, which gives
$ A_{3} \lesssim \delta^{2}\theta^{2}\langle v \rangle^{\gamma+2}\langle v_{*} \rangle^{2}.$
Similarly, we have $ A_{4} \lesssim \delta^{2}\theta^{2}\langle v \rangle^{\gamma+2}\langle v_{*} \rangle^{2}.$

Patching together the above estimates for $A_{1}, A_{2}, A_{3}, A_{4}$ and \eqref{restriction-to-les-R} , we arrive at \eqref{chi-W-difference-part}.
\end{proof}

Now we are in a position to prove the following spectral gap result.
\begin{thm}\label{micro-weight-dissipation} Let $-3 \leq \gamma \le 0$. There are three universal constants $\epsilon_{0}, \eta_{0}, \lambda_0>0$ ($\lambda_0$ is related to $\lambda_1^\epsilon$ in \eqref{firstegenvalueLA}), such that for any $0 \leq \epsilon \leq \epsilon_{0}, 0 \leq \eta \leq \eta_{0}$ and smooth function $g$, the following estimate holds true.
\ben \label{gap-estimate}
 \langle  \mathcal{L}^{\epsilon,\gamma,\eta}g, g\rangle \geq \lambda_{0}|(\mathbb{I}-\mathbb{P})g|^{2}_{\epsilon,\gamma/2}.
\een
\end{thm}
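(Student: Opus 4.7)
The plan is to derive \eqref{gap-estimate} by combining two complementary estimates: the coercivity estimate of Theorem \ref{strong-coercivity}, which controls the strong norm $|\cdot|_{\epsilon,\gamma/2}$ modulo a weaker $L^{2}_{\gamma/2}$ remainder, and a ``Maxwellian-type'' spectral gap that, conversely, controls $|\cdot|_{L^{2}_{\gamma/2}}$ modulo a small multiple of $|\cdot|_{\epsilon,\gamma/2}$. Since $\mathcal{L}^{\epsilon,\gamma,\eta}$ is self-adjoint with kernel containing $\mathcal{N}$, one may replace $f$ by $(\mathbb{I}-\mathbb{P})f$ throughout. Theorem \ref{strong-coercivity} then directly furnishes the first ingredient
\begin{equation*}
\langle \mathcal{L}^{\epsilon,\gamma,\eta}f,f\rangle \;\ge\; \nu_{0}\,|(\mathbb{I}-\mathbb{P})f|^{2}_{\epsilon,\gamma/2}-C|(\mathbb{I}-\mathbb{P})f|^{2}_{L^{2}_{\gamma/2}}.
\end{equation*}

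For the second ingredient I would reduce the general $\gamma$-kernel to the Maxwellian-molecules case by inserting the weight $U_{\delta}^{\gamma/2}\chi_{R}$ from \eqref{specialweightfun} with $R\sim\delta^{-1}$ in the weak form of $\mathcal{L}^{\epsilon,\gamma,\eta}$. The elementary inequality $|v-v_{*}|^{\gamma}\ge c\,\delta^{-\gamma}(U_{\delta}U_{\delta,*})^{\gamma/2}$ (valid on the bulk region $|v-v_{*}|\lesssim R$) lets me estimate from below
\begin{equation*}
\langle \mathcal{L}^{\epsilon,\gamma,\eta}f,f\rangle \;\gtrsim\; \delta^{-\gamma}\int B^{\epsilon,0,\eta}\mu_{*}\bigl((\chi_{R}U_{\delta}^{\gamma/2}f)'-\chi_{R}U_{\delta}^{\gamma/2}f\bigr)^{2}d\sigma\,dv_{*}dv \;-\;\mathcal{E}(f),
\end{equation*}
where $\mathcal{E}(f)$ is the commutator error produced by moving the weight $\chi_{R}U_{\delta}^{\gamma/2}$ through the collision. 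The main term is precisely the (Maxwellian-molecules) functional controlled by Chang--Uhlenbeck's estimate \eqref{mxsgthm}; since \eqref{firstegenvalueLA} makes the first eigenvalue uniform in $\epsilon$ for the kernel $b^{\epsilon}$, this term is bounded below by $c\lambda_{1}^{\epsilon}|(\mathbb{I}-\mathbb{P})(\chi_{R}U_{\delta}^{\gamma/2}f)|^{2}_{L^{2}}$, which in turn dominates $c'|(\mathbb{I}-\mathbb{P})f|^{2}_{L^{2}_{\gamma/2}}$ up to lower-order terms. The crucial bookkeeping is that $\mathcal{E}(f)$ reduces, via Lemma \ref{difference-term-complication}, to an integral of $(\delta^{2}+R^{-2})\theta^{2}\langle v\rangle^{\gamma+2}\langle v_{*}\rangle^{2}$ against $\mu_{*}f^{2}$; the second moment $\int b^{\epsilon}\theta^{2}d\sigma\lesssim 1$ from \eqref{order-2} then yields $|\mathcal{E}(f)|\lesssim \delta^{2}|f|^{2}_{L^{2}_{\gamma/2+1}}\lesssim \delta^{2}|f|^{2}_{\epsilon,\gamma/2}$. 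The $\eta$-truncation discrepancy between $B^{\epsilon,\gamma,\eta}$ and the needed $B^{\epsilon,0,\eta}$ is absorbed by Lemma \ref{gamma-0-eat-lb}, contributing only an $O(\eta^{3})|W^{\epsilon}(D)(\mathbb{I}-\mathbb{P})f|^{2}_{L^{2}}$ term, controllable for small $\eta$. This produces \eqref{ideasg2}.

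For $-2<\gamma\le 0$, one forms a convex combination of the two ingredients: choose $\delta$ small enough that $C_{2}\delta^{\gamma+2}$ absorbs into $\nu_{0}/2$, and choose the convex-combination weight so that the $L^{2}_{\gamma/2}$ remainder in the coercivity bound cancels the negative $L^{2}_{\gamma/2}$ contribution from the Maxwellian reduction. The result is \eqref{gap-estimate} with $\lambda_{0}$ depending only on $\nu_{0}$ and $\lambda_{1}^{\epsilon}$. For the borderline range $-3\le\gamma\le-2$, the bootstrap $\gamma=\alpha+\beta$ with $-2<\alpha,\beta<0$ and $B^{\epsilon,\gamma,\eta}=B^{\epsilon,\alpha,\eta}|v-v_{*}|^{\beta}$ reduces the problem to the already established spectral gap for $\mathcal{L}^{\epsilon,\alpha,\eta}$ together with one more application of the $U_{\delta}$-weight trick to trade the leftover $|v-v_{*}|^{\beta}$ for a Maxwellian factor.

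The main obstacle is the second ingredient. The constraint $\gamma+2>0$ in the first stage is essential because the weight error scales like $\delta^{2}$ whereas the gain from the Maxwellian reduction scales like $\delta^{-\gamma}$; the product $\delta^{\gamma+2}$ is only favorably small when $\gamma>-2$, which is exactly why the bootstrap stage is needed for $\gamma\in[-3,-2]$. A further delicate point is that symmetrizing the weight $\chi_{R}U_{\delta}^{\gamma/2}$ across a collision must be done sharply enough to keep Lemma \ref{difference-term-complication} effective; in particular, the cutoff $\chi_{R}$ is indispensable to keep the error term a genuine second-moment of $b^{\epsilon}$ rather than a logarithmically divergent quantity.
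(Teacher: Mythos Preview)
Your proposal follows essentially the same route as the paper's proof: the two ingredients \eqref{ideasg1}--\eqref{ideasg2}, the localization via $\chi_{R}U_{\delta}^{\gamma/2}$ with the commutator errors handled through Lemmas \ref{difference-term-complication} and \ref{gamma-0-eat-lb}, the convex combination for $-2<\gamma\le 0$, and the bootstrap $\gamma=\alpha+\beta$ with $-2<\alpha,\beta<0$ for $-3\le\gamma\le-2$. One small correction: after inserting the weight, the main term should be the full symmetric form $\delta^{-\gamma}\langle\mathcal{L}^{\epsilon,0,\eta}(\chi_{R}U_{\delta}^{\gamma/2}g),\chi_{R}U_{\delta}^{\gamma/2}g\rangle=\tfrac{1}{4}\delta^{-\gamma}\int B^{\epsilon,0,\eta}\mathbb{F}(\mu^{1/2},\chi_{R}U_{\delta}^{\gamma/2}g)$ rather than the one-sided $\int B^{\epsilon,0,\eta}\mu_{*}(\cdot'-\cdot)^{2}$ you displayed, because the weight manipulation \eqref{move-inside}--\eqref{move-outside-asym} relies on the symmetry of $\mathbb{A}(\mu^{1/2},g)$; with this form in hand, \eqref{mxsgthm} applies exactly as you intended and the rest of your plan goes through.
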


\begin{proof} Suppose  $\mathbb{P}g=0$ and then it suffices to   prove
$ \langle  \mathcal{L}^{\epsilon,\gamma,\eta}g, g\rangle \gtrsim  |g|^{2}_{\epsilon,\gamma/2}.
$
For brevity, we set
\beno J^{\epsilon,\gamma,\eta}(g) := 4 \langle  \mathcal{L}^{\epsilon,\gamma,\eta}g, g\rangle,\quad\mathbb{A}(f, g):=(f_{*}g + f g_{*} - f^{\prime}_{*}g^{\prime} - f^{\prime} g^{\prime}_{*}), \quad\mathbb{F}(f, g):= \mathbb{A}^{2}(f, g).\eeno With these notations, we have
 $J^{\epsilon,\gamma,\eta}(g) = \int B^{\epsilon,\gamma,\eta} \mathbb{F}(\mu^{1/2}, g) d\sigma dv dv_{*}.$ Our proof is divided into four steps.

\noindent{\it Step 1: Localization of $J^{\epsilon,\gamma,\eta}(g)$.}
Due to \eqref{specialweightfun} and the condition $\gamma\le0$,  we get \beno |v - v_{*}|^{-\gamma}
\leq  C_{\gamma} \delta^{\gamma} ((\delta|v|)^{-\gamma}+(\delta|v_{*}|)^{-\gamma})
 \leq 2C_{\gamma} \delta^{\gamma} U^{-\gamma}_{\delta}(v)U^{-\gamma}_{\delta}(v_{*}),  \eeno
which gives
$|v - v_{*}|^{\gamma} \gtrsim \delta^{-\gamma} U^{\gamma}_{\delta}(v)U^{\gamma}_{\delta}(v_{*})$
and thus
\beno J^{\epsilon,\gamma,\eta}(g) \gtrsim \delta^{-\gamma}\int b^{\epsilon} \mathrm{1}_{|v-v_{*}| \geq \eta} \chi^{2}_{R}(\chi^{2}_{R})_{*}U^{\gamma}_{\delta}(U^{\gamma}_{\delta})_{*}\mathbb{F}(\mu^{1/2}, g) d\sigma dv dv_{*}.
\eeno
We move the function $\chi^{2}_{R}(\chi^{2}_{R})_{*}U^{\gamma}_{\delta}(U^{\gamma}_{\delta})_{*}$ inside $\mathbb{F}(\mu^{1/2}, g)$, which leads to  $\mathbb{F}(\chi_{R}U^{\gamma/2}_{\delta}\mu^{1/2}, \chi_{R}U^{\gamma/2}_{\delta}g)$ with some correction terms.
For   simplicity, set $h=\chi_{R} U^{\gamma/2}_{\delta}, f=\mu^{1/2}$,
then
\ben \label{move-inside}
\chi^{2}_{R}(\chi^{2}_{R})_{*}U^{\gamma}_{\delta}(U^{\gamma}_{\delta})_{*}\mathbb{F}(\mu^{1/2}, g)  &=&
h^{2}_{*}h^{2}\mathbb{F}(f, g)
=
\left(h h_{*}\left(f_{*}g + f g_{*}\right) -
h h_{*}\left(f^{\prime}_{*}g^{\prime} + f^{\prime} g^{\prime}_{*}\right)\right)^{2}
\nonumber \\&=& \left(h h_{*}\left(f_{*}g + f g_{*}\right) -
h^{\prime} h^{\prime}_{*}\left(f^{\prime}_{*}g^{\prime} + f^{\prime} g^{\prime}_{*}\right)
+ \left(h^{\prime} h^{\prime}_{*}-
h h_{*}\right)
\left(f^{\prime}_{*}g^{\prime} + f^{\prime} g^{\prime}_{*}\right)
\right)^{2}
\nonumber \\  &\geq&  \frac{1}{2}  \left(h h_{*}\left(f_{*}g + f g_{*}\right) -
h^{\prime} h^{\prime}_{*}\left(f^{\prime}_{*}g^{\prime} + f^{\prime} g^{\prime}_{*}\right) \right)^{2}\nonumber -\left(h^{\prime} h^{\prime}_{*}-
h h_{*}\right)^{2}
\left(f^{\prime}_{*}g^{\prime} + f^{\prime} g^{\prime}_{*}\right)^{2}\nonumber
\\  &=&  \frac{1}{2}  \mathbb{F}(h f, h g)
-\left(h^{\prime} h^{\prime}_{*}-
h h_{*}\right)^{2}
\left(f^{\prime}_{*}g^{\prime} + f^{\prime} g^{\prime}_{*}\right)^{2}.
\een
From which we get
\ben \label{move-inside-sym} J^{\epsilon,\gamma,\eta}(g) &\gtrsim&
\frac{1}{2}\delta^{-\gamma}
\int b^{\epsilon} \mathrm{1}_{|v-v_{*}| \geq \eta} \mathbb{F}(\chi_{R}U^{\gamma/2}_{\delta}\mu^{1/2}, \chi_{R}U^{\gamma/2}_{\delta}g) d\sigma dv dv_{*}
\\  && - \delta^{-\gamma}\int b^{\epsilon} \left(h^{\prime} h^{\prime}_{*}-
h h_{*}\right)^{2}
\left(f^{\prime}_{*}g^{\prime} + f^{\prime} g^{\prime}_{*}\right)^{2} d\sigma dv dv_{*}. \nonumber
\een
We now move $\chi_{R}U^{\gamma/2}_{\delta}$ before $\mu^{1/2}$ out of $\mathbb{F}(\chi_{R}U^{\gamma/2}_{\delta}\mu^{1/2}, \chi_{R}U^{\gamma/2}_{\delta}g)$,
which leads to $\mathbb{F}(\mu^{1/2}, \chi_{R}U^{\gamma/2}_{\delta}g)$ with some correction terms. That is,
\ben \label{move-outside-asym} \mathbb{F}(\chi_{R}U^{\gamma/2}_{\delta}\mu^{1/2}, \chi_{R}U^{\gamma/2}_{\delta}g)  &=& \mathbb{A}^{2}(\chi_{R}U^{\gamma/2}_{\delta}\mu^{1/2}, \chi_{R}U^{\gamma/2}_{\delta}g)
\nonumber \\&=&\left( \mathbb{A}(\mu^{1/2}, \chi_{R}U^{\gamma/2}_{\delta}g)  - \mathbb{A}\big((1-\chi_{R}U^{\gamma/2}_{\delta})\mu^{1/2}, \chi_{R}U^{\gamma/2}_{\delta}g\big) \right)^{2}
\nonumber \\&\geq&\frac{1}{2} \mathbb{A}^{2}(\mu^{1/2}, \chi_{R}U^{\gamma/2}_{\delta}g)
-\mathbb{A}^{2}\big((1-\chi_{R}U^{\gamma/2}_{\delta})\mu^{1/2}, \chi_{R}U^{\gamma/2}_{\delta}g\big)
\nonumber \\&=&\frac{1}{2}\mathbb{F}(\mu^{1/2}, \chi_{R}U^{\gamma/2}_{\delta}g) -  \mathbb{F}((1-\chi_{R}U^{\gamma/2}_{\delta})\mu^{1/2}, \chi_{R}U^{\gamma/2}_{\delta}g).
\een
By symmetry, we have
\ben \label{sym-term}\int b^{\epsilon} \left(h^{\prime} h^{\prime}_{*}-
h h_{*}\right)^{2}
\left(f^{\prime}_{*}g^{\prime} + f^{\prime} g^{\prime}_{*}\right)^{2} d\sigma dv dv_{*} \leq 4\int b^{\epsilon} \left(h^{\prime} h^{\prime}_{*}-
h h_{*}\right)^{2}f^{2}_{*}g^{2} d\sigma dv dv_{*}.
 \een
Thanks  to \eqref{move-inside-sym}, \eqref{move-outside-asym} and \eqref{sym-term}, we get
\ben \label{separate-key-part} J^{\epsilon,\gamma,\eta}(g) &\gtrsim&
\frac{1}{4}\delta^{-\gamma}
\int b^{\epsilon} \mathrm{1}_{|v-v_{*}| \geq \eta}\mathbb{F}(\mu^{1/2}, \chi_{R}U^{\gamma/2}_{\delta}g) d\sigma dv dv_{*}
\\  && - \frac{1}{2}\delta^{-\gamma}\int b^{\epsilon} \mathbb{F}((1-\chi_{R}U^{\gamma/2}_{\delta})\mu^{1/2}, \chi_{R}U^{\gamma/2}_{\delta}g) d\sigma dv dv_{*}
\nonumber
\\  && - 4\delta^{-\gamma}\int b^{\epsilon} \left(h^{\prime} h^{\prime}_{*}-
h h_{*}\right)^{2} f^{2}_{*}g^{2} d\sigma dv dv_{*}
:= \frac{1}{4}J_{1} - \frac{1}{2}J_{2} -  4J_{3}. \nonumber
\een

\noindent{\it Step 2: Estimates of $J_i(i=1,2,3)$.} We will give the estimates term by term.

{\it \underline{Lower bound of $J_1$.}}
 We claim that for $\epsilon \leq 64^{-1}R^{-2}$ and some universal constant $C$,
\ben \label{estimate-J-1}  J_{1} \gtrsim \delta^{-\gamma}|g|^{2}_{L^{2}_{\gamma/2}}-C(\eta^{3}+\delta^{2}+R^{-2})|g|^{2}_{\epsilon,\gamma/2}.  \een
Thanks to \eqref{mxsgthm}, \eqref{firstegenvalueLA} and \eqref{order-2}, for any smooth function $F$, we have
\beno \langle  \mathcal{L}^{\epsilon,0,0}F, F\rangle \ge \lambda_1^\epsilon|(\mathbb{I}-\mathbb{P})F|^{2}_{L^{2}}\gtrsim |(\mathbb{I}-\mathbb{P})F|^{2}_{L^{2}}.\eeno
From which together with Lemma \ref{gamma-0-eat-lb}, for some universal constant $C$, we have
\ben \label{eta-pure-lower-bound}\langle  \mathcal{L}^{\epsilon,0,\eta}F, F\rangle =
\langle  \mathcal{L}^{\epsilon,0,0}F, F\rangle - \langle  \mathcal{L}^{\epsilon,0}_{\eta}F, F\rangle
\gtrsim |(\mathbb{I}-\mathbb{P})F|^{2}_{L^{2}} - C\eta^{3} |W^{\epsilon}(D)(\mathbb{I}-\mathbb{P})F|^{2}_{L^{2}}. \een
Applying \eqref{eta-pure-lower-bound} with $F=\chi_{R}U^{\gamma/2}_{\delta}g$,
and using $(a-b)^{2} \geq a^{2}/2-b^{2}$, we have
\ben \label{out-truncation-projection} J_{1}&=&\delta^{-\gamma}
\int b^{\epsilon} \mathrm{1}_{|v-v_{*}| \geq \eta}\mathbb{F}(\mu^{1/2}, \chi_{R}U^{\gamma/2}_{\delta}g) d\sigma dv dv_{*}
=4 \delta^{-\gamma}\langle  \mathcal{L}^{\epsilon,0,\eta}\chi_{R}U^{\gamma/2}_{\delta}g, \chi_{R}U^{\gamma/2}_{\delta}g\rangle
\\&\gtrsim& \delta^{-\gamma}|(\mathbb{I}-\mathbb{P})(\chi_{R}U^{\gamma/2}_{\delta}g)|^{2}_{L^{2}} -C\eta^{3} \delta^{-\gamma}|W^{\epsilon}(D)(\mathbb{I}-\mathbb{P})(\chi_{R}U^{\gamma/2}_{\delta}g)|^{2}_{L^{2}}
\nonumber  \\&\gtrsim& \frac{1}{2}\delta^{-\gamma}|\chi_{R}U^{\gamma/2}_{\delta}g|^{2}_{L^{2}} - \delta^{-\gamma}|\mathbb{P}(\chi_{R}U^{\gamma/2}_{\delta}g)|^{2}_{L^{2}} -C\eta^{3} \delta^{-\gamma}|W^{\epsilon}(D)(\mathbb{I}-\mathbb{P})(\chi_{R}U^{\gamma/2}_{\delta}g)|^{2}_{L^{2}}
\nonumber \\&\gtrsim& \frac{1}{4}\delta^{-\gamma}|U^{\gamma/2}_{\delta}g|^{2}_{L^{2}} - \frac{1}{2}
\delta^{-\gamma}|(1-\chi_{R})U^{\gamma/2}_{\delta}g|^{2}_{L^{2}}
-\delta^{-\gamma}|\mathbb{P}(\chi_{R}U^{\gamma/2}_{\delta}g)|^{2}_{L^{2}} \nonumber
\\&&-C\eta^{3} \delta^{-\gamma}|W^{\epsilon}(D)(\mathbb{I}-\mathbb{P})(\chi_{R}U^{\gamma/2}_{\delta}g)|^{2}_{L^{2}}
 := J_{1,1}- J_{1,2} - J_{1,3} - J_{1,4}. \nonumber
\een
\begin{itemize}
\item Since $\delta \leq 1$ and $\gamma \leq 0$, then $U^{\gamma/2}_{\delta} \geq W_{\gamma/2}$, which enables us to get the leading term
\ben \label{leading-term} J_{1,1} \gtrsim \delta^{-\gamma}|g|^{2}_{L^{2}_{\gamma/2}}.\een
\item
Thanks to the fact $\delta^{-\gamma}U^{\gamma}_{\delta} \leq W_{\gamma}$ and $1-\chi_{R}(v)=0$ when $|v|\leq R$,  we have
\ben \label{large-velocity-part-1} J_{1,2} &=& \frac{1}{2}
\delta^{-\gamma}|(1-\chi_{R})U^{\gamma/2}_{\delta}g|^{2}_{L^{2}}
\lesssim |(1-\chi_{R})W_{\gamma/2}g|^{2}_{L^{2}}
 \\&\lesssim&  |\mathrm{1}_{|v| \geq R} \phi(\epsilon^{1/2} \cdot) W_{\gamma/2}g|^{2}_{L^{2}}  +
 |(1- \phi(\epsilon^{1/2} \cdot))W_{\gamma/2}g|^{2}_{L^{2}} \nonumber
\\&\lesssim&  R^{-2}| \phi(\epsilon^{1/2} \cdot) W_{\gamma/2+1}g|^{2}_{L^{2}}  +
 \epsilon|(1- \phi(\epsilon^{1/2} \cdot)) \epsilon^{-1/2} W_{\gamma/2}g|^{2}_{L^{2}}
\lesssim (R^{-2}+\epsilon)|W^{\epsilon}W_{\gamma/2}g|^{2}_{L^{2}}, \nonumber\een
where we use \eqref{lower-bound-when-small-cf} and \eqref{lower-bound-when-large-cf} in the last inequality.
By the assumption $\epsilon \leq 64^{-1}R^{-2}$, we have
\ben \label{large-velocity-part-12} J_{1,2} \lesssim R^{-2}|W^{\epsilon}W_{\gamma/2}g|^{2}_{L^{2}}.\een
\item
We now estimate $J_{1,3}$. Recalling \eqref{DefProj} for the definition of $\mathbb{P}$
and  by the condition $\mathbb{P}g = 0$, we have
\beno  \mathbb{P}(\chi_{R}U^{\gamma/2}_{\delta}g) &=& \sum_{i=1}^{5} e_{i}\int e_{i}\chi_{R}U^{\gamma/2}_{\delta}g dv
= \sum_{i=1}^{5} e_{i}\int e_{i}(\chi_{R}U^{\gamma/2}_{\delta}-1)g dv .\eeno
Observing
\ben\label{lclfact1} 1-\chi_{R}U^{\gamma/2}_{\delta}\lesssim 1-\chi_R+\delta |v|\chi_{R}, \een
and thus $e_{i}(1-\chi_{R}U^{\gamma/2}_{\delta}) \lesssim (\delta+R^{-1}) \mu^{1/4}$, we have
$ \big|\int e_{i}(\chi_{R}U^{\gamma/2}_{\delta}-1)g dv\big| \lesssim (\delta+R^{-1}) |\mu^{1/8}g|_{L^{2}},$
which gives
\ben \label{large-velocity-part-13} J_{1,3} = \delta^{-\gamma}|\mathbb{P}(\chi_{R}U^{\gamma/2}_{\delta}g)|^{2}_{L^{2}} \lesssim (\delta^{2}+R^{-2}) |\mu^{1/8}g|^{2}_{L^{2}} \lesssim (\delta^{2}+R^{-2}) |g|^{2}_{L^{2}_{\gamma/2}}.  \een
\item Using Lemma \ref{operatorcommutator1} with  $\Phi=\delta^{-\gamma/2}\chi_{R}U^{\gamma/2}_{\delta}\in S^{\gamma/2}_{1,0}$ and $M=W^\epsilon\in S^1_{1,0}$, we have
\ben \label{large-velocity-part-14} J_{1,4} \le C\eta^{3}\delta^{-\gamma}|W^{\epsilon}(D)(\chi_{R}U^{\gamma/2}_{\delta}g)|^{2}_{L^{2}}
 \lesssim \eta^{3}|W^{\epsilon}(D)W_{\gamma/2}g|^{2}_{L^{2}}.  \een \end{itemize}
Patching together the estimates \eqref{leading-term},\eqref{large-velocity-part-12}, \eqref{large-velocity-part-13}, \eqref{large-velocity-part-14}, we get \eqref{estimate-J-1}.

{\it \underline{Upper bound of $J_2$.}}
For simplicity, setting $f_{\gamma}= (1-\chi_{R}U^{\gamma/2}_{\delta})\mu^{1/2}, g_{\gamma}=\chi_{R}U^{\gamma/2}_{\delta}g$, we get
\ben \label{J2-part}
J_{2} &=&
\delta^{-\gamma}\int b^{\epsilon} \mathbb{F}((1-\chi_{R}U^{\gamma/2}_{\delta})\mu^{1/2}, \chi_{R}U^{\gamma/2}_{\delta}g) d\sigma dv dv_{*}
=\delta^{-\gamma}\int b^{\epsilon} \mathbb{F}(f_{\gamma}, g_{\gamma}) d\sigma dv dv_{*}
\nonumber\\&\lesssim& \delta^{-\gamma}\int b^{\epsilon} (f_{\gamma}^{2})_{*}(g_{\gamma}^{\prime}-g_{\gamma})^{2} d\sigma dv dv_{*} + \delta^{-\gamma}\int b^{\epsilon} (g_{\gamma}^{2})_{*}(f_{\gamma}^{\prime}-f_{\gamma})^{2} d\sigma dv dv_{*}
:= J_{2,1} + J_{2,2}.
\een
Thanks to \eqref{lclfact1}, we have
\ben \label{f-gamma-upper}
(f_{\gamma}^{2})_{*} = ((1-\chi_{R}U^{\gamma/2}_{\delta})\mu^{1/2})^{2}_{*} \lesssim (\delta^{2}+R^{-2}) \mu^{1/2}_{*}.
\een
Plugging \eqref{f-gamma-upper} into $J_{2,1},$ we have
\ben \label{J21}
J_{2,1}&\lesssim& (\delta^{2}+R^{-2})\delta^{-\gamma}\int b^{\epsilon}  \mu^{1/2}_{*} (g_{\gamma}^{\prime}-g_{\gamma})^{2} d\sigma dv dv_{*}
\\&=& (\delta^{2}+R^{-2})\delta^{-\gamma} \mathcal{N}^{\epsilon,0,0}(\mu^{1/4},g_{\gamma})
\lesssim (\delta^{2}+R^{-2})\delta^{-\gamma}|\chi_{R}U^{\gamma/2}_{\delta}g|^{2}_{\epsilon,\gamma/2} \lesssim (\delta^{2}+R^{-2})|g|^{2}_{\epsilon,\gamma/2},\nonumber
\een
where we use \eqref{anisotropic-regularity-general-g-up-bound}
and  Lemma \ref{operatorcommutator1} with  $\Phi=\delta^{-\gamma/2}\chi_{R}U^{\gamma/2}_{\delta}\in S^{\gamma/2}_{1,0}$ and $M=W^\epsilon\in S^1_{1,0}$.

By Taylor expansion up to order 1,
$ f_{\gamma}^{\prime}-f_{\gamma}  = \int_{0}^{1} (\nabla f_{\gamma})(v(\kappa))\cdot (v^{\prime}-v)d\kappa.$ From which together with
\beno |\nabla f_{\gamma}| &=&| \nabla ((1-\chi_{R}U^{\gamma/2}_{\delta})\mu^{1/2})|
= |(1-\chi_{R}U^{\gamma/2}_{\delta}) \nabla \mu^{1/2}
- U^{\gamma/2}_{\delta}\mu^{1/2} \nabla \chi_{R} - \chi_{R} \mu^{1/2} \nabla U^{\gamma/2}_{\delta} |
\\&\lesssim& \mu^{1/8}(\delta + R^{-1}),
\eeno
we get
\ben \label{f-gamma-by-taylor}
|f_{\gamma}^{\prime}-f_{\gamma}|^{2}  \lesssim
(\delta^{2}+R^{-2})\theta^{2} \int_{0}^{1} \mu^{1/4}(v(\kappa))|(v(\kappa)-v_{*})|^{2}d\kappa.\een
Since $R \leq 8^{-1}\epsilon^{-1/2}$, by the change \eqref{change-exact-formula}-\eqref{change-Jacobean-bound}, and the fact \eqref{lower-bound-when-small-cf},
we have
\ben \label{J22}
J_{2,2}&\lesssim& (\delta^{2}+R^{-2})\delta^{-\gamma}\int b^{\epsilon} \theta^{2} (\chi_{R}U^{\gamma/2}_{\delta}g)^{2}_{*} \mu^{1/4}(v(\kappa))|v(\kappa)-v_{*}|^{2} d\sigma dv(\kappa) dv_{*} d\kappa
\nonumber\\&\lesssim& (\delta^{2}+R^{-2})|\chi_{R}W_{\gamma/2+1}g|^{2}_{L^{2}}
\lesssim (\delta^{2}+R^{-2})|W_{\gamma/2}W^{\epsilon}g|^{2}_{L^{2}}
\lesssim (\delta^{2}+R^{-2})|g|^{2}_{\epsilon,\gamma/2}.
\een
Plugging the estimates \eqref{J21} and \eqref{J22} into \eqref{J2-part}, we get
\ben \label{estimate-J-2}
J_{2} \lesssim (\delta^{2}+R^{-2})|g|^{2}_{\epsilon,\gamma/2}.
\een

{\it \underline{Upper bound of $J_3$.}}
By Lemma \ref{difference-term-complication}, we have
$
\delta^{-\gamma}\left(h^{\prime} h^{\prime}_{*}-
h h_{*}\right)^{2}
\lesssim(\delta^{2}+R^{-2})\theta^{2}\langle v \rangle^{\gamma+2}\langle v_{*} \rangle^{2}\mathrm{1}_{|v|\leq 4R}. $
Since $8R \leq \epsilon^{-1/2}$, by \eqref{lower-bound-when-small-cf},  we have,
\ben \label{estimate-J-3}
J_{3} &=& \delta^{-\gamma}\int b^{\epsilon} \left(h^{\prime} h^{\prime}_{*}-
h h_{*}\right)^{2} \mu_{*}g^{2} d\sigma dv dv_{*}
\lesssim (\delta^{2}+R^{-2})
\int b^{\epsilon} \theta^{2}\langle v_{*} \rangle^{2}\langle v \rangle^{\gamma+2}\mu_{*}\mathrm{1}_{|v|\leq 4R} g^{2} d\sigma dv dv_{*}
\nonumber\\&\lesssim& (\delta^{2}+R^{-2})|\mathrm{1}_{|\cdot|\leq 4R} W_{\gamma/2+1}g|^{2}_{L^{2}} \lesssim (\delta^{2}+R^{-2})|W_{\gamma/2}W^{\epsilon}g|^{2}_{L^{2}}
\leq (\delta^{2}+R^{-2})|g|^{2}_{\epsilon,\gamma/2}.
\een

{\it Step 3: Case $-2< \gamma <0$.}
Plugging the estimates of $J_{1}$ in \eqref{estimate-J-1},  $J_{2}$ in \eqref{estimate-J-2}, $J_{3}$ in \eqref{estimate-J-3} into \eqref{separate-key-part},  for $\epsilon \leq 64^{-1}R^{-2}, 0<\delta<1$, we get
\beno   J^{\epsilon,\gamma,\eta}(g) \gtrsim \delta^{-\gamma}|g|^{2}_{L^{2}_{\gamma/2}}-C(\eta^{3}+\delta^{2}+R^{-2})|g|^{2}_{\epsilon,\gamma/2}. \eeno
Choosing $R = \delta^{-1}, \eta = \delta^{2/3}$, for some universal constants $C_{1}, C_{2}$, we have
\ben \label{key-estimate-by-He}   J^{\epsilon,\gamma,\eta}(g) \geq C_{1}\delta^{-\gamma}|g|^{2}_{L^{2}_{\gamma/2}}-C_{2}\delta^{2}|g|^{2}_{\epsilon,\gamma/2}. \een
By the coercivity estimate in Theorem \ref{strong-coercivity}, for some universal constants $C_{3}, C_{4}$, we have
\ben \label{known-estimate}    J^{\epsilon,\gamma,\eta}(g) \geq C_{3}|g|^{2}_{\epsilon,\gamma/2}- C_{4}|g|^{2}_{L^{2}_{\gamma/2}}.\een
Multiplying \eqref{known-estimate} by $C_{5}\delta^{2}$ and adding the resulting inequality to \eqref{key-estimate-by-He}, we get
\ben \label{combination}  (1+C_{5}\delta^{2}) J^{\epsilon,\gamma,\eta}(g) \geq (
C_{1}\delta^{-\gamma}-C_{4}C_{5}\delta^{2})|g|^{2}_{L^{2}_{\gamma/2}}+(C_{3}C_{5}-C_{2})\delta^{2}|g|^{2}_{\epsilon,\gamma/2}.  \een
First take $C_{5}$ large enough such that $C_{3}C_{5}-C_{2} \geq C_{2}$, for example let $C_{5} = 2C_{2}/C_{3}$.
Then take $\delta$ small enough such that $C_{1}\delta^{-\gamma}-C_{4}C_{5}\delta^{2} \geq 0$, for example, let $\delta=\left(\frac{C_{1}}{C_{4}C_{5}}\right)^{1/(2+\gamma)} = \left(\frac{C_{1}C_{3}}{2C_{4}C_{2}}\right)^{1/(2+\gamma)}$. Then we get
\ben \label{conclusion1-gamma-minus2}   J^{\epsilon,\gamma,\eta}(g) \geq C_{2}\delta^{2}|g|^{2}_{\epsilon,\gamma/2} = C_{2}\left(\frac{C_{1}C_{3}}{2C_{4}C_{2}}\right)^{2/(2+\gamma)}|g|^{2}_{\epsilon,\gamma/2}, \een
for any $0 \leq \epsilon \leq 64^{-1}R^{-2}  = 64^{-1}\left(\frac{C_{1}C_{3}}{2C_{4}C_{2}}\right)^{2/(2+\gamma)}$ and $0 \leq \eta \leq \delta^{2/3} = \left(\frac{C_{1}C_{3}}{2C_{4}C_{2}}\right)^{2/(6+3\gamma)}$.

{\it Step 4: Case $-3\leq \gamma \leq -2$.}
In this case, we take $-2<\alpha,\beta<0$ such that $\alpha+\beta=\gamma$. Replacing $b^{\epsilon}$ by  $b^{\epsilon}|v-v_{*}|^{\alpha}$ and $\gamma$ by $\beta$, similar to \eqref{separate-key-part}, we get
\ben \label{separate-gamma-very-small} J^{\epsilon,\gamma,\eta}(g) &\gtrsim&
\frac{1}{4}\delta^{-\beta}
\int b^{\epsilon}|v-v_{*}|^{\alpha}\mathrm{1}_{|v-v_{*}| \geq \eta} \mathbb{F}(\mu^{1/2}, \chi_{R}U^{\beta/2}_{\delta}g) d\sigma dv dv_{*}
\\  && - \frac{1}{2}\delta^{-\beta}\int b^{\epsilon}|v-v_{*}|^{\alpha} \mathbb{F}((1-\chi_{R}U^{\beta/2}_{\delta})\mu^{1/2}, \chi_{R}U^{\beta/2}_{\delta}g) d\sigma dv dv_{*}
\nonumber\\  && -  4\delta^{-\beta}\int b^{\epsilon}|v-v_{*}|^{\alpha} \left(h^{\prime} h^{\prime}_{*}-
h h_{*}\right)^{2} \mu_{*}g^{2} d\sigma dv dv_{*}
:= \frac{1}{4} J^{\alpha,\beta}_{1}- \frac{1}{2} J^{\alpha,\beta}_{2} - 4 J^{\alpha,\beta}_{3},
\nonumber
\een
where $h :=\chi_{R} U^{\beta/2}_{\delta}$.

 {\it \underline{ Lower bound of $J^{\alpha,\beta}_{1}$.}}
Since $-2<\alpha<0$, we can use previous estimate \eqref{conclusion1-gamma-minus2} to get,
\beno J^{\alpha,\beta}_{1} = \delta^{-\beta} J^{\epsilon,\alpha,\eta}(\chi_{R}U^{\beta/2}_{\delta}g) \gtrsim \delta^{-\beta}
|W_{\alpha/2}(\mathbb{I}-\mathbb{P})(\chi_{R}U^{\beta/2}_{\delta}g)|^{2}_{L^{2}}.
\eeno
Using $(a-b)^{2} \geq a^{2}/2 - b^{2}$, we get
\ben \label{out-truncation-projection-2}   J^{\alpha,\beta}_{1}
&\gtrsim& \frac{1}{4}\delta^{-\beta}|W_{\alpha/2}U^{\beta/2}_{\delta}g|^{2}_{L^{2}} -
\frac{1}{2}\delta^{-\beta}|W_{\alpha/2}(1-\chi_{R})U^{\beta/2}_{\delta}g|^{2}_{L^{2}}
-\delta^{-\beta}|W_{\alpha/2}\mathbb{P}(\chi_{R}U^{\beta/2}_{\delta}g)|^{2}_{L^{2}}
 \nonumber \\&:=& J^{\alpha,\beta}_{1,1}- J^{\alpha,\beta}_{1,2} - J^{\alpha,\beta}_{1,3}.
\een
Thanks to $U_{\delta} \leq W$, one has $U^{\beta/2}_{\delta} \geq W_{\beta/2}$, and thus
\ben J^{\alpha,\beta}_{1,1} \label{J-al-be-11}
\gtrsim \delta^{-\beta}|W_{\alpha/2}W_{\beta/2}g|^{2}_{L^{2}} =  \delta^{-\beta}|g|^{2}_{L^{2}_{\gamma/2}}.
\een
Thanks to $\delta^{-\beta}U^{\beta}_{\delta} \leq W_{\beta}$, similar to \eqref{large-velocity-part-1}  and \eqref{large-velocity-part-12}, we have
\ben \label{J-al-be-12}
J^{\alpha,\beta}_{1,2} \lesssim |W_{\alpha/2}(1-\chi_{R})W_{\beta/2}g|^{2}_{L^{2}} \lesssim R^{-2}|W^{\epsilon}W_{\gamma/2}g|^{2}_{L^{2}}.\een
Similar to \eqref{large-velocity-part-13}, we get
\ben \label{J-al-be-13}
J^{\alpha,\beta}_{1,3} = \delta^{-\beta}|W_{\alpha/2}\mathbb{P}(\chi_{R}U^{\beta/2}_{\delta}g)|^{2}_{L^{2}}  \lesssim (\delta^{2}+R^{-2}) |\mu^{1/8}g|^{2}_{L^{2}} \lesssim (\delta^{2}+R^{-2}) |g|^{2}_{L^{2}_{\gamma/2}}.  \een
Plugging \eqref{J-al-be-11}, \eqref{J-al-be-12}, \eqref{J-al-be-13} into \eqref{out-truncation-projection-2},
we get
\ben \label{J-al-be-1}  J^{\alpha,\beta}_{1} \gtrsim \delta^{-\beta}|g|^{2}_{L^{2}_{\gamma/2}}-C(\delta^{2}+R^{-2})|g|^{2}_{\epsilon,\gamma/2}. \een

 {\it \underline{ Upper bound of $J^{\alpha,\beta}_{2}$.}} Now we analyze
\beno J^{\alpha,\beta}_{2}= \delta^{-\beta}\int b^{\epsilon}|v-v_{*}|^{\alpha} \mathbb{F}((1-\chi_{R}U^{\beta/2}_{\delta})\mu^{1/2}, \chi_{R}U^{\beta/2}_{\delta}g) d\sigma dv dv_{*}. \eeno
For simplicity, set $f_{\beta}= (1-\chi_{R}U^{\beta/2}_{\delta})\mu^{1/2}, g_{\beta}=\chi_{R}U^{\beta/2}_{\delta}g$, we get
\ben \label{J-al-be-2-into-2}
J^{\alpha,\beta}_{2} &\lesssim& \delta^{-\beta}\int b^{\epsilon} |v-v_{*}|^{\alpha} (f_{\beta}^{2})_{*}(g_{\beta}^{\prime}-g_{\beta})^{2} d\sigma dv dv_{*} + \delta^{-\beta}\int b^{\epsilon} |v-v_{*}|^{\alpha} (g_{\beta}^{2})_{*}(f_{\beta}^{\prime}-f_{\beta})^{2} d\sigma dv dv_{*}
\nonumber \\&:=& J^{\alpha,\beta}_{2,1} + J^{\alpha,\beta}_{2,2}.
\een
Similar to \eqref{f-gamma-upper}, we get
$ f_{\beta}^{2}=((1-\chi_{R}U^{\beta/2}_{\delta})\mu^{1/2})^{2} \lesssim (\delta^{2}+R^{-2}) \mu^{1/2}. $
From which together with $\delta^{-\beta}U^{\beta/2}_{\delta} \leq W_{\beta/2}$, we get
\ben \label{J-al-be-2-into-2-1}
J^{\alpha,\beta}_{2,1}&\lesssim& (\delta^{2}+R^{-2})\delta^{-\beta}\int b^{\epsilon}|v-v_{*}|^{\alpha}  \mu^{1/2}_{*} (g_{\beta}^{\prime}-g_{\beta})^{2} d\sigma dv dv_{*}
=(\delta^{2}+R^{-2})\delta^{-\beta}\mathcal{N}^{\epsilon,\alpha,0}(\mu^{1/4},g_{\beta})
\nonumber \\&\lesssim& (\delta^{2}+R^{-2})\delta^{-\beta}|\chi_{R}U^{\beta/2}_{\delta}g|^{2}_{\epsilon,\alpha/2} \lesssim (\delta^{2}+R^{-2})|g|^{2}_{\epsilon,\gamma/2},
\een
where we use Lemma \ref{operatorcommutator1} with $\Phi=W_{\alpha/2}\delta^{-\beta/2}U^{\beta/2}_{\delta}\chi_{R}$ and $M=W^\epsilon$.
Similar to \eqref{f-gamma-by-taylor}, we have
\beno  |f_{\beta}^{\prime}-f_{\beta}|^{2}  \lesssim
(\delta^{2}+R^{-2})\theta^{2} \int_{0}^{1} \mu^{1/4}(v(\kappa))|v(\kappa)-v_{*}|^{2}d\kappa.\eeno
Thanks to $|v-v_{*}| \sim |v(\kappa)-v_{*}|$, since $8 R \leq \epsilon^{-1/2}$, by the change \eqref{change-exact-formula}-\eqref{change-Jacobean-bound}, we have
\ben \label{J-al-be-2-into-2-2}
J^{\alpha,\beta}_{2,2}&\lesssim& (\delta^{2}+R^{-2})\delta^{-\beta}\int b^{\epsilon} \theta^{2} (\chi_{R}U^{\beta/2}_{\delta}g)^{2}_{*} \mu^{1/4}(v(\kappa))|v(\kappa)-v_{*}|^{2+\alpha} d\sigma dv(\kappa) dv_{*} d\kappa.
\nonumber \\&\lesssim& (\delta^{2}+R^{-2})|\chi_{R}W_{\gamma/2+1}g|^{2}_{L^{2}} \lesssim (\delta^{2}+R^{-2})|g|^{2}_{\epsilon,\gamma/2}.
\een
Plugging \eqref{J-al-be-2-into-2-1} and \eqref{J-al-be-2-into-2-2} into \eqref{J-al-be-2-into-2}, we get
\ben \label{J-al-be-2-into-2-final}
J^{\alpha,\beta}_{2} \lesssim (\delta^{2}+R^{-2})|g|^{2}_{\epsilon,\gamma/2}.
\een

 {\it \underline{ Upper bound of $J^{\alpha,\beta}_{3}$.}}
Recall $J^{\alpha,\beta}_{3} = \delta^{-\beta}\int b^{\epsilon}|v-v_{*}|^{\alpha} \left(h^{\prime} h^{\prime}_{*}-
h h_{*}\right)^{2} \mu_{*}g^{2} d\sigma dv dv_{*} $.
By Lemma \ref{difference-term-complication}, we have
\beno\delta^{-\beta}\left(h^{\prime} h^{\prime}_{*}-
h h_{*}\right)^{2} = X(\beta,R,\delta)
 \lesssim (\delta^{2}+R^{-2})\theta^{2}\langle v_{*} \rangle^{2}\langle v \rangle^{\beta+2}\mathrm{1}_{|v|\leq 4R}. \eeno
Thanks to $\int |v-v_{*}|^{\alpha}\langle v_{*} \rangle^{2} \mu_{*} d v_{*} \lesssim \langle v \rangle^{\alpha}$, since $8 R \leq \epsilon^{-1/2}$,
we get
\ben \label{J-al-be-3-up}
J^{\alpha,\beta}_{3} &\lesssim& (\delta^{2}+R^{-2}) \int b^{\epsilon}|v-v_{*}|^{\alpha} \theta^{2}\langle v_{*} \rangle^{2}\langle v \rangle^{\beta+2}\mu_{*} \mathrm{1}_{|v|\leq 4R} g^{2} d\sigma dv dv_{*}
\nonumber \\&\lesssim& (\delta^{2}+R^{-2})|\mathrm{1}_{|\cdot|\leq 4R} W_{\gamma/2+1}g|^{2}_{L^{2}} \lesssim (\delta^{2}+R^{-2})|g|^{2}_{\epsilon,\gamma/2}.\een

Plugging the estimates of $J^{\alpha,\beta}_{1}$ in \eqref{J-al-be-1},
$J^{\alpha,\beta}_{2}$ in \eqref{J-al-be-2-into-2-final},
$J^{\alpha,\beta}_{3}$ in \eqref{J-al-be-3-up} into \eqref{separate-gamma-very-small},
we get
\beno   J^{\epsilon,\gamma,\eta}(g) \gtrsim \delta^{-\beta}|g|^{2}_{L^{2}_{\gamma/2}}-C(\delta^{2}+R^{-2})|g|^{2}_{\epsilon,\gamma/2}. \eeno
Choosing $R = \delta^{-1}$, for some universal constants $C_{6},C_{7}$,
we get
\ben \label{key-estimate-by-He-2}   J^{\epsilon,\gamma,\eta}(g) \geq C_{6}\delta^{-\beta}|g|^{2}_{L^{2}_{\gamma/2}}-C_{7}\delta^{2}|g|^{2}_{\epsilon,\gamma/2}. \een
Together with coercivity estimate \eqref{known-estimate}, thanks to $-2< \beta <0$,
by a similar argument as in {\it Step 3}, similar to \eqref{conclusion1-gamma-minus2}, we get for $-3\leq \gamma \leq-2$,
\ben \label{conclusion1-gamma-minus3}   J^{\epsilon,\gamma,\eta}(g) \geq C_{7}\left(\frac{C_{6}C_{3}}{2C_{4}C_{7}}\right)^{2/(2+\beta)}|g|^{2}_{\epsilon,\gamma/2},\een
for any $0 \leq \epsilon \leq \min\bigg\{  64^{-1}\left(\frac{C_{1}C_{3}}{2C_{4}C_{2}}\right)^{2/(2+\alpha)}, 64^{-1}\left(\frac{C_{6}C_{3}}{2C_{4}C_{7}}\right)^{2/(2+\beta)}\bigg\}$  and $0 \leq \eta \leq  \left(\frac{C_{1}C_{3}}{2C_{4}C_{2}}\right)^{2/(6+3\alpha)}$.

One can trace the proof to settle down two universal constants $\epsilon_{0}, \eta_{0}>0$ such that \eqref{gap-estimate} holds true for any $0 \leq \epsilon \leq \epsilon_{0}$ and $0 \leq \eta \leq \eta_{0}$. One should not worry that the above constants could blow up if $\alpha, \beta \rightarrow -2$. Indeed, in {\it{Step 3}}, we can deal with $-7/4 \leq \gamma < 0$. Then in {\it{Step 4}}, we deal with $-3 \leq \gamma < -7/4$, where we can choose $ -7/4 \leq \alpha, \beta < 0$ such that $\alpha+\beta=\gamma$. In this way, all the constants are universally bounded.
\end{proof}

\setcounter{equation}{0}

\section{Upper bound estimate} \label{Upper-Bound-Estimate}
In this section, we will provide various upper bounds  on the nonlinear operator $\Gamma^{\epsilon}$ and linear operator $\mathcal{L}^{\epsilon }$. We recall the definition of $\Gamma^{\epsilon,\gamma,\eta}(g,h)$ from \eqref{Gamma-ep-eta-ga}, that is,
\beno
\Gamma^{\epsilon,\gamma,\eta}(g,h) &=& \mu^{-1/2} Q^{\epsilon,\gamma,\eta}(\mu^{1/2}g,\mu^{1/2}h)
= \int B^{\epsilon,\gamma,\eta}(v-v_*,\sigma) \mu^{1/2}_{*} \big(g^{\prime}_{*} h^{\prime}-g_{*} h\big) d\sigma dv_*
\\&=& \int B^{\epsilon,\gamma,\eta}(v-v_*,\sigma) \big((\mu^{1/2}g)^{\prime}_{*} h^{\prime}-(\mu^{1/2}g)_{*} h\big)d\sigma dv_*
\\&&+ \int B^{\epsilon,\gamma,\eta}(v-v_*,\sigma) \big(\mu^{1/2}_{*}-(\mu^{1/2})^{\prime}_{*}\big)g^{\prime}_{*} h^{\prime} d\sigma dv_*
= Q^{\epsilon,\gamma,\eta}(\mu^{1/2}g,h) + I^{\epsilon,\gamma,\eta}(g,h),
\eeno
where for notational brevity,  we set
\ben \label{I-ep-ga-geq-eta}
I^{\epsilon,\gamma,\eta}(g,h) :=\int B^{\epsilon,\gamma,\eta}(v-v_*,\sigma) \big(\mu^{1/2}_{*}-(\mu^{1/2})^{\prime}_{*}\big)g^{\prime}_{*} h^{\prime} d\sigma dv_{*} .
\een

We recall from \eqref{etalower} the operators $\Gamma^{\epsilon,\gamma}_{\eta}, Q^{\epsilon,\gamma}_{\eta}, \mathcal{L}^{\epsilon,\gamma}_{\eta}$ containing subscript $\eta$.
Similar to \eqref{L1-ep-eta-ga} and \eqref{L2-ep-eta-ga}, we can define  $\mathcal{L}^{\epsilon,\gamma}_{1,\eta}, \mathcal{L}^{\epsilon,\gamma}_{2,\eta}$ through $\Gamma^{\epsilon,\gamma}_{\eta}$.
We define $I^{\epsilon,\gamma}_{\eta}(g,h)$ using kernel $B^{\epsilon,\gamma}_{\eta}(v-v_*,\sigma)$ in \eqref{etalower} in the way as in \eqref{I-ep-ga-geq-eta}. When $\eta = 0$, we drop the superscript $\eta$ for brevity.
That is, $Q^{\epsilon,\gamma} := Q^{\epsilon,\gamma,0}, \Gamma^{\epsilon,\gamma}:= \Gamma^{\epsilon,\gamma,0}, I^{\epsilon,\gamma} := I^{\epsilon,\gamma,0}$. With these notations in hand, we have
\ben
\label{Gamma-ep-ga-into-IQ}
\Gamma^{\epsilon,\gamma}(g,h) &=& Q^{\epsilon,\gamma}(\mu^{1/2}g,h) + I^{\epsilon,\gamma}(g,h),
\\
\label{Gamma-ep-ga-geq-eta-into-IQ}
\Gamma^{\epsilon,\gamma,\eta}(g,h) &=& Q^{\epsilon,\gamma,\eta}(\mu^{1/2}g,h) + I^{\epsilon,\gamma,\eta}(g,h),
\\
\label{Gamma-ep-ga-leq-eta-into-IQ}
\Gamma^{\epsilon,\gamma}_{\eta}(g,h) &=& Q^{\epsilon,\gamma}_{\eta}(\mu^{1/2}g,h)+ I^{\epsilon,\gamma}_{\eta}(g,h),
\\
\label{Q-ep-ga-sep-eta}
Q^{\epsilon,\gamma}(g,h) &=& Q^{\epsilon,\gamma,\eta}(g,h) + Q^{\epsilon,\gamma}_{\eta}(g,h),
\\
\label{Gamma-ep-ga-sep-eta}
\Gamma^{\epsilon,\gamma}(g,h) &=& \Gamma^{\epsilon,\gamma,\eta}(g,h) + \Gamma^{\epsilon,\gamma}_{\eta}(g,h),
\\
\label{I-ep-ga-sep-eta}
I^{\epsilon,\gamma}(g,h) &=& I^{\epsilon,\gamma,\eta}(g,h) + I^{\epsilon,\gamma}_{\eta}(g,h),
\\
\label{L-ep-ga-sep-eta-l1-l2}
\mathcal{L}^{\epsilon,\gamma}_{\eta}g &=& \mathcal{L}^{\epsilon,\gamma}_{1,\eta}g + \mathcal{L}^{\epsilon,\gamma}_{2,\eta}g.
\een

Throughout this section, we assume $-3 \leq \gamma \leq 0$.
Our results on the upper bounds can be summarized in the following Table \ref{ResultSummary}.
\begin{table}[!htbp]
\centering
\caption{Results Summary} \label{ResultSummary}
\begin{tabular}{cc}
\hline
Functionals &Proposition or Theorem \\
\hline
$\langle Q^{\epsilon,\gamma,\eta}(g,h), f\rangle$ & Proposition \ref{ubqepsilonnonsingular}\\
$\langle I^{\epsilon,\gamma,\eta}(g,h), f\rangle$ & Proposition \ref{upforI-ep-ga-et}\\
$\langle Q^{\epsilon,\gamma}_{\eta}(g,h), f\rangle$ & Proposition \ref{ubqepsilon-singular}\\
$\langle I^{\epsilon,\gamma}_{\eta}(g,h), f\rangle$ & Proposition \ref{I-less-eta-upper-bound}\\
$\langle -\mathcal{L}^{\epsilon,\gamma}_{1,\eta}h + \Gamma^{\epsilon,\gamma}_{\eta}(f,h), h\rangle$ &Proposition \ref{less-eta-part-l1-Gamma}\\
$ \langle -\mathcal{L}^{\epsilon,\gamma}_{2,\eta}f , h\rangle$ & Proposition \ref{less-eta-part-l2}\\
$\langle \Gamma^{\epsilon,\gamma,\eta}(g,h), f\rangle$ & Theorem \ref{upGammagh-geq-eta}\\
$\langle Q^{\epsilon,\gamma}(g,h), f\rangle$ & Theorem \ref{Q-full-up-bound}\\
$\langle I^{\epsilon,\gamma}(g,h), f\rangle$ & Theorem \ref{upforI-total}\\
$\langle \Gamma^{\epsilon,\gamma}(g,h), f\rangle$ & Theorem \ref{Gamma-full-up-bound}\\
$\langle  \Gamma^{\epsilon,\gamma}_{\eta}(f,h)-\mathcal{L}^{\epsilon,\gamma}_{\eta}h, h\rangle$ & Theorem \ref{small-part-L+gamma}\\
\hline
\end{tabular}
\end{table}

It is easy to see that $\langle Q^{\epsilon,\gamma,\eta}(g,h), f\rangle$ and $\langle I^{\epsilon,\gamma,\eta}(g,h), f\rangle$ involve the
regular region $|v-v_{*}|\geq  \eta$, while $\langle Q^{\epsilon,\gamma}_{\eta}(g,h), f\rangle, \langle I^{\epsilon,\gamma}_{\eta}(g,h), f\rangle, \langle -\mathcal{L}^{\epsilon,\gamma}_{1,\eta}h + \Gamma^{\epsilon,\gamma}_{\eta}(f,h), h\rangle$ and $\langle -\mathcal{L}^{\epsilon,\gamma}_{2,\eta}f , h\rangle$  focus on the singular region $|v-v_{*}| \leq  \eta$. We provide two types of estimates on these functionals because we will meet two cases for the nonlinear term $\Gamma^\epsilon$ when the standard energy method is applied.  These two cases can be clarified as follows: $\langle \Gamma^\epsilon(f,\pa^\alpha f), \pa^\alpha f\rangle$ and $\langle \Gamma^\epsilon(\pa^{\alpha_1}f,\pa^{\alpha_2} f), \pa^\alpha f\rangle$, where $\alpha_1+\alpha_2=\alpha$ and $|\alpha_2| < |\alpha|$.
\begin{itemize}
	\item The first case corresponds to the highest order estimate of the solution. As we explained in section 1.4.3, the linear-quasilinear method will be employed. Technically we need to separate the integration domain into two regions: singular region and regular region. In this situation, all the upper bounds will depend on the parameter $\eta$.
	\item For the second case, since $|\alpha_2|<|\alpha|$, we have one more derivative freedom on the function $\pa^{\alpha_2}f$.
In this situation, all the upper bounds are independent of the parameter $\eta$ and allow more regularity.
\end{itemize}

\subsection{Upper bounds of $\langle Q^{\epsilon,\gamma,\eta}(g,h), f\rangle$ and $\langle I^{\epsilon,\gamma,\eta}(g,h), f\rangle$ }
 Thanks to \eqref{Gamma-ep-ga-geq-eta-into-IQ}, we have
\ben \label{Gamma-ep-ga-geq-eta-into-IQ-inner}
\langle \Gamma^{\epsilon,\gamma,\eta}(g,h), f\rangle =  \langle Q^{\epsilon,\gamma,\eta}(\mu^{1/2}g,h), f\rangle +
\langle I^{\epsilon,\gamma,\eta}(g,h), f\rangle.
\een
It suffices to consider $\langle Q^{\epsilon,\gamma,\eta}(g,h), f\rangle$ and $\langle I^{\epsilon,\gamma,\eta}(g,h), f\rangle$ individually.

\subsubsection{Upper bound of $\langle Q^{\epsilon,\gamma,\eta}(g,h), f\rangle$}
We begin with a lemma.
\begin{lem}\label{crosstermsimilar} Let $0< \eta \leq 1$,
	$
	\mathcal{Y}^{\epsilon,\gamma}(h,f) := \int b^{\epsilon}(\frac{u}{|u|}\cdot\sigma)|u|^\gamma\mathrm{1}_{|u|\geq \eta} h(u)[f(u^{+}) - f(|u|\frac{u^{+}}{|u^{+}|})] d\sigma du,
	$
	then
	\beno
	|\mathcal{Y}^{\epsilon,\gamma}(h,f)| \lesssim \eta^{\gamma-3} (|W^{\epsilon}W_{\gamma/2}h|_{L^{2}}+|W^{\epsilon}(D)W_{\gamma/2}h|_{L^{2}})
	(|W^{\epsilon}W_{\gamma/2}f|_{L^{2}}+|W^{\epsilon}(D)W_{\gamma/2}f|_{L^{2}}).
	\eeno
\end{lem}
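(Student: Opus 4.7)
I would prove Lemma \ref{crosstermsimilar} by a Cauchy-Schwarz split in $(u,\sigma)$ that isolates $h$ from the radial difference $f(u^{+}) - f(|u|\tfrac{u^{+}}{|u^{+}|})$, and then applying to the $f$-factor the radial-part estimate developed in Lemma \ref{gammanonzerotozero}. The key geometric observation is that $u^{+}$ and $|u|\tfrac{u^{+}}{|u^{+}|}$ lie on the same ray from the origin, so the $f$-difference is purely radial and is exactly the object analyzed on the Fourier side by Lemma \ref{gammanonzerotozero}.

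Concretely, one symmetrically splits the weight $b^{\epsilon}|u|^{\gamma}\mathrm{1}_{|u|\geq\eta}$ to obtain
\[
|\mathcal{Y}^{\epsilon,\gamma}(h,f)|^{2} \le A(h)\cdot B(f),
\]
with $A(h):=\int b^{\epsilon}|u|^{\gamma}\mathrm{1}_{|u|\geq\eta}|h|^{2} d\sigma du$ and $B(f):=\int b^{\epsilon}|u|^{\gamma}\mathrm{1}_{|u|\geq\eta}|f(u^{+})-f(|u|\tfrac{u^{+}}{|u^{+}|})|^{2} d\sigma du$. For $A(h)$ one first integrates $\sigma$ by \eqref{order-0}, picking up the factor $|\ln\epsilon|^{-1}\epsilon^{-2}$, and then compares $|u|^{\gamma}\mathrm{1}_{|u|\geq\eta}$ with $\langle u\rangle^{\gamma}$ separately on the regions $\eta\leq|u|\leq 1$ and $|u|\geq 1$, pulling out one factor $\eta^{\gamma}$ and leaving $|h|_{L^{2}_{\gamma/2}}^{2}$. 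For $B(f)$ one uses the crude bound $|u|^{\gamma}\mathrm{1}_{|u|\geq\eta}\leq\eta^{\gamma}$ together with the change of variables $(u,\sigma)\to(r,\tau,\varsigma)$ from Lemma \ref{spherical-part} and Plancherel, which reduces the integral to the spatial analogue of the quantity treated in Lemma \ref{gammanonzerotozero} with $\gamma=0$; the $W_{\gamma/2}$ weight is then absorbed by the same symbol-calculus manipulations used throughout Section \ref{Coercivity-Estimate}, producing a mixed-norm bound of the correct shape on the $f$-side.

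The principal obstacle is converting the residual $|\ln\epsilon|^{-1/2}\epsilon^{-1}$ arising from the angular integral in $A(h)$ into the target $\eta^{-3/2}$, so that combining both halves of the Cauchy-Schwarz gives exactly $\eta^{\gamma-3}$ rather than $\eta^{\gamma}$ times a bad $\epsilon$-dependent constant. I expect this to go through a dyadic decomposition of $h$ in frequency relative to $\epsilon^{-1}$, using the bounds on $W^{\epsilon}$ from Lemma \ref{property-of-symbol} to dominate the high-frequency piece by $|W^{\epsilon}(D)W_{\gamma/2}h|_{L^{2}}$ with the required $\epsilon$-gain, and exploiting the $\mathrm{1}_{|u|\geq\eta}$ restriction together with a Sobolev-type embedding on the low-frequency piece to extract the volumetric factor $\eta^{-3/2}$ from the complement of $\{|u|<\eta\}$. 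Verifying that this tradeoff produces the stated exponent uniformly in the relative size of $\eta$ and $\epsilon$, and that the analogous decomposition on the $f$-side of $B(f)$ is compatible with the mixed norm on the right-hand side, will be the technical heart of the argument.
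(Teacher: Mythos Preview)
Your Cauchy--Schwarz split is too crude and the gap you identify in the last paragraph is fatal, not merely technical. After the split, $A(h)$ carries the full angular integral $\int b^{\epsilon}\,d\sigma\sim|\ln\epsilon|^{-1}\epsilon^{-2}$ from \eqref{order-0}, and this constant depends only on $\epsilon$, not on $\eta$. Test against a fixed Schwartz $h$ supported near the origin in both phase and frequency: all of $|h|_{L^2_{\gamma/2}}$, $|W^{\epsilon}W_{\gamma/2}h|_{L^2}$, $|W^{\epsilon}(D)W_{\gamma/2}h|_{L^2}$ are $\sim 1$ uniformly in $\epsilon$, while $A(h)\gtrsim|\ln\epsilon|^{-1}\epsilon^{-2}$ blows up as $\epsilon\to 0$. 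No frequency decomposition of $h$ can help, because once you have applied Cauchy--Schwarz, $A(h)$ sees only $|h(u)|^{2}$ pointwise and the angular factor has already been integrated out; the $\mathrm{1}_{|u|\geq\eta}$ restriction controls only the $u$-integral and has nothing to do with the $\sigma$-integral that produced the $\epsilon^{-2}$.

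The paper avoids this by never performing a global Cauchy--Schwarz. It first proves the unweighted bilinear estimate $\mathcal{X}(h,f):=\int b^{\epsilon}\,h(u)[f(u^{+})-f(|u|\tfrac{u^{+}}{|u^{+}|})]\,d\sigma du$ (no $|u|^{\gamma}$, no $\eta$-cutoff) directly, via dyadic decomposition of $h$ in the phase variable and then of $\varphi_{k}h$ in the frequency variable, using the Fourier-side identity of Proposition~\ref{fourier-transform-cross-term} to pass between the two; this yields \eqref{gamma=0-clear} without any $\epsilon$-loss. Only afterwards is $|u|^{\gamma}\mathrm{1}_{|u|\geq\eta}$ reinserted by splitting it as $|u|^{\gamma}(1-\phi(u))+|u|^{\gamma}(\phi(u)-\mathrm{1}_{|u|<\eta})$: the first piece is a smooth symbol handled by Lemma~\ref{operatorcommutator1}, while the second piece $\tilde W$ is localized to $\eta\lesssim|u|\lesssim 1$ and is absorbed into $h$ via a Fourier-multiplier bound $|W^{\epsilon}(D)(\tilde W\tilde H)|_{L^2}\lesssim|\tilde W|_{H^3}|W^{\epsilon}(D)\tilde H|_{L^2}$. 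The factor $\eta^{\gamma-3}$ arises precisely from $|\tilde W|_{H^3}$, i.e.\ from three derivatives falling on the cutoff at scale $\eta$, not from any $u$-volume or $\epsilon$ tradeoff. If you want to salvage your argument, the right starting point is to prove the bilinear estimate for $\mathcal{X}(h,f)$ first and treat $|u|^{\gamma}\mathrm{1}_{|u|\geq\eta}$ as a multiplier on $h$ afterward.
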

\begin{proof}We divide the proof into two steps.
	
{\it Step 1: without the term $|u|^\gamma\mathrm{1}_{|u|\geq \eta}$.} For ease of notation, we denote $\mathcal{X}(h,f):=\int b^{\epsilon}(\frac{u}{|u|}\cdot\sigma)  h(u)[f(u^{+}) - f(|u|\frac{u^{+}}{|u^{+}|})] d\sigma du $. First applying dyadic decomposition in the phase space, we have
	\beno
	\mathcal{X}(h,f) = \sum_{k=-1}^{\infty}\int b^{\epsilon}(\frac{u}{|u|}\cdot\sigma) (\tilde{\varphi}_{k}h)(u) [(\varphi_{k}f)(u^{+})- (\varphi_{k}f)(|u|\frac{u^{+}}{|u^{+}|})] d\sigma du
	:= \sum_{k=-1}^{\infty} \mathcal{X}_{k}.
	\eeno
	where $\tilde{\varphi}_{k} = \sum_{|l-k|\leq 3} \varphi_{l}$.
	We split the proof into two cases:  $2^{k}\geq 1/\epsilon$ and $2^{k}\leq 1/\epsilon$.

	\underline{\it Case 1: $2^{k}\geq 1/\epsilon$. } We first have
	\beno
	|\mathcal{X}_{k}| \leq \bigg(\int b^{\epsilon}(\frac{u}{|u|}\cdot\sigma) |(\tilde{\varphi}_{k}h)(u)|^{2}  d\sigma du\bigg)^{\frac{1}{2}}
	  \bigg(\int b^{\epsilon}(\frac{u}{|u|}\cdot\sigma)  (|(\varphi_{k}f)(u^{+})|^{2} + |(\varphi_{k}f)(|u|\frac{u^{+}}{|u^{+}|})|^{2}) d\sigma du\bigg)^{\frac{1}{2}}.
	\eeno
	By the changes  $u \rightarrow u^{+}$ and $u \rightarrow |u|\frac{u^{+}}{|u^{+}|}$, the estimate \eqref{order-0},
 we have
	$|\mathcal{X}_{k}| \lesssim |\log \epsilon |^{-1} \epsilon^{-2}|\tilde{\varphi}_{k}h|_{L^{2}}|\varphi_{k}f|_{L^{2}}$, which gives
	\beno
	|\sum_{2^{k}\geq 1/\epsilon} \mathcal{X}_{k}| \lesssim \sum_{2^{k}\geq 1/\epsilon} |\log \epsilon |^{-1} \epsilon^{-2}|\tilde{\varphi}_{k}h|_{L^{2}}|\varphi_{k}f|_{L^{2}} \lesssim |W^{\epsilon}h|_{L^{2}}|W^{\epsilon}f|_{L^{2}}.
	\eeno

	\underline{\it Case 2: $2^{k}\leq 1/\epsilon$.} By Proposition \ref{fourier-transform-cross-term} and  the dyadic decomposition in the frequency space, we have
	\beno
	\mathcal{X}_{k} &=& \int b^{\epsilon}(\frac{\xi}{|\xi|}\cdot\sigma)  [\widehat{\tilde{\varphi}_{k}h}(\xi^{+})- \widehat{\tilde{\varphi}_{k}h}(|\xi|\frac{\xi^{+}}{|\xi^{+}|})] \overline{\widehat{\varphi_{k}f}}(\xi)  d\sigma d\xi
	\\&=& \sum_{l=-1}^{\infty} \int b^{\epsilon}(\frac{\xi}{|\xi|}\cdot\sigma)[({\varphi}_{l}\widehat{\tilde{\varphi}_{k}h})(\xi^{+})- ({\varphi}_{l}\widehat{\tilde{\varphi}_{k}h})(|\xi|\frac{\xi^{+}}{|\xi^{+}|})] (\tilde{\varphi}_{l}\overline{\widehat{\varphi_{k}f}})(\xi)  d\sigma d\xi
	:= \sum_{l=-1}^{\infty} \mathcal{X}_{k,l}.
	\eeno

	\underline{\it Case 2.1:  $2^{l}\geq 1/\epsilon$.}
	In this case, we have
	$
	|\mathcal{X}_{k,l}| \lesssim |\ln \epsilon|^{-1}\epsilon^{-2}|{\varphi}_{l}\widehat{\tilde{\varphi}_{k}h}|_{L^{2}}|\tilde{\varphi}_{l}\widehat{\varphi_{k}f}|_{L^{2}},
	$
	which yields
	\beno
	\sum_{2^{l}\geq 1/\epsilon}|\mathcal{X}_{k,l}| \lesssim \sum_{2^{l}\geq 1/\epsilon} |\ln \epsilon|^{-1}\epsilon^{-2}|{\varphi}_{l}\widehat{\tilde{\varphi}_{k}h}|_{L^{2}}|\tilde{\varphi}_{l}\widehat{\varphi_{k}f}|_{L^{2}}
	\lesssim |W^{\epsilon}(D)\tilde{\varphi}_{k}h|_{L^{2}}|W^{\epsilon}(D)\varphi_{k}f|_{L^{2}}.
	\eeno
	Then by Lemma \ref{piece-to-whole}, we have
	$
	\sum_{2^{k}\leq 1/\epsilon,2^{l}\geq 1/\epsilon}|\mathcal{X}_{k,l}| \lesssim |W^{\epsilon}(D)h|_{L^{2}}|W^{\epsilon}(D)f|_{L^{2}}.
	$

\underline{\it Case 2.2:  $2^{l}\le 1/\epsilon$.}  We have
	\beno
	\mathcal{X}_{k,l} &=& \int b^{\epsilon}(\frac{\xi}{|\xi|}\cdot\sigma)\mathrm{1}_{\theta \geq 2^{-\frac{k+l}{2}}}[({\varphi}_{l}\widehat{\tilde{\varphi}_{k}h})(\xi^{+})- ({\varphi}_{l}\widehat{\tilde{\varphi}_{k}h})(|\xi|\frac{\xi^{+}}{|\xi^{+}|})] (\tilde{\varphi}_{l}\overline{\widehat{\varphi_{k}f}})(\xi) d\sigma d\xi
	\\&& + \int b^{\epsilon}(\frac{\xi}{|\xi|}\cdot\sigma)\mathrm{1}_{\theta \leq 2^{-\frac{k+l}{2}}}[({\varphi}_{l}\widehat{\tilde{\varphi}_{k}h})(\xi^{+})- ({\varphi}_{l}\widehat{\tilde{\varphi}_{k}h})(|\xi|\frac{\xi^{+}}{|\xi^{+}|})] (\tilde{\varphi}_{l}\overline{\widehat{\varphi_{k}f}})(\xi) d\sigma d\xi
	:= \mathcal{X}_{k,l,1} + \mathcal{X}_{k,l,2}.
	\eeno
	By the similar argument as before, we have
	$
	|\mathcal{X}_{k,l,1}| \lesssim |\ln \epsilon|^{-1}2^{k+l}|{\varphi}_{l}\widehat{\tilde{\varphi}_{k}h}|_{L^{2}}|\tilde{\varphi}_{l}\widehat{\varphi_{k}f}|_{L^{2}}.
	$
	Therefore  we have
	\beno
	\sum_{2^{k}\leq 1/\epsilon,2^{l}\leq 1/\epsilon} |\mathcal{X}_{k,l,1}| &\leq& \bigg(\sum_{2^{k}\leq 1/\epsilon,2^{l}\leq 1/\epsilon} |\ln \epsilon|^{-1}2^{2l}|{\varphi}_{l}\widehat{\tilde{\varphi}_{k}h}|^{2}_{L^{2}}\bigg)^{1/2}\bigg(\sum_{2^{k}\leq 1/\epsilon,2^{l}\leq 1/\epsilon}|\ln \epsilon|^{-1}2^{2k}|\tilde{\varphi}_{l}\widehat{\varphi_{k}f}|^{2}_{L^{2}}\bigg)^{1/2}
\\&\lesssim&
\bigg(\sum_{2^{k}\leq 1/\epsilon} |W^{\epsilon}(D)\tilde{\varphi}_{k}h|^{2}_{L^{2}}\bigg)^{1/2}\bigg(\sum_{2^{k}\leq 1/\epsilon}|\ln \epsilon|^{-1}2^{2k}|\varphi_{k}f|^{2}_{L^{2}}\bigg)^{1/2}
	\\&\lesssim& |W^{\epsilon}(D)h|_{L^{2}}|W^{\epsilon}f|_{L^{2}}.
	\eeno
	By Taylor expansion,
	$
	({\varphi}_{l}\widehat{\tilde{\varphi}_{k}h})(\xi^{+})- ({\varphi}_{l}\widehat{\tilde{\varphi}_{k}h})(|\xi|\frac{\xi^{+}}{|\xi^{+}|}) = (1-\frac{1}{\cos\theta})\int_{0}^{1}
	(\nabla (\varphi_{l}\widehat{\tilde{\varphi}_{k}h}))(\xi^{+}(\kappa))\cdot \xi^{+} d\kappa,
	$
	where $\xi^{+}(\kappa) = (1-\kappa)|\xi|\frac{\xi^{+}}{|\xi^{+}|} + \kappa \xi^{+}$. From which we obtain
	\beno
	|\mathcal{X}_{k,l,2}| &=& | \int_{[0,1]\times \R^{3} \times \mathbb{S}^{2}}  b^{\epsilon}(\frac{\xi}{|\xi|}\cdot\sigma)(1-\frac{1}{\cos\theta})\mathrm{1}_{\epsilon \leq \theta \leq 2^{-\frac{k+l}{2}} }(\tilde{\varphi}_{l}\overline{\widehat{\varphi_{k}f}})(\xi)
(\nabla (\varphi_{l}\widehat{\tilde{\varphi}_{k}h})(\xi^{+}(\kappa)))\cdot \xi^{+} d\kappa d\sigma d\xi |
	\\&\lesssim& |\ln \epsilon|^{-1} (\int_{\epsilon}^{2^{-\frac{k+l}{2}}} \theta^{-1} |\tilde{\varphi}_{l}\widehat{\varphi_{k}f}(\xi)|^{2} d\theta d\xi)^{1/2}
	 (\int_{\epsilon}^{2^{-\frac{k+l}{2}}} \theta^{-1} |u|^{2}|\nabla (\varphi_{l}\widehat{\tilde{\varphi}_{k}h})(u)|^{2} d\theta du )^{1/2}
	\\&\lesssim& |\ln \epsilon|^{-1}(|\ln \epsilon| - \frac{k+l}{2} \ln 2)
|\tilde{\varphi}_{l}\widehat{\varphi_{k}f}|_{L^{2}}(\int |u|^{2}|\nabla (\varphi_{l}\widehat{\tilde{\varphi}_{k}h})(u)|^{2}du )^{1/2}
\\&\lesssim& |\ln \epsilon|^{-1}(|\ln \epsilon| - \frac{k+l}{2} \ln 2)
|\tilde{\varphi}_{l}\widehat{\varphi_{k}f}|_{L^{2}}(|\tilde{\varphi}_{l}\widehat{\tilde{\varphi}_{k}h}|_{L^{2}} + 2^{l}|\varphi_{l} \widehat{v\tilde{\varphi}_{k}h}|_{L^{2}}),
	\eeno
	where we use the change of variable $\xi \rightarrow u = \xi^{+}(\kappa)$ and the fact
	\beno
	\int |u|^{2}|\nabla (\varphi_{l}\widehat{\tilde{\varphi}_{k}h})(u)|^{2}du\ \lesssim |\tilde{\varphi}_{l}\widehat{\tilde{\varphi}_{k}h}|^2_{L^{2}} + 2^{2l}|\varphi_{l} \widehat{v\tilde{\varphi}_{k}h}|^{2}_{L^{2}}.
	\eeno
Since $|\ln \epsilon|^{-1}(|\ln \epsilon| - \frac{k+l}{2} \ln 2) \lesssim 1$, we have
\beno \sum_{2^{k}\leq 1/\epsilon,2^{l}\leq 1/\epsilon}|\ln \epsilon|^{-1}(|\ln \epsilon| - \frac{k+l}{2} \ln 2)|\tilde{\varphi}_{l}\widehat{\varphi_{k}f}|_{L^{2}}|\tilde{\varphi}_{l}\widehat{\tilde{\varphi}_{k}h}|_{L^{2}} \lesssim |f|_{L^{2}}|h|_{L^{2}}.\eeno
It is easy to check
  $|\ln \epsilon| - \frac{k+l}{2} \ln 2 \leq \left(  |\ln \epsilon| - k \ln 2 +2 \right)^{1/2} \left(  |\ln \epsilon| - l \ln 2 +2 \right)^{1/2}$ and thus
\beno  &&\sum_{ 2^{k}\leq 1/\epsilon, 2^{l}\leq 1/\epsilon}|\ln \epsilon|^{-1}(|\ln \epsilon| - \frac{k+l}{2} \ln 2) 2^{l}|\varphi_{l} \widehat{v\tilde{\varphi}_{k}h}|_{L^{2}}|\tilde{\varphi}_{l}\widehat{\varphi_{k}f}|_{L^{2}} \\&\lesssim& \bigg( \sum_{ 2^{k}\leq 1/\epsilon, 2^{l}\leq 1/\epsilon}|\ln \epsilon|^{-1}(|\ln \epsilon| - l \ln 2+2) 2^{2l}|\tilde{\varphi}_{l}\widehat{\varphi_{k}f}|^{2}_{L^{2}} \bigg)^{1/2}
\\&&\times \bigg( \sum_{ 2^{k}\leq 1/\epsilon,  2^{l}\leq 1/\epsilon}|\ln \epsilon|^{-1}(|\ln \epsilon| - k \ln 2+2) |\varphi_{l} \widehat{v\tilde{\varphi}_{k}h}|^{2}_{L^{2}}\bigg)^{1/2}
\\&\lesssim& \bigg( \sum_{ 2^{k}\leq 1/\epsilon}|W^{\epsilon}(D)\varphi_{k}f|^{2}_{L^{2}} \bigg)^{1/2} \bigg( \sum_{ 2^{k}\leq 1/\epsilon}|\ln \epsilon|^{-1}(|\ln \epsilon| - k \ln 2+2) |v\tilde{\varphi}_{k}h|^{2}_{L^{2}}\bigg)^{1/2}
\lesssim |W^{\epsilon}(D)f|_{L^{2}} |W^{\epsilon}h|_{L^{2}}.
\eeno
By the previous two results, we have
	$
	\sum_{2^{k}\leq 1/\epsilon,2^{l}\leq 1/\epsilon}|\mathcal{X}_{k,l,2}| \lesssim |W^{\epsilon}(D)f|_{L^{2}} |W^{\epsilon}h|_{L^{2}}.
	$
	Patching together all the above results, we conclude that
	\ben \label{gamma=0-clear}
	|\mathcal{X}(h,f)| \lesssim (|W^{\epsilon}h|_{L^{2}}+|W^{\epsilon}(D)h|_{L^{2}})(|W^{\epsilon}f|_{L^{2}}+|W^{\epsilon}(D)f|_{L^{2}}).
	\een

{\it Step 2: with the term $|u|^\gamma\mathrm{1}_{|u|\geq \eta}$.} Observe that $|u|^\gamma\mathrm{1}_{|u|\geq \eta}=|u|^\gamma(1-\phi(u))+|u|^\gamma(\phi(u)-\mathrm{1}_{|u| < \eta})$. From which, we separate $\mathcal{Y}^{\epsilon,\gamma}(h,f)$ into two parts $\mathcal{Y}_1^{\epsilon,\gamma}(h,f)$ and $\mathcal{Y}_2^{\epsilon,\gamma}(h,f)$ which correspond to $|u|^\gamma(1-\phi(u))$ and $|u|^\gamma(\phi(u)-\mathrm{1}_{|u| < \eta})$ respectively.

\underline{Estimate of $\mathcal{Y}_1^{\epsilon,\gamma}(h,f)$.}
 Set $H(u) :=h\langle u \rangle^{-\gamma} |u|^\gamma(1-\phi(u))$ and $w = |u|\frac{u^{+}}{|u^{+}|}$, then $W_{\gamma/2}(u) = W_{\gamma/2}(w)$ and
	\beno
	\langle u \rangle^{\gamma} H(u)[f(u^{+}) - f(w)] &=& (W_{\gamma/2}H)(u)[(W_{\gamma/2}f)(u^{+})-(W_{\gamma/2}f)(w)]
	\\&& +(W_{\gamma/2}H)(u)(W_{\gamma/2}f)(u^{+})(W_{\gamma/2}(w)W_{-\gamma/2}(u^{+}) - 1).
	\eeno
From which we have
	\beno
	\mathcal{Y}_1^{\epsilon,\gamma}(h,f) &=& \mathcal{X}(W_{\gamma/2}H, W_{\gamma/2}f)+ \int b^{\epsilon}(\frac{u}{|u|}\cdot\sigma)(W_{\gamma/2}H)(u) \\
	&&\times(W_{\gamma/2}f)(u^{+})(W_{\gamma/2}(w)W_{-\gamma/2}(u^{+}) - 1) d\sigma du
	:= \mathcal{X}(W_{\gamma/2}H, W_{\gamma/2}f) + \mathcal{A}.
	\eeno
	Observing that
	$
	|W_{\gamma/2}(u)W_{-\gamma/2}(u^{+}) - 1| \lesssim \theta^{2},
	$
	we have
	\beno
	|\mathcal{A}| &=&  \big(\int b^{\epsilon}(\frac{u}{|u|}\cdot\sigma)|W_{\gamma/2}H|^{2}(u)|W_{\gamma/2}(w)W_{-\gamma/2}(u^{+}) - 1|d\sigma du\big)^{1/2}
	\\&&\times\big(\int b^{\epsilon}(\frac{u}{|u|}\cdot\sigma)|W_{\gamma/2}f|^{2}(u^{+})|W_{\gamma/2}(w)W_{-\gamma/2}(u^{+}) - 1| d\sigma du\big)^{1/2}
	\lesssim  |W_{\gamma/2}H|_{L^{2}}|W_{\gamma/2}f|_{L^{2}},
	\eeno
	where the change of variable $u \rightarrow u^{+}$ is used. Thanks to the result \eqref{gamma=0-clear} in {\it Step 1} and Lemma \ref{operatorcommutator1} with $M=W^\epsilon$ and $\Phi=  |\cdot|^\gamma(1-\phi)(\cdot)$, we have
\beno |\mathcal{Y}_1^{\epsilon,\gamma}(h,f)|\lesssim (|W^{\epsilon}W_{\gamma/2}h|_{L^{2}}+|W^{\epsilon}(D)W_{\gamma/2}h|_{L^{2}})
	(|W^{\epsilon}W_{\gamma/2}f|_{L^{2}}+|W^{\epsilon}(D)W_{\gamma/2}f|_{L^{2}}).\eeno

 \underline{Estimate of $\mathcal{Y}_2^{\epsilon,\gamma}(h,f)$.}
 Since the support of $|u|^\gamma(\phi(u)-\mathrm{1}_{|u| < \eta})$ belongs to $\eta \lesssim u \lesssim 1$, we notice that
 \ben\label{Y2epgaestimate}  \mathcal{Y}_2^{\epsilon,\gamma}(h,f)=\int b^{\epsilon}(\frac{u}{|u|}\cdot\sigma)  \tilde{W}(u) \tilde{H}(u)[\tilde{F}(u^{+}) - \tilde{F}(|u|\frac{u^{+}}{|u^{+}|})] d\sigma du, \een
 where $\tilde{W}(u):=|u|^\gamma(\phi(u)-\mathrm{1}_{|u| < \eta}) $, $\tilde{\phi}(u):=\phi(u/4)$, $\tilde{H}:=\tilde{\phi}h$, $\tilde{F}:=\tilde{\phi}f$. By the result \eqref{gamma=0-clear} in {\it Step 1}, we derive that
 \beno|\mathcal{Y}_2^{\epsilon,\gamma}(h,f)|\lesssim (|W^{\epsilon}\tilde{W}\tilde{H}|_{L^{2}}+|W^{\epsilon}(D)\tilde{W}\tilde{H}|_{L^{2}})
	(|W^{\epsilon}\tilde{F}|_{L^{2}}+|W^{\epsilon}(D)\tilde{F}|_{L^{2}}). \eeno
First by $|\tilde{W}| \lesssim \phi(u)\eta^{\gamma}$, we have $|W^{\epsilon}\tilde{W}\tilde{H} |_{L^{2}} \lesssim \eta^{\gamma}|W^{\epsilon}W_{\gamma/2}h|_{L^{2}}$. Next, let us focus on $W^{\epsilon}(D)(\tilde{W}\tilde{H})$. Note that for any $\Psi\in L^2$, there holds
	\beno \int \Psi W^{\epsilon}(D)(\tilde{W}\tilde{H})  dv=\int \widehat{\Psi}(\xi)  W^\epsilon(\xi) \widehat{\tilde{W}}(v)\widehat{\tilde{H}}(\xi-v) dv d\xi.  \eeno
By \eqref{separate-into-2-cf}, Fubini's theorem, $|\cdot|_{L^{1}} \lesssim |\cdot|_{L^{2}_{2}}$ and
\eqref{upper-bound-cf}, we have
\beno \big|\int \Psi W^{\epsilon}(D)(\tilde{W}\tilde{H})  dv\big|&\lesssim& |W^\epsilon \widehat{\tilde{W}}|_{L^1}|W^\epsilon \widehat{\tilde{H}}|_{L^2}|\Psi|_{L^2}\lesssim  | \tilde{W}|_{H^3}|W^\epsilon(D)\tilde{H}|_{L^2}|\Psi|_{L^2}\\
	&\lesssim& \eta^{\gamma-3}|W^\epsilon(D)\tilde{H}|_{L^2}|\Psi|_{L^2}. \eeno
From which we infer that $|W^{\epsilon}(D)(\tilde{W}\tilde{H})|_{L^2}\lesssim \eta^{\gamma-3}|W^\epsilon(D)\tilde{H}|_{L^2}$. From which together with the support of $\tilde{H}$ and $\tilde{F}$,
and Lemma \ref{operatorcommutator1}, we finally have \beno
|\mathcal{Y}_2^{\epsilon,\gamma}(h,f)|\lesssim \eta^{\gamma-3}(|W^{\epsilon}W_{\gamma/2}h|_{L^{2}}+|W^{\epsilon}(D)W_{\gamma/2}h|_{L^{2}})
	(|W^{\epsilon}W_{\gamma/2}f|_{L^{2}}+|W^{\epsilon}(D)W_{\gamma/2}f|_{L^{2}}).\eeno
We conclude the desired result by patching together all the estimates.
\end{proof}

Now we are in a position to prove the following upper bound  of $ Q^{\epsilon,\gamma,\eta}$.
\begin{prop}\label{ubqepsilonnonsingular} Fix $0< \eta \leq 1$. For smooth functions $g, h$ and $f$, there holds
\beno
|\langle Q^{\epsilon,\gamma,\eta}(g,h), f\rangle| \lesssim \eta^{\gamma-3}|g|_{L^{1}_{|\gamma|+2}}|h|_{\epsilon,\gamma/2}|f|_{\epsilon,\gamma/2}.
\eeno
\end{prop}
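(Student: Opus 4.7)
\medskip
\noindent\textbf{Proof proposal.} The plan is to reduce the trilinear form to integrals that are handled by Lemma \ref{crosstermsimilar}, via a geometric decomposition of $f(v')-f(v)$ into a radial part and a spherical part, and then to integrate $v_*$ out against the $L^{1}_{|\gamma|+2}$ norm of $g$.

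First, I would use the pre-post change of variables (which preserves $\mathrm{1}_{|v-v_*|\geq\eta}$ since $|v-v_*|=|v'-v'_*|$) to rewrite
\[
\langle Q^{\epsilon,\gamma,\eta}(g,h), f\rangle = \int g(v_*) \left(\int b^{\epsilon}(\cos\theta)|v-v_*|^{\gamma}\mathrm{1}_{|v-v_*|\geq\eta}\, h(v)[f(v')-f(v)]\,d\sigma dv\right) dv_*,
\]
and for each fixed $v_*$, perform the translation $u = v - v_*$ and set $H := T_{v_*}h$, $F := T_{v_*}f$, so the inner integral becomes
\[
\mathcal{Z}(v_*) := \int b^{\epsilon}(\cos\theta)|u|^{\gamma}\mathrm{1}_{|u|\geq\eta}\, H(u)[F(u^+) - F(u)]\,d\sigma du.
\]

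Next, I would decompose $F(u^+)-F(u) = [F(u^+)-F(|u|\tfrac{u^+}{|u^+|})] + [F(|u|\tfrac{u^+}{|u^+|})-F(u)]$, splitting $\mathcal{Z}(v_*) = \mathcal{Z}_r(v_*) + \mathcal{Z}_s(v_*)$. The radial piece $\mathcal{Z}_r$ has precisely the structure of the cross term in Lemma \ref{crosstermsimilar}, which gives the bound $\eta^{\gamma-3}$ times the sum $(|W^{\epsilon}W_{\gamma/2}H|_{L^2}+|W^{\epsilon}(D)W_{\gamma/2}H|_{L^2})(|W^{\epsilon}W_{\gamma/2}F|_{L^2}+|W^{\epsilon}(D)W_{\gamma/2}F|_{L^2})$. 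For the spherical piece $\mathcal{Z}_s$, since the map $u\mapsto|u|\tfrac{u^+}{|u^+|}$ preserves $|u|$, I would symmetrize in the $(r,\tau,\varsigma)$ coordinates used in the proof of Lemma \ref{spherical-part}: exchanging $\tau\leftrightarrow\varsigma$ and using the symmetry of $b^{\epsilon}(2(\tau\cdot\varsigma)^{2}-1)$ shows that the antisymmetric contribution vanishes and leaves $\mathcal{Z}_s(v_*)$ equal to $-\tfrac{1}{2}\int b^{\epsilon}|u|^{\gamma}\mathrm{1}_{|u|\geq\eta}[H(u)-H(|u|\tfrac{u^+}{|u^+|})][F(u)-F(|u|\tfrac{u^+}{|u^+|})]d\sigma du$. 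Cauchy--Schwarz, combined with the elementary bound $|u|^{\gamma}\mathrm{1}_{|u|\geq\eta}\lesssim \eta^{\gamma}\langle u\rangle^{\gamma}$ (valid for $\gamma\leq 0$ and $\eta\leq 1$), the identity $W_{\gamma/2}(u)=W_{\gamma/2}(|u|\tfrac{u^+}{|u^+|})$, and the position-space analogue of Lemma \ref{spherical-part} applied to $W_{\gamma/2}H$ and $W_{\gamma/2}F$, then yield $|\mathcal{Z}_s(v_*)|\lesssim \eta^{\gamma}|H|_{\epsilon,\gamma/2}|F|_{\epsilon,\gamma/2}\leq \eta^{\gamma-3}|H|_{\epsilon,\gamma/2}|F|_{\epsilon,\gamma/2}$.

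Finally, I would remove the translations: using the sub-multiplicativity \eqref{separate-into-2-cf}, Peetre's inequality $\langle w-v_*\rangle^{\gamma/2}\lesssim \langle v_*\rangle^{|\gamma|/2}\langle w\rangle^{\gamma/2}$, the commutator estimate of Lemma \ref{equivalence}, and the equivalence of Lemma \ref{spherical-part} applied to the translated function to handle the angular norm, one obtains $|T_{v_*}h|_{\epsilon,\gamma/2}\lesssim\langle v_*\rangle^{|\gamma|/2+1}|h|_{\epsilon,\gamma/2}$ and likewise for $f$. Integrating the pointwise bound $|\mathcal{Z}(v_*)|\lesssim \eta^{\gamma-3}\langle v_*\rangle^{|\gamma|+2}|h|_{\epsilon,\gamma/2}|f|_{\epsilon,\gamma/2}$ against $|g(v_*)|$ then produces the claimed $\eta^{\gamma-3}|g|_{L^{1}_{|\gamma|+2}}|h|_{\epsilon,\gamma/2}|f|_{\epsilon,\gamma/2}$. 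The chief technical obstacle will be this last step for the angular component of the norm, since $W^{\epsilon}((-\Delta_{\mathbb{S}^2})^{1/2})$ is a position-space operator that does not commute with $T_{v_*}$; it is addressed by rewriting this norm through the equivalent spherical-difference quadratic form, inside which Peetre's inequality and sub-multiplicativity of $W^{\epsilon}$ can be applied uniformly to extract the $\langle v_*\rangle$-weight.
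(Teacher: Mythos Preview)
Your overall strategy---geometric decomposition, Lemma~\ref{crosstermsimilar} for the radial piece, symmetrization plus Cauchy--Schwarz for the spherical piece---matches the paper. The gap is in how you close the spherical term $\mathcal{Z}_s$. By applying Cauchy--Schwarz pointwise in $v_*$, you are forced to bound the anisotropic component of $|T_{v_*}h|_{\epsilon,\gamma/2}$ by $\langle v_*\rangle^{c}|h|_{\epsilon,\gamma/2}$, and your proposed mechanism (Peetre's inequality and sub-multiplicativity of $W^\epsilon$ applied inside the spherical-difference quadratic form) does not achieve this: that quadratic form measures differences of $h$ along spheres centered at $v_*$, and Peetre's inequality only shifts the polynomial weight $W_{\gamma/2}$, not the center of the spherical geometry. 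There is no way to convert those differences into differences along spheres centered at the origin by this route.

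The paper avoids translating the anisotropic norm altogether. It applies Cauchy--Schwarz in $(\sigma,u,v_*)$ jointly, keeping $|g_*|$ inside both square roots. In each resulting factor it then inserts $(T_{v_*}f)(u^+)$: the radial remainder is controlled by Lemma~\ref{gammanonzerotozero} together with the translation facts \eqref{tvstartonovstar1}--\eqref{tvstartonovstar2} (which involve only the $W^\epsilon$ and $W^\epsilon(D)$ parts of the norm, and these \emph{do} behave well under $T_{v_*}$), while the full-difference piece
\[
\int b^\epsilon\langle u\rangle^\gamma|g_*|\,\big[(T_{v_*}f)(u^+)-(T_{v_*}f)(u)\big]^2\,d\sigma\,dv_*\,du
\]
is, after undoing the translation $u\mapsto v-v_*$, precisely $\tilde{\mathcal{N}}^{\epsilon,\gamma,0}(\sqrt{|g|},f)$. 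This is bounded directly by Lemma~\ref{reduce-gamma-to-0-no-sigularity} and \eqref{anisotropic-regularity-general-g-up-bound}, producing $|g|_{L^1_{|\gamma|+2}}|f|_{\epsilon,\gamma/2}^2$ with the anisotropic norm of $f$ itself (not of any translate) appearing naturally.
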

\begin{proof} Recalling the translation operator $T_{v_{*}}$ defined by $(T_{v_{*}}f)(v) = f(v_{*}+v)$.
By geometric decomposition in the phase space, we have
$\langle Q^{\epsilon,\gamma,\eta}(g,h), f\rangle = \mathcal{D}_{1} + \mathcal{D}_{2}$,
where
\beno \mathcal{D}_{1} := \int b^{\epsilon}(\frac{u}{|u|}\cdot\sigma)|u|^{\gamma}\mathrm{1}_{|u|\geq \eta}g_{*} (T_{v_{*}}h)(u) ((T_{v_{*}}f)(u^{+})-(T_{v_{*}}f)(|u|\frac{u^{+}}{|u^{+}|})) d\sigma dv_{*} du, \\
\mathcal{D}_{2}:=\int b^{\epsilon}(\frac{u}{|u|}\cdot\sigma)|u|^{\gamma}\mathrm{1}_{|u|\geq \eta}g_{*} (T_{v_{*}}h)(u)((T_{v_{*}}f)(|u|\frac{u^{+}}{|u^{+}|})- (T_{v_{*}}f)(u)) d\sigma dv_{*} du.
\eeno
We remark that $\mathcal{D}_{1}$ represents the "radical" part and $\mathcal{D}_{2}$ stands for the "spherical" part.

{\it Step 1: Estimate of $\mathcal{D}_{1}$.}
By   Lemma \ref{crosstermsimilar}, we have
\beno
|\mathcal{D}_{1}| &\lesssim& \eta^{\gamma-3}\int |g_{*}| (|W^{\epsilon}W_{\gamma/2}T_{v_{*}}h|_{L^{2}}+|W^{\epsilon}(D)W_{\gamma/2}T_{v_{*}}h|_{L^{2}})
\\&&\times(|W^{\epsilon}W_{\gamma/2}T_{v_{*}}f|_{L^{2}}+|W^{\epsilon}(D)W_{\gamma/2}T_{v_{*}}f|_{L^{2}}) dv_{*}.
\eeno
By \eqref{translation-out-cf} and \eqref{translation-out-weight}, we have
\begin{eqnarray}\label{tvstartonovstar1}
|W^{\epsilon}W_{\gamma/2}T_{v_{*}}h|_{L^{2}} \lesssim W^{\epsilon}(v_{*})W_{|\gamma|/2}(v_{*})|W^{\epsilon}W_{\gamma/2}h|_{L^{2}}\lesssim W_{|\gamma|/2+1}(v_{*})|W^{\epsilon}W_{\gamma/2}h|_{L^{2}}.
\end{eqnarray}
Since $W^{\epsilon} \in S^{1}_{1,0}, W_{\gamma/2} \in S^{\gamma/2}_{1,0}$, by \eqref{translation-out-weight} and Lemma \ref{operatorcommutator1}, we have
\begin{eqnarray}\label{tvstartonovstar2}
|W^{\epsilon}(D)W_{\gamma/2}T_{v_{*}}h|_{L^{2}} &\lesssim& |W_{\gamma/2}W^{\epsilon}(D)T_{v_{*}}h|_{L^{2}} + |T_{v_{*}}h|_{H^{0}_{\gamma/2-1}}
= |W_{\gamma/2}T_{v_{*}}W^{\epsilon}(D)h|_{L^{2}} + |T_{v_{*}}h|_{H^{0}_{\gamma/2-1}}
\nonumber \\&\lesssim& W_{|\gamma|/2}(v_{*})(|W_{\gamma/2}W^{\epsilon}(D)h|_{L^{2}} + |h|_{L^{2}_{\gamma/2-1}})
\lesssim W_{|\gamma|/2}(v_{*})|W^{\epsilon}(D)W_{\gamma/2}h|_{L^{2}}.
\end{eqnarray}
By \eqref{tvstartonovstar1} and \eqref{tvstartonovstar2},
 we get
\beno
|\mathcal{D}_{1}| \lesssim \eta^{\gamma-3}| g|_{L^{1}_{|\gamma|+2}}( |W^{\epsilon}(D)W_{\gamma/2}h|_{L^{2}} + |W^{\epsilon}W_{\gamma/2}h|_{L^{2}})
( |W^{\epsilon}(D)W_{\gamma/2}f|_{L^{2}} + |W^{\epsilon}W_{\gamma/2}f|_{L^{2}}).
\eeno

{\it Step 2: Estimate of $\mathcal{D}_{2}$.}
Let $u = r \tau$ and $\varsigma = \frac{\tau+\sigma}{|\tau+\sigma|}$, then $\frac{u}{|u|} \cdot \sigma = 2(\tau\cdot\varsigma)^{2} - 1$ and $|u|\frac{u^{+}}{|u^{+}|} = r \varsigma$. By the change of variable $(u, \sigma) \rightarrow (r, \tau, \varsigma)$, one has
$
d\sigma du = 4  (\tau\cdot\varsigma) r^{2} dr d \tau d \varsigma.
$
Then
\beno
\mathcal{D}_{2} &=& 4 \int r^\gamma(1-\phi)(r)b^{\epsilon}(2(\tau\cdot\varsigma)^{2} - 1)(T_{v_*}h)(r\tau)\big((T_{v_*}f)(r\varsigma) - (T_{v_*}f) (r\tau)\big) (\tau\cdot\varsigma) r^{2} dr d \tau d \varsigma dv_{*}\\
&=& 2 \int r^\gamma(1-\phi)(r)b^{\epsilon}(2(\tau\cdot\varsigma)^{2} - 1)\big((T_{v_*}h)(r\tau) - (T_{v_*}h) (r\varsigma)\big)\\ &&\times \big((T_{v_*}f)(r\varsigma) - (T_{v_*}f) (r\tau)\big) (\tau\cdot\varsigma) r^{2} dr d \tau d \varsigma dv_{*}\\&=&
-\frac{1}{2}\int b^{\epsilon}(\frac{u}{|u|}\cdot\sigma)|u|^{\gamma}\mathrm{1}_{|u|\geq \eta}g_{*} ((T_{v_{*}}h)(|u|\frac{u^{+}}{|u^{+}|})-(T_{v_{*}}h)(u))\\ &&\times
 ((T_{v_{*}}f)(|u|\frac{u^{+}}{|u^{+}|})-(T_{v_{*}}f)(u)) d\sigma dv_{*} du.
\eeno
Then by Cauchy-Schwartz inequality and the fact $|u|^{\gamma}\mathrm{1}_{|u|\geq \eta} \lesssim \eta^{\gamma}\langle u \rangle^{\gamma}$, we have
\beno
|\mathcal{D}_{2}| &\lesssim& \eta^{\gamma}\big(\int b^{\epsilon}(\frac{u}{|u|}\cdot\sigma)\langle u \rangle^{\gamma}|g_{*}| ((T_{v_{*}}h)( |u|\frac{u^{+}}{|u^{+}|})-(T_{v_{*}}h)( u))^{2} d\sigma dv_{*} du\big)^{1/2}
\\&& \times \big(\int b^{\epsilon}(\frac{u}{|u|}\cdot\sigma)\langle u \rangle^{\gamma}|g_{*}|
((T_{v_{*}}f)(|u|\frac{u^{+}}{|u^{+}|})-(T_{v_{*}})f(u))^{2} d\sigma dv_{*} du\big)^{1/2}
:= \eta^{\gamma}(\mathcal{D}_{2,1})^{1/2}(\mathcal{D}_{2,2})^{1/2}.
\eeno
Note that $\mathcal{D}_{2,1}$ and $\mathcal{D}_{2,2}$ have exactly the same structure. It suffices to focus on  $\mathcal{D}_{2,2}$.
Since
\beno
((T_{v_{*}}f)(|u|\frac{u^{+}}{|u^{+}|})-(T_{v_{*}}f)(u))^{2} \leq 2 ((T_{v_{*}}f)(|u|\frac{u^{+}}{|u^{+}|})-(T_{v_{*}}f)(u^{+}))^{2} + 2 ((T_{v_{*}}f)(u^{+})-(T_{v_{*}}f)(u))^{2},
\eeno
we have
\beno
\mathcal{D}_{2,2} &\lesssim& \int b^{\epsilon}(\frac{u}{|u|}\cdot\sigma)\langle u \rangle^{\gamma}|g_{*}| ((T_{v_{*}}f)(|u|\frac{u^{+}}{|u^{+}|})-(T_{v_{*}}f)(u^{+}))^{2} d\sigma dv_{*} du
\\&&+\int b^{\epsilon}(\frac{u}{|u|}\cdot\sigma)\langle u \rangle^{\gamma}|g_{*}| ((T_{v_{*}}f)(u^{+})-(T_{v_{*}}f)(u))^{2} d\sigma dv_{*} du
:= \mathcal{D}_{2,2,1}+ \mathcal{D}_{2,2,2}.
\eeno
\begin{itemize}\item
By Lemma \ref{gammanonzerotozero}, and the facts (\ref{tvstartonovstar1}) and (\ref{tvstartonovstar2}), we have
\beno
\mathcal{D}_{2,2,1} \lesssim \int |g_{*}| \mathcal{Z}^{\epsilon,\gamma}(T_{v_{*}}f) dv_{*} \lesssim| g|_{L^{1}_{|\gamma|+2}}(|W^{\epsilon}(D)W_{\gamma/2}f|^{2}_{L^{2}}+|W^{\epsilon}W_{\gamma/2}f|^{2}_{L^{2}}).
\eeno
\item
Observe that
$\mathcal{D}_{2,2,2} =  \tilde{\mathcal{N}}^{\epsilon,\gamma,0}(\sqrt{|g|}, f).$
By Lemma \ref{reduce-gamma-to-0-no-sigularity}, we have
\beno
\tilde{\mathcal{N}}^{\epsilon,\gamma,0}(\sqrt{|g|},f)  \lesssim \mathcal{N}^{\epsilon,0,0}(W_{-\gamma/2} \sqrt{|g|},W_{\gamma/2}f) + |g|_{L^{1}_{|\gamma+2|}}|f|^{2}_{L^{2}_{\gamma/2}}. \nonumber
\eeno
By \eqref{anisotropic-regularity-general-g-up-bound} in Lemma \ref{lowerboundpart2-general-g}, we get
\beno
\mathcal{N}^{\epsilon,0,0}(W_{-\gamma/2} \sqrt{|g|},W_{\gamma/2}f) \lesssim  |W_{-\gamma/2} \sqrt{|g|}|^{2}_{L^{2}_{1}}|f|^{2}_{\epsilon,\gamma/2} \lesssim |g|_{L^{1}_{-\gamma+2}}|f|^{2}_{\epsilon,\gamma/2}.
\eeno
So we have
$\mathcal{D}_{2,2,2} \lesssim  |g|_{L^{1}_{|\gamma|+2}}|f|^{2}_{\epsilon,\gamma/2}.$
\end{itemize}
Patching together the estimates for  $\mathcal{D}_{2,2,1}$ and $\mathcal{D}_{2,2,2}$, we get
\beno
\mathcal{D}_{2,2} \lesssim |g|_{L^{1}_{|\gamma|+2}}|f|^{2}_{\epsilon,\gamma/2},
\eeno
which yields
$|\mathcal{D}_{2}| \lesssim \eta^{\gamma}(\mathcal{D}_{2,1})^{1/2}(\mathcal{D}_{2,2})^{1/2} \lesssim \eta^{\gamma}|g|_{L^{1}_{|\gamma|+2}}|h|_{\epsilon,\gamma/2}|f|_{\epsilon,\gamma/2}.$

We complete the proof by patching together the estimates of $\mathcal{D}_{1}$ and $\mathcal{D}_{2}$.
\end{proof}

\subsubsection{Upper bound of $\langle I^{\epsilon,\gamma,\eta}(g,h), f\rangle$}
To implement the energy estimates for the nonlinear equations, in this subsection, we will give the upper bound of $\langle I^{\epsilon,\gamma,\eta}(g,h;\beta), f\rangle$ where
\ben \label{I-ep-ga-geq-eta-beta}
I^{\epsilon,\gamma,\eta}(g,h;\beta) :=\int B^{\epsilon,\gamma,\eta}(v-v_*,\sigma) \big((\pa_{\beta}\mu^{1/2})_{*}-(\pa_{\beta}\mu^{1/2})^{\prime}_{*}\big)g^{\prime}_{*} h^{\prime} d\sigma dv_{*}.
\een

Let us deviate to explain why we consider the additional differential operator $\pa_{\beta}$.
By binomial expansion, we have
\ben \label{alpha-beta-on-Gamma} \pa^{\alpha}_{\beta}\Gamma^{\epsilon}(g,h) = \sum _{\beta_{0}+\beta_{1}+\beta_{2}= \beta,\alpha_{1}+\alpha_{2}=\alpha} C^{\beta_{0},\beta_{1},\beta_{2}}_{\beta} C^{\alpha_{1},\alpha_{2}}_{\alpha} \Gamma^{\epsilon}(\pa^{\alpha_{1}}_{\beta_{1}}g,\pa^{\alpha_{2}}_{\beta_{2}}h;\beta_{0}),\een
where
\ben \label{Gamma-beta}
 \Gamma^{\epsilon}(g,h;\beta)(v):=
\int_{\SS^{2} \times \R^3} B^{\epsilon}(v-v_*,\sigma)(\pa_{\beta}\mu^{1/2})_{*}(g'_*h'-g_*h)d\sigma dv_*.
\een
Note that
\ben \label{Gamma-ep-ga-geq-eta-into-IQ-inner-beta}
\Gamma^{\epsilon,\gamma,\eta}(g,h;\beta) =   Q^{\epsilon,\gamma,\eta}(g\partial_{\beta}\mu^{1/2},h) +
 I^{\epsilon,\gamma,\eta}(g,h;\beta).
\een
This explains why we consider the general version $ I^{\epsilon,\gamma,\eta}(g,h;\beta)$.

By writing $\pa_{\beta} \mu^\f12 =\mu^{\f12}P_{\beta}$ where $P_{\beta}$ is a polynomial, we observe that
\ben \label{nice-decomposition}
 (\mu^{\f12}P_{\beta})_{*}^{\prime} - (\mu^{\f12}P_{\beta})_*&=& ((\mu^{\f14})_*^{\prime} - (\mu^{\f14})_*)((\mu^{\f14}P_{\beta})_{*}^{\prime} - (\mu^{\f14}P_{\beta})_{*})
 + (\mu^{\f14})_*((\mu^{\f14}P_{\beta})_{*}^{\prime}   - (\mu^{\f14}P_{\beta})_{*}) \\&&+  (\mu^{1/4}P_{\beta})_{*}((\mu^{\f14})_*^{\prime} - (\mu^{\f14})_*). \nonumber
\een
Then we have
\beno
\langle I^{\epsilon,\gamma,\eta}(g,h;\beta), f\rangle  = \int B^{\epsilon,\gamma,\eta}((\mu^{1/8})_{*}^{\prime} + \mu_{*}^{1/8})   ((\mu^{\f18})_*^{\prime} - (\mu^{\f18})_*)((\mu^{\f14}P_{\beta})_{*}^{\prime} - (\mu^{\f14}P_{\beta})_{*})g_{*} h f^{\prime} d\sigma dv_{*} dv
\\+  \int B^{\epsilon,\gamma,\eta}[((\mu^{\f14}P_{\beta})_{*}^{\prime} - (\mu^{\f14}P_{\beta})_{*})(\mu^{1/4}g)_{*}+((\mu^{\f14})_*^{\prime} - (\mu^{\f14})_*)(\mu^{1/4}P_{\beta}g)_{*}](h-h^{\prime})f^{\prime} d\sigma dv_{*}dv
\\+   \int B^{\epsilon,\gamma,\eta}[((\mu^{\f14}P_{\beta})_{*}^{\prime} - (\mu^{\f14}P_{\beta})_{*})(\mu^{1/4}g)_{*}+((\mu^{\f14})_*^{\prime} - (\mu^{\f14})_*)(\mu^{1/4}P_{\beta}g)_{*}] h^{\prime} f^{\prime} d\sigma dv_{*}dv
:= \mathcal{I}_{1} + \mathcal{I}_{2} + \mathcal{I}_{3}.
\eeno

\begin{prop}\label{upforI-ep-ga-et}
For any  $0 <\eta \leq 1, 0 <\delta \leq 1/2$, and smooth functions $g,h$ and $f$, there holds for $s_{1}, s_{2} \geq 0$ with $s_{1}+s_{2} = 3/2+\delta$,
\beno |\langle I^{\epsilon,\gamma,\eta}(g,h; \beta), f\rangle|  \lesssim  \delta^{-1/2}|\mu^{1/12}g|_{H^{s_{1}}}|\mu^{1/12}h|_{H^{s_{2}}}|W^{\epsilon}f|_{L^{2}_{\gamma/2}}
+\eta^{\gamma-3}|g|_{L^{2}}|h|_{\epsilon,\gamma/2}|W^{\epsilon}f|_{L^{2}_{\gamma/2}}.\eeno
The $\lesssim$ constant could depend on $|\beta|$.
\end{prop}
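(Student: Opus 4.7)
The argument starts from the decomposition $\langle I^{\epsilon,\gamma,\eta}(g,h;\beta),f\rangle=\mathcal{I}_1+\mathcal{I}_2+\mathcal{I}_3$ already produced in the excerpt via the identity \eqref{nice-decomposition}; each of the three pieces exhibits a different cancellation structure and is estimated separately. The common tool throughout is Taylor expansion on the Maxwellian factors: any difference of the form $(\mu^{c})'_{*}-(\mu^{c})_{*}$ is pointwise controlled by $\theta|v-v_{*}|\mu^{c/2}(v_{*}^{(\kappa)})$, where $v_{*}^{(\kappa)}=v_{*}+\kappa(v'_{*}-v_{*})$, and the Gaussian weight at the intermediate point can be shifted back to $v_{*}$ via the change of variable \eqref{change-exact-formula}--\eqref{change-Jacobean-bound}.

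The easiest term is $\mathcal{I}_1$, which carries two Maxwellian differences. Double Taylor expansion yields the pointwise bound of order $\theta^{2}|v-v_{*}|^{2}\mu^{1/16}(v_{*}^{(\kappa)})$, and since $\int b^{\epsilon}(\cos\theta)\theta^{2}d\sigma\lesssim 1$ by Lemma \ref{integral-angular-function}, the angular singularity is absorbed entirely. A Cauchy--Schwarz in $(v,v_{*})$ separating $g_{*}h$ from $((\mu^{1/8})_{*}+(\mu^{1/8})'_{*})f'$ then produces a bound of the form $|\mu^{1/12}g|_{L^{2}}|\mu^{1/12}h|_{L^{2}}|W^{\epsilon}f|_{L^{2}_{\gamma/2}}$, which is subsumed by the second summand of the target estimate (after noting $|\cdot|_{L^2}\lesssim|\cdot|_{\epsilon,\gamma/2}$).

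The term $\mathcal{I}_3$ has no difference on $h$ or $f$ but carries a single Maxwellian difference on the $*$-variables. Applying the pre/post-collisional change of variable $(v,v_{*})\to(v',v'_{*})$ (with $\sigma$ fixed, Jacobian $1$) to the $(\mu^{1/4})'_{*}$ half of the Maxwellian difference converts the $h'f'$ product into $hf$ at the cost of swapping one Gaussian factor. After this swap, the residual integrand takes the form of a weighted cross term $\int b^{\epsilon}(\tfrac{u}{|u|}\cdot\sigma)|u|^{\gamma}\mathrm{1}_{|u|\geq\eta}\tilde{g}(u)[\tilde{F}(u^{+})-\tilde{F}(|u|u^{+}/|u^{+}|)]d\sigma du$ after translation $T_{v_{*}}$, precisely the functional analysed in Lemma \ref{crosstermsimilar}. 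Absorbing the Gaussian factors into $g$ and $h$ delivers the bound $\lesssim\eta^{\gamma-3}|g|_{L^{2}}|h|_{\epsilon,\gamma/2}|W^{\epsilon}f|_{L^{2}_{\gamma/2}}$, which is the second summand of the target estimate.

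The delicate piece is $\mathcal{I}_2$, containing a single Maxwellian difference multiplied by $(h-h')$. A naive Cauchy--Schwarz coupling $(h-h')^{2}$ against $b^{\epsilon}\theta d\sigma$ fails because $\int b^{\epsilon}\theta d\sigma$ is only logarithmically convergent and would not produce the sharp constant. The correct approach is a dyadic decomposition $h=\sum_{k}h_{k}$ in the Fourier variable tuned to a matching dyadic split of the angular region $\theta\sim 2^{-k}$: on the small-$\theta$ sub-region, Taylor expansion absorbs a further fractional derivative from $h_{k}$ with factor $(\theta|v-v_{*}|2^{k})^{s_{2}}$, while on the large-$\theta$ sub-region one uses the three-dimensional Sobolev embedding $H^{3/2+\delta}\hookrightarrow L^{\infty}$ applied to the pointwise bound of $h_{k}$. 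Summing in $k$ and optimising the threshold produces the endpoint blow-up $\delta^{-1/2}$, while the remaining regularity budget distributes symmetrically across $g$ and $h$ as $s_{1}+s_{2}=3/2+\delta$, delivering $\lesssim\delta^{-1/2}|\mu^{1/12}g|_{H^{s_{1}}}|\mu^{1/12}h|_{H^{s_{2}}}|W^{\epsilon}f|_{L^{2}_{\gamma/2}}$. The main obstacle is exactly this term: achieving the sharp Sobolev distribution with the correct $\delta^{-1/2}$ constant requires that the dyadic decomposition in $\theta$ is precisely tuned to the Littlewood--Paley scale of $h$ at each level, and uses crucially the $|\ln\epsilon|^{-1}$ normalisation in $b^{\epsilon}$ (through \eqref{order-2}) to keep all constants uniform in $\epsilon$ as $\epsilon\to 0$.
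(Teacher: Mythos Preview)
Your treatment of $\mathcal{I}_3$ contains a genuine gap. After the pre/post-collisional change of variables (and the further swap $(v,v_*,\sigma)\to(v_*,v,-\sigma)$ used in the paper), one obtains
\[
\mathcal{I}_3 = \int b^\epsilon|v-v_*|^\gamma\mathrm{1}_{|v-v_*|\geq\eta}\bigl(\mu^{1/4}-(\mu^{1/4})'\bigr)(\mu^{1/4}g)'\,h_*f_*\,d\sigma\,dv_*dv,
\]
where $h_*f_*$ depends only on $v_*$. For fixed $v_*$ the inner integral involves the \emph{full} difference $F(u)-F(u^+)$ with $F=T_{v_*}\mu^{1/4}$, not the radial piece $F(u^+)-F(|u|\tfrac{u^+}{|u^+|})$ appearing in Lemma~\ref{crosstermsimilar}; these functionals are structurally different and that lemma does not apply. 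More importantly, even a correct bound on the inner $u$-integral would only yield polynomial growth in $v_*$, not the crucial weight $(W^\epsilon)^2(v_*)\langle v_*\rangle^\gamma$ required to land in $|W^\epsilon f|_{L^2_{\gamma/2}}$. The paper handles $\mathcal{I}_3$ by a direct three-region split $E_1\cup E_2\cup E_3$ (according to $|v-v_*|\gtrless 1/\epsilon$ and $\theta\gtrless |v-v_*|^{-1}$); in $E_3$ one expands to second order, uses that the first-order term integrates to zero by symmetry, and proves the pointwise estimate \eqref{definition-K} giving $K(v_*)\lesssim (W^\epsilon)^2(v_*)\langle v_*\rangle^\gamma$. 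This is where the characteristic function $W^\epsilon$ actually enters for $\mathcal{I}_3$.

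Your route for $\mathcal{I}_2$ is different from the paper's and more involved than necessary. The paper first applies Cauchy--Schwarz to isolate $\mathcal{I}_{2,1}=\int B^{\epsilon,\gamma,\eta}|(\mu^{1/4}g)_*|(h-h')^2$, then uses the algebraic identity $(h-h')^2=(h')^2-h^2-2h(h'-h)$: the piece $(h')^2-h^2$ is handled by the Cancellation Lemma~\ref{cancellation-lemma-general-gamma-minus3-mu} followed by a single H\"older inequality and the embedding $|\cdot|_{L^\infty}\lesssim\delta^{-1/2}|\cdot|_{H^{3/2+\delta}}$, which immediately gives the $s_1+s_2=3/2+\delta$ distribution with the sharp $\delta^{-1/2}$; the piece $-2h(h'-h)$ is exactly $-2\langle Q^{\epsilon,\gamma,\eta}(\mu^{1/4}g,h),h\rangle$, bounded by Proposition~\ref{ubqepsilonnonsingular}. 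No Littlewood--Paley decomposition is needed. Your sketch for $\mathcal{I}_1$ is also too rough: integrating $b^\epsilon\theta^2$ over $\sigma$ first is not legitimate since $f'$ depends on $\sigma$, and the $W^\epsilon$ weight on $f$ arises instead from the symbol estimate of Proposition~\ref{symbol} via $\int b^\epsilon\min\{1,|v-v_*|^2\theta^2\}d\sigma$, as in the claim~\eqref{claim-1-L2}.
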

\begin{proof} Let us consider the $\beta=0$ case since the following arguments also work when $|\beta|>0$. There are three steps in the proof.
We will indicate the main difference at the end of each step.

In the proof, we will
frequently use the following fact:
\ben \label{mu-prodcut-square}
((\mu^{1/8})_{*}^{\prime} - \mu_{*}^{1/8})^{2} \lesssim \min\{1,|v-v_{*}|^{2}\theta^{2}\} \sim \min\{1,|v^{\prime}-v_{*}|^{2}\theta^{2}\}\sim \min\{1,|v-v^{\prime}_{*}|^{2}\theta^{2}\}.
\een

{\it Step 1: Estimate of $\mathcal{I}_{1}$.}
When $|\beta|=0$, recall
\beno
\mathcal{I}_{1} = \int B^{\epsilon,\gamma,\eta}((\mu^{1/8})_{*}^{\prime} + \mu_{*}^{1/8})^{2}((\mu^{1/8})_{*}^{\prime} - \mu_{*}^{1/8})^{2}g_{*} h f^{\prime} d\sigma dv_{*} dv.
\eeno
Since $|v-v_{*}| \geq \eta$,  we have
\ben \label{v-minus-vstar-lower-no-singularity}
|v-v_{*}|^{\gamma} \mathrm{1}_{|v-v_{*}| \geq \eta} \lesssim \eta^{\gamma}\langle v -v_{*} \rangle^{\gamma}. \een
By \eqref{v-minus-vstar-lower-no-singularity} and Cauchy-Schwartz inequality, we have
\beno
|\mathcal{I}_{1}| &\lesssim& \eta^{\gamma}\big(\int b^{\epsilon}(\cos\theta)\langle v-v_{*}\rangle^{\gamma}((\mu^{1/8})_{*}^{\prime} + \mu_{*}^{1/8})^{2}((\mu^{1/8})_{*}^{\prime} - \mu_{*}^{1/8})^{2}g^{2}_{*} h^{2}  d\sigma dv_{*} dv \big)^{1/2}
\\&&\times \big(\int b^{\epsilon}(\cos\theta)\langle v-v_{*} \rangle^{\gamma}((\mu^{1/8})_{*}^{\prime} + \mu_{*}^{1/8})^{2}((\mu^{1/8})_{*}^{\prime} - \mu_{*}^{1/8})^{2} f^{\prime 2} d\sigma dv_{*} dv\big)^{1/2}
:= \eta^{\gamma}(\mathcal{I}_{1,1})^{1/2} (\mathcal{I}_{1,2})^{1/2}.
\eeno

\underline{Estimate of $\mathcal{I}_{1,1}$.}
We claim that
\ben \label{claim-1-L2} \mathcal{A} := \int b^{\epsilon}(\cos\theta)\langle v-v_{*}\rangle^{\gamma}((\mu^{1/8})_{*}^{\prime} + \mu_{*}^{1/8})^{2}((\mu^{1/8})_{*}^{\prime} - \mu_{*}^{1/8})^{2}  d\sigma \lesssim (W^{\epsilon})^{2}(v)\langle v\rangle^{\gamma},\een
which immediately gives
$\mathcal{I}_{1,1} \lesssim  |g|^{2}_{L^{2}}|W^{\epsilon}h|^{2}_{L^{2}_{\gamma/2}}$. It remains to prove \eqref{claim-1-L2}.
Since $((\mu^{1/8})_{*}^{\prime} + \mu_{*}^{1/8})^{2} \lesssim (\mu^{1/4})_{*}^{\prime} + \mu_{*}^{1/4}$, we have $\mathcal{A} \lesssim \mathcal{A}_{1}+\mathcal{A}_{2}$, where
\beno \mathcal{A}_{1} := \int b^{\epsilon}(\cos\theta)\langle v-v_{*}\rangle^{\gamma}\mu_{*}^{1/4}((\mu^{1/8})_{*}^{\prime} - \mu_{*}^{1/8})^{2}d\sigma,
\\  \mathcal{A}_{2} := \int b^{\epsilon}(\cos\theta)\langle v-v_{*}\rangle^{\gamma}(\mu^{1/4})_{*}^{\prime}((\mu^{1/8})_{*}^{\prime} - \mu_{*}^{1/8})^{2}  d\sigma.
\eeno
Thanks to \eqref{mu-prodcut-square}, Proposition \ref{symbol} and property \eqref{separate-into-2-cf},  one has
\ben \label{estimate-A-1}
\mathcal{A}_{1} \lesssim \langle v-v_{*}\rangle^{\gamma}\mu_{*}^{1/4}(W^{\epsilon})^{2}(v-v_{*}) \lesssim (W^{\epsilon})^{2}(v)\langle v\rangle^{\gamma}.\een
As for $\mathcal{A}_{2}$, thanks to $|v-v_{*}| \sim |v-v^{\prime}_{*}|$ and thus $\langle v-v_{*}\rangle^{\gamma} \lesssim \langle v-v^{\prime}_{*}\rangle^{\gamma} \lesssim \langle v\rangle^{\gamma}\langle v^{\prime}_{*}\rangle^{|\gamma|}$, we have
 \beno \mathcal{A}_{2} \lesssim \langle v\rangle^{\gamma} \int b^{\epsilon}(\cos\theta) (\mu^{1/8})_{*}^{\prime} \min\{1,|v-v_{*}|^{2}\theta^{2}\} d\sigma.\eeno
 \begin{itemize}
 	\item
If $|v-v_{*}|\geq 10|v|$, then there holds $|v^{\prime}_{*}| = |v^{\prime}_{*}-v+v|\geq |v^{\prime}_{*}-v| -|v| \geq (1/\sqrt{2} - 1/10)|v-v_{*}| \geq \frac{1}{5}|v-v_{*}|$, and thus $(\mu^{1/8})_{*}^{\prime} \lesssim \mu^{1/200}(v-v_{*})$,
which yields
\beno \mathcal{A}_{2} \lesssim \langle v\rangle^{\gamma}\mu^{1/200}(v-v_{*}) (W^{\epsilon})^{2}(v-v_{*}) \lesssim \langle v\rangle^{\gamma}.\eeno

\item If $|v-v_{*}|\leq 10|v|$, by Proposition \ref{symbol}, we have
\beno \mathcal{A}_{2} \lesssim \langle v\rangle^{\gamma} \int b^{\epsilon}(\cos\theta)  \min\{1,|v|^{2}\theta^{2}\} d\sigma  \lesssim (W^{\epsilon})^{2}(v)\langle v\rangle^{\gamma}.\eeno
\end{itemize}
Patching together the estimates of $\mathcal{A}_{1}$ and $\mathcal{A}_{2}$, we get the claim \eqref{claim-1-L2}.

\underline{Estimate of $\mathcal{I}_{1,2}$.}
By the   change of variable $(v, v_{*},\sigma)\rightarrow (v^{\prime}, v_{*}^{\prime}, \tau=(v-v_{*})/|v-v_{*}|)$, we have
\beno
\mathcal{I}_{1,2} &=& \int b^{\epsilon}(\cos\theta)\langle v-v_{*} \rangle^{\gamma}((\mu^{1/8})_{*}^{\prime} + \mu_{*}^{1/8})^{2}((\mu^{1/8})_{*}^{\prime} - \mu_{*}^{1/8})^{2} f^{2} d\sigma dv_{*} dv
\\&\leq& 2\int b^{\epsilon}(\cos\theta)\langle v-v_{*} \rangle^{\gamma}\mu_{*}^{1/4}((\mu^{1/8})_{*}^{\prime} - \mu_{*}^{1/8})^{2} f^{2} d\sigma dv_{*} dv
\\&& + 2\int b^{\epsilon}(\cos\theta)\langle v-v_{*} \rangle^{\gamma}(\mu^{1/4})_{*}^{\prime}((\mu^{1/8})_{*}^{\prime} - \mu_{*}^{1/8})^{2} f^{2} d\sigma dv_{*} dv
:= \mathcal{I}_{1,2,1} + \mathcal{I}_{1,2,2}.
\eeno
By \eqref{mu-prodcut-square}, Proposition \ref{symbol} and property \eqref{separate-into-2-cf}, we have
\beno
\mathcal{I}_{1,2,1} &\lesssim& \int b^{\epsilon}(\cos\theta)\langle v-v_{*} \rangle^{\gamma}\mu_{*}^{1/4}\min\{1,|v-v_{*}|^{2}\theta^{2}\} f^{2} d\sigma dv_{*} dv
\\&\lesssim&  \int \langle v-v_{*} \rangle^{\gamma}\mu_{*}^{1/8} (W^{\epsilon})^{2}(v)f^{2}  dv_{*} dv
\lesssim |W^{\epsilon}f|^{2}_{L^{2}_{\gamma/2}}.
\eeno
By the fact $|v-v_{*}|\sim|v-v_{*}^{\prime}|$, the change of variable $v_{*}\rightarrow v_{*}^{\prime}$, and the estimate \eqref{estimate-A-1} of $\mathcal{A}_{1}$, we have
\beno
\mathcal{I}_{1,2,2} \lesssim \int b^{\epsilon}(\cos\theta)\langle v-v_{*}^{\prime} \rangle^{\gamma}(\mu^{1/4})_{*}^{\prime}\min\{1,|v-v_{*}^{\prime}|^{2}\theta^{2}\} f^{2} d\sigma dv_{*}^{\prime} dv
\lesssim |W^{\epsilon}f|^{2}_{L^{2}_{\gamma/2}}.
\eeno
Therefore we have
$
\mathcal{I}_{1,2} \lesssim |W^{\epsilon}f|^{2}_{L^{2}_{\gamma/2}}.
$
Patching together the estimates  for $\mathcal{I}_{1,1}$ and $\mathcal{I}_{1,2}$, we have
\ben \label{I-1-1}
\mathcal{I}_{1} \lesssim \eta^{\gamma}|g|_{L^{2}}|W^{\epsilon}h|_{L^{2}_{\gamma/2}}|W^{\epsilon}f|_{L^{2}_{\gamma/2}}.
\een

Since \eqref{mu-prodcut-square} also holds for $(\mu^{\f14}P_{\beta})_{*}^{\prime} - (\mu^{\f14}P_{\beta})_{*}$, the above estimates in this step  are also valid for the $|\beta|>0$ case.

{\it Step 2: Estimate of $\mathcal{I}_{2}$.}  When $|\beta|=0$, by Cauchy-Schwartz inequality, we have
\ben \label{estimate-I-21}
\mathcal{I}_{2} &=& 2\int B^{\epsilon,\gamma,\eta}((\mu^{1/4})_{*}^{\prime} - \mu_{*}^{1/4})(\mu^{1/4}g)_{*}(h-h^{\prime})f^{\prime}
d\sigma dv_{*} dv
\nonumber \\&\lesssim&  \big(\int B^{\epsilon,\gamma,\eta}|(\mu^{1/4}g)_{*}|(h-h^{\prime})^{2}d\sigma dv_{*} dv\big)^{1/2}
\nonumber \\&&\times \big(\int B^{\epsilon,\gamma,\eta}((\mu^{1/4})_{*}^{\prime} - \mu_{*}^{1/4})^{2}|(\mu^{1/4}g)_{*}|f^{\prime 2}d\sigma dv_{*} dv\big)^{1/2}
:= (\mathcal{I}_{2,1})^{1/2}(\mathcal{I}_{2,2})^{1/2}.
\een

\underline{Estimate of $\mathcal{I}_{2,1}$.}
Since
$
(h-h^{\prime})^{2} = h^{\prime 2} - h^{2} - 2h(h^{\prime}-h),
$
we have
\beno
\mathcal{I}_{2,1} &=& \int B^{\epsilon,\gamma,\eta}(\mu^{1/4}g)_{*}(h^{\prime 2} - h^{2})d\sigma dv_{*} dv - 2 \langle Q^{\epsilon}(\mu^{1/4}g, h), h\rangle
\\&:=& \mathcal{I}_{2,1,1} - 2 \langle Q^{\epsilon,\gamma,\eta}(\mu^{1/4}g, h), h\rangle. \nonumber
\eeno
By \eqref{geq-delta-mu} with $a=1/6, 1/p+1/q=1$ in Cancellation Lemma \ref{cancellation-lemma-general-gamma-minus3-mu}, one has
\beno |\mathcal{I}_{2,1,1}| \lesssim |\mu^{1/12}g|_{L^{p}}|\mu^{1/6}h^{2}|_{L^{q}} = |\mu^{1/12}g|_{L^{p}}|\mu^{1/12}h|^{2}_{L^{2q}} \lesssim
\delta^{-1/2}|\mu^{1/12}g|_{H^{s_{1}}}|\mu^{1/12}h|^{2}_{H^{s_{2}}}, \eeno
where $s_{1}+s_{2} = 3/2 +\delta $ and we use the Sobolev imbedding $|\cdot|_{L^{\infty}} \lesssim \delta^{-1/2}|\cdot|_{H^{3/2+\delta}}$ with $\delta>0$ and $|\cdot|_{L^{p}} \lesssim |\cdot|_{H^{s}}$ with $s/3 = 1/2 - 1/p$.
By Proposition \ref{ubqepsilonnonsingular}, we have
\beno |\langle Q^{\epsilon,\gamma,\eta}(\mu^{1/4}g, h), h\rangle|  \lesssim \eta^{\gamma-3}|\mu^{1/4}g|_{L^{1}_{|\gamma|+2}}|h|^{2}_{\epsilon,\gamma/2} \lesssim \eta^{\gamma-3}|\mu^{1/8}g|_{L^{2}}|h|_{\epsilon,\gamma/2}^2.\eeno
Patching together the previous two results, we get
\ben \label{result-estimate-I-21}
|\mathcal{I}_{2,1}| \lesssim \delta^{-1/2}|\mu^{1/12}g|_{H^{s_{1}}}|\mu^{1/12}h|^{2}_{H^{s_{2}}}+\eta^{\gamma-3}|\mu^{1/8}g|_{L^{2}}|h|_{\epsilon,\gamma/2}^2.
\een

\underline{Estimate of $\mathcal{I}_{2,2}$.}
By the change of variable $v \rightarrow v^{\prime}$, and the estimate \eqref{estimate-A-1} of $\mathcal{A}_{1}$, we have
$
\mathcal{I}_{2,2} \lesssim \eta^{\gamma}|\mu^{1/8}g|_{L^{2}}|W^{\epsilon}f|^{2}_{L^{2}_{\gamma/2}}.
$

Patching the estimates for $\mathcal{I}_{2,1}$ and $\mathcal{I}_{2,2}$, we get
\beno
|\mathcal{I}_{2}| \lesssim
\delta^{-1/2}|\mu^{1/12}g|_{H^{s_{1}}}|\mu^{1/12}h|_{H^{s_{2}}}|W^{\epsilon}f|_{L^{2}_{\gamma/2}}
+\eta^{\gamma-3}| \mu^{1/8}g|_{L^{2}}|h|_{\epsilon,\gamma/2}|W^{\epsilon}f|_{L^{2}_{\gamma/2}}.
\eeno

We remark that the $|\beta|>0$ case can be dealt with in a similar way and there is no essential difference.

{\it Step 3: Estimate of $\mathcal{I}_{3}$.}
By the change of variables $(v,v_{*}) \rightarrow (v^{\prime},v_{*}^{\prime})$ and   $(v,v_{*},\sigma) \rightarrow (v_{*},v,-\sigma)$,
\beno
\mathcal{I}_{3} =  \int b^{\epsilon}(\cos\theta)|v-v_{*}|^{\gamma}(\mu^{ 1/4} - (\mu^{1/4})^{\prime})(\mu^{1/4}g)^{\prime} h_{*}f_{*} d\sigma dv_{*}dv.
\eeno
We separate the integration domain into three parts, $\mathbb{S}^{2} \times \mathbb{R}^{3} \times \mathbb{R}^{3} = E_{1} \cup E_{2} \cup E_{3}$, where
 $E_{1} = \{(\sigma, v_{*}, v): |v-v_{*}| \geq 1/\epsilon\}, E_{2} = \{(\sigma, v_{*}, v): |v-v_{*}| \leq 1/\epsilon, |v-v_{*}|^{-1}\leq \theta \leq \pi/2\}, E_{3} = \{(\sigma, v_{*}, v): |v-v_{*}| \leq 1/\epsilon, \epsilon \leq \theta \leq |v-v_{*}|^{-1}\}$. Then $\mathcal{I}_{3}  = \mathcal{I}_{3,1} + \mathcal{I}_{3,2} + \mathcal{I}_{3,3} $ where $\mathcal{I}_{3,i} = \int_{E_{1}}(\cdots) d\sigma dv_{*}dv$.

 \underline{Estimate of $\mathcal{I}_{3,1}$.}
  By the change of variable $v \rightarrow v^{\prime}$, the fact $|v^{\prime}-v_{*}|\geq |v-v_{*}| /\sqrt{2}$ and \eqref{order-0}, we have
\beno
|\mathcal{I}_{3,1}| &\lesssim& \int b^{\epsilon}(\cos\theta)|v^{\prime}-v_{*}|^{\gamma}\mathrm{1}_{|v^{\prime}-v_{*}|\geq (\sqrt{2}\epsilon)^{-1}}|(\mu^{1/4}g)^{\prime} h_{*}f_{*}| d\sigma dv_{*}dv^{\prime}
\\&\lesssim& |\ln \epsilon|^{-1}\epsilon^{-2} \int |v^{\prime}-v_{*}|^{\gamma}\mathrm{1}_{|v^{\prime}-v_{*}|\geq (\sqrt{2}\epsilon)^{-1}}|(\mu^{1/4}g)^{\prime} h_{*}f_{*}| dv_{*}dv^{\prime}.
\eeno
On one hand, by Cauchy-Schwartz inequality, we have
\begin{eqnarray}\label{lessthanep2s}
&&|\ln \epsilon|^{-1}\epsilon^{-2} \int |v^{\prime}-v_{*}|^{\gamma}\mathrm{1}_{|v^{\prime}-v_{*}|\geq (\sqrt{2}\epsilon)^{-1}}|(\mu^{1/4}g)^{\prime}|dv^{\prime}
\\&\leq& |\mu^{1/8}g|_{L^{2}} |\ln \epsilon|^{-1}\epsilon^{-2} \big(\int |v^{\prime}-v_{*}|^{2\gamma}\mathrm{1}_{|v^{\prime}-v_{*}|\geq (\sqrt{2}\epsilon)^{-1}}(\mu^{1/4})^{\prime}dv^{\prime}\big)^{1/2}
\lesssim |\mu^{1/8}g|_{L^{2}} |\ln \epsilon|^{-1}\epsilon^{-2} \langle v_{*} \rangle^{\gamma}, \nonumber
\end{eqnarray}
where we use
$|v^{\prime}-v_{*}|^{2\gamma}\mathrm{1}_{|v^{\prime}-v_{*}|\geq (\sqrt{2}\epsilon)^{-1}} \lesssim \langle v^{\prime}-v_{*} \rangle^{2\gamma} \mathrm{1}_{|v^{\prime}-v_{*}|\geq 1} \lesssim \langle v^{\prime} \rangle^{|2\gamma|}\langle v_{*} \rangle^{2\gamma}.$
On the other hand, we have
\begin{eqnarray}\label{lessthanvstar2s}
&&|\ln \epsilon|^{-1}\epsilon^{-2} \int |v^{\prime}-v_{*}|^{\gamma}\mathrm{1}_{|v^{\prime}-v_{*}|\geq (\sqrt{2}\epsilon)^{-1}}|(\mu^{1/4}g)^{\prime}|dv^{\prime}
\\&\lesssim& |\ln \epsilon|^{-1}\int |v^{\prime}-v_{*}|^{\gamma+2}\mathrm{1}_{|v^{\prime}-v_{*}|\geq (\sqrt{2}\epsilon)^{-1}}|(\mu^{1/4}g)^{\prime}|dv^{\prime} \nonumber
\\&\leq& |\ln \epsilon|^{-1}|\mu^{1/8}g|_{L^{2}}  \big(\int |v^{\prime}-v_{*}|^{2\gamma+4}\mathrm{1}_{|v^{\prime}-v_{*}|\geq (\sqrt{2}\epsilon)^{-1}}(\mu^{1/4})^{\prime}dv^{\prime}\big)^{1/2} \nonumber
\lesssim |\ln \epsilon|^{-1}|\mu^{1/8}g|_{L^{2}}  \langle v_{*} \rangle^{\gamma+2}. \nonumber
\end{eqnarray}
With estimates (\ref{lessthanep2s}) and  (\ref{lessthanvstar2s}) in hand, since
\ben \label{property-of-cf}
\min\{|\ln \epsilon|^{-1}\epsilon^{-2},|\ln \epsilon|^{-1} \langle v_{*} \rangle^{2}\} \lesssim (W^{\epsilon})^{2}(v_{*} ), \een
we have
$
|\mathcal{I}_{3,1}| \lesssim |\mu^{1/8}g|_{L^{2}}|W^{\epsilon}h|_{L^{2}_{\gamma/2}}|W^{\epsilon}f|_{L^{2}_{\gamma/2}}.
$

 \underline{Estimate of $\mathcal{I}_{3,2}$.}  By the change of variable $v \rightarrow v^{\prime}$ and the fact $|v^{\prime}-v_{*}| \leq |v-v_{*}|$, we get
\begin{eqnarray}\label{i32preliminary}
|\mathcal{I}_{3,2}| &\lesssim& \int b^{\epsilon}(\cos\theta)\mathrm{1}_{\theta \geq (\sqrt{2}|v^{\prime}-v_{*}|)^{-1} }|v^{\prime}-v_{*}|^{\gamma}\mathrm{1}_{|v^{\prime}-v_{*}|\leq 1/\epsilon}|(\mu^{1/4}g)^{\prime} h_{*}f_{*}| d\sigma dv_{*}dv^{\prime}
\\&\lesssim& |\ln \epsilon|^{-1}\int |v^{\prime}-v_{*}|^{\gamma+2}\mathrm{1}_{|v^{\prime}-v_{*}|\leq 1/\epsilon}|(\mu^{1/4}g)^{\prime} h_{*}f_{*}| dv_{*}dv^{\prime}, \nonumber
\end{eqnarray}
where $\int^{\pi/2}_{(\sqrt{2}|v^{\prime}-v_{*}|)^{-1}} \theta^{-3} d\theta \lesssim |v^{\prime}-v_{*}|^{2}$ is used.

On one hand, similar to the argument in (\ref{lessthanvstar2s}), we have
\begin{eqnarray}\label{i32lessthanvstar2s}
 |\ln \epsilon|^{-1}\int |v^{\prime}-v_{*}|^{\gamma+2}\mathrm{1}_{|v^{\prime}-v_{*}|\leq 1/\epsilon}|(\mu^{1/4}g)^{\prime}| dv^{\prime}
\lesssim |\ln \epsilon|^{-1}|\mu^{1/8}g|_{L^{2}}  \langle v_{*} \rangle^{\gamma+2}.
\end{eqnarray}
On the other hand, if $|v_{*}|\geq 2/\epsilon$, then $|v^{\prime}| \geq |v_{*}| - |v^{\prime}-v_{*}| \geq |v_{*}|/2 \geq 1/\epsilon$, which gives $\mu^{\prime} \lesssim \mu_{*}^{1/4} \lesssim e^{-1/2\epsilon^{2}}$. Then we deduce that
\begin{eqnarray}\label{i32lessthanep2s}
&&
|\ln \epsilon|^{-1}\mathrm{1}_{|v_*|\geq 2/ \epsilon}\int |v^{\prime}-v_{*}|^{\gamma+2}\mathrm{1}_{|v^{\prime}-v_{*}|\leq 1/\epsilon}|(\mu^{1/4}g)^{\prime}| dv^{\prime}
\\&\lesssim& |\ln \epsilon|^{-1}\mathrm{1}_{|v_*|\geq 2/ \epsilon}|\mu^{1/8}g|_{L^{2}}  \big(\int |v^{\prime}-v_{*}|^{2\gamma+4}\mathrm{1}_{|v^{\prime}-v_{*}|\leq 1/\epsilon}(\mu^{1/4})^{\prime}dv^{\prime}\big)^{1/2} \nonumber
\\&\lesssim& |\ln \epsilon|^{-1}\mathrm{1}_{|v_*|\geq 2/ \epsilon}|\mu^{1/8}g|_{L^{2}} \mu_{*}^{1/64} (\epsilon^{-1})^{\gamma+2+3/2} e^{-1/32\epsilon^{2}} \nonumber
\lesssim |\ln \epsilon|^{-1}\mathrm{1}_{|v_*|\geq 2/ \epsilon}|\mu^{1/8}g|_{L^{2}} \mu_{*}^{1/64}. \nonumber
\end{eqnarray}
With estimates (\ref{i32lessthanvstar2s}) and  (\ref{i32lessthanep2s}) in hand, recalling \eqref{property-of-cf}, we have
$
|\mathcal{I}_{3,2}| \lesssim |\mu^{1/8}g|_{L^{2}}|W^{\epsilon}h|_{L^{2}_{\gamma/2}}|W^{\epsilon}f|_{L^{2}_{\gamma/2}}.
$

\underline{Estimate of $\mathcal{I}_{3,3}$.}
By Taylor expansion, one has
\beno
\mu^{ 1/4} - (\mu^{1/4})^{\prime} = (\nabla \mu^{ 1/4})(v^{\prime})\cdot(v-v^{\prime}) + \frac{1}{2}\int_{0}^{1} (1-\kappa) [(\nabla^{2} \mu^{ 1/4})(v(\kappa)):(v-v^{\prime})\otimes(v-v^{\prime})] d\kappa,
\eeno
where $v(\kappa) = v^{\prime} + \kappa(v-v^{\prime})$.
For any fixed $v_{*}$, there holds
\beno
\int b^{\epsilon}(\cos\theta)|v-v_{*}|^{\gamma} \mathrm{1}_{|v-v_{*}| \leq 1/\epsilon, \epsilon \leq \theta \leq |v-v_{*}|^{-1}}(\nabla \mu^{ 1/4})(v^{\prime})\cdot(v-v^{\prime})(\mu^{1/4}g)^{\prime}  d\sigma dv = 0.
\eeno
By the change of variable $v \rightarrow v^{\prime}$, the fact $|v^{\prime}-v_{*}|\geq |v-v_{*}| /\sqrt{2}$ and $|\nabla^{2} \mu^{ 1/4}|_{L^{\infty}} \lesssim 1$, we have
\beno
|\mathcal{I}_{3,3}| &=&  \frac{1}{2}|\int_{E_{3}\times[0,1]} b^{\epsilon}(\cos\theta)|v-v_{*}|^{\gamma}\mathrm{1}_{|v-v_{*}| \leq 1/\epsilon, \epsilon \leq \theta \leq |v-v_{*}|^{-1}}
\\&&\times (1-\kappa)[(\nabla^{2} \mu^{ 1/4})(v(\kappa)):(v-v^{\prime})\otimes(v-v^{\prime})] (\mu^{1/4}g)^{\prime} h_{*}f_{*} d\kappa d\sigma dv_{*}dv|
\\&\lesssim& \int b^{\epsilon}(\cos\theta)|v^{\prime}-v_{*}|^{\gamma+2}\theta^{2}\mathrm{1}_{|v^{\prime}-v_{*}| \leq 1/\epsilon, \epsilon \leq \theta \leq |v^{\prime}-v_{*}|^{-1}}|(\mu^{1/4}g)^{\prime} h_{*}f_{*}|  d\sigma dv_{*}dv^{\prime}
\\&\lesssim& |\ln \epsilon|^{-1}\int \left( |\ln \epsilon| - \ln|v^{\prime}-v_{*}|
\right)|v^{\prime}-v_{*}|^{\gamma+2}\mathrm{1}_{|v^{\prime}-v_{*}| \leq 1/\epsilon}|(\mu^{1/4}g)^{\prime} h_{*}f_{*}|   dv_{*}dv^{\prime}.
\eeno
We claim that
\beno
|\ln \epsilon|^{-1}\int ( |\ln \epsilon| - \ln|v-v_{*}| ) |v-v_{*}|^{\gamma+2}\mathrm{1}_{|v-v_{*}| \leq 1/\epsilon}|\mu^{1/4}g| dv  \lesssim  (W^{\epsilon})^{2}_{*} \langle v_{*} \rangle^{\gamma} |\mu^{1/8}g|_{L^{2}},
\eeno
which immediately gives  $|\mathcal{I}_{3,3}| \lesssim|\mu^{1/8}g|_{L^{2}}|W^{\epsilon}h|_{L^{2}_{\gamma/2}}|W^{\epsilon}f|_{L^{2}_{\gamma/2}}$. By Cauchy-Schwartz inequality,
it suffices to prove
\ben \label{definition-K}
K(v_{*}):=|\ln \epsilon|^{-1} \big(\int ( |\ln \epsilon| - \ln|v-v_{*}| )^{2} |v-v_{*}|^{2\gamma+4}\mathrm{1}_{|v-v_{*}| \leq 1/\epsilon} \mu^{1/4} dv\big)^{1/2}  \lesssim  (W^{\epsilon})^{2}_{*} \langle v_{*} \rangle^{\gamma}.
\een

\underline{\it Case 1: $|v_{*}| \geq 2/\epsilon$.} Since $|v-v_{*}| \leq 1/\epsilon$, we have $|v-v_{*}| \leq |v_{*}|/2$ and thus $|v| \geq |v_{*}|-|v-v_{*}| \geq |v_{*}|/2$. Then we get $\mu \lesssim \mu^{1/4}_{*}$. On the other hand, since $|v|^{2} \geq \frac{1}{2}|v-v_{*}|^{2}-|v_{*}|^{2}$, we have
\ben \label{shift-to-v-minus-vStar}
\mu \lesssim \mu^{1/2}(v-v_{*}) \mu^{-1}_{*},
\een which gives
$
\mu^{1/4}\mathrm{1}_{|v_{*}| \geq 2/\epsilon, |v-v_{*}| \leq 1/\epsilon}
\lesssim \mu^{1/128}(v-v_{*}) \mu^{1/64}_{*}\mathrm{1}_{|v_{*}| \geq 2/\epsilon, |v-v_{*}| \leq 1/\epsilon}.
$
Plugging which into $K(v_{*})$, we get
\beno
K(v_{*})\mathrm{1}_{|v_{*}| \geq 2/\epsilon} &\lesssim& \mu^{1/128}_{*} |\ln \epsilon|^{-1} \big(\int ( |\ln \epsilon| - \ln|v-v_{*}| )^{2} |v-v_{*}|^{2\gamma+4}\mathrm{1}_{|v-v_{*}| \leq 1/\epsilon} \mu^{1/128}(v-v_{*})  dv\big)^{1/2}
\\&=&  \mu^{1/128}_{*} |\ln \epsilon|^{-1} \big(\int ( |\ln \epsilon| - \ln|u| )^{2} |u|^{2\gamma+4}\mathrm{1}_{|u| \leq 1/\epsilon} \mu^{1/128}(u)  du \big)^{1/2}.
\eeno
We separate the integration domain   into two regions: $|u| \leq 1$ and $|u| \geq 1$.
\begin{itemize}
	\item
 For the part $|u| \leq 1$, we have
\beno
\int ( |\ln \epsilon| - \ln|u| )^{2} |u|^{2\gamma+4}\mathrm{1}_{|u| \leq 1} \mu^{1/128}(u)  du
&\leq& 2 \int (|\ln \epsilon|^{2} + \ln^{2}|u|) |u|^{2\gamma+4}\mathrm{1}_{|u| \leq 1}  du
\\&\leq& 2 (C_{1}(\gamma)|\ln \epsilon|^{2}+ C_{2}(\gamma))
\lesssim |\ln \epsilon|^{2}.
\eeno
where $C_{1}(\gamma)  := \int  |u|^{2\gamma+4}\mathrm{1}_{|u| \leq 1}   du $ and
$ \int  (\ln|u|)^{2} |u|^{2\gamma+4}\mathrm{1}_{|u| \leq 1}   du \leq  C \int   |u|^{2\gamma+7/2}\mathrm{1}_{|u| \leq 1}:=C_{2}(\gamma)$.
 \item For the part $|u| \ge 1$, since $\ln|u| \geq 0$ and $\gamma<0$
we have
\beno
\int ( |\ln \epsilon| - \ln|u| )^{2} |u|^{2\gamma+4}\mathrm{1}_{1/\epsilon \geq |u| \geq 1} \mu^{1/128}(u)  du
\leq |\ln \epsilon|^{2} \int |u|^{4}\mathrm{1}_{|u| \geq 1} \mu^{1/128}(u)  du
\lesssim |\ln \epsilon|^{2}.
\eeno
\end{itemize}
By these two estimates, we get
$
\mathrm{1}_{|v_{*}| \geq 2/\epsilon}K(v_{*}) \lesssim \mu^{1/128}_{*}  \lesssim (W^{\epsilon})^{2}_{*} \langle v_{*} \rangle^{\gamma}.
$

\underline{\it Case 2: $1 \leq |v_{*}| \leq 2/\epsilon$.} We separate the integration domain into  two regions: $|v-v_{*}| \leq |v_{*}|/2 \leq 1/\epsilon$ and $|v-v_{*}| \geq |v_{*}|/2$. Using $\sqrt{A+B} \leq \sqrt{A}  + \sqrt{B}$, we get
\beno
\mathrm{1}_{1 \leq |v_{*}| \leq 2/\epsilon}K(v_{*})\leq \mathrm{1}_{1 \leq |v_{*}| \leq 2/\epsilon}
|\ln \epsilon|^{-1} \big(\int ( |\ln \epsilon| - \ln|v-v_{*}| )^{2} |v-v_{*}|^{2\gamma+4}\mathrm{1}_{|v-v_{*}| \leq |v_{*}|/2} \mu^{1/4} dv\big)^{1/2}
\\+\mathrm{1}_{1 \leq |v_{*}| \leq 2/\epsilon}
|\ln \epsilon|^{-1} \big(\int ( |\ln \epsilon| - \ln|v-v_{*}| )^{2} |v-v_{*}|^{2\gamma+4}\mathrm{1}_{|v_{*}|/2  \leq |v-v_{*}| \leq 1/\epsilon} \mu^{1/4} dv\big)^{1/2}
\\:= K_{1}(v_{*}) + K_{2}(v_{*}).
\eeno
When $|v-v_{*}| \leq |v_{*}|/2$, we can follow the computation in {\it Case 1}  to get
$ K_{1}(v_{*})  \lesssim \mu^{1/128}_{*}.$
When $|v-v_{*}| \geq |v_{*}|/2$, then $|\ln \epsilon| - \ln|v-v_{*}| \leq |\ln \epsilon| - \ln|v_{*}| + \ln 2$, we get
\beno
K_{2}(v_{*})&\leq&
|\ln \epsilon|^{-1} \big( |\ln \epsilon| - \ln|v_{*}| + \ln 2 \big) \big(\int  |v-v_{*}|^{2\gamma+4}\mathrm{1}_{|v_{*}|/2  \leq |v-v_{*}| \leq 1/\epsilon} \mu^{1/4} dv\big)^{1/2}
\\&\lesssim&
|\ln \epsilon|^{-1} \big( |\ln \epsilon| - \ln|v_{*}| + \ln 2 \big) \langle v_{*} \rangle^{\gamma+2}.
\eeno
By  $\mu^{1/128}_{*} \lesssim \langle v_{*} \rangle^{\gamma} $ and $\mathrm{1}_{1 \leq |v_{*}| \leq 2/\epsilon}|\ln \epsilon|^{-1} \big( |\ln \epsilon| - \ln|v_{*}| + \ln 2 \big) \langle v_{*} \rangle^{2} \lesssim (W^{\epsilon})^{2}_{*}$, we get
$
\mathrm{1}_{1 \leq |v_{*}| \leq 2/\epsilon} K(v_{*}) \lesssim (W^{\epsilon})^{2}_{*} \langle v_{*} \rangle^{\gamma}.
$

\underline{\it Case 3: $|v_{*}| \leq 1$.} By \eqref{shift-to-v-minus-vStar}, we have
$
\mu \lesssim \mu^{1/2}(v-v_{*}).
$
Plugging which into $K(v_{*})$, we have
\beno \mathrm{1}_{|v_{*}| \leq 1}
K(v_{*}) &\lesssim& |\ln \epsilon|^{-1} \big(\int ( |\ln \epsilon| - \ln|v-v_{*}| )^{2} |v-v_{*}|^{2\gamma+4}\mathrm{1}_{|v-v_{*}| \leq 1/\epsilon} \mu^{1/8}(v-v_{*})  dv\big)^{1/2}
\\&=&  |\ln \epsilon|^{-1} \big(\int ( |\ln \epsilon| - \ln|u| )^{2} |u|^{2\gamma+4}\mathrm{1}_{|u| \leq 1/\epsilon} \mu^{1/8}(u)  du \big)^{1/2}.
\eeno
By the computation in {\it Case 1}, we get
$
\mathrm{1}_{|v_{*}| \leq 1} K(v_{*}) \lesssim  \mathrm{1}_{|v_{*}| \leq 1} \lesssim \mathrm{1}_{|v_{*}| \leq 1}(W^{\epsilon})^{2}_{*} \langle v_{*} \rangle^{\gamma}.
$

Patching together the above three cases, we get \eqref{definition-K}.

By the above upper bounds of $\mathcal{I}_{3,1}, \mathcal{I}_{3,2}$ and $\mathcal{I}_{3,3}$, we have
\beno
|\mathcal{I}_{3}| \lesssim |\mu^{1/8}g|_{L^{2}}|W^{\epsilon}h|_{L^{2}_{\gamma/2}}|W^{\epsilon}f|_{L^{2}_{\gamma/2}}.
\eeno

In the $|\beta|>0$ case,  $\mathcal{I}_{3}$ contains two parts. The first part involving $((\mu^{\f14}P_{\beta})_{*}^{\prime} - (\mu^{\f14}P_{\beta})_{*})(\mu^{1/4}g)_{*}$, by the change of variables $(v,v_{*}) \rightarrow (v^{\prime},v_{*}^{\prime})$ and   $(v,v_{*},\sigma) \rightarrow (v_{*},v,-\sigma)$, gives
\beno
\mathcal{I}_{3} =  \int b^{\epsilon}(\cos\theta)|v-v_{*}|^{\gamma}(\mu^{1/4}P_{\beta} - (\mu^{1/4} P_{\beta})^{\prime})(\mu^{1/4}g)^{\prime} h_{*}f_{*} d\sigma dv_{*}dv.
\eeno
With the same decomposition as above according to $E_{1}, E_{2}, E_{3}$, we have $\mathcal{I}_{3} = \mathcal{I}_{3,1} + \mathcal{I}_{3,2} + \mathcal{I}_{3,3}$.
In $\mathcal{I}_{3,1}, \mathcal{I}_{3,2}$, we can use $|\mu^{1/4}P_{\beta}| \lesssim 1$.
In $\mathcal{I}_{3,3}$, we can use $|\nabla^{2}\mu^{1/4}P_{\beta}| \lesssim 1$.
The second part involving $P_{\beta}\mu^{1/4}g$ can be dealt with in the same way as the above for the case $|\beta|=0$.

We end the proof by patching together the above estimates of $\mathcal{I}_{1}, \mathcal{I}_{2}$ and $\mathcal{I}_{3}$.
\end{proof}

\subsubsection{Upper bound of $ \Gamma^{\epsilon,\gamma,\eta}(g,h)$}
We are now ready to give the upper bound for the inner product $\langle \Gamma^{\epsilon,\gamma,\eta}(g,h; \beta), f\rangle$.
\begin{thm}\label{upGammagh-geq-eta}
For any  $0 <\eta \leq 1, 0 <\delta \leq 1/2$, and smooth functions $g,h$ and $f$, there holds for $s_{1}, s_{2} \geq 0$ with $s_{1}+s_{2} = 3/2+\delta$,
\beno
|\langle \Gamma^{\epsilon,\gamma,\eta}(g,h; \beta), f\rangle| \lesssim  \delta^{-1/2}|\mu^{1/12}g|_{H^{s_{1}}}|\mu^{1/12}h|_{H^{s_{2}}}|W^{\epsilon}f|_{L^{2}_{\gamma/2}}
+\eta^{\gamma-3}|g|_{L^{2}}|h|_{\epsilon,\gamma/2}|f|_{\epsilon, \gamma/2}.
\eeno
\end{thm}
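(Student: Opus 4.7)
The plan is to reduce the estimate directly to the two propositions already in hand, namely Proposition \ref{ubqepsilonnonsingular} (for $Q^{\epsilon,\gamma,\eta}$) and Proposition \ref{upforI-ep-ga-et} (for $I^{\epsilon,\gamma,\eta}$). The starting point is the identity \eqref{Gamma-ep-ga-geq-eta-into-IQ-inner-beta},
\begin{equation*}
\Gamma^{\epsilon,\gamma,\eta}(g,h;\beta) = Q^{\epsilon,\gamma,\eta}(g\,\partial_\beta \mu^{1/2},h) + I^{\epsilon,\gamma,\eta}(g,h;\beta),
\end{equation*}
so the inner product splits into two pieces that can be estimated separately.

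For the second piece, Proposition \ref{upforI-ep-ga-et} already delivers exactly the desired bound,
\begin{equation*}
|\langle I^{\epsilon,\gamma,\eta}(g,h;\beta), f\rangle|\lesssim \delta^{-1/2}|\mu^{1/12}g|_{H^{s_1}}|\mu^{1/12}h|_{H^{s_2}}|W^\epsilon f|_{L^2_{\gamma/2}} + \eta^{\gamma-3}|g|_{L^2}|h|_{\epsilon,\gamma/2}|W^\epsilon f|_{L^2_{\gamma/2}},
\end{equation*}
so no extra work is required here. For the first piece, Proposition \ref{ubqepsilonnonsingular} gives
\begin{equation*}
|\langle Q^{\epsilon,\gamma,\eta}(g\,\partial_\beta \mu^{1/2},h), f\rangle| \lesssim \eta^{\gamma-3}\,|g\,\partial_\beta \mu^{1/2}|_{L^1_{|\gamma|+2}}\,|h|_{\epsilon,\gamma/2}\,|f|_{\epsilon,\gamma/2}.
\end{equation*}
The task is then to absorb $|g\,\partial_\beta \mu^{1/2}|_{L^1_{|\gamma|+2}}$ into $|g|_{L^2}$. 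Writing $\partial_\beta \mu^{1/2} = \mu^{1/2} P_\beta$ for some polynomial $P_\beta$, Cauchy--Schwarz yields
\begin{equation*}
|g\,\partial_\beta \mu^{1/2}|_{L^1_{|\gamma|+2}} \leq |g|_{L^2}\,\big|\langle v\rangle^{|\gamma|+2} P_\beta \mu^{1/2}\big|_{L^2} \lesssim C_{|\beta|}\,|g|_{L^2},
\end{equation*}
using the Gaussian decay of $\mu^{1/2}$ to absorb any polynomial factor.

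Combining the two estimates and noting that $|W^\epsilon f|_{L^2_{\gamma/2}} \leq |f|_{\epsilon,\gamma/2}$ so the two terms can be written with a common $|f|_{\epsilon,\gamma/2}$ factor (or kept as in the statement with $|W^\epsilon f|_{L^2_{\gamma/2}}$ and a further bound by $|f|_{\epsilon,\gamma/2}$), we obtain the claim. There is no substantive obstacle: the theorem is a bookkeeping consequence of Propositions \ref{ubqepsilonnonsingular} and \ref{upforI-ep-ga-et}, together with the elementary fact that multiplication by $\partial_\beta \mu^{1/2}$ improves any polynomially weighted $L^1$ norm to an $L^2$ norm without weight.
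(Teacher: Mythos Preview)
Your proposal is correct and matches the paper's proof almost exactly: the paper also invokes the decomposition \eqref{Gamma-ep-ga-geq-eta-into-IQ-inner-beta} and then appeals directly to Proposition~\ref{ubqepsilonnonsingular} and Proposition~\ref{upforI-ep-ga-et}. Your explicit Cauchy--Schwarz step bounding $|g\,\partial_\beta\mu^{1/2}|_{L^1_{|\gamma|+2}}\lesssim |g|_{L^2}$ is the only detail the paper leaves implicit.
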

\begin{proof} Recalling from \eqref{Gamma-ep-ga-geq-eta-into-IQ-inner-beta}, we have
\beno
\langle \Gamma^{\epsilon,\gamma,\eta}(g,h; \beta), f\rangle = \langle Q^{\epsilon,\gamma,\eta}(P_{\beta}\mu^{1/2}g,h), f\rangle + \langle I^{\epsilon,\gamma,\eta}(g,h;\beta), f\rangle.
\eeno
The theorem follows directly from Proposition \ref{ubqepsilonnonsingular} and Proposition \ref{upforI-ep-ga-et}.
\end{proof}
Taking $\delta = 1/2, s_{1}=2, s_{2}=0$ in Theorem \ref{upGammagh-geq-eta}, we have
\begin{col}\label{upgammamuff1-geq-eta}
For any  $0 <\eta \leq 1,$ and smooth functions $h$ and $f$, there holds
\beno
 |\langle \Gamma^{\epsilon,\gamma,\eta}(\partial_{\beta_{1}}\mu^{1/2},h; \beta_{0}), f\rangle| \lesssim \eta^{\gamma-3}|h|_{\epsilon,\gamma/2}|f|_{\epsilon, \gamma/2}.
\eeno
\end{col}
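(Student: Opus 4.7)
The plan is to derive this corollary directly from Theorem \ref{upGammagh-geq-eta} by specializing the free parameters. First I would set $g = \partial_{\beta_1}\mu^{1/2}$, $\beta = \beta_0$, $\delta = 1/2$, and choose the Sobolev exponents $s_1 = 2$, $s_2 = 0$ (so that $s_1 + s_2 = 2 = 3/2 + 1/2$). This converts the generic bound into
\begin{equation*}
|\langle \Gamma^{\epsilon,\gamma,\eta}(\partial_{\beta_1}\mu^{1/2},h;\beta_0),f\rangle| \lesssim |\mu^{1/12}\partial_{\beta_1}\mu^{1/2}|_{H^2}\,|\mu^{1/12}h|_{L^2}\,|W^{\epsilon}f|_{L^{2}_{\gamma/2}} + \eta^{\gamma-3}|\partial_{\beta_1}\mu^{1/2}|_{L^2}\,|h|_{\epsilon,\gamma/2}|f|_{\epsilon,\gamma/2}.
\end{equation*}

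Next I would absorb the $\mu$-factors into universal constants: since $\partial_{\beta_1}\mu^{1/2}$ is of the form (polynomial)$\times\mu^{1/2}$, the product $\mu^{1/12}\partial_{\beta_1}\mu^{1/2}$ is a Schwartz function, so both $|\mu^{1/12}\partial_{\beta_1}\mu^{1/2}|_{H^2}$ and $|\partial_{\beta_1}\mu^{1/2}|_{L^2}$ are finite constants depending only on $\beta_1$.

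To finish, I would control the two resulting terms by the right-hand side of the claimed inequality. On one hand, the definition of $|\cdot|_{\epsilon,\gamma/2}$ gives $|W^{\epsilon}f|_{L^{2}_{\gamma/2}} \leq |f|_{\epsilon,\gamma/2}$, and similarly $|\mu^{1/12}h|_{L^2} \lesssim |h|_{L^2_{\gamma/2}} \lesssim |h|_{\epsilon,\gamma/2}$ since $\mu^{1/12}\langle v\rangle^{-\gamma/2}$ is bounded. On the other hand, the hypothesis $0 < \eta \leq 1$ combined with $\gamma - 3 \leq -3 < 0$ yields $\eta^{\gamma-3} \geq 1$, so the first (smooth) term is automatically dominated by the second (singular) one, and the proof closes.

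There is essentially no obstacle here: the only verification is that the chosen $(s_1,s_2,\delta)$ lies in the admissible range of Theorem \ref{upGammagh-geq-eta} and that $\mu$-weighted norms of $\partial_{\beta_1}\mu^{1/2}$ are finite, both of which are routine.
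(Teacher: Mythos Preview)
Your proof is correct and follows exactly the paper's approach: the paper likewise takes $\delta = 1/2$, $s_1 = 2$, $s_2 = 0$ in Theorem \ref{upGammagh-geq-eta} and leaves the absorption of the first term implicit. Your added justification that $\eta^{\gamma-3}\geq 1$ dominates the smooth contribution makes explicit what the paper takes for granted.
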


\subsection{Upper bound of  $\langle Q^{\epsilon,\gamma}_{\eta}(g,h), f\rangle$ and $ \langle I^{\epsilon,\gamma}_{\eta}(g,h), f\rangle$} We will provide two estimates for each functional. One allows us to make use of the smallness of $\eta$ later,  and the other is independent of $\eta$.

\subsubsection{Upper bound of $Q^{\epsilon,\gamma}_{\eta}$}
We give the  upper bound  of $ Q^{\epsilon,\gamma}_{\eta}$ in the following proposition.
\begin{prop}\label{ubqepsilon-singular} Let $\delta \in(0,1/2], \eta\in(0,1],a\in[0,1]$ and $(s_{3},s_{4})=(1/2+\delta,0)$ or $ (0, 1/2+\delta)$. Then for any smooth functions $g, h$ and $f$, the following estimates are valid.
\ben \label{with-small-eta}
|\langle Q^{\epsilon,\gamma}_{\eta}(g,h), f\rangle| &\lesssim& \delta^{-1/2} (\eta^{\delta}+\epsilon^{1/2})|\mu^{-2a}g|_{H^{3/2+\delta}}|\mu^{a/2}h|_{H^{1}}|W^{\epsilon}(D)\mu^{a/2}f|_{L^{2}}.
\\ \label{without-small-eta}
|\langle Q^{\epsilon,\gamma}_{\eta}(g,h), f\rangle| &\lesssim&  \delta^{-1/2}|\mu^{-2a}g|_{H^{s_{3}}}|\mu^{a/2}h|_{H^{1+s_{4}}}|W^{\epsilon}(D)\mu^{a/2}f|_{L^{2}}.
\een
\end{prop}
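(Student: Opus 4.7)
\medskip
\noindent\textbf{Proof plan for Proposition \ref{ubqepsilon-singular}.}
My starting point is the pre-post collisional change of variables, which together with the symmetry $B^{\epsilon,\gamma}_{\eta}(v-v_*,\sigma)=B^{\epsilon,\gamma}_{\eta}(v'-v_*',(v-v_*)/|v-v_*|)$ allows one to recast
\[
\langle Q^{\epsilon,\gamma}_{\eta}(g,h),f\rangle \;=\; \int B^{\epsilon,\gamma}_{\eta}\, g_*\,h\,(f'-f)\,d\sigma\,dv\,dv_*.
\]
I then decompose $h(f'-f)=[(hf)'-hf]-(h'-h)f'$, which yields the splitting $\langle Q^{\epsilon,\gamma}_{\eta}(g,h),f\rangle = \mathcal{A}+\mathcal{B}$, with
\[
\mathcal{A}=\int B^{\epsilon,\gamma}_{\eta}\, g_*[(hf)'-hf]\,d\sigma\,dv\,dv_*,\qquad
\mathcal{B}=-\int B^{\epsilon,\gamma}_{\eta}\, g_*(h'-h)\,f'\,d\sigma\,dv\,dv_*.
\]
The piece $\mathcal{A}$ is of exactly the cancellation form treated in \eqref{0-delta-eta-small-eta-epsilon-hf-mu} of Lemma \ref{cancellation-lemma-general-gamma-minus3-mu} (with the parameter $\delta$ there set to $0$), so it immediately delivers the smallness factor $(\eta+\epsilon^{1/2})$ and places $g$ in $L^\infty$. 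The entire difficulty is therefore concentrated in $\mathcal{B}$.

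For $\mathcal{A}$, applying \eqref{0-delta-eta-small-eta-epsilon-hf-mu} with the given $a\in[0,1]$ gives
\[
|\mathcal{A}|\lesssim (\eta+\epsilon^{1/2})|\mu^{-2a}g|_{L^\infty}\bigl(|W^\epsilon(D)\mu^{a/2}h|_{L^2}|\mu^{a/2}f|_{L^2}+|\mu^{a/2}h|_{L^2}|W^\epsilon(D)\mu^{a/2}f|_{L^2}\bigr),
\]
and I would then invoke the three-dimensional Sobolev embedding $|\cdot|_{L^\infty}\lesssim \delta^{-1/2}|\cdot|_{H^{3/2+\delta}}$ to replace $|\mu^{-2a}g|_{L^\infty}$ by $\delta^{-1/2}|\mu^{-2a}g|_{H^{3/2+\delta}}$. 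The elementary inequality $\eta\leq \eta^{\delta}$ (valid since $0<\eta,\delta\leq 1$) then converts $(\eta+\epsilon^{1/2})$ into $(\eta^{\delta}+\epsilon^{1/2})$, and the bound $|W^\epsilon(D)\mu^{a/2}h|_{L^2}+|\mu^{a/2}h|_{L^2}\lesssim |\mu^{a/2}h|_{H^1}$ (a consequence of \eqref{upper-bound-cf}) aggregates the two contributions into the form of \eqref{with-small-eta}.

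For $\mathcal{B}$, the core idea is a low-high frequency decomposition of $f$ at threshold $|\xi|\sim \epsilon^{-1/2}$, namely $f=f_\epsilon+f^\epsilon$ with $f_\epsilon=\phi(\epsilon^{1/2}D)f$, $f^\epsilon=(1-\phi(\epsilon^{1/2}D))f$. On the high-frequency piece $-\int B^{\epsilon,\gamma}_{\eta} g_*(h'-h)(f^\epsilon)'d\sigma\,dv\,dv_*$, I would apply Cauchy--Schwarz, use Taylor expansion together with $|v'-v|\leq |v-v_*|\sin(\theta/2)\leq \eta\sin(\theta/2)$ on the support $|v-v_*|\leq\eta$ to dominate $(h'-h)^2$ by $\eta^2\sin^2(\theta/2)\int_0^1|\nabla h(v(\kappa))|^2d\kappa$, and combine with $|f^\epsilon|_{L^2}\lesssim \epsilon^{1/2}|W^\epsilon(D)f|_{L^2}$ (from \eqref{lower-bound-when-large-cf}) and $\int b^\epsilon \sin^2(\theta/2)\,d\sigma\lesssim 1$ (from \eqref{order-2}) to produce the $\epsilon^{1/2}$ factor. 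On the low-frequency piece $-\int B^{\epsilon,\gamma}_{\eta} g_*(h'-h)f_\epsilon' d\sigma\,dv\,dv_*$, I additionally Taylor expand $f_\epsilon'=f_\epsilon+\int_0^1\nabla f_\epsilon(v(\kappa))\cdot(v'-v)d\kappa$, so that the integrand becomes bilinear in $(v'-v)$, producing the second-order smallness $\eta^2\sin^2(\theta/2)$ before interpolation; Sobolev embedding on $g$ (or $h$) then yields the $\eta^\delta$ factor. Throughout, the $\mu$-weights are shuffled using the same trick as in the proof of Lemma \ref{cancellation-lemma-general-gamma-minus3-mu}: since all velocities involved stay within $\eta\leq 1$ of each other, inequalities \eqref{to-1-2}--\eqref{to-3-4} translate weight shifts into harmless constants $\mathrm{e}^{O(a)}$.

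The second estimate \eqref{without-small-eta}, which does not claim smallness in $\eta$ or $\epsilon$, follows from the same $\mathcal{A}+\mathcal{B}$ decomposition but with the frequency split on $f$ no longer necessary: one trades the lost smallness for either an extra Sobolev derivative on $h$ (when $s_4=1/2+\delta$) via Sobolev embedding on $h$ in the Cauchy--Schwarz bound of $\mathcal{B}$, or an $L^\infty$ bound on $g$ (when $s_3=1/2+\delta$) as in the treatment of $\mathcal{A}$. The main obstacle I foresee is the careful bookkeeping of $\mu$-weights and of the $(\eta,\epsilon)$-factors in $\mathcal{B}$, especially making sure that the Taylor-expansion gain $\eta^2\sin^2(\theta/2)$ in the low-frequency subpiece interacts correctly with the $\int b^\epsilon\sin^2(\theta/2)d\sigma\lesssim 1$ integral without producing a stray $\epsilon^{-1}$ from the low-frequency $L^\infty$ bound on $\nabla f_\epsilon$.
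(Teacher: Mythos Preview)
Your $\mathcal{A}+\mathcal{B}$ splitting and the treatment of $\mathcal{A}$ via \eqref{0-delta-eta-small-eta-epsilon-hf-mu} are correct and coincide with the paper's handling of the cancellation piece $\mathcal{Y}_1$. The problems lie in $\mathcal{B}$, where your plan has two genuine gaps.

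\textbf{High-frequency piece.} Your Cauchy--Schwarz, which separates $(h'-h)^2$ from $((f^\epsilon)')^2$, leaves the full kernel $b^\epsilon|v-v_*|^{-3}\mathrm{1}_{|v-v_*|\le\eta}$ in the factor carrying $((f^\epsilon)')^2$, and this is divergent both angularly ($\int b^\epsilon d\sigma\sim|\ln\epsilon|^{-1}\epsilon^{-2}$) and radially ($\int_{|u|\le\eta}|u|^{-3}du=\infty$); the $\epsilon^{1/2}$ you gain from $|f^\epsilon|_{L^2}$ cannot compensate. The paper does not split $h$ and $f$ this way: it applies Taylor once (to $h$, say), extracting a single factor $\theta|v-v_*|$ so that the kernel becomes $b^\epsilon\theta|v-v_*|^{-2}$, and then applies Cauchy--Schwarz pairing $G_*^2$ with $|\nabla H|^2$ against the weight $|v-v_*|^{-1-2\delta}$ (controlled by Hardy/HLS, \eqref{Y2ghf1}) and $|\mathfrak{F}^\phi F|^2$ against the integrable weight $|v-v_*|^{-3+2\delta}$ (yielding the $\eta^\delta$, \eqref{Y2ghf2}). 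The residual $|\ln\epsilon|^{-1}\epsilon^{-1}$ from $\int b^\epsilon\theta\,d\sigma$ is absorbed by $|\mathfrak{F}^\phi F|_{L^2}\lesssim|\ln\epsilon|^{1/2}\epsilon\,|W^\epsilon(D)F|_{L^2}$ via \eqref{high-frequency-lb-cf}; note that this forces the frequency cut at $|\xi|\sim\epsilon^{-1}$, not $\epsilon^{-1/2}$.

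\textbf{Low-frequency piece.} Your Taylor expansion $f_\epsilon'=f_\epsilon+\int_0^1\nabla f_\epsilon\cdot(v'-v)\,d\kappa$ leaves the zeroth-order term $-\int B^{\epsilon,\gamma}_\eta g_*(h'-h)f_\epsilon$, which carries only one factor of $(v'-v)$; you do not say how to treat it. If you Taylor $h'-h$ again, the first-order part is manageable after the angular average \eqref{dispear2}, but the second-order remainder forces $|h|_{H^2}$, overshooting the allowed $|\mu^{a/2}h|_{H^1}$. The paper resolves this not by a single cut but by a \emph{double dyadic decomposition} of $H$ and $F$ in frequency (its Step~3): for each mode pair $(H_j,F_k)$ with $2^j,2^k\lesssim\epsilon^{-1}$, the Taylor order and the angular truncation $\theta\gtrless 2^{-k}$ are adapted to $\min(j,k)$, and for $j\le k$ the cancellation lemma is invoked again at the dyadic level. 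Summation over $(j,k)$ then reconstitutes the weight $(W^\epsilon(D))^2\sim 2^{2k}(|\ln\epsilon|-k\ln 2+1)/|\ln\epsilon|$ on $F$ while keeping only $H^1$ on $H$. This dyadic bookkeeping is the essential mechanism you are missing.
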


\begin{proof}  Set $G =\mu^{-2a}g, F=\mu^{a/2}f, H=\mu^{a/2}h$. By the definition of $Q^{\epsilon,\gamma}_{\eta}$,  we have
\beno
\langle Q^{\epsilon,\gamma}_{\eta}(g,h), f\rangle = \int B^{\epsilon,\gamma}_{\eta} (\mu^{2a}G)_{*} \mu^{-a/2}H ((\mu^{-a/2}F)^{\prime}-\mu^{-a/2}F) d\sigma dv dv_{*}
:= \mathcal{Y}(G,H,F).
\eeno
By the decomposition $F = \mathfrak{F}_{\phi}F + \mathfrak{F}^{\phi}F$ and $H = \mathfrak{F}_{\phi}H + \mathfrak{F}^{\phi}H$, we have
\beno \mathcal{Y}(G,H,F) = \mathcal{Y}(G,\mathfrak{F}_{\phi}H,\mathfrak{F}_{\phi}F)+\mathcal{Y}(G,\mathfrak{F}^{\phi}H,\mathfrak{F}_{\phi}F)
+\mathcal{Y}(G,H,\mathfrak{F}^{\phi}F). \eeno

{\it{Step 1: $\mathcal{Y}(G,H,\mathfrak{F}^{\phi}F)$}.}
In order to transfer regularity from $\mu^{-a/2}\mathfrak{F}^{\phi}F$ to $\mu^{-a/2}H$, we rearrange
\beno
\mathcal{Y}(G,H,\mathfrak{F}^{\phi}F) &=& \int B^{\epsilon,\gamma}_{\eta}  (\mu^{2a}G)_{*} \mu^{-a/2}H \big((\mu^{-a/2}\mathfrak{F}^{\phi}F)^{\prime}-\mu^{-a/2}\mathfrak{F}^{\phi}F \big) d\sigma dv dv_{*}
\\&=& \int B^{\epsilon,\gamma}_{\eta} (\mu^{2a}G)_{*} \big((\mu^{-a/2}H\mu^{-a/2}\mathfrak{F}^{\phi}F)^{\prime}-\mu^{-a/2}H\mu^{-a/2}\mathfrak{F}^{\phi}F \big) d\sigma dv dv_{*}
\\&&+\int B^{\epsilon,\gamma}_{\eta} (\mu^{2a}G)_{*} \big(\mu^{-a/2}H-(\mu^{-a/2}H)^{\prime}\big)(\mu^{-a/2}\mathfrak{F}^{\phi}F)^{\prime} d\sigma dv dv_{*}
\\&:=& \mathcal{Y}_{1}(G,H,\mathfrak{F}^{\phi}F) + \mathcal{Y}_{2}(G,H,\mathfrak{F}^{\phi}F).
\eeno

\underline{Estimate of  $\mathcal{Y}_{1}(G,H,\mathfrak{F}^{\phi}F)$.} We will give two results on it.

\begin{enumerate}\item
Using \eqref{0-delta-eta-small-eta-epsilon-hf-mu}  in Lemma \ref{cancellation-lemma-general-gamma-minus3-mu}, we have
\ben\label{cancellation-less-eta} |\mathcal{Y}_{1}(G,H,\mathfrak{F}^{\phi}F)| &\lesssim& (\eta + \epsilon^{1/2})|G|_{L^{\infty}} (|W^{\epsilon}(D)H|_{L^{2}}|\mathfrak{F}^{\phi}F|_{L^{2}}+|H|_{L^{2}}|W^{\epsilon}(D)\mathfrak{F}^{\phi}F|_{L^{2}})
\\&\lesssim& {\delta}^{-1/2}(\eta + \epsilon^{1/2})|G|_{H^{3/2+\delta}} |W^{\epsilon}(D)H|_{L^{2}}|W^{\epsilon}(D)\mathfrak{F}^{\phi}F|_{L^{2}}.
\nonumber \een
\item
Using \eqref{geq-delta-mu} in Lemma \ref{cancellation-lemma-general-gamma-minus3-mu}, we have
\beno |\mathcal{Y}_{1}(G,H,\mathfrak{F}^{\phi}F)|\lesssim |G|_{L^{p}} |H\mathfrak{F}^{\phi}F|_{L^{q}} \lesssim |G|_{L^{p}} |H|_{L^{r}}|\mathfrak{F}^{\phi}F|_{L^{2}},
\eeno
where $\frac{1}{p} + \frac{1}{r} = \frac{1}{2}$.
Taking $p,r=2,\infty$ or $p,r=3,6$, by Sololev imbedding one has
\beno |G|_{L^{2}} |H|_{L^{\infty}} \lesssim \delta^{-1/2}|G|_{H^{0}} |H|_{H^{3/2+\delta}} \text{ or }  |G|_{L^{3}} |H|_{L^{6}} \lesssim |G|_{H^{1/2}} |H|_{H^{1}}.\eeno
Therefore we have for $(s_{3},s_{4})=(1/2+\delta,0)$ or $(s_{3},s_{4})=(0, 1/2+\delta)$,
\ben \label{geq-delta-no-small-eta} |\mathcal{Y}_{1}(G,H,\mathfrak{F}^{\phi}F)| \lesssim \delta^{-1/2}|G|_{H^{s_{3}}} |H|_{H^{1+s_{4}}}|\mathfrak{F}^{\phi}F|_{L^{2}}.
\een
\end{enumerate}

\underline{Estimate of $\mathcal{Y}_{2}(G,H,\mathfrak{F}^{\phi}F)$.}
By Taylor expansion up to order 1,
\beno  |(\mu^{-a/2}H)^{\prime}-\mu^{-a/2}H|  = |\int_{0}^{1}(\nabla (\mu^{-a/2}H))(v(\kappa))\cdot (v^{\prime}-v)d\kappa| \lesssim
\theta|v-v_{*}|\int_{0}^{1}|(\nabla (\mu^{-a/2}H))(v(\kappa))|d\kappa ,\eeno
and the fact
\ben \label{mu-H-nabla}
|\nabla (\mu^{-a/2}H)| &=& |\mu^{-a/2}\nabla H + H \nabla \mu^{-a/2}| \lesssim  (1+a|v|)\mu^{-a/2}(|\nabla H| + |H|)
\\&\lesssim&  (1+a)\mu^{-a}(|\nabla H| + |H|), \nonumber
 \een
we get
\beno \mathcal{Y}_{2}(G,H,\mathfrak{F}^{\phi}F) &\lesssim& (1+a)
\int B^{\epsilon,\gamma}_{\eta} \theta|v-v_{*}| |(\mu^{2a}G)_{*}|\mu^{-a}(v(\kappa))
\\&&\times (|\nabla H (v(\kappa))| + |H(v(\kappa))|)
|(\mu^{-a/2}\mathfrak{F}^{\phi}F)^{\prime}| d\sigma dv dv_{*} d\kappa.
 \eeno
Thanks to $|v_{*}-v(\kappa)| \leq 1$ and $|v_{*}-v^{\prime}| \leq 1$,
we can apply \eqref{to-3-4} to get
$ \mu^{2a}(v_{*}) \mu^{-a}(v(\kappa)) \mu^{-a/2}(v^{\prime}) \leq \mathrm{e}^{3a}, $
which gives
\beno \mathcal{Y}_{2}(G,H,\mathfrak{F}^{\phi}F) \leq C(1+a)\mathrm{e}^{3a}
\int B^{\epsilon,\gamma}_{\eta} \theta|v-v_{*}| |G_{*}|(|\nabla H (v(\kappa))| + |H(v(\kappa))|)
|(\mathfrak{F}^{\phi}F)^{\prime}| d\sigma dv dv_{*} d\kappa.
 \eeno
By Cauchy-Schwartz inequality, by the change \eqref{change-exact-formula}-\eqref{change-Jacobean-bound},  $\int_{\epsilon}^{\pi/2} \theta^{-2} d\theta \lesssim \epsilon^{-1}$, and $|v-v_{*}|^{\gamma+1} \leq |v-v_{*}|^{-2}$ when $|v-v_{*}| \leq 1$,
we get
\ben \label{cauchy-change-integration}
|\mathcal{Y}_{2}(G,H,\mathfrak{F}^{\phi}F)| &\lesssim& |\ln \epsilon|^{-1} \epsilon^{-1} \big(\int \mathrm{1}_{|v-v_{*}| \leq \eta} |v-v_{*}|^{-1-2\delta} G_{*}^2
(|\nabla H|^{2}+H^{2}) dv dv_{*}\big)^{1/2}
\\&& \times\big(\int \mathrm{1}_{|v-v_{*}| \leq \eta} |v-v_{*}|^{-3+2\delta}
|\mathfrak{F}^{\phi}F|^{2} dv dv_{*}\big)^{1/2}
.
\nonumber\een

It is easy to check that for $\delta\in(0,1/2)$, and for $(s_{3},s_{4})=(1/2+\delta,0)$ or $(s_{3},s_{4})=(0, 1/2+\delta)$,  by Hardy inequality and Hardy-Littlewood-Sobolev inequality,
\ben\label{Y2ghf1} \int_{|v-v_*|\le1}|v-v_{*}|^{-1-2\delta} G_{*}^2
(|\nabla H|^{2}+H^{2}) dv dv_{*}&\lesssim& |G|_{H^{s_3}}^2|H|_{H^{1+s_4}}^2,\\
\int \mathrm{1}_{|v-v_{*}| \leq \eta} |v-v_{*}|^{-3+2\delta}
|\mathfrak{F}^{\phi}F|^{2} dv dv_{*}&\lesssim& \delta^{-1}\eta^{2\delta}|\mathfrak{F}^{\phi}F|_{L^2}^2,\label{Y2ghf2}
\een
which yields
\ben \label{all-low-with-small-eta-Y2} |\mathcal{Y}_{2}(G,H,\mathfrak{F}^{\phi}F)| \lesssim
\delta^{-1/2} \eta^{\delta}|\ln \epsilon|^{-1/2}|G|_{H^{s_{3}}}|H|_{H^{1+s_{4}}}|W^{\epsilon}(D)F|_{L^{2}}
.\een

Patching \eqref{cancellation-less-eta} and \eqref{all-low-with-small-eta-Y2} together, we get
\ben \label{all-low-with-small-eta} |\mathcal{Y}(G,H,\mathfrak{F}^{\phi}F)|
&\lesssim& \delta^{-1/2} \eta^{\delta}|G|_{H^{s_{3}}}|H|_{H^{1+s_{4}}}|W^{\epsilon}(D)F|_{L^{2}} \\&&+ \delta^{-1/2} (\eta + \epsilon^{1/2})|G|_{H^{3/2+\delta}} |W^{\epsilon}(D)H|_{L^{2}}|W^{\epsilon}(D)F|_{L^{2}}
. \nonumber \een
Patching \eqref{geq-delta-no-small-eta} and \eqref{all-low-with-small-eta-Y2} together, for $(s_{3},s_{4})=(1/2+\delta,0)$ or $(s_{3},s_{4})=(0, 1/2+\delta)$, we get
\ben \label{all-low-without-small-eta} |\mathcal{Y}(G,H,\mathfrak{F}^{\phi}F)|
\lesssim \delta^{-1/2} |G|_{H^{s_{3}}}|H|_{H^{1+s_{4}}}|W^{\epsilon}(D)F|_{L^{2}}
. \een

{\it{Step 2: $\mathcal{Y}(G,\mathfrak{F}^{\phi}H,\mathfrak{F}_{\phi}F)$}.}  Recall
\beno
\mathcal{Y}(G,\mathfrak{F}^{\phi}H,\mathfrak{F}_{\phi}F) = \int B^{\epsilon,\gamma}_{\eta}  (\mu^{2a}G)_{*} (\mu^{-a/2}\mathfrak{F}^{\phi}H) ((\mu^{-a/2}\mathfrak{F}_{\phi}F)^{\prime}-\mu^{-a/2}\mathfrak{F}_{\phi}F) d\sigma dv dv_{*} .
\eeno
The analysis of term $\mathcal{Y}(G,\mathfrak{F}^{\phi}H,\mathfrak{F}_{\phi}F)$ is similar to that of $\mathcal{Y}_{2}(G,H,\mathfrak{F}^{\phi}F)$ in {\it Step 1}. In this step, we can apply Taylor expansion to function $\mu^{-a/2}\mathfrak{F}_{\phi}F$. Then similar to \eqref{cauchy-change-integration}, we will get
\ben \label{cauchy-change-integration-high-H-low-F}
|\mathcal{Y}(G,\mathfrak{F}^{\phi}H,\mathfrak{F}_{\phi}F)| &\lesssim& |\ln \epsilon|^{-1} \epsilon^{-1} \big(\int \mathrm{1}_{|v-v_{*}| \leq \eta} |v-v_{*}|^{-1-2\delta} G_{*}^2
|\mathfrak{F}^{\phi}H|^{2} dv dv_{*}\big)^{1/2}
\\&& \times\big(\int \mathrm{1}_{|v-v_{*}| \leq \eta} |v-v_{*}|^{-3+2\delta}   (|\nabla \mathfrak{F}_{\phi}F|^{2}+|\mathfrak{F}_{\phi}F|^{2}) dv dv_{*}\big)^{1/2}
.\nonumber\een

Thanks to \eqref{Y2ghf1} and \eqref{Y2ghf2},
 for $(s_{3},s_{4})=(1/2+\delta,0)$ or $(s_{3},s_{4})=(0, 1/2+\delta)$, we have
\ben \label{high-low-with-or-without-eta} |\mathcal{Y}(G,\mathfrak{F}^{\phi}H,\mathfrak{F}_{\phi}F)|
&\lesssim& \delta^{-1/2} \eta^{\delta}|\ln \epsilon|^{-1} \epsilon^{-1}|G|_{H^{s_{3}}}|\mathfrak{F}^{\phi}H|_{H^{s_{4}}}|\mathfrak{F}_{\phi}F|_{H^{1}}
\\ &\lesssim& \delta^{-1/2} \eta^{\delta}|G|_{H^{s_{3}}}|H|_{H^{1+s_{4}}}|W^{\epsilon}(D)F|_{L^{2}}, \nonumber
\een
where we use \eqref{high-frequency-lb-cf}, \eqref{low-frequency-lb-cf} and \eqref{upper-bound-cf} in the last line.

{\it{Step 3: $\mathcal{Y}(G,\mathfrak{F}_{\phi}H,\mathfrak{F}_{\phi}F)$}.} We  make dyadic decomposition in the  frequency space and get
\beno \mathcal{Y}(G,\mathfrak{F}_{\phi}H,\mathfrak{F}_{\phi}F) &=& \sum_{j,k=-1}^{\infty} \mathcal{Y}(G,\mathfrak{F}_{j}\mathfrak{F}_{\phi}H, \mathfrak{F}_{k}\mathfrak{F}_{\phi}F)
\\&=& \sum_{-1\leq j\leq k \lesssim |\ln \epsilon| }\mathcal{Y}(G,\mathfrak{F}_{j}\mathfrak{F}_{\phi}H, \mathfrak{F}_{k}\mathfrak{F}_{\phi}F) +
\sum_{-1\leq k < j \lesssim |\ln \epsilon| } \mathcal{Y}(G,\mathfrak{F}_{j}\mathfrak{F}_{\phi}H, \mathfrak{F}_{k}\mathfrak{F}_{\phi}F).
\eeno
For simplicity, set $H_{j}:=\mathfrak{F}_{j}\mathfrak{F}_{\phi}H, F_{k}:=\mathfrak{F}_{k}\mathfrak{F}_{\phi}F$.

\underline{\it Case 1: $k<j$.} We will apply Taylor expansion to $\mu^{-a/2}F_{k}$.
Note that
\beno
\mathcal{Y}(G,H_{j},F_{k}) &=& \int B^{\epsilon,\gamma}_{\eta} \phi(\sin(\theta/2)/2^{k}) (\mu^{2a}G)_{*} \mu^{-a/2} H_{j} ((\mu^{-a/2}F_{k})^{\prime}-\mu^{-a/2}F_{k}) d\sigma dv dv_{*}
\\&&+ \int B^{\epsilon,\gamma}_{\eta} (1-\phi(\sin(\theta/2)/2^{k})) (\mu^{2a}G)_{*} \mu^{-a/2} H_{j} ((\mu^{-a/2}F_{k})^{\prime}-\mu^{-a/2}F_{k}) d\sigma dv dv_{*}
\\&:=&\mathcal{X}_{1}(G,H_{j},F_{k}) +\mathcal{X}_{2}(G,H_{j},F_{k}).
\eeno

\underline{Estimate of $\mathcal{X}_{1}(G,H_{j},F_{k})$.}
We remind the reader that in this case $\epsilon \lesssim \theta \lesssim 2^{-k}$.
By Taylor expansion up to order 2,
\beno (\mu^{-a/2}F_{k})^{\prime}-\mu^{-a/2}F_{k} &=& (\nabla (\mu^{-a/2}F_{k})) \cdot (v^{\prime}-v)
\\&&+ \int_{0}^{1} \frac{1-\kappa}{2}  (\nabla^{2}(\mu^{-a/2}F_{k}))(v(\kappa)):(v^\prime - v)\otimes (v^\prime - v)
 d\kappa. \eeno
We have $\mathcal{X}_{1}(G,H_{j},F_{k})= \mathcal{X}_{1,1}(G,H_{j},F_{k}) + \mathcal{X}_{1,2}(G,H_{j},F_{k})$
according to the previous expansion with
\beno
\mathcal{X}_{1,1}(G,H_{j},F_{k})& :=& \int B^{\epsilon,\gamma}_{\eta} \phi(\sin(\theta/2)/2^{k}) (\mu^{2a}G)_{*} \mu^{-a/2} H_{j}
 (\nabla (\mu^{-a/2}F_{k})) \cdot (v^{\prime}-v) d\sigma dv dv_{*} ,
\\
\mathcal{X}_{1,2}(G,H_{j},F_{k})&:=& \int B^{\epsilon,\gamma}_{\eta} \phi(\sin(\theta/2)/2^{k}) (\mu^{2a}G)_{*} \mu^{-a/2} H_{j}
\\&&\times \frac{1-\kappa}{2} [ (\nabla^{2}(\mu^{-a/2}F_{k}))(v(\kappa)) :(v^\prime - v)\otimes (v^\prime - v) ]
 d\sigma dv dv_{*} d\kappa .
\eeno

{\it $\bullet$ Estimate of $\mathcal{X}_{1,1}(G,H_{j},F_{k})$.}
Plugging \eqref{around-mode-k} into $\mathcal{X}_{1,1}(G,H_{j},F_{k})$, we have
\beno
\mathcal{X}_{1,1}(G,H_{j},F_{k})&\lesssim& |\ln \epsilon|^{-1}(|\ln \epsilon|-k \ln 2 + 1) |\int 1_{|v - v_{*}|\leq \eta} |v - v_{*}|^{\gamma+1}(\mu^{2a}G)_{*} \mu^{-a/2} H_{j} |(\nabla \mu^{-a/2}F_{k})| dv dv_{*}
\\&\lesssim&  |\ln \epsilon|^{-1}(|\ln \epsilon|-k \ln 2 + 1) \int 1_{|v - v_{*}|\leq \eta}
|v - v_{*}|^{-2}G_{*}  H_{j} (|F_{k}|+|\nabla F_{k}|) dv dv_{*},
\eeno
where we use \eqref{mu-H-nabla} and \eqref{to-3-4}.  Thanks to \eqref{Y2ghf1} and \eqref{Y2ghf2}, for $(s_{3},s_{4})=(1/2+\delta,0)$ or $(s_{3},s_{4})=(0, 1/2+\delta)$, we have
\beno
\mathcal{X}_{1,1}(G,H_{j},F_{k}) \lesssim  |\ln \epsilon|^{-1}(|\ln \epsilon|-k \ln 2 + 1)2^{k} \delta^{-1/2}\eta^{\delta} |G|_{H^{s_3}}|H_{j}|_{H^{s_4}}|F_{k}|_{L^{2}}.
\eeno

{\it $\bullet$ Estimate of $\mathcal{X}_{1,2}(G,H_{j},F_{k})$.} By Cauchy-Schwartz inequality and the change \eqref{change-exact-formula}-\eqref{change-Jacobean-bound},
we get
\beno
\mathcal{X}_{1,2}(G,H_{j},F_{k})&\lesssim&  \int b^{\epsilon} \phi \left(\sin(\theta/2)/2^{k}\right)\theta^{2} 1_{|v - v_{*}|\leq \eta} |v - v_{*}|^{\gamma+2}
\\&&\times|(\mu^{2a}G)_{*} \mu^{-a/2} H_{j} (\nabla^{2} \mu^{-a/2}F_{k})(v(\kappa)) | d\sigma dv dv_{*} d\kappa
\\&\lesssim&  \big(   \int b^{\epsilon} \phi\left(\sin(\theta/2)/2^{k}\right)\theta^{2} 1_{|v - v_{*}|\leq \eta} |v - v_{*}|^{-1}(\mu^{2a}G)_{*} ( \mu^{-a/2} H_{j})^{2} d\theta dv dv_{*}  \big)^{1/2}
\\&&\times \big( \int b^{\epsilon} \phi\left(\sin(\theta/2)/2^{k}\right)\theta^{2} 1_{|v - v_{*}|\leq \eta} |v - v_{*}|^{-1}(\mu^{2a}G)_{*}|(\nabla^{2} \mu^{-a/2}F_{k})|^{2} d\theta dv dv_{*} \big)^{1/2}
\\&\lesssim&  |\ln \epsilon|^{-1}(|\ln \epsilon|-k \ln 2 + 1)
\big(   \int  1_{|v - v_{*}|\leq \eta} |v - v_{*}|^{-1-2\delta}|G_{*}|^2 H_{j}^{2} dv dv_{*} \big)^{1/2}
\\&&\times \big( \int 1_{|v - v_{*}|\leq \eta} |v - v_{*}|^{-1+2\delta}
(|\nabla^{2} F_{k}|^{2}+|\nabla F_{k}|^{2}+| F_{k}|^{2} ) dv dv_{*} \big)^{1/2},
\eeno
where we use the fact
$
|\nabla^{2} (\mu^{-a/2}F)|
\lesssim  (1+a^{2})\mu^{-a}(|\nabla^{2} F|+|\nabla F| + |F|)
$
and \eqref{to-3-4}.  Thanks to \eqref{Y2ghf1} and \eqref{Y2ghf2},
we get
\beno
\mathcal{X}_{1,2}(G,H_{j},F_{k}) \lesssim   |\ln \epsilon|^{-1}(|\ln \epsilon|-k \ln 2 + 1)2^{2k} \delta^{-1/2}\eta^{\delta} |G|_{H^{s_3}}|H_{j}|_{H^{s_4}}|F_{k}|_{L^{2}}.
\eeno

Patching together the estimates of $\mathcal{X}_{1,1}(G,H_{j},F_{k})$ and $\mathcal{X}_{1,2}(G,H_{j},F_{k}),$ we get
\beno  \mathcal{X}_{1}(G,H_{j},F_{k}) \lesssim   |\ln \epsilon|^{-1}(|\ln \epsilon|-k \ln 2 + 1)2^{2k}\delta^{-1/2}\eta^{\delta} |G|_{H^{s_{3}}}|H_{j}|_{H^{s_{4}}}|F_{k}|_{L^{2}}. \eeno

\underline{Estimate of $\mathcal{X}_{2}(G,H_{j},F_{k})$.} In this case, one has $\theta \gtrsim 2^{-k}$.
By Taylor expansion up to order 1,
\beno  |(\mu^{-a/2}F_{k})^{\prime}-\mu^{-a/2}F_{k}| &=& |\int_{0}^{1}(\nabla (\mu^{-a/2}F_{k}))(v(\kappa))\cdot (v^{\prime}-v)d\kappa|
\\&\lesssim&
\theta|v-v_{*}|\int_{0}^{1}|(\nabla (\mu^{-a/2}F_{k}))(v(\kappa))|d\kappa. \eeno
Plugging which into $\mathcal{X}_{2}(G,H_{j},F_{k})$, since $\int_{2^{-k}}^{\pi/2} \theta^{-2} d\theta \lesssim 2^{k}$, by similar computation as in \eqref{cauchy-change-integration}, we get
\beno
\mathcal{X}_{2}(G,H_{j},F_{k})&\lesssim&   |\ln \epsilon|^{-1}2^{k} \big(   \int  1_{|v - v_{*}|\leq \eta} |v - v_{*}|^{-1-2\delta}|G_{*}|^2 H_{j}^{2}  dv dv_{*}\big)^{1/2}
\\&&\times \big( \int 1_{|v - v_{*}|\leq \eta} |v - v_{*}|^{-3+2\delta} (|\nabla F_{k}|^{2}+| F_{k}|^{2} ) dv dv_{*} \big)^{1/2}.
\eeno
We conclude that  for $0 < \delta <1/2, (s_{3},s_{4})=(1/2+\delta,0)$ or $(s_{3},s_{4})=(0, 1/2+\delta)$,
\beno  \mathcal{X}_{2}(G,H_{j},F_{k}) \lesssim   |\ln \epsilon|^{-1}2^{2k} \delta^{-1/2}\eta^{\delta} |G|_{H^{s_{3}}}|H_{j}|_{H^{s_{4}}}|F_{k}|_{L^{2}}. \eeno

To summarize, we have when $k<j$,
\ben \label{k-less-j-low-low}  \mathcal{Y}(G,H_{j},F_{k}) \lesssim   |\ln \epsilon|^{-1}(1 + |\ln \epsilon|-k \ln 2)2^{2k} \delta^{-1/2}\eta^{\delta} |G|_{H^{s_{3}}}|H_{j}|_{H^{s_{4}}}|F_{k}|_{L^{2}}. \een

\underline{\it Case 2: $j \leq  k$.}
Note that
\beno
\mathcal{Y}(G,H_{j},F_{k}) &=& \int B^{\epsilon,\gamma}_{\eta}  (\mu^{2a}G)_{*} \mu^{-a/2}H_{j} \big((\mu^{-a/2}F_{k})^{\prime}-\mu^{-a/2}F_{k} \big) d\sigma dv dv_{*}
\\&=& \int B^{\epsilon,\gamma}_{\eta} (\mu^{2a}G)_{*} \big((\mu^{-a}H_{j}F_{k})^{\prime}-\mu^{-a}H_{j}F_{k} \big) d\sigma dv dv_{*}
\\&&+\int B^{\epsilon,\gamma}_{\eta} (\mu^{2a}G)_{*} \big(\mu^{-a/2}H_{j}-(\mu^{-a/2}H_{j})^{\prime}\big)(\mu^{-a/2}F_{k})^{\prime} d\sigma dv dv_{*}
\\&=& \mathcal{Y}_{1}(G,H_{j},F_{k}) + \mathcal{Y}_{2}(G,H_{j},F_{k}),
\eeno
where $\mathcal{Y}_{1}$ and $\mathcal{Y}_{2}$ are defined  in {\it Step 1}. Since  $\mathcal{Y}_{1}(G,H_{j},F_{k})$ is handled in {\it Step 1} and  $\mathcal{Y}_{2}(G,H_{j},F_{k})$ enjoys almost the same argument as that for $\mathcal{Y}(G,H_{j},F_{k})$ in the {\it Case 1} where  $k<j$, we conclude from \eqref{cancellation-less-eta}, \eqref{geq-delta-no-small-eta} and \eqref{k-less-j-low-low}
 that for
 $j \leq  k$,
\ben \label{with-small-eta-k-less-j} |\mathcal{Y}(G,H_{j},F_{k})| &\lesssim& |\ln \epsilon|^{-1}(1 + |\ln \epsilon|-j \ln 2)2^{2j} \delta^{-1/2}\eta^{\delta } |G|_{H^{s_{3}}}|H_{j}|_{H^{s_{4}}}|F_{k}|_{L^{2}} \\&&+ \delta^{-1/2}(\eta + \epsilon^{1/2})|G|_{H^{3/2+\delta}}|W^{\epsilon}(D)H_{j}|_{L^{2}}|W^{\epsilon}(D)F_{k}|_{L^{2}},  \nonumber  \\
 \label{without-small-eta-k-less-j}  |\mathcal{Y}(G,H_{j},F_{k})| &\lesssim& |\ln \epsilon|^{-1}(1 + |\ln \epsilon|-j \ln 2)2^{2j} \delta^{-1/2}\eta^{\delta } |G|_{H^{s_{3}}}|H_{j}|_{H^{s_{4}}}|F_{k}|_{L^{2}} \\&&+ \delta^{-1/2}|G|_{H^{s_{3}}} |H_{j}|_{H^{1+s_{4}}}|F_{k}|_{L^{2}}.\nonumber \een

By \eqref{k-less-j-low-low} and \eqref{with-small-eta-k-less-j}, we have
\ben \label{with-eta-small-low-low-final} |\mathcal{Y}(G,\mathfrak{F}_{\phi}H,\mathfrak{F}_{\phi}F) | &\lesssim& \delta^{-1/2}\eta^{\delta } |G|_{H^{s_{3}}}\sum_{-1\leq k < j \lesssim |\ln \epsilon| } 2^{2k} \frac{|\ln \epsilon|- k\ln 2 +1}{|\ln \epsilon|} |H_{j}|_{H^{s_{4}}} |F_{k}|_{L^{2}}
 \nonumber\\&&+\delta^{-1/2}\eta^{\delta } |G|_{H^{s_{3}}}\sum_{-1\leq j\leq k \lesssim |\ln \epsilon| } 2^{2j} \frac{|\ln \epsilon|- j\ln 2 +1}{|\ln \epsilon|} |H_{j}|_{H^{s_{4}}} |F_{k}|_{L^{2}} \nonumber
\\&&+ \delta^{-1/2}(\eta + \epsilon^{1/2})|G|_{H^{3/2+\delta}}\sum_{-1\leq j \leq k \lesssim |\ln \epsilon| } |W^{\epsilon}(D)H_{j}|_{L^{2}}|W^{\epsilon}(D)F_{k}|_{L^{2}} \nonumber
\\&\lesssim&  \delta^{-1/2}\eta^{\delta } |G|_{H^{s_{3}}}|W^{\epsilon}(D)H|_{H^{s_{4}}}|W^{\epsilon}(D)F|_{L^{2}}
\\&&+\delta^{-1/2}(\eta + \epsilon^{1/2})|G|_{H^{3/2+\delta}} |W^{\epsilon}(D)H|_{L^{2}}|W^{\epsilon}(D)F|_{L^{2}}. \nonumber
\een
Similarly, by \eqref{k-less-j-low-low} and \eqref{without-small-eta-k-less-j}, we have
\ben \label{without-eta-small-low-low-final} |\mathcal{Y}(G,\mathfrak{F}_{\phi}H,\mathfrak{F}_{\phi}F) | &\lesssim&  \delta^{-1/2}\eta^{\delta } |G|_{H^{s_{3}}}|W^{\epsilon}(D)H|_{H^{s_{4}}}|W^{\epsilon}(D)F|_{L^{2}} + \delta^{-1/2}|G|_{H^{s_{3}}} |H|_{H^{1+s_{4}}}|F|_{L^{2}} \nonumber
\\&\lesssim&  \delta^{-1/2}|G|_{H^{s_{3}}} |H|_{H^{1+s_{4}}}|W^{\epsilon}(D)F|_{L^{2}}.
\een

Patching all the estimates, we get the proposition. Indeed, patching together \eqref{all-low-with-small-eta}, \eqref{high-low-with-or-without-eta} and \eqref{with-eta-small-low-low-final}, we get \eqref{with-small-eta}.
Patching together \eqref{all-low-without-small-eta}, \eqref{high-low-with-or-without-eta} and \eqref{without-eta-small-low-low-final}, we get \eqref{without-small-eta}.
\end{proof}

\subsubsection{Upper bound of $Q^{\epsilon,\gamma}(g,h)$}
As a result of Proposition \ref{ubqepsilonnonsingular} and Proposition \ref{ubqepsilon-singular}, we get
\begin{thm}\label{Q-full-up-bound} Let $\delta \in(0,1/2], \eta\in(0,1],a\in[0,1]$ and $(s_{3},s_{4})=(1/2+\delta,0)$ or $ (0, 1/2+\delta)$. Then for any smooth functions $g, h$ and $f$, we have
\beno
 |\langle Q^{\epsilon,\gamma}(g,h), f\rangle| &\lesssim& \delta^{-1/2} (\eta^{\delta}+\epsilon^{1/2})|\mu^{-2a}g|_{H^{3/2+\delta}}|\mu^{a/2}h|_{H^{1}}|W^{\epsilon}(D)\mu^{a/2}f|_{L^{2}}
\\&&+\eta^{\gamma-3}|g|_{L^{1}_{|\gamma|+2}}|h|_{\epsilon,\gamma/2}|f|_{\epsilon,\gamma/2},
\\
|\langle Q^{\epsilon,\gamma}(g,h), f\rangle| &\lesssim& \delta^{-1/2}|\mu^{-2a}g|_{H^{s_{3}}}|\mu^{a/2}h|_{H^{1+s_{4}}}|W^{\epsilon}(D)\mu^{a/2}f|_{L^{2}}+
|g|_{L^{1}_{|\gamma|+2}}|h|_{\epsilon,\gamma/2}|f|_{\epsilon,\gamma/2}.
\eeno
\end{thm}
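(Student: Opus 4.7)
The plan is straightforward: exploit the decomposition \eqref{Q-ep-ga-sep-eta}, namely
\[
Q^{\epsilon,\gamma}(g,h) = Q^{\epsilon,\gamma,\eta}(g,h) + Q^{\epsilon,\gamma}_{\eta}(g,h),
\]
which splits the collision operator into its regular part (supported on $|v-v_*|\ge\eta$) and its singular part (supported on $|v-v_*|\le\eta$). Each piece has already been estimated: Proposition \ref{ubqepsilonnonsingular} handles the regular part, while Proposition \ref{ubqepsilon-singular} handles the singular part, offering two flavors of estimate (one with a smallness factor $\eta^\delta+\epsilon^{1/2}$, another without).

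First I would apply Proposition \ref{ubqepsilonnonsingular} to the inner product $\langle Q^{\epsilon,\gamma,\eta}(g,h), f\rangle$, yielding the universal contribution $\eta^{\gamma-3}|g|_{L^1_{|\gamma|+2}}|h|_{\epsilon,\gamma/2}|f|_{\epsilon,\gamma/2}$ that appears in both stated bounds. Note that in the second bound the exponent $\eta^{\gamma-3}$ is effectively replaced by a universal constant; this is legitimate because we restrict to $\eta\in(0,1]$ and $\gamma-3<0$, but since we want an $\eta$-independent bound we simply absorb $\eta^{\gamma-3}$ into a constant by choosing $\eta$ bounded below by a universal constant (e.g.\ $\eta=1$), which is compatible with Proposition \ref{ubqepsilonnonsingular}.

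Next I would apply Proposition \ref{ubqepsilon-singular} to $\langle Q^{\epsilon,\gamma}_{\eta}(g,h), f\rangle$. For the first stated bound, I use estimate \eqref{with-small-eta} directly, which produces the term
\[
\delta^{-1/2}(\eta^{\delta}+\epsilon^{1/2})|\mu^{-2a}g|_{H^{3/2+\delta}}|\mu^{a/2}h|_{H^{1}}|W^{\epsilon}(D)\mu^{a/2}f|_{L^{2}}.
\]
For the second stated bound, I use estimate \eqref{without-small-eta}, which produces
\[
\delta^{-1/2}|\mu^{-2a}g|_{H^{s_{3}}}|\mu^{a/2}h|_{H^{1+s_{4}}}|W^{\epsilon}(D)\mu^{a/2}f|_{L^{2}}
\]
for $(s_3,s_4)=(1/2+\delta,0)$ or $(0,1/2+\delta)$.

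Summing the two contributions yields both estimates immediately. Since the theorem is a direct corollary of the two preceding propositions via the linear decomposition \eqref{Q-ep-ga-sep-eta}, there is essentially no obstacle beyond bookkeeping the two alternative forms of the singular-part estimate; the serious technical work (the cancellation lemma and the dyadic frequency analysis) has already been carried out in the proofs of Propositions \ref{ubqepsilonnonsingular} and \ref{ubqepsilon-singular}.
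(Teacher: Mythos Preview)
Your proposal is correct and matches the paper's own argument exactly: the theorem is stated as a direct consequence of Proposition~\ref{ubqepsilonnonsingular} and Proposition~\ref{ubqepsilon-singular} via the decomposition \eqref{Q-ep-ga-sep-eta}, and your handling of the $\eta$-dependence in the second bound (fixing $\eta=1$ to absorb $\eta^{\gamma-3}$ into a universal constant) is the intended reading.
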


\subsubsection{Upper bound of $\langle I^{\epsilon,\gamma}_{\eta}(g,h), f\rangle$} We derive
\begin{prop} \label{I-less-eta-upper-bound}  Let $\delta\in(0,1/2], \eta\in(0,1]$ and $(s_{3},s_{4})=(1/2+\delta,0)$ or $ (0, 1/2+\delta)$. Then for any smooth functions $g, h$ and $f$, there holds
\beno \langle I^{\epsilon,\gamma}_{\eta}(g,h;\beta), f\rangle  \lesssim  \delta^{-1/2}\eta^{\delta}|\mu^{1/16}g|_{H^{s_{3}}}|W^{\epsilon}(D)\mu^{1/16}h|_{H^{s_{4}}}|W^{\epsilon}(D)\mu^{1/16}f|_{L^{2}}. \eeno
\end{prop}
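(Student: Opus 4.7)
The plan is to mirror the structure of Proposition \ref{upforI-ep-ga-et}, but now with the integration restricted to the singular region $|v-v_*|\le\eta$, and to harvest a factor $\eta^{\delta}$ from the remaining singularity $|v-v_*|^{\gamma}=|v-v_*|^{-3}$. Writing $\partial_{\beta}\mu^{1/2}=\mu^{1/2}P_{\beta}$ with $P_{\beta}$ polynomial, I would first invoke the three-piece identity \eqref{nice-decomposition} and split
\[
\langle I^{\epsilon,\gamma}_{\eta}(g,h;\beta),f\rangle=\mathcal{J}_{1}+\mathcal{J}_{2}+\mathcal{J}_{3},
\]
where $\mathcal{J}_{1}$ carries the product $((\mu^{1/4})_{*}^{\prime}-(\mu^{1/4})_{*})((\mu^{1/4}P_{\beta})_{*}^{\prime}-(\mu^{1/4}P_{\beta})_{*})$, while $\mathcal{J}_{2}$ and $\mathcal{J}_{3}$ contain a single Maxwellian difference paired with a smooth $(\mu^{1/4})_{*}$ or $(\mu^{1/4}P_{\beta})_{*}$ factor. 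In both $\mathcal{J}_{2}$ and $\mathcal{J}_{3}$, I would absorb the smooth $(\mu^{1/4})_{*}$ factor into $g$ (producing $\mu^{1/4}g$, which will eventually be majorized by $\mu^{1/16}g$ thanks to the closeness inequality \eqref{to-1-2} and the restriction $|v-v_*|\le\eta\le 1$).

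For $\mathcal{J}_{1}$, I would apply Cauchy--Schwarz, using \eqref{mu-prodcut-square} to bound both squared Maxwellian differences by $\min\{1,|v-v_*|^2\theta^2\}$, which yields after the standard spherical integration (as in the proof of Proposition \ref{symbol}) a gain of $\min\{(W^{\epsilon})^2(v-v_*),|v-v_*|^2\}$. The remaining radial integral is controlled by $\int_{|v-v_*|\le\eta}|v-v_*|^{-1+2\delta}\,d(v-v_*)\lesssim\delta^{-1}\eta^{2\delta+2}$, which delivers both the $\eta^{\delta}$ gain and the $\delta^{-1/2}$ factor after taking square root. For $\mathcal{J}_{2}$ and $\mathcal{J}_{3}$, both carrying a single Maxwellian difference, I would rewrite them in a form suitable for applying Proposition \ref{ubqepsilon-singular}. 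Concretely, $\mathcal{J}_{2}$ has the structure of an inner product involving $Q^{\epsilon,\gamma}_{\eta}(\mu^{1/4}g,h)$ modulo rearrangement, so after using the factorization of the Maxwellian difference as in \eqref{estimate-I-21} (Cauchy--Schwarz separating $h-h'$ from $((\mu^{1/4})_*^{\prime}-(\mu^{1/4})_*)$), one factor reduces to $\langle Q^{\epsilon,\gamma}_{\eta}(\mu^{1/4}g,h),h\rangle$, handled by \eqref{without-small-eta} in Proposition \ref{ubqepsilon-singular}, while the other is a purely cancellation-type integral yielding the $\eta^{\delta}$ factor by the same radial integral as above. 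The treatment of $\mathcal{J}_{3}$ is analogous via the change of variables $(v,v_*,\sigma)\to(v',v'_*,(v-v_*)/|v-v_*|)$ as in the third step of Proposition \ref{upforI-ep-ga-et}, reducing again to either the cancellation bound or to Proposition \ref{ubqepsilon-singular}.

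Finally, to obtain the $\mu^{1/16}$-weights on all three functions $g,h,f$, I would systematically exploit the fact that in the singular region $|v-v_*|\le\eta\le 1$, the bounds \eqref{to-1-2} and \eqref{to-3-4} let me shift small Gaussian weights such as $(\mu^{1/8})_{*}^{\prime}$, $(\mu^{1/8})_{*}$ and $\mu(v(\kappa))$ freely among the variables $v,v_*,v',v'_*$; after distributing one copy of $\mu^{1/16}$ onto each of $g$, $h$, $f$, what remains is a small Gaussian on one variable which is harmlessly absorbed into the $L^{1}$ or $L^{\infty}$ integrations.

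The main obstacle will be the bookkeeping in $\mathcal{J}_{1}$: even though $((\mu^{1/4})_*^{\prime}-(\mu^{1/4})_*)((\mu^{1/4}P_\beta)_*^{\prime}-(\mu^{1/4}P_\beta)_*)\lesssim |v-v_*|^2\theta^2$ looks favorable, after integrating $b^{\epsilon}(\cos\theta)\theta^{2}$ one still collects a potentially logarithmically divergent factor, and the targeted bound requires keeping a copy of $W^{\epsilon}(D)$ on both $h$ and $f$ (rather than on $g$). The way out is to carefully split the angular integral at $\sin(\theta/2)\sim 2^{-k}$ in the dyadic frequency decomposition of $\mathfrak{F}_{\phi}h$ and $\mathfrak{F}_{\phi}f$, as was done for $\mathcal{X}_{1},\mathcal{X}_{2}$ in the proof of Proposition \ref{ubqepsilon-singular}, and then to combine the resulting $2^{2k}$ factors with $|\ln\epsilon|^{-1}(|\ln\epsilon|-k\ln 2+1)$ to reconstruct $|W^{\epsilon}(D)\mathfrak{F}_{k}h|_{L^{2}}\cdot|W^{\epsilon}(D)\mathfrak{F}_{k}f|_{L^{2}}$ under the summation, with the $\eta^{\delta}$ factor surviving from the localized radial integral.
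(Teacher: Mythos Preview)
Your proposal has a genuine gap in the treatment of $\mathcal{J}_{2}$ and $\mathcal{J}_{3}$. Reducing to $\langle Q^{\epsilon,\gamma}_{\eta}(\mu^{1/4}g,h),h\rangle$ via Cauchy--Schwarz and then invoking Proposition~\ref{ubqepsilon-singular} forces a full derivative on $h$: estimate \eqref{without-small-eta} gives $|\mu^{a/2}h|_{H^{1+s_{4}}}$, not $|W^{\epsilon}(D)\mu^{1/16}h|_{H^{s_{4}}}$. These are not comparable uniformly in $\epsilon$ (cf.\ Lemma~\ref{interWsd}: one cannot bound $|h|_{H^{1}}$ by $|W^{\epsilon}(D)h|_{L^{2}}$). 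The same issue arises if you try to handle $\mathcal{J}_{2}$ directly by Cauchy--Schwarz: with only one Maxwellian difference you extract a single $\theta$-factor, and after $\int b^{\epsilon}\theta\,d\sigma\sim|\ln\epsilon|^{-1}\epsilon^{-1}$ you are left with a factor $|\ln\epsilon|^{-1}\epsilon^{-1}|h|_{H^{s_{4}}}|f|_{L^{2}}$ that cannot be absorbed into $|W^{\epsilon}(D)h|_{H^{s_{4}}}|W^{\epsilon}(D)f|_{L^{2}}$. Your frequency-splitting remark in the last paragraph is aimed at $\mathcal{J}_{1}$, but $\mathcal{J}_{1}$ is actually the easy term (two Maxwellian differences already give $\theta^{2}$, and the resulting bound $|h|_{H^{s_{4}}}|f|_{L^{2}}$ sits below the target); the hard terms are $\mathcal{J}_{2},\mathcal{J}_{3}$.

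The paper does \emph{not} use the decomposition \eqref{nice-decomposition} here. Instead it keeps the full Maxwellian difference $(\mu^{1/2})'_{*}-\mu^{1/2}_{*}$ and creates a difference in $h$ by writing
\[
(\mu^{-1/16}H)(\mu^{-1/16}F)'
=\bigl(\mu^{-1/16}H-(\mu^{-1/16}H)'\bigr)(\mu^{-1/16}F)'+(\mu^{-1/8}HF)'.
\]
This yields two pieces $\mathcal{A}$ and $\mathcal{B}$. In $\mathcal{A}$ the Maxwellian difference and the $(H-H')$-difference together supply $\theta^{2}$, with the Taylor expansion of $H-H'$ producing exactly one derivative on $H$; the dyadic frequency analysis on $H,F$ (low--low with the angular cut at $2^{-k}$, as you describe) then converts this into $W^{\epsilon}(D)$ on both $H$ and $F$ with no full derivative appearing. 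In $\mathcal{B}$ one Taylor-expands $\mu^{1/2}$ to second order and kills the first-order term by the symmetry identity \eqref{dispear1}, which again yields $\theta^{2}$ without touching $h$ or $f$. The missing idea in your plan is precisely this add--subtract of $(\mu^{-1/16}H)'$.
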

\begin{proof}Let us consider the $\beta=0$ case since the following arguments also work when we replace $\mu^{1/2}$  with $P_{\beta}\mu^{1/2}$.
Recall
\ben \label{I-ep-ga-eta-ghf}
\langle I^{\epsilon,\gamma}_{\eta}(g,h), f\rangle =  \int B^{\epsilon,\gamma}_{\eta}((\mu^{1/2})_{*}^{\prime} - \mu_{*}^{1/2})g_{*} h f^{\prime} d\sigma dv_{*} dv.
\een
By setting $G = \mu^{1/16}g, H = \mu^{1/16}h, F = \mu^{1/16}f$, we have
\beno
\langle I^{\epsilon,\gamma}_{\eta}(g,h), f\rangle &=&  \int B^{\epsilon,\gamma}_{\eta}((\mu^{1/2})_{*}^{\prime} - \mu_{*}^{1/2})(\mu^{-1/16}G)_{*} \mu^{-1/16}H (\mu^{-1/16}F)^{\prime} d\sigma dv_{*} dv
\\&=&  \int B^{\epsilon,\gamma}_{\eta}((\mu^{1/2})_{*}^{\prime} - \mu_{*}^{1/2})(\mu^{-1/16}G)_{*} \big(\mu^{-1/16}H- (\mu^{-1/16}H)^{\prime}\big)(\mu^{-1/16}F)^{\prime} d\sigma dv_{*} dv
\\&&+  \int B^{\epsilon,\gamma}_{\eta}((\mu^{1/2})_{*}^{\prime} - \mu_{*}^{1/2})(\mu^{-1/16}G)_{*} (\mu^{-1/8}H F)^{\prime} d\sigma dv_{*} dv
:= \mathcal{A}(G, H, F) + \mathcal{B}(G, H, F)
.
\eeno

{\it Step 1: $\mathcal{A}(G, H, F)$.}
By the decomposition $F = \mathfrak{F}_{\phi}F + \mathfrak{F}^{\phi}F$ and $H = \mathfrak{F}_{\phi}H + \mathfrak{F}^{\phi}H$, we have
\beno \mathcal{A}(G, H, F) = \mathcal{A}(G,\mathfrak{F}_{\phi}H,\mathfrak{F}_{\phi}F) + \mathcal{A}(G,\mathfrak{F}^{\phi}H,\mathfrak{F}_{\phi}F) +\mathcal{A}(G,\mathfrak{F}_{\phi}H,\mathfrak{F}^{\phi}F) +\mathcal{A}(G,\mathfrak{F}^{\phi}H,\mathfrak{F}^{\phi}F).
\eeno
\underline{\it Step 1.1: low-high, high-low, high-high.}
By Taylor expansion, we get
\beno |(\mu^{1/2})^{\prime}_{*}-\mu^{1/2}_{*}| = |\int_{0}^{1} (\nabla \mu^{1/2})(v_{*}(\iota)) \cdot (v^{\prime}-v) d\iota| \lesssim
\theta|v-v_{*}| \int |\mu^{1/4}(v_{*}(\iota))| d\iota, \eeno
which yields
\beno |\mathcal{A}(G,H,F)| &\lesssim& \int B^{\epsilon,\gamma}_{\eta}\theta|v-v_{*}| \mu^{1/4}(v_{*}(\iota)) |(\mu^{-1/16}G)_{*} \big(\mu^{-1/16}H- (\mu^{-1/16}H)^{\prime}\big)(\mu^{-1/16}F)^{\prime}| d\sigma dv dv_{*} d\iota
\\&\lesssim&\int B^{\epsilon,\gamma}_{\eta}\theta|v-v_{*}| \mu^{1/4}(v_{*}(\iota)) \mu^{-1/16}(v_{*}) \mu^{-1/16}(v) \mu^{-1/16}(v^{\prime}) |G_{*}HF^{\prime}| d\sigma dv dv_{*} d\iota
\\&&+\int B^{\epsilon,\gamma}_{\eta}\theta|v-v_{*}|  \mu^{1/4}(v_{*}(\iota)) \mu^{-1/16}(v_{*}) \mu^{-1/8}(v^{\prime}) |G_{*}H^{\prime}F^{\prime}| d\sigma dv dv_{*} d\iota.
\eeno
Note that $|v-v_{*}| \leq \eta \leq 1$ yields $|v_{*}-v_{*}(\iota)| \leq 1, |v-v_{*}(\iota)| \leq 1, |v^{\prime}-v_{*}(\iota)| \leq 1$. Then by \eqref{to-3-4}, one has
\beno \mu^{1/4}(v_{*}(\iota)) \mu^{-1/16}(v_{*}) \mu^{-1/16}(v) \mu^{-1/16}(v^{\prime}) \lesssim 1, \mu^{1/4}(v_{*}(\iota)) \mu^{-1/16}(v_{*}) \mu^{-1/8}(v^{\prime}) \lesssim 1,\eeno
which yields
\beno |\mathcal{A}(G,H,F)| &\lesssim&\int B^{\epsilon,\gamma}_{\eta} \theta|v-v_{*}| |G_{*}|(|H|+|H^{\prime}|)|F^{\prime}| d\sigma dv dv_{*}
\\&\lesssim& \big(  \int B^{\epsilon,\gamma}_{\eta}\theta|v-v_{*}|^{2-2\delta} |G_{*}|^2 |H|^{2} d\sigma dv dv_{*}  \big)^{1/2}
\big( \int B^{\epsilon,\gamma}_{\eta}\theta|v-v_{*}|^{2\delta}|F|^{2} d\sigma dv dv_{*}  \big)^{1/2},\eeno
where we use Cauchy-Schwartz inequality and the change of variable $v \rightarrow v^{\prime}$. Since $\int_{\epsilon}^{\pi/2} \theta^{-2} d\theta \lesssim \epsilon^{-1}$,
thanks to \eqref{Y2ghf1} and  \eqref{Y2ghf2},   for $(s_{3},s_{4})=(1/2+\delta,0)$ or $(s_{3},s_{4})=(0, 1/2+\delta)$, we derive
\beno |\mathcal{A}(G,H,F)| \lesssim |\ln \epsilon|^{-1} \epsilon^{-1} \delta^{-1/2}\eta^{\delta}
|G|_{H^{s_3}} |H|_{H^{s_4}} |F|_{L^{2}}. \eeno
Taking $(H,F)=(\mathfrak{F}^{\phi}H,\mathfrak{F}_{\phi}F), $ or $(H,F)=(\mathfrak{F}_{\phi}H,\mathfrak{F}^{\phi}F),$ or $  (H,F)=(\mathfrak{F}^{\phi}H,\mathfrak{F}^{\phi}F), $ by \eqref{low-frequency-lb-cf} and \eqref{high-frequency-lb-cf},
we have
\beno | \mathcal{A}(G,\mathfrak{F}^{\phi}H,\mathfrak{F}_{\phi}F) +\mathcal{A}(G,\mathfrak{F}_{\phi}H,\mathfrak{F}^{\phi}F) +\mathcal{A}(G,\mathfrak{F}^{\phi}H,\mathfrak{F}^{\phi}F)| \lesssim  \delta^{-1/2}\eta^{\delta}|G|_{H^{s_3}} |W^{\epsilon}(D)H|_{H^{s_4}}|W^{\epsilon}(D)F|_{L^{2}}. \eeno

\underline{\it Step 1.2: low-low.} We  make dyadic decomposition in the  frequency space and get
\beno \mathcal{A}(G,\mathfrak{F}_{\phi}H,\mathfrak{F}_{\phi}F) &=& \sum_{j,k=-1}^{\infty} \mathcal{A}(G,\mathfrak{F}_{j}\mathfrak{F}_{\phi}H,\mathfrak{F}_{k}\mathfrak{F}_{\phi}F)
\\&=& \sum_{-1\leq j\leq k \lesssim |\ln \epsilon| } \mathcal{A}(G,\mathfrak{F}_{j}\mathfrak{F}_{\phi}H,\mathfrak{F}_{k}\mathfrak{F}_{\phi}F) +
\sum_{-1\leq k < j \lesssim |\ln \epsilon| } \mathcal{A}(G,\mathfrak{F}_{j}\mathfrak{F}_{\phi}H,\mathfrak{F}_{k}\mathfrak{F}_{\phi}F).
\eeno
For simplicity, let $H_{j}=\mathfrak{F}_{j}\mathfrak{F}_{\phi}H, F_{k}=\mathfrak{F}_{k}\mathfrak{F}_{\phi}F$.

\underline{{\it Case 1: $j\leq k$.}}
Let us first consider
$\mathcal{A}(G,H_{j},F_{k})$ for $-1\leq j\leq k \lesssim |\ln \epsilon| $. Recall
\beno \mathcal{A}(G,H_{j},F_{k}) = \int B^{\epsilon,\gamma}_{\eta}((\mu^{1/2})_{*}^{\prime} - \mu_{*}^{1/2})(\mu^{-1/16}G)_{*} \big(\mu^{-1/16}H_{j}- (\mu^{-1/16}H_{j})^{\prime}\big)(\mu^{-1/16}F_{k})^{\prime} d\sigma dv_{*} dv.
\eeno
By Taylor expansion up to order 1,
\beno |\mu^{-1/16}H_{j}-(\mu^{-1/16}H_{j})^{\prime}| = |\int (\nabla \mu^{-1/16}H_{j})(v(\kappa)) \cdot (v^{\prime}-v) d\kappa| \lesssim
\theta|v-v_{*}| \int |(\nabla \mu^{-1/16}H_{j})(v(\kappa))| d\kappa
 \\
 |(\mu^{1/2})^{\prime}_{*}-\mu^{1/2}_{*}| = |\int (\nabla \mu^{1/2})(v_{*}(\iota)) \cdot (v^{\prime}-v) d\iota| \lesssim
\theta|v-v_{*}| \int |(\nabla \mu^{1/2})(v_{*}(\iota))| d\iota. \eeno
From which together with $|\nabla \mu^{-1/16}H_{j}| \lesssim \mu^{-1/8}(|H_{j}|+|\nabla H_{j}|)$ and $|\nabla \mu^{1/2}| \lesssim \mu^{1/3}$, we get
\beno |\mathcal{A}(G,H_{j},F_{k})| &\lesssim& \int B^{\epsilon,\gamma}_{\eta}\theta^{2}|v-v_{*}|^{2} \mu^{1/3}(v_{*}(\iota))  \mu^{-1/8}(v(\kappa)) \mu^{-1/16}(v_{*})\mu^{-1/16}(v^{\prime})
\\&&\times|G_{*}|(|(\nabla H_{j})(v(\kappa))|+ |H_{j}(v(\kappa))|)|F_{k}^{\prime}| d\sigma dv_{*} dv d\kappa d\iota.
\eeno
By \eqref{to-3-4}, we have
$ \mu^{1/3}(v_{*}(\iota))  \mu^{-1/8}(v(\kappa)) \mu^{-1/16}(v_{*})\mu^{-1/16}(v^{\prime}) \lesssim 1, $ which gives
\beno |\mathcal{A}(G,H_{j},F_{k})| &\lesssim& \int B^{\epsilon,\gamma}_{\eta}\theta^{2}|v-v_{*}|^{2}
|G_{*}|(|(\nabla H_{j})(v(\kappa))|+ |H_{j}(v(\kappa))|)|F_{k}^{\prime}| d\sigma dv_{*} dv d\kappa
\\&\lesssim& \big(  \int B^{\epsilon,\gamma}_{\eta}\theta^{2}|v-v_{*}|^{2-2\delta} |G_{*}| (|\nabla H_{j}|^{2}+
|H_{j}|^{2}) d\sigma dv_{*} dv \big)^{1/2}
\\&&\times
\big( \int B^{\epsilon,\gamma}_{\eta}\theta^{2}|v-v_{*}|^{2+2\delta} |F_{k}|^{2} d\sigma dv_{*} dv \big)^{1/2}.
\eeno
where we use Cauchy-Schwartz inequality and the change \eqref{change-exact-formula}-\eqref{change-Jacobean-bound}.
Thanks to \eqref{Y2ghf1} and \eqref{Y2ghf2},  for $(s_{3},s_{4})=(1/2+\delta,0)$ or $(s_{3},s_{4})=(0, 1/2+\delta)$, we derive that
 \beno |\mathcal{A}(G,H_{j},F_{k})| \lesssim  \delta^{-1/2}\eta^{\delta} 2^j|G|_{H^{s_3}} | H_{j}|_{H^{s_4}} |F_{k}|_{L^{2}},\eeno
from which together with the fact
 $ 2^{j} \lesssim 2^{2j} \frac{|\ln \epsilon|- j\ln 2 +1}{|\ln \epsilon|}$,
  we arrive at
\ben \label{j-less-k-low-low-2}|\mathcal{A}(G,H_{j},F_{k})| \lesssim 2^{2j} \frac{|\ln \epsilon|- j\ln 2 +1}{|\ln \epsilon|} \delta^{-1/2}\eta^{\delta}|G|_{H^{s_3}} |H_{j}|_{H^{s_4}} |F_{k}|_{L^{2}}.\een

\underline{{\it Case 2: $ k < j$.}}
Let us now consider $\mathcal{A}(G,H_{j},F_{k})$ for $-1 \leq k < j \lesssim |\ln \epsilon| $.
Note that
\beno \mathcal{A}(G,H_{j},F_{k}) &=& \int B^{\epsilon,\gamma}_{\eta}((\mu^{1/2})_{*}^{\prime} - \mu_{*}^{1/2})(\mu^{-1/16}G)_{*} \big(\mu^{-1/16}H_{j}- (\mu^{-1/16}H_{j})^{\prime}\big)(\mu^{-1/16}F_{k})^{\prime} d\sigma dv_{*} dv
\\&=& \int B^{\epsilon,\gamma}_{\eta} (\mu^{-1/16}G)_{*}(\mu^{-1/8}H_{j}F_{k}-(\mu^{-1/8}H_{j}F_{k})^{\prime})((\mu^{1/2})^{\prime}_{*}-\mu^{1/2}_{*}) d\sigma dv_{*} dv
\\&&+ \int B^{\epsilon,\gamma}_{\eta} (\mu^{-1/16}G)_{*} \mu^{-1/16}H_{j}((\mu^{-1/16}F_{k})^{\prime} - \mu^{-1/16}F_{k})((\mu^{1/2})^{\prime}_{*}-\mu^{1/2}_{*}) d\sigma dv_{*} dv
\\&:=& \mathcal{A}_{1}(g,H_{j},F_{k})+\mathcal{A}_{2}(g,H_{j},F_{k}).
\eeno

{\it $\bullet$ Estimate of  $\mathcal{A}_{1}(g,H_{j},F_{k})$.} By Taylor expansion, one has
\ben \label{taylor-order-2-mu} (\mu^{1/2})^{\prime}_{*}-\mu^{1/2}_{*} = (\nabla \mu^{1/2})_{*} \cdot (v^{\prime}_{*}-v_{*}) + \int_{0}^{1} \frac{1-\kappa}{2}  (\nabla^{2}\mu^{1/2})(v_{*}(\kappa)):(v^\prime_{*} - v_{*})\otimes (v^\prime_{*} - v_{*}) d\kappa. \een
where $v_{*}(\kappa)  = v_{*} + \kappa (v^{\prime}_{*}-v_{*})$. Then $\mathcal{A}_{1}(g,H_{j},F_{k})= \mathcal{A}_{1,1}(g,H_{j},F_{k}) +\mathcal{A}_{1,2}(g,H_{j},F_{k})$,  where
\beno
\mathcal{A}_{1,1}(g,H_{j},F_{k})& := &\int B^{\epsilon,\gamma}_{\eta} (\mu^{-1/16}G)_{*}(\mu^{-1/8}H_{j}F_{k}-(\mu^{-1/8}H_{j}F_{k})^{\prime}) (\nabla \mu^{1/2})_{*} \cdot (v^{\prime}_{*}-v_{*}),
\\
\mathcal{A}_{1,2}(g,H_{j}{},F_{k})&:=& \int B^{\epsilon,\gamma}_{\eta} (\mu^{-1/16}G)_{*}(\mu^{-1/8}H_{j}F_{k}-(\mu^{-1/8}H_{j}F_{k})^{\prime}) \int_{0}^{1} \frac{1-\kappa}{2}  (\nabla^{2}\mu^{1/2})(v_{*}(\kappa))\\&&:(v^\prime_{*} - v_{*})\otimes (v^\prime_{*} - v_{*}) d\kappa.
\eeno
Using the fact $v^{\prime}_{*}-v_{*} = v - v^{\prime}$ and the identities (see \cite{alexandre2002boltzmann})
\ben &&\int  B^{\epsilon,\gamma}_{\eta}  f^{\prime}  (v^{\prime}-v)d\sigma dv =0,\label{dispear1} \\
 &&\int  B^{\epsilon,\gamma}_{\eta}  (v^{\prime}-v)d\sigma =\big(\int  B^{\epsilon,\gamma}_{\eta} \sin^2(\theta/2) d\sigma\big) (v_*-v), \label{dispear2}\een
we have
\beno |\mathcal{A}_{1,1}(g,H_{j},F_{k})|  &=& |\int B^{\epsilon,\gamma}_{\eta} (\mu^{-1/16}G)_{*}(\mu^{-1/8}H_{j}F_{k}) (\nabla \mu^{1/2})_{*} \cdot (v^{\prime}-v) d\sigma dv_{*} dv|
\\&=& |\int B^{\epsilon,\gamma}_{\eta} \sin^{2}\frac{\theta}{2}(\mu^{-1/16}G)_{*}(\mu^{-1/8}H_{j}F_{k})(\nabla \mu^{1/2})_{*} \cdot (v_{*}-v) d\sigma dv_{*} dv|
\\&\lesssim& \int  1_{|v - v_{*}|\leq \eta}|v - v_{*}|^{-2} |G_{*}H_{j}F_{k}| dv_{*} dv
 \lesssim  \delta^{-1/2}\eta^{\delta} | G|_{H^{s_3}}|H_{j}|_{H^{s_4}} |F_{k}|_{L^{2}},
\eeno
where we use \eqref{Y2ghf1} and \eqref{Y2ghf2}.  Similar to the estimate of $\mathcal{A}$ in {\it Case 1},
we get that
\beno |\mathcal{A}_{1,2}(g,H_{j},F_{k}) | &\lesssim& \int B^{\epsilon,\gamma}_{\eta}\theta^{2} |v-v_{*}|^{2} |(\mu^{-1/16}G)_{*}| (|\mu^{-1/8}H_{j}F_{k}| + |(\mu^{-1/8}H_{j}F_{k})^{\prime}|)
\\&&\times|(\nabla^{2}\mu^{1/2})(v_{*}(\kappa))| d\sigma dv_{*} dv d\kappa
\\&\lesssim& \int B^{\epsilon,\gamma}_{\eta}\theta^{2} |v-v_{*}|^{2} |G_{*}| (|H_{j}F_{k}| + |(H_{j}F_{k})^{\prime}|) d\sigma dv_{*} dv
\\&\lesssim& \int  1_{|v - v_{*}|\leq \eta}|v - v_{*}|^{-1} |G_{*}H_{j}F_{k}| dv_{*} dv
\lesssim \delta^{-1/2}\eta^{\delta}|G|_{H^{s_3}} |H_{j}|_{H^{s_4}}|F_{k}|_{L^{2}}.
\eeno
Patching together the previous two estimates, we have
$ |\mathcal{A}_{1}(g,H_{j},F_{k})|  \lesssim  \delta^{-1/2}\eta^{\delta} | G|_{H^{s_3}}|H_{j}|_{H^{s_4}} |F_{k}|_{L^{2}}.
$

{\it $\bullet$ Estimate of  $\mathcal{A}_{2}(g,H_{j},F_{k})$.}  Similar to the idea used to estimate $\mathcal{A}(G,F_{k},H_{j})$ in {\it Case 1},  we apply Taylor expansion to
$\mu^{-1/16}F_{k}$ and get
\beno |\mathcal{A}_{2}(g,H_{j},F_{k})| \lesssim 2^{2k} \frac{|\ln \epsilon|- k\ln 2 +1}{|\ln \epsilon|} \delta^{-1/2}\eta^{\delta}|G|_{H^{s_3}} |H_{j}|_{H^{s_4}} |F_{k}|_{L^{2}}. \eeno

Patching together the estimates of $\mathcal{A}_{1}(g,H_{j},F_{k})$ and $\mathcal{A}_{2}(g,H_{j},F_{k})$, we have for $-1 \leq k < j \lesssim |\ln \epsilon| $,
\ben \label{k-less-j-low-low-2} |\mathcal{A}(G,H_{j},F_{k})| &\lesssim& 2^{2k} \frac{|\ln \epsilon|- k\ln 2 +1}{|\ln \epsilon|} \delta^{-1/2}\eta^{\delta}|G|_{H^{s_3}} |H_{j}|_{H^{s_4}} |F_{k}|_{L^{2}}
\\&&+\delta^{-1/2}\eta^{\delta} | G|_{H^{s_3}}|H_{j}|_{H^{s_4}} |F_{k}|_{L^{2}}. \nonumber \een
Patching together \eqref{j-less-k-low-low-2} and \eqref{k-less-j-low-low-2}, we have
\beno |\mathcal{A}(G,\mathfrak{F}_{\phi}H,\mathfrak{F}_{\phi}F)| &\lesssim& \delta^{-1/2}\eta^{\delta} \sum_{-1\leq j\leq k \lesssim |\ln \epsilon| }2^{2j} \frac{|\ln \epsilon|- j\ln 2 +1}{|\ln \epsilon|} |G|_{H^{s_3}} |H_{j}|_{H^{s_4}} |F_{k}|_{L^{2}}\\&&+
\delta^{-1/2}\eta^{\delta} \sum_{-1\leq k < j \lesssim |\ln \epsilon| } 2^{2k} \frac{|\ln \epsilon|- k\ln 2 +1}{|\ln \epsilon|} |G|_{H^{s_3}} |H_{j}|_{H^{s_4}} |F_{k}|_{L^{2}}
\\&&+
\delta^{-1/2}\eta^{\delta} \sum_{-1\leq k < j \lesssim |\ln \epsilon| }   | G|_{H^{s_3}}|H_{j}|_{H^{s_4}} |F_{k}|_{L^{2}}
\\&\lesssim&   \delta^{-1/2}\eta^{\delta}|G|_{H^{s_{3}}} |W^{\epsilon}(D)H|_{H^{s_{4}}}|W^{\epsilon}(D)F|_{L^{2}},
\eeno
for $(s_{3},s_{4})=(1/2+\delta,0)$ or $(s_{3},s_{4})=(0, 1/2+\delta)$.

{\it Step 2: Estimate of $\mathcal{B}(G, H, F)$.} Recalling \eqref{taylor-order-2-mu},
thanks to \eqref{dispear1}, similar to the estimate of $\mathcal{A}$ in {\it Case 1}, by the change of variable $v \rightarrow v^{\prime}$, we get
\beno |\mathcal{B}(G, H, F)| &\leq&  |\int B^{\epsilon,\gamma}_{\eta} (\mu^{-1/16}G)_{*}(\mu^{-1/8}H F)^{\prime}   (\nabla^{2}\mu^{1/2})(v_{*}(\kappa)):(v^\prime - v)\otimes (v^\prime - v)
d\sigma dv_{*} dv d\kappa|
\\&\lesssim&  \int B^{\epsilon,\gamma}_{\eta} |v - v_{*}|^{2}\theta^{2} |G_{*}(HF)^{\prime}| d\sigma dv_{*} dv
\lesssim \eta^{1/2}|G|_{H^{s_3}} |H|_{H^{s_4}}|F|_{L^{2}}.
 \eeno

Patching together the estimates in {\it Step 1} and {\it Step 2}, we finish the proof.
\end{proof}

\subsubsection{Upper bound of $\langle I^{\epsilon,\gamma}(g,h) , f \rangle$}
By Proposition \ref{upforI-ep-ga-et} and  Proposition \ref{I-less-eta-upper-bound}, we get
\begin{thm}\label{upforI-total}
 Let $\delta\in(0,1/2], \eta \in(0,1]$ and $(s_{3},s_{4})=(1/2+\delta,0)$ or $ (0, 1/2+\delta)$. Then for any smooth functions $g, h$ and $f$, the following two estimates are valid.
\beno
|\langle I^{\epsilon,\gamma}(g,h; \beta) , f \rangle|  &\lesssim& \eta^{\gamma-3}|g|_{L^{2}}|h|_{\epsilon,\gamma/2}|W^{\epsilon}f|_{L^{2}_{\gamma/2}}
+\delta^{-1/2}\eta^{\delta}|\mu^{1/16}g|_{H^{1/2+\delta}}|\mu^{1/16}h|_{H^{1}}|f|_{\epsilon,\gamma/2}\\&& + \delta^{-1/2}|\mu^{1/16}g|_{H^{3/2+\delta}}|\mu^{1/16}h|_{L^2}|f|_{\epsilon,\gamma/2},\\
|\langle I^{\epsilon,\gamma}(g,h; \beta) , f \rangle|  &\lesssim& |g|_{L^{2}}|h|_{\epsilon,\gamma/2}|W^{\epsilon}f|_{L^{2}_{\gamma/2}} + \delta^{-1/2}|\mu^{1/16}g|_{H^{s_{3}}}|\mu^{1/16}h|_{H^{1+s_{4}}}|f|_{\epsilon,\gamma/2}.\eeno
\end{thm}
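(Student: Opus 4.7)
The plan is to derive Theorem \ref{upforI-total} as a direct assembly of the two preceding propositions by using the decomposition \eqref{I-ep-ga-sep-eta}, namely
\[
I^{\epsilon,\gamma}(g,h;\beta) = I^{\epsilon,\gamma,\eta}(g,h;\beta) + I^{\epsilon,\gamma}_{\eta}(g,h;\beta),
\]
which separates the regular region $|v-v_*|\geq \eta$ from the singular region $|v-v_*|\leq \eta$. The regular part is controlled by Proposition \ref{upforI-ep-ga-et}, which contains the $\eta^{\gamma-3}$ singular factor (coming from the factor $|v-v_*|^{\gamma} \lesssim \eta^\gamma \langle v-v_*\rangle^\gamma$ used for $|v-v_*|\geq\eta$ with $\gamma\leq 0$, together with a cancellation-lemma factor $\eta^{-3}$). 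The singular part is controlled by Proposition \ref{I-less-eta-upper-bound}, whose bound is proportional to $\eta^\delta$, reflecting the fact that the integral over $|v-v_*|\leq\eta$ is small when $\eta$ is small. This trade-off in $\eta$ is the essence of the two versions of the estimate.

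For the first (\,$\eta$-dependent\,) bound in Theorem \ref{upforI-total}, I apply Proposition \ref{upforI-ep-ga-et} with the extremal choice $s_1=3/2+\delta,\ s_2=0$, producing exactly the third term $\delta^{-1/2}|\mu^{1/16}g|_{H^{3/2+\delta}}|\mu^{1/16}h|_{L^2}|f|_{\epsilon,\gamma/2}$ after noticing that $|W^\epsilon f|_{L^2_{\gamma/2}}\leq |f|_{\epsilon,\gamma/2}$ by definition of $|\cdot|_{\epsilon,\gamma/2}$, and that $|\mu^{1/12}\cdot|_{H^s}\lesssim |\mu^{1/16}\cdot|_{H^s}$ (since $\mu^{1/12}/\mu^{1/16}=\mu^{1/48}$ is Schwartz, multiplication by it is bounded on $H^s$). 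Simultaneously, I apply Proposition \ref{I-less-eta-upper-bound} with $(s_3,s_4)=(1/2+\delta,0)$, which gives the second term; here one uses $W^\epsilon(\xi)\leq \langle\xi\rangle$ from \eqref{upper-bound-cf} to replace $|W^\epsilon(D)\mu^{1/16}h|_{L^2}$ by $|\mu^{1/16}h|_{H^1}$, and a similar argument (together with the symbol-commutator bound from Lemma \ref{operatorcommutator1}) to absorb $|W^\epsilon(D)\mu^{1/16}f|_{L^2}$ into $|f|_{\epsilon,\gamma/2}$ (writing $\mu^{1/16}=(\mu^{1/16}W_{-\gamma/2})W_{\gamma/2}$ with $\mu^{1/16}W_{-\gamma/2}\in S^0_{1,0}$). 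The remaining $\eta^{\gamma-3}$ term is carried verbatim from Proposition \ref{upforI-ep-ga-et}.

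For the second (\,$\eta$-independent\,) bound, I simply fix $\eta=1$ so that $\eta^{\gamma-3}$ and $\eta^{\delta}$ become harmless constants. Applying Proposition \ref{upforI-ep-ga-et} with $(s_1,s_2)=(s_3,1+s_4)$ (note $s_1+s_2=3/2+\delta$ matches the admissible range for either choice of $(s_3,s_4)$) produces the two desired summands, modulo the norm conversions explained above, and applying Proposition \ref{I-less-eta-upper-bound} with the same $(s_3,s_4)$ is subsumed by the same term since $|W^\epsilon(D)\mu^{1/16}h|_{H^{s_4}}\lesssim |\mu^{1/16}h|_{H^{1+s_4}}$. The only mildly delicate step throughout is verifying that the $\mu$-weighted Sobolev and the $|\cdot|_{\epsilon,\gamma/2}$ norms can be freely interchanged as above; this is essentially pseudodifferential bookkeeping, and since the Gaussian weights dominate any polynomial factor coming from $W^\epsilon$ and $W_{\gamma/2}$, no genuine loss occurs. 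No new analytical obstacle arises — the theorem is truly a corollary of the work already done in Propositions \ref{upforI-ep-ga-et} and \ref{I-less-eta-upper-bound}.
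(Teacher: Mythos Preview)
Your proposal is correct and follows exactly the approach of the paper, which treats Theorem \ref{upforI-total} as a direct corollary of Propositions \ref{upforI-ep-ga-et} and \ref{I-less-eta-upper-bound} via the splitting \eqref{I-ep-ga-sep-eta}. You have correctly supplied all the norm conversions (the $\mu^{1/12}\to\mu^{1/16}$ passage, the bound $|W^{\epsilon}(D)\mu^{1/16}f|_{L^2}\lesssim |f|_{\epsilon,\gamma/2}$ via Lemma \ref{operatorcommutator1}, and $W^{\epsilon}(\xi)\le\langle\xi\rangle$) that the paper leaves implicit; your choice $\eta=1$ for the second estimate and the parameter matching $(s_1,s_2)=(s_3,1+s_4)$ are precisely what is needed.
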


\subsubsection{Upper bound of $\Gamma^{\epsilon,\gamma}$}
Recalling \eqref{Gamma-ep-ga-geq-eta-into-IQ-inner-beta}, as a result of Theorem \ref{Q-full-up-bound} with $a=1/8$ and  Theorem \ref{upforI-total}, we get
\begin{thm}\label{Gamma-full-up-bound}   Let $\delta\in(0,1/2], \eta \in(0,1]$ and $(s_{3},s_{4})=(1/2+\delta,0)$ or $ (0, 1/2+\delta)$.  Then for any smooth functions $g, h$ and $f$,  the following two estimates are valid.
\beno
|\langle \Gamma^{\epsilon,\gamma}(g,h;\beta), f\rangle| &\lesssim& \eta^{\gamma-3}|g|_{L^{2}}|h|_{\epsilon,\gamma/2}|f|_{\epsilon,\gamma/2}
+\delta^{-1/2}(\eta^{\delta}+\epsilon^{1/2})|\mu^{1/16}g|_{H^{3/2+\delta}}|\mu^{1/16}h|_{H^{1}}|f|_{\epsilon,\gamma/2}\\&& + \delta^{-1/2}|\mu^{1/16}g|_{H^{3/2+\delta}}|\mu^{1/16}h|_{L^2}|f|_{\epsilon,\gamma/2},\\
|\langle \Gamma^{\epsilon,\gamma}(g,h;\beta), f\rangle| &\lesssim& |g|_{L^{2}}|h|_{\epsilon,\gamma/2}|f|_{\epsilon,\gamma/2} + \delta^{-1/2}|\mu^{1/16}g|_{H^{s_{3}}}|\mu^{1/16}h|_{H^{1+s_{4}}}|f|_{\epsilon,\gamma/2}.
\eeno
\end{thm}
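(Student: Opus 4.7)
The plan is to reduce the theorem to a combination of two already-established bounds via the operator splitting \eqref{Gamma-ep-ga-geq-eta-into-IQ-inner-beta} specialized to the full kernel (i.e.\ $\eta = 0$). With that identity in hand,
\beno
\langle \Gamma^{\epsilon,\gamma}(g,h;\beta), f\rangle = \langle Q^{\epsilon,\gamma}(g\pa_{\beta}\mu^{1/2}, h), f\rangle + \langle I^{\epsilon,\gamma}(g,h;\beta), f\rangle,
\eeno
so it suffices to bound the two pieces individually by the right-hand sides claimed in the theorem, then sum.

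For the $Q^{\epsilon,\gamma}$ piece, I would invoke Theorem \ref{Q-full-up-bound} with $a = 1/8$, applied to $g$ replaced by $g\pa_{\beta}\mu^{1/2} = g P_{\beta}\mu^{1/2}$, where $P_{\beta}$ is the polynomial given by $\pa_{\beta}\mu^{1/2} = P_{\beta}\mu^{1/2}$. The essential reductions are the pointwise rewriting $\mu^{-1/4}(g P_{\beta}\mu^{1/2}) = g\,(P_{\beta}\mu^{1/4})$ and the observation that $P_{\beta}\mu^{3/16} \in W^{s,\infty}(\R^3)$ for every $s$, so that by the Leibniz rule
\beno
|g P_{\beta}\mu^{1/4}|_{H^{s}} = |(P_{\beta}\mu^{3/16})(\mu^{1/16}g)|_{H^{s}} \lesssim_{\beta,s} |\mu^{1/16}g|_{H^{s}}.
\eeno
The remaining factors are handled by $|gP_{\beta}\mu^{1/2}|_{L^{1}_{|\gamma|+2}} \lesssim |g|_{L^{2}}$ (Cauchy--Schwarz plus Gaussian decay) and by $|W^{\epsilon}(D)\mu^{1/16} f|_{L^{2}} \lesssim |f|_{\epsilon,\gamma/2}$, the latter following from the pointwise bound $\mu^{1/16}(v) \lesssim W_{\gamma/2}(v)$ valid for $-3 \leq \gamma \leq 0$ combined with the pseudo-differential commutator estimate in Lemma \ref{operatorcommutator1} (applied to the symbol $W^\epsilon \in S^1_{1,0}$ and the multiplier $\mu^{1/16}$). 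The two alternative output formats in Theorem \ref{Q-full-up-bound} produce, respectively, the $\eta^{\gamma-3}|g|_{L^2}|h|_{\epsilon,\gamma/2}|f|_{\epsilon,\gamma/2}$ and $(\eta^\delta+\epsilon^{1/2})|\mu^{1/16}g|_{H^{3/2+\delta}}|\mu^{1/16}h|_{H^1}|f|_{\epsilon,\gamma/2}$ terms of the first bound, and the corresponding terms of the second bound.

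For the $I^{\epsilon,\gamma}$ piece, Theorem \ref{upforI-total} is already exactly in the target form: its first estimate contributes the $\eta^{\gamma-3}|g|_{L^2}|h|_{\epsilon,\gamma/2}|W^\epsilon f|_{L^2_{\gamma/2}}$, the $\delta^{-1/2}\eta^\delta|\mu^{1/16}g|_{H^{1/2+\delta}}|\mu^{1/16}h|_{H^1}|f|_{\epsilon,\gamma/2}$, and the third $|\mu^{1/16}h|_{L^2}$ term; its second estimate contributes the two $(s_3,s_4)$-indexed terms. Absorbing $|W^\epsilon f|_{L^2_{\gamma/2}} \lesssim |f|_{\epsilon,\gamma/2}$ (which is immediate from the definition of $|\cdot|_{\epsilon,\gamma/2}$) and combining with the $Q$-estimates gives both bounds stated in Theorem \ref{Gamma-full-up-bound}. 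The main obstacle, if there is one, is purely bookkeeping: verifying for each of the $(s_{3},s_{4})$ choices and each of the three components of $|f|_{\epsilon,\gamma/2}$ that the weights $\mu^{-1/4}$, $\mu^{1/8}$, $\mu^{1/16}$ interchange correctly with $W_{\gamma/2}$ and commute (up to lower order) with the multipliers $W^{\epsilon}$, $W^{\epsilon}(D)$, $W^\epsilon((-\Delta_{\SS^{2}})^{1/2})$. Once the pointwise inequality $\mu^{1/16} \lesssim W_{\gamma/2}$ and the $W^{s,\infty}$-boundedness of $P_\beta\mu^{3/16}$ are recorded, no genuinely new analytic input is required.
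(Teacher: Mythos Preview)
Your proposal is correct and follows exactly the paper's approach: the paper's proof is the single sentence ``Recalling \eqref{Gamma-ep-ga-geq-eta-into-IQ-inner-beta}, as a result of Theorem \ref{Q-full-up-bound} with $a=1/8$ and Theorem \ref{upforI-total}, we get [the theorem],'' and you have spelled out precisely the weight bookkeeping (the rewriting $\mu^{-1/4}(gP_\beta\mu^{1/2})=gP_\beta\mu^{1/4}$, the bound $|gP_\beta\mu^{1/4}|_{H^s}\lesssim|\mu^{1/16}g|_{H^s}$, and $|W^\epsilon(D)\mu^{1/16}f|_{L^2}\lesssim|f|_{\epsilon,\gamma/2}$) that this one-liner implicitly relies on.
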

Taking $\delta = 1/2$ in Theorem \ref{Gamma-full-up-bound}, we have
\begin{col}\label{upgammamuff1-full} For any smooth functions $h$ and $f$ and any $\eta \in(0,1]$, there holds
\beno
|\langle \Gamma^{\epsilon,\gamma}(\partial_{\beta_{1}}\mu^{1/2},h;\beta_{0}), f\rangle| \lesssim (\eta^{\gamma-3}|h|_{\epsilon,\gamma/2} +(\eta^{1/2}+\epsilon^{1/2})|\mu^{1/16}h|_{H^{1}})|f|_{\epsilon,\gamma/2}.
\eeno
\end{col}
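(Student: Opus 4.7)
\medskip

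\noindent\textbf{Proof proposal.} The strategy is to specialize the first estimate in Theorem~\ref{Gamma-full-up-bound} to $g = \partial_{\beta_{1}}\mu^{1/2}$ and $\beta = \beta_{0}$, with the parameter choice $\delta = 1/2$. The function $\mu^{1/16}\partial_{\beta_{1}}\mu^{1/2}$ is Schwartz and purely velocity-dependent, so both norms appearing on $g$ in Theorem~\ref{Gamma-full-up-bound}, namely $|\partial_{\beta_{1}}\mu^{1/2}|_{L^{2}}$ and $|\mu^{1/16}\partial_{\beta_{1}}\mu^{1/2}|_{H^{3/2+\delta}} = |\mu^{1/16}\partial_{\beta_{1}}\mu^{1/2}|_{H^{2}}$, are bounded by a universal constant depending only on $|\beta_{1}|$.

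Substituting these bounds, Theorem~\ref{Gamma-full-up-bound} yields
\begin{equation*}
|\langle \Gamma^{\epsilon,\gamma}(\partial_{\beta_{1}}\mu^{1/2},h;\beta_{0}), f\rangle|
\lesssim \eta^{\gamma-3}|h|_{\epsilon,\gamma/2}|f|_{\epsilon,\gamma/2}
+ (\eta^{1/2}+\epsilon^{1/2})|\mu^{1/16}h|_{H^{1}}|f|_{\epsilon,\gamma/2}
+ |\mu^{1/16}h|_{L^{2}}|f|_{\epsilon,\gamma/2}.
\end{equation*}
The first two terms already have the shape claimed in the corollary, so the only task is to absorb the residual $|\mu^{1/16}h|_{L^{2}}|f|_{\epsilon,\gamma/2}$ into one of them.

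Since $\gamma \leq 0$, we have the pointwise bound $\mu^{1/16}(v) \lesssim \langle v\rangle^{\gamma/2}$, hence $|\mu^{1/16}h|_{L^{2}} \lesssim |h|_{L^{2}_{\gamma/2}} \leq |h|_{\epsilon,\gamma/2}$. On the other hand, for $0 < \eta \leq 1$ and $\gamma - 3 \leq -3$, one has $\eta^{\gamma-3} \geq 1$. Combining these two observations, the third term is dominated by $\eta^{\gamma-3}|h|_{\epsilon,\gamma/2}|f|_{\epsilon,\gamma/2}$, and the stated inequality follows. There is no real obstacle here beyond being careful that the constants hidden in $\lesssim$ depend only on $|\beta_{0}|, |\beta_{1}|$ and $\gamma$, which is ensured by the Schwartz character of $\mu^{1/16}\partial_{\beta_{1}}\mu^{1/2}$ and by the universality of the bounds in Theorem~\ref{Gamma-full-up-bound}.
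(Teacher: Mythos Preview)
Your proof is correct and follows essentially the same approach as the paper, which simply states ``Taking $\delta = 1/2$ in Theorem \ref{Gamma-full-up-bound}''; you have spelled out the absorption of the residual term $|\mu^{1/16}h|_{L^{2}}|f|_{\epsilon,\gamma/2}$ into $\eta^{\gamma-3}|h|_{\epsilon,\gamma/2}|f|_{\epsilon,\gamma/2}$ in more detail than the paper does, but the underlying argument is identical.
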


\subsection{Upper bound of $\langle \Gamma^{\epsilon,\gamma}_{\eta}(f,h)-\mathcal{L}^{\epsilon,\gamma}_{\eta}h, h\rangle$}
This is the core part of the linear-quasilinear method. Observe
\beno
\langle  \Gamma^{\epsilon,\gamma}_{\eta}(f,h)-\mathcal{L}^{\epsilon,\gamma}_{\eta}h, h\rangle =\langle -\mathcal{L}^{\epsilon,\gamma}_{1,\eta}h + \Gamma^{\epsilon,\gamma}_{\eta}(f,h), h\rangle + \langle -\mathcal{L}^{\epsilon,\gamma}_{2,\eta}h , h\rangle.
\eeno
We begin with
\subsubsection{Upper bound of $\langle -\mathcal{L}^{\epsilon,\gamma}_{1,\eta}h + \Gamma^{\epsilon,\gamma}_{\eta}(f,h), h\rangle$} We have
\begin{prop}\label{less-eta-part-l1-Gamma} Let $\delta \in (0, 1/2], \eta\in (0, 1]$. For any smooth functions $h$ and $f$ with $\mu^{1/2}+ f \geq 0$, there holds
\beno \langle -\mathcal{L}^{\epsilon,\gamma}_{1,\eta}h + \Gamma^{\epsilon,\gamma}_{\eta}(f,h), h\rangle  \lesssim
\delta^{-1/2}\eta^{\delta}|\mu^{1/16}(\mu^{1/2}+ f)|_{H^{1/2+\delta}}|W^{\epsilon}(D)\mu^{1/16}h|^{2}_{L^{2}}
\\+ \delta^{-1/2}(\eta + \epsilon^{1/2}) |\mu^{1/4}(\mu^{1/2}+f)|_{H^{3/2+\delta}}|W^{\epsilon}(D)\mu^{1/8}h|^{2}_{L^{2}}.
\eeno
\end{prop}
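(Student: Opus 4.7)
Since $\mathcal{L}^{\epsilon,\gamma}_{1,\eta}h=-\Gamma^{\epsilon,\gamma}_{\eta}(\mu^{1/2},h)$ by definition and $\Gamma^{\epsilon,\gamma}_{\eta}$ is linear in its first slot, the left-hand side collapses to $\langle \Gamma^{\epsilon,\gamma}_{\eta}(F,h),h\rangle$ with $F:=\mu^{1/2}+f\geq 0$. Using the identity \eqref{Gamma-ep-ga-leq-eta-into-IQ},
\[
\langle \Gamma^{\epsilon,\gamma}_{\eta}(F,h),h\rangle=\langle Q^{\epsilon,\gamma}_{\eta}(\mu^{1/2}F,h),h\rangle+\langle I^{\epsilon,\gamma}_{\eta}(F,h),h\rangle,
\]
and my plan is to treat the two pieces separately: the $I$-piece will produce the first (order-$\eta^{\delta}$) term of the claim, while the $Q$-piece---handled via a quasilinear trick exploiting $\mu^{1/2}F\geq 0$---will produce the second (order-$(\eta+\epsilon^{1/2})$) term.

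\textbf{The $I$-piece.} A direct application of Proposition \ref{I-less-eta-upper-bound} with $\beta=0$, $(s_3,s_4)=(1/2+\delta,0)$, and the obvious identification of arguments yields
\[
|\langle I^{\epsilon,\gamma}_{\eta}(F,h),h\rangle|\lesssim \delta^{-1/2}\eta^{\delta}|\mu^{1/16}F|_{H^{1/2+\delta}}|W^{\epsilon}(D)\mu^{1/16}h|^2_{L^2},
\]
which is exactly the first term of the stated bound; no further work is needed here.

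\textbf{The $Q$-piece (quasilinear step).} Writing $G:=\mu^{1/2}F\geq 0$ and combining the pre/post-collision symmetry of $B^{\epsilon,\gamma}_{\eta}$ with the algebraic identity $h(h'-h)=\tfrac{1}{2}((h')^2-h^2)-\tfrac{1}{2}(h'-h)^2$, I split
\[
\langle Q^{\epsilon,\gamma}_{\eta}(G,h),h\rangle=\tfrac{1}{2}\!\int\! B^{\epsilon,\gamma}_{\eta}G_*\bigl((h')^2-h^2\bigr)\,d\sigma dv_* dv-\tfrac{1}{2}\!\int\! B^{\epsilon,\gamma}_{\eta}G_*(h'-h)^2\,d\sigma dv_* dv.
\]
Because $B^{\epsilon,\gamma}_{\eta}G_*\geq 0$, the second integral is non-negative and may simply be dropped in the upper bound---this is the heart of the quasilinear trick, and the place where the hypothesis $F\geq 0$ is used. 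The first, a cancellation-type integral, is then controlled by \eqref{0-delta-eta-small-eta-epsilon-hf-mu} of the Cancellation Lemma \ref{cancellation-lemma-general-gamma-minus3-mu} with $h=f$ (both equal to our $h$) and parameter $a=1/8$, chosen so that $\mu^{-2a}G=\mu^{1/4}F$. This produces the critical gain $(\eta+\epsilon^{1/2})$, the factor $|\mu^{1/4}F|_{L^{\infty}}$---which Sobolev embedding controls by $\delta^{-1/2}|\mu^{1/4}F|_{H^{3/2+\delta}}$---and the mixed quadratic in $\mu^{a/2}h$. The latter is collapsed to $|W^{\epsilon}(D)\mu^{a/2}h|^2_{L^2}$ by absorbing $|\mu^{a/2}h|_{L^2}$ using the elementary pointwise lower bound $W^{\epsilon}\gtrsim 1$ coming from \eqref{charicter function}.

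\textbf{Main obstacle.} The genuinely delicate point is coordinating the weight parameter $a$ between the $F$-side and the $h$-side of the Cancellation Lemma: $a=1/8$ is essentially forced by the requirement that $\mu^{-2a}G$ be a $\mu^{\alpha}$-weighted version of $F$ with $\alpha>0$ (no pointwise bound on $F$ itself is available, since $F$ need not decay), but the $h$-side weight that naturally comes out is $\mu^{1/16}$ rather than the $\mu^{1/8}$ appearing in the stated estimate. I intend to close this gap by either dyadically decomposing $G$ in $v_*$ and running the Cancellation Lemma at two different values of $a$ on the resulting pieces, or by relaxing the $h$-weight through the crude inequality $\mu^{1/16}\leq 1$ combined with a Sobolev upgrade on $F$. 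All remaining ingredients are routine once the two Cancellation Lemma applications are patched together.
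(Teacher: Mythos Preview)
Your approach is essentially identical to the paper's: both split the expression into the $Q$-piece and the $I$-piece, handle the $I$-piece by Proposition~\ref{I-less-eta-upper-bound} with $(s_3,s_4)=(1/2+\delta,0)$, and handle the $Q$-piece by dropping the nonnegative $(h'-h)^2$ term (the quasilinear trick) and applying the Cancellation Lemma estimate \eqref{0-delta-eta-small-eta-epsilon-hf-mu} with $a=1/8$. The only cosmetic difference is that the paper writes out the decomposition from the definition $\Gamma^{\epsilon,\gamma}_{\eta}(F,h)=\mu^{-1/2}Q^{\epsilon,\gamma}_{\eta}(\mu^{1/2}F,\mu^{1/2}h)$ and manipulates the integrand directly to arrive at the same two pieces, whereas you invoke the pre-packaged identity \eqref{Gamma-ep-ga-leq-eta-into-IQ}.

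The ``main obstacle'' you worry about is a red herring. Taking $a=1/8$ in \eqref{0-delta-eta-small-eta-epsilon-hf-mu} gives $\mu^{-2a}G=\mu^{1/4}(\mu^{1/2}+f)$ on the $F$-side and $\mu^{a/2}=\mu^{1/16}$ on the $h$-side, exactly as you computed. The exponent $1/8$ in the stated bound appears to be a typo for $1/16$; indeed the paper's own proof writes ``taking $a=1/8$ in \eqref{0-delta-eta-small-eta-epsilon-hf-mu}'' and the downstream consequence (Theorem~\ref{small-part-L+gamma}, which is the only place this proposition is used) is stated with $\mu^{1/16}$ throughout. So no dyadic decomposition or Sobolev juggling is needed---just record the bound with $\mu^{1/16}$, which is what the argument actually produces and what suffices for everything that follows. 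Your proposed fix (b) would in any case go the wrong direction, since $\mu^{1/16}\ge \mu^{1/8}$ means $|W^\epsilon(D)\mu^{1/16}h|_{L^2}$ need not be controlled by $|W^\epsilon(D)\mu^{1/8}h|_{L^2}$.
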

\begin{proof} Set  $F=\mu +\mu^{1/2}f, g = \mu^{1/2}+ f$, then $\langle -\mathcal{L}^{\epsilon,\gamma}_{1,\eta}h + \Gamma^{\epsilon,\gamma}_{\eta}(f,h), h\rangle = \langle \mu^{-1/2} Q^{\epsilon,\gamma}_{\eta}(F,\mu^{1/2}h), h\rangle$ and $F = \mu^{1/2}g \geq 0$.
We make the decomposition
\beno \langle \mu^{-1/2} Q^{\epsilon,\gamma}_{\eta}(F,\mu^{1/2}h), h\rangle &=& \int B^{\epsilon,\gamma}_{\eta} F_{*}\mu^{1/2}h((\mu^{-1/2}h)^{\prime}-\mu^{-1/2}h) d\sigma dv_{*} dv
\\&=& \int B^{\epsilon,\gamma}_{\eta} F_{*}h(h^{\prime}-h) d\sigma dv_{*} dv +\int B^{\epsilon,\gamma}_{\eta} g_{*}hh^{\prime}((\mu^{1/2})^{\prime}_{*}-\mu^{1/2}_{*}) d\sigma dv_{*} dv
\\&:=&I_{1}+I_{2}.
\eeno
By the inequality $2 h(h^{\prime}-h) \leq \left((h^{\prime})^{2}-h^{2}\right)$ and the condition $F \geq 0$, we have
\beno I_{1}   \leq \frac{1}{2}\int B^{\epsilon,\gamma}_{\eta} F_{*}\left((h^{\prime})^{2}-h^{2}\right) d\sigma dv_{*} dv .
\eeno
By Lemma \ref{cancellation-lemma-general-gamma-minus3-mu}, taking $a=1/8$  in \eqref{0-delta-eta-small-eta-epsilon-hf-mu}, we have
\beno |\int B^{\epsilon,\gamma}_{\eta} F_{*}\left((h^{\prime})^{2}-h^{2}\right) d\sigma dv_{*} dv|  \lesssim (\eta + \epsilon^{1/2}) |\mu^{-1/4}F|_{L^{\infty}}|W^{\epsilon}(D)\mu^{1/8}h|_{L^{2}}|\mu^{1/8}h|_{L^{2}}.
\eeno
Recalling $F = \mu^{1/2}(\mu^{1/2}+f)$, one has $\mu^{-1/4}F= \mu^{1/4}(\mu^{1/2}+f)$ and thus
\beno  I_{1} \lesssim \delta^{-1/2}(\eta + \epsilon^{1/2}) |\mu^{1/4}(\mu^{1/2}+f)|_{H^{3/2+\delta}}|W^{\epsilon}(D)\mu^{1/8}h|_{L^{2}}|\mu^{1/8}h|_{L^{2}}.
\eeno
Observe that
$ I_{2} = \int B^{\epsilon,\gamma}_{\eta} g_{*}hh^{\prime}((\mu^{1/2})^{\prime}_{*}-\mu^{1/2}_{*}) d\sigma dv_{*} dv  = \langle I^{\epsilon,\gamma}_{\eta}(g,h), h \rangle.
$
Then by Proposition \ref{I-less-eta-upper-bound}, we have
\beno |I_{2}| \lesssim \delta^{-1/2}\eta^{\delta}|\mu^{1/16}g|_{H^{1/2+\delta}}|W^{\epsilon}(D)\mu^{1/16}h|^{2}_{L^{2}}.\eeno
Patching together the previous two inequalities, we finish the proof.
\end{proof}

\subsubsection{ Upper bound of  $\langle -\mathcal{L}^{\epsilon,\gamma}_{2,\eta}f , h\rangle$ } We have
\begin{prop}\label{less-eta-part-l2} Fix $0 < \eta \leq 1$. For any smooth functions $f$ and $h$, there holds
\beno \langle -\mathcal{L}^{\epsilon,\gamma}_{2,\eta}f , h\rangle \lesssim  (\eta^{1/2}+\epsilon^{1/2})|W^{\epsilon}(D)\mu^{1/8}f|_{L^{2}}|\mu^{1/8}h|_{L^{2}}.  \eeno
\end{prop}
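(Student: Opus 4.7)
The plan is to decompose the expression into pieces matching the structure of the cancellation lemmas, then use the smallness of the singular region combined with the smoothness of $\mu^{1/2}$.

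First, I would use the identity $-\mathcal{L}^{\epsilon,\gamma}_{2,\eta} f = \Gamma^{\epsilon,\gamma}_{\eta}(f, \mu^{1/2})$ and the decomposition \eqref{Gamma-ep-ga-leq-eta-into-IQ}:
\[
\langle -\mathcal{L}^{\epsilon,\gamma}_{2,\eta} f, h\rangle = \langle Q^{\epsilon,\gamma}_{\eta}(\mu^{1/2}f, \mu^{1/2}), h\rangle + \langle I^{\epsilon,\gamma}_{\eta}(f, \mu^{1/2}), h\rangle.
\]

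For the $Q$-part, the standard weak form (obtained via the pre-post collisional change $(v,v_*,\sigma) \to (v',v'_*,\sigma')$ which preserves $B^{\epsilon,\gamma}_\eta$ thanks to assumption (A2)) gives
\[
\langle Q^{\epsilon,\gamma}_{\eta}(\mu^{1/2}f, \mu^{1/2}), h\rangle = \int B^{\epsilon,\gamma}_{\eta}\,(\mu^{1/2}f)_*\,\mu(v)\bigl[(\mu^{-1/2}h)' - \mu^{-1/2}h\bigr] d\sigma dv_* dv.
\]
Writing $\mu(v)(\mu^{-1/2}h)' = \mu^{1/2}(v)h' + h'[\mu(v)\mu^{-1/2}(v') - \mu^{1/2}(v)]$ and further decomposing $\mu^{1/2}(v)(h'-h) = [(\mu^{1/2}h)' - \mu^{1/2}h] - h'[(\mu^{1/2})' - \mu^{1/2}]$ produces two types of terms. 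The principal piece $\int B^{\epsilon,\gamma}_{\eta} (\mu^{1/2}f)_* [(\mu^{1/2}h)' - \mu^{1/2}h]\,d\sigma dv_* dv$ fits \eqref{0-delta-eta-small-eta-epsilon-mu} of Lemma \ref{cancellation-lemma-general-gamma-minus3-mu}, and with the choice $a \in (0, 3/16]$ yields
\[
\lesssim (\eta+\epsilon^{1/2})|W^{\epsilon}(D)\mu^{1/2-2a}f|_{L^2}|\mu^{1/2+a}h|_{L^2} \lesssim (\eta+\epsilon^{1/2})|W^{\epsilon}(D)\mu^{1/8}f|_{L^2}|\mu^{1/8}h|_{L^2},
\]
after absorbing the extra Gaussian decay and commuting $\mu$-weights through $W^{\epsilon}(D)$ via Lemma \ref{operatorcommutator1}. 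The remainder terms (those with $h'[(\mu^{1/2})'-\mu^{1/2}]$ and the residual $\alpha := \mu^{1/2}(v)/\mu^{1/2}(v')-1$) carry the smoothness bound $|(\mu^{1/2})'-\mu^{1/2}| + \mu^{1/2}|\alpha| \lesssim |v-v_*|\sin(\theta/2)\mu^{1/4}(v)$ (valid since $|v-v_*|\leq \eta\leq 1$). After symmetrizing via $h' = h + (h'-h)$ and pre-post swap to generate a paired $(h'-h)$ factor, Cauchy–Schwartz together with Lemma \ref{integral-angular-function} ($\int b^{\epsilon}\sin^2(\theta/2)d\sigma \lesssim 1$) controls the $|v-v_*|^{-3}$ singularity and transfers the regularity onto $f$ through the coercivity-type identification of $\int B^{\epsilon,\gamma}_{\eta}(f'-f)^2\mu^{1/4}\,d\sigma dv_*dv$ with $|W^{\epsilon}(D)\mu^{1/8}f|_{L^2}^2$.

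For the $I$-part, I apply the pre-post swap $(v,v_*,\sigma)\to(v',v'_*,\sigma')$ to
\[
\langle I^{\epsilon,\gamma}_{\eta}(f, \mu^{1/2}), h\rangle = \int B^{\epsilon,\gamma}_{\eta}\,[\mu^{1/2}_* - (\mu^{1/2})'_*]\,f'_*(\mu^{1/2})'\,h\,d\sigma dv_* dv,
\]
converting it to an integral with $f_*$ (no prime) and $h'$ paired against a smooth difference; then the further swap $(v,v_*,\sigma)\to(v_*,v,-\sigma)$ (valid by (A2)) moves the smoothness to the $v$-variable, yielding $\int B^{\epsilon,\gamma}_{\eta}[(\mu^{1/2})-(\mu^{1/2})'](\cdots)$ against $f$ with $h_*$ as test. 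Cauchy–Schwartz with $|(\mu^{1/2})'-\mu^{1/2}|\lesssim |v-v_*|\sin(\theta/2)\mu^{1/4}$ then produces the desired bound.

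The main obstacle is the borderline $|v-v_*|^{-3}$ singularity in the region $|v-v_*|\leq \eta$, for which naive Cauchy–Schwartz fails because $\int_{|v-v_*|\leq \eta}|v-v_*|^{-3} dv$ is already logarithmically divergent. The resolution is twofold: first, the refined Cancellation Lemma \eqref{0-delta-eta-small-eta-epsilon-mu}, which internally performs a high/low frequency separation at threshold $\epsilon^{-1/2}$ and applies Taylor expansion only on the low-frequency part; second, the combined factor $|v-v_*|\sin(\theta/2)$ coming from the Taylor expansion of the smooth weight $\mu^{1/2}$, which paired with Lemma \ref{integral-angular-function} (giving $\int b^{\epsilon}\sin^2(\theta/2)d\sigma \lesssim 1$ and $\int b^{\epsilon}\sin(\theta/2)d\sigma \lesssim |\ln\epsilon|^{-1}\epsilon^{-1}$) provides exactly enough integrability to close the estimate with the target accuracy $\eta^{1/2}+\epsilon^{1/2}$. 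The Gaussian shift estimates \eqref{to-1-2}–\eqref{to-3-4}, applicable since $|v-v_*|\leq 1$, are what allow the free redistribution of $\mu$-weights between $v$ and $v_*$ needed to match the target weight $\mu^{1/8}$ on both sides.
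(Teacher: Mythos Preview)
Your top-level decomposition and the treatment of the principal piece are the same as the paper's: both identify $\int B^{\epsilon,\gamma}_{\eta}(\mu^{1/2}f)_*[(\mu^{1/2}h)'-\mu^{1/2}h]$ as the term to which the refined cancellation lemma \eqref{0-delta-eta-small-eta-epsilon-mu} applies. (There is a notational slip --- the displayed ``$Q$-part'' formula you wrote is already the full $\langle -\mathcal{L}^{\epsilon,\gamma}_{2,\eta}f,h\rangle$, so your ``residual $\alpha$'' and your separate $I$-part are in fact the same object, the paper's $Y_2$.)

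The gap is in the remainder terms. Your mechanism --- first-order Taylor $|(\mu^{1/2})'-\mu^{1/2}|\lesssim |v-v_*|\sin(\theta/2)\mu^{1/4}$ followed by Cauchy--Schwartz --- does not close. A single factor of $|v-v_*|\sin(\theta/2)$ leaves $b^{\epsilon}\sin(\theta/2)$ in the angular integral (worth $|\ln\epsilon|^{-1}\epsilon^{-1}$ by \eqref{order-1}, not $O(1)$) and $|v-v_*|^{-2}$ in the relative-velocity integral; the other Cauchy--Schwartz factor then carries the raw $b^{\epsilon}|v-v_*|^{-3}$ with no compensation and diverges. Equivalently, your proposed ``symmetrize $h'=h+(h'-h)$'' would produce an $\int B^{\epsilon,\gamma}_\eta(h'-h)^2$ factor requiring regularity on $h$, contradicting the target bound $|\mu^{1/8}h|_{L^2}$.

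What the paper does instead is extract a \emph{second} power of $\theta|v-v_*|$. For $Y_{1,2}$ and $Y_{2,2}$ this comes from expanding $(\mu^{1/2})'-\mu^{1/2}$ (or $(\mu^{1/2})'_*-\mu^{1/2}_*$) to second order around $v'$ (resp.\ $v_*$): the first-order term is a multiple of $(v'-v)$ against a function evaluated at $v'$, and vanishes exactly by the identity \eqref{dispear1}, leaving $\theta^2|v-v_*|^2$ which together with \eqref{order-2} tames the kernel. For $Y_{2,1}$ the paper first writes $\mu^{1/2}=(\mu^{1/2}-(\mu^{1/2})')+(\mu^{1/2})'$ so that two separate $\mu^{1/2}$-differences are present, and then Cauchy--Schwartz (one difference per factor) works directly. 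You should invoke \eqref{dispear1} explicitly; without it the single-difference remainders cannot be bounded by $|\mu^{1/8}f|_{L^2}|\mu^{1/8}h|_{L^2}$.
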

\begin{proof}
Note that
\beno \langle -\mathcal{L}^{\epsilon,\gamma}_{2,\eta}f , h\rangle &=&
\langle \mu^{-1/2} Q^{\epsilon,\gamma}_{\eta}(\mu^{1/2}f,\mu), h\rangle
= \int B^{\epsilon,\gamma}_{\eta} (\mu^{1/2}f)_{*}\mu((\mu^{-1/2}h)^{\prime}-\mu^{-1/2}h) d\sigma dv_{*} dv
\\&=& \int B^{\epsilon,\gamma}_{\eta} (\mu^{1/2}f)_{*} \mu^{1/2} (h^{\prime}-h) d\sigma dv_{*} dv +\int B^{\epsilon,\gamma}_{\eta} f_{*}\mu^{1/2}h^{\prime}((\mu^{1/2})^{\prime}_{*}-\mu^{1/2}_{*}) d\sigma dv_{*} dv
\\&:=&Y_{1}+Y_{2}.
\eeno
We first estimate $Y_{1}$. Observe that
\beno Y_{1}&=& \int B^{\epsilon,\gamma}_{\eta} (\mu^{1/2}f)_{*} \mu^{1/2} (h^{\prime}-h) d\sigma dv_{*} dv
\\&=&\int B^{\epsilon,\gamma}_{\eta} (\mu^{1/2}f)_{*}  ((\mu^{1/2}h)^{\prime}-\mu^{1/2}h) d\sigma dv_{*} dv+\int B^{\epsilon,\gamma}_{\eta} (\mu^{1/2}f)_{*} (\mu^{1/2}- (\mu^{1/2})^{\prime})h^{\prime} d\sigma dv_{*} dv
\\&=& Y_{1,1} +Y_{1,2}.
\eeno
For $Y_{1,1}$, use
\eqref{0-delta-eta-small-eta-epsilon-mu} with $\delta=a=0$ in the Lemma \ref{cancellation-lemma-general-gamma-minus3-mu}  to get
\beno |Y_{1,1}|\lesssim (\eta+\epsilon^{1/2})|W^{\epsilon}(D)\mu^{1/2}f|_{L^{2}}|\mu^{1/2}h|_{L^{2}}.
\eeno
Similar to the estimate of $\mathcal{B}(G, H, F)$ in the proof of Proposition \ref{I-less-eta-upper-bound}, we have
\beno |Y_{1,2}| \lesssim  \eta^{1/2}|\mu^{1/8}f|_{L^{2}}|\mu^{1/8}h|_{L^{2}}.
\eeno
We turn to $Y_2$. Note that
\beno Y_{2} &=& \int B^{\epsilon,\gamma}_{\eta} f_{*}\mu^{1/2}h^{\prime}((\mu^{1/2})^{\prime}_{*}-\mu^{1/2}_{*}) d\sigma dv_{*} dv
\\&=& \int B^{\epsilon,\gamma}_{\eta} f_{*}(\mu^{1/2}-(\mu^{1/2})^{\prime})h^{\prime}((\mu^{1/2})^{\prime}_{*}-\mu^{1/2}_{*}) d\sigma dv_{*} dv  +\int B^{\epsilon,\gamma}_{\eta} f_{*}(\mu^{1/2})^{\prime}h^{\prime}((\mu^{1/2})^{\prime}_{*}-\mu^{1/2}_{*}) d\sigma dv_{*} dv
\\&:=& Y_{2,1} +Y_{2,2}.
\eeno
Similar to the estimate of $\mathcal{B}(G, H, F)$ in the proof of Proposition \ref{I-less-eta-upper-bound}, we get
\beno |Y_{2,2}| \lesssim  \eta^{1/2}|\mu^{1/8}f|_{L^{2}}|\mu^{1/8}h|_{L^{2}}.
\eeno
 By Cauchy-Schwartz inequality, we get
\beno |Y_{2,1}| &\leq&  \big(\int B^{\epsilon,\gamma}_{\eta} f^{2}_{*}(\mu^{1/2}-(\mu^{1/2})^{\prime})^{2}d\sigma dv_{*} dv \big)^{1/2}\big(\int B^{\epsilon,\gamma}_{\eta}(h^{\prime})^{2}((\mu^{1/2})^{\prime}_{*}-\mu^{1/2}_{*})^{2}d\sigma dv_{*} dv \big)^{1/2}
  \\&=&\big(\int B^{\epsilon,\gamma}_{\eta} f^{2}_{*}(\mu^{1/2}-(\mu^{1/2})^{\prime})^{2}d\sigma dv_{*} dv \big)^{1/2}\big(\int B^{\epsilon,\gamma}_{\eta}h_{*}^{2}(\mu^{1/2}-(\mu^{1/2})^{\prime})^{2}d\sigma dv_{*} dv \big)^{1/2}.
\eeno
By Taylor expansion up to order 1,
$(\mu^{1/2})^{\prime}-\mu^{1/2}  = \int_{0}^{1}(\nabla \mu^{1/2})(v(\kappa))\cdot (v^{\prime}-v)d\kappa,$
and the change \eqref{change-exact-formula}-\eqref{change-Jacobean-bound}, and \eqref{to-3-4},  we get
\beno \int B^{\epsilon,\gamma}_{\eta} f^{2}_{*}(\mu^{1/2}-(\mu^{1/2})^{\prime})^{2} d\sigma dv_{*} dv  \lesssim
\int  f^{2}_{*} \mu^{1/4}_{*} |v-v_{*}|^{-1}\mathrm{1}_{|v-v_{*}|\leq \eta} dv dv_{*}\lesssim
 \eta^{2} |\mu^{1/8}f|^{2}_{L^{2}},\eeno
which gives
$ |Y_{2,1}| \lesssim \eta^{2} |\mu^{1/8}f|_{L^{2}} |\mu^{1/8}h|. $

Patching together the above estimates, we finish the proof.
\end{proof}

\subsubsection{Quasilinear estimate} As a result of Proposition \ref{less-eta-part-l1-Gamma} and Proposition \ref{less-eta-part-l2}, we have
\begin{thm}\label{small-part-L+gamma} Let $\delta \in(0,1/2],\eta\in(0,1]$. For any smooth functions $h$ and $f$ with $\mu^{1/2}+ f \geq 0$, there holds
\beno \langle  \Gamma^{\epsilon,\gamma}_{\eta}(f,h)-\mathcal{L}^{\epsilon,\gamma}_{\eta}h, h\rangle \lesssim
\delta^{-1/2}(\eta^{\delta}+\epsilon^{1/2})(1+|\mu^{1/16}f|_{H^{3/2+\delta}})|W^{\epsilon}(D)\mu^{1/16}h|^{2}_{L^{2}}.
\eeno
\end{thm}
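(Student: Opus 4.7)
\textbf{Proof proposal for Theorem \ref{small-part-L+gamma}.}

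The plan is to exploit the splitting $\mathcal{L}^{\epsilon,\gamma}_{\eta}=\mathcal{L}^{\epsilon,\gamma}_{1,\eta}+\mathcal{L}^{\epsilon,\gamma}_{2,\eta}$ from \eqref{L-ep-ga-sep-eta-l1-l2} to write
\[
\langle \Gamma^{\epsilon,\gamma}_{\eta}(f,h)-\mathcal{L}^{\epsilon,\gamma}_{\eta}h, h\rangle
=\langle -\mathcal{L}^{\epsilon,\gamma}_{1,\eta}h + \Gamma^{\epsilon,\gamma}_{\eta}(f,h), h\rangle
+\langle -\mathcal{L}^{\epsilon,\gamma}_{2,\eta}h , h\rangle,
\]
and then invoke Proposition \ref{less-eta-part-l1-Gamma} on the first inner product (this is where the hypothesis $\mu^{1/2}+f\ge 0$ is used, through the quasilinear cancellation yielding a good sign for $\int B^{\epsilon,\gamma}_\eta F_*\left((h')^2-h^2\right)$) and Proposition \ref{less-eta-part-l2} with $f$ replaced by $h$ on the second one.

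The remaining work is to convert the several slightly different weighted quantities produced by the two propositions into the single clean bound involving $|\mu^{1/16}f|_{H^{3/2+\delta}}$ and $|W^\epsilon(D)\mu^{1/16}h|_{L^2}$. Concretely, I will use: (i) for the left input of the bilinear forms, the triangle inequality $|\mu^{1/16}(\mu^{1/2}+f)|_{H^{1/2+\delta}}+|\mu^{1/4}(\mu^{1/2}+f)|_{H^{3/2+\delta}}\lesssim 1+|\mu^{1/16}f|_{H^{3/2+\delta}}$ (since $\mu^{1/4}=\mu^{1/16}\mu^{3/16}$ with $\mu^{3/16}$ a Schwartz function, the operator of multiplication by $\mu^{3/16}$ is bounded on every $H^s$ by Lemma \ref{operatorcommutator1}); (ii) for the right input, I replace the $\mu^{1/8}$ weight on $h$ by $\mu^{1/16}$ by writing $\mu^{1/8}h=\mu^{1/16}(\mu^{1/16}h)$ and using that $\mu^{1/16}\in S^0_{1,0}$, so Lemma \ref{operatorcommutator1} gives
\[
|W^\epsilon(D)\mu^{1/8}h|_{L^2}\lesssim |W^\epsilon(D)\mu^{1/16}h|_{L^2};
\]
(iii) for the Cauchy--Schwarz tail $|W^\epsilon(D)\mu^{1/8}h|_{L^2}|\mu^{1/8}h|_{L^2}$ coming from Proposition \ref{less-eta-part-l2}, I further bound $|\mu^{1/8}h|_{L^2}\le|\mu^{1/16}h|_{L^2}\lesssim |W^\epsilon(D)\mu^{1/16}h|_{L^2}$, using that $W^\epsilon\gtrsim 1$ pointwise (consequence of \eqref{lower-bound-when-small-cf} and \eqref{lower-bound-when-large-cf}), so this term becomes a square of the desired norm.

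Finally, to match the exponent $\eta^\delta$ in the conclusion, I collect factors via the elementary inequalities $\eta\le\eta^\delta$ and $\eta^{1/2}\le\eta^\delta$ valid for $\eta\in(0,1]$ and $\delta\in(0,1/2]$; the $\epsilon^{1/2}$ contributions are already of the required form. Adding the three resulting estimates and factoring $(1+|\mu^{1/16}f|_{H^{3/2+\delta}})|W^\epsilon(D)\mu^{1/16}h|_{L^2}^2$ with coefficient $\delta^{-1/2}(\eta^\delta+\epsilon^{1/2})$ yields the claim. There is no genuine obstacle here; the two underlying propositions do the real analytic work, and the remaining task is purely a uniform rewriting in terms of a single weight $\mu^{1/16}$. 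The only point requiring mild care is step (ii) above, namely checking that decreasing the Gaussian weight from $\mu^{1/8}$ to $\mu^{1/16}$ under $W^\epsilon(D)$ costs only bounded multiplicative constants (uniformly in $\epsilon$); this is the purpose of the $S^0_{1,0}$ symbol calculus in Lemma \ref{operatorcommutator1}.
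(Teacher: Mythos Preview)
Your proposal is correct and follows essentially the same approach as the paper: the paper's proof is a one-line appeal to Proposition \ref{less-eta-part-l1-Gamma} and Proposition \ref{less-eta-part-l2}, and you have carried out precisely that combination, including the routine weight and exponent adjustments needed to unify the bounds into the stated form.
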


\subsubsection{Byproducts} In this part, we give some byproducts of previous results.
We define
\ben \label{beta-version-L}
\mathcal{L}^{\epsilon,\gamma,\eta,\beta_{0},\beta_{1}} g = \mathcal{L}^{\epsilon,\gamma,\eta,\beta_{0},\beta_{1}}_{1} g + \mathcal{L}^{\epsilon,\gamma,\eta, \beta_{0},\beta_{1}}_{2} g,
\een
where
\ben \label{beta-version-L-1-2}
\mathcal{L}^{\epsilon,\gamma,\eta,\beta_{0},\beta_{1}}_{1} g := - \Gamma^{\epsilon,\gamma,\eta}(\pa_{\beta_{1}}\mu^{1/2}, g;\beta_{0}),
\mathcal{L}^{\epsilon,\gamma,\eta,\beta_{0},\beta_{1}}_{2} g := - \Gamma^{\epsilon,\gamma,\eta}(g, \pa_{\beta_{1}}\mu^{1/2};\beta_{0}).
\een

\begin{lem} \label{l2-full-estimate-geq-eta} For any $\eta \geq 0$, there holds
\ben \label{full-L2} |\langle \mathcal{L}^{\epsilon,\gamma,\eta,\beta_{0},\beta_{1}}_{2}f , h\rangle| \lesssim |\mu^{1/8}f|_{L^{2}}|\mu^{1/8}h|_{L^{2}}.  \een
\end{lem}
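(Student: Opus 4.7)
The key feature of this estimate is that both slots of $\Gamma^{\epsilon,\gamma,\eta}(\cdot,\cdot;\beta)$ are occupied by fast–decaying smooth factors: writing $g_i:=\partial_{\beta_i}\mu^{1/2}=P_{\beta_i}\mu^{1/2}$, both $g_0, g_1$ and all their derivatives satisfy pointwise bounds of the form $|\partial^\alpha g_i|\lesssim \mu^{1/4}$. The plan is to exploit this double Gaussian localization together with two pre/post–collisional changes of variable to trade every non-smooth difference against a Taylor expansion on $g_0$ or $g_1$, and thus reduce everything to kernels $|v-v_*|^{\gamma+2}=|v-v_*|^{-1}$ weighted by Maxwellians, which are locally integrable and yield a bound independent of $\eta\ge 0$ and $\epsilon\le\epsilon_0$.

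Starting from $\langle \mathcal{L}^{\epsilon,\gamma,\eta,\beta_0,\beta_1}_2 f,h\rangle=-\int B^{\epsilon,\gamma,\eta}(g_0)_*(f'_*g_1'-f_*g_1)h\,d\sigma dv_* dv$, I would apply the involution $(v,v_*,\sigma)\to (v',v'_*,\sigma')$ (which preserves $B^{\epsilon,\gamma,\eta}$) to the gain term only, obtaining the symmetric form
\[
\langle \mathcal{L}_2 f,h\rangle=\int B^{\epsilon,\gamma,\eta}\, f_*\,g_1(v)\Big[(g_0)_*h(v)-(g_0)'_* h(v')\Big]d\sigma dv_* dv,
\]
and then split the bracket as $[(g_0)_*-(g_0)'_*]h+(g_0)'_*(h-h')=:U_1+U_2$.

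For $U_1$, I would Taylor expand $g_0$ between $v_*$ and $v'_*$, using $|\nabla g_0|\lesssim\mu^{1/4}$ to get $|(g_0)_*-(g_0)'_*|\lesssim |v-v_*|\sin(\theta/2)\int_0^1\mu^{1/4}(v_*(\kappa))d\kappa$. Combined with \eqref{order-2} (which gives $\int b^\epsilon\sin^2(\theta/2)d\sigma\lesssim 1$ uniformly in $\epsilon$) and $|v-v_*|^\gamma=|v-v_*|^{-3}$, the two $\sin(\theta/2)$ factors together with $|v-v_*|^2$ yield an effective convolution kernel $|v-v_*|^{-1}$, which is locally $L^1$. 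A Cauchy--Schwartz split of the form $|f_*|\,|h|\le\tfrac12(f_*^2+h^2)$, combined with the Maxwellian weights coming from $g_1(v)$ and $\int\mu^{1/4}(v_*(\kappa))d\kappa\lesssim\mu^{1/8}_*+(\mu^{1/8})'_*$, and Young's inequality for $|v|^{-1}\ast\mu^{1/4}\in L^\infty$, then delivers the bound $\lesssim|\mu^{1/8}f|_{L^2}|\mu^{1/8}h|_{L^2}$ uniformly in $\eta\ge 0$.

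The main obstacle will be $U_2$, where the cancellation $h-h'$ sits on a non-smooth function. The plan is to apply the same change of variable a second time to the $h'$-term in $U_2$, which produces
\[
U_2=\int B^{\epsilon,\gamma,\eta}\,h\,\Big[f_*g_1(v)(g_0)'_*-f'_*g_1(v')(g_0)_*\Big]d\sigma dv_* dv,
\]
and then decompose the new bracket successively as $f_*g_1(v)[(g_0)'_*-(g_0)_*]+(g_0)_*g_1(v)[f_*-f'_*]+(g_0)_*f'_*[g_1(v)-g_1(v')]$. The first and third pieces involve differences of $g_0$ or $g_1$, which are handled exactly as for $U_1$ via Taylor expansion, \eqref{order-2}, and the resulting $|v-v_*|^{-1}$ convolution kernel. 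The middle piece containing $f_*-f'_*$ is the delicate point; here I would exploit the symmetry of the remaining integrand under the involution $(v,v_*,\sigma)\to(v',v'_*,\sigma')$ to rewrite it as a difference in which one extra factor of $g_0$ or $g_1$ now carries the cancellation, reducing once more to a Taylor expansion of a Maxwellian factor. After these manipulations every term is controlled by a double integral with a $|v-v_*|^{-1}$ kernel, a factor $\mu^{1/4}_*\mu^{1/4}(v)$ (from the product of the remaining $g_0$ and $g_1$), and the test functions $f, h$; a final Cauchy--Schwartz estimate using $\mu^{1/4}\le\mu^{1/4}$ and $|v|^{-1}\ast\mu^{1/8}\in L^\infty$ yields the claimed bound.
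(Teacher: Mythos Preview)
Your overall strategy of moving all differences onto the Gaussian factors $g_0,g_1$ via pre/post-collisional involutions is natural, but the argument as written has a real gap in the angular integration.

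In the estimate of $U_1$ you write $|(g_0)_*-(g_0)'_*|\lesssim |v-v_*|\sin(\theta/2)\int_0^1\mu^{1/4}(v_*(\kappa))d\kappa$ and then invoke \eqref{order-2}, claiming ``the two $\sin(\theta/2)$ factors'' give the kernel $|v-v_*|^{-1}$. But your first-order Taylor bound produces only \emph{one} factor of $\sin(\theta/2)$; neither $g_1(v)$, nor $f_*$, nor $h$ is differenced in $U_1$, so nothing else contributes. With a single factor you are left with $\int b^{\epsilon}\sin(\theta/2)\,d\sigma\sim |\ln\epsilon|^{-1}\epsilon^{-1}$ by \eqref{order-1}, which is not uniformly bounded. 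The same issue recurs in the ``first and third pieces'' of your decomposition of $U_2$. Your Cauchy--Schwartz/AM--GM split $|f_*||h|\le\tfrac12(f_*^2+h^2)$ is purely in the $v$-variables and does not produce any extra angular decay. Finally, the handling of the ``middle piece'' with the non-smooth difference $f_*-f'_*$ via a further involution is circular: applying the involution to $\int B h(g_0)_*g_1[f_*-f'_*]$ returns a term of exactly the same structure as the original $\langle\mathcal L_2 f,h\rangle$ (up to replacing $g_1$ by $g_1'$), so no progress has been made.

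The paper's route (mirroring Proposition~\ref{less-eta-part-l2}) supplies the two missing ingredients. First, after your initial involution one writes $(g_0)_*(h-h')$ and then further splits $g_1(h-h')=[(g_1h)-(g_1h)']+(g_1'-g_1)h'$; the piece $\int B^{\epsilon,\gamma,\eta}(fg_0)_*[(g_1h)'-(g_1h)]$ is handled directly by the cancellation lemma \eqref{geq-delta-mu}, which needs no $\sin(\theta/2)$ at all. Second, for the remaining terms carrying a single Gaussian difference such as $\int B(fg_0)_*(g_1'-g_1)h'$ or $\int B f_*g_1 h'[(g_0)_*-(g_0)'_*]$, one Taylor-expands to \emph{second} order; the first-order contribution is then killed either by the vanishing identity \eqref{dispear1} (when the remaining factors are functions of $v_*$ and $v'$ only) or, after writing the obstructing factor as $g_1(v)=g_1(v')+[g_1(v)-g_1(v')]$, one gains a second $\sin(\theta/2)$ from the extra difference. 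The second-order remainder always carries $\theta^2$, so \eqref{order-2} applies and the effective kernel is indeed $|v-v_*|^{-1}$. Your proposal is salvageable along these lines, but the two mechanisms (cancellation lemma for the $h$-difference, vanishing of first-order terms via \eqref{dispear1} for the Gaussian differences) must be made explicit; first-order Taylor alone does not close.
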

\begin{proof} The proof is similar to Proposition \ref{less-eta-part-l2}, so we omit the details.
\end{proof}

By Corollary \ref{upgammamuff1-geq-eta} and Lemma \ref{l2-full-estimate-geq-eta}, we have the following lemma for upper bound of $\mathcal{L}^{\epsilon,\gamma,\eta,\beta_{0},\beta_{1}}$.

\begin{lem} \label{l-full-estimate-geq-eta} For any $0<\eta \leq 1$, there holds
\ben \label{full-L-geq-eta} |\langle \mathcal{L}^{\epsilon,\gamma,\eta,\beta_{0},\beta_{1}}f , h\rangle| \lesssim \eta^{\gamma-3} |h|_{\epsilon,\gamma/2}|f|_{\epsilon,\gamma/2}.  \een
\end{lem}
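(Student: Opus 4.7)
The plan is to decompose $\mathcal{L}^{\epsilon,\gamma,\eta,\beta_0,\beta_1} = \mathcal{L}^{\epsilon,\gamma,\eta,\beta_0,\beta_1}_1 + \mathcal{L}^{\epsilon,\gamma,\eta,\beta_0,\beta_1}_2$ according to \eqref{beta-version-L}--\eqref{beta-version-L-1-2} and to handle the two pieces separately, each by invoking an already-proved estimate. By the triangle inequality,
\begin{equation*}
|\langle \mathcal{L}^{\epsilon,\gamma,\eta,\beta_{0},\beta_{1}}f , h\rangle| \leq |\langle \mathcal{L}^{\epsilon,\gamma,\eta,\beta_{0},\beta_{1}}_1 f , h\rangle| + |\langle \mathcal{L}^{\epsilon,\gamma,\eta,\beta_{0},\beta_{1}}_2 f , h\rangle|,
\end{equation*}
so it suffices to bound the two summands by the right-hand side of \eqref{full-L-geq-eta}.

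For the first piece, I would write $\langle \mathcal{L}^{\epsilon,\gamma,\eta,\beta_0,\beta_1}_1 f,h\rangle = -\langle \Gamma^{\epsilon,\gamma,\eta}(\partial_{\beta_1}\mu^{1/2}, f;\beta_0), h\rangle$ and apply Corollary \ref{upgammamuff1-geq-eta} verbatim, which gives exactly $|\langle \Gamma^{\epsilon,\gamma,\eta}(\partial_{\beta_1}\mu^{1/2}, f;\beta_0), h\rangle| \lesssim \eta^{\gamma-3}|f|_{\epsilon,\gamma/2}|h|_{\epsilon,\gamma/2}$, which is the target bound.

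For the second piece, the natural input is Lemma \ref{l2-full-estimate-geq-eta}, which yields
\begin{equation*}
|\langle \mathcal{L}^{\epsilon,\gamma,\eta,\beta_0,\beta_1}_2 f,h\rangle| \lesssim |\mu^{1/8}f|_{L^2}|\mu^{1/8}h|_{L^2}.
\end{equation*}
Since $\mu^{1/8}(v) \lesssim \langle v\rangle^{\gamma/2}$ (in fact $\mu^{1/8}$ has super-polynomial decay), we have $|\mu^{1/8}f|_{L^2} \lesssim |f|_{L^2_{\gamma/2}} \leq |f|_{\epsilon,\gamma/2}$ and similarly for $h$. Because $\gamma\leq 0$ and $\eta\in(0,1]$, the factor $\eta^{\gamma-3}\geq 1$, so this $L^2$-type estimate is absorbed trivially into $\eta^{\gamma-3}|f|_{\epsilon,\gamma/2}|h|_{\epsilon,\gamma/2}$.

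Combining the two bounds yields \eqref{full-L-geq-eta}. There is no real obstacle here; the work is entirely front-loaded in Corollary \ref{upgammamuff1-geq-eta} (which rests on Proposition \ref{ubqepsilonnonsingular} and Proposition \ref{upforI-ep-ga-et}) and Lemma \ref{l2-full-estimate-geq-eta}, and the lemma is just their bookkeeping sum.
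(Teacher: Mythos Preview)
Your proof is correct and matches the paper's approach exactly: the paper simply says the lemma follows from Corollary~\ref{upgammamuff1-geq-eta} and Lemma~\ref{l2-full-estimate-geq-eta}, and you have spelled out precisely how those two inputs combine via the decomposition \eqref{beta-version-L}--\eqref{beta-version-L-1-2}. The only additional detail you supply---that $|\mu^{1/8}f|_{L^2}\lesssim |f|_{\epsilon,\gamma/2}$ and $\eta^{\gamma-3}\geq 1$ so the $\mathcal{L}_2$ contribution is absorbed---is routine and implicit in the paper's one-line justification.
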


By Corollary \ref{upgammamuff1-full} and Lemma \ref{l2-full-estimate-geq-eta}, we have the following lemma for upper bound of $\mathcal{L}^{\epsilon,\gamma,\beta_{0},\beta_{1}} =\mathcal{L}^{\epsilon,\gamma,0,\beta_{0},\beta_{1}}$.
\begin{lem} \label{l-full-estimate} For any $0<\eta \leq 1$,  there holds
\ben \label{full-L-upper-bound} |\langle\mathcal{L}^{\epsilon,\gamma,\beta_{0},\beta_{1}}f , h\rangle| \lesssim  \eta^{\gamma-3}|f|_{\epsilon,\gamma/2}|h|_{\epsilon,\gamma/2}+(\eta^{1/2}+\epsilon^{1/2})
|\mu^{1/16}f|_{H^{1}}|h|_{\epsilon,\gamma/2}.  \een
\end{lem}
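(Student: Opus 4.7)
The target estimate decomposes naturally via the splitting $\mathcal{L}^{\epsilon,\gamma,\beta_0,\beta_1} = \mathcal{L}^{\epsilon,\gamma,\beta_0,\beta_1}_1 + \mathcal{L}^{\epsilon,\gamma,\beta_0,\beta_1}_2$ recalled in \eqref{beta-version-L}--\eqref{beta-version-L-1-2}, so the plan is to handle each piece separately and then combine. Since all of the hard analytic work has already been done in the preceding subsection, this lemma is essentially a bookkeeping consequence, with no genuine obstacle.

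First I would treat the ``cross'' piece $\langle \mathcal{L}^{\epsilon,\gamma,\beta_0,\beta_1}_1 f,h\rangle = -\langle \Gamma^{\epsilon,\gamma}(\partial_{\beta_1}\mu^{1/2}, f; \beta_0), h\rangle$. This is exactly the functional controlled by Corollary~\ref{upgammamuff1-full} (applied with the role of $h$ in that corollary played by our $f$, and with the role of the test function $f$ played by our $h$). That corollary gives directly, for any $\eta \in (0,1]$,
\[
|\langle \mathcal{L}^{\epsilon,\gamma,\beta_0,\beta_1}_1 f, h\rangle| \lesssim \bigl(\eta^{\gamma-3}|f|_{\epsilon,\gamma/2} + (\eta^{1/2}+\epsilon^{1/2})|\mu^{1/16}f|_{H^1}\bigr)|h|_{\epsilon,\gamma/2},
\]
which is already the right-hand side we want.

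Second I would treat the ``absorption'' piece $\langle \mathcal{L}^{\epsilon,\gamma,\beta_0,\beta_1}_2 f, h\rangle = -\langle \Gamma^{\epsilon,\gamma}(f, \partial_{\beta_1}\mu^{1/2}; \beta_0), h\rangle$. Observe that $\mathcal{L}^{\epsilon,\gamma,\beta_0,\beta_1}_2$ is exactly $\mathcal{L}^{\epsilon,\gamma,0,\beta_0,\beta_1}_2$ in the notation of \eqref{beta-version-L-1-2} with $\eta=0$, so Lemma~\ref{l2-full-estimate-geq-eta} (whose proof is indicated to follow Proposition~\ref{less-eta-part-l2}) yields
\[
|\langle \mathcal{L}^{\epsilon,\gamma,\beta_0,\beta_1}_2 f,h\rangle| \lesssim |\mu^{1/8}f|_{L^2}|\mu^{1/8}h|_{L^2}.
\]
Because the Gaussian weight $\mu^{1/8}$ dominates any polynomial factor, one has $|\mu^{1/8}f|_{L^2} \lesssim |\mu^{1/16}f|_{H^1}$ and $|\mu^{1/8}h|_{L^2} \lesssim |h|_{L^2_{\gamma/2}} \leq |h|_{\epsilon,\gamma/2}$. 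Since $1 \lesssim \eta^{1/2} + \epsilon^{1/2}$ is of course false in general, I would instead simply bound $|\mu^{1/8}f|_{L^2}|\mu^{1/8}h|_{L^2}$ by $|\mu^{1/16}f|_{H^1}|h|_{\epsilon,\gamma/2}$ and absorb it into the first term on the right (which contains $\eta^{\gamma-3}|f|_{\epsilon,\gamma/2}|h|_{\epsilon,\gamma/2}$ with $\eta^{\gamma-3}\geq 1$ since $\gamma\leq 0$ and $\eta\leq 1$); equivalently note $|\mu^{1/8}f|_{L^2} \lesssim \eta^{\gamma-3}|f|_{\epsilon,\gamma/2}$ trivially in this range of parameters.

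Adding the two bounds yields the claim. The main (very minor) point to verify carefully is that the constant in Lemma~\ref{l2-full-estimate-geq-eta} is indeed uniform in $\eta$ at $\eta=0$ and that the weight arithmetic $|\mu^{1/8}\cdot|_{L^2} \leq |\mu^{1/16}\cdot|_{H^1}$ and $|\mu^{1/8}\cdot|_{L^2} \leq |\cdot|_{\epsilon,\gamma/2}$ both hold; these are immediate from the definitions of the norms in Section~1.5.
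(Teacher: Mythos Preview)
Your proposal is correct and follows exactly the paper's approach: the paper's proof simply cites Corollary~\ref{upgammamuff1-full} for the $\mathcal{L}_1$ piece and Lemma~\ref{l2-full-estimate-geq-eta} (with $\eta=0$) for the $\mathcal{L}_2$ piece, and your absorption of the $\mathcal{L}_2$ contribution $|\mu^{1/8}f|_{L^2}|\mu^{1/8}h|_{L^2} \lesssim |f|_{\epsilon,\gamma/2}|h|_{\epsilon,\gamma/2} \leq \eta^{\gamma-3}|f|_{\epsilon,\gamma/2}|h|_{\epsilon,\gamma/2}$ (using $\eta^{\gamma-3}\geq 1$) is the intended one.
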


\setcounter{equation}{0}

\section{Commutator estimates}\label{Commutator-Estimate}
This section is devoted to the estimate of the  commutator estimates between $ \Gamma^{\epsilon}(g,\cdot)$ and $W_{l}$, which are necessary for energy estimates in weighted Sobolev space. In this section, unless indicated otherwise, $-3 \leq \gamma \leq 0$ and $g,h,f$ are suitable smooth functions.

\subsection{Commutator estimates for $Q^{\epsilon,\gamma,\eta}$} We first have
\begin{prop}\label{commutatorQepsilon}
	Let $0<\eta \leq 1, l \geq 2$, there holds
\beno |\langle Q^{\epsilon,\gamma,\eta}(\mu^{1/2}g,W_{l}h)-W_{l}Q^{\epsilon,\gamma,\eta}(\mu^{1/2}g,h), f\rangle| \lesssim \eta^{\gamma-3} C_{l}|\mu^{1/16}g|_{L^{2}}|h|_{L^{2}_{l+\gamma/2}}|f|_{\epsilon,\gamma/2}. \eeno
\end{prop}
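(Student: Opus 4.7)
The plan is to first compute the commutator kernel explicitly, then extract the smallness from the weight difference via Taylor expansion, redistribute the resulting polynomial weights, and finally reduce to a bilinear estimate of the type treated in Proposition~\ref{ubqepsilonnonsingular}.

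First I would expand both terms of the commutator using \eqref{Q-ep-ga-geq-eta}. Since the loss terms $(\mu^{1/2}g)_*(W_l h)(v)$ and $W_l(v)(\mu^{1/2}g)_*h(v)$ coincide pointwise, they cancel and there remains
\begin{equation*}
\mathcal{C}(v):=Q^{\epsilon,\gamma,\eta}(\mu^{1/2}g,W_l h)(v)-W_l(v)Q^{\epsilon,\gamma,\eta}(\mu^{1/2}g,h)(v)=\int B^{\epsilon,\gamma,\eta}(\mu^{1/2}g)'_*\,h'\bigl(W_l(v')-W_l(v)\bigr)d\sigma\,dv_*.
\end{equation*}
A first-order Taylor expansion along $v(\kappa):=v+\kappa(v'-v)$, $\kappa\in[0,1]$, together with $|v'-v|=|v-v_*|\sin(\theta/2)$, $|\nabla W_l|\lesssim C_l\langle\cdot\rangle^{l-1}$, and $\langle v(\kappa)\rangle\lesssim\langle v\rangle+\langle v_*\rangle$, yields
\begin{equation*}
|W_l(v')-W_l(v)|\lesssim C_l\,|v-v_*|\sin(\theta/2)\bigl(\langle v\rangle^{l-1}+\langle v_*\rangle^{l-1}\bigr).
\end{equation*}
The decisive observation is that the commutator carries one \emph{extra} radial factor $|v-v_*|$ and one \emph{extra} angular factor $\sin(\theta/2)$ relative to $B^{\epsilon,\gamma,\eta}$; this is precisely the cancellation that will remove the need for any $\epsilon$-regularity norm on $h$.

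Next I would redistribute the polynomial weights. Using $\langle v_*\rangle\lesssim\langle v_*'\rangle+|v-v_*|\sin(\theta/2)$, the hypothesis $l\geq 2$ ensures $\sin^{l-1}(\theta/2)\leq\sin(\theta/2)$, hence
\begin{equation*}
\langle v_*\rangle^{l-1}\lesssim\langle v_*'\rangle^{l-1}+|v-v_*|^{l-1}\sin(\theta/2),
\end{equation*}
and analogously $\langle v\rangle^{l-1}\lesssim\langle v'\rangle^{l-1}+|v-v_*|^{l-1}\sin(\theta/2)$. Each factor $\langle v_*'\rangle^{l-1}$ is absorbed into $\mu^{1/2}(v_*')$, producing $\mu^{1/16}(v_*')g(v_*')$ with a harmless Maxwellian remainder. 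Each $\langle v'\rangle^{l-1}$ is absorbed into the weight of $h'$, contributing to the target norm $|h|_{L^2_{l+\gamma/2}}$. Each remaining factor $|v-v_*|^{l-1}\sin(\theta/2)$ combines with the pre-existing $\sin(\theta/2)$ from Step~1 to produce $\sin^2(\theta/2)b^\epsilon$, which is $\sigma$-integrable uniformly in $\epsilon$ by \eqref{order-2}. After these reductions, $\langle\mathcal{C},f\rangle$ is controlled by a sum of two integrals, one with effective kernel $|v-v_*|^{\gamma+1}\mathbf{1}_{|v-v_*|\geq\eta}\sin(\theta/2)b^\epsilon$ and one with $|v-v_*|^{\gamma+l}\mathbf{1}_{|v-v_*|\geq\eta}\sin^2(\theta/2)b^\epsilon$, acting on $\mu^{1/8}(v_*')g(v_*')$, $h(v')$, and $f(v)$.

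For the final step I would estimate each reduced integral by the same spherical--radial geometric decomposition used in the proof of Proposition~\ref{ubqepsilonnonsingular}: the radial contribution is handled via Lemma~\ref{crosstermsimilar} and carries the $\eta^{\gamma-3}$ scaling from the truncation $\mathbf{1}_{|v-v_*|\geq\eta}$, while the spherical contribution is controlled via Lemma~\ref{lowerboundpart2-general-g}. The gained $\sin(\theta/2)$ is exactly what replaces $|h|_{\epsilon,l+\gamma/2}$ by the plain $L^2$-norm $|h|_{L^2_{l+\gamma/2}}$: splitting $\sin^{1/2}(\theta/2)(b^\epsilon)^{1/2}$ between the $h$- and $f$-sides in the Cauchy--Schwarz step, the $h$-side collapses after $v\mapsto v'$ change of variable and $\sigma$-integration to $|h|^2_{L^2_{l+\gamma/2}}$, while the $f$-side retains the full anisotropic norm $|f|^2_{\epsilon,\gamma/2}$. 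The main obstacle will be precisely this bookkeeping: one must distribute the angular singular factor $b^\epsilon$ and the radial factor $|v-v_*|^{\gamma+1}$ (or $|v-v_*|^{\gamma+l}$) between the two sides so that simultaneously the $\eta^{\gamma-3}$ radial scaling is preserved, no $W^\epsilon(D)$ appears on $h$, and the full $\epsilon$-regularity of $f$ is retained, which requires careful use of Lemma~\ref{integral-angular-function} and the change-of-variable bounds from the proof of Proposition~\ref{ubqepsilonnonsingular}.
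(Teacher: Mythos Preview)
Your opening computation of the commutator kernel is correct (and equivalent to the paper's via the pre--post collisional change of variables), but the subsequent strategy has a genuine gap: a single factor of $\sin(\theta/2)$ from a first-order Taylor bound on $|W_l'-W_l|$ is not enough to close the argument. After taking absolute values you are left with an integrand of the form $b^\epsilon\sin(\theta/2)\,|v-v_*|^{\gamma+1}\mathbf 1_{|v-v_*|\ge\eta}\,(\mu^{1/2}g)_*'\,|h'|\,|f|$, with \emph{no} difference structure on $f$ or $h$. Cauchy--Schwarz then forces at least one side to carry $\int b^\epsilon\sin(\theta/2)\,d\sigma\sim|\ln\epsilon|^{-1}\epsilon^{-1}$, which is not uniformly bounded; the appeals to Lemma~\ref{crosstermsimilar} and Lemma~\ref{lowerboundpart2-general-g} do not help, since both of those results act on \emph{differences} $f(u^+)-f(|u|\tfrac{u^+}{|u^+|})$ or $(f'-f)^2$, whereas your reduced integrals contain only pointwise values $h(v')$, $f(v)$.

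The paper's proof supplies the two missing ideas. First, working with the equivalent form $\int B^{\epsilon,\gamma,\eta}(W_l-W_l')\mu^{1/2}_*g_*\,h\,f'$, it splits $h f'=h(f'-f)+hf$. In the piece $\mathcal A_1$ carrying $h(f'-f)$, Cauchy--Schwarz places the \emph{full} angular kernel $b^\epsilon$ together with $(f'-f)^2$, which is exactly the structure $\mathcal I_{2,1}$ of Proposition~\ref{upforI-ep-ga-et} and yields $\eta^{\gamma-3}|f|^2_{\epsilon,\gamma/2}$; the companion factor contains $(W_l-W_l')^2\lesssim\theta^2|v-v_*|^2\langle v\rangle^{2l-2}\langle v_*\rangle^{2l-2}$, so the $\sigma$-integral closes via \eqref{order-2} and the $v_*$-integral via $\mu_*^{1/2}$, giving $|h|^2_{L^2_{l+\gamma/2}}$. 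In the piece $\mathcal A_2$ carrying $hf$, one expands $W_l'-W_l$ to \emph{second} order: the first-order term is not bounded in absolute value but integrated exactly in $\sigma$ using the symmetry identity $\int b^\epsilon(v'-v)\,d\sigma=(\int b^\epsilon\sin^2(\theta/2)\,d\sigma)(v_*-v)$, producing $\sin^2(\theta/2)$ rather than $\sin(\theta/2)$; the second-order remainder already carries $\theta^2$. Both are then bounded by $|\mu^{1/16}g|_{L^2}|h|_{L^2_{l+\gamma/2}}|f|_{L^2_{\gamma/2}}$. Your first-order absolute-value bound destroys precisely this symmetry gain and the $(f'-f)$ structure simultaneously.
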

\begin{proof}
We observe that
\beno &&\langle Q^{\epsilon,\gamma,\eta}(\mu^{1/2}g,W_{l}h)-W_{l}Q^{\epsilon,\gamma,\eta}(\mu^{1/2}g,h), f\rangle  = \int B^{\epsilon,\gamma,\eta}(W_{l}-W^{\prime}_{l})\mu^{1/2}_{*}g_{*} h f^{\prime} d\sigma dv_{*} dv
\\&=& \int B^{\epsilon,\gamma,\eta}(W_{l}-W^{\prime}_{l})\mu^{1/2}_{*}g_{*} h (f^{\prime}-f) d\sigma dv_{*} dv
 +\int B^{\epsilon,\gamma,\eta}(W_{l}-W^{\prime}_{l})\mu^{1/2}_{*}g_{*} h f d\sigma dv_{*} dv
:=\mathcal{A}_{1} + \mathcal{A}_{2}. \eeno

{\it Step 1: Estimate of $\mathcal{A}_{1}$.}
By Cauchy-Schwartz inequality, we have
\beno |\mathcal{A}_{1}| &\leq& \big(\int B^{\epsilon,\gamma,\eta} \mu^{1/2}_{*}(f^{\prime}-f)^{2} d\sigma dv_{*} dv\big)^{1/2}
\big(\int B^{\epsilon,\gamma,\eta}(W_{l}-W^{\prime}_{l})^{2}\mu^{1/2}_{*}g^{2}_{*} h^{2}  d\sigma dv_{*} dv\big)^{1/2}
\\&:=&(\mathcal{A}_{1,1})^{1/2}(\mathcal{A}_{1,2})^{1/2}. \eeno
Note that $\mathcal{A}_{1,1}$ has the same structure as $\mathcal{I}_{2,1}$ in \eqref{estimate-I-21}. Taking $\delta=1/2, s_{1}=2, s_{2}=0$ in \eqref{result-estimate-I-21}, we have $\mathcal{A}_{1,1} \lesssim \eta^{\gamma-3}|f|^{2}_{\epsilon,\gamma/2}$.
It is easy to derive $ \int b^{\epsilon}(W_{l}-W^{\prime}_{l})^{2}d\sigma \lesssim |v-v_{*}|^{2}\langle v \rangle^{2l-2}\langle v_{*} \rangle^{2l-2}, $
which gives \beno \mathcal{A}_{1,2} \lesssim \int \mathrm{1}_{|v-v_{*}|\geq \eta} |v-v_{*}|^{\gamma+2}\langle v \rangle^{2l-2}\langle v_{*} \rangle^{2l-2}\mu^{1/2}_{*}g^{2}_{*} h^{2}   dv_{*} dv.\eeno
If $\gamma+2 \geq 0$, there holds $ \mathcal{A}_{1,2} \lesssim |\mu^{1/16}g|^{2}_{L^{2}}|h|^{2}_{L^{2}_{l+\gamma/2}}.$
If $\gamma+2 \leq 0$,   we get
\beno\mathcal{A}_{1,2} \lesssim \eta^{\gamma+2}\int \langle v-v_{*} \rangle^{\gamma+2}\langle v \rangle^{2l-2}\langle v_{*} \rangle^{2l-2}\mu^{1/2}_{*}g^{2}_{*} h^{2}   dv_{*} dv \lesssim \eta^{\gamma+2} |\mu^{1/16}g|^{2}_{L^{2}}|h|^{2}_{L^{2}_{l+\gamma/2}}.\eeno
Patching together the estimates of $\mathcal{A}_{1,1}$  and $\mathcal{A}_{1,2}$, we get
$ |\mathcal{A}_{1}| \lesssim \eta^{\gamma-3} |\mu^{1/16}g|_{L^{2}}|h|_{L^{2}_{l+\gamma/2}}|f|_{\epsilon,\gamma/2}.
$

{\it Step 2: Estimate of $\mathcal{A}_{2}$.}
By Taylor expansion, one has
\beno W^{\prime}_{l} - W_{l} = (\nabla W_{l})(v)\cdot(v^{\prime}-v) +\frac{1}{2}\int_{0}^{1}(1-\kappa)(\nabla^{2}W_{l})(v(\kappa)):(v^{\prime}-v)\otimes(v^{\prime}-v)d\kappa, \eeno
where $v(\kappa) = v + \kappa (v^{\prime}-v)$. Thus we have
\beno \mathcal{A}_{2} &=& -\int B^{\epsilon,\gamma,\eta}(\nabla W_{l})(v)\cdot(v^{\prime}-v)\mu^{1/2}_{*}g_{*} h f d\sigma dv_{*} dv
\\&&-\frac{1}{2}\int B^{\epsilon,\gamma,\eta}(1-\kappa)(\nabla^{2}W_{l})(v(\kappa)):(v^{\prime}-v)\otimes(v^{\prime}-v)\mu^{1/2}_{*}g_{*} h f d\kappa d\sigma dv_{*} dv
:= \mathcal{A}_{2,1}+\mathcal{A}_{2,2}.\eeno

\underline{Estimate of $\mathcal{A}_{2,1}$.} Thanks to the fact \eqref{dispear2}, using $|(\nabla W_{l})(v)| \lesssim \langle v \rangle^{l-1}$,
we have
\beno |\mathcal{A}_{2,1}|  \lesssim   \int \mathrm{1}_{|v-v_{*}|\geq \eta} |v-v_{*}|^{\gamma+1}\langle v \rangle^{l-1}\mu^{1/2}_{*}|g_{*} h f| d\sigma dv_{*} dv.\eeno
If $\gamma+1 \geq 0$, there holds $ |\mathcal{A}_{2,1}| \lesssim |\mu^{1/16}g|_{L^{2}}|h|_{L^{2}_{l+\gamma/2}}|f|_{L^{2}_{\gamma/2}}.$
If $\gamma+1 \leq 0$,  we have
\beno |\mathcal{A}_{2,1}| \lesssim \eta^{\gamma+1}\int \langle v-v_{*} \rangle^{\gamma+1}\langle v \rangle^{l-1}\mu^{1/2}_{*}|g_{*} h f|   dv_{*} dv \lesssim \eta^{\gamma+1} |\mu^{1/16}g|_{L^{2}}|h|_{L^{2}_{l+\gamma/2}}|f|_{L^{2}_{\gamma/2}}.\eeno
Patching together the two cases, we get
$ |\mathcal{A}_{2,1}| \lesssim \eta^{(\gamma+1) \wedge 0} |\mu^{1/16}g|_{L^{2}}|h|_{L^{2}_{l+\gamma/2}}|f|_{L^{2}_{\gamma/2}}.
$

\underline{Estimate of $\mathcal{A}_{2,2}$.} Since $|(\nabla^{2}W_{l})(v(\kappa))| \lesssim \langle v(\kappa) \rangle^{l-2} \lesssim \langle v \rangle^{l-2}\langle v_{*} \rangle^{l-2}$ and $|v^{\prime}-v|^{2} \lesssim \theta^{2} |v-v_{*}|^{2}$, we have
\beno |\mathcal{A}_{2,2}| &\lesssim& \int b^{\epsilon}(\cos\theta)\theta^{2}\mathrm{1}_{|v-v_{*}|\geq \eta}|v-v_{*}|^{\gamma+2}\langle v \rangle^{l-2}\langle v_{*} \rangle^{l-2}\mu^{1/2}_{*}|g_{*} h f |d\sigma dv_{*} dv
\\&\lesssim& \int \mathrm{1}_{|v-v_{*}|\geq \eta}|v-v_{*}|^{\gamma+2}\langle v \rangle^{l-2}\mu_{*}^{1/8}|g_{*} h f | dv_{*} dv.
\eeno
Similar as in the estimate of $\mathcal{A}_{2,1}$, we have
$ |\mathcal{A}_{2,2}| \lesssim \eta^{(\gamma+2) \wedge 0} |\mu^{1/16}g|_{L^{2}}|h|_{L^{2}_{l+\gamma/2}}|f|_{L^{2}_{\gamma/2}}.
$
Patching together the estimates of $\mathcal{A}_{2,1}$ and $\mathcal{A}_{2,2}$, we have
\beno |\mathcal{A}_{2}| \lesssim \eta^{(\gamma+1) \wedge 0} |\mu^{1/16}g|_{L^{2}}|h|_{L^{2}_{l+\gamma/2}}|f|_{L^{2}_{\gamma/2}}.
\eeno

The proposition follows by patching together the estimates of $\mathcal{A}_{1}$ and $\mathcal{A}_{2}$.
\end{proof}

Observe that
\ben \label{singular-Q-ep-ga-eta}
\langle Q^{\epsilon,\gamma}_\eta(\mu^{1/2}g,W_{l}h)-W_{l}Q^{\epsilon,\gamma}_\eta(\mu^{1/2}g,h), f\rangle  = \int B^{\epsilon,\gamma}_\eta(W_{l}-W^{\prime}_{l})\mu^{1/2}_{*}g_{*} h f^{\prime} d\sigma dv_{*} dv. \een
Comparing \eqref{singular-Q-ep-ga-eta} with \eqref{I-ep-ga-eta-ghf}, we find that they enjoy almost the same structure. Thus following the argument there and using the fact $\mu_{*}^{1/2}\mu^{3/4} \mathrm{1}_{|v-v_*|\leq 1} \lesssim \mu_{*}\mathrm{1}_{|v-v_*|\leq 1} \lesssim \mu_{*}^{1/2}\mu^{1/4} \mathrm{1}_{|v-v_*|\leq 1}$, we get
\begin{prop} \label{commutatorQepsilon-less-eta}Let $0 < \eta \leq 1, l \geq 2, 0< \delta \leq 1/2 $, $(s_{3},s_{4})=(1/2+\delta,0)$ or $(s_{3},s_{4})=(0, 1/2+\delta)$, there holds
\beno |\langle Q^{\epsilon,\gamma}_{\eta}(\mu^{1/2}g,W_{l}h)-W_{l}Q^{\epsilon,\gamma}_{\eta}(\mu^{1/2}g,h), f\rangle|  &\lesssim&  C_{l}\delta^{-1/2}\eta^{\delta}|\mu^{1/16}g|_{H^{s_{3}}}|W^{\epsilon}(D)\mu^{1/16}h|_{H^{s_{4}}}|W^{\epsilon}(D)\mu^{1/16}f|_{L^{2}}. \eeno
\end{prop}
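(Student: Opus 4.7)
\textbf{Proof plan for Proposition \ref{commutatorQepsilon-less-eta}.}  The strategy is to reduce the commutator to the framework already developed for Proposition \ref{I-less-eta-upper-bound}. By \eqref{singular-Q-ep-ga-eta} the commutator equals
\[
\int B^{\epsilon,\gamma}_{\eta}(W_{l}-W_{l}^{\prime})\mu_{*}^{1/2}g_{*} h f^{\prime}\, d\sigma dv_{*} dv,
\]
whose structure is formally parallel to $\langle I^{\epsilon,\gamma}_{\eta}(g,h),f\rangle$ in \eqref{I-ep-ga-eta-ghf}: a smooth collisional difference multiplies the triple product $g_{*} h f^{\prime}$ against $B^{\epsilon,\gamma}_{\eta}$. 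The only real novelty is that the kinetic difference $(\mu^{1/2})^{\prime}_{*}-\mu_{*}^{1/2}$ is replaced by $(W_{l}^{\prime}-W_{l})\mu_{*}^{1/2}$, in which the weight $W_{l}$ grows polynomially instead of decaying like a Gaussian.

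First I would set $G=\mu^{1/16}g$, $H=\mu^{1/16}h$, $F=\mu^{1/16}f$ and split the integrand by writing $\mu^{-1/16}H(\mu^{-1/16}F)^{\prime} = [\mu^{-1/16}H-(\mu^{-1/16}H)^{\prime}](\mu^{-1/16}F)^{\prime} + (\mu^{-1/8}HF)^{\prime}$, producing two pieces analogous to $\mathcal{A}(G,H,F)$ and $\mathcal{B}(G,H,F)$ in the proof of Proposition \ref{I-less-eta-upper-bound}. On the $\mathcal{A}$-type piece I would Taylor expand $W_{l}^{\prime}-W_{l}$ to first order, using $|\nabla W_{l}|\lesssim C_{l}\langle v\rangle^{l-1}$ and $|v^{\prime}-v|\lesssim\theta|v-v_{*}|$; on the $\mathcal{B}$-type piece I would Taylor expand to second order, using \eqref{dispear1} to kill the first-order term and $|\nabla^{2}W_{l}|\lesssim C_{l}\langle v\rangle^{l-2}$ for the remainder. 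These yield $|W_{l}^{\prime}-W_{l}|\lesssim C_{l}\theta|v-v_{*}|\langle v\rangle^{l-1}$ (resp.\ $C_{l}\theta^{2}|v-v_{*}|^{2}\langle v\rangle^{l-2}$), which are precisely the analogues of the bounds on $(\mu^{1/2})^{\prime}_{*}-\mu_{*}^{1/2}$ used in Steps~1 and~2 there, up to the polynomial factor $\langle v\rangle^{l-k}$.

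The crucial rebalancing is to absorb this polynomial loss into the Gaussian factor $\mu_{*}^{1/2}$ accompanying the commutator. On $|v-v_{*}|\leq\eta\leq 1$ one has $\langle v\rangle\lesssim\langle v_{*}\rangle$, so $\langle v\rangle^{l-1}$ is controlled by $C_{l}\mu_{*}^{-c}$ for any small $c>0$. Using the pointwise inequality $\mu_{*}^{1/2}\mu^{3/4}\mathrm{1}_{|v-v_{*}|\leq 1}\lesssim\mu_{*}\mathrm{1}_{|v-v_{*}|\leq 1}\lesssim\mu_{*}^{1/2}\mu^{1/4}\mathrm{1}_{|v-v_{*}|\leq 1}$ highlighted just before the proposition, I can then trade the surviving fraction of $\mu_{*}^{1/2}$ for a power of $\mu$ at the velocity $v$ (or, after the change $v\mapsto v^{\prime}$, at $v^{\prime}$) and recover exactly the Gaussian weights $\mu^{1/16}$ on $g$, $h$, $f$ required by the right-hand side. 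Once the weights are redistributed this way, the dyadic frequency decomposition into low-low, low-high, high-low and high-high regimes, the further splitting $j\leq k$ versus $k<j$, and the two Taylor-plus-cancellation arguments carried out in Steps~1.1--1.2 and Step~2 of the proof of Proposition \ref{I-less-eta-upper-bound} transfer verbatim, with each estimate picking up the extra constant $C_{l}$; the Hardy-type bounds \eqref{Y2ghf1}--\eqref{Y2ghf2} yield the same $\delta^{-1/2}\eta^{\delta}$ gain and cover both admissible pairs $(s_{3},s_{4})$ uniformly.

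The main obstacle will be the bookkeeping of $W_{l}$ across the many sub-cases: in each Taylor and dyadic step I must verify that the polynomial factor produced by differentiating $W_{l}$ is indeed swallowed by part of $\mu_{*}^{1/2}$ via the above rebalancing, without ever spending more derivative or Sobolev order on $g$, $h$, $f$ than is allowed by the target $|\mu^{1/16}g|_{H^{s_{3}}}$, $|W^{\epsilon}(D)\mu^{1/16}h|_{H^{s_{4}}}$, $|W^{\epsilon}(D)\mu^{1/16}f|_{L^{2}}$. No new analytic ingredient is needed beyond what appears in the proof of Proposition \ref{I-less-eta-upper-bound}; the entire novelty is this weight swap enabled by $|v-v_{*}|\leq 1$, at the cost of a harmless constant $C_{l}$.
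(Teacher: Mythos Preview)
Your proposal is correct and follows exactly the route the paper indicates: compare \eqref{singular-Q-ep-ga-eta} with \eqref{I-ep-ga-eta-ghf}, rerun the proof of Proposition \ref{I-less-eta-upper-bound} with $(\mu^{1/2})'_{*}-\mu^{1/2}_{*}$ replaced by $(W_{l}-W_{l}')\mu^{1/2}_{*}$, and use the weight-swap $\mu_{*}^{1/2}\mu^{3/4}\mathrm{1}_{|v-v_{*}|\leq 1}\lesssim\mu_{*}\lesssim\mu_{*}^{1/2}\mu^{1/4}$ on the singular region to absorb the polynomial growth $\langle v\rangle^{l-k}$ coming from $\nabla^{k}W_{l}$ into the Gaussian factor. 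The paper's own proof is the single sentence preceding the proposition; your write-up is a faithful expansion of it, including the correct use of \eqref{dispear1} (Taylor expand $W_{l}-W_{l}'$ around $v'$ so that the first-order integrand is a function of $v'$ times $(v'-v)$).
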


\subsection{Commutator estimates for $I^{\epsilon,\gamma,\eta}$} We have
\begin{prop}\label{commutatorforI} Let $ \eta \geq 0, l \geq 1$, there holds
\beno
|\langle I^{\epsilon,\gamma,\eta}(g,W_{l}h;\beta) - W_{l} I^{\epsilon,\gamma,\eta}(g,h;\beta), f\rangle| \lesssim C_{l}|g|_{L^{2}}|h|_{L^{2}_{l+\gamma/2}}|W^{\epsilon}f|_{L^{2}_{\gamma/2}}.
\eeno
\end{prop}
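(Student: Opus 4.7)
The plan is to expand the commutator by definition and exploit the Taylor smallness of $W_{l}(v')-W_{l}(v)$. From \eqref{I-ep-ga-geq-eta-beta},
\begin{equation*}
\mathcal{C}:=\langle I^{\epsilon,\gamma,\eta}(g,W_{l}h;\beta)-W_{l}I^{\epsilon,\gamma,\eta}(g,h;\beta),f\rangle = \int B^{\epsilon,\gamma,\eta}\bigl((\pa_{\beta}\mu^{1/2})_{*}-(\pa_{\beta}\mu^{1/2})'_{*}\bigr) g'_{*}(W'_{l}-W_{l})h' f\,d\sigma dv_{*}dv,
\end{equation*}
so the only new ingredient compared to $\langle I^{\epsilon,\gamma,\eta}(g,h;\beta),f\rangle$ of Proposition \ref{upforI-ep-ga-et} is the factor $W'_{l}-W_{l}$. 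Since $|v'-v|=|v-v_{*}|\sin(\theta/2)$ and $|\nabla W_{l}|\lesssim\langle\cdot\rangle^{l-1}$ with $l\geq 1$, Taylor's theorem yields $|W'_{l}-W_{l}|\lesssim\theta|v-v_{*}|(\langle v\rangle^{l-1}+\langle v_{*}\rangle^{l-1})$.

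I would then apply Cauchy--Schwartz in the spirit of the $\mathcal{A}_{1}$ decomposition of Proposition \ref{commutatorQepsilon}, placing the factor $(\pa_{\beta}\mu^{1/2})_{*}-(\pa_{\beta}\mu^{1/2})'_{*}$ (together with a share of each Gaussian) on the $f$-side. Using $|(\pa_{\beta}\mu^{1/2})_{*}-(\pa_{\beta}\mu^{1/2})'_{*}|^{2}\lesssim\min\{1,|v-v_{*}|^{2}\theta^{2}\}(\mu^{1/8}_{*}+\mu^{1/8}(v'_{*}))$ as in the treatment of $\mathcal{I}_{1}$ of Proposition \ref{upforI-ep-ga-et}, the angular kernel on the $f$-side becomes $b^{\epsilon}\min\{1,|v-v_{*}|^{2}\theta^{2}\}$; by Proposition \ref{symbol} and \eqref{separate-into-2-cf} its $\sigma$-integral is controlled by $(W^{\epsilon}(v))^{2}(W^{\epsilon}(v_{*}))^{2}$ for $|v-v_{*}|\geq 2$ and by $|v-v_{*}|^{2}$ for $|v-v_{*}|\leq 2$, and a standard collision change of variable handling $\mu^{1/8}(v'_{*})$ reduces the $f$-side to $|W^{\epsilon}f|^{2}_{L^{2}_{\gamma/2}}$, exactly as in the estimate of $\mathcal{I}_{1,2}$ there. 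The complementary side carries the Taylor gain from $W'_{l}-W_{l}$: after invoking $\int b^{\epsilon}\theta^{2}d\sigma\lesssim 1$ from \eqref{order-2} and performing the collision change of variable $(v,v_{*},\sigma)\to(v',v'_{*},\sigma')$ of unit Jacobian (so that $g'_{*}h'\to g_{*}h$ and $\langle v'\rangle,\langle v'_{*}\rangle\lesssim\langle v\rangle\langle v_{*}\rangle$), the estimate reduces to a velocity integral of the form $\int\mathrm{1}_{|v-v_{*}|\geq\eta}|v-v_{*}|^{\gamma+2}\langle v\rangle^{2l-2}\mu^{1/32}_{*}|g_{*}|^{2}|h|^{2}dv_{*}dv$, where the Gaussian $\mu^{1/32}_{*}$ (brought in through the Cauchy--Schwartz weight split) has absorbed $\langle v_{*}\rangle^{2l-2}$. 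Integration in $v_{*}$ first yields a function of $v$ bounded by $\langle v\rangle^{\gamma+2}$, producing $C_{l}|g|^{2}_{L^{2}}|h|^{2}_{L^{2}_{l+\gamma/2}}$ as needed.

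The key observation, and the reason for the $\eta$-independence of the stated bound in contrast with the $\eta^{\gamma-3}$ loss of Proposition \ref{commutatorQepsilon}, is that the extra Taylor factor $|v-v_{*}|^{2}$ from $W'_{l}-W_{l}$ exactly compensates the singularity $|v-v_{*}|^{\gamma}$ at $\gamma=-3$, leaving $|v-v_{*}|^{\gamma+2}\mathrm{1}_{|v-v_{*}|\geq\eta}$ which is locally integrable in $v_{*}$ near $v$ uniformly in $\eta$ for every $\gamma\in[-3,0]$. The main technical subtlety to monitor is the precise Cauchy--Schwartz weight split that distributes just enough Gaussian decay to both sides -- $\mu^{1/16}_{*}+\mu^{1/16}(v'_{*})$ on the $f$-side to feed Proposition \ref{symbol} and $\mu^{1/32}_{*}$ on the complementary side to absorb $\langle v_{*}\rangle^{2l-2}$ -- which is carried out exactly as in the $\mathcal{A}_{1}$-step of Proposition \ref{commutatorQepsilon}, so no new technology is required.
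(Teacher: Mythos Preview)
Your overall plan---write out the commutator, use the Taylor smallness of $W_l'-W_l$, and split by Cauchy--Schwartz---is the right one, and your observation that the extra $|v-v_*|^2$ from $(W_l'-W_l)^2$ removes any need for an $\eta$-power is exactly the point of the proposition. However, the Cauchy--Schwartz split you describe has a genuine gap. You propose to put the full difference $(\partial_\beta\mu^{1/2})_*-(\partial_\beta\mu^{1/2})'_*$ on the $f$-side and then insert a weight $\mu^{-a}_*$ to hand the $g,h$-side a Gaussian $\mu^{a}_*$ that absorbs $\langle v_*\rangle^{2l-2}$. But once you bound $|\Delta\mu|^2\lesssim\min\{1,|v-v_*|^2\theta^2\}(\mu^{1/8}_*+(\mu^{1/8})'_*)$, the $f$-side contains the cross term $(\mu^{1/8})'_*\,\mu^{-a}_*$, and this quantity is \emph{not} uniformly bounded (the arguments $v'_*$ and $v_*$ can be far apart when $|v-v_*|$ is large, so no pointwise relation controls the product). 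Your claim that this is ``exactly as in the $\mathcal A_1$-step of Proposition \ref{commutatorQepsilon}'' overlooks that there the Gaussian being shared is the \emph{fixed} function $\mu^{1/2}_*$, not a difference mixing $v_*$ and $v'_*$.

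The paper's proof resolves this by the algebraic identity
\[
(\mu^{1/2})'_*-\mu^{1/2}_*=((\mu^{1/4})'_*-\mu^{1/4}_*)^2+2\mu^{1/4}_*\bigl((\mu^{1/4})'_*-\mu^{1/4}_*\bigr),
\]
splitting the commutator into $\mathcal A_1+\mathcal A_2$. In $\mathcal A_1$ the \emph{square} factor distributes one copy of $((\mu^{1/4})'_*-\mu^{1/4}_*)$ to each side of Cauchy--Schwartz; the $f'$-side then becomes exactly $\mathcal N^{\epsilon,\gamma,\eta}(f,\mu^{1/4})$ after the collision change of variables, which Lemma~\ref{upperboundpart1} bounds by $|W^\epsilon f|^2_{L^2_{\gamma/2}}$ with no $\eta$-loss. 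In $\mathcal A_2$ the prefactor $\mu^{1/4}_*$ is a fixed function of $v_*$ alone and goes to both sides exactly as in Proposition~\ref{commutatorQepsilon}. So the factorization is not cosmetic: it is precisely what allows the Gaussian decay to be placed on both sides without generating mixed terms like $(\mu^{a})'_*\mu^{-b}_*$. Your proposal can be repaired by adopting this decomposition; as written, the ``precise Cauchy--Schwartz weight split'' you allude to is not specified and the obvious candidate fails.
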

\begin{proof}
Let us consider the $\beta=0$ case since the following arguments also work when we replace $\mu^{1/2}$  with $P_{\beta}\mu^{1/2}$ by using the decomposition \eqref{nice-decomposition}. There are two steps in the proof. We will indicate the main difference at the end of each step.

By the definition \eqref{I-ep-ga-geq-eta} of $I^{\epsilon,\gamma,\eta}(g,h)$ and the fact $((\mu^{1/2})_{*}^{\prime} - \mu_{*}^{1/2}) =((\mu^{1/4})_{*}^{\prime} - \mu_{*}^{1/4})^{2}+2\mu_{*}^{1/4}((\mu^{1/4})_{*}^{\prime} - \mu_{*}^{1/4})$, we have
\beno
\langle I^{\epsilon,\gamma,\eta}(g,W_{l}h) - W_{l} I^{\epsilon,\gamma,\eta}(g,h), f\rangle &=& \int B^{\epsilon,\gamma,\eta}((\mu^{1/2})_{*}^{\prime} - \mu_{*}^{1/2}) (W_{l}-W^{\prime}_{l})g_{*} h f^{\prime} d\sigma dv_{*} dv
\\&=&  \int B^{\epsilon,\gamma,\eta}((\mu^{1/4})_{*}^{\prime} - \mu_{*}^{1/4})^{2}(W_{l}-W^{\prime}_{l})g_{*} h f^{\prime} d\sigma dv_{*} dv
\\&&
 + 2 \int B^{\epsilon,\gamma,\eta}\mu_{*}^{1/4}((\mu^{1/4})_{*}^{\prime} - \mu_{*}^{1/4})(W_{l}-W^{\prime}_{l})g_{*} h f^{\prime} d\sigma dv_{*} dv
\\&:=& \mathcal{A}_{1} + 2\mathcal{A}_{2}.
\eeno

{\it Step 1: Estimate of $\mathcal{A}_{1}$.}  By Cauchy-Schwartz inequality, we have
\beno |\mathcal{A}_{1}| &\leq& \big(\int B^{\epsilon,\gamma,\eta} ((\mu^{1/4})_{*}^{\prime} - \mu_{*}^{1/4})^{2} f^{\prime 2} d\sigma dv_{*} dv\big)^{1/2}
\\&&\times\big(\int B^{\epsilon,\gamma,\eta}((\mu^{1/4})_{*}^{\prime} - \mu_{*}^{1/4})^{2}(W_{l}-W^{\prime}_{l})^{2}g^{2}_{*} h^{2}  d\sigma dv_{*} dv\big)^{1/2}
:=(\mathcal{A}_{1,1})^{1/2}(\mathcal{A}_{1,2})^{1/2}. \eeno
By the change of variables $(v,v_{*}) \rightarrow (v_{*}^{\prime},v^{\prime})$ and Lemma \ref{upperboundpart1} (the result still holds with $\mu^{1/2}$ replaced by $\mu^{1/4}$), we have
\beno \mathcal{A}_{1,1} = \int B^{\epsilon,\gamma,\eta} ((\mu^{1/4})^{\prime} - \mu^{1/4})^{2} f^{2}_{*} d\sigma dv_{*} dv = \mathcal{N}^{\epsilon,\gamma,\eta}(f,\mu^{1/4})
\lesssim |W^{\epsilon}f|^{2}_{L^{2}_{\gamma/2}}.\eeno
Thanks to  $((\mu^{1/4})_{*}^{\prime} - \mu_{*}^{1/4})^{2} = ((\mu^{1/8})_{*}^{\prime} + \mu_{*}^{1/8})^{2}((\mu^{1/8})_{*}^{\prime} - \mu_{*}^{1/8})^{2} \leq 2 ((\mu^{1/4})_{*}^{\prime} + \mu_{*}^{1/4})((\mu^{1/8})_{*}^{\prime} - \mu_{*}^{1/8})^{2}$, we have
\beno \mathcal{A}_{1,2} &\lesssim& \int B^{\epsilon,\gamma,\eta}\mu_{*}^{1/4}((\mu^{1/8})_{*}^{\prime} - \mu_{*}^{1/8})^{2}(W_{l}-W^{\prime}_{l})^{2}g^{2}_{*} h^{2}  d\sigma dv_{*} dv \\&&+ \int B^{\epsilon,\gamma,\eta}(\mu^{1/4})_{*}^{\prime}((\mu^{1/8})_{*}^{\prime} - \mu_{*}^{1/8})^{2}(W_{l}-W^{\prime}_{l})^{2}g^{2}_{*} h^{2}  d\sigma dv_{*} dv
:= \mathcal{A}_{1,2,1} + \mathcal{A}_{1,2,2}.\eeno
We first estimate $\mathcal{A}_{1,2,2}$.  We recall $|v-v^{\prime}_{*}| \sim |v-v_{*}|$ and thus
\ben\label{roughaboutwl}
(W_{l}-W^{\prime}_{l})^{2} \lesssim \min\{\theta^{2}|v-v_{*}^{\prime}|^{2}\langle v \rangle^{2l-2} \langle v_{*}^{\prime} \rangle^{2l-2}, \theta^{2}\langle v \rangle^{2l} \langle v_{*}^{\prime} \rangle^{2l}\},
\\ \label{roughaboutmu}
((\mu^{1/8})_{*}^{\prime} - \mu_{*}^{1/8})^{2} \lesssim \min\{ \theta^{2}|v-v_{*}^{\prime}|^{2}, 1\}.
\een
We set to prove
\begin{eqnarray}\label{kernelestimate2}
\mathcal{B} := \int B^{\epsilon,\gamma,\eta}(\mu^{1/4})_{*}^{\prime}((\mu^{1/8})_{*}^{\prime} - \mu_{*}^{1/8})^{2}(W_{l}-W^{\prime}_{l})^{2}  d\sigma \lesssim \langle v \rangle^{2l+\gamma},
\end{eqnarray}
which immediately gives
$\mathcal{A}_{1,2,2} \lesssim |g|^{2}_{L^{2}}|h|^{2}_{L^{2}_{l+\gamma/2}}.$

\underline{\it Case 1: $|v-v_{*}|\leq 1$.} By (\ref{roughaboutwl}) and (\ref{roughaboutmu}), we have
\beno \mathcal{B}  \lesssim \int b^{\epsilon}(\cos\theta)\theta^{4}|v-v^{\prime}_{*}|^{\gamma+4}(\mu^{1/4})_{*}^{\prime} \langle v \rangle^{2l-2} \langle v_{*}^{\prime} \rangle^{2l-2} d\sigma. \eeno
Since $|v-v_{*}|\leq 1$, there holds $|v-v_{*}^{\prime}|\leq 1$, $|v-v^{\prime}_{*}|^{\gamma+4}\leq 1$ and $\langle v \rangle \sim \langle v^{\prime}_{*} \rangle$, thus $\langle v \rangle^{2l-2}  \lesssim \langle v \rangle^{2l+\gamma} \langle v^{\prime}_{*} \rangle^{-2-\gamma}$, which implies
\beno \mathcal{B}  \lesssim \int b^{\epsilon}(\cos\theta)\theta^{4}(\mu^{1/4})_{*}^{\prime} \langle v \rangle^{2l+\gamma} \langle v_{*}^{\prime} \rangle^{2l-4-\gamma} d\sigma \lesssim \int b^{\epsilon}(\cos\theta)\theta^{4} \langle v \rangle^{2l+\gamma} d\sigma \lesssim \langle v \rangle^{2l+\gamma}.  \eeno

\underline{\it Case 2: $|v-v_{*}|\geq 1$.} By (\ref{roughaboutwl}) and (\ref{roughaboutmu}), we have
$\mathcal{B}  \lesssim \int b^{\epsilon}(\cos\theta)\theta^{2}|v-v^{\prime}_{*}|^{\gamma}(\mu^{1/4})_{*}^{\prime} \langle v \rangle^{2l} \langle v_{*}^{\prime} \rangle^{2l} d\sigma. $
Since $|v-v_{*}|\geq 1$, there holds   $|v-v^{\prime}_{*}|^{\gamma}\sim \langle v-v^{\prime}_{*} \rangle^{\gamma} \lesssim \langle v \rangle^{\gamma}\langle v^{\prime}_{*} \rangle^{|\gamma|}$, which implies
\beno \mathcal{B}  \lesssim \int b^{\epsilon}(\cos\theta)\theta^{2} (\mu^{1/4})_{*}^{\prime} \langle v \rangle^{2l+\gamma} \langle v_{*}^{\prime} \rangle^{2l+|\gamma|} d\sigma  \lesssim \int b^{\epsilon}(\cos\theta)\theta^{2} \langle v \rangle^{2l+\gamma} d\sigma \lesssim \langle v \rangle^{2l+\gamma}. \eeno
We get \eqref{kernelestimate2} by patching together the two cases.

We then go to estimate $\mathcal{A}_{1,2,1}$.
Thanks to
$
(W_{l}-W^{\prime}_{l})^{2} \lesssim \min\{\theta^{2}|v-v_{*}|^{2}\langle v \rangle^{2l-2} \langle v_{*} \rangle^{2l-2}, \theta^{2}\langle v \rangle^{2l} \langle v_{*} \rangle^{2l}\},
$
and  $
((\mu^{1/8})_{*}^{\prime} - \mu_{*}^{1/8})^{2} \lesssim \min\{ \theta^{2}|v-v_{*}|^{2}, 1\},
$
similar to (\ref{kernelestimate2}), we can prove
\begin{eqnarray}\label{kernelestimate1}
\int B^{\epsilon,\gamma,\eta}\mu_{*}^{1/4}((\mu^{1/8})_{*}^{\prime} - \mu_{*}^{1/8})^{2}(W_{l}-W^{\prime}_{l})^{2}  d\sigma \lesssim \langle v \rangle^{2l+\gamma}\mu_{*}^{1/8}.
\end{eqnarray}
Plugging  \eqref{kernelestimate1} into $\mathcal{A}_{1,2,1}$, we get $ \mathcal{A}_{1,2,1} \lesssim |\mu^{1/16}g|^{2}_{L^{2}}|h|^{2}_{L^{2}_{l+\gamma/2}}.$
Patching together the upper bound estimates of $\mathcal{A}_{1,2,1}$ and $\mathcal{A}_{1,2,2}$, we arrive at
 $\mathcal{A}_{1,2} \lesssim |g|^{2}_{L^{2}}|h|^{2}_{L^{2}_{l+\gamma/2}}. $
Patching together the estimates of $\mathcal{A}_{1,1}$ and $\mathcal{A}_{1,2}$, we conclude
$|\mathcal{A}_{1}| \lesssim |g|_{L^{2}}|h|_{L^{2}_{l+\gamma/2}}|W^{\epsilon}f|_{L^{2}_{\gamma/2}}.$

In the $|\beta|>0$ case, by recalling \eqref{alpha-beta-on-Gamma} and \eqref{nice-decomposition},
changes only happen in $\mathcal{A}_{1,1}$, in which $((\mu^{1/4})^{\prime} - \mu^{1/4})^{2}$ is replaced with $(P_{\beta}\mu^{1/4})^{\prime} - P_{\beta}\mu^{1/4})^{2}$. Then Lemma \ref{upperboundpart1} also holds since it only utilizes the condition \eqref{mu-prodcut-square}.

{\it Step 2: Estimate of $\mathcal{A}_{2}$.}
By Cauchy-Schwartz inequality, we have
\beno |\mathcal{A}_{2}| &\leq& \big(\int B^{\epsilon,\gamma,\eta}\mu_{*}^{1/4}((\mu^{1/4})_{*}^{\prime} - \mu_{*}^{1/4})^{2}g_{*} f^{\prime 2} d\sigma dv_{*} dv\big)^{1/2}
\\&\times&\big(\int B^{\epsilon,\gamma,\eta}\mu_{*}^{1/4}(W_{l}-W^{\prime}_{l})^{2}g_{*} h^{2}  d\sigma dv_{*} dv\big)^{1/2}
:=(\mathcal{A}_{2,1})^{1/2}(\mathcal{A}_{2,2})^{1/2}. \eeno

\underline{Estimate of $\mathcal{A}_{2,1}$.} By the change of variable $v \rightarrow v^{\prime}$, we have
\beno \mathcal{A}_{2,1} \lesssim \int b^{\epsilon}(\cos\theta)|v^{\prime}-v_{*}|^{\gamma}\mu_{*}^{1/4}((\mu^{1/4})_{*}^{\prime} - \mu_{*}^{1/4})^{2}g_{*} f^{\prime 2} d\sigma dv_{*} dv^{\prime}.\eeno
By Proposition \ref{symbol}, one has
 $ \int b^{\epsilon}(\cos\theta) ((\mu^{1/4})_{*}^{\prime} - \mu_{*}^{1/4})^{2} d\sigma \lesssim  |v'-v_*|^2\mathrm{1}_{|v'-v_*|\le2}+(W^{\epsilon})^{2}(v^{\prime}-v_{*})\mathrm{1}_{|v'-v_*|\ge2},$
which gives
\beno\mathcal{A}_{2,1} &\lesssim& \int |v^{\prime}-v_{*}|^{\gamma}\mu_{*}^{1/4} (|v'-v_*|^2\mathrm{1}_{|v'-v_*|\le2}+(W^{\epsilon})^{2}(v^{\prime}-v_{*})\mathrm{1}_{|v'-v_*|\ge2}) g_{*} f^{\prime 2} dv_{*} dv^{\prime}\\
&\lesssim& |\mu^{1/8}g|_{L^2}|W^\epsilon f|_{L^2_{\gamma/2}}^2. \eeno

 \underline{Estimate of $\mathcal{A}_{2,2}$.}
By Taylor expansion, when $l \geq 1$, it is easy to check
\beno (W_{l}-W^{\prime}_{l})^{2} \lesssim \theta^{2}|v-v_{*}|^{2}(\langle v \rangle^{2l-2} + \langle v_{*} \rangle^{2l-2}) \lesssim \theta^{2}|v-v_{*}|^{2}\langle v \rangle^{2l-2} \langle v_{*} \rangle^{2l-2}\eeno
which gives
\beno \mathcal{A}_{2,2} &\lesssim& \int b^{\epsilon}(\cos\theta) \theta^{2}|v-v_{*}|^{\gamma+2}\langle v \rangle^{2l-2} \langle v_{*} \rangle^{2l-2}\mu_{*}^{1/4} g_{*} h^{ 2}  d\sigma dv_{*} dv
\\&\lesssim& \int |v-v_{*}|^{\gamma+2}\langle v \rangle^{2l-2} \langle v_{*} \rangle^{2l-2}\mu_{*}^{1/4} g_{*} h^{ 2} dv_{*} dv.
\eeno
Noting that \beno \int |v-v_{*}|^{\gamma+2} \langle v_{*} \rangle^{2l-2}\mu_{*}^{1/4} g_{*} dv_{*} &\leq& \big(\int |v-v_{*}|^{2\gamma+4}  \mu_{*}^{1/4}  dv_{*}\big)^{1/2}
\big(\int \langle v_{*} \rangle^{4l-4}\mu_{*}^{1/4} g^{2}_{*} dv_{*} \big)^{1/2}
\\&\lesssim& \langle v \rangle^{\gamma+2} |\mu^{1/16}g|_{L^{2}}, \eeno
which gives
$ \mathcal{A}_{2,2} \lesssim |\mu^{1/16}g|_{L^{2}}|h|^{2}_{L^{2}_{l+\gamma/2}}. $
Putting together the estimates of $\mathcal{A}_{2,1}$ and $\mathcal{A}_{2,2}$, we arrive at
\beno |\mathcal{A}_{2}| \lesssim |\mu^{1/16}g|_{L^{2}}|h|_{L^{2}_{l+\gamma/2}}|W^{\epsilon}f|_{L^{2}_{\gamma/2}}.\eeno

In the $|\beta|>0$ case, by recalling \eqref{alpha-beta-on-Gamma} and \eqref{nice-decomposition},
$\mu_{*}^{1/4}((\mu^{1/4})_{*}^{\prime} - \mu_{*}^{1/4})^{2}$ is replaced by $(P_{\beta}\mu^{1/4})_{*}((\mu^{1/4})_{*}^{\prime} - \mu_{*}^{1/4})^{2}$ or $\mu_{*}^{1/4}((P_{\beta}\mu^{1/4})_{*}^{\prime} - (P_{\beta}\mu^{1/4})_{*})^{2}$.
The above arguments also work. In the former, just replace $(P_{\beta}\mu^{1/4})_{*}$ with $(\mu^{1/8})_{*}$.
In the latter, $((P_{\beta}\mu^{1/4})_{*}^{\prime} - (P_{\beta}\mu^{1/4})_{*})^{2}$ enjoys the condition \eqref{mu-prodcut-square}.

The proposition follows   the estimates of $\mathcal{A}_{1}$ and $\mathcal{A}_{2}$.
\end{proof}

\subsection{Applications of previous results}  We first have
\begin{thm}\label{commutator-Gamma-geq-eta}Let $0 < \eta \leq 1,l \geq 2$. There holds
\ben \label{Gamma-commutator-geq-eta} |\langle \Gamma^{\epsilon,\gamma,\eta}(g,W_{l}h;\beta)-W_{l}\Gamma^{\epsilon,\gamma,\eta}(g,h;\beta), f\rangle| \lesssim \eta^{\gamma-3}C_{l}|g|_{L^{2}}|h|_{L^{2}_{l+\gamma/2}}|f|_{\epsilon,\gamma/2}. \een
Let $0< \delta \leq 1/2, (s_{3},s_{4})=(1/2+\delta,0)$ or $(s_{3},s_{4})=(0, 1/2+\delta)$, there holds
\ben \label{Gamma-commutator-full} |\langle \Gamma^{\epsilon,\gamma}(g,W_{l}h;\beta)-W_{l}\Gamma^{\epsilon,\gamma}(g,h;\beta), f\rangle| \lesssim  \eta^{\gamma-3}C_{l}|g|_{L^{2}}|h|_{L^{2}_{l+\gamma/2}}|f|_{\epsilon,\gamma/2} \\+ C_{l}\delta^{-1/2}\eta^{\delta}|\mu^{1/16}g|_{H^{s_{3}}}|W^{\epsilon}(D)\mu^{1/16}h|_{H^{s_{4}}}|W^{\epsilon}(D)\mu^{1/16}f|_{L^{2}}. \nonumber\een
\end{thm}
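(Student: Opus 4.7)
The plan is to proceed by the decomposition $\Gamma^{\epsilon,\gamma,\eta}(g,h;\beta) = Q^{\epsilon,\gamma,\eta}(P_{\beta}\mu^{1/2}g,h) + I^{\epsilon,\gamma,\eta}(g,h;\beta)$ from \eqref{Gamma-ep-ga-geq-eta-into-IQ-inner-beta}, where $P_{\beta}$ is the polynomial such that $\partial_{\beta}\mu^{1/2} = P_{\beta}\mu^{1/2}$. Then the commutator with $W_{l}$ splits accordingly into a $Q$-commutator and an $I$-commutator, and each is treated by an already proved proposition.

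For estimate \eqref{Gamma-commutator-geq-eta}, first I would apply Proposition \ref{commutatorQepsilon} to the $Q^{\epsilon,\gamma,\eta}$ commutator, with $g$ replaced by $P_{\beta}g$, so that the bound becomes $\eta^{\gamma-3}C_{l}|\mu^{1/16}P_{\beta}g|_{L^{2}}|h|_{L^{2}_{l+\gamma/2}}|f|_{\epsilon,\gamma/2}$; since $\mu^{1/16}P_{\beta}$ is uniformly bounded (with a constant depending only on $|\beta|$), this is controlled by $\eta^{\gamma-3}C_{l}|g|_{L^{2}}|h|_{L^{2}_{l+\gamma/2}}|f|_{\epsilon,\gamma/2}$. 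Second, I would apply Proposition \ref{commutatorforI} directly to the $I^{\epsilon,\gamma,\eta}$ commutator, giving $C_{l}|g|_{L^{2}}|h|_{L^{2}_{l+\gamma/2}}|W^{\epsilon}f|_{L^{2}_{\gamma/2}}$, which is absorbed into the $\eta^{\gamma-3}$ bound since $\eta \leq 1$. Adding the two gives \eqref{Gamma-commutator-geq-eta}.

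For estimate \eqref{Gamma-commutator-full}, the idea is to use the regular/singular decomposition $\Gamma^{\epsilon,\gamma} = \Gamma^{\epsilon,\gamma,\eta} + \Gamma^{\epsilon,\gamma}_{\eta}$ from \eqref{Gamma-ep-ga-sep-eta}. The $\Gamma^{\epsilon,\gamma,\eta}$ part is already estimated by \eqref{Gamma-commutator-geq-eta} and produces the $\eta^{\gamma-3}$ term. For the singular part, I would further split $\Gamma^{\epsilon,\gamma}_{\eta}(g,h;\beta) = Q^{\epsilon,\gamma}_{\eta}(P_{\beta}\mu^{1/2}g,h) + I^{\epsilon,\gamma}_{\eta}(g,h;\beta)$ in analogy with \eqref{Gamma-ep-ga-leq-eta-into-IQ}, and then handle the two commutators separately. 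For the $Q^{\epsilon,\gamma}_{\eta}$ commutator I would invoke Proposition \ref{commutatorQepsilon-less-eta} (again absorbing $P_{\beta}$ into a $\mu^{1/16}$ weight), which immediately yields the $\eta^{\delta}$ term. For the $I^{\epsilon,\gamma}_{\eta}$ commutator, since this is not already stated as a separate proposition, I would adapt the argument of Proposition \ref{I-less-eta-upper-bound}: use the decomposition \eqref{nice-decomposition} of $(\mu^{1/2})_{*}^{\prime}-\mu_{*}^{1/2}$ to isolate a difference-squared factor and a $\mu^{1/4}$-weighted main factor, apply Taylor expansion to $W_{l}-W_{l}^{\prime}\lesssim \theta|v-v_{*}|\langle v\rangle^{l-1}\langle v_{*}\rangle^{l-1}$, and localize in frequency via the cutoff $\phi(\epsilon D)$ to control the dangerous singularity $|v-v_{*}|^{-3}$ on $|v-v_{*}|\leq\eta$; this produces integrals of the form $\int \mathrm{1}_{|v-v_{*}|\leq\eta}|v-v_{*}|^{-1-2\delta}$ and $\int\mathrm{1}_{|v-v_{*}|\leq\eta}|v-v_{*}|^{-3+2\delta}$, which by Hardy/Hardy-Littlewood-Sobolev (as in \eqref{Y2ghf1}-\eqref{Y2ghf2}) yield exactly the $\delta^{-1/2}\eta^{\delta}$ prefactor.

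The main obstacle is the $I^{\epsilon,\gamma}_{\eta}$ commutator bound: one must simultaneously exploit the smallness from the truncation $|v-v_{*}|\leq\eta$, the Taylor-gain from both the Maxwellian difference $(\mu^{1/2})_{*}^{\prime}-\mu_{*}^{1/2}$ and the weight difference $W_{l}-W_{l}^{\prime}$, and the frequency localization needed to convert the two $\theta$-factors from the Taylor expansions into the operator $W^{\epsilon}(D)$ via the dyadic bookkeeping developed in the proof of Proposition \ref{I-less-eta-upper-bound}. Balancing these three gains is what delivers the form $\delta^{-1/2}\eta^{\delta}|\mu^{1/16}g|_{H^{s_{3}}}|W^{\epsilon}(D)\mu^{1/16}h|_{H^{s_{4}}}|W^{\epsilon}(D)\mu^{1/16}f|_{L^{2}}$ that matches the one supplied by Proposition \ref{commutatorQepsilon-less-eta} for the $Q$ part; their sum, together with the already derived $\eta^{\gamma-3}$ bound from \eqref{Gamma-commutator-geq-eta}, completes \eqref{Gamma-commutator-full}.
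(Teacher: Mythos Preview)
Your argument for \eqref{Gamma-commutator-geq-eta} is exactly the paper's: split $\Gamma^{\epsilon,\gamma,\eta}(g,h;\beta)$ via \eqref{Gamma-ep-ga-geq-eta-into-IQ-inner-beta}, apply Proposition~\ref{commutatorQepsilon} to the $Q$-piece (absorbing $P_\beta$ into $\mu^{1/16}$) and Proposition~\ref{commutatorforI} to the $I$-piece.

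For \eqref{Gamma-commutator-full} your plan is correct but does unnecessary work on the $I$-commutator. The paper's route is shorter: one writes $\Gamma^{\epsilon,\gamma}(g,h;\beta)=Q^{\epsilon,\gamma}(P_\beta\mu^{1/2}g,h)+I^{\epsilon,\gamma}(g,h;\beta)$, then splits only the $Q$-commutator into the $\geq\eta$ and $<\eta$ pieces (handled by Propositions~\ref{commutatorQepsilon} and~\ref{commutatorQepsilon-less-eta} respectively), while the \emph{full} $I^{\epsilon,\gamma}$-commutator is disposed of in one stroke by Proposition~\ref{commutatorforI} applied with $\eta=0$. The point you overlooked is that Proposition~\ref{commutatorforI} is stated for all $\eta\geq 0$ and its bound $C_l|g|_{L^2}|h|_{L^2_{l+\gamma/2}}|W^\epsilon f|_{L^2_{\gamma/2}}$ is already dominated by the $\eta^{\gamma-3}$ term on the right of \eqref{Gamma-commutator-full} (since $\eta\leq 1$, $\gamma-3<0$). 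So there is no need to extract any $\eta^\delta$ smallness from the $I^{\epsilon,\gamma}_\eta$-commutator; the entire ``main obstacle'' you describe---the dyadic bookkeeping and frequency localization for that piece---simply does not arise. Your longer route would work too, but the paper's observation that the $I$-commutator estimate is uniform in $\eta$ makes it a two-line deduction from the three propositions.
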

\begin{proof} By Proposition \ref{commutatorQepsilon} and Proposition \ref{commutatorforI}, we have \eqref{Gamma-commutator-geq-eta}. By Proposition \ref{commutatorQepsilon} and Proposition \ref{commutatorQepsilon-less-eta} and Proposition \ref{commutatorforI}, we get \eqref{Gamma-commutator-full}.
\end{proof}

%

Theorem \ref{Gamma-full-up-bound} and Theorem \ref{commutator-Gamma-geq-eta} together give the following upper bound estimate with weight.
\begin{col}\label{Gamma-full-up-bound-with-weight} Let $0< \delta \leq 1/2, (s_{3},s_{4})=(1/2+\delta,0)$ or $(s_{3},s_{4})=(0, 1/2+\delta)$, there holds
\ben
|\langle W_{l}\Gamma^{\epsilon,\gamma}(g,h;\beta), f\rangle| &\lesssim& |g|_{L^{2}}|h|_{\epsilon,l+\gamma/2}|f|_{\epsilon,\gamma/2} +
C_{l}|g|_{L^{2}}|h|_{L^{2}_{l+\gamma/2}}|f|_{\epsilon,\gamma/2}
\nonumber \\&&+C_{l}\delta^{-1/2}|\mu^{1/16}g|_{H^{s_{3}}}|\mu^{1/32}h|_{H^{1+s_{4}}}|f|_{\epsilon,\gamma/2}. \label{version-2-dissipation-Boltzmann}
\\ \label{version-2-dissipation-landau}
|\langle W_{l}\Gamma^{\epsilon,\gamma}(g,h;\beta), f\rangle| &\lesssim& |g|_{H^{2}}|h|_{0,l+\gamma/2}|f|_{\epsilon,\gamma/2}
+ C_{l}|g|_{H^{2}}|h|_{L^{2}_{l+\gamma/2}}|f|_{\epsilon,\gamma/2}.
\een
\end{col}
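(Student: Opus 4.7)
The plan is to reduce the weighted bound to the unweighted one via the commutator decomposition
\begin{equation*}
W_l\,\Gamma^{\epsilon,\gamma}(g,h;\beta) = \Gamma^{\epsilon,\gamma}(g,W_l h;\beta) + \Big(W_l\,\Gamma^{\epsilon,\gamma}(g,h;\beta) - \Gamma^{\epsilon,\gamma}(g,W_l h;\beta)\Big),
\end{equation*}
and to estimate the two resulting inner products separately. The first piece is controlled by Theorem \ref{Gamma-full-up-bound} applied with $h$ replaced by $W_l h$; the commutator piece is exactly the object bounded in Theorem \ref{commutator-Gamma-geq-eta} inequality \eqref{Gamma-commutator-full}, which I would apply with $\eta$ fixed (say $\eta=1$) so that $\eta^{\gamma-3}$ and $\eta^{\delta}$ collapse to universal constants.

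For the main piece I would use the identity $|W_l h|_{\epsilon,\gamma/2} = |h|_{\epsilon,l+\gamma/2}$, which holds because both sides unfold to $|W^{\epsilon}((-\Delta_{\mathbb{S}^2})^{1/2})W_{l+\gamma/2}h|_{L^2}^2 + |W^{\epsilon}(D)W_{l+\gamma/2}h|_{L^2}^2 + |W^{\epsilon}W_{l+\gamma/2}h|_{L^2}^2$ after squaring, and then $|\mu^{1/16}W_l h|_{H^{1+s_4}}\le C_l|\mu^{1/32}h|_{H^{1+s_4}}$ by the Leibniz rule, using that $\mu^{1/32}W_l$ is a Schwartz multiplier. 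This yields the first and third terms of \eqref{version-2-dissipation-Boltzmann}.

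For the commutator piece, \eqref{Gamma-commutator-full} delivers $C_l|g|_{L^2}|h|_{L^2_{l+\gamma/2}}|f|_{\epsilon,\gamma/2}$ immediately and an extra factor $C_l\delta^{-1/2}|\mu^{1/16}g|_{H^{s_3}}|W^{\epsilon}(D)\mu^{1/16}h|_{H^{s_4}}|W^{\epsilon}(D)\mu^{1/16}f|_{L^2}$. For the latter two norms I would invoke $W^{\epsilon}\in S^1_{1,0}$ together with Lemma \ref{operatorcommutator1} to obtain $|W^{\epsilon}(D)\mu^{1/16}h|_{H^{s_4}}\lesssim|\mu^{1/16}h|_{H^{1+s_4}}\le|\mu^{1/32}h|_{H^{1+s_4}}$, and $|W^{\epsilon}(D)\mu^{1/16}f|_{L^2}\lesssim|W^{\epsilon}(D)W_{\gamma/2}f|_{L^2} + |f|_{L^2_{\gamma/2}}\lesssim|f|_{\epsilon,\gamma/2}$, since $\mu^{1/16}W_{-\gamma/2}$ lies in the Schwartz class. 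Summing the two pieces proves \eqref{version-2-dissipation-Boltzmann}.

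Inequality \eqref{version-2-dissipation-landau} follows from the same skeleton by crude majorization: $|g|_{H^2}$ dominates $|\mu^{1/16}g|_{H^{s_3}}$ for every admissible $s_3\le 3/2+\delta\le 2$, the Landau norm $|h|_{0,l+\gamma/2}$ dominates $|h|_{\epsilon,l+\gamma/2}$ (via $W^{\epsilon}\le W$) and also $|\mu^{1/32}h|_{H^{1+s_4}}$ up to a constant $C_l$ (since $|h|_{0,l+\gamma/2}$ already controls a full derivative against the weight $W_{l+\gamma/2}$), while the purely weighted contribution $|h|_{L^2_{l+\gamma/2}}$ from the commutator survives unchanged. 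The only delicate point in the whole argument is the bookkeeping for the various norms on $h$, in particular checking that after passing from $W^{\epsilon}(D)\mu^{1/16}h$ to $\mu^{1/32}h$ with one extra derivative, no hidden $\eta$-dependence or weight mismatch reappears; since both cited theorems are already proved and the norm identities above are direct, this plan executes cleanly.
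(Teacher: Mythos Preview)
Your approach for \eqref{version-2-dissipation-Boltzmann} is correct and matches the paper's: write $W_l\Gamma^{\epsilon,\gamma}(g,h;\beta)=\Gamma^{\epsilon,\gamma}(g,W_lh;\beta)+[W_l,\Gamma^{\epsilon,\gamma}(g,\cdot;\beta)]h$, apply Theorem \ref{Gamma-full-up-bound} to the first piece and \eqref{Gamma-commutator-full} (with $\eta=1$) to the second. The norm identities you invoke ($|W_lh|_{\epsilon,\gamma/2}=|h|_{\epsilon,l+\gamma/2}$, $|\mu^{1/16}W_lh|_{H^{1+s_4}}\le C_l|\mu^{1/32}h|_{H^{1+s_4}}$, and the $S^1_{1,0}$/commutator bounds for $W^\epsilon(D)\mu^{1/16}f$) are all correct.

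For \eqref{version-2-dissipation-landau}, however, your ``crude majorization from \eqref{version-2-dissipation-Boltzmann}'' loses the constant structure. The point of \eqref{version-2-dissipation-landau} is that the \emph{first} term carries a \emph{universal} constant while only the $|h|_{L^2_{l+\gamma/2}}$ term carries $C_l$; this is exactly what is used later in Lemma \ref{non-linear-estimate-for-error} (see \eqref{g-h-f-N-equals-4-x-v-weight-15} and the leading terms of \eqref{g-h-f-N-equals-4-x-v-weight-larger}, \eqref{g-h-f-N-geq-5-x-v-weight-larger}). In your route the third term of \eqref{version-2-dissipation-Boltzmann} already carries $C_l$, and bounding $|\mu^{1/32}h|_{H^{1+s_4}}\lesssim |h|_{0,l+\gamma/2}$ keeps that $C_l$, so you end up with $C_l|g|_{H^2}|h|_{0,l+\gamma/2}|f|_{\epsilon,\gamma/2}$ rather than a universal constant there.

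The fix is to go back to the skeleton rather than through \eqref{version-2-dissipation-Boltzmann}. For the main piece, with $(s_3,s_4)=(1/2+\delta,0)$, write $\mu^{1/16}W_l=(\mu^{1/16}W_{-\gamma/2})W_{l+\gamma/2}$ (the first factor is a \emph{universal} Schwartz multiplier), so $|\mu^{1/16}W_lh|_{H^1}\lesssim|W_{l+\gamma/2}h|_{H^1}\le|h|_{0,l+\gamma/2}$ with a universal constant. For the commutator, exploit the free parameter $\eta$ in \eqref{Gamma-commutator-full}: the troublesome term carries $C_l\eta^{\delta}$, while $|W^\epsilon(D)\mu^{1/16}h|_{L^2}\lesssim|h|_{0,l+\gamma/2}$ with a universal constant (same factorization as above). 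Choosing $\eta=\eta(l)$ so that $C_l\eta^{\delta}\lesssim 1$ makes that contribution universal; the price is that the other commutator term picks up $\eta(l)^{\gamma-3}$, but that term is the $|h|_{L^2_{l+\gamma/2}}$ one, which is allowed to be $C_l$. With this $l$-dependent choice of $\eta$ you recover \eqref{version-2-dissipation-landau} exactly as stated.
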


As an application of Theorem \ref{commutator-Gamma-geq-eta}, we have
\begin{col}\label{commutator-L-geq-eta}Let $0<\eta \leq 1, l \geq 2$,  there holds
\beno  \langle [\mathcal{L}^{\epsilon,\gamma,\beta_{0},\beta_{1}},W_{l}]g, W_{l}f\rangle| \lesssim \eta^{\gamma-3}C_{l}|g|_{L^{2}_{l+\gamma/2}}|f|_{\epsilon,l+\gamma/2} + C_{l}\eta^{1/2}|W^{\epsilon}(D)\mu^{1/16}g|_{L^{2}}|f|_{\epsilon,\gamma/2}.\eeno
\end{col}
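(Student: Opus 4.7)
The plan is to split the analysis according to the decomposition $\mathcal{L}^{\epsilon,\gamma,\beta_0,\beta_1}=\mathcal{L}^{\epsilon,\gamma,\eta,\beta_0,\beta_1}+\mathcal{L}^{\epsilon,\gamma,\beta_0,\beta_1}_{\eta}$ from \eqref{etalower}, which isolates the regular region $|v-v_*|\ge\eta$ from the singular region $|v-v_*|<\eta$. The regular part will be responsible for the $\eta^{\gamma-3}$ contribution, while the singular part will produce the $\eta^{1/2}$ gain; the different weights on $f$ in the two terms of the conclusion reflect exactly this dichotomy, since on $|v-v_*|<\eta$ all four weights $W_l(v),(W_l)_*,W_l(v'),(W_l)'_*$ agree up to $O(\eta)$ corrections and so no extra weight is needed on $f$.

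For the regular part $\langle[\mathcal{L}^{\epsilon,\gamma,\eta,\beta_0,\beta_1},W_l]g,W_lf\rangle$ I would further decompose into the $\mathcal{L}_1$- and $\mathcal{L}_2$-contributions via \eqref{beta-version-L}--\eqref{beta-version-L-1-2}. The $\mathcal{L}_1$-commutator falls directly under \eqref{Gamma-commutator-geq-eta} of Theorem \ref{commutator-Gamma-geq-eta} applied with $g\leftarrow\pa_{\beta_1}\mu^{1/2}$, $h\leftarrow g$, $f\leftarrow W_lf$: together with $|\pa_{\beta_1}\mu^{1/2}|_{L^2}\lesssim1$ and the weight equivalence $|W_lf|_{\epsilon,\gamma/2}\sim|f|_{\epsilon,l+\gamma/2}$ (Lemma \ref{equivalence}), this yields the desired $\eta^{\gamma-3}C_l|g|_{L^2_{l+\gamma/2}}|f|_{\epsilon,l+\gamma/2}$. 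The $\mathcal{L}_2$-commutator is not directly covered by Theorem \ref{commutator-Gamma-geq-eta} since $W_l$ then attaches to the \emph{first} slot of $\Gamma$; I would expand the commutator as a sum of two integrands involving the weight differences $(W_l)_*-W_l$ and $(W_l)'_*-W_l$, apply the Taylor expansion $(W_l)_*-W_l=\nabla W_l(v)\cdot(v_*-v)+O(|v-v_*|^2\langle v\rangle^{l-2}\langle v_*\rangle^{l-2})$ and the cancellation identities \eqref{dispear1}--\eqref{dispear2}, and run the same scheme as for $\mathcal{A}_2$ in the proof of Proposition \ref{commutatorQepsilon}. The Maxwellian decay of both $\pa_{\beta_0}\mu^{1/2}$ and $\pa_{\beta_1}\mu^{1/2}$ then absorbs all polynomial weights, and estimating the remaining angular integral by $\int b^\epsilon d\sigma\lesssim|\ln\epsilon|^{-1}\epsilon^{-2}$ against the restriction $|v-v_*|\ge\eta$ recovers the $\eta^{\gamma-3}$ bound.

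For the singular part $\langle[\mathcal{L}^{\epsilon,\gamma,\beta_0,\beta_1}_\eta,W_l]g,W_lf\rangle$ I would again split into $\mathcal{L}_{1,\eta}$ and $\mathcal{L}_{2,\eta}$. Expanding $(W_l)_*-W_l$, $(W_l)'_*-W_l$, $(W_l)'_*-(W_l)_*$ by Taylor and using the near-diagonality on $|v-v_*|\le\eta$ extracts a prefactor $\eta\cdot|v-v_*|^{-1}$ inside the integrand, which removes one power of the velocity singularity. The $\mathcal{L}_{1,\eta}$-contribution is then treated as in Proposition \ref{less-eta-part-l1-Gamma}: application of the Cancellation Lemma \ref{cancellation-lemma-general-gamma-minus3-mu}, especially estimate \eqref{0-delta-eta-small-eta-epsilon-mu}, produces an $(\eta+\epsilon^{1/2})$ factor which, combined with the extracted $\eta$-gain and the admissibility $\epsilon\lesssim\eta$, gives the $\eta^{1/2}$ prefactor alongside the regularity norm $|W^\epsilon(D)\mu^{1/16}g|_{L^2}$. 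The $\mathcal{L}_{2,\eta}$-contribution is bounded by the direct Cauchy--Schwarz and Taylor-expansion argument of Proposition \ref{less-eta-part-l2}, producing an even stronger $\eta$-gain (already at level $\eta$ rather than $\eta^{1/2}$). Throughout, the Maxwellian factors $\pa_{\beta_0}\mu^{1/2}$ and $\pa_{\beta_1}\mu^{1/2}$ kill all polynomial weights, so only $|f|_{\epsilon,\gamma/2}$ survives on the right.

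The main obstacle is the regular-region $\mathcal{L}_2$-commutator, where the weight $W_l$ sits on the ``wrong'' slot of $\Gamma$ and none of the existing commutator lemmas of Section \ref{Commutator-Estimate} apply directly; I will have to rework the Taylor expansion plus cancellation argument by hand, balancing between the two difference terms via the symmetry $(v,v_*,\sigma)\leftrightarrow(v',v'_*,(v-v_*)/|v-v_*|)$ so that the weight differences in $(W_l)_*-W_l$ and $(W_l)'_*-W_l$ can be compared on a common footing. A secondary subtlety is matching the precise weight structure on the right-hand side: tracking carefully which commutator pieces can afford only the $L^2_{l+\gamma/2}$ norm on $g$ (rather than the stronger $|g|_{\epsilon,l+\gamma/2}$ that a direct application of Lemma \ref{l-full-estimate-geq-eta} would require) depends on the fact that each weight difference gains a factor of $|v-v_*|$, which exactly neutralizes one power of the velocity singularity in $B^{\epsilon,\gamma}$.
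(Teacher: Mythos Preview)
Your regular/singular split in $|v-v_*|$ is unnecessary and makes the argument far longer than it needs to be. The paper's route is much more direct: split only into $\mathcal{L}_1$ and $\mathcal{L}_2$ via \eqref{beta-version-L}--\eqref{beta-version-L-1-2}, and apply the \emph{full} commutator estimate \eqref{Gamma-commutator-full} of Theorem~\ref{commutator-Gamma-geq-eta} (not just the regular-region version \eqref{Gamma-commutator-geq-eta}) to each piece. Since \eqref{Gamma-commutator-full} already carries both the $\eta^{\gamma-3}$ and the $\eta^{\delta}$ contributions on its right-hand side with $\eta$ a free parameter, there is no need to separate the two relative-velocity regimes by hand. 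Concretely, for $\mathcal{L}_1$ one takes $\delta=1/2$, $(s_3,s_4)=(1,0)$ with $g\leftarrow\partial_{\beta_1}\mu^{1/2}$, $h\leftarrow g$, $f\leftarrow W_lf$; for $\mathcal{L}_2$ one takes $(s_3,s_4)=(0,1)$ with $g\leftarrow g$, $h\leftarrow\partial_{\beta_1}\mu^{1/2}$, $f\leftarrow W_lf$; and finishes with $|W^\epsilon(D)\mu^{1/16}W_lf|_{L^2}\lesssim C_l|f|_{\epsilon,\gamma/2}$.

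Your observation that the $\mathcal{L}_2$-commutator places $W_l$ in the \emph{first} slot of $\Gamma$, so that Theorem~\ref{commutator-Gamma-geq-eta} does not literally apply, is correct, and the paper's citation is somewhat loose there. But this is not a genuine obstacle: with $h=\partial_{\beta_1}\mu^{1/2}$ in the second slot and $(\partial_{\beta_0}\mu^{1/2})_*$ already in the kernel, every polynomial weight difference $(W_l)_*-W_l$, $(W_l)'_*-W_l$ is absorbed by Maxwellian decay, and the first-slot commutator satisfies the same bound by the same computation as in the second-slot case. Your references to Propositions~\ref{less-eta-part-l1-Gamma} and~\ref{less-eta-part-l2} for the singular part are off-target---those handle $\langle -\mathcal{L}_\eta h+\Gamma_\eta(f,h),h\rangle$ and $\langle\mathcal{L}_{2,\eta}f,h\rangle$, not commutators with $W_l$; if you insisted on the regular/singular split, the correct tool would be Proposition~\ref{commutatorQepsilon-less-eta}.
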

\begin{proof} Recall from \eqref{beta-version-L-1-2}, $\mathcal{L}^{\epsilon,\gamma,\beta_{0},\beta_{1}}_{1} g = - \Gamma^{\epsilon,\gamma}(\pa_{\beta_{1}}\mu^{1/2}, g;\beta_{0})$.
Taking $\delta=1/2, s_{3}= 1, s_{4}=0$ in \eqref{Gamma-commutator-full}, we get
\beno \langle [\mathcal{L}^{\epsilon,\gamma,\beta_{0},\beta_{1}}_{1},W_{l}]g, W_{l}f\rangle| \lesssim  \eta^{\gamma-3}C_{l}|g|_{L^{2}_{l+\gamma/2}}|f|_{\epsilon,l+\gamma/2} + C_{l}\eta^{1/2}|W^{\epsilon}(D)\mu^{1/16}g|_{L^2}|W^{\epsilon}(D)\mu^{1/16}W_{l}f|_{L^2}.\eeno
Recall from \eqref{beta-version-L-1-2}, $
\mathcal{L}^{\epsilon,\gamma,\beta_{0},\beta_{1}}_{2} g := - \Gamma^{\epsilon,\gamma}(g, \pa_{\beta_{1}}\mu^{1/2};\beta_{0}).
$
Taking $\delta=1/2, s_{3}= 0, s_{4}=1$ in \eqref{Gamma-commutator-full}, we get
\beno \langle [\mathcal{L}^{\epsilon,\gamma,\beta_{0},\beta_{1}}_{2},W_{l}]g, f\rangle| \lesssim \eta^{\gamma-3}C_{l}|g|_{L^{2}}|f|_{\epsilon,l+\gamma/2}+ C_{l}\eta^{1/2}|\mu^{1/16}g|_{L^2}|W^{\epsilon}(D)\mu^{1/16}W_{l}f|_{L^2}.\eeno
Patching together the above two estimates, recalling \eqref{beta-version-L}, thanks to the fact $|W^{\epsilon}(D)\mu^{1/16}W_{l}f|_{L^2} \lesssim C_{l}|f|_{\epsilon,\gamma/2}$, we finish the proof.
\end{proof}

When $\gamma=-3$, recall the notation $\mathcal{L}^{\epsilon,\beta_{0},\beta_{1}} = \mathcal{L}^{\epsilon,-3,\beta_{0},\beta_{1}}$. As a special of case of Corollary \ref{commutator-L-geq-eta}, we have
\begin{col}\label{commutator-L-gamma-minus3} Let $0<\eta \leq 1, l \geq 2$, there holds
\beno \langle [\mathcal{L}^{\epsilon,\beta_{0},\beta_{1}},W_{l}]g, W_{l}f\rangle| \leq  \eta^{-6}C_{l}|g|_{L^{2}_{l+\gamma/2}}|f|_{\epsilon,l+\gamma/2} + \eta^{1/2}C_{l}|g|_{\epsilon,\gamma/2}|f|_{\epsilon,\gamma/2}. \eeno
\end{col}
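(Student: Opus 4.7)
The plan is to derive Corollary \ref{commutator-L-gamma-minus3} as a direct specialization of Corollary \ref{commutator-L-geq-eta} by setting $\gamma=-3$. Substituting into the general estimate yields
\begin{eqnarray*}
|\langle [\mathcal{L}^{\epsilon,\beta_{0},\beta_{1}},W_{l}]g, W_{l}f\rangle| \lesssim \eta^{-6} C_{l}|g|_{L^{2}_{l+\gamma/2}}|f|_{\epsilon,l+\gamma/2} + C_{l}\eta^{1/2}|W^{\epsilon}(D)\mu^{1/16}g|_{L^{2}}|f|_{\epsilon,\gamma/2},
\end{eqnarray*}
which already matches the first term on the right-hand side of the claim. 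So the only work is to show
\begin{eqnarray*}
|W^{\epsilon}(D)\mu^{1/16}g|_{L^{2}} \lesssim |g|_{\epsilon,\gamma/2}
\end{eqnarray*}
with $\gamma=-3$, so that the second term collapses to $C_l \eta^{1/2}|g|_{\epsilon,\gamma/2}|f|_{\epsilon,\gamma/2}$.

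To obtain that bound, I would factor $\mu^{1/16} = (\mu^{1/16} W_{-\gamma/2})\cdot W_{\gamma/2}$. Since $\gamma\le 0$, the function $\Phi:=\mu^{1/16}W_{-\gamma/2}$ is Schwartz and in particular a symbol of order $0$ (more than acceptable for any smoothing class one needs). Applying Lemma \ref{operatorcommutator1} with $M=W^{\epsilon}\in S^{1}_{1,0}$ and $\Phi\in S^{0}_{1,0}$ to the product $\Phi\cdot W_{\gamma/2}g$ gives
\begin{eqnarray*}
|W^{\epsilon}(D)(\Phi\, W_{\gamma/2}g)|_{L^{2}} \lesssim |\Phi W^{\epsilon}(D)W_{\gamma/2}g|_{L^{2}} + |W_{\gamma/2}g|_{L^{2}} \lesssim |W^{\epsilon}(D)W_{\gamma/2}g|_{L^{2}} + |W_{\gamma/2}g|_{L^{2}},
\end{eqnarray*}
and both terms on the right are controlled by $|g|_{\epsilon,\gamma/2}$ through its definition.

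Combining the two observations gives the stated estimate with the universal constant absorbed into $C_{l}$. The only subtlety to check is the invocation of Lemma \ref{operatorcommutator1}: one must verify that $\Phi=\mu^{1/16}W_{-\gamma/2}$ satisfies the hypotheses (a Schwartz function trivially does), and that the implicit constant depends only on finitely many seminorms of $\Phi$ (hence on $l,\gamma$ only). Beyond this verification, no new analytic idea is required; the result is a clean corollary of the general-$\gamma$ commutator bound together with a standard pseudo-differential composition estimate.
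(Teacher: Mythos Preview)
Your proposal is correct and follows exactly the paper's approach: the corollary is obtained by specializing Corollary \ref{commutator-L-geq-eta} to $\gamma=-3$, so that $\eta^{\gamma-3}=\eta^{-6}$, and then absorbing $|W^{\epsilon}(D)\mu^{1/16}g|_{L^{2}}$ into $|g|_{\epsilon,\gamma/2}$. Your explicit justification of this last bound via Lemma \ref{operatorcommutator1} (writing $\mu^{1/16}=\Phi\cdot W_{\gamma/2}$ with $\Phi=\mu^{1/16}W_{-\gamma/2}\in S^{0}_{1,0}$) is exactly the type of argument the paper uses elsewhere (see the end of the proof of Corollary \ref{commutator-L-geq-eta}, where the analogous fact $|W^{\epsilon}(D)\mu^{1/16}W_{l}f|_{L^2}\lesssim C_{l}|f|_{\epsilon,\gamma/2}$ is invoked).
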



\setcounter{equation}{0}

\section{Energy estimate and asymptotic formula} \label{Energy-Estimate}
In this section, we will give the proof to Theorem \ref{asymptotic-result}. We divide the proof into three subsections. The first subsection is devoted to the {\it a priori} estimates for the linear equation \eqref{lBE-general}.
In subsection 4.2, we consider the global well-posedness \eqref{uniform-controlled-by-initial} and regularity propagation \eqref{propagation} of the linearized Boltzmann equation \eqref{linearizedBE}. In subsection 4.3, we derive the global asymptotic formula \eqref{error-function-uniform-estimate} which describes the limit that $\epsilon$ goes to zero. Throughout this section, we set $\gamma=-3$.

\subsection{Estimate for the linear equation} We will deal with the linear equation as follows:
\ben \label{lBE-general} \partial_{t}f + v\cdot \nabla_{x} f + \mathcal{L}^{\epsilon}f= g. \een
Let us set up some notations which will be used throughout this section.

\begin{itemize}
\item  We set $f_{1} :=\mathbb{P} f$ and $f_{2} := f - \mathbb{P} f$, where $\mathbb{P}$ is the projection operator defined in \eqref{DefProj}. By \eqref{DefProj} and
\eqref{Defabc},
\ben \label{definition-f-1} f_{1}(t,x,v) = \big(a(t,x) + b(t,x) \cdot v + c(t,x)|v|^{2}\big)\mu^{1/2},\een which solves
\ben \label{macro-micro-LBE-2} \partial_{t}f_{1} + v\cdot \nabla_{x} f_{1}  = r + l + g,\een
 where $r = - \partial_{t}f_{2}$ and $ l = - v\cdot \nabla_{x} f_{2} - \mathcal{L}^{\epsilon}f_{2}$.

\item  $\{e_{j}\}_{1\leq j \leq 13}$ is defined  explicitly as
\beno e_{1} = \mu^{1/2}, e_{2} = v_{1}\mu^{1/2}, e_{3} = v_{2}\mu^{1/2},e_{4} = v_{3}\mu^{1/2}, \\ e_{5} = v_{1}^{2}\mu^{1/2}, e_{6} = v_{2}^{2}\mu^{1/2},e_{7} = v_{3}^{2}\mu^{1/2}, e_{8} = v_{1}v_{2}\mu^{1/2}, e_{9} = v_{2}v_{3}\mu^{1/2},e_{10} = v_{3}v_{1}\mu^{1/2}, \\ e_{11} = |v|^{2}v_{1}\mu^{1/2}, e_{12} = |v|^{2}v_{2}\mu^{1/2},e_{13} = |v|^{2}v_{3}\mu^{1/2}. \eeno

\item Let $A = (a_{ij})_{1\leq i \leq 13, 1\leq j \leq 13}$ be the matrix defined by $a_{ij} = \langle e_{i}, e_{j} \rangle $ and $y$ be the 13-dimensional vector with components $\partial_{t} a, \{\partial_{t}b_{i}+ \partial_{i} a \}_{1\leq i \leq 3}, \{\partial_{t}c+ \partial_{i} b_{i} \}_{1\leq i \leq 3},  \{\partial_{i}b_{j}+ \partial_{j} b_{i} \}_{1\leq i < j  \leq 3}, \{\partial_{i}c \}_{1\leq i \leq 3}$. Set $z = (z_i)_{i=1}^{13}=(\langle r+l+g, e_i\rangle)_{i=1}^{13}$. By  \eqref{definition-f-1} and taking inner product between \eqref{macro-micro-LBE-2} and $\{e_{j}\}_{1\leq j \leq 13}$ in the space $L^{2}(\R^{3})$ for variable $v$,   one has $Ay = z$, which implies
 $ y = A^{-1}z $.
We denote
\beno \tilde{r}= (r^{(0)}, \{r^{(1)}_{i}\}_{1\leq i \leq 3}, \{r^{(2)}_{i}\}_{1\leq i \leq 3}, \{r^{(2)}_{ij}\}_{1\leq i < j \leq 3}, \{r^{(3)}_{i}\}_{1\leq i \leq 3})^{T} = A^{-1} ( \langle r, e_i\rangle)_{i=1}^{13},
\\ \tilde{l}= (l^{(0)}, \{l^{(1)}_{i}\}_{1\leq i \leq 3}, \{l^{(2)}_{i}\}_{1\leq i \leq 3}, \{l^{(2)}_{ij}\}_{1\leq i < j \leq 3}, \{l^{(3)}_{i}\}_{1\leq i \leq 3})^{T} = A^{-1} ( \langle l, e_i\rangle)_{i=1}^{13},
\\ \tilde{g}=(g^{(0)}, \{g^{(1)}_{i}\}_{1\leq i \leq 3}, \{g^{(2)}_{i}\}_{1\leq i \leq 3}, \{g^{(2)}_{ij}\}_{1\leq i < j \leq 3}, \{g^{(3)}_{i}\}_{1\leq i \leq 3})^{T} = A^{-1} ( \langle g, e_i\rangle)_{i=1}^{13},
\\ \tilde{f}= (\tilde{f}^{(0)}, \{\tilde{f}^{(1)}_{i}\}_{1\leq i \leq 3}, \{\tilde{f}^{(2)}_{i}\}_{1\leq i \leq 3}, \{\tilde{f}^{(2)}_{ij}\}_{1\leq i < j \leq 3}, \{\tilde{f}^{(3)}_{i}\}_{1\leq i \leq 3})^{T} = A^{-1} (\langle f_{2}, e_{i}\rangle)_{i=1}^{13}. \eeno
Then one has
$ \tilde{r} =  - \partial_{t}\tilde{f} $, which yields
\ben
\label{linear-equation-abc-3}
y  = -\partial_{t}\tilde{f} + \tilde{l} + \tilde{g}.\een

 \item  We define the
 temporal  energy functional  $\mathcal{I}^{N}(f)$ as
\ben \label{interactive-INf} \mathcal{I}^{N}(f) := \sum_{|\alpha|\leq N-1}\sum_{i=1}^{3}( \mathcal{I}^{a}_{\alpha,i}(f)+\mathcal{I}^{b}_{\alpha,i}(f)+\mathcal{I}^{c}_{\alpha,i}(f)+\mathcal{I}^{ab}_{\alpha,i}(f)), \een
where
\beno
 \mathcal{I}^{a}_{\alpha,i}(f):= \langle \partial^{\alpha} \tilde{f}^{(1)}_{i}, \partial_{i}\partial^{\alpha} a\rangle_{x}, \mathcal{I}^{c}_{\alpha,i}(f) := \langle \partial^{\alpha} \tilde{f}^{(3)}_{i}, \partial_{i}\partial^{\alpha} c\rangle_{x}, \mathcal{I}^{ab}_{\alpha,i}(f) := \langle \partial_{i}\partial^{\alpha} a, \partial^{\alpha} b_{i}\rangle_{x}
  \\ \mathcal{I}^{b}_{\alpha,i}(f) := -\sum_{j \neq i}\langle \partial^{\alpha} \tilde{f}^{(2)}_{j}, \partial_{i}\partial^{\alpha} b_{i}\rangle_{x} + \sum_{j \neq i}\langle \partial^{\alpha} \tilde{f}^{(2)}_{ji}, \partial_{j}\partial^{\alpha} b_{i}\rangle_{x}  + 2 \langle \partial^{\alpha} \tilde{f}^{(2)}_{i}, \partial_{i}\partial^{\alpha} b_{i}\rangle_{x}.
\eeno
Here $\langle a, b\rangle_{x} := \int_{\mathbb{T}^{3}} a(x)b(x) dx$ is the inner product in $L^{2}(\mathbb{T}^{3})$ for variable $x$.
\end{itemize}

Note that there is some universal constant $C_{1}$ such that
\ben \label{temporal-bounded-by-energy}
|\mathcal{I}^{N}(f)| \leq C_{1} \|f\|^{2}_{H^{N}_{x}L^{2}}.\een
The temporal energy functional $\mathcal{I}^{N}(f)$ is used to capture the dissipation of $(a,b,c)$. Based on \eqref{linear-equation-abc-3}, one can study the evolution of the macroscopic quantities $(a,b,c)$ in terms of the microscopic part $f_{2}$.
\begin{lem}\label{estimate-for-highorder-abc}
There exists are two universal constants $C, c_{0} > 0$ such that for any $N \geq 2$,
\ben \label{solution-property-part2} \frac{d}{dt}\mathcal{I}^{N}(f) + c_{0}|(a,b,c)|^{2}_{H^{N}_{x}} \leq C(\|f_{2}\|^{2}_{H^{N}_{x}L^2_{\epsilon,\gamma/2}} + \sum_{|\alpha| \leq N-1}\sum_{j=1}^{13}\int_{\mathbb{T}^{3}}|\langle  \pa^{\alpha}g, e_j\rangle|^{2} dx).\een
\end{lem}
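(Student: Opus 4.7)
The plan is to follow the macro-dissipation interactive functional method (in the spirit of Guo \cite{guo2002landau,guo2012vlasov}) adapted to the present cutoff Rutherford setting. The key input is the identity \eqref{linear-equation-abc-3}, $y = -\partial_t\tilde{f} + \tilde{l} + \tilde{g}$, which rewrites each of the thirteen macroscopic objects ($\partial_t a$, $\partial_t b_i+\partial_i a$, $\partial_t c+\partial_i b_i$, $\partial_i b_j+\partial_j b_i$, $\partial_i c$) as a time derivative of a component of $\tilde f$ plus explicit source contributions from $\tilde l$ and $\tilde g$. Since the linearized dissipation $\langle \mathcal{L}^\epsilon f,f\rangle$ vanishes on $\mathcal{N}$, dissipation of $(a,b,c)$ cannot come from $\mathcal{L}^\epsilon$ directly and must be extracted by exploiting the coupling between micro and macro pieces; this is precisely what $\mathcal{I}^N(f)$ in \eqref{interactive-INf} is engineered to do.

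For each multi-index $\alpha$ with $|\alpha|\leq N-1$, I would time-differentiate each of the four building blocks of $\mathcal{I}^N$. Taking $\mathcal{I}^a_{\alpha,i}(f)=\langle \partial^\alpha \tilde{f}^{(1)}_i,\partial_i\partial^\alpha a\rangle_x$ as the model,
\begin{equation*}
\tfrac{d}{dt}\mathcal{I}^a_{\alpha,i}(f)=\langle \partial^\alpha\partial_t\tilde{f}^{(1)}_i,\partial_i\partial^\alpha a\rangle_x+\langle \partial^\alpha\tilde{f}^{(1)}_i,\partial_i\partial^\alpha\partial_t a\rangle_x,
\end{equation*}
and I substitute $\partial_t\tilde{f}^{(1)}_i=-(\partial_t b_i+\partial_i a)+\tilde{l}^{(1)}_i+\tilde{g}^{(1)}_i$ in the first piece and $\partial_t a=-\partial_t\tilde{f}^{(0)}+\tilde{l}^{(0)}+\tilde{g}^{(0)}$ (after an $x$-integration by parts) in the second. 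This produces the leading negative term $-\|\partial_i\partial^\alpha a\|_{L^2_x}^2$, a cross term of the form $-\langle \partial^\alpha\partial_t b_i,\partial_i\partial^\alpha a\rangle_x$, and remainders that are either spatial derivatives of microscopic components $\langle f_2,e_j\rangle$ or pairings with $\tilde l,\tilde g$. Identical manipulations on $\mathcal{I}^b_{\alpha,i}$ and $\mathcal{I}^c_{\alpha,i}$ extract $-\|\partial_i\partial^\alpha b_i\|_{L^2_x}^2$ and $-\|\partial_i\partial^\alpha c\|_{L^2_x}^2$ respectively (using the off-diagonal pieces $\tilde f^{(2)}_{ji}$ to recover the full $\|\nabla_x b\|^2$), while the auxiliary $\mathcal{I}^{ab}_{\alpha,i}$ is designed to cancel the spurious $\partial_t b_i\cdot\partial_i a$ cross term and contribute an additional $\|\partial_i\partial^\alpha a\|_{L^2_x}^2$ via a second application of \eqref{linear-equation-abc-3}.

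Summing over $i$ and $|\alpha|\leq N-1$, the leading negative quadratic form in $\nabla_x(a,b,c)$ dominates all cross terms after a Young's inequality with a small constant, yielding
\begin{equation*}
\tfrac{d}{dt}\mathcal{I}^N(f)+2c_0\|\nabla_x(a,b,c)\|_{H^{N-1}_x}^2\leq C\,\mathcal{R},
\end{equation*}
where $\mathcal{R}$ collects (i) spatial derivatives of the microscopic moments $\langle f_2,e_j\rangle$, controlled after Cauchy--Schwarz in $v$ (using the Schwartz decay of $e_j$) by $\|f_2\|^2_{H^N_xL^2_{\epsilon,\gamma/2}}$; (ii) quadratic forms in $\langle \partial^\alpha l,e_j\rangle$ with $l=-v\cdot\nabla_x f_2-\mathcal{L}^\epsilon f_2$; and (iii) the desired quadratic forms in $\langle \partial^\alpha g,e_j\rangle$. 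For (ii), the streaming part yields $|\langle v\cdot\nabla_x f_2,e_j\rangle_v|\lesssim\|\nabla_x f_2\|_{L^2_{-3/2}}$ since $ve_j\langle v\rangle^{3/2}\in L^2$, and the collision part yields $|\langle \mathcal{L}^\epsilon f_2,e_j\rangle_v|\lesssim |f_2|_{L^2_{\epsilon,\gamma/2}}$ by Lemma \ref{l-full-estimate} combined with the fact that $e_j\mu^{-1/16}$ belongs to every weighted Sobolev space. Thus (i) and (ii) are jointly bounded by $C\|f_2\|^2_{H^N_xL^2_{\epsilon,\gamma/2}}$, uniformly in $\epsilon$, while (iii) is exactly the $g$-term on the right-hand side of the lemma.

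Finally, the conservation identities \eqref{Nuspace} enforce $\int_{\TT^3}a\,dx=\int_{\TT^3}b_i\,dx=\int_{\TT^3}c\,dx=0$ for all $t\geq0$, so Poincaré's inequality on $\TT^3$ upgrades $\|\nabla_x(a,b,c)\|_{H^{N-1}_x}^2$ to $\gtrsim|(a,b,c)|_{H^N_x}^2$, yielding the claimed dissipation after a possible shrinking of $c_0$. The main obstacle, in my view, is the careful bookkeeping of the numerous cross terms generated by differentiating $\mathcal{I}^N$ and the verification that, upon substituting \eqref{linear-equation-abc-3}, every occurrence of $\partial_t(a,b,c)$ either cancels or is convertible (via integration by parts in $x$) into either a spatial derivative of a microscopic quantity or a genuine source contribution; the uniformity in $\epsilon$ then follows from the uniform upper bounds on $\mathcal{L}^\epsilon$ established in Section \ref{Upper-Bound-Estimate}.
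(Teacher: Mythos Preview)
Your proposal is correct and follows essentially the same approach as the paper: the paper's proof simply cites \cite{duan2008cauchy,guo2003classical} for the gradient estimate $\tfrac{d}{dt}\mathcal{I}^N(f)+\tfrac12|\nabla_x(a,b,c)|^2_{H^{N-1}_x}\leq C(\cdots)$ and then applies Poincar\'e via \eqref{Nuspace}, which is exactly the interactive-functional argument you have spelled out in detail. One minor remark: when bounding $\langle \mathcal{L}^\epsilon f_2,e_j\rangle$ via Lemma~\ref{l-full-estimate}, it is cleanest to use the self-adjointness of $\mathcal{L}^\epsilon$ and apply the lemma with the roles of $f_2$ and $e_j$ swapped, so that the $H^1$ norm falls on the Schwartz function $e_j$ rather than on $f_2$.
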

\begin{proof} Referring to \cite{duan2008cauchy,guo2003classical}, we have
\beno\frac{d}{dt}\mathcal{I}^{N}(f) + \frac{1}{2}|\nabla_{x}(a,b,c)|^{2}_{H^{N-1}_{x}} \leq C(\|f_{2}\|^{2}_{H^{N}_{x}L^2_{\epsilon,\gamma/2}} + \sum_{|\alpha| \leq N-1}\sum_{j=1}^{13}\int_{\mathbb{T}^{3}}|\langle  \pa^{\alpha}g, e_j\rangle|^{2} dx).\eeno
Thanks to \eqref{Nuspace}, we can apply Poincare inequality $|\mathcal{MA}|_{H^N_x} \lesssim |\na_x\mathcal{MA}|_{H^{N-1}_x}$ to end the proof.
\end{proof}

The above set-up is standard for the near Maxwellian framework. We refer readers to \cite{duan2008cauchy,guo2003classical} for more details. Before giving the estimates for \eqref{lBE-general}, we prepare some technical lemmas to deal with the inner products that will appear in energy estimates.

\begin{lem} \label{coercivity-L-alpha-beta}
Let $|\alpha|+|\beta| \leq N$, $q\ge 0$, then
\beno
(\mathcal{L}^{\epsilon, \gamma,\eta} W_{q}\pa^{\alpha}_{\beta}f, W_{q}\pa^{\alpha}_{\beta}f) \geq
 (7/8)\lambda_0\| W_{q}\pa^{\alpha}_{\beta} f_2\|^2_{L^{2}_{x}L^2_{\epsilon,\gamma/2}}- C_{q,N}(\|\pa^{\alpha} f_2\|^2_{L^{2}_{x}L^{2}_{\gamma/2}}+|\partial^{\alpha}(a,b,c)|^{2}_{L^{2}_{x}}).
\eeno
\end{lem}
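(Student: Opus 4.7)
The plan is to apply the spectral gap estimate of Theorem~\ref{micro-weight-dissipation} pointwise in $x \in \mathbb{T}^3$ to the function $h := W_q \partial^\alpha_\beta f$ viewed as a function of $v$, then integrate over $x$. This directly yields
\[
(\mathcal{L}^{\epsilon,\gamma,\eta} W_q \partial^\alpha_\beta f, W_q \partial^\alpha_\beta f) \geq \lambda_0 \|(\mathbb{I}-\mathbb{P})(W_q \partial^\alpha_\beta f)\|^2_{L^2_x L^2_{\epsilon,\gamma/2}}.
\]
The task then reduces to comparing $(\mathbb{I}-\mathbb{P})(W_q \partial^\alpha_\beta f)$ with $W_q \partial^\alpha_\beta f_2$ in this weighted norm, modulo remainder terms that can be absorbed into the claimed error.

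Using $W_q \partial^\alpha_\beta f_2 = W_q \partial^\alpha_\beta f - W_q \partial^\alpha_\beta \mathbb{P}f$, I would write $(\mathbb{I}-\mathbb{P})(W_q \partial^\alpha_\beta f) = W_q \partial^\alpha_\beta f_2 + R$ with the remainder $R := W_q \partial^\alpha_\beta \mathbb{P}f - \mathbb{P}(W_q \partial^\alpha_\beta f)$. For the first piece of $R$, the explicit form $\mathbb{P}f = (a + b\cdot v + c|v|^2)\sqrt{\mu}$ shows that $W_q \partial^\alpha_\beta \mathbb{P}f$ is a polynomial in $v$ (of degree controlled by $|\beta|$) times $\sqrt{\mu}$, with coefficients linear in $\partial^\alpha(a,b,c)(x)$. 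Since $\sqrt{\mu}$ is Schwartz, its $|\cdot|_{\epsilon,\gamma/2}$ norm in $v$ is bounded uniformly in $\epsilon$ by a constant depending only on $q$ and $N$, giving an $L^2_x$ bound by $C_{q,N}\,|\partial^\alpha(a,b,c)|_{L^2_x}$.

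For the second piece of $R$, the key manipulation is integration by parts in $v$ to remove $\partial^\beta$ from $f$: since $\mathbb{P}g = \sum_{j=1}^{5} e_j \langle g, e_j\rangle_v$ with each $e_j$ a polynomial times $\sqrt{\mu}$, one has
\[
\mathbb{P}(W_q \partial^\alpha_\beta f)(x,v) = (-1)^{|\beta|} \sum_{j=1}^{5} e_j(v)\, \langle \partial^\alpha f(x,\cdot),\, \partial^\beta_v(W_q e_j)\rangle_v,
\]
with no boundary contribution by the decay of $\sqrt{\mu}$. The functions $\partial^\beta_v(W_q e_j)$ are again polynomials times $\sqrt{\mu}$, hence by Cauchy--Schwartz the $v$-inner product is bounded pointwise in $x$ by $C_{q,N}\,|\partial^\alpha f(x,\cdot)|_{L^2_{\gamma/2}}$; splitting $f = f_1 + f_2$ (with $|\partial^\alpha f_1|_{L^2_{\gamma/2}} \lesssim |\partial^\alpha(a,b,c)|$) then yields $\|R\|^2_{L^2_x L^2_{\epsilon,\gamma/2}} \leq C_{q,N}\bigl(|\partial^\alpha(a,b,c)|^2_{L^2_x} + \|\partial^\alpha f_2\|^2_{L^2_x L^2_{\gamma/2}}\bigr)$. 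This is precisely the mechanism producing the $\|\partial^\alpha f_2\|_{L^2_{\gamma/2}}$ term on the right hand side without any $W_q$ weight and without $\partial^\beta$, since the $\partial^\beta$ has been moved onto the rapidly decaying test functions.

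The conclusion then follows from the elementary inequality $|a+b|^2 \geq (7/8)|a|^2 - 7|b|^2$ applied with $a = W_q \partial^\alpha_\beta f_2$ and $b = R$, absorbing the constant $7$ and the factor $\lambda_0$ into an enlarged $C_{q,N}$. The main technical point to watch is tracking how the three building blocks $W_q$, $W_{\gamma/2}$ and the characteristic function $W^\epsilon$ (entering also through $W^\epsilon(D)$ and $W^\epsilon((-\Delta_{\mathbb{S}^2})^{1/2})$) interact inside the $|\cdot|_{\epsilon,\gamma/2}$ norm on the remainder $R$, together with the non-commutation of $\partial^\beta_v$ with $\mathbb{P}$; fortunately the rapid decay of $\sqrt{\mu}$ absorbs all polynomial factors arising from $\partial_\beta$ and $W_q$ into the constant $C_{q,N}$, and no assumption on the size of $\eta$ or $\epsilon$ is needed beyond the range allowed by Theorem~\ref{micro-weight-dissipation}.
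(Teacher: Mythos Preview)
Your proposal is correct and follows essentially the same route as the paper: apply Theorem~\ref{micro-weight-dissipation} to $W_q\partial^\alpha_\beta f$, then compare $(\mathbb{I}-\mathbb{P})W_q\partial^\alpha_\beta f$ with $W_q\partial^\alpha_\beta f_2$ using the elementary inequality \eqref{basic-inequality-1} (with $\alpha=7/8$), controlling the remainder via integration by parts in $v$ to shift $\partial_\beta$ onto the $\mu^{1/2}$-weighted test functions. The paper's proof is a three-line sketch of exactly this argument, so your write-up is simply a more detailed version of the same idea.
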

\begin{proof} By Theorem \ref{micro-weight-dissipation}, we have
\beno (\mathcal{L}^{\epsilon, \gamma,\eta} W_{q}\pa^{\alpha}_{\beta}f, W_{q}\pa^{\alpha}_{\beta}f)\ge \lambda_0\|(\mathbb{I}-\mathbb{P})W_{q}\pa^{\alpha}_{\beta}f\|_{L^{2}_{x}L^2_{\epsilon,\gamma/2}}^2. \eeno
Thanks to the Macro-Micro decomposition $f=f_{1}+f_{2}$, we deduce that
\beno (\mathcal{L}^{\epsilon, \gamma,\eta} W_{q}\pa^{\alpha}_{\beta}f, W_{q}\pa^{\alpha}_{\beta}f)\ge \lambda_0\|(\mathbb{I}-\mathbb{P})W_{q}\pa^{\alpha}_{\beta}(f_1+f_2)\|_{L^{2}_{x}L^2_{\epsilon,\gamma/2}}^{2}\\
 \ge (7/8)\lambda_0 \| W_{q}\pa^{\alpha}_{\beta} f_2\|_{L^{2}_{x}L^2_{\epsilon,\gamma/2}}^2- C_{q,N}(\| \pa^{\alpha} f_2\|_{L^{2}_{\gamma/2}}^2+|\partial^{\alpha}(a,b,c)|^{2}_{L^{2}_{x}}), \eeno
where we use \eqref{basic-inequality-1} to take out $W_{q}\pa^{\alpha}_{\beta} f_2$ as the leading term and integration by parts formula to deal with the operator $\partial_{\beta}$. In addition, all polynomial weights can be controlled by the factor $\mu^{1/2}$ in $f_{1}$.
\end{proof}

\begin{lem} \label{commutator-L-alpha-beta}
Let $|\alpha|+|\beta| \leq N, \beta_{0} + \beta_{1} + \beta_{2} = \beta$, $q \geq 2$,  then for any $0<\delta \leq 1$, we have
\beno
|([W_{q}, \mathcal{L}^{\epsilon,\beta_{0},\beta_{1}}]\pa^{\alpha}_{\beta_{2}}f,W_{q}\pa^{\alpha}_{\beta}f)| &\leq&
\delta (\|\pa^{\alpha}_{\beta}f_{2}\|_{L^{2}_{x}L^{2}_{\epsilon,q+\gamma/2}}^{2}  + \|\pa^{\alpha}_{\beta_{2}}f_{2}\|^{2}_{L^{2}_{x}L^{2}_{\epsilon,\gamma/2}})
\\&&+ C_{\delta, q}\|\pa^{\alpha}_{\beta_{2}}f_{2}\|^{2}_{L^{2}_{x}L^{2}_{q+\gamma/2}}
 + C_{\delta,q,N}|\pa^{\alpha}(a,b,c)|^{2}_{L^{2}_{x}}.
\eeno
\end{lem}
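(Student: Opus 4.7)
The plan is to deduce this lemma as a direct consequence of the $v$-only commutator estimate in Corollary \ref{commutator-L-gamma-minus3}, by freezing $x$, integrating, and then peeling off the macroscopic piece via the Macro--Micro decomposition.

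First I would fix $x\in\mathbb{T}^3$ and apply Corollary \ref{commutator-L-gamma-minus3} with $l=q$, $g=\partial^{\alpha}_{\beta_2}f(x,\cdot)$ and (the second argument) $\partial^{\alpha}_{\beta}f(x,\cdot)$. This is legitimate because $q\ge 2$, and because $[W_q,\mathcal{L}^{\epsilon,\beta_0,\beta_1}]=-[\mathcal{L}^{\epsilon,\beta_0,\beta_1},W_q]$ (so the sign is irrelevant under the absolute value). Integrating the resulting pointwise-in-$x$ inequality over $\mathbb{T}^3$ and applying the Cauchy--Schwarz inequality in $x$ yields
\begin{align*}
&|([W_{q},\mathcal{L}^{\epsilon,\beta_{0},\beta_{1}}]\partial^{\alpha}_{\beta_{2}}f,W_{q}\partial^{\alpha}_{\beta}f)|\\
&\qquad\le \eta^{-6}C_{q}\,\|\partial^{\alpha}_{\beta_{2}}f\|_{L^{2}_{x}L^{2}_{q+\gamma/2}}\|\partial^{\alpha}_{\beta}f\|_{L^{2}_{x}L^{2}_{\epsilon,q+\gamma/2}} + \eta^{1/2}C_{q}\,\|\partial^{\alpha}_{\beta_{2}}f\|_{L^{2}_{x}L^{2}_{\epsilon,\gamma/2}}\|\partial^{\alpha}_{\beta}f\|_{L^{2}_{x}L^{2}_{\epsilon,\gamma/2}}.
\end{align*}

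Next, I would choose $\eta$ depending on $\delta$ and $q$ so that $\eta^{1/2}C_q \le \delta/2$, and then apply Young's inequality $ab\le \delta\, a^2/2 + b^2/(2\delta)$ to the first term on the right. This produces the three structural pieces: a $\delta$-multiple of $\|\partial^{\alpha}_{\beta}f\|_{L^{2}_{x}L^{2}_{\epsilon,q+\gamma/2}}^{2}$, a $C_{\delta,q}$-multiple (absorbing $\eta^{-12}C_q^{2}$ with the chosen $\eta$) of $\|\partial^{\alpha}_{\beta_{2}}f\|_{L^{2}_{x}L^{2}_{q+\gamma/2}}^{2}$, and a $\delta$-multiple of $\|\partial^{\alpha}_{\beta_{2}}f\|^{2}_{L^{2}_{x}L^{2}_{\epsilon,\gamma/2}} + \|\partial^{\alpha}_{\beta}f\|^{2}_{L^{2}_{x}L^{2}_{\epsilon,\gamma/2}}$. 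These match the target shape except that they still involve $f$ instead of $f_2$.

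Finally, I would substitute the Macro--Micro decomposition $f=f_1+f_2$ with $f_1=\mathbb{P}f=(a+b\cdot v+c|v|^{2})\mu^{1/2}$ into all four $f$-norms on the right-hand side. Since $\partial^{\alpha}_{\beta}f_1$ and $\partial^{\alpha}_{\beta_2}f_1$ are polynomials in $v$ of degree at most $2+|\beta|\le 2+N$ multiplied by $\mu^{1/2}$, the Gaussian factor absorbs every polynomial weight uniformly in $\epsilon$, giving
\begin{equation*}
\|\partial^{\alpha}_{\beta}f_1\|_{L^{2}_{x}L^{2}_{\epsilon,q+\gamma/2}}^{2}+\|\partial^{\alpha}_{\beta_{2}}f_1\|_{L^{2}_{x}L^{2}_{\epsilon,\gamma/2}}^{2}+\|\partial^{\alpha}_{\beta_{2}}f_1\|_{L^{2}_{x}L^{2}_{q+\gamma/2}}^{2}\le C_{q,N}\,|\partial^{\alpha}(a,b,c)|^{2}_{L^{2}_{x}},
\end{equation*}
which contributes precisely the $C_{\delta,q,N}|\partial^{\alpha}(a,b,c)|^{2}_{L^{2}_{x}}$ term in the statement.

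There is no real analytic obstacle here: everything is a routine bookkeeping exercise once Corollary \ref{commutator-L-gamma-minus3} is in hand. The one spot that demands care is the choice of $\eta=\eta(\delta,q)$, which must be made \emph{after} one commits to the absorbable factor $\delta$ on the $\|\partial^{\alpha}_{\beta}f\|_{L^{2}_{x}L^{2}_{\epsilon,q+\gamma/2}}^{2}$ term, since that term is the one that will later be dominated by the coercivity bound of Lemma \ref{coercivity-L-alpha-beta} during the energy estimate; one must therefore keep $\delta$ as a free small parameter at the cost of the $\eta$-singular constant $C_{\delta,q}$ blowing up as $\delta\to 0$.
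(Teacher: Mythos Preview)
Your proposal is correct and follows essentially the same route as the paper: apply Corollary \ref{commutator-L-gamma-minus3} (integrated in $x$), split the two products via Young's inequality with a parameter $\eta$ chosen so that $\eta^{1/2}C_q\sim\delta$, and then insert $f=f_1+f_2$ to trade all $f_1$-contributions for $|\partial^\alpha(a,b,c)|^2_{L^2_x}$. The only cosmetic difference is that the paper writes $\delta=\eta^{1/2}C_q$ directly and absorbs the extra $\delta\|\partial^\alpha_\beta f\|^2_{L^2_xL^2_{\epsilon,\gamma/2}}$ into $\delta\|\partial^\alpha_\beta f\|^2_{L^2_xL^2_{\epsilon,q+\gamma/2}}$ using $q\ge 2$, which you do as well implicitly.
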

\begin{proof}
By Corollary \ref{commutator-L-gamma-minus3},
we get for any $0<\eta<1$,
\beno |([W_{q}, \mathcal{L}^{\epsilon,\beta_{0},\beta_{1}}]\pa^{\alpha}_{\beta_{2}}f,W_{q}\pa^{\alpha}_{\beta}f)|  &\leq&
\eta^{-6}C_{q}\|\pa^{\alpha}_{\beta_{2}}f\|_{L^{2}_{x}L^{2}_{q+\gamma/2}}\|\pa^{\alpha}_{\beta}f\|_{L^{2}_{x}L^{2}_{\epsilon,q+\gamma/2}}
\\&&+ \eta^{1/2}C_{q}\|\pa^{\alpha}_{\beta_{2}}f\|_{L^{2}_{x}L^{2}_{\epsilon,\gamma/2}}\|\pa^{\alpha}_{\beta}f \|_{L^{2}_{x}L^{2}_{\epsilon,\gamma/2}}
\\ &\leq&
\eta^{1/2}C_{q}\|\pa^{\alpha}_{\beta}f\|_{L^{2}_{x}L^{2}_{\epsilon,q+\gamma/2}}^{2}+ \eta^{-12-1/2}C_{q}\|\pa^{\alpha}_{\beta_{2}}f\|_{L^{2}_{x}L^{2}_{q+\gamma/2}}^{2}
\\&&+ \eta^{1/2}C_{q}\|\pa^{\alpha}_{\beta_{2}}f\|^{2}_{L^{2}_{x}L^{2}_{\epsilon,\gamma/2}}
+ \eta^{1/2}C_{q}\|\pa^{\alpha}_{\beta}f \|^{2}_{L^{2}_{x}L^{2}_{\epsilon,\gamma/2}}
\\ &\leq& \delta \|\pa^{\alpha}_{\beta}f\|_{L^{2}_{x}L^{2}_{\epsilon,q+\gamma/2}}^{2} + \delta \|\pa^{\alpha}_{\beta_{2}}f\|^{2}_{L^{2}_{x}L^{2}_{\epsilon,\gamma/2}}
+ \delta^{-25}C_{q}\|\pa^{\alpha}_{\beta_{2}}f\|_{L^{2}_{x}L^{2}_{q+\gamma/2}}^{2},
\eeno
where we set $\delta = \eta^{1/2}C_{q}$, and the constant $C_{q}$ may change across different lines. From which together with the decomposition $f=f_1+f_2=\mathbb{P}f+f_2$,  we get the lemma.
\end{proof}

\begin{lem} \label{commutator-L-with-beta}
Let $|\alpha|+|\beta| \leq N, |\beta| \geq 1$, $q \geq 2$ or $q=0$,  then
\beno
|(W_{q}[\partial_{\beta}, \mathcal{L}^{\epsilon}]\pa^{\alpha}f ,W_{q}\pa^{\alpha}_{\beta}f)| &\lesssim&
(\delta + C_{N}\epsilon)\|\pa^{\alpha}_{\beta}f_{2}\|_{L^{2}_{x}L^{2}_{\epsilon,q+\gamma/2}}^{2}
 + C_{\delta,q,N}\|\partial^{\alpha}(a,b,c)\|_{L^{2}_{x}}^{2}
\\&&+   C_{\delta,q,N} \sum_{\beta_{2}<\beta} (\|\pa^{\alpha}_{\beta_{2}}f_{2}\|_{L^{2}_{x}L^{2}_{\epsilon,q+\gamma/2}}^{2}
+ \|\partial^{\alpha}_{\beta_{2}}f_{2}\|_{L^{2}_{x}H^{1}_{\gamma/2}}^{2}).
\eeno
\end{lem}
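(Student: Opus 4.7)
The plan is to expand the commutator via Leibniz and then handle each piece using the weight-commutator estimate of Corollary~\ref{commutator-L-geq-eta} together with the operator upper bound of Lemma~\ref{l-full-estimate}. Writing $\mathcal{L}^\epsilon = \mathcal{L}^\epsilon_1 + \mathcal{L}^\epsilon_2$ and applying the Leibniz rule \eqref{alpha-beta-on-Gamma} to $\partial_\beta \mathcal{L}^\epsilon_i \pa^\alpha f$, the term with $(\beta_0,\beta_1)=(0,0)$ is precisely $\mathcal{L}^\epsilon \pa^\alpha_\beta f$ and cancels, so using the notation \eqref{beta-version-L} one arrives at
\[
[\partial_\beta, \mathcal{L}^\epsilon]\pa^\alpha f \;=\; \sum_{\substack{\beta_0+\beta_1+\beta_2=\beta \\ \beta_2 < \beta}} C^{\beta_0,\beta_1,\beta_2}_\beta \; \mathcal{L}^{\epsilon,\beta_0,\beta_1} \pa^\alpha_{\beta_2} f.
\]

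For each summand I would commute $W_q$ past $\mathcal{L}^{\epsilon,\beta_0,\beta_1}$, splitting
\[
(W_q \mathcal{L}^{\epsilon,\beta_0,\beta_1}\pa^\alpha_{\beta_2}f,\, W_q \pa^\alpha_\beta f)
= (\mathcal{L}^{\epsilon,\beta_0,\beta_1}(W_q \pa^\alpha_{\beta_2}f),\, W_q \pa^\alpha_\beta f) + ([W_q, \mathcal{L}^{\epsilon,\beta_0,\beta_1}]\pa^\alpha_{\beta_2}f,\, W_q \pa^\alpha_\beta f).
\]
Lemma~\ref{l-full-estimate} controls the first piece pointwise in $x$ by
\[
\eta^{-6}|\pa^\alpha_{\beta_2}f|_{\epsilon,q+\gamma/2}|\pa^\alpha_\beta f|_{\epsilon,q+\gamma/2} + C_q(\eta^{1/2}+\epsilon^{1/2})|\mu^{1/32}\pa^\alpha_{\beta_2}f|_{H^1}|\pa^\alpha_\beta f|_{\epsilon,q+\gamma/2},
\]
after using Lemma~\ref{equivalence} to identify $|W_q g|_{\epsilon,\gamma/2}\sim |g|_{\epsilon,q+\gamma/2}$ and the pointwise bound $\mu^{1/16}W_q \lesssim C_q\mu^{1/32}$. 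Corollary~\ref{commutator-L-geq-eta} yields an analogous bound for the second piece, with $|\pa^\alpha_{\beta_2}f|_{L^2_{q+\gamma/2}}$ and $|W^\epsilon(D)\mu^{1/16}\pa^\alpha_{\beta_2}f|_{L^2}\lesssim |\pa^\alpha_{\beta_2}f|_{H^1_{\gamma/2}}$ replacing the $|\pa^\alpha_{\beta_2}f|_{\epsilon,q+\gamma/2}$ factor. Integration in $x$ and Cauchy--Schwarz then produce the space-integrated counterparts.

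The crucial step is the Young-inequality treatment of $(\eta^{1/2}+\epsilon^{1/2})AB$, where $B=\|\pa^\alpha_\beta f\|_{L^2_xL^2_{\epsilon,q+\gamma/2}}$: I would split
\[
\eta^{1/2}AB \leq \delta B^2 + \tfrac{\eta}{4\delta}A^2, \qquad \epsilon^{1/2}AB \leq C_N\epsilon\, B^2 + \tfrac{C_q^2}{4C_N}A^2,
\]
so that the $\eta^{1/2}$ contribution is absorbed into $\delta B^2$ (with $\eta$ fixed sufficiently small depending on $\delta,q,N$) while the $\epsilon^{1/2}$ contribution yields exactly $C_N\epsilon B^2$. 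The $\eta^{-6}AB$ term and the terms from Corollary~\ref{commutator-L-geq-eta} are handled by pure $\delta B^2 + C_{\delta,q}(\cdots)A^2$ splits. Finally, splitting $f=f_1+f_2$ everywhere, the $f_2$ contributions give the claimed weighted $L^2_{\epsilon,q+\gamma/2}$ and $H^1_{\gamma/2}$ norms of $\pa^\alpha_{\beta_2}f_2$ and of $\pa^\alpha_\beta f_2$, while the macroscopic part $f_1=(a+b\cdot v+c|v|^2)\mu^{1/2}$ has Maxwellian decay in $v$, so every weighted norm of $\pa^\alpha_\beta f_1$ or $\pa^\alpha_{\beta_2}f_1$ collapses to $C_{q,N}|\partial^\alpha(a,b,c)|_{L^2_x}$.

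The main technical obstacle is ensuring the sharp $(\delta+C_N\epsilon)$ prefactor on $\|\pa^\alpha_\beta f_2\|^2_{L^2_xL^2_{\epsilon,q+\gamma/2}}$ rather than a weaker $C_{N,\delta}\epsilon^{1/2}$, which forces the distinct Young-type splits above rather than a uniform treatment of $\eta^{1/2}+\epsilon^{1/2}$; once this is arranged, the rest is book-keeping. The degenerate case $q=0$ is easier since $W_0\equiv 1$ makes the weight commutator vanish, and one simply applies Lemma~\ref{l-full-estimate} directly to the expanded commutator.
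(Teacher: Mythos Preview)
Your proposal is correct and follows essentially the same route as the paper: Leibniz-expand $[\partial_\beta,\mathcal{L}^\epsilon]$ into $\mathcal{L}^{\epsilon,\beta_0,\beta_1}\partial^\alpha_{\beta_2}$ pieces, commute $W_q$ through, apply Lemma~\ref{l-full-estimate} to the main term and the weight-commutator estimate (the paper cites the pre-packaged Lemma~\ref{commutator-L-alpha-beta}, which is just Corollary~\ref{commutator-L-geq-eta} after the same Young-inequality processing you describe), then split $f=f_1+f_2$. Two cosmetic points: the identity $|W_q g|_{\epsilon,\gamma/2}=|g|_{\epsilon,q+\gamma/2}$ is exact (no need for Lemma~\ref{equivalence}), and the paper simply takes $\eta=\delta$ in Lemma~\ref{l-full-estimate} from the outset rather than keeping $\eta$ separate---your more elaborate Young splits work but are not needed, since the $C_N$ in front of $\epsilon$ arises naturally from the sum over $\beta_2<\beta$ after relabeling $\delta$.
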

\begin{proof}
Recalling
$ \mathcal{L}^{\epsilon}g = -\Gamma^{\epsilon}(\mu^{1/2},g) - \Gamma^{\epsilon}(g, \mu^{1/2}),$ \eqref{Gamma-beta}, \eqref{alpha-beta-on-Gamma}, and \eqref{beta-version-L},
we have
\ben  \pa_{\beta}\mathcal{L}^{\epsilon}g &=& \mathcal{L}^{\epsilon}\pa_{\beta}g
-\sum_{\beta_{0}+\beta_{1}+\beta_{2}= \beta, \beta_{2} < \beta} C^{\beta_{0},\beta_{1},\beta_{2}}_{\beta}
 [\Gamma^{\epsilon}(\pa_{\beta_{1}}\mu^{1/2}, \pa_{\beta_{2}}g;\beta_{0}) + \Gamma^{\epsilon}(\pa_{\beta_{1}}g, \pa_{\beta_{2}}\mu^{1/2};\beta_{0})]
\nonumber \\ \label{alpha-beta-Lep}
&=& \mathcal{L}^{\epsilon}\pa_{\beta}g
-\sum_{\beta_{0}+\beta_{1}+\beta_{2}= \beta, \beta_{2} < \beta} C^{\beta_{0},\beta_{1},\beta_{2}}_{\beta}
 \mathcal{L}^{\epsilon,\beta_{0},\beta_{1}} \partial_{\beta_{2}}g. \een
In this way, we have $[\partial_{\beta}, \mathcal{L}^{\epsilon}] = \sum_{\beta_{2}<\beta} C^{\beta_{0},\beta_{1},\beta_{2}}_{\beta}\mathcal{L}^{\epsilon,\beta_{0},\beta_{1}} \partial_{\beta_{2}}$,  and thus
\beno W_{q}[\partial_{\beta}, \mathcal{L}^{\epsilon}]\pa^{\alpha}f &=& W_{q}\sum_{\beta_{2}<\beta} C^{\beta_{0},\beta_{1},\beta_{2}}_{\beta}\mathcal{L}^{\epsilon,\beta_{0},\beta_{1}} \partial^{\alpha}_{\beta_{2}}f
\\&=&\sum_{\beta_{2}<\beta} C^{\beta_{0},\beta_{1},\beta_{2}}_{\beta} \mathcal{L}^{\epsilon,\beta_{0},\beta_{1}} W_{q}\partial^{\alpha}_{\beta_{2}}f +
  \sum_{\beta_{2}<\beta}C^{\beta_{0},\beta_{1},\beta_{2}}_{\beta}
  [W_{q},\mathcal{L}^{\epsilon,\beta_{0},\beta_{1}}]\partial^{\alpha}_{\beta_{2}}f. \eeno
By upper bound estimate in Lemma \ref{l-full-estimate}, we get
\beno |(\mathcal{L}^{\epsilon,\beta_{0},\beta_{1}} W_{q}\partial^{\alpha}_{\beta_{2}}f ,W_{q}\pa^{\alpha}_{\beta}f)|
&\lesssim&
\delta^{-6}\|\partial^{\alpha}_{\beta_{2}}f\|_{L^{2}_{x}L^{2}_{\epsilon,q+\gamma/2}}\|\pa^{\alpha}_{\beta}f\|_{L^{2}_{x}L^{2}_{\epsilon,q+\gamma/2}}
\\&&+ (\delta^{1/2}+\epsilon^{1/2})\|\mu^{1/16}\partial^{\alpha}_{\beta_{2}}f\|_{L^{2}_{x}H^{1}_{q}}\|\pa^{\alpha}_{\beta}f\|_{L^{2}_{x}L^{2}_{\epsilon,q+\gamma/2}}
\\&\lesssim&
(\delta+\epsilon) \|\pa^{\alpha}_{\beta}f_{2}\|_{L^{2}_{x}L^{2}_{\epsilon,q+\gamma/2}}^{2}
 + C_{\delta,q,N}\|\partial^{\alpha}(a,b,c)\|_{L^{2}_{x}}^{2}
\\&&+   C_{\delta,q,N}\|\pa^{\alpha}_{\beta_{2}}f_{2}\|_{L^{2}_{x}L^{2}_{\epsilon,q+\gamma/2}}^{2}
+ C_{q}\|\partial^{\alpha}_{\beta_{2}}f_{2}\|_{L^{2}_{x}H^{1}_{\gamma/2}}^{2}.\eeno
where we use $f = f_{1} + f_{2}$ and the definition of $a,b,c$.

If $q\geq 2$, by Lemma \ref{commutator-L-alpha-beta}, we have
\beno
|([W_{q}, \mathcal{L}^{\epsilon,\beta_{0},\beta_{1}}]\pa^{\alpha}_{\beta_{2}}f,W_{q}\pa^{\alpha}_{\beta}f)| &\leq&
\delta (\|\pa^{\alpha}_{\beta}f_{2}\|_{L^{2}_{x}L^{2}_{\epsilon,q+\gamma/2}}^{2}  + \|\pa^{\alpha}_{\beta_{2}}f_{2}\|^{2}_{L^{2}_{x}L^{2}_{\epsilon,\gamma/2}})
\\&&+ C_{\delta, q}\|\pa^{\alpha}_{\beta_{2}}f_{2}\|^{2}_{L^{2}_{x}L^{2}_{q+\gamma/2}}
 + C_{\delta,q,N}|\pa^{\alpha}(a,b,c)|^{2}_{L^{2}_{x}}.
\eeno
If $q=0$, the commutator $[W_{q},\mathcal{L}^{\epsilon,\beta_{0},\beta_{1}}]=0$.
Taking sum over $\beta_{2}<\beta$,
we get the result.
\end{proof}

For non-negative integers $n,m$, we recall:
\beno \|f\|^{2}_{H^{n}_{x}\dot{H}^{m}_{l}} = \sum_{|\alpha|\leq n,|\beta| = m}\|\partial^{\alpha}_{\beta}f\|^{2}_{L^{2}_{x}L^{2}_{l}},
\|f\|^{2}_{H^{n}_{x}\dot{H}^{m}_{\epsilon,l}} = \sum_{|\alpha|\leq n,|\beta| = m}\|\partial^{\alpha}_{\beta}f\|^{2}_{L^{2}_{x}L^{2}_{\epsilon,l}}. \eeno
Let $N \geq 4, l\geq 3N+2$. For some universal constants $M, L, K_{j}, 0 \leq j \leq N$ (which could depend on $N, l$ and will be explicitly determined later), we define
\ben \label{def-energy-combination}
\Xi^{N,l}(f) &=& M\mathcal{I}^{N}(f)+ L\|f\|^{2}_{H^{N}_{x}L^{2}}+ \sum_{j=0}^{N} K_{j}\|f\|^{2}_{H^{N-j}_{x}\dot{H}^{j}_{l+j\gamma}},
\\ \label{def-dissipation}
\mathcal{D}_{\epsilon}^{N,l}(f)&=& c_{0}M|\mathcal{MA}|^{2}_{H^{N}_{x}} + \lambda_{0}L\|f_{2}\|^{2}_{H^{N}_{x}L^{2}_{\epsilon,\gamma/2}}
+\lambda_{0}\sum_{j=0}^{N}K_{j}\|f\|^{2}_{H^{N-j}_{x}\dot{H}^{j}_{\epsilon,l+j\gamma+\gamma/2}}, \een
where $\mathcal{MA}(t,x) = (a(t,x), b(t,x), c(t,x))$ is a vector of length 5,
which stands for the macro-part of a solution $f$.

Now we are in a position to prove
\begin{prop}\label{essential-estimate-of-micro-macro} Let $N \geq 4, l\geq 3N+2$. Suppose $f$ is a smooth solution to \eqref{lBE-general}, then there exist a constant $\epsilon_{1}$ verifying $0< \epsilon_{1} \leq \epsilon_{0}$, such that for any $0 \leq \eta \leq \eta_{0}, 0 \leq \epsilon \leq  \epsilon_{1}$,
there holds
\ben \label{essential-micro-macro-result-2} \frac{d}{dt}\Xi^{N,l}(f) +  \frac{1}{4} \mathcal{D}_{\epsilon}^{N,l}(f) &\leq& M C\sum_{|\alpha| \leq N-1}\sum_{j=1}^{13}
\int_{\mathbb{T}^{3}}|\langle  \pa^{\alpha}g, e_j\rangle|^{2} dx
+2 L \sum_{|\alpha|\leq N}(\pa^{\alpha}g- \mathcal{L}^{\epsilon, \gamma}_{\eta}\pa^{\alpha}f, \pa^{\alpha}f) \\&&+\sum_{j=0}^{N}2K_{j}\sum_{|\alpha|\leq N-j,|\beta|=j}(W_{l+j\gamma}\pa^{\alpha}_{\beta}g- \mathcal{L}^{\epsilon, \gamma}_{\eta}W_{l+j\gamma}\pa^{\alpha}_{\beta}f, W_{l+j\gamma}\pa^{\alpha}_{\beta}f).
\nonumber \een
The constant $\epsilon_{1}>0$ could depend on $N,l$. Here $\eta_{0}, \epsilon_{0}$ are the universal constants in Theorem \ref{micro-weight-dissipation}. Moreover, $M \sim L$.
\end{prop}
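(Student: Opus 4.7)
The plan is to perform a simultaneous energy estimate at three coupled levels. The interaction functional $\mathcal{I}^{N}(f)$ controls the macroscopic moments $(a,b,c)$ via Lemma~\ref{estimate-for-highorder-abc}, the unweighted norm $\|f\|^{2}_{H^{N}_{x}L^{2}}$ captures the microscopic dissipation $\|f_{2}\|^{2}_{H^{N}_{x}L^{2}_{\epsilon,\gamma/2}}$ via Theorem~\ref{micro-weight-dissipation}, and the weighted pure-order pieces $\|f\|^{2}_{H^{N-j}_{x}\dot H^{j}_{l+j\gamma}}$ are required to absorb the strong velocity singularity $\gamma=-3$ through an inductive scheme in $j=0,\dots,N$. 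The constants $M,L$ and $K_{0},\dots,K_{N}$ in \eqref{def-energy-combination} will be fixed at the very end so that each error term generated at one level is swallowed by the dissipation at the same or an earlier level.

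First I would apply $\partial^{\alpha}$ for $|\alpha|\le N$ to \eqref{lBE-general}, test against $\partial^{\alpha}f$, note that $v\cdot\nabla_{x}$ is skew-adjoint in $x$, and split $\mathcal L^{\epsilon}=\mathcal L^{\epsilon,\gamma,\eta}+\mathcal L^{\epsilon,\gamma}_{\eta}$. Lemma~\ref{coercivity-L-alpha-beta} with $q=0,|\beta|=0$ produces the dissipation $(7/8)\lambda_{0}\|f_{2}\|^{2}_{H^{N}_{x}L^{2}_{\epsilon,\gamma/2}}$ modulo controllable corrections in $|\mathcal{MA}|^{2}_{H^{N}_{x}}$ and $\|f_{2}\|^{2}_{L^{2}_{x}L^{2}_{\gamma/2}}$, while the $\mathcal L^{\epsilon,\gamma}_{\eta}$-contribution is kept on the right-hand side in the form displayed in \eqref{essential-micro-macro-result-2}, to be absorbed later by Theorem~\ref{small-part-L+gamma}. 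Combined with $M$ times Lemma~\ref{estimate-for-highorder-abc}, this gives the $j=0$ piece of the estimate.

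Next I would march through $j=0,1,\dots,N$. At level $j$, I apply $\partial^{\alpha}_{\beta}$ with $|\alpha|+|\beta|\le N$ and $|\beta|=j$, test against $W_{l+j\gamma}^{2}\partial^{\alpha}_{\beta}f$, and exploit three ingredients: $(i)$ the transport commutator $[\partial_{\beta},v\cdot\nabla_{x}]\partial^{\alpha}f=\sum_{i}\partial_{i}\partial^{\alpha}\partial_{\beta-e_{i}}f$, which is lower order in $\beta$ and by Cauchy--Schwarz with weight $W_{l+j\gamma}$ is absorbable into the level-$(j-1)$ dissipation as the weight scheme satisfies $l+j\gamma\le l+(j-1)\gamma+\gamma/2$ when $\gamma<0$; $(ii)$ the identity \eqref{alpha-beta-Lep} to reduce $\partial_{\beta}\mathcal L^{\epsilon}$ to $\mathcal L^{\epsilon,\beta_{0},\beta_{1}}\partial_{\beta_{2}}$ with $\beta_{2}<\beta$; and $(iii)$ Lemmas~\ref{coercivity-L-alpha-beta}, \ref{commutator-L-alpha-beta} and \ref{commutator-L-with-beta} applied with $q=l+j\gamma$ to split off the top-order dissipation $\lambda_{0}K_{j}\|f\|^{2}_{H^{N-j}_{x}\dot H^{j}_{\epsilon,l+j\gamma+\gamma/2}}$ and estimate the weight and derivative commutators. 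Each commutator produces a small multiple $\delta$ of the current dissipation (absorbable by choosing $\delta$ small and $\epsilon\le\epsilon_{1}$ sufficiently small so that $C_{N}\epsilon$ in Lemma~\ref{commutator-L-with-beta} is harmless), together with a large multiple $C_{\delta,l,N}$ of the dissipation at $|\beta|\le j-1$ and of $|\partial^{\alpha}(a,b,c)|^{2}_{L^{2}_{x}}$; as before the $\mathcal L^{\epsilon,\gamma}_{\eta}$-contribution is deposited on the right-hand side.

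The constants are then fixed in decreasing order of $j$: start with $K_{N}=1$, then recursively choose $K_{j-1}$ so large that the lower-order errors produced at level $j$ are dominated by $(\lambda_{0}/8)K_{j-1}\|f\|^{2}_{H^{N-j+1}_{x}\dot H^{j-1}_{\epsilon,l+(j-1)\gamma+\gamma/2}}$; finally pick $L\sim K_{0}$ to handle the $\|f_{2}\|^{2}_{L^{2}_{x}L^{2}_{\gamma/2}}$ loss from the $j=0$ level, and $M\sim L$ to absorb the $\|f_{2}\|^{2}_{H^{N}_{x}L^{2}_{\epsilon,\gamma/2}}$-term produced by $M\mathcal{I}^{N}$ into the available $(1/4)\lambda_{0}L\|f_{2}\|^{2}_{H^{N}_{x}L^{2}_{\epsilon,\gamma/2}}$. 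The main obstacle is precisely this bookkeeping: the commutator lemmas generate several distinct lower-order norms (some weighted with $W_{q+\gamma/2}$ in $L^{2}$, some with $W^{\epsilon}$ in $L^{2}_{\epsilon,q+\gamma/2}$), and one must verify at every step that the assumption $l\ge 3N+2$ ensures the weight shift $j\gamma=-3j$ keeps the base weight $l+j\gamma$ nonnegative and large enough so the interpolations behind Lemmas~\ref{commutator-L-alpha-beta}--\ref{commutator-L-with-beta} are admissible.
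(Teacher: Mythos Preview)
Your proposal is correct and follows essentially the same architecture as the paper's proof: the combination of Lemma~\ref{estimate-for-highorder-abc}, Theorem~\ref{micro-weight-dissipation}, and the three commutator Lemmas~\ref{coercivity-L-alpha-beta}--\ref{commutator-L-with-beta}, together with the transport commutator estimate and the weight decrement $l+j\gamma$, is exactly what the paper does. The one presentational difference is the order in which the constants are fixed: the paper runs an induction upward in $j$ (proving the full estimate for levels $0,\dots,i$ and at each step multiplying all previously fixed coefficients by a large $M_{3}$ before adding the new level with coefficient~$1$), whereas you describe fixing $K_{N}=1$ first and choosing $K_{j-1}$ large afterward; these are equivalent and yield the same hierarchy $K_{0}\gg K_{1}\gg\cdots\gg K_{N}$ with $M\sim L\sim K_{0}$.
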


\begin{proof} We divide the proof into three steps to construct the energy functional $\Xi^{N,l}(f)$ in \eqref{def-energy-combination}.

{\it Step 1: Propagation of $\|f\|^{2}_{H^{N}_{x}L^{2}}$}.
Applying $\partial^{\alpha}$ to equation \eqref{lBE-general}, taking inner product with $\partial^{\alpha}f$, taking sum over $|\alpha|\leq N$, we have
\ben \label{inner-product-with-pure-x}
\frac{1}{2}\frac{d}{dt}\|f\|^{2}_{H^{N}_{x}L^{2}}  + \sum_{|\alpha| \leq N} (\mathcal{L}^{\epsilon}\pa^{\alpha}f, \pa^{\alpha}f) = \sum_{|\alpha| \leq N}(\pa^{\alpha}g, \pa^{\alpha}f). \een
Split $\mathcal{L}^{\epsilon,\gamma} = \mathcal{L}^{\epsilon, \gamma,\eta} + \mathcal{L}^{\epsilon, \gamma}_{\eta}$.
Thanks to $\partial^{\alpha}f_{2} = (\partial^{\alpha}f)_{2}$,
by Theorem \ref{micro-weight-dissipation}, for $\eta \leq \eta_{0}, \epsilon \leq \epsilon_{0}$,
we have
\beno (\mathcal{L}^{\epsilon, \gamma,\eta} \partial^{\alpha}f, \partial^{\alpha}f) \geq \lambda_{0}\|\partial^{\alpha}f_{2}\|^{2}_{L^{2}_{x}L^2_{\epsilon,\gamma/2}}.\eeno
Plugging which into \eqref{inner-product-with-pure-x}, we have
\ben \label{solution-property-part-g}\frac{1}{2}\frac{d}{dt}\|f\|^{2}_{H^{N}_{x}L^{2}} + \lambda_{0}\|f_{2}\|^{2}_{H^{N}_{x}L^2_{\epsilon,\gamma/2}} \leq \sum_{|\alpha| \leq N}
(\pa^{\alpha}g- \mathcal{L}^{\epsilon, \gamma}_{\eta}\partial^{\alpha}f, \pa^{\alpha}f).\een
Multiplying \eqref{solution-property-part-g} by a large constant $2M_{1}$ and adding it to \eqref{solution-property-part2},
we get
\ben \label{essential-micro-macro-result} &&\frac{d}{dt}(M_{1}\|f\|^{2}_{H^{N}_{x}L^{2}}+\mathcal{I}^{N}(f))+ (c_{0}|\mathcal{MA}|^{2}_{H^{N}_{x}}+M_{1}\lambda_{0}\|f_{2}\|^{2}_{H^{N}_{x}L^2_{\epsilon,\gamma/2}}) \\&\leq& 2M_{1}\sum_{|\alpha| \leq N}
(\pa^{\alpha}g- \mathcal{L}^{\epsilon, \gamma}_{\eta}\partial^{\alpha}f, \pa^{\alpha}f)+ C\sum_{|\alpha| \leq N-1}\sum_{j=1}^{13}
\int_{\mathbb{T}^{3}}|\langle  \pa^{\alpha}g, e_j\rangle|^{2} dx. \nonumber \een
Here $M_{1}$ is large enough such that $M_{1} \geq 2C_{1}$ and $M_{1} \lambda_{0} \geq C$ to insure $ M_{1}\|f\|^{2}_{H^{N}_{x}L^{2}}+\mathcal{I}^{N}(f) \sim \|f\|^{2}_{H^{N}_{x}L^{2}}$ by \eqref{temporal-bounded-by-energy} and cancel the term $C\|f_{2}\|^{2}_{H^{N}_{x}L^2_{\epsilon,\gamma/2}}$ on the right hand side of \eqref{solution-property-part2}.

{\it Step 2: Propagation of $\|f\|^{2}_{H^{N}_{x}L^{2}_{l}}$}.
Applying $W_{l}\partial^{\alpha}$ to equation \eqref{lBE-general}, taking inner product with $W_{l}\partial^{\alpha}f$, taking sum over $|\alpha|\leq N$, we have
\ben \label{inner-product-with-pure-x-weight}
\frac{1}{2}\frac{d}{dt}\|f\|^{2}_{H^{N}_{x}L^{2}_{l}}  + \sum_{|\alpha| \leq N} (W_{l}\mathcal{L}^{\epsilon}\pa^{\alpha}f,W_{l}\pa^{\alpha}f) = \sum_{|\alpha| \leq N}(W_{l}\pa^{\alpha}g,W_{l}\pa^{\alpha}f). \een
Using commutator to transfer weight and splitting $\mathcal{L}^{\epsilon,\gamma} = \mathcal{L}^{\epsilon, \gamma,\eta} + \mathcal{L}^{\epsilon, \gamma}_{\eta}$, we get
\beno W_{l}\mathcal{L}^{\epsilon}\pa^{\alpha}f = \mathcal{L}^{\epsilon}W_{l}\pa^{\alpha}f + [W_{l}, \mathcal{L}^{\epsilon}]\pa^{\alpha}f
= \mathcal{L}^{\epsilon, \gamma,\eta}W_{l}\pa^{\alpha}f +\mathcal{L}^{\epsilon, \gamma}_{\eta}W_{l}\pa^{\alpha}f+ [W_{l}, \mathcal{L}^{\epsilon}]\pa^{\alpha}f.
\eeno
By Lemma \ref{coercivity-L-alpha-beta}, we get
\beno
(\mathcal{L}^{\epsilon, \gamma,\eta} W_{l}\pa^{\alpha}f, W_{l}\pa^{\alpha}f) \geq
 (7/8)\lambda_{0} \|\pa^{\alpha}f_{2}\|_{L^{2}_{x}L^{2}_{\epsilon,l+\gamma/2}}^{2}
-  C_{l,N}( \|\partial^{\alpha}f_{2}\|^{2}_{L^{2}_{x}L^2_{\gamma/2}}
+|\partial^{\alpha}(a,b,c)|^{2}_{L^{2}_{x}}).
\eeno
Thanks to Lemma \ref{commutator-L-alpha-beta}, we have
\beno
|([W_{l}, \mathcal{L}^{\epsilon}]\pa^{\alpha}f,W_{l}\pa^{\alpha}f)| \leq
\delta \|\pa^{\alpha}f_{2}\|_{L^{2}_{x}L^{2}_{\epsilon,l+\gamma/2}}^{2}
 + C_{\delta, l}\|\pa^{\alpha}f_{2}\|^{2}_{L^{2}_{x}L^{2}_{l+\gamma/2}}
 + C_{\delta,l,N}|\pa^{\alpha}(a,b,c)|^{2}_{L^{2}_{x}}.
\eeno
Since $|h|_{L^{2}_{q}} \lesssim (\delta_{2}^{1/2} + \epsilon^{1/2})|W^{\epsilon}h|_{L^{2}_{q}} + C(\delta_{2}, q) |h|_{L^{2}}$ for any $\delta_{2}>0$, we have
\ben \label{reduce-to-gamma/2-2}
 \|\pa^{\alpha}f_{2}\|^{2}_{L^{2}_{x}L^{2}_{l+\gamma/2}} \leq (\delta_{2} + \epsilon)\|\pa^{\alpha}f_{2}\|^{2}_{L^{2}_{x}L^{2}_{\epsilon,l+\gamma/2}}
+ C(\delta_{2}, l)\|\pa^{\alpha}f_{2}\|^{2}_{L^{2}_{x}L^{2}_{\epsilon,\gamma/2}}.\een
First taking  $\delta = \lambda_{0}/8$, then taking $\delta_{2}$ such that $\delta_{2}C_{\delta,l}=\lambda_{0}/8$,
when $\epsilon C_{\delta,l} \leq \lambda_{0}/8$, we get
\ben \label{weighted-pure-x-2}
\frac{d}{dt}\|f\|^{2}_{H^{N}_{x}L^{2}_{l}}  + (\lambda_{0}/2)\|f_{2}\|^{2}_{H^{N}_{x}L^{2}_{\epsilon,l+\gamma/2}} &\leq& C_{l,N}\|f_{2}\|^{2}_{H^{N}_{x}L^{2}_{\epsilon,\gamma/2}}+C_{l,N}|\partial^{\alpha}(a,b,c)|^{2}_{L^{2}_{x}}
\nonumber\\&&+2\sum_{|\alpha| \leq N}(W_{l}\pa^{\alpha}g-\mathcal{L}^{\epsilon, \gamma}_{\eta}W_{l}\pa^{\alpha}f,W_{l}\pa^{\alpha}f).  \een

There is a constant $C_{l}$ such that $\|f_{1}\|^{2}_{H^{N}_{x}L^{2}_{\epsilon,l+\gamma/2}} \leq C_{l}|\mathcal{MA}|^{2}_{H^{N}_{x}}.$
We choose a constant $M_{2}$ large enough such that $c_{0}M_{2}/4 \geq C_{l,N}, c_{0}M_{2}/4 \geq C_{l}\lambda_{0}/2, M_{2}M_{1}\lambda_{0}/2 \geq C_{l,N}$. Multiplying \eqref{essential-micro-macro-result} by the constant $M_{2}$ and adding the resulting inequality to \eqref{weighted-pure-x-2},
we get
\ben \label{essential-micro-macro-result-3} &&\frac{d}{dt}(M_{2}\mathcal{I}^{N}(f)+M_{1}M_{2}\|f\|^{2}_{H^{N}_{x}L^{2}}+\|f\|^{2}_{H^{N}_{x}L^{2}_{l}})
\\&&+ \frac{1}{2}(M_{2}c_{0}|\mathcal{MA}|^{2}_{H^{N}_{x}}+M_{2}M_{1}\lambda_{0}\|f_{2}\|^{2}_{H^{N}_{x}L^2_{\epsilon,\gamma/2}}
+\lambda_{0}\|f_{1}\|^{2}_{H^{N}_{x}L^2_{\epsilon,l+\gamma/2}}+ \lambda_{0}\|f_{2}\|^{2}_{H^{N}_{x}L^2_{\epsilon,l+\gamma/2}})
\nonumber \\&\leq&  M_{2}C\sum_{|\alpha| \leq N-1}\sum_{j=1}^{13}
\int_{\mathbb{T}^{3}}|\langle  \pa^{\alpha}g, e_j\rangle|^{2} dx
+2M_{2}M_{1}\sum_{|\alpha| \leq N}
(\pa^{\alpha}g- \mathcal{L}^{\epsilon, \gamma}_{\eta}\partial^{\alpha}f, \pa^{\alpha}f), \nonumber
\\&&+2\sum_{|\alpha| \leq N}(W_{l}\pa^{\alpha}g-\mathcal{L}^{\epsilon, \gamma}_{\eta}W_{l}\pa^{\alpha}f,W_{l}\pa^{\alpha}f).
\nonumber \een

{\it Step 3: Propagation of  $ \sum_{j=1}^{N}K_{j}\|f\|^{2}_{H^{N-j}_{x}\dot{H}^{j}_{l+j\gamma}}$}.  We shall use mathematical induction to prove that for any $0\leq i \leq N$, there are some constants $M^{i}, L^{i}, K^{i}_{j}, 0 \leq j \leq i$, such that
\ben \label{essential-micro-macro-result-4} &&\frac{d}{dt}(M^{i}\mathcal{I}^{N}(f)+ L^{i}\|f\|^{2}_{H^{N}_{x}L^{2}}+\sum_{0 \leq j \leq i}K^{i}_{j}\|f\|^{2}_{H^{N-j}_{x}\dot{H}^{j}_{l+j\gamma}})\\&&+  2^{-1-i/N}(c_{0}M^{i}|\mathcal{MA}|^{2}_{H^{N}_{x}}
+\lambda_{0}L^{i} \|f_{2}\|^{2}_{H^{N}_{x}L^{2}_{\epsilon,\gamma/2}}
\nonumber\\&&+\lambda_{0}\sum_{j=0}^{i}K^{i}_{j}(\|f_{1}\|^{2}_{H^{N-j}_{x}\dot{H}^{j}_{\epsilon,l+j\gamma+\gamma/2}}
+\|f_{2}\|^{2}_{H^{N-j}_{x}\dot{H}^{j}_{\epsilon,l+j\gamma+\gamma/2}}) \nonumber \\&\leq& M^{i}C\sum_{|\alpha| \leq N-1}\sum_{j=1}^{13}
\int_{\mathbb{T}^{3}}|\langle  \pa^{\alpha}g, e_j\rangle|^{2} dx
+ 2L^{i} \sum_{|\alpha|\leq N}(\pa^{\alpha}g- \mathcal{L}^{\epsilon, \gamma}_{\eta}\pa^{\alpha}f, \pa^{\alpha}f) \nonumber \\&&+\sum_{j=0}^{i}2K^{i}_{j}\sum_{|\alpha|\leq N-j,|\beta|=j}(W_{l+j\gamma}\pa^{\alpha}_{\beta}g- \mathcal{L}^{\epsilon, \gamma}_{\eta}W_{l+j\gamma}\pa^{\alpha}_{\beta}f, W_{l+j\gamma}\pa^{\alpha}_{\beta}f). \nonumber \een

It is easy to check that \eqref{essential-micro-macro-result-4} is valid for $i=0$ thanks to \eqref{essential-micro-macro-result-3}. We also remark that our final goal \eqref{essential-micro-macro-result-2} is actually  \eqref{essential-micro-macro-result-4} with $i=N$ since
$\|f\|^{2}_{H^{N-j}_{x}\dot{H}^{j}_{\epsilon,l+j\gamma+\gamma/2}}
\leq 2\|f_{1}\|^{2}_{H^{N-j}_{x}\dot{H}^{j}_{\epsilon,l+j\gamma+\gamma/2}}
+2\|f_{2}\|^{2}_{H^{N-j}_{x}\dot{H}^{j}_{\epsilon,l+j\gamma+\gamma/2}}.$

Assume \eqref{essential-micro-macro-result-4} is valid for $i = k$ for some $0 \leq k \leq N-1$. We set to prove \eqref{essential-micro-macro-result-4} is also valid for $i=k+1$.

Let $\alpha$ and $\beta$ be multi-indices such that $|\alpha|\leq N-(k+1)$ and $|\beta|= k+1 \geq 1$. Let $q=l+(k+1)\gamma$. Applying $W_{q}\pa^{\alpha}_{\beta}$ to both sides of \eqref{lBE-general}, we obtain
\ben \label{weight-q-LBE-3} \partial_{t}W_{q}\pa^{\alpha}_{\beta}f + v\cdot \nabla_{x} W_{q}\pa^{\alpha}_{\beta}f +\sum_{\beta_{1}\leq \beta,|\beta_{1}|=1} W_{q}\pa^{\alpha+\beta_{1}}_{\beta-\beta_{1}}f + W_{q}\pa^{\alpha}_{\beta}\mathcal{L}^{\epsilon}f = W_{q}\pa^{\alpha}_{\beta}g. \een
Taking inner product with $W_{q}\pa^{\alpha}_{\beta} f$ over $(x,v)$, one has
\ben \label{weight-mixed-derivative} \frac{1}{2}\frac{d}{dt}\|\pa^{\alpha}_{\beta}f \|^{2}_{L^{2}_{q}}  +  \sum_{\beta_{1}\leq \beta,|\beta_{1}|=1}(W_{q}\pa^{\alpha+\beta_{1}}_{\beta-\beta_{1}}f,W_{q}\pa^{\alpha}_{\beta}f) + (W_{q}\pa^{\alpha}_{\beta}\mathcal{L}^{\epsilon}f,W_{q}\pa^{\alpha}_{\beta}f) = (W_{q}\pa^{\alpha}_{\beta}g,W_{q}\pa^{\alpha}_{\beta}f).  \een

\underline{\it Estimate of $(W_{q}\pa^{\alpha+\beta_{1}}_{\beta-\beta_{1}}f,W_{q}\pa^{\alpha}_{\beta}f)$.} By Cauchy-Schwartz inequality and using $f=f_{1}+f_{2}$, we get
\ben \label{weight-term-1}
|(W_{q}\pa^{\alpha+\beta_{1}}_{\beta-\beta_{1}}f, W_{q}\pa^{\alpha}_{\beta}f)| &\leq& \|\pa^{\alpha+\beta_{1}}_{\beta-\beta_{1}}f\|_{L^2_{x}L^{2}_{q-\gamma/2}}\|\pa^{\alpha}_{\beta}f\|_{L^2_{x}L^{2}_{q+\gamma/2}}
\\&\lesssim& \|f_{2}\|^{2}_{H^{N-k}_{x}\dot{H}^{k}_{\epsilon,q-\gamma+\gamma/2}} +\|f_{2}\|^{2}_{H^{N-k-1}_{x}\dot{H}^{k+1}_{q+\gamma/2}} + C_{l}|\mathcal{MA}|^{2}_{H^{N-k}_{x}}. \nonumber \een

\underline{\it Estimate of $(W_{q}\pa^{\alpha}_{\beta}\mathcal{L}^{\epsilon}f,W_{q}\pa^{\alpha}_{\beta}f) $.}
Using commutator to transfer weight and splitting $\mathcal{L}^{\epsilon,\gamma} = \mathcal{L}^{\epsilon, \gamma,\eta} + \mathcal{L}^{\epsilon, \gamma}_{\eta}$, we get
\ben \label{weight-separation}
W_{q}\pa^{\alpha}_{\beta}\mathcal{L}^{\epsilon}f = \mathcal{L}^{\epsilon, \gamma,\eta}W_{q}\pa^{\alpha}_{\beta}f +\mathcal{L}^{\epsilon, \gamma}_{\eta}W_{q}\pa^{\alpha}_{\beta}f + [W_{q}, \mathcal{L}^{\epsilon}]\pa^{\alpha}_{\beta}f+
W_{q}[\partial_{\beta}, \mathcal{L}^{\epsilon}]\pa^{\alpha}f.
\een
By Lemma \ref{coercivity-L-alpha-beta}, Lemma \ref{commutator-L-alpha-beta} and Lemma \ref{commutator-L-with-beta}, we have
\beno
&&(\mathcal{L}^{\epsilon, \gamma,\eta} W_{q}\pa^{\alpha}_{\beta}f+[W_{q}, \mathcal{L}^{\epsilon}]\pa^{\alpha}_{\beta}f+
W_{q}[\partial_{\beta}, \mathcal{L}^{\epsilon}]\pa^{\alpha}f, W_{q}\pa^{\alpha}_{\beta}f)
\\&\geq&
(7\lambda_0/8 - 3\delta-C_{N}\epsilon)\| W_{q}\pa^{\alpha}_{\beta} f_2\|^2_{L^{2}_{x}L^2_{\epsilon,\gamma/2}}
- C_{\delta, q}\|\pa^{\alpha}_{\beta}f_{2}\|^{2}_{L^{2}_{x}L^{2}_{q+\gamma/2}}
 - C_{\delta,q,N}\|\partial^{\alpha}(a,b,c)\|_{L^{2}_{x}}^{2}
\\&& -   C_{\delta,q,N} \sum_{\beta_{2}<\beta} ( \|\pa^{\alpha}_{\beta_{2}}f_{2}\|_{L^{2}_{x}L^{2}_{\epsilon,q+\gamma/2}}^{2}
+ \|\partial^{\alpha}_{\beta_{2}}f_{2}\|_{L^{2}_{x}H^{1}_{\gamma/2}}^{2} ).
\eeno
Taking $\delta$ such that $3\delta = \lambda_0/16$, when $\epsilon$ is small such that $C_{N}\epsilon \leq \lambda_0/16$, we get
\ben \label{weight-term-2}
(\mathcal{L}^{\epsilon, \gamma,\eta} W_{q}\pa^{\alpha}_{\beta}f+[W_{q}, \mathcal{L}^{\epsilon}]\pa^{\alpha}_{\beta}f+
W_{q}[\partial_{\beta}, \mathcal{L}^{\epsilon}]\pa^{\alpha}f, W_{q}\pa^{\alpha}_{\beta}f)
\geq
 (3\lambda_{0}/4)\|\pa^{\alpha}_{\beta}f_{2}\|_{L^{2}_{x}L^{2}_{\epsilon,q+\gamma/2}}^{2}
 \\- C_{q,N}\|f_{2}\|^{2}_{H^{N-k-1}_{x}H^{k+1}_{q+\gamma/2}} - C_{q,N}\|f_{2}\|^{2}_{H^{N-k-1}_{x}H^{k}_{\epsilon, q+\gamma/2}}
 - C_{q,N}\|\partial^{\alpha}(a,b,c)\|_{L^{2}_{x}}^{2}. \nonumber
\een

Plugging \eqref{weight-term-1}, \eqref{weight-separation} and \eqref{weight-term-2} into \eqref{weight-mixed-derivative},
taking sum over $|\alpha|\leq N-(k+1), |\beta| = k+1$, we have
\ben \label{essential-micro-macro-result-pure}&&\frac{d}{dt}\|f\|^{2}_{H^{N-k-1}_{x}\dot{H}^{k+1}_{q}}+ \frac{3}{2}\lambda_{0} \|f_{2}\|^{2}_{H^{N-k-1}_{x}\dot{H}^{k+1}_{\epsilon,q+\gamma/2}}  \\&\leq& 2\sum_{|\alpha|\leq N-k-1,|\beta|=k+1}(W_{q}\pa^{\alpha}_{\beta}g-\mathcal{L}^{\epsilon, \gamma}_{\eta}W_{q}\pa^{\alpha}_{\beta}f, W_{q}\pa^{\alpha}_{\beta}f)
+C_{N}\|f_{2}\|^{2}_{H^{N-k}_{x}\dot{H}^{k}_{\epsilon,q-\gamma+\gamma/2}}
\nonumber \\&&+C_{l,N}(\|f_{2}\|^{2}_{H^{N-k-1}_{x}H^{k+1}_{q+\gamma/2}} +\|f_{2}\|^{2}_{H^{N-k-1}_{x}H^{k}_{\epsilon,q+\gamma/2}}
 +|\mathcal{MA}|^{2}_{H^{N-k}_{x}})
. \nonumber\een
By Lemma \ref{interWsd}, for any $0<\delta'<1$, we have
\beno \|f_{2}\|^{2}_{H^{N-k-1}_{x}H^{k+1}_{q+\gamma/2}} \leq (\delta^{\prime}+\epsilon)\|f_{2}\|^{2}_{H^{N-k-1}_{x}H^{k+1}_{\epsilon,q+\gamma/2}}
+C(\delta',k)\|f_{2}\|_{H^{N-k-1}_{x}L^{2}_{q+\gamma/2}}^{2}.
 \eeno
Taking $\delta^{\prime}$ such that $\delta^{\prime}C_{l,N} = \lambda_{0}/4$, when $\epsilon$ satisfies $\epsilon C_{l,N} \leq \lambda_0/4$, recalling  $q=l+(k+1)\gamma$, we get
\ben \label{essential-micro-macro-result-pure-2}&&\frac{d}{dt}\|f\|^{2}_{H^{N-k-1}_{x}\dot{H}^{k+1}_{q}}+ \lambda_{0} \|f_{2}\|^{2}_{H^{N-k-1}_{x}\dot{H}^{k+1}_{\epsilon,q+\gamma/2}}\nonumber \\&\leq& 2\sum_{|\alpha|\leq N-k-1,|\beta|=k+1}(W_{q}\pa^{\alpha}_{\beta}g-\mathcal{L}^{\epsilon, \gamma}_{\eta}W_{q}\pa^{\alpha}_{\beta}f, W_{q}\pa^{\alpha}_{\beta}f)\nonumber
+C_{l,N}(\sum_{j=0}^{k}\|f_{2}\|^{2}_{H^{N-j}_{x}\dot{H}^{j}_{\epsilon,l+j\gamma+\gamma/2}}+|\mathcal{MA}|^{2}_{H^{N-k}_{x}})
. \een

For notational convenience, set $\mathcal{X}^{k}(f):=M^{k}\mathcal{I}^{N}(f)+L^{k}\|f\|^{2}_{H^{N}_{x}L^{2}}+\sum_{0 \leq j \leq k}K^{k}_{j}\|f\|^{2}_{H^{N-j}_{x}\dot{H}^{j}_{l+j\gamma}}$,
and
\beno \mathcal{Y}^{k}_{\epsilon}(f):=c_{0}M^{k}|\mathcal{MA}|^{2}_{H^{N}_{x}}
+\lambda_{0} L^{k}\|f_{2}\|^{2}_{H^{N}_{x}L^{2}_{\epsilon,\gamma/2}}
+\lambda_{0}\sum_{j=0}^{k}K^{k}_{j} ( \|f_{1}\|^{2}_{H^{N-j}_{x}\dot{H}^{j}_{\epsilon,l+j\gamma+\gamma/2}}+\|f_{2}\|^{2}_{H^{N-j}_{x}\dot{H}^{j}_{\epsilon,l+j\gamma+\gamma/2}}). \eeno
By our induction assumption, \eqref{essential-micro-macro-result-4} is true when $i=k$, that is,
\ben \label{essential-micro-macro-result-4-i-k} &&\frac{d}{dt}\mathcal{X}^{k}(f)+ 2^{1-k/N}\mathcal{Y}^{k}_{\epsilon}(f)  \\&\leq& M^{k}C\sum_{|\alpha| \leq N-1}\sum_{j=1}^{13}
\int_{\mathbb{T}^{3}}|\langle  \pa^{\alpha}g, e_j\rangle|^{2} dx
+ 2L^{k} \sum_{|\alpha|\leq N}(\pa^{\alpha}g- \mathcal{L}^{\epsilon, \gamma}_{\eta}\pa^{\alpha}f, \pa^{\alpha}f) \nonumber \\&&+\sum_{j=0}^{k}2K^{k}_{j}\sum_{|\alpha|\leq N-j,|\beta|=j}(W_{l+j\gamma}\pa^{\alpha}_{\beta}g- \mathcal{L}^{\epsilon, \gamma}_{\eta}W_{l+j\gamma}\pa^{\alpha}_{\beta}f, W_{l+j\gamma}\pa^{\alpha}_{\beta}f). \nonumber \een
There is a constant $C_{l,N}$ such that $\|f_{1}\|^{2}_{H^{N-k-1}_{x}H^{k+1}_{\epsilon,q+\gamma/2}} \leq C_{l,N}|\mathcal{MA}|^{2}_{H^{N}_{x}}.$
We choose a constant $M_{3}$ large enough such that
\beno
M_{3}(1-2^{-1/N})c_{0}M^{k}/2 \geq C_{l,N}, M_{3}(1-2^{-1/N})c_{0}M^{k}/2 \geq C_{l,N}\lambda_{0},
\\ M_{3}(1-2^{-1/N}) \lambda_{0} \min_{0 \leq j \leq k} \{K^{k}_{j}\} \geq C_{l,N}. \eeno
Multiplying \eqref{essential-micro-macro-result-4-i-k} by the constant $M_{3}$,
and adding the resulting inequality  to \eqref{essential-micro-macro-result-pure},
we get
\ben \label{essential-micro-macro-result-4-i-k-plus} &&\frac{d}{dt} (M_{3}\mathcal{X}^{k}(f)+ \|f\|^{2}_{H^{N-k-1}_{x}\dot{H}^{k+1}_{q}})+ M_{3}2^{-1/N}2^{1-k/N} \mathcal{Y}^{k}_{\epsilon}(f)
\\&& +\lambda_{0} \|f_{1}\|^{2}_{H^{N-k-1}_{x}\dot{H}^{k+1}_{\epsilon,q+\gamma/2}}+ \lambda_{0} \|f_{2}\|^{2}_{H^{N-k-1}_{x}\dot{H}^{k+1}_{\epsilon,q+\gamma/2}}
\nonumber \\&\leq& M_{3}M^{k}C\sum_{|\alpha| \leq N-1}\sum_{j=1}^{13}
\int_{\mathbb{T}^{3}}|\langle  \pa^{\alpha}g, e_j\rangle|^{2} dx
+ 2M_{3}L^{k} \sum_{|\alpha|\leq N}(\pa^{\alpha}g- \mathcal{L}^{\epsilon, \gamma}_{\eta}\pa^{\alpha}f, \pa^{\alpha}f) \nonumber \\&&+\sum_{j=0}^{k}2M_{3}K^{k}_{j}\sum_{|\alpha|\leq N-j,|\beta|=j}(W_{l+j\gamma}\pa^{\alpha}_{\beta}g- \mathcal{L}^{\epsilon, \gamma}_{\eta}W_{l+j\gamma}\pa^{\alpha}_{\beta}f, W_{l+j\gamma}\pa^{\alpha}_{\beta}f). \nonumber
\\&&+2\sum_{|\alpha|\leq N-k-1,|\beta|=k+1}(W_{q}\pa^{\alpha}_{\beta}g-\mathcal{L}^{\epsilon, \gamma}_{\eta}W_{q}\pa^{\alpha}_{\beta}f, W_{q}\pa^{\alpha}_{\beta}f).\nonumber
\een
So we get \eqref{essential-micro-macro-result-4} for $i=k+1$. In detail, we set $M^{k+1} = M_{3}M^{k}, L^{k+1}=M_{3} L^{k}, K^{k+1}_{j}=M_{3} K^{k}_{j}$ for $0 \leq j\leq k$ and
$K^{k+1}_{k+1}=1$.
\end{proof}

\subsection{Global well-posedenss and propagation of regularity}
The local well-posedness and the non-negativity of the solution to \eqref{coboltzmann} were well established in \cite{he2014well}. Thus to prove global well-posedness, we only need to
  provide the {\it a priori} estimates for the equation, which is Theorem \ref{a-priori-estimate-LBE}.

\subsubsection{A priori estimate of Boltzmann equation \eqref{linearizedBE}.}
In this subsection, we derive the following a priori estimate for solutions to the Cauchy problem \eqref{linearizedBE}.
\begin{thm}\label{a-priori-estimate-LBE} There exists a universal constant  $\delta_{0}>0$ such that the following statement is valid.
Let $N \geq 4, l \geq 3N+2$, there is a constant $\epsilon_{0}$ which may depend on $N, l$, such that if $\epsilon \leq \epsilon_{0}$ and $f^{\epsilon}$ is a solution of the Cauchy problem \eqref{linearizedBE}  satisfying
$\sup_{0 \leq t \leq T} \mathcal{E}^{4,14} (f^{\epsilon}(t)) \le \delta_{0}$, then for any $t\in[0,T]$,
\begin{enumerate}
\item if $N=4, l=14$, the solution $f^{\epsilon}$  verifies
\ben \label{uniform-estimate-propagation-N-4-l-15} \mathcal{E}^{4,14}(f^{\epsilon}(t)) +  \int_{0}^{t}\mathcal{D}_{\epsilon}^{4,14}(f^{\epsilon}(s))ds \leq C \mathcal{E}^{4,14}(f_{0});
\een
\item if $N=4, l>14$, the solution $f^{\epsilon}$  verifies	
\ben\label{uniform-estimate-propagation-N-4-l-big} \mathcal{E}^{4,l}(f^{\epsilon}(t)) +  \int_{0}^{t}\mathcal{D}_{\epsilon}^{4,l}(f^{\epsilon}(s))ds \leq C_{l}\mathcal{E}^{4,l}(f_{0});
\een	
\item if $N\geq 5, l \geq 3N+2$, the solution $f^{\epsilon}$  verifies	
\ben \label{uniform-estimate-propagation-N-geq-5-l-big} \mathcal{E}^{N,l}(f^{\epsilon}(t)) +  \int_{0}^{t}\mathcal{D}_{\epsilon}^{N,l}(f^{\epsilon}(s))ds \leq P_{N,l}(\mathcal{E}^{N,l}(f_{0})).
\een	
\end{enumerate}
Here $C$ is a universal constant, $C_{l}$ is a constant depending on $l$ and $P_{N,l}(\cdot)$ is a continuous and increasing function with $P_{N,l}(0)=0$.
\end{thm}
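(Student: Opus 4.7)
The strategy is to apply Proposition \ref{essential-estimate-of-micro-macro} to the nonlinear equation \eqref{linearizedBE} by viewing it as \eqref{lBE-general} with source $g = \Gamma^\epsilon(f^\epsilon, f^\epsilon)$. This reduces matters to bounding every nonlinear functional on the right-hand side of \eqref{essential-micro-macro-result-2} by a small multiple of $\mathcal{D}_\epsilon^{N,l}(f^\epsilon)$, so that the inequality closes in the form $\frac{d}{dt}\Xi^{N,l}(f^\epsilon) + \frac{1}{8}\mathcal{D}_\epsilon^{N,l}(f^\epsilon) \leq 0$ plus controllable lower-order terms, which then integrates to the stated estimates. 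The scalar contribution $\int|\langle \pa^\alpha \Gamma^\epsilon(f,f), e_j\rangle|^2 dx$ is harmless because each $e_j$ carries a factor $\mu^{1/2}$, so a direct application of the second estimate in Theorem \ref{Gamma-full-up-bound} with $e_j$ as test function yields exponential velocity decay and can be absorbed by $M \|\mathbb{P}(\cdot)\|^2_{H^N_x}$ and the dissipation.

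For the main inner-product terms, I expand $\pa^\alpha_\beta \Gamma^\epsilon(f,f)$ by \eqref{alpha-beta-on-Gamma} into a sum of $\Gamma^\epsilon(\pa^{\alpha_1}_{\beta_1}f, \pa^{\alpha_2}_{\beta_2}f;\beta_0)$ and split it into the diagonal piece where all derivatives land on the second factor and the off-diagonal remainder. For off-diagonal pieces, both factors carry strictly fewer derivatives than the top $(\alpha,\beta)$, so one is bounded in $L^2$ after Sobolev embedding (using the a priori bound $\mathcal{E}^{4,14}(f^\epsilon) \leq \delta_0$), and Corollary \ref{Gamma-full-up-bound-with-weight}, estimate \eqref{version-2-dissipation-Boltzmann}, places the other factor and $\pa^\alpha_\beta f$ in the dissipation norm $|\cdot|_{\epsilon,l+j\gamma+\gamma/2}$, yielding a bound $\lesssim \sqrt{\mathcal{E}^{N,l}(f^\epsilon)}\,\mathcal{D}_\epsilon^{N,l}(f^\epsilon)$. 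The diagonal contribution $\Gamma^\epsilon(f^\epsilon, \pa^\alpha_\beta f^\epsilon)$ must be combined with $-\mathcal{L}^{\epsilon,\gamma}_\eta \pa^\alpha_\beta f^\epsilon$ and split further as $\Gamma^{\epsilon,\gamma,\eta} + \Gamma^{\epsilon,\gamma}_\eta$. The regular part $\Gamma^{\epsilon,\gamma,\eta}(f^\epsilon, \pa^\alpha_\beta f^\epsilon)$ is handled by Theorem \ref{upGammagh-geq-eta} (and Theorem \ref{commutator-Gamma-geq-eta} after commuting $W_{l+j\gamma}$), producing a prefactor $\eta^{\gamma-3}\sqrt{\mathcal{E}^{4,14}(f^\epsilon)}$. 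The quasilinear combination $\Gamma^{\epsilon,\gamma}_\eta(f^\epsilon, \pa^\alpha_\beta f^\epsilon) - \mathcal{L}^{\epsilon,\gamma}_\eta \pa^\alpha_\beta f^\epsilon$ is exactly the object of Theorem \ref{small-part-L+gamma}, which, thanks to the propagated pointwise nonnegativity $\mu + \mu^{1/2} f^\epsilon \geq 0$, yields a prefactor $(\eta^\delta + \epsilon^{1/2})(1+|\mu^{1/16}f^\epsilon|_{H^{3/2+\delta}})$ multiplying $|W^\epsilon(D)\mu^{1/16}\pa^\alpha_\beta f^\epsilon|^2_{L^2}$, which is absorbed by the dissipation. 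For weighted instances one first uses the commutator estimate of Corollary \ref{commutator-L-gamma-minus3} to move $W_{l+j\gamma}$ past $\mathcal{L}^{\epsilon,\gamma}_\eta$ at the cost of a term proportional to $\eta^{-6}\|\cdot\|^2_{H^{N-j}_x\dot{H}^j_{l+j\gamma}}$, which is bounded by $\mathcal{E}^{N,l}(f^\epsilon)$ and hence controllable after choosing $\delta_0$ small depending on $\eta$.

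After collecting all estimates, the right-hand side is dominated by $C\bigl(\eta^{\gamma-3}+\eta^\delta+\epsilon^{1/2}\bigr)\sqrt{\mathcal{E}^{N,l}(f^\epsilon)}\,\mathcal{D}_\epsilon^{N,l}(f^\epsilon) + C\sqrt{\mathcal{E}^{N,l}(f^\epsilon)}\,\mathcal{D}_\epsilon^{N,l}(f^\epsilon)$ plus terms absorbed in $\mathcal{D}_\epsilon^{N,l}$. I first fix $\eta = \eta_0$ small (universal), then $\epsilon_0$ small depending on $N,l$, then a universal $\delta_0$ making the prefactor less than $\frac{1}{8}$; this yields \eqref{uniform-estimate-propagation-N-4-l-15}. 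For $N=4$, $l>14$ I iterate the same inequality with the same $\delta_0$, using the already obtained bound at $l=14$ to control Sobolev factors in off-diagonal Leibniz terms, so the differential inequality is now linear in the top energy with a coefficient depending on $l$, and Gronwall gives \eqref{uniform-estimate-propagation-N-4-l-big}. For $N\geq 5$, induction on $N$ provides the continuous increasing function $P_{N,l}$ in \eqref{uniform-estimate-propagation-N-geq-5-l-big}. The main obstacle is that at top order the coercivity $|W^\epsilon(D)\cdot|_{L^2_{-3/2}}^2$ gained from the linearized operator loses a full derivative and is therefore unable to absorb the cubic term $\Gamma^\epsilon(f^\epsilon,\pa^\alpha_\beta f^\epsilon)$ by any Sobolev-only argument; only the linear–quasilinear splitting together with the pointwise positivity $\mu+\mu^{1/2}f^\epsilon\geq 0$ recovers the estimate, which is precisely why Theorem \ref{small-part-L+gamma} is the analytic core of the argument.
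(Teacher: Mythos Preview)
Your proposal is correct and follows essentially the same route as the paper: apply Proposition~\ref{essential-estimate-of-micro-macro} with $g=\Gamma^\epsilon(f^\epsilon,f^\epsilon)$, expand $\pa^\alpha_\beta\Gamma^\epsilon$ by Leibniz, handle off-diagonal pieces by Corollary~\ref{Gamma-full-up-bound-with-weight}, and treat the top-order diagonal piece by the linear--quasilinear splitting $\Gamma^{\epsilon,\gamma,\eta}+\Gamma^{\epsilon,\gamma}_\eta$ with Theorem~\ref{upGammagh-geq-eta} and Theorem~\ref{small-part-L+gamma} respectively; then Gronwall for $l>14$ and induction on $N$ for $N\ge5$.

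One clarification: you do \emph{not} need to commute $W_{l+j\gamma}$ past $\mathcal{L}^{\epsilon,\gamma}_\eta$. Proposition~\ref{essential-estimate-of-micro-macro} already delivers $-\mathcal{L}^{\epsilon,\gamma}_\eta W_{l+j\gamma}\pa^\alpha_\beta f$ with the weight inside, so after writing $W_q\pa^\alpha_\beta\Gamma^\epsilon(f,f)=\Gamma^{\epsilon,\gamma}_\eta(f,W_q\pa^\alpha_\beta f)+\Gamma^{\epsilon,\gamma,\eta}(f,W_q\pa^\alpha_\beta f)+[W_q,\Gamma^\epsilon(f,\cdot)]\pa^\alpha_\beta f+W_q[\pa^\alpha_\beta,\Gamma^\epsilon(f,\cdot)]f$, the quasilinear pair $(\Gamma^{\epsilon,\gamma}_\eta(f,W_q\pa^\alpha_\beta f)-\mathcal{L}^{\epsilon,\gamma}_\eta W_q\pa^\alpha_\beta f,\,W_q\pa^\alpha_\beta f)$ matches Theorem~\ref{small-part-L+gamma} exactly. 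The weight commutator you must control is $[W_q,\Gamma^\epsilon(f,\cdot)]$, for which Theorem~\ref{commutator-Gamma-geq-eta} is the right tool (Corollary~\ref{commutator-L-gamma-minus3} is for the linearized operator, used inside the proof of Proposition~\ref{essential-estimate-of-micro-macro}, not here). This also removes your concern about an $\eta^{-6}\mathcal{E}^{N,l}$ error term, which would not close as written.
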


Recall from \eqref{energy-functional} the energy functional $\mathcal{E}^{N,l} =\sum_{j=0}^{N}\|f\|^{2}_{H^{N-j}_{x}\dot{H}^{j}_{l+j\gamma}}$. For some constants $C_{0}$ depending only on $N, l$, we have
\ben \label{energy-equivalence}
\mathcal{E}^{N,l}(f) \leq \Xi^{N,l}(f) \leq C_{0}(N,l) \mathcal{E}^{N,l}(f),
\een
In order to prove Theorem \ref{a-priori-estimate-LBE},
we employ Proposition \ref{essential-estimate-of-micro-macro} by taking $g = \Gamma^{\epsilon}(f^{\epsilon},f^{\epsilon})$ and get
\ben \label{g=gamma-f-f}
\frac{d}{dt}\Xi^{N,l}(f^{\epsilon}) +  \frac{1}{4} \mathcal{D}_{\epsilon}^{N,l}(f^{\epsilon}) \leq \sum_{j=0}^{N}2K_{j} \mathcal{A}^{N,j,l}_{\epsilon}(f^{\epsilon},f^{\epsilon})
+2 L \mathcal{B}^{N}_{\epsilon}(f^{\epsilon},f^{\epsilon})+M C \mathcal{C}^{N}_{\epsilon}(f^{\epsilon},f^{\epsilon}), \een
where
\ben \label{A-N-j-l}
\mathcal{A}^{N,j,l}_{\epsilon}(g,f) := \sum_{|\alpha|\leq N-j,|\beta|=j}(W_{l+j\gamma}\pa^{\alpha}_{\beta}\Gamma^{\epsilon}(g,f) - \mathcal{L}^{\epsilon,\gamma}_{\eta}W_{l+j\gamma}\pa^{\alpha}_{\beta}f, W_{l+j\gamma}\pa^{\alpha}_{\beta}f).
\\ \label{B-N-j-0}
\mathcal{B}^{N}_{\epsilon}(g,f) := \sum_{|\alpha|\leq N}(\pa^{\alpha}\Gamma^{\epsilon}(g,f)- \mathcal{L}^{\epsilon, \gamma}_{\eta} \pa^{\alpha}f, \pa^{\alpha}f),
\\ \label{C-N-epsilon-0}
\mathcal{C}^{N}_{\epsilon}(g,f) := \sum_{|\alpha| \leq N-1}\sum_{j=1}^{13}
\int_{\mathbb{T}^{3}}|\langle  \pa^{\alpha}\Gamma^{\epsilon}(g,f), e_j\rangle|^{2} dx.
\een

To move forward based on \eqref{g=gamma-f-f}, we need to estimate $\mathcal{A}^{N,j,l}_{\epsilon}(f^{\epsilon},f^{\epsilon}), \mathcal{B}^{N}_{\epsilon}(f^{\epsilon},f^{\epsilon}), \mathcal{C}^{N}_{\epsilon}(f^{\epsilon},f^{\epsilon})$. To this end, we will give estimates of functionals $\mathcal{A}^{N,j,l}_{\epsilon}$ and $\mathcal{B}^{N}_{\epsilon}$ in Lemma \ref{non-linear-term}, functional $\mathcal{C}^{N}_{\epsilon}$ in Lemma  \ref{pure-x-term-from-I}. To keep the proof of Lemma \ref{non-linear-term} in a reasonable length, we prepare a commutator estimate as Lemma \ref{derivative-commutator-lemma}.

Recalling from \eqref{def-dissipation} the dissipation functional $\mathcal{D}_{\epsilon}^{N,l}$, we have
\ben \label{lower-bound-of-D}
\mathcal{D}_{\epsilon}^{N,l}(f) \geq (c_{0}M/2)|\mathcal{MA}|^{2}_{H^{N}_{x}} + c_{1} \lambda_{0}L\|f\|^{2}_{H^{N}_{x}H^{0}_{\epsilon,\gamma/2}}
+\lambda_{0}\sum_{j=0}^{N}K_{j}\|f\|^{2}_{H^{N-j}_{x}\dot{H}^{j}_{\epsilon,l+j\gamma+\gamma/2}},
\een
for some universal constant $c_{1}=\left(M c_{0}/4L\lambda_{0} \right)\wedge 1$ since $M \sim L$ by Proposition \ref{essential-estimate-of-micro-macro}.
For simplicity, we define
\beno \|f\|^{2}_{H^{m}_{x,v}} := \sum_{|\alpha|+|\beta|\leq m}\|\pa^{\alpha}_{\beta}f\|^{2}_{L^{2}},  \,\, \mathcal{D}_{\epsilon}^{m}(f) := \sum_{|\alpha|+|\beta|\leq m} \|\pa_{\beta}^{\alpha}f\|_{H^{0}_xH^{0}_{\epsilon,\gamma/2}}^2. \eeno

\begin{lem} \label{derivative-commutator-lemma} Let $N \geq 4, l \geq 3N+2, 0 \leq j \leq N$.
Let $\alpha, \beta$ satisfy $|\alpha| \leq N-j, |\beta|=j$.  The following three statements hold true.
\begin{enumerate}
	\item If $N=4$ and $l=14$, then
\ben \label{derivative-commutator-N-4-l-14}
|(W_{l+j\gamma}[\pa^{\alpha}_{\beta},\Gamma^{\epsilon}(g,\cdot)]h, W_{l+j\gamma}\pa_{\beta}^{\alpha}f)| \lesssim \|g\|_{H^{4}_{x,v}} \left(\mathcal{D}_{\epsilon}^{4,14}(h)\right)^{1/2}\left(\mathcal{D}_{\epsilon}^{4,14}(f)\right)^{1/2}.
\een
\item If $N=4$ and $l>14$,  then for any $\delta>0$,
\ben \label{derivative-commutator-N-4-l-large}
|(W_{l+j\gamma}[\pa^{\alpha}_{\beta},\Gamma^{\epsilon}(g,\cdot)]h, W_{l+j\gamma}\pa_{\beta}^{\alpha}f)| &\lesssim& \|g\|_{H^{4}_{x,v}} \|h\|_{H^{N-j}_x \dot{H}^{j}_{\epsilon,l+j\gamma+\gamma/2}} \|\pa_{\beta}^{\alpha}f\|_{H^{0}_xH^{0}_{\epsilon,l+j\gamma+\gamma/2}} +
\\&&+ \delta \|\pa_{\beta}^{\alpha}f\|^{2}_{H^{0}_xH^{0}_{\epsilon,l+j\gamma+\gamma/2}} + \delta^{-1}C_{l} \|g\|^{2}_{H^{4}_{x,v}} \mathcal{E}^{4,l}(h).  \nonumber
\een
 \item If $N \geq 5$ and $l \geq 3N+2$, then for any $\delta>0$,
\ben \label{derivative-commutator-N-geq-5}
|(W_{l+j\gamma}[\pa^{\alpha}_{\beta},\Gamma^{\epsilon}(g,\cdot)]h, W_{l+j\gamma}\pa_{\beta}^{\alpha}f)| &\lesssim& \delta \|\pa_{\beta}^{\alpha}f\|^{2}_{H^{0}_xH^{0}_{\epsilon,l+j\gamma+\gamma/2}}
+ \delta^{-1}C_{N,l}\|g\|^{2}_{H^{N}_{x,v}}\mathcal{D}_{\epsilon}^{N-1,l}(h)
\\&&+ \delta^{-1}C_{N,l}\|g\|^{2}_{H^{4}_{x,v}}\|h\|^{2}_{H^{N}_{x,v}}. \nonumber
\een
\end{enumerate}

Let $N \geq 4$. Let $\alpha$ satisfy $|\alpha| \leq N$. The following two statements hold true.
\begin{enumerate}
	\item If $N=4$, then
\ben \label{derivative-commutator-N-4-no-weight}
|([\pa^{\alpha},\Gamma^{\epsilon}(g,\cdot)]h, \pa^{\alpha}f)| \lesssim \|g\|_{H^{4}_{x,v}}
\sqrt{\mathcal{D}_{\epsilon}^{4}(h)} \sqrt{\mathcal{D}_{\epsilon}^{4}(f)}.
\een
\item  If $N\geq 5$, then for any $\delta>0$,
\ben \label{derivative-commutator-N-geq-5-no-weight}
|([\pa^{\alpha},\Gamma^{\epsilon}(g,\cdot)]h, \pa^{\alpha}f)| \lesssim  \delta \|\pa^{\alpha}f\|^{2}_{H^{0}_xH^{0}_{\epsilon, \gamma/2}}
+ \delta^{-1}C_{N}\|g\|^{2}_{H^{N}_{x,v}}\mathcal{D}_{\epsilon}^{N-1}(h)
+ \delta^{-1}C_{N}\|g\|^{2}_{H^{4}_{x,v}}\|h\|^{2}_{H^{N}_{x,v}}.
\een
\end{enumerate}
\end{lem}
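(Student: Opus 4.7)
The plan is to expand the commutator via the Leibniz rule (the analogue of \eqref{alpha-beta-on-Gamma} with the main term subtracted off),
\[
[\pa^\alpha_\beta,\Gamma^\epsilon(g,\cdot)]h=\sum_{\substack{\alpha_1+\alpha_2=\alpha,\ \beta_0+\beta_1+\beta_2=\beta\\(\alpha_1,\beta_0,\beta_1)\neq(0,0,0)}} C^{\alpha_1,\alpha_2}_\alpha C^{\beta_0,\beta_1,\beta_2}_\beta\,\Gamma^\epsilon(\pa^{\alpha_1}_{\beta_1}g,\pa^{\alpha_2}_{\beta_2}h;\beta_0),
\]
and to estimate each bilinear piece using the weighted upper bound Corollary~\ref{Gamma-full-up-bound-with-weight} (for \eqref{derivative-commutator-N-4-l-14}--\eqref{derivative-commutator-N-geq-5}) or the unweighted Theorem~\ref{Gamma-full-up-bound} (for \eqref{derivative-commutator-N-4-no-weight}--\eqref{derivative-commutator-N-geq-5-no-weight}). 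Applied pointwise in $x\in\TT^3$, these give the schematic bound $|\pa^{\alpha_1}_{\beta_1}g|_{L^2_v}\,|\pa^{\alpha_2}_{\beta_2}h|_{\epsilon,l+j\gamma+\gamma/2}\,|\pa^\alpha_\beta f|_{\epsilon,l+j\gamma+\gamma/2}$ plus a benign $L^2_{l+\gamma/2}$-residual (which is dominated by the dissipation layer since $W^\epsilon\gtrsim 1$ for small $\epsilon$), and I then integrate in $x$ by H\"older.

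The heart of the argument is a derivative-count split. For $N=4$, the constraint $(\alpha_1,\beta_0,\beta_1)\neq(0,0,0)$ forces either $|\alpha_1|+|\beta_1|\leq 2$ (Case~A) or $|\alpha_2|+|\beta_2|\leq 1$ (Case~B). In Case~A, I Sobolev-embed $\pa^{\alpha_1}_{\beta_1}g$ through $H^2_x\hookrightarrow L^\infty_x$, obtaining $\sup_x|\pa^{\alpha_1}_{\beta_1}g|_{L^2_v}\lesssim\|g\|_{H^{|\alpha_1|+|\beta_1|+2}_{x,v}}\le\|g\|_{H^4_{x,v}}$, and place the remaining two factors in $L^2_x$, each landing inside an appropriate layer of the dissipation functional $\mathcal{D}^{4,14}_\epsilon$. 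In Case~B, I Sobolev-embed $\pa^{\alpha_2}_{\beta_2}h$ in $x$ instead; this is possible because $|\alpha_2|+|\beta_2|+2\leq 3<N$ still leaves room inside $\mathcal{D}^{4,14}_\epsilon$. The useful sign observation is that since $\gamma<0$, the required integrand weight $l+j\gamma$ with $j=|\beta|\ge j_2:=|\beta_2|$ is never larger than the natural weight $l+j_2\gamma$ of the $j_2$-th velocity layer of the dissipation, so weights align automatically.

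For the refinements \eqref{derivative-commutator-N-4-l-large} and \eqref{derivative-commutector-N-geq-5}, the same scheme applies but with small modifications: when $l>14$ or $N\ge 5$, certain marginal sub-cases cannot be closed purely by Sobolev embedding on $g$, so Young's inequality $2ab\leq\delta a^2+\delta^{-1}b^2$ is applied with $a$ the $f$-factor (absorbed into the $\delta\|\pa^\alpha_\beta f\|^2$ term) and $b$ a product involving $\|g\|^2_{H^N_{x,v}}\,\mathcal{D}^{N-1,l}_\epsilon(h)$ or $\|g\|^2_{H^4_{x,v}}\,\mathcal{E}^{N,l}(h)$, matching the three terms claimed in the lemma. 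The unweighted estimates \eqref{derivative-commutator-N-4-no-weight}--\eqref{derivative-commutator-N-geq-5-no-weight} follow by the same recipe with Theorem~\ref{Gamma-full-up-bound} in place of Corollary~\ref{Gamma-full-up-bound-with-weight} and the weight exponent set to zero. The main obstacle is the bookkeeping: three derivative budgets (in $x$, in $v$, and in the weight exponent) must be tracked simultaneously through every sub-case, and in particular one has to verify that the Sobolev footprint in $x$ never exceeds $N-j_2$ so that the resulting factor reassembles correctly into a layer of $\mathcal{D}^{N,l}_\epsilon$.
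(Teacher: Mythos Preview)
Your approach is essentially the paper's: Leibniz-expand the commutator, apply the weighted upper bound of Corollary~\ref{Gamma-full-up-bound-with-weight} term by term, and distribute the $x$-regularity by Sobolev embedding. The paper performs the $x$-integration via Fourier series on $\TT^3$ (giving a flexible split $a_1+a_2=2$ with $a_i\in\{0,1,2\}$) rather than physical-space H\"older, but your $L^\infty_x\times L^2_x\times L^2_x$ scheme is an equivalent mechanism.

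There is, however, a genuine point you have glossed over. Your schematic reads the upper bound as ``$|g|_{L^2_v}\,|h|_{\epsilon,q+\gamma/2}\,|f|_{\epsilon,q+\gamma/2}$ plus a benign $L^2$-residual'', but Corollary~\ref{Gamma-full-up-bound-with-weight} carries a \emph{third} term of the form $C_q\,|\mu^{1/16}g|_{H^{b_1}}\,|\mu^{1/32}h|_{H^{b_2}}\,|f|_{\epsilon,q+\gamma/2}$ with $b_1+b_2=2$, $b_2\ge 1$. The $\mu$-weights are indeed harmless for the polynomial weight budget, but the factor $|\mu^{1/32}\pa^{\alpha_2}_{\beta_2}h|_{H^{b_2}}$ costs at least one \emph{extra} $v$-derivative on $h$, and this is where most of the real bookkeeping lives. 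Your Case~A/Case~B dichotomy only distributes the $x$-derivatives; it does not pin down $(b_1,b_2)$, and no single choice works uniformly: for instance $|\alpha_1|+|\beta_1|=2$ together with your $H^2_x\hookrightarrow L^\infty_x$ on $g$ forces $(b_1,b_2)=(0,2)$, whereas $|\alpha_1|+|\beta_1|\le 1$ with $|\alpha_2|+|\beta_2|=N-1$ forces $(b_1,b_2)=(1,1)$ to keep the $h$-count at $N$. The paper handles this via a systematic case table (Table~\ref{parameter-2}), prescribing $(a_1,a_2,b_1,b_2)$ for each value of $(|\alpha_1|,|\beta_1|)$ and verifying simultaneously $|\alpha_1|+a_1+|\beta_1|+b_1\le 4$ and $|\alpha_2|+a_2+|\beta_2|+b_2\le N$. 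Your plan can be completed in the same way, but this table \emph{is} precisely the ``three derivative budgets'' bookkeeping you flag at the end, and it is not as automatic as the proposal suggests.
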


\begin{proof} Set $q=l+j\gamma$.
By the binomial expansion \eqref{alpha-beta-on-Gamma}, we have
\ben \label{derivative-commutator}
W_{q}[\pa^{\alpha}_{\beta},\Gamma^{\epsilon}(f,\cdot)]f &=&  W_{q}\pa^{\alpha}_{\beta}\Gamma^{\epsilon}(f,f) - W_{q}\Gamma^{\epsilon}(f,\pa^{\alpha}_{\beta}f) \nonumber \\&=& W_{q}\sum C(\alpha_{1},\alpha_{2},\beta_{0},\beta_{1},\beta_{2}) \Gamma^{\epsilon}(\partial^{\alpha_{1}}_{\beta_{1}}f, \partial^{\alpha_{2}}_{\beta_{2}}f;\beta_{0}),
\een
where the sum is over $\alpha_{1}+\alpha_{2}=\alpha, \beta_{1}+\beta_{2}\leq \beta, |\alpha_{2}+\beta_{2}|\leq |\alpha|+|\beta|-1$.
By \eqref{version-2-dissipation-Boltzmann} in Corollary \ref{Gamma-full-up-bound-with-weight}, we have for $b_{1} \geq 0,b_{2}\geq 1$ with $b_{1}+b_{2}=2$,
\ben \label{upper-bound-2}
|\langle W_{q}\Gamma^{\epsilon,\gamma}(g,h; \beta_{0}), W_{q}f\rangle| &\lesssim&
|g|_{H^{0}}|h|_{H^{0}_{\epsilon,q+\gamma/2}}|f|_{H^{0}_{\epsilon,q+\gamma/2}} + C_{q}|g|_{H^{0}}|h|_{H^{0}_{q+\gamma/2}}|f|_{H^{0}_{\epsilon,q+\gamma/2}}
\nonumber \\&&+
C_{q}|\mu^{1/16}g|_{H^{b_{1}}}|\mu^{1/32}h|_{H^{b_{2}}}|f|_{H^{0}_{\epsilon,q+\gamma/2}}.
\een
If we denote the Fourier transform of $f$ with respect to $x$ variable by $\widehat{f}$, then we have
\beno (\Gamma^\epsilon(g, h; \beta_{0}), f)=\sum_{k,m\in\Z^3} \langle \Gamma^\epsilon (\widehat{g}(k), \widehat{h}(m-k); \beta_{0}), \widehat{f}(m)\rangle. \eeno
From which together with \eqref{upper-bound-2}, we get
\beno &&|(W_{q}\Gamma^\epsilon(\pa^{\alpha_{1}}_{\beta_{1}} g, \pa^{\alpha_{2}}_{\beta_{2}} h; \beta_{0}), W_{q}f)|
\\&\lesssim&
\sum_{k,m\in\Z^3} |k|^{|\alpha_{1}|}|m-k|^{|\alpha_{2}|}|\widehat{\pa_{\beta_{1}}g}(k)|_{H^{0}}
|\widehat{\pa_{\beta_{2}}h}(m-k)|_{H^{0}_{\epsilon,q+\gamma/2}}|\widehat{f}(m)|_{H^0_{\epsilon,q+\gamma/2}}
\\&&+C_{q}\sum_{k,m\in\Z^3} |k|^{|\alpha_{1}|}|m-k|^{|\alpha_{2}|}|\widehat{\pa_{\beta_{1}}g}(k)|_{H^{0}}
|\widehat{\pa_{\beta_{2}}h}(m-k)|_{H^{0}_{q+\gamma/2}}|\widehat{f}(m)|_{H^0_{\epsilon,q+\gamma/2}}
\\&&+C_{q}
\sum_{k,m\in\Z^3} |k|^{|\alpha_{1}|}|m-k|^{|\alpha_{2}|}|\mu^{1/16}\widehat{\pa_{\beta_{1}}g}(k)|_{H^{b_{1}}}
|\mu^{1/32}\widehat{\pa_{\beta_{2}}h}(m-k)|_{H^{b_{2}}}|\widehat{f}(m)|_{H^0_{\epsilon,q+\gamma/2}}.
\eeno
From which we derive that  for $a_{1}, a_{2}\ge 0$ with $a_{1}+a_{2}=2$,
\ben\label{HNGamma1-continue}
&&|( W_{q}\Gamma^\epsilon(\pa_{\beta_{1}}^{\alpha_{1}} g, \pa_{\beta_{2}}^{\alpha_{2}} h), W_{q}\pa_{\beta}^{\alpha}f)|\lesssim
\|g\|_{H^{|\alpha_{1}|+a_{1}}_xH^{|\beta_{1}|}}(\|\pa_{\beta_{2}}h\|_{H^{|\alpha_{2}|+a_{2}}_xL^2_{\epsilon,q+\gamma/2}}+C_q\|\pa_{\beta_{2}}h\|_{H^{|\alpha_{2}|+a_{2}}_xL^2_{q+\gamma/2}})
\nonumber \\&& \quad\times\|\pa_{\beta}^{\alpha}f\|_{H^{0}_xH^{0}_{\epsilon,q+\gamma/2}}+C_{q}\|\mu^{1/16}g\|_{H^{|\alpha_{1}|+a_{1}}_xH^{|\beta_{1}|+b_{1}}}\|\mu^{1/32}h\|_{H^{|\alpha_{2}|+a_{2}}_xH^{|\beta_{2}|+b_{2}}}
\|\pa_{\beta}^{\alpha}f\|_{H^{0}_xH^{0}_{\epsilon,q+\gamma/2}}.
\een

In the following, we choose $a_{1}, a_{2}, b_{1}, b_{2} \in \{0,1,2\}$ with $a_{1}+a_{2}=b_{1}+b_{2}=2$ and $b_2\ge1$.
For $N \geq 4$ and multi-indices $\alpha, \beta$ with $|\alpha|+|\beta| \leq N$, we consider all the combinations of $\alpha_{1},\alpha_{2},\beta_{1},\beta_{2}$ such that $\alpha_{1}+\alpha_{2}=\alpha, \beta_{1}+\beta_{2} \leq \beta, |\alpha_{2}+\beta_{2}|\leq |\alpha|+|\beta|-1$ in Table \ref{parameter-2} for the choice of $a_{1}, a_{2}, b_{1}, b_{2}$.

\begin{table}[!htbp]
\centering
\caption{Parameter choice}\label{parameter-2}
\begin{tabular}{ccccc}
\hline
$(|\alpha_{1}|,|\beta_{1}|)$  &$(|\alpha_{2}|,|\beta_{2}|)$ & $(a_{1}, a_{2}, b_{1}, b_{2})$ & $|\alpha_{1}|+a_{1}+|\beta_{1}|+b_{1}$ &
$|\alpha_{2}|+a_{2}+|\beta_{2}|+b_{2}$   \\
\hline
(0,0)& $(|\alpha|, \leq |\beta|-1)$& (2,0,1,1) & 3 &  $\leq |\alpha|+|\beta|$ \\
(0,1)& $(|\alpha|, \leq |\beta|-1)$& (2,0,1,1) & 4 & $\leq |\alpha|+|\beta|$\\
(1,0)& $(|\alpha|-1, \leq |\beta|)$& (2,0,1,1) & 4 & $\leq |\alpha|+|\beta|$\\
(0,2)& $(|\alpha|, \leq |\beta|-2)$& (2,0,0,2) & 4 & $\leq |\alpha|+|\beta|$\\
(1,1)& $(|\alpha|-1, \leq |\beta|-1)$& (1,1,1,1) & 4 & $\leq |\alpha|+|\beta|$\\
(2,0)& $(|\alpha|-2, \leq |\beta|)$& (1,1,1,1) & 4 & $\leq |\alpha|+|\beta|$\\
(0,3)& $(|\alpha|, \leq |\beta|-3)$& (1,1,0,2) & 4 & $\leq |\alpha|+|\beta|$\\
(1,2)& $(|\alpha|-1, \leq |\beta|-2)$& (1,1,0,2) & 4 & $\leq |\alpha|+|\beta|$\\
(2,1)& $(|\alpha|-2, \leq |\beta|-1)$& (0,2,1,1) & 4 & $\leq |\alpha|+|\beta|$\\
(3,0)& $(|\alpha|-3, \leq |\beta|)$& (0,2,1,1) & 4 & $\leq |\alpha|+|\beta|$\\
$|\alpha_{1}|+|\beta_{1}|=4$& $(|\alpha|-|\alpha_{1}|, \leq |\beta|-|\beta_{1}|)$& (0,2,0,2) & 4 & $\leq |\alpha|+|\beta|$\\
$|\alpha_{1}|+|\beta_{1}|\geq5$& $(|\alpha|-|\alpha_{1}|, \leq |\beta|-|\beta_{1}|)$& (0,2,0,2) & N & $\leq |\alpha|+|\beta|-1$\\
\hline
\end{tabular}
\end{table}

To summarize, if $|\alpha_{1}|+|\beta_{1}| \leq 4$, with the choice of $(a_{1}, a_{2}, b_{1}, b_{2})$ in Table \ref{parameter-2}, we have
$|\alpha_{1}|+a_{1}+|\beta_{1}|+b_{1} \leq 4, |\alpha_{2}|+a_{2}+|\beta_{2}|+b_{2}\leq |\alpha|+|\beta|, |\beta_{2}| \leq |\beta|$.
\smallskip

\underline{\it Case 1: $N=4, l=14$.} We recall that $q=l+j\gamma \leq 14$ which implies $C_{q} \lesssim 1$. By the lower bound of $\mathcal{D}_{\epsilon}^{N,l}(f)$ in \eqref{lower-bound-of-D} and the estimate \eqref{HNGamma1-continue}, we have
\beno
|( W_{q}\Gamma^\epsilon(\pa_{\beta_{1}}^{\alpha_{1}} g, \pa_{\beta_{2}}^{\alpha_{2}} h), W_{q}\pa_{\beta}^{\alpha}f)|
\lesssim \|g\|_{H^{4}_{x,v}}\left(\mathcal{D}_{\epsilon}^{4,14}(h)\right)^{1/2}\left(\mathcal{D}_{\epsilon}^{4,14}(f)\right)^{1/2}.
\eeno
Since $N=4$, the constants $C(\alpha_{1},\alpha_{2},\beta_{0},\beta_{1},\beta_{2})$  in \eqref{derivative-commutator} are universally bounded.
Taking sum over $\alpha_{1}+\alpha_{2}=\alpha, \beta_{0}+\beta_{1}+\beta_{2}=\beta, |\alpha_{2}+\beta_{2}|\leq |\alpha|+|\beta|-1$, we get \eqref{derivative-commutator-N-4-l-14}.

\underline{\it Case 2: $N=4, l\ge14$.}
 If $|\beta_{2}| = |\beta|$, we use $\|h\|_{H^{|\alpha_{2}|+a_{2}}_x\dot{H}^{|\beta_{2}|}_{\epsilon,q+\gamma/2}} \leq \|h\|_{H^{N-j}_x \dot{H}^{j}_{\epsilon,q+\gamma/2}}$.
If $|\beta_{2}| < |\beta|$ which only happens when $j \geq 1$, by $|f|_{\epsilon, l} \lesssim |f|_{H^{1}_{l+1}}$, we have
\beno \|h\|_{H^{|\alpha_{2}|+a_{2}}_xH^{|\beta_{2}|}_{\epsilon,q+\gamma/2}} \leq \|h\|_{H^{N-j}_x H^{j-1}_{\epsilon,q+\gamma/2}} \lesssim  \|h\|_{H^{N-j}_x H^{j}_{q+\gamma/2+1}} \leq \|h\|_{H^{N-j}_x H^{j}_{q}}. \eeno
 Plugging these facts into \eqref{HNGamma1-continue}, we get
\beno
|( W_{q}\Gamma^\epsilon(\pa_{\beta_{1}}^{\alpha_{1}} g, \pa_{\beta_{2}}^{\alpha_{2}} h), W_{q}\pa_{\beta}^{\alpha}f)|&\lesssim&
 \mathrm{1}_{|\beta_{2}| = |\beta|} \|g\|_{H^{4}_{x,v}}\|h\|_{H^{N-j}_x \dot{H}^{j}_{\epsilon,q+\gamma/2}} \|\pa_{\beta}^{\alpha}f\|_{H^{0}_xH^{0}_{\epsilon,q+\gamma/2}} \\&&+
(\mathrm{1}_{j \geq 1} \mathrm{1}_{|\beta_{2}| < |\beta|} + C_{q})\|g\|_{H^{4}_{x,v}} \|h\|_{H^{N-j}_x H^{j}_{q}}\|\pa_{\beta}^{\alpha}f\|_{H^{0}_xH^{0}_{\epsilon,q+\gamma/2}}
\\&& +
C_{q}\|g\|_{H^{4}_{x,v}} \|h\|_{H^{4}_{x,v}} \|\pa_{\beta}^{\alpha}f\|_{H^{0}_xH^{0}_{\epsilon,q+\gamma/2}}
\\&\lesssim& \|g\|_{H^{4}_{x,v}} \|h\|_{H^{N-j}_x \dot{H}^{j}_{\epsilon,q+\gamma/2}} \|\pa_{\beta}^{\alpha}f\|_{H^{0}_xH^{0}_{\epsilon,q+\gamma/2}} +
\\&&+ \delta \|\pa_{\beta}^{\alpha}f\|^{2}_{H^{0}_xH^{0}_{\epsilon,q+\gamma/2}} + \delta^{-1}C_{l} \|g\|^{2}_{H^{4}_{x,v}} \mathcal{E}^{4,l}(h),
\eeno
where the facts $\|h\|_{H^{N-j}_x H^{j}_{q}}^{2} \leq \mathcal{E}^{4,l}(h)$ and  $\|h\|^{2}_{H^{4}_{x,v}} \leq \mathcal{E}^{4,l}(h)$ are used.
Since $N=4$, the constants $C(\alpha_{1},\alpha_{2},\beta_{0},\beta_{1},\beta_{2})$  in \eqref{derivative-commutator} are universally bounded.
Taking sum over according to \eqref{derivative-commutator}, we get \eqref{derivative-commutator-N-4-l-large}.

\underline{\it Case 3: $N\ge 5, l\ge 3N+2$.} When $N \geq 5$, since $|\alpha_{2}|+a_{2}+|\beta_{2}| \leq |\alpha|+|\beta| - b_{2} \leq N-1$,
we have
\ben \label{N-geq-5-g-leq-4}
|( W_{q}\Gamma^\epsilon(\pa_{\beta_{1}}^{\alpha_{1}} g, \pa_{\beta_{2}}^{\alpha_{2}} h), W_{q}\pa_{\beta}^{\alpha}f)| &\lesssim&
C_{q}\|g\|_{H^{4}_{x,v}} \left(\mathcal{D}_{\epsilon}^{N-1,l}(h)\right)^{1/2}\|\pa_{\beta}^{\alpha}f\|_{H^{0}_xH^{0}_{\epsilon,q+\gamma/2}}
\\&&+
C_{q}\|g\|_{H^{4}_{x,v}} \|h\|_{H^{N}_{x,v}} \|\pa_{\beta}^{\alpha}f\|_{H^{0}_xH^{0}_{\epsilon,q+\gamma/2}}
\nonumber \\&\lesssim& \delta \|\pa_{\beta}^{\alpha}f\|^{2}_{H^{0}_xH^{0}_{\epsilon,q+\gamma/2}}
+ \delta^{-1}C_{l}\|g\|^{2}_{H^{4}_{x,v}}\mathcal{D}_{\epsilon}^{N-1,l}(h) + \delta^{-1}C_{l}\|g\|^{2}_{H^{4}_{x,v}}\|h\|^{2}_{H^{N}_{x,v}}.
\nonumber
\een

If $|\alpha_{1}|+|\beta_{1}| \geq 5$, which occurs only when $N\geq 5$, with the choice of $(a_{1}, a_{2}, b_{1}, b_{2})$ in the last line of Table \ref{parameter-2}, we have
$|\alpha_{1}|+a_{1}+|\beta_{1}|+b_{1} \leq N, |\alpha_{2}|+a_{2}+|\beta_{2}|+b_{2}\leq |\alpha|+|\beta|-1 \leq N-1, |\beta_{2}| \leq |\beta|$
and thus
\ben \label{N-geq-5-g-geq-5}
|( W_{q}\Gamma^\epsilon(\pa_{\beta_{1}}^{\alpha_{1}} g, \pa_{\beta_{2}}^{\alpha_{2}} h), W_{q}\pa_{\beta}^{\alpha}f)|&\lesssim&
C_{q}\|g\|_{H^{N}_{x,v}} \left(\mathcal{D}_{\epsilon}^{N-1,l}(h)\right)^{1/2}\|\pa_{\beta}^{\alpha}f\|_{H^{0}_xH^{0}_{\epsilon,q+\gamma/2}}
\\&\lesssim& \delta \|\pa_{\beta}^{\alpha}f\|^{2}_{H^{0}_xH^{0}_{\epsilon,q+\gamma/2}}
+ \delta^{-1}C_{l}\|g\|^{2}_{H^{N}_{x,v}}\mathcal{D}_{\epsilon}^{N-1,l}(h) . \nonumber\een

Taking sum over according to \eqref{derivative-commutator}, by \eqref{N-geq-5-g-leq-4} for the case of $|\alpha_{1}|+|\beta_{1}| \leq 4$ and \eqref{N-geq-5-g-geq-5} for the case of $|\alpha_{1}|+|\beta_{1}| \geq 5$,
 we get \eqref{derivative-commutator-N-geq-5}. We remark that the sum will bring a constant depending on $N$ due to the constants $C(\alpha_{1},\alpha_{2},\beta_{0},\beta_{1},\beta_{2})$. However, thanks to the arbitrariness of $\delta$, only the latter two terms in
 \eqref{derivative-commutator-N-geq-5} depend on $N$.

We turn to the case when $q=0$. By Theorem \ref{Gamma-full-up-bound}, a counterpart to \eqref{HNGamma1-continue} is
\ben\label{HNGamma1-continue-no-weight-version}
|( \Gamma^\epsilon(\pa^{\alpha_{1}} g, \pa^{\alpha_{2}} h), \pa^{\alpha}f)|&\lesssim&
\|g\|_{H^{|\alpha_{1}|+a_{1}}_xH^{0}}\|h\|_{H^{|\alpha_{2}|+a_{2}}_xH^{0}_{\epsilon,\gamma/2}}
\|\pa^{\alpha}f\|_{H^{0}_xH^{0}_{\epsilon,\gamma/2}}
\nonumber
\\&&+\|\mu^{1/16}g\|_{H^{|\alpha_{1}|+a_{1}}_xH^{b_{1}}}\|\mu^{1/32}h\|_{H^{|\alpha_{2}|+a_{2}}_xH^{b_{2}}}
\|\pa^{\alpha}f\|_{H^{0}_xH^{0}_{\epsilon,\gamma/2}},
\een
 which gives
\eqref{derivative-commutator-N-4-no-weight} and \eqref{derivative-commutator-N-geq-5-no-weight} in a similar way.
\end{proof}

\begin{lem}\label{non-linear-term} Let $0<\eta, \epsilon<1$ and $g, f$ be  suitable  functions with $\mu^{1/2}+g \geq 0$. Recall the definition of $\mathcal{A}^{N,j,l}_{\epsilon}$ in \eqref{A-N-j-l}.
The following three statements hold true.
\begin{enumerate}
\item  If  $N=4, l=14$, then
\ben  \label{N-equals-4-x-v-weight-15}
\mathcal{A}^{N,j,l}_{\epsilon}(g,f) \leq C(\eta^{1/2}+\epsilon^{1/2} +\eta^{-6}\|g\|_{H^{4}_{x,v}})
\mathcal{D}_{\epsilon}^{4,14}(f).
\een
 \item  If $N=4, l>14$,  then for any $\delta>0$,
\ben \label{N-equals-4-x-v-weight-larger}
\mathcal{A}^{N,j,l}_{\epsilon}(g,f) &\leq& C(\delta + \eta^{1/2}+\epsilon^{1/2} +\eta^{-6}\|g\|_{H^{4}_{x,v}}) \|f\|^{2}_{H^{N-j}_{x}\dot{H}^{j}_{\epsilon,l+j\gamma+\gamma/2}}
\\&&+ \delta^{-1} C_{l}\|g\|^{2}_{H^{4}_{x,v}} \|f\|^{2}_{H^{N-j}_{x}\dot{H}^{j}_{l+j\gamma+\gamma/2}} +
\delta^{-1}C_{l} \|g\|^{2}_{H^{4}_{x,v}} \mathcal{E}^{4,l}(f). \nonumber \een
\item   If $N\geq 5, l\geq 3N+2$,  then for any $\delta>0$,
\ben \label{N-geq-5-x-v-weight-larger}
\mathcal{A}^{N,j,l}_{\epsilon}(g,f) &\leq& C(\delta+\eta^{1/2}+\epsilon^{1/2} +\eta^{-6}\|g\|_{H^{4}_{x,v}}) \|f\|^{2}_{H^{N-j}_{x}\dot{H}^{j}_{\epsilon,l+j\gamma+\gamma/2}}+\delta^{-1} C_{N,l}\|g\|^{2}_{H^{4}_{x,v}}
\\&& \times\|f\|^{2}_{H^{N-j}_{x}\dot{H}^{j}_{l+j\gamma+\gamma/2}}  + \delta^{-1}C_{N,l}\|g\|^{2}_{H^{N}_{x,v}}\mathcal{D}_{\epsilon}^{N-1,l}(f)
+ \delta^{-1}C_{N,l}\|g\|^{2}_{H^{4}_{x,v}}\|f\|^{2}_{H^{N}_{x,v}}.
\nonumber
\een
\end{enumerate}

Recall the definition of $\mathcal{B}^{N}_{\epsilon}$ in \eqref{B-N-j-0}.
The following two statements hold true.
\begin{enumerate}
\item If $N=4$, then
\ben \label{N-equals-4-x-v-no-weight}
\mathcal{B}^{N}_{\epsilon}(g,f) \leq C(\eta^{1/2}+\epsilon^{1/2} +\eta^{-6}\|g\|_{H^{4}_{x,v}})
\mathcal{D}_{\epsilon}^{4}(f).
\een
\item If $N\geq 5$, then for any $\delta>0$,
\ben \label{N-geq-5-x-v-no-weight}
\mathcal{B}^{N}_{\epsilon}(g,f) &\leq& C(\delta+\eta^{1/2}+\epsilon^{1/2} +\eta^{-6}\|g\|_{H^{4}_{x,v}}) \|f\|^{2}_{H^{N}_{x} H^{0}_{\epsilon,\gamma/2}}
\\&&+ C_{N}\delta^{-1}\|g\|^{2}_{H^{N}_{x,v}}\mathcal{D}_{\epsilon}^{N-1}(f)
+ C_{N}\delta^{-1}\|g\|^{2}_{H^{4}_{x,v}}\|f\|^{2}_{H^{N}_{x,v}} \nonumber
.\een
\end{enumerate}

We emphasize that $C$ is a universal constant independent of $N,l$.
\end{lem}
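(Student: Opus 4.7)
The strategy is to reduce each inner product to four pieces that have already been controlled in earlier sections. Writing $q := l+j\gamma$ and fixing $\alpha, \beta$ with $|\alpha| \leq N-j$, $|\beta|=j$, I would first split off the derivative commutator,
\[
\pa^{\alpha}_{\beta}\Gamma^{\epsilon}(g,f) = \Gamma^{\epsilon}(g, \pa^{\alpha}_{\beta}f) + [\pa^{\alpha}_{\beta}, \Gamma^{\epsilon}(g,\cdot)]f,
\]
and then for the principal piece split once more,
\[
W_{q}\Gamma^{\epsilon}(g, \pa^{\alpha}_{\beta}f) = \Gamma^{\epsilon}(g, W_{q}\pa^{\alpha}_{\beta}f) + [W_{q}, \Gamma^{\epsilon}(g,\cdot)]\pa^{\alpha}_{\beta}f.
\]
The two commutator contributions are dispatched immediately: the derivative commutator, which by \eqref{alpha-beta-on-Gamma} is a sum of $\Gamma^{\epsilon}(\pa^{\alpha_{1}}_{\beta_{1}}g, \pa^{\alpha_{2}}_{\beta_{2}}f;\beta_{0})$ with $|\alpha_{2}+\beta_{2}|<|\alpha+\beta|$, is controlled in the three regimes by the bounds \eqref{derivative-commutator-N-4-l-14}, \eqref{derivative-commutator-N-4-l-large}, \eqref{derivative-commutator-N-geq-5} of Lemma \ref{derivative-commutator-lemma}; the weight commutator is controlled by Theorem \ref{commutator-Gamma-geq-eta}, producing (after absorption via Young's inequality into $\delta$ and into the $(\eta^{-6}\|g\|_{H^{4}_{x,v}})$ factor) exactly the non-dissipative $C_{l}\|g\|^{2}_{H^{4}_{x,v}}\mathcal{E}^{4,l}(f)$ and $C_{N,l}\|g\|^{2}_{H^{N}_{x,v}}\mathcal{D}_{\epsilon}^{N-1,l}(f)$ terms in the conclusions.

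What remains is the crucial expression
\[
\langle \Gamma^{\epsilon}(g, W_{q}\pa^{\alpha}_{\beta}f) - \mathcal{L}^{\epsilon,\gamma}_{\eta} W_{q}\pa^{\alpha}_{\beta}f,\; W_{q}\pa^{\alpha}_{\beta}f\rangle.
\]
Using the splitting \eqref{Gamma-ep-ga-sep-eta} we write $\Gamma^{\epsilon}(g,\cdot)=\Gamma^{\epsilon,\gamma,\eta}(g,\cdot)+\Gamma^{\epsilon,\gamma}_{\eta}(g,\cdot)$. The regular-region piece $\langle \Gamma^{\epsilon,\gamma,\eta}(g, W_{q}\pa^{\alpha}_{\beta}f), W_{q}\pa^{\alpha}_{\beta}f\rangle$ is dominated by Theorem \ref{upGammagh-geq-eta} (with $\delta=1/2$), which supplies the $\eta^{\gamma-3}\|g\|_{H^{4}_{x,v}}\|f\|^{2}_{H^{N-j}_{x}\dot{H}^{j}_{\epsilon,q+\gamma/2}}$ contribution hidden inside the prefactor $\eta^{-6}\|g\|_{H^{4}_{x,v}}$. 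The singular-region combination
\[
\langle \Gamma^{\epsilon,\gamma}_{\eta}(g, W_{q}\pa^{\alpha}_{\beta}f) - \mathcal{L}^{\epsilon,\gamma}_{\eta} W_{q}\pa^{\alpha}_{\beta}f,\; W_{q}\pa^{\alpha}_{\beta}f\rangle
\]
is precisely the quasilinear object of Theorem \ref{small-part-L+gamma}: applied with $f\mapsto g$ and $h\mapsto W_{q}\pa^{\alpha}_{\beta}f$, and using the hypothesis $\mu^{1/2}+g\geq 0$, it is bounded by $(\eta^{1/2}+\epsilon^{1/2})(1+\|g\|_{H^{4}_{x,v}})\|W^{\epsilon}(D)\mu^{1/16}W_{q}\pa^{\alpha}_{\beta}f\|^{2}_{L^{2}}$, which fits inside the dissipation $\|f\|^{2}_{H^{N-j}_{x}\dot{H}^{j}_{\epsilon,q+\gamma/2}}$ after invoking Lemma \ref{equivalence} to commute $W^{\epsilon}(D)$ past the weight $\mu^{1/16}W_{q}$.

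Summing over $|\alpha|\leq N-j$, $|\beta|=j$ and collecting the smallness factors $\eta^{1/2}$, $\epsilon^{1/2}$ together with the nonlinear coefficient $\eta^{-6}\|g\|_{H^{4}_{x,v}}$ then yields the three stated bounds \eqref{N-equals-4-x-v-weight-15}, \eqref{N-equals-4-x-v-weight-larger}, \eqref{N-geq-5-x-v-weight-larger} for $\mathcal{A}^{N,j,l}_{\epsilon}$. The treatment of $\mathcal{B}^{N}_{\epsilon}$ is identical upon taking the weight to be unity: the weight commutator disappears entirely, the derivative commutator is bounded by \eqref{derivative-commutator-N-4-no-weight} or \eqref{derivative-commutator-N-geq-5-no-weight}, and the regular/singular splitting with Theorem \ref{upGammagh-geq-eta} and Theorem \ref{small-part-L+gamma} proceeds unchanged to produce \eqref{N-equals-4-x-v-no-weight}, \eqref{N-geq-5-x-v-no-weight}.

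The main technical obstacle to watch is that Theorem \ref{small-part-L+gamma} imposes a sign condition on the first argument slot of $\Gamma^{\epsilon,\gamma}_{\eta}$, namely $\mu^{1/2}+g\geq 0$, which is an assumption on $g$ itself and cannot be expected to survive differentiation or multiplication by the polynomial weight $W_{q}$. The whole decomposition must therefore be arranged so that the original $g$ stays in the first slot at the moment the quasilinear estimate is invoked; every operation that would move $\pa^{\alpha}_{\beta}$ or $W_{q}$ off of $f$ and onto $g$ must be diverted into the commutator branches treated by Lemma \ref{derivative-commutator-lemma} and Theorem \ref{commutator-Gamma-geq-eta}. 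This bookkeeping is what forces the three-tiered statement of the lemma (with universal constants for $N=4$, $l=14$, and weight- or $N$-dependent constants otherwise).
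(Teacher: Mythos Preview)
Your proposal is correct and follows essentially the same route as the paper: the same four-piece decomposition \eqref{decomposition-Gamma}, the same invocation of Theorem \ref{small-part-L+gamma} for the singular quasilinear piece (with the sign hypothesis on $g$ preserved), Theorem \ref{upGammagh-geq-eta} for the regular piece, Theorem \ref{commutator-Gamma-geq-eta} for the weight commutator, and Lemma \ref{derivative-commutator-lemma} for the derivative commutator. One minor bookkeeping slip: the $\mathcal{E}^{4,l}(f)$ and $\mathcal{D}_{\epsilon}^{N-1,l}(f)$ contributions in the final bounds actually originate from the \emph{derivative} commutator (Lemma \ref{derivative-commutator-lemma}), while the weight commutator only produces the $\|f\|^{2}_{H^{N-j}_{x}\dot{H}^{j}_{l+j\gamma+\gamma/2}}$ terms; but this does not affect the argument.
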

\begin{proof}   Let $q=l+j\gamma$.
Note that
a typical term in $\mathcal{A}^{N,j,l}_{\epsilon}(g,f)$ is $(W_{q}\pa^{\alpha}_{\beta}\Gamma^{\epsilon}(g,f) - \mathcal{L}^{\epsilon,\gamma}_{\eta}W_{q}\pa^{\alpha}_{\beta}f, W_{q}\pa^{\alpha}_{\beta}f)$ for some fixed $\alpha,\beta$ such that $|\alpha| \leq N-j, |\beta|=j$. We make the following decomposition,
\ben \label{decomposition-Gamma} W_{q}\pa^{\alpha}_{\beta}\Gamma^{\epsilon}(g,f) &=& W_{q}\Gamma^{\epsilon}(g,\pa^{\alpha}_{\beta}f)
+W_{q}[\pa^{\alpha}_{\beta},\Gamma^{\epsilon}(g,\cdot)]f
\nonumber \\&=& \Gamma^{\epsilon}(g,W_{q}\pa^{\alpha}_{\beta}f) + [W_{q},\Gamma^{\epsilon}(g,\cdot)]\pa^{\alpha}_{\beta}f
+W_{q}[\pa^{\alpha}_{\beta},\Gamma^{\epsilon}(g,\cdot)]f
\nonumber \\&=& \Gamma^{\epsilon,\gamma}_{\eta}(g,W_{q}\pa^{\alpha}_{\beta}f) + \Gamma^{\epsilon,\gamma,\eta}(g,W_{q}\pa^{\alpha}_{\beta}f) +
[W_{q},\Gamma^{\epsilon}(g,\cdot)]\pa^{\alpha}_{\beta}f
+W_{q}[\pa^{\alpha}_{\beta},\Gamma^{\epsilon}(g,\cdot)]f.
\een

\underline{\it Estimate of $\Gamma^{\epsilon,\gamma}_{\eta}(g,W_{q}\pa^{\alpha}_{\beta}f) - \mathcal{L}^{\epsilon,\gamma}_{\eta}W_{q}\pa^{\alpha}_{\beta}f$.}
Since $\mu^{1/2}+g \geq 0$, taking $\delta=1/2$ in Theorem \ref{small-part-L+gamma}, we have
\ben \label{key-term-novel}
(\Gamma^{\epsilon,\gamma}_{\eta}(g,W_{q}\pa^{\alpha}_{\beta}f) - \mathcal{L}^{\epsilon,\gamma}_{\eta}W_{q}\pa^{\alpha}_{\beta}f, W_{q}\pa^{\alpha}_{\beta}f) &\leq& C(\eta^{1/2}+\epsilon^{1/2})\int_{\mathbb{T}^{3}} (1+|\mu^{1/16}g|_{H^{2}})|W_{q}\pa^{\alpha}_{\beta}f|^{2}_{\epsilon,\gamma/2} dx
\nonumber \\&\leq&C(\eta^{1/2}+\epsilon^{1/2})\|\pa^{\alpha}_{\beta}f\|^{2}_{L^{2}_{x}L^{2}_{\epsilon,q+\gamma/2}}
\nonumber\\&&+ C(\eta^{1/2}+\epsilon^{1/2}) \|\mu^{1/16}g\|_{H^{2}_{x}H^{2}} \|\pa^{\alpha}_{\beta}f\|^{2}_{L^{2}_{x}L^{2}_{\epsilon,q+\gamma/2}}
\nonumber\\&\leq& C(\eta^{1/2}+\epsilon^{1/2} +\|g\|_{H^{4}_{x,v}})\|\pa^{\alpha}_{\beta}f\|^{2}_{L^{2}_{x}L^{2}_{\epsilon,q+\gamma/2}}.
\een

\underline{\it{Estimate of $\Gamma^{\epsilon,\gamma,\eta}(g,W_{q}\pa^{\alpha}_{\beta}f)$.}}
Taking $\delta=1/2, s_{1}=2, s_{2}=0$ in Theorem \ref{upGammagh-geq-eta}, by the embedding $H^{2}_{x} \rightarrow L^{\infty}_{x}$,
we have
\ben \label{regular-term}
|(\Gamma^{\epsilon,\gamma,\eta}(g,W_{q}\pa^{\alpha}_{\beta}f), W_{q}\pa^{\alpha}_{\beta}f)| &\leq& C\eta^{-6}\int_{\mathbb{T}^{3}}  |f|_{H^{2}}|W_{q}\pa^{\alpha}_{\beta}f|^{2}_{\epsilon,\gamma/2} dx
\nonumber \\&\leq& C \eta^{-6} \|g\|_{H^{4}_{x,v}} \|\pa^{\alpha}_{\beta}f\|^{2}_{L^{2}_{x}L^{2}_{\epsilon,q+\gamma/2}}.
\een

\underline{\it{Estimate of  $[W_{q},\Gamma^{\epsilon}(g,\cdot)]\pa^{\alpha}_{\beta}f$.}}
Taking $\delta=1/2, s_{3}=1, s_{4}=0$ in Theorem \ref{commutator-Gamma-geq-eta}, by the embedding $H^{2}_{x} \rightarrow L^{\infty}_{x}$, we have
\beno |([W_{q},\Gamma^{\epsilon}(g,\cdot)]\pa^{\alpha}_{\beta}f, W_{q}\pa^{\alpha}_{\beta}f)| &\leq& \eta^{-6}C_{l}\int
|g|_{L^{2}}|W_{q+\gamma/2}\pa^{\alpha}_{\beta}f|_{L^{2}}|\pa^{\alpha}_{\beta}f|_{\epsilon,q+\gamma/2} dx
\\&&+\eta^{1/2}C_{l}\int|g|_{H^{1}}|\pa^{\alpha}_{\beta}f|^{2}_{\epsilon,\gamma/2} dx
\\&\leq& C_{l}\|g\|_{H^{4}_{x,v}}  \|\pa^{\alpha}_{\beta}f\|_{L^{2}_{x}L^{2}_{q+\gamma/2}} \|\pa^{\alpha}_{\beta}f\|_{L^{2}_{x}L^{2}_{\epsilon,q+\gamma/2}} +
\|g\|_{H^{4}_{x,v}} \|\pa^{\alpha}_{\beta}f\|^{2}_{L^{2}_{x}L^{2}_{\epsilon,\gamma/2}},
\eeno
where we choose $\eta$ such that $\eta^{1/2}C_{l}=1$ and then $\eta^{-6}C_{l}$ is a constant depending only on $l$.
When $N=4, l=14$, the constant $C_{l}$ is a universal constant, which gives
\ben \label{weight-commutator-N-4-l-15}
|([W_{q},\Gamma^{\epsilon}(g,\cdot)]\pa^{\alpha}_{\beta}f, W_{q}\pa^{\alpha}_{\beta}f)| \lesssim
\|g\|_{H^{4}_{x,v}} \|\pa^{\alpha}_{\beta}f\|_{L^{2}_{x}L^{2}_{\epsilon,q+\gamma/2}}^{2}.
\een
When $N=4, l>14$ or $N \geq 5, l \geq 3N+2$,  we get
\ben \label{weight-commutator-N-4-l-large-or-N-geq-5}
|([W_{q},\Gamma^{\epsilon}(g,\cdot)]\pa^{\alpha}_{\beta}f, W_{q}\pa^{\alpha}_{\beta}f)|
&\lesssim& (\delta + \|g\|_{H^{4}_{x,v}})\|\pa^{\alpha}_{\beta}f\|^{2}_{L^{2}_{x}L^{2}_{\epsilon,q+\gamma/2}} + \delta^{-1} C_{l}\|g\|^{2}_{H^{4}_{x,v}} \|\pa^{\alpha}_{\beta}f\|^{2}_{L^{2}_{x}L^{2}_{q+\gamma/2}}.
\een

The last term $W_{q}[\pa^{\alpha}_{\beta},\Gamma^{\epsilon}(g,\cdot)]f$ in \eqref{decomposition-Gamma} is handled in Lemma \ref{derivative-commutator-lemma} by \eqref{derivative-commutator-N-4-l-14}, \eqref{derivative-commutator-N-4-l-large} and \eqref{derivative-commutator-N-geq-5}. From which together with
\eqref{key-term-novel}, \eqref{regular-term}, \eqref{weight-commutator-N-4-l-15} and \eqref{weight-commutator-N-4-l-large-or-N-geq-5}
, we get the desired results  \eqref{N-equals-4-x-v-weight-15}, \eqref{N-equals-4-x-v-weight-larger} and \eqref{N-geq-5-x-v-weight-larger}. The estimates  \eqref{N-equals-4-x-v-no-weight} and \eqref{N-geq-5-x-v-no-weight} of
 $\mathcal{B}^{N}_{\epsilon}(g,f)$ can be derived similarly, so we omit the details and end the proof of the lemma.
\end{proof}

\begin{lem}\label{pure-x-term-from-I} Let $ \epsilon\ge0 $ be small enough. Recall the definition of $\mathcal{C}^{N}_{\epsilon}$ in \eqref{C-N-epsilon-0}. The following two statements hold true.
\begin{enumerate}
\item If $N=4$, then
\ben \label{only-x-with-mu-type-N-equals-4}
\mathcal{C}^{N}_{\epsilon}(g,f) \leq C\|g\|^{2}_{H^{4}_{x,v}} \mathcal{D}_{\epsilon}^{4}(f).
\een
\item If $N\geq 5$, then
\ben \label{only-x-with-mu-type-N-bigger-5}
\sum_{|\alpha| \leq N-1}\sum_{j=1}^{13}
\int_{\mathbb{T}^{3}}|\langle  \pa^{\alpha}\Gamma^{\epsilon}(g,f), e_j\rangle|^{2} dx \leq C_{N}(\|g\|^{2}_{H^{N}_{x,v}}\mathcal{D}_{\epsilon}^{N-1}(f)
+ \mathcal{D}_{\epsilon}^{N-1}(g)\|f\|^{2}_{H^{N}_{x,v}}).
\een
\end{enumerate}
\end{lem}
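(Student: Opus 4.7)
My plan is to exploit the fact that each test function $e_j$ has the form $e_j = P_j(v)\mu^{1/2}$ with $P_j$ a polynomial of degree at most $3$, so $e_j$ is Schwartz. This yields a much stronger upper bound on $|\langle \Gamma^\epsilon(g,h), e_j\rangle|$ than the general estimates in Theorem \ref{Gamma-full-up-bound}, in particular one completely free of $v$-derivatives on $g$ and $h$. Concretely, since $\mu^{-1/2}e_j = P_j$, the usual weak formulation of $Q^\epsilon$ yields
\begin{equation*}
\langle \Gamma^\epsilon(g,h), e_j\rangle = \int B^\epsilon (\mu^{1/2})_* g_*\, \mu^{1/2}(v) h(v)\,[P_j(v')-P_j(v)]\,dv\,dv_*\,d\sigma.
\end{equation*}
The core ingredient is the Taylor expansion $P_j(v')-P_j(v) = \nabla P_j(v)\cdot(v'-v) + R(v,v',v_*)$ with remainder $|R| \lesssim |v'-v|^2(\langle v\rangle+\langle v_*\rangle)\lesssim \theta^2|v-v_*|^2(\langle v\rangle+\langle v_*\rangle)$. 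After $\sigma$-averaging the first-order piece via identity \eqref{dispear2}, it gains a $\sin^2(\theta/2)$ weight; the remainder already carries $\theta^2$. By Lemma \ref{integral-angular-function} both $\int b^\epsilon\sin^2(\theta/2)\,d\sigma$ and $\int b^\epsilon\theta^2\,d\sigma$ are bounded uniformly in $\epsilon$. Combined with $|\nabla P_j|\lesssim\langle v\rangle^2$, the Gaussian decay of $\mu^{1/2}(\mu^{1/2})_*$, and the uniform-in-$v_*$ bound $\int |v-v_*|^{-2}\langle v\rangle^2\mu^{1/2}(v)\,dv\leq C$, Cauchy--Schwarz gives the key specialized estimate
\begin{equation*}
|\langle \Gamma^\epsilon(g,h), e_j\rangle| \lesssim |\mu^{1/4} g|_{L^2_v} |\mu^{1/4} h|_{L^2_v}.
\end{equation*}

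Applying Leibniz and this estimate fiberwise in $x$, for any $\alpha_1+\alpha_2=\alpha$ with $|\alpha|\leq N-1$,
\begin{equation*}
|\langle \partial^\alpha \Gamma^\epsilon(g,f), e_j\rangle(x)| \lesssim \sum_{\alpha_1+\alpha_2=\alpha} |\mu^{1/4}\partial^{\alpha_1}g(x,\cdot)|_{L^2_v}\, |\mu^{1/4}\partial^{\alpha_2}f(x,\cdot)|_{L^2_v}.
\end{equation*}
Squaring, integrating over $\mathbb{T}^3$, and placing the Sobolev embedding $L^\infty_x(\mathbb{T}^3)\hookleftarrow H^2_x(\mathbb{T}^3)$ on the factor with fewer $x$-derivatives, together with the absorption $|\mu^{1/4}w|_{L^2_v}^2\lesssim |w|^2_{\epsilon,\gamma/2}$ (converting the $f$-factor into a $\mathcal{D}_\epsilon^{\bullet}$ contribution), then closes both cases. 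For $N=4$ one has $\min(|\alpha_1|,|\alpha_2|)\leq 1$: if $|\alpha_1|\leq 2$ I put Sobolev on $g$ to get $\|g\|^2_{H^{|\alpha_1|+2}_{x,v}}\leq\|g\|^2_{H^4_{x,v}}$ paired with $\mathcal{D}_\epsilon^{|\alpha_2|}(f)$; and if $|\alpha_1|=3$, forcing $|\alpha_2|=0$, I put Sobolev on $f$ to get $\|g\|^2_{H^3_{x,v}}$ paired with $\mathcal{D}_\epsilon^2(f)$. Both bounds are dominated by $\|g\|^2_{H^4_{x,v}}\mathcal{D}_\epsilon^4(f)$, giving \eqref{only-x-with-mu-type-N-equals-4}. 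For $N\geq 5$ the same redistribution through Sobolev produces either $\|g\|^2_{H^N_{x,v}}\mathcal{D}^{N-1}_\epsilon(f)$ or the symmetric $\mathcal{D}^{N-1}_\epsilon(g)\|f\|^2_{H^N_{x,v}}$ depending on which side absorbs the two extra $x$-derivatives, summing to \eqref{only-x-with-mu-type-N-bigger-5}.

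The main obstacle is establishing the specialized estimate in step one uniformly in $\epsilon$: although the angular singularity at $\theta = 0$ is borderline, the polynomial nature of $P_j$ supplies enough smoothness so that, after the $\sigma$-average of the first-order Taylor term, the surviving angular factor is exactly $\sin^2(\theta/2)$ or $\theta^2$, which is uniformly integrable against $b^\epsilon$. A subsidiary care point is the uniform-in-$v_*$ potential-type bound $\int |v-v_*|^{-2}\langle v\rangle^2\mu^{1/2}(v)\,dv\leq C$, handled by splitting $|v-v_*|\lessgtr|v_*|/2$ and using local integrability of $|v-v_*|^{-2}$ in dimension three near the singularity and the Gaussian decay of $\mu^{1/2}$ at infinity.
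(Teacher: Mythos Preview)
Your proposal is correct and takes a genuinely different route from the paper. The paper's proof invokes the general upper bound of Theorem~\ref{Gamma-full-up-bound} on $\langle\Gamma^{\epsilon}(g,h),f\rangle$ with the test function $f=e_j$; that bound carries a term $|\mu^{1/16}g|_{H^{s_3}}|\mu^{1/16}h|_{H^{1+s_4}}$ with positive $v$-regularity on the inputs, and the subsequent distribution of $x$- and $v$-derivatives is handled via the parameter choices of Table~\ref{parameter-2}. Your approach bypasses this machinery entirely by exploiting the special structure of the test functions $e_j=P_j\mu^{1/2}$: since $P_j$ is a polynomial of degree at most $3$, the weak formulation, Taylor expansion of $P_j$, and the symmetry identity \eqref{dispear2} combine to produce the sharper dedicated estimate $|\langle\Gamma^\epsilon(g,h),e_j\rangle|\lesssim|\mu^{1/4}g|_{L^2_v}|\mu^{1/4}h|_{L^2_v}$, free of any $v$-derivatives on $g,h$. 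The Schur-type bound on the kernel $|v-v_*|^{-2}\langle v\rangle^2\mu^{1/4}(v)\mu^{1/4}(v_*)$ that you need follows exactly as you sketch. This makes the $x$-distribution step trivial (only Sobolev in $x$ is required), whereas the paper must still track $v$-derivative terms. Your argument is more elementary and self-contained for this particular lemma; the paper's route is more economical in the sense that it simply re-uses already-established general estimates rather than deriving a new specialized one.
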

\begin{proof}
Thanks to Theorem \ref{Gamma-full-up-bound}, for $a_{1}, a_{2}, b_{1}, b_{2} \in \{0,1,2\}$ with $a_{1}+a_{2}=b_{1}+b_{2}=2$ and $b_2\ge1$,
when $N=4$, we have
\beno
\int_{\mathbb{T}^{3}}|\langle  \Gamma^{\epsilon}(\pa^{\alpha_{1}} g, \pa^{\alpha_{2}}f), e_j\rangle|^{2} dx &\lesssim&
\|g\|^{2}_{H^{|\alpha_{1}|+a_{1}}_xH^{0}}\|f\|^{2}_{H^{|\alpha_{2}|+a_{2}}_xH^{0}_{\epsilon,\gamma/2}}
\\&&+\|\mu^{1/16}g\|^{2}_{H^{|\alpha_{1}|+a_{1}}_xH^{b_{1}}}\|\mu^{1/32}f\|^{2}_{H^{|\alpha_{2}|+a_{2}}_xH^{b_{2}}}
\lesssim
 \|g\|^{2}_{H^{4}_{x,v}} \mathcal{D}_{\epsilon}^{4}(f).
\eeno
Similarly, when $N\geq 5$, we have
\beno
\int_{\mathbb{T}^{3}}|\langle  \Gamma^{\epsilon}(\pa^{\alpha_{1}} g, \pa^{\alpha_{2}}f), e_j\rangle|^{2} dx &\lesssim&
\|g\|^{2}_{H^{|\alpha_{1}|+a_{1}}_xH^{0}}\|f\|^{2}_{H^{|\alpha_{2}|+a_{2}}_xH^{0}_{\epsilon,\gamma/2}}
 \\&&+\|\mu^{1/16}g\|^{2}_{H^{|\alpha_{1}|+a_{1}}_xH^{b_{1}}}\|\mu^{1/32}f\|^{2}_{H^{|\alpha_{2}|+a_{2}}_xH^{b_{2}}}
\\&\lesssim&  \|g\|^{2}_{H^{N}_{x,v}}\mathcal{D}_{\epsilon}^{N-1}(f)
+ \mathcal{D}_{\epsilon}^{N-1}(g)\|f\|^{2}_{H^{N}_{x,v}}.
\eeno
Here in both cases, we use Table \ref{parameter-2} for the choice of $a_{1}, a_{2}, b_{1}, b_{2}$.
\end{proof}

Now we are ready to prove Theorem \ref{a-priori-estimate-LBE}.
\begin{proof}[Proof of Theorem \ref{a-priori-estimate-LBE}]
Taking $g = \Gamma^{\epsilon}(f^{\epsilon},f^{\epsilon})$ in Proposition \ref{essential-estimate-of-micro-macro}, for $0< \eta \leq \eta_{0}, 0 \leq  \epsilon \leq \epsilon_{1}$, we have
\beno
\frac{d}{dt}\Xi^{N,l}(f^{\epsilon}) +  \frac{1}{4} \mathcal{D}_{\epsilon}^{N,l}(f^{\epsilon}) \leq \sum_{j=0}^{N}2K_{j} \mathcal{A}^{N,j,l}_{\epsilon}(f^{\epsilon},f^{\epsilon})
+2 L \mathcal{B}^{N}_{\epsilon}(f^{\epsilon},f^{\epsilon})+M C \mathcal{C}^{N}_{\epsilon}(f^{\epsilon},f^{\epsilon}), \eeno

 \underline{\it Case 1: $N=4, l=14$.}  In this case, the constants $M, L, K_{j}$ are universal.
Then by \eqref{N-equals-4-x-v-weight-15} and \eqref{N-equals-4-x-v-no-weight} in Lemma \ref{non-linear-term},
 and \eqref{only-x-with-mu-type-N-equals-4} in Lemma \ref{pure-x-term-from-I}, and the natural inequality
 $\mathcal{D}_{\epsilon}^{4}(f^{\epsilon}) \leq \mathcal{D}_{\epsilon}^{4,14}(f^{\epsilon})$,
 we have
\beno
\frac{d}{dt}\Xi^{4,14}(f^{\epsilon}) +  \frac{1}{4} \mathcal{D}_{\epsilon}^{4,14}(f^{\epsilon}) \leq
 C(\eta^{1/2}+\epsilon^{1/2} +\eta^{-6}\|f^{\epsilon}\|_{H^{4}_{x,v}} + \|f^{\epsilon}\|^{2}_{H^{4}_{x,v}})
\mathcal{D}_{\epsilon}^{4,14}(f^{\epsilon}).
\eeno
Let $\eta_{1}$ verify $C \eta_{1}^{1/2} = 1/32$. Let $\epsilon_{2}$ verify $C\epsilon_{2}^{1/2} = 1/32$. Let $\delta_{1}$ to be the largest number satisfying $C \eta_{2}^{-6} \delta_{1}^{1/2} \leq 1/32$ and $C\delta_{1} \leq 1/32$.
We choose $\eta= \min\{\eta_{0},\eta_{1}\}$. When $\epsilon \leq \min\{\epsilon_{1},\epsilon_{2}\}$, under the assumption $\sup_{0 \leq t \leq T}\mathcal{E}^{4,14}(f^{\epsilon}(t)) \leq \delta_{1}$, since $\|f^{\epsilon}\|^{2}_{H^{4}_{x,v}} \leq \mathcal{E}^{4,14}(f^{\epsilon})$, we have
\ben \label{N=4-l=14-diff}
\frac{d}{dt}\Xi^{4,14}(f^{\epsilon}) +  \frac{1}{8} \mathcal{D}_{\epsilon}^{4,14}(f^{\epsilon}) \leq 0.
\een
We emphasize that when $N=4, l=14$, the constants $C_{0}$ in \eqref{energy-equivalence} is universal. Therefore
we get \eqref{uniform-estimate-propagation-N-4-l-15} from \eqref{N=4-l=14-diff}.

 \underline{\it Case 2: $N=4, l>14$.}  In this case, the constants $M, L, K_{j}$ could depend on $l$.
Then by  \eqref{N-equals-4-x-v-weight-larger} and \eqref{N-equals-4-x-v-no-weight} in Lemma \ref{non-linear-term}
 and \eqref{only-x-with-mu-type-N-equals-4} in Lemma \ref{pure-x-term-from-I}, and the natural inequality
 $\|f^{\epsilon}\|^{2}_{H^{4}_{x,v}}  \leq \mathcal{D}_{\epsilon}^{4,14}(f^{\epsilon})$,
 we have
\beno
\frac{d}{dt}\Xi^{4,l}(f^{\epsilon}) +  \frac{1}{4} \mathcal{D}_{\epsilon}^{4,l}(f^{\epsilon}) &\leq&
M C \|f^{\epsilon}\|^{2}_{H^{4}_{x,v}} \mathcal{D}_{\epsilon}^{4}(f^{\epsilon}) + C_{l}(\eta^{1/2}+\epsilon^{1/2} +\eta^{-6}\|f^{\epsilon}\|_{H^{4}_{x,v}})
\mathcal{D}_{\epsilon}^{4}(f^{\epsilon})
\\&&+ \sum_{j=0}^{N}2 K_{j} C(\delta+\eta^{1/2}+\epsilon^{1/2} +\eta^{-6}\|f^{\epsilon}\|_{H^{4}_{x,v}}) \|f^{\epsilon}\|^{2}_{H^{N-j}_{x}\dot{H}^{j}_{\epsilon,l+j\gamma+\gamma/2}}
\\&&+ \delta^{-1}  C_{l}\mathcal{D}_{\epsilon}^{4,14}(f^{\epsilon}) \mathcal{E}^{4,l}(f^{\epsilon}).
\eeno
We take $\delta, \eta_{2}, \epsilon_{3}, \delta_{2}$ such that $2C\delta = \lambda_{0}/64$, $2C \eta_{2}^{1/2}= \lambda_{0}/64$, $2C \epsilon_{3}^{1/2}= \lambda_{0}/64,$ $2C \eta_{2}^{-6} \delta_{2}^{1/2}= \lambda_{0}/64$. We choose $\eta= \min\{\eta_{0},\eta_{1}, \eta_{2}\}$. When $\epsilon \leq \min\{\epsilon_{1},\epsilon_{2},\epsilon_{3}\}$, under the assumption $\sup_{0 \leq t \leq T}\mathcal{E}^{4,14}(f^{\epsilon}(t)) \leq \min\{\delta_{1},\delta_{2}\}$, since $\|f^{\epsilon}\|^{2}_{H^{4}_{x,v}} \leq \mathcal{E}^{4,14}(f^{\epsilon})$, we have
$2 C(\delta+\eta_{3}^{1/2}+\epsilon^{1/2} +\eta_{3}^{-6}\|f^{\epsilon}\|_{H^{4}_{x,v}}) \leq \lambda_{0}/16$. Recalling the definition of $\mathcal{D}_{\epsilon}^{N,l}$
in \eqref{lower-bound-of-D}, we get
\beno
\frac{d}{dt}\Xi^{4,l}(f^{\epsilon}) +  \frac{1}{8} \mathcal{D}_{\epsilon}^{4,l}(f^{\epsilon}) \leq
C_{l}\mathcal{D}_{\epsilon}^{4}(f^{\epsilon}) +  C_{l}\mathcal{D}_{\epsilon}^{4,14}(f^{\epsilon}) \mathcal{E}^{4,l}(f^{\epsilon}).
\eeno
By Grownwall inequality,  we arrive at
\ben \label{a-priori-estimate-n-4-l-large} \Xi^{4,l}(f^{\epsilon}(t))+ \frac{1}{8}  \int_{0}^{t} \mathcal{D}_{\epsilon}^{4,l}(f^{\epsilon}(s)) ds &\leq& (\Xi^{4,l}(f_{0}) + C_{l} \int_{0}^{t} \mathcal{D}_{\epsilon}^{4}(f^{\epsilon}(s)) ds)\exp\big(C_{l}\int_{0}^{t}\mathcal{D}_{\epsilon}^{4,14}(f^{\epsilon}(s))ds\big)
\\&\leq& (\Xi^{4,l}(f_{0}) + C_{l} C \mathcal{E}^{4,14}(f_{0})  )  \exp\left(C_{l} C \mathcal{E}^{4,14}(f_{0})  \right)
\leq C_{l}\Xi^{4,l}(f_{0}). \nonumber \een
where we use $\mathcal{D}_{\epsilon}^{4}(f^{\epsilon}) \leq \mathcal{D}_{\epsilon}^{4,14}(f^{\epsilon})$, and
$\int_{0}^{t}\mathcal{D}_{\epsilon}^{4,14}(f^{\epsilon}(s))ds \leq C \mathcal{E}^{4,14}(f_{0})$ by  the proved result \eqref{uniform-estimate-propagation-N-4-l-15}, and $\mathcal{E}^{4,14}(f_{0}) \leq \Xi^{4,l}(f_{0})$, and the assumption $\mathcal{E}^{4,14}(f_{0}) \leq \delta_{1} \lesssim 1$.
Then by \eqref{energy-equivalence}, we get \eqref{uniform-estimate-propagation-N-4-l-big}.

 \underline{\it Case 3: $N\geq 5, l \geq 3N+2$.}  In this case, the constants $M, L, K_{j}$ could depend on $N, l$.
Then by  \eqref{N-geq-5-x-v-weight-larger} and \eqref{N-geq-5-x-v-no-weight} in Lemma \ref{non-linear-term}
 and \eqref{only-x-with-mu-type-N-bigger-5} in Lemma \ref{pure-x-term-from-I}, and the inequalities $\|f^{\epsilon}\|^{2}_{H^{4}_{x,v}}  \leq \mathcal{D}_{\epsilon}^{N-1,l}(f^{\epsilon}),  \mathcal{D}_{\epsilon}^{N-1}(f^{\epsilon})  \leq \mathcal{D}_{\epsilon}^{N-1,l}(f^{\epsilon}), \|f^{\epsilon}\|^{2}_{H^{N}_{x,v}}  \leq \mathcal{E}_{\epsilon}^{N,l}(f^{\epsilon})$,
we have
\beno
\frac{d}{dt}\Xi^{N,l}(f^{\epsilon}) +  \frac{1}{4} \mathcal{D}_{\epsilon}^{N,l}(f^{\epsilon})
&\leq& 2C(\delta+\eta^{1/2}+\epsilon^{1/2} +\eta^{-6}\|f^{\epsilon}\|_{H^{4}_{x,v}})
 L \|f^{\epsilon}\|^{2}_{H^{N}_{x} H^{0}_{\epsilon,\gamma/2}}
\\&&+ 2 C(\delta+\eta^{1/2}+\epsilon^{1/2} +\eta^{-6}\|f^{\epsilon}\|_{H^{4}_{x,v}}) \sum_{j=0}^{N} K_{j}  \|f^{\epsilon}\|^{2}_{H^{N-j}_{x}\dot{H}^{j}_{\epsilon,q+\gamma/2}}
\\&& + \delta^{-1}C_{N,l} \mathcal{D}_{\epsilon}^{N-1,l}(f^{\epsilon}) \mathcal{E}^{N,l}(f^{\epsilon}).
\eeno
We take $\delta, \eta_{3}, \epsilon_{4}, \delta_{3}$ such that $2C\delta = c_{1}\lambda_{0}/64$, $2C \eta_{3}^{1/2}= c_{1}\lambda_{0}/64$, $2C \epsilon_{4}^{1/2}= c_{1}\lambda_{0}/64,$ $2C \eta_{3}^{-6} \delta_{3}^{1/2}= c_{1}\lambda_{0}/64$. We choose $\eta= \min_{0 \le i\le 3}\eta_i$. Let $\delta_0=\min_{1\le i\le 3}\delta_i$ and $\epsilon_0 =\min_{2\le i\le 4}\epsilon_i$.
When $\epsilon \leq \epsilon_{0}$, under the assumption $\sup_{0 \leq t \leq T}\mathcal{E}^{4,14}(f^{\epsilon}(t)) \leq \delta_0$, since $\|f^{\epsilon}\|^{2}_{H^{4}_{x,v}} \leq \mathcal{E}^{4,14}(f^{\epsilon})$, we have
$2 C(\delta+\eta_{3}^{1/2}+\epsilon^{1/2} +\eta_{3}^{-6}\|f^{\epsilon}\|_{H^{4}_{x,v}}) \leq c_{1}\lambda_{0}/16$. Recalling the definition of $\mathcal{D}_{\epsilon}^{N,l}$
in \eqref{lower-bound-of-D}, we conclude that for any $N\geq 5, l \geq 3N+2$, there holds
\ben \label{n-geq-5-l-big}
\frac{d}{dt}\Xi^{N,l}(f^{\epsilon}) +  \frac{1}{8} \mathcal{D}_{\epsilon}^{N,l}(f^{\epsilon}) \leq
C_{N,l} \mathcal{D}_{\epsilon}^{N-1,l}(f^{\epsilon}) \mathcal{E}^{N,l}(f^{\epsilon}).
\een

In the following we use mathematical induction to finish the proof. Suppose for some $k \geq 4$, \eqref{uniform-estimate-propagation-N-geq-5-l-big} is valid for $N=k$, that is,
\ben \label{a-priori-estimate-n=k} \mathcal{E}^{k,l}(f^{\epsilon}(t)) +  \int_{0}^{t}\mathcal{D}_{\epsilon}^{k,l}(f^{\epsilon}(s))ds \leq P_{k,l}(\mathcal{E}^{k,l}(f_{0})). \een
Then for $N=k+1 \geq 5, l \geq 3N+2$, by \eqref{n-geq-5-l-big}, we get
\ben \label{a-priori-estimate-n=k+1} \frac{d}{dt}\Xi^{k+1,l}(f^{\epsilon})+ \frac{1}{8} \mathcal{D}_{\epsilon}^{k+1,l}(f^{\epsilon}) \leq C_{k+1,l} \mathcal{D}_{\epsilon}^{k,l}(f^{\epsilon}) \mathcal{E}^{k+1,l}(f^{\epsilon}). \een
Now since $\int_{0}^{t}\mathcal{D}_{\epsilon}^{k,l}(f^{\epsilon}(s))ds \leq P_{k,l}(\mathcal{E}^{k,l}(f_{0}))$ by \eqref{a-priori-estimate-n=k} and $\mathcal{E}^{k+1,l}(f^{\epsilon}) \leq \Xi^{k+1,l}(f^{\epsilon})$, by Gronwall's inequality, we arrive at
\ben \label{a-priori-estimate-n=k+12} \Xi^{k+1,l}(f^{\epsilon}(t))+ \frac{1}{8}  \int_{0}^{t} \mathcal{D}_{\epsilon}^{k+1,l}(f^{\epsilon}(t)) dt &\leq& \Xi^{k+1,l}(f_{0})\exp\big(C_{k+1,l}\int_{0}^{t}\mathcal{D}_{\epsilon}^{k,l}(f^{\epsilon}(s))ds\big)
\\&\leq& \Xi^{k+1,l}(f_{0})\exp\left(C_{k+1,l}P_{k,l}(\mathcal{E}^{k,l}(f_{0}))\right). \nonumber \een
Then by the equivalence relation \eqref{energy-equivalence}, we have
\ben \label{a-priori-estimate-n=k+13} \mathcal{E}^{k+1,l}(f^{\epsilon}(t)) +  \int_{0}^{t}\mathcal{D}_{\epsilon}^{k+1,l}(f^{\epsilon}(s))ds
&\leq& C_{k+1,l}\mathcal{E}^{k+1,l}(f_{0})\exp\left(C_{k+1,l}P_{k,l}(\mathcal{E}^{k+1,l}(f_{0}))\right)
\\ &:=& P_{k+1,l}(\mathcal{E}^{k+1,l}(f_{0})). \nonumber
\een
That is, we get \eqref{uniform-estimate-propagation-N-geq-5-l-big} for the case $N =  k+1, l \geq 3N+2$. Starting from $P_{4,l}(x) = C_{l}x$, we can define $P_{N,l}(x) := C_{N,l}x \exp\left(C_{N,l}P_{N-1,l}(x)\right)$ in a iterating manner for $N \geq 5$.
\end{proof}

\begin{proof}[Proof of Theorem \ref{asymptotic-result}(global well-posedness and regularity propagation)] We remind readers that local well-posedness of the equation and the non-negativity of $\mu+\mu^{\f12}f$ were proved in \cite{he2014well}. Thanks to Theorem \ref{a-priori-estimate-LBE}, the standard continuity argument yields the global well-posedness result \eqref{uniform-controlled-by-initial}. The propagation of regularity result \eqref{propagation} is a direct consequence of Theorem \ref{a-priori-estimate-LBE}.
\end{proof}

\subsection{Asymptotic formula for the limit} We want to prove \eqref{error-function-uniform-estimate} in this subsection.    Let $f^{\epsilon}$ and $f$ be the solutions to  \eqref{linearizedBE} and \eqref{linearizedLE} respectively with the initial data $f_0$. Set $F^{\epsilon}_{R} :=  |\ln \epsilon| (f^{\epsilon}-f)$, then it solves
\ben \label{error-equation} \partial_{t}F^{\epsilon}_{R} + v \cdot \nabla_{x} F^{\epsilon}_{R} + \mathcal{L}F^{\epsilon}_{R} = |\ln \epsilon|[(\mathcal{L}-\mathcal{L}^{\epsilon})f^{\epsilon}+(\Gamma^{\epsilon}-\Gamma)(f^{\epsilon},f)]
+\Gamma^{\epsilon}(f^{\epsilon},F^{\epsilon}_{R})+\Gamma(F^{\epsilon}_{R},f). \een
We will apply Proposition  \ref{essential-estimate-of-micro-macro} to the above equation for $F^{\epsilon}_{R}$.
For notational brevity, we set
\ben \label{three-terms}
G_{1} = |\ln \epsilon|[(\mathcal{L}-\mathcal{L}^{\epsilon})f^{\epsilon}+(\Gamma^{\epsilon}-\Gamma)(f^{\epsilon},f)],\quad G_{2}=\Gamma^{\epsilon}(f^{\epsilon},F^{\epsilon}_{R}), \quad G_{3}= \Gamma(F^{\epsilon}_{R},f).
\een

When $N \geq 4, \eta=\epsilon=0$, by applying Proposition  \ref{essential-estimate-of-micro-macro} with $g = G_{1}+G_{2}+G_{3}$, since
$|\langle \pa^{\alpha}g, e_j\rangle|^{2} \leq 3(|\langle \pa^{\alpha}G_{1}, e_j\rangle|^{2} + |\langle \pa^{\alpha}G_{2}, e_j\rangle|^{2} +|\langle \pa^{\alpha}G_{3}, e_j\rangle|^{2})$,
we have
\ben \label{essential-micro-macro-error-function-2} \frac{d}{dt}\Xi^{N,l}(F^{\epsilon}_{R}) +  \frac{1}{4} \mathcal{D}_{0}^{N,l}(F^{\epsilon}_{R}) &\leq& 3 M C \sum_{i=1}^{3}\sum_{|\alpha| \leq N-1}\sum_{j=1}^{13}
\int_{\mathbb{T}^{3}}|\langle \pa^{\alpha}G_{i}, e_j\rangle|^{2} dx
+ \sum_{i=1}^{3} 2 L\sum_{|\alpha|\leq N}(\pa^{\alpha}G_{i}, \pa^{\alpha}F^{\epsilon}_{R}) \nonumber \\&&+ \sum_{i=1}^{3}\sum_{j=0}^{N}2K_{j}\sum_{|\alpha|\leq N-j,|\beta|=j}(W_{l+j\gamma}\pa^{\alpha}_{\beta}G_{i}, W_{l+j\gamma}\pa^{\alpha}_{\beta}F^{\epsilon}_{R})\nonumber
\\&=& 3 MC \left(\mathcal{X}^{N}(G_{1}) + \mathcal{C}^{N}_{\epsilon}(f^{\epsilon},F^{\epsilon}_{R}) + \mathcal{C}^{N}_{0}(F^{\epsilon}_{R},f) \right)
\nonumber\\&&+ 2 L \left(\mathcal{V}^{N}(G_{1}) + \mathcal{Z}^{N}_{\epsilon}(f^{\epsilon},F^{\epsilon}_{R},F^{\epsilon}_{R}) + \mathcal{Z}^{N}_{0}(F^{\epsilon}_{R},f,F^{\epsilon}_{R}) \right)
\nonumber \\&&+
 \sum_{j=0}^{N} 2K_{j} \left(\mathcal{W}^{N,j,l}(G_{1}) + \mathcal{Y}^{N,j,l}_{\epsilon}(f^{\epsilon},F^{\epsilon}_{R},F^{\epsilon}_{R}) + \mathcal{Y}^{N,j,l}_{0}(F^{\epsilon}_{R},f,F^{\epsilon}_{R}) \right)
 \een
where for $\epsilon \geq 0$ and general functions $g,h,f$, we define
\ben \label{mathcal-X-pure-x}
\mathcal{X}^{N}(h) := \sum_{|\alpha| \leq N-1}\sum_{j=1}^{13} \int_{\mathbb{T}^{3}}|\langle \pa^{\alpha}h, e_j\rangle|^{2} dx
\\ \label{mathcal-V-no-weight-up}
\mathcal{V}^{N}(h) := \sum_{|\alpha|\leq N}(\pa^{\alpha}h, \pa^{\alpha}F^{\epsilon}_{R})
\\\label{mathcal-Z-no-weight-up} \mathcal{Z}^{N}_{\epsilon}(g,h,f):= \sum_{|\alpha|\leq N}
(\pa^{\alpha}\Gamma^{\epsilon}(g,h), \pa^{\alpha}f).
\\\label{mathcal-W-no-weight-up}
\mathcal{W}^{N,j,l}(h) := \sum_{|\alpha|\leq N-j,|\beta|=j}(W_{l+j\gamma}\pa^{\alpha}_{\beta}h, W_{l+j\gamma}\pa^{\alpha}_{\beta}F^{\epsilon}_{R})
\\\label{mathcal-Y-no-weight-up}
\mathcal{Y}^{N,j,l}_{\epsilon}(g,h,f):= \sum_{|\alpha|\leq N-j,|\beta|=j}
(W_{l+j\gamma}\pa^{\alpha}_{\beta}\Gamma^{\epsilon}(g,h), W_{l+j\gamma}\pa^{\alpha}_{\beta}f).
\een

In order to further analyze \eqref{essential-micro-macro-error-function-2}, we need to estimate the nine terms on the right hand side.
Note that the functional $ \mathcal{C}^{N}_{\epsilon}$ is already handled in Lemma \ref{pure-x-term-from-I}. We will deal with the functionals
$\mathcal{Z}^{N}_{\epsilon}$ and $\mathcal{Y}^{N,j,l}_{\epsilon}$ in Lemma \ref{non-linear-estimate-for-error}, functional $\mathcal{X}^{N}$ in Lemma \ref{difference-terms}, functionals $\mathcal{V}^{N}$ and $\mathcal{W}^{N,j,l}$ in Lemma \ref{difference-terms-2}.

\begin{lem}\label{non-linear-estimate-for-error}
Let $\epsilon \geq 0$. Let $g, h, f$ be suitable functions.
Recall the definition of $\mathcal{Y}^{N,j,l}_{\epsilon}$ in \eqref{mathcal-Y-no-weight-up}. The following three statements hold true.
\begin{enumerate}
\item If $N=4, l=14$, we have
\ben  \label{g-h-f-N-equals-4-x-v-weight-15}
\mathcal{Y}^{N,j,l}_{\epsilon}(g,h,f) \leq  C\|g\|_{H^{4}_{x,v}} \left(\mathcal{D}_{0}^{4,14}(h)\right)^{1/2}\left(\mathcal{D}_{\epsilon}^{4,14}(f)\right)^{1/2}
.
\een
\item If $N=4, l>14$, we have for any $\delta>0$,
\ben \label{g-h-f-N-equals-4-x-v-weight-larger}
\mathcal{Y}^{N,j,l}_{\epsilon}(g,h,f) &\leq& C\|g\|_{H^{4}_{x,v}} \|h\|_{H^{N-j}_{x}\dot{H}^{j}_{0,l+j\gamma+\gamma/2}}\|f\|_{H^{N-j}_{x}\dot{H}^{j}_{\epsilon,l+j\gamma+\gamma/2}}
+\delta\|f\|^{2}_{H^{N-j}_{x}\dot{H}^{j}_{\epsilon,l+j\gamma+\gamma/2}}
\\&&    +  \delta^{-1} C_{l} \|g\|^{2}_{H^{4}_{x,v}}\mathcal{E}^{4,l}(h). \nonumber \een
\item If $N\geq 5, l\geq 3N+2$, we have for any $\delta>0$,
\ben \label{g-h-f-N-geq-5-x-v-weight-larger}
\mathcal{Y}^{N,j,l}_{\epsilon}(g,h,f) &\leq& C\|g\|_{H^{4}_{x,v}} \|h\|_{H^{N-j}_{x}\dot{H}^{j}_{0,l+j\gamma+\gamma/2}}\|f\|_{H^{N-j}_{x}\dot{H}^{j}_{\epsilon,l+j\gamma+\gamma/2}}
+\delta\|f\|^{2}_{H^{N-j}_{x}\dot{H}^{j}_{\epsilon,l+j\gamma+\gamma/2}}
\\&&  +  \delta^{-1} C_{N,l} \|g\|^{2}_{H^{4}_{x,v}}\|h\|^{2}_{H^{N-j}_{x}\dot{H}^{j}_{l+j\gamma+\gamma/2}}
\nonumber \\&&  + \delta^{-1}C_{N,l}\|g\|^{2}_{H^{N}_{x,v}}\mathcal{D}_{\epsilon}^{N-1,l}(h) + \delta^{-1}C_{N,l}\|g\|^{2}_{H^{4}_{x,v}}\|h\|^{2}_{H^{N}_{x,v}}.
\nonumber
\een \end{enumerate}
Recall the definition of $\mathcal{Z}^{N}_{\epsilon}$ in \eqref{mathcal-Z-no-weight-up}.
The following two statements hold true.
\begin{enumerate}
\item If $N=4$, then
\ben \label{g-h-f-N-equals-4-x-v-no-weight}
|\mathcal{Z}^{N}_{\epsilon}(g,h,f)| \leq
C\|g\|_{H^{4}_{x,v}} (\mathcal{D}_{0}^{4}(h))^{1/2} (\mathcal{D}_{\epsilon}^{4}(f))^{1/2}
.
\een

\item If $N\geq5$, then for any $\delta>0$,
\ben \label{g-h-f-N-geq-5-x-v-no-weight}
|\mathcal{Z}^{N}_{\epsilon}(g,h,f)| &\leq&
C\|g\|_{H^{4}_{x,v}} \|h\|_{H^{N}_{x}H^{0}_{0,\gamma/2}}\|f\|_{H^{N}_{x}H^{0}_{\epsilon,\gamma/2}}
+\delta\|f\|^{2}_{H^{N}_{x}H^{0}_{\epsilon,\gamma/2}}
\\&&+ \delta^{-1}C_{N}\|g\|^{2}_{H^{N}_{x,v}}\mathcal{D}_{\epsilon}^{N-1}(h)
+ \delta^{-1}C_{N}\|g\|^{2}_{H^{4}_{x,v}}\|h\|^{2}_{H^{N}_{x,v}}. \nonumber
\een
\end{enumerate}

We emphasize that $C$ is a universal constant independent of $N,l$.
\end{lem}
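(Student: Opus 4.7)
The proof will mirror that of Lemma~\ref{non-linear-term}, the key simplification being that here $g$, $h$, $f$ are three independent functions, so the quasilinear-type cancellation estimate of Theorem~\ref{small-part-L+gamma} (which exploited $h=f$ together with the non-negativity $\mu^{1/2}+g\ge 0$) is no longer needed. In its place I would use the direct upper bounds from Theorem~\ref{Gamma-full-up-bound} and Corollary~\ref{Gamma-full-up-bound-with-weight}, which are available without any sign condition on $g$.

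For $\mathcal{Y}^{N,j,l}_\epsilon(g,h,f)$, setting $q=l+j\gamma$, I would start from the decomposition analogous to \eqref{decomposition-Gamma},
\begin{equation*}
W_q \pa^\alpha_\beta \Gamma^\epsilon(g,h) \;=\; \Gamma^\epsilon(g, W_q\pa^\alpha_\beta h) \;+\; [W_q,\Gamma^\epsilon(g,\cdot)]\pa^\alpha_\beta h \;+\; W_q[\pa^\alpha_\beta,\Gamma^\epsilon(g,\cdot)]h,
\end{equation*}
and pair each piece with $W_q\pa^\alpha_\beta f$. The principal term is controlled by Theorem~\ref{Gamma-full-up-bound} applied with $\delta=1/2$, placing two derivatives and weight on $g$; this delivers a factor of the form $\|g\|_{H^{4}_{x,v}}\,\|\pa^\alpha_\beta h\|_{L^2_xL^2_{\epsilon,q+\gamma/2}}\,\|\pa^\alpha_\beta f\|_{L^2_xL^2_{\epsilon,q+\gamma/2}}$, which I then rewrite using $|\cdot|_{\epsilon,\cdot}\le C|\cdot|_{0,\cdot}$ (consequence of $W^\epsilon\le\langle\cdot\rangle$) so that $h$ carries the Landau-style anisotropic norm that appears in the statement. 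The weight commutator $[W_q,\Gamma^\epsilon(g,\cdot)]$ is handled by Theorem~\ref{commutator-Gamma-geq-eta} (choosing $\eta$ so that $\eta^{1/2}C_l\sim 1$), at the cost of a constant $C_l$ that is universal when $l=14$ and gives the $C_l$-prefactor in the $l>14$ cases.

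The derivative commutator $W_q[\pa^\alpha_\beta,\Gamma^\epsilon(g,\cdot)]h$ is exactly what Lemma~\ref{derivative-commutator-lemma} was built for, and its conclusions \eqref{derivative-commutator-N-4-l-14}, \eqref{derivative-commutator-N-4-l-large}, \eqref{derivative-commutator-N-geq-5} can be imported verbatim, generating precisely the three cases $(N=4,l=14)$, $(N=4,l>14)$, $(N\ge 5,l\ge 3N+2)$ in the statement; the Sobolev interpolation bookkeeping (which derivative sits on $g$, which on $h$, and which factor gets the $L^\infty_x$ placement) is identical to Table~\ref{parameter-2}. For the unweighted functional $\mathcal{Z}^N_\epsilon(g,h,f)$ the proof is strictly easier: only the binomial expansion \eqref{alpha-beta-on-Gamma} in $x$ is needed, and each resulting piece is bounded by Theorem~\ref{Gamma-full-up-bound} together with the unweighted estimates \eqref{derivative-commutator-N-4-no-weight}, \eqref{derivative-commutator-N-geq-5-no-weight} of Lemma~\ref{derivative-commutator-lemma}, yielding \eqref{g-h-f-N-equals-4-x-v-no-weight} and \eqref{g-h-f-N-geq-5-x-v-no-weight}.

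The main obstacle, though conceptually minor, is the asymmetric norm assignment: the stated bound puts $h$ in the Landau-type dissipation norm $H^{N-j}_x\dot H^j_{0,l+j\gamma+\gamma/2}$ while $f$ sits in the Boltzmann-type norm $H^{N-j}_x\dot H^j_{\epsilon,l+j\gamma+\gamma/2}$, whereas Theorem~\ref{Gamma-full-up-bound} naturally produces $|\cdot|_{\epsilon,\gamma/2}$ on both. The remedy is the monotonicity $|h|_{\epsilon,q+\gamma/2}\le C|h|_{0,q+\gamma/2}$, which matches the structure needed when the lemma is later applied with $(g,h,f)=(F^\epsilon_R,f,F^\epsilon_R)$: the middle slot $h$ is the Landau solution whose only available dissipation is the Landau one, while the Boltzmann dissipation is reserved for $f=F^\epsilon_R$ so that it can be absorbed into $\mathcal{D}^{N,l}_\epsilon(F^\epsilon_R)$. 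A second, purely bookkeeping, obstacle for $N\ge 5$ is that the binomial constants $C(\alpha_1,\alpha_2,\beta_0,\beta_1,\beta_2)$ depend on $N$, so the small parameter $\delta$ used to absorb $\|\pa^\alpha_\beta f\|^2_{L^2_xL^2_{\epsilon,q+\gamma/2}}$ into the left-hand side must be chosen \emph{after} summing over all multi-indices; this is already internalized in Lemma~\ref{derivative-commutator-lemma} and simply needs to be propagated through.
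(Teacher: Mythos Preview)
Your proposal is correct and follows essentially the same approach as the paper. The only cosmetic difference is that the paper uses a two-term decomposition
\[
W_q\pa^{\alpha}_{\beta}\Gamma^{\epsilon}(g,h) = W_q\Gamma^{\epsilon}(g,\pa^{\alpha}_{\beta}h) + W_q[\pa^{\alpha}_{\beta},\Gamma^{\epsilon}(g,\cdot)]h,
\]
and handles the first term in one stroke by invoking the pre-packaged weighted estimate \eqref{version-2-dissipation-landau} of Corollary~\ref{Gamma-full-up-bound-with-weight} (which already delivers the asymmetric norm assignment $|h|_{0,q+\gamma/2}$ versus $|f|_{\epsilon,q+\gamma/2}$), whereas you split this further into the principal term plus the weight commutator and recover the same asymmetry via the monotonicity $|h|_{\epsilon,\cdot}\le|h|_{0,\cdot}$; since \eqref{version-2-dissipation-landau} is itself derived from Theorem~\ref{Gamma-full-up-bound} and Theorem~\ref{commutator-Gamma-geq-eta} using exactly that monotonicity, the two routes coincide.
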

\begin{proof}
A typical term in $\mathcal{Y}^{N,j,l}_{\epsilon}(g,h,f)$ is $(W_{l+j\gamma}\pa^{\alpha}_{\beta}\Gamma^{\epsilon}(g,h), W_{l+j\gamma}\pa^{\alpha}_{\beta}f)$ for some fixed $\alpha,\beta$ such that $|\alpha|\leq N-j,|\beta|=j$. For simplicity, let $q=l+j\gamma$.
We use
\beno W_{q}\pa^{\alpha}_{\beta}\Gamma^{\epsilon}(g,h) = W_{q}\Gamma^{\epsilon}(g,\pa^{\alpha}_{\beta}h)
+W_{q}[\pa^{\alpha}_{\beta},\Gamma^{\epsilon}(g,\cdot)]h.
\eeno
Since  $(W_{q}[\pa^{\alpha}_{\beta},\Gamma^{\epsilon}(g,\cdot)]h, W_{q}\pa_{\beta}^{\alpha}f)$ is handled in Lemma \ref{derivative-commutator-lemma}, we only need to focus on the first term.

By \eqref{version-2-dissipation-landau} in Corollary \ref{Gamma-full-up-bound-with-weight}, we have
\beno |\langle W_{q}\Gamma^\epsilon (g, \pa^{\alpha}_{\beta}h), W_{q}\pa_{\beta}^{\alpha}f\rangle| \lesssim
|g|_{H^{2}}|\pa^{\alpha}_{\beta}h|_{0,q+\gamma/2}|\pa_{\beta}^{\alpha}f|_{\epsilon,q+\gamma/2} +
C_{l}|g|_{H^{2}}|\pa_{\beta}^{\alpha}h|_{L^{2}_{q+\gamma/2}}|\pa_{\beta}^{\alpha}f|_{\epsilon,q+\gamma/2}.\eeno
Then by the imbedding $H^{2}_{x} \rightarrow L^{\infty}_{x}$, we get
\beno
|(W_{q}\Gamma^{\epsilon}(g,\pa^{\alpha}_{\beta}h), W_{q}\pa_{\beta}^{\alpha}f)| &\lesssim& \|g\|_{H^{4}_{x,v}}\|\pa_{\beta}^{\alpha}h\|_{H^{0}_xH^{0}_{0,q+\gamma/2}}\|\pa_{\beta}^{\alpha}f\|_{H^{0}_xH^{0}_{\epsilon,q+\gamma/2}}
\\&&+
C_{l}\|g\|_{H^{4}_{x,v}}\|\pa_{\beta}^{\alpha}h\|_{H^{0}_xH^{0}_{q+\gamma/2}}\|\pa_{\beta}^{\alpha}f\|_{H^{0}_xH^{0}_{\epsilon,q+\gamma/2}}
.
\eeno

  When $N=4, l=14$, since $C_{l}$ is a universal constant, we have
\beno
|(W_{q}\Gamma^{\epsilon}(g,\pa^{\alpha}_{\beta}h), W_{q}\pa_{\beta}^{\alpha}f)| \lesssim
\|g\|_{H^{4}_{x,v}} \left(\mathcal{D}_{0}^{4,14}(h)\right)^{1/2}\left(\mathcal{D}_{\epsilon}^{4,14}(f)\right)^{1/2}
.
\eeno
From which together with \eqref{derivative-commutator-N-4-l-14} in Lemma \ref{derivative-commutator-lemma}, we get
\eqref{g-h-f-N-equals-4-x-v-weight-15}.
When $N=4, l>14$ or $N\geq 5, l\geq3N+2$, one has
\beno
|(W_{q}\Gamma^{\epsilon}(g,\pa^{\alpha}_{\beta}h), W_{q}\pa_{\beta}^{\alpha}f)| &\lesssim& \|g\|_{H^{4}_{x,v}}\|\pa_{\beta}^{\alpha}h\|_{H^{0}_xH^{0}_{0,q+\gamma/2}}\|\pa_{\beta}^{\alpha}f\|_{H^{0}_xH^{0}_{\epsilon,q+\gamma/2}}+
\delta  \|\pa_{\beta}^{\alpha}f\|^{2}_{H^{0}_xH^{0}_{\epsilon,q+\gamma/2}}
\\&&+ \delta^{-1}C_{l}\|g\|^{2}_{H^{4}_{x,v}}\|\pa_{\beta}^{\alpha}h\|^{2}_{H^{0}_xH^{0}_{q+\gamma/2}}
.
\eeno
From which together with \eqref{derivative-commutator-N-4-l-large} and \eqref{derivative-commutator-N-geq-5} in Lemma \ref{derivative-commutator-lemma}, we get  \eqref{g-h-f-N-equals-4-x-v-weight-larger}
and  \eqref{g-h-f-N-geq-5-x-v-weight-larger}.

For
 $\mathcal{Z}^{N}_{\epsilon}(g,h,f)$, it is not difficult to copy the above argument to get the desired result.
\end{proof}

We recall an estimate on the operator $\Gamma-\Gamma^{\epsilon}$, which can be derived similarly as in \cite{zhou2020refined}.
\begin{lem}\label{estimate-operator-difference} There holds
\beno|\langle W_{q}(\Gamma-\Gamma^{\epsilon})(g,h), f \rangle| \lesssim C_{q}|\ln \epsilon|^{-1}|\mu^{1/32}g|_{H^{3}}|h|_{H^{3}_{q+15/2}}|f|_{L^{2}_{-3/2}}.\eeno
\end{lem}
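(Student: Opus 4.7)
\smallskip
\noindent\textbf{Proof proposal for Lemma \ref{estimate-operator-difference}.}

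The plan is to exploit the fact that the Landau operator $Q^{L}$ is precisely the second-order Taylor term one obtains when expanding $Q^{\epsilon}$ in the grazing-collision limit, so the difference $Q^{\epsilon}-Q^{L}$ is governed by cubic-and-higher order remainders in the deviation $v'-v$, which when integrated against $b^{\epsilon}$ pick up the crucial factor $|\ln\epsilon|^{-1}$. Concretely, pulling out $\mu^{1/2}$, the quantity $W_{q}(\Gamma^{\epsilon}-\Gamma)(g,h)$ equals $W_{q}\mu^{-1/2}(Q^{\epsilon}-Q^{L})(\mu^{1/2}g,\mu^{1/2}h)$, and I will analyze the right-hand side directly on the level of $Q^{\epsilon}-Q^{L}$.

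The first step is the standard Taylor expansion: write
\beno
h(v')-h(v)=\nabla h(v)\cdot(v'-v)+\tfrac12\nabla^{2}h(v):(v'-v)^{\otimes 2}+R_{h}(v,v',v_{*}),
\eeno
and similarly for $g(v'_{*})-g(v_{*})$, where the remainder $R_{h}$ is of third order in $|v'-v|=|v-v_{*}|\sin(\theta/2)$. Substituting these expansions into $Q^{\epsilon}(g,h)$, the zeroth-order terms cancel, the first-order terms vanish after integrating $\sigma$ over $\SS^{2}$ by symmetry (using $\int b^{\epsilon}(v'-v)d\sigma$ reduces to a multiple of $(v_{*}-v)$ only through $\sin^{2}(\theta/2)$, see (\ref{dispear2})), and the quadratic terms combine to produce the Landau-type convolution matrix with kernel proportional to $\int b^{\epsilon}(\cos\theta)\sin^{2}(\theta/2)\,d\sigma$. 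By Lemma \ref{integral-angular-function} this integral is $8\pi+O(|\ln\epsilon|^{-1})$, so the quadratic contribution matches $Q^{L}(g,h)$ up to a perturbation of the same form but multiplied by $|\ln\epsilon|^{-1}$; in particular the \emph{leading} quadratic term is exactly $Q^{L}$.

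Thus $Q^{\epsilon}(g,h)-Q^{L}(g,h)$ is the sum of (i) the discrepancy in the quadratic piece, which is $O(|\ln\epsilon|^{-1})$ times a Landau-like operator, and (ii) the genuine cubic remainders involving $R_{h}$ and $R_{g_{*}}$. For (ii) I intend to bound the remainder trilinear form by Cauchy--Schwarz combined with the key angular identity $\int b^{\epsilon}(\cos\theta)\sin^{4}(\theta/2)\,d\sigma\lesssim|\ln\epsilon|^{-1}$, which follows from the same computation as (\ref{order-2}) with an extra $\sin^{2}(\theta/2)$ (integrating $t^{-1}$ over $[\epsilon,\sqrt2/2]$ yields $|\ln\epsilon|$, cancelled by the $|\ln\epsilon|^{-1}$ prefactor of $b^{\epsilon}$). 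This is the mechanism by which the accuracy $|\ln\epsilon|^{-1}$ appears, consistent with the Coulomb-logarithm heuristic and with \cite{he2014well}.

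To convert the resulting kernel-level bounds into the claimed weighted Sobolev estimate, I then insert $W_{q}$, pair with $f$, and bound each cubic remainder block. The three derivatives on $g$ and on $h$ absorb the two gradients produced by Taylor expansion plus one extra gradient obtained by symmetrizing / integrating by parts to shift $|v-v_{*}|^{-3}$ singularities (this is where the loss becomes borderline and why $\mu^{1/32}g\in H^{3}$ is required: the Gaussian weight kills any polynomial loss from relocating derivatives onto $g$, while $h$ keeps the polynomial weight $W_{q+15/2}$ to absorb both the ambient $W_{q}$ and extra factors of $\langle v-v_{*}\rangle^{|\gamma|+2}$ that appear when estimating $|v-v_{*}|^{\gamma}$-type singularities near the pole and at infinity; the $15/2$ comes from combining $q+\gamma+$ a handful of $\langle v_{*}\rangle$-shifts arising in the translation estimates (\ref{translation-out-weight}) and the weight $W_{3/2}$ dual to $f\in L^{2}_{-3/2}$). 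The hardest part will be step (ii): organizing the many cubic remainder pieces so that all strong singularities in $|v-v_{*}|^{-3}$ are tamed by either an integration-by-parts gain, a cancellation identity analogous to Cancellation Lemma \ref{cancellation-lemma-general-gamma-minus3-mu}, or the Gaussian factor, while keeping the angular integral bounded by $|\ln\epsilon|^{-1}$ throughout; this is precisely the technical core of \cite{he2014well,zhou2020refined}, on which the proof rests.
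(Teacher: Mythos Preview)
Your approach is essentially the correct one, and it coincides with what the paper does: note that the paper does not actually prove this lemma but simply cites \cite{zhou2020refined}, where the argument proceeds precisely via the Taylor expansion you describe---matching the quadratic piece against $Q^{L}$ and bounding the cubic remainder using $\int b^{\epsilon}(\cos\theta)\sin^{3}(\theta/2)\,d\sigma\lesssim|\ln\epsilon|^{-1}$ (your $\sin^{4}$ also works but is not the sharp order needed).

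One small imprecision worth flagging: you say ``the first-order terms vanish after integrating $\sigma$ over $\SS^{2}$ by symmetry,'' then immediately note that $\int b^{\epsilon}(v'-v)\,d\sigma$ is a nonzero multiple of $(v_{*}-v)$. These are in tension. The first-order term does \emph{not} vanish; rather, after angular integration it produces a $\sin^{2}(\theta/2)$-weighted drift term that, together with the quadratic diffusion term, reconstitutes $Q^{L}$. Be careful to track both pieces when doing the matching. Apart from this, your plan is sound and the difficult bookkeeping you anticipate (controlling the $|v-v_{*}|^{-3}$ singularity against the cubic gain $|v-v_{*}|^{3}\sin^{3}(\theta/2)$, distributing weights via $\mu^{1/32}$ on $g$, and accounting for the extra $15/2$ orders on $h$) is exactly the content of the cited reference.
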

As an application of Lemma \ref{estimate-operator-difference}, we have
\begin{lem}
\label{difference-terms} Let $N \geq 4$. Recall the function $G_{1}$ in \eqref{three-terms} and the functional $\mathcal{X}^{N}$ in \eqref{mathcal-X-pure-x}. The following estimate is valid.
\beno \mathcal{X}^{N}(G_{1}) \leq C_{N}\mathcal{D}_{\epsilon}^{N+3,3N+12}(f^{\epsilon})+
C_{N}\|f^{\epsilon}\|^{2}_{H^{N+3}_{x,v}}\mathcal{D}_{0}^{N+3,3N+12}(f).\eeno
\end{lem}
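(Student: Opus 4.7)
The plan is to first reduce $G_1$ to purely bilinear form. Since $\mathcal L g=-\Gamma(\mu^{1/2},g)-\Gamma(g,\mu^{1/2})$ and similarly for $\mathcal L^{\epsilon}$,
\[
G_1 = |\ln\epsilon|\bigl[(\Gamma^{\epsilon}-\Gamma)(\mu^{1/2},f^{\epsilon})+(\Gamma^{\epsilon}-\Gamma)(f^{\epsilon},\mu^{1/2})+(\Gamma^{\epsilon}-\Gamma)(f^{\epsilon},f)\bigr].
\]
Because $\mu^{1/2}$ is independent of $x$, applying $\pa^{\alpha}$ with $|\alpha|\leq N-1$ leaves the first two terms as $(\Gamma^{\epsilon}-\Gamma)(\mu^{1/2},\pa^{\alpha}f^{\epsilon})$ and $(\Gamma^{\epsilon}-\Gamma)(\pa^{\alpha}f^{\epsilon},\mu^{1/2})$, while Leibniz turns the third into $\sum_{\alpha_1+\alpha_2=\alpha}C_{\alpha}^{\alpha_1}(\Gamma^{\epsilon}-\Gamma)(\pa^{\alpha_1}f^{\epsilon},\pa^{\alpha_2}f)$. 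Pairing with $e_j$ and noting $|e_j|_{L^{2}_{-3/2}}\lesssim 1$ uniformly, Lemma \ref{estimate-operator-difference} with $q=0$ provides the key pointwise-in-$x$ estimate whose $|\ln\epsilon|^{-1}$ precisely cancels the $|\ln\epsilon|$ prefactor of $G_1$.

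For the linear pieces, Lemma \ref{estimate-operator-difference} gives $|\ln\epsilon|\,|\langle(\Gamma^{\epsilon}-\Gamma)(\mu^{1/2},\pa^{\alpha}f^{\epsilon}),e_j\rangle|\lesssim |\pa^{\alpha}f^{\epsilon}|_{H^{3}_{15/2}}$, and symmetrically for the swapped term (the Gaussian factor there absorbing the weight $W_{15/2}$). Squaring, integrating over $\mathbb T^3$, and summing over $|\alpha|\leq N-1$ produces a bound of order $\sum_{|\alpha|\leq N-1,\,|\beta|\leq 3}\|\pa^{\alpha}_{\beta}f^{\epsilon}\|^{2}_{L^{2}_{x}L^{2}_{15/2}}$. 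Since $W^{\epsilon}\gtrsim 1$ by \eqref{lower-bound-when-small-cf}-\eqref{lower-bound-when-large-cf}, this is dominated by $\|\pa^{\alpha}_{\beta}f^{\epsilon}\|^{2}_{L^{2}_{x}L^{2}_{\epsilon,15/2}}$. At each level $j=|\beta|\leq 3$ the dissipation $\mathcal D^{N+3,3N+12}_{\epsilon}(f^{\epsilon})$ supplies weight $3N+12-3|\beta|-3/2\geq 15/2$ (needing $N\geq 4$) and derivative budget $|\alpha|+|\beta|\leq N+2\leq N+3$, so the whole sum is absorbed into $C_N\mathcal D^{N+3,3N+12}_{\epsilon}(f^{\epsilon})$.

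For the bilinear part, Lemma \ref{estimate-operator-difference} yields the pointwise-in-$x$ bound $|\mu^{1/32}\pa^{\alpha_1}f^{\epsilon}|_{H^{3}}\,|\pa^{\alpha_2}f|_{H^{3}_{15/2}}$. Squaring, integrating over $x$, and applying the Sobolev embedding $H^{2}_{x}\hookrightarrow L^{\infty}_{x}$ to whichever factor has fewer $x$-derivatives: the $L^{\infty}_{x}$ factor costs at most $|\alpha_i|+2\leq(N-1)/2+2\leq N$ $x$-derivatives plus $3$ $v$-derivatives (hence at most $N+3$ in total), while the $L^{2}_{x}$ factor needs $\leq N-1$ $x$-derivatives plus $3$ $v$-derivatives. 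The $f^{\epsilon}$-side collapses into $\|f^{\epsilon}\|^{2}_{H^{N+3}_{x,v}}$ (the Gaussian factor $\mu^{1/32}$ absorbing the polynomial weights), while the $f$-side is controlled by $\|f\|^{2}_{H^{N}_{x}H^{3}_{15/2}}\leq C_N\mathcal D^{N+3,3N+12}_{0}(f)$ by the same weight-count as in the linear step. The only substantive obstacle is this simultaneous bookkeeping of regularity and weight budgets: it is precisely the Sobolev loss of two $x$-derivatives, combined with the three $v$-derivatives and weight $15/2$ forced by Lemma \ref{estimate-operator-difference}, that dictates the indices $N+3$ and $3N+12$ on the right-hand side of the lemma.
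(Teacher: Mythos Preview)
Your proof is correct and follows essentially the same route as the paper's: decompose $G_1$ into linear and bilinear $(\Gamma^{\epsilon}-\Gamma)$ pieces, apply Lemma~\ref{estimate-operator-difference} with $q=0$ against each $e_j$ (so the $|\ln\epsilon|^{-1}$ cancels), then use $H^{2}_{x}\hookrightarrow L^{\infty}_{x}$ to distribute $x$-derivatives and finally embed the resulting $H^{N}_{x}H^{3}_{15/2}$-type norms into $\mathcal D_{\epsilon}^{N+3,3N+12}$. The paper's proof is terser---it records the pointwise-in-$x$ bound as $|\pa^{\alpha}f^{\epsilon}|_{H^{3}_{9+\gamma/2}}+C_{N}\sum|\pa^{\alpha_{1}}f^{\epsilon}|_{H^{3}_{\gamma/2}}|\pa^{\alpha_{2}}f|_{H^{3}_{9+\gamma/2}}$ (with $\gamma=-3$, so $9+\gamma/2=15/2$) and then passes directly to $\|f^{\epsilon}\|^{2}_{H^{N}_{x}H^{3}_{9+\gamma/2}}+\|f^{\epsilon}\|^{2}_{H^{N}_{x}H^{3}_{\gamma/2}}\|f\|^{2}_{H^{N}_{x}H^{3}_{9+\gamma/2}}$---but your more explicit bookkeeping of the derivative and weight budgets amounts to the same argument.
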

\begin{proof}
By Lemma \ref{estimate-operator-difference}, we have
\beno |\langle  \pa^{\alpha}G_{1}, e_j\rangle| \lesssim |\pa^{\alpha}f^{\epsilon}|_{H^{3}_{9+\gamma/2}} + C_{N}\sum_{\alpha_{1}+\alpha_{2}=\alpha}|\pa^{\alpha_{1}}f^{\epsilon}|_{H^{3}_{\gamma/2}}|\pa^{\alpha_{2}}f|_{H^{3}_{9+\gamma/2}}.\eeno
Since $N \geq 4$, by the embedding $H^{2}_{x} \rightarrow L^{\infty}_{x}$, we get
\beno \mathcal{X}^{N}(G_{1}) &\leq& C_{N}\|f^{\epsilon}\|^{2}_{H^{N}_{x}H^{3}_{9+\gamma/2}}+
C_{N}\|f^{\epsilon}\|^{2}_{H^{N}_{x}H^{3}_{\gamma/2}}\|f\|^{2}_{H^{N}_{x}H^{3}_{9+\gamma/2}}
\\&\leq& C_{N}\mathcal{D}_{\epsilon}^{N+3,3N+12}(f^{\epsilon})+
C_{N}\|f^{\epsilon}\|^{2}_{H^{N+3}_{x,v}}\mathcal{D}_{0}^{N+3,3N+12}(f),\eeno
thanks to $\mathcal{D}_{\epsilon}^{N+3,3N+12}(f^{\epsilon}) \geq \|f^{\epsilon}\|^{2}_{H^{N+3-j}_{x}\dot{H}^{j}_{\epsilon, 3N+3+\gamma/2}} \geq \|f^{\epsilon}\|^{2}_{H^{N+3-j}_{x} \dot{H}^{j}_{3N+3+\gamma/2}}$ for any $0\leq j \leq N+3$.
\end{proof}

As another application of Lemma \ref{estimate-operator-difference}, we have
\begin{lem}\label{difference-terms-2} Let $N \geq 4, l\geq 3N+2$. Recall the function $G_{1}$ in \eqref{three-terms} and the functional $\mathcal{V}^{N}$ in  \eqref{mathcal-V-no-weight-up} and  $\mathcal{W}^{N,j,l}$ in  \eqref{mathcal-W-no-weight-up}.
For any $\delta>0$, there holds
\beno
\mathcal{V}^{N}(G_{1})+\mathcal{W}^{N,j,l}(G_{1}) \leq \delta \mathcal{D}_{0}^{N,l}(F^{\epsilon}_{R}) + \delta^{-1}C_{N,l}(\mathcal{D}_{\epsilon}^{N+3,l+18}(f^{\epsilon})+
\|f^{\epsilon}\|^{2}_{H^{N+3}_{x,v}}\mathcal{D}_{0}^{N+3,l+18}(f)).\eeno
\end{lem}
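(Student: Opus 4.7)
\textbf{Proof proposal for Lemma \ref{difference-terms-2}.}

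The plan is to reduce everything to repeated applications of Lemma \ref{estimate-operator-difference}, exploiting the exact cancellation between the prefactor $|\ln\epsilon|$ inside $G_1$ and the $|\ln\epsilon|^{-1}$ gain produced by that lemma. First I would rewrite $G_1$ in a purely $\Gamma-\Gamma^\epsilon$ form by using $\mathcal{L}^\epsilon h = -\Gamma^\epsilon(\mu^{1/2},h)-\Gamma^\epsilon(h,\mu^{1/2})$ and the analogous identity for $\mathcal{L}$, which yields
\[
G_1 = -|\ln\epsilon|\bigl[(\Gamma-\Gamma^\epsilon)(\mu^{1/2},f^\epsilon)+(\Gamma-\Gamma^\epsilon)(f^\epsilon,\mu^{1/2})+(\Gamma-\Gamma^\epsilon)(f^\epsilon,f)\bigr].
\]
So $\mathcal{V}^N(G_1)$ and $\mathcal{W}^{N,j,l}(G_1)$ split into three groups of inner products, each of the shape $(W_{l+j\gamma}\partial^\alpha_\beta (\Gamma-\Gamma^\epsilon)(g,h),W_{l+j\gamma}\partial^\alpha_\beta F^\epsilon_R)$ (with $j=0$ and no weight for the $\mathcal{V}^N$ pieces).

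Next I would distribute $\partial^\alpha_\beta$ by the same binomial expansion \eqref{alpha-beta-on-Gamma} used throughout Section \ref{Commutator-Estimate}, with the $\beta_0$ part falling on the $\mu^{1/2}$ hidden inside $\Gamma-\Gamma^\epsilon$. For each resulting piece I would apply Lemma \ref{estimate-operator-difference} (in its natural $\beta_0$--version, proved by the same argument as for $\beta_0=0$) to get
\[
|(W_{q}(\Gamma-\Gamma^\epsilon)(\partial^{\alpha_1}_{\beta_1}g,\partial^{\alpha_2}_{\beta_2}h;\beta_0), W_q\partial^\alpha_\beta F^\epsilon_R)|
\lesssim C_q|\ln\epsilon|^{-1}|\mu^{1/32}\partial^{\alpha_1}_{\beta_1}g|_{H^3}\,|\partial^{\alpha_2}_{\beta_2}h|_{H^3_{q+15/2}}\,|\partial^\alpha_\beta F^\epsilon_R|_{L^2_{q-3/2}},
\]
where $q=l+j\gamma$. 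The two $|\ln\epsilon|$ factors cancel. I would then apply Cauchy--Schwartz with parameter $\delta$, placing the $F^\epsilon_R$ factor into $\delta\,\mathcal{D}^{N,l}_0(F^\epsilon_R)$ (note that $q-3/2=l+j\gamma+\gamma/2$ matches the weight inside $\mathcal{D}^{N,l}_0$), and the remaining factor into $\delta^{-1}$ times a quantity built from $f^\epsilon$ and $f$.

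For the three groups of terms I would argue as follows. In the first two groups, one argument is $\partial^{\beta_1}\mu^{1/2}$, which together with the $\mu^{1/32}$ weight gives a universal bound; the other argument is $\partial^{\alpha_2}_{\beta_2}f^\epsilon$ with $|\alpha_2|\leq N-j$, $|\beta_2|\leq j$, so passing to $H^3_{q+15/2}$ costs $3$ extra derivatives and a weight shift of $15/2$. Since $q=l+j\gamma\leq l$ and $\gamma=-3$, the needed norm is controlled by $\mathcal{D}^{N+3,l+18}_\epsilon(f^\epsilon)$ (using that the dissipation contains the full $H^{N+3-j}_x\dot H^j_{l+j\gamma+\gamma/2}$ norms which dominate the required $H^3_{q+15/2}$ norms once $l\geq 3N+2$). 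For the bilinear group $(\Gamma-\Gamma^\epsilon)(f^\epsilon,f)$, I would split on the size of $|\alpha_1|+|\beta_1|$: when it is $\leq 2$ I place $f^\epsilon$ in the $g$--slot, bounding $|\mu^{1/32}\partial^{\alpha_1}_{\beta_1}f^\epsilon|_{H^3}\lesssim\|f^\epsilon\|_{H^{N+3}_{x,v}}$ via Sobolev embedding and the Gaussian weight, and bound the $f$--factor by $\mathcal{D}_0^{N+3,l+18}(f)$; when $|\alpha_1|+|\beta_1|\geq 3$ I swap roles and put $f^\epsilon$ in the $h$--slot, absorbing it into $\mathcal{D}^{N+3,l+18}_\epsilon(f^\epsilon)$ (here I use $\|f\|_{H^{N+3}_{x,v}}\lesssim \mathcal{D}_0^{N+3,l+18}(f)^{1/2}$ only as a multiplicative factor, dominated by the second term on the right).

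The main obstacle will be the accounting: ensuring that every $(\alpha_1,\beta_1,\alpha_2,\beta_2)$ combination fits into either $\mathcal{D}^{N+3,l+18}_\epsilon(f^\epsilon)$ alone or into the product $\|f^\epsilon\|^2_{H^{N+3}_{x,v}}\mathcal{D}^{N+3,l+18}_0(f)$, without spurious extra factors. The natural splitting above, together with the crude but uniform estimate $l+j\gamma+15/2\leq l+15/2<l+18$ and the $+3$ derivative loss forced by Lemma \ref{estimate-operator-difference}, is precisely what dictates the choice $N+3$ and $l+18$ appearing in the statement.
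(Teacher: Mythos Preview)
Your overall strategy --- rewrite $G_1$ in $(\Gamma-\Gamma^\epsilon)$ form, expand $\partial^\alpha_\beta$ via \eqref{alpha-beta-on-Gamma}, apply Lemma~\ref{estimate-operator-difference} to cancel the $|\ln\epsilon|$ factors, and finish with Cauchy--Schwartz on the $F^\epsilon_R$ factor --- is exactly the paper's approach. The linear pieces and the overall bookkeeping are fine.

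The gap is in your treatment of the bilinear piece $(\Gamma-\Gamma^\epsilon)(f^\epsilon,f)$ when $|\alpha_1|+|\beta_1|\geq 3$. You propose to ``swap roles and put $f^\epsilon$ in the $h$-slot'', but this is not available: Lemma~\ref{estimate-operator-difference} is asymmetric in $g,h$ (the $\mu^{1/32}H^3$ norm goes on the first argument, the heavy $H^3_{q+15/2}$ norm on the second), and the operator $(\Gamma-\Gamma^\epsilon)(f^\epsilon,f)$ has $f^\epsilon$ fixed in the first slot. Even granting the swap, the output would be $\|f\|^2_{H^{N+3}_{x,v}}\mathcal{D}^{N+3,l+18}_\epsilon(f^\epsilon)$, which is neither $\mathcal{D}^{N+3,l+18}_\epsilon(f^\epsilon)$ alone nor $\|f^\epsilon\|^2_{H^{N+3}_{x,v}}\mathcal{D}^{N+3,l+18}_0(f)$; your claim that it is ``dominated by the second term on the right'' is incorrect since the roles of $f$ and $f^\epsilon$ are reversed.

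The fix is simply not to swap. The paper keeps $f^\epsilon$ in the $g$-slot for \emph{every} $(\alpha_1,\beta_1)$: the $\mu^{1/32}$ weight means $|\mu^{1/32}\partial^{\alpha_1}_{\beta_1}f^\epsilon|_{H^3_v}$ costs only unweighted derivatives on $f^\epsilon$, and after Sobolev embedding $H^2_x\hookrightarrow L^\infty_x$ in the $x$-integration (distributing the two extra $x$-derivatives to whichever factor carries fewer), the $f^\epsilon$ contribution is always bounded by $\|f^\epsilon\|_{H^{N+3}_{x,v}}$ while the $f$ contribution, sitting in the $h$-slot with weight $q+15/2$, lands in $\sqrt{\mathcal{D}^{N+3,l+18}_0(f)}$. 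This gives precisely the second term in the statement, with no case distinction on $|\alpha_1|+|\beta_1|$ needed.
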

\begin{proof} It suffices to only consider $\mathcal{W}^{N,j,l}(G_{1})$.
Set $q=l+j\gamma$.
By Lemma \ref{estimate-operator-difference}, we have
\beno |\langle  W_{q}\pa^{\alpha}_{\beta}G_{1}, W_{q}\pa^{\alpha}_{\beta}F^{\epsilon}_{R}\rangle| &\lesssim& \sum_{\beta_{1}\leq\beta}C_{N,q}|\pa^{\alpha}_{\beta_{1}}f^{\epsilon}|_{H^{3}_{q+9+\gamma/2}}
|\pa^{\alpha}_{\beta}F^{\epsilon}_{R}|_{L^{2}_{q+\gamma/2}}
\\&&+ \sum_{\alpha_{1}+\alpha_{2}=\alpha,\beta_{1}+\beta_{2}\leq\beta}
C_{N,q}|\pa^{\alpha_{1}}_{\beta_{1}}f^{\epsilon}|_{H^{3}}|\pa^{\alpha_{2}}_{\beta_{2}}f|_{H^{3}_{q+9+\gamma/2}}|
\pa^{\alpha}_{\beta}F^{\epsilon}_{R}|_{L^{2}_{q+\gamma/2}}.\eeno
Since $N \geq 4$, by embedding $H^{2}_{x} \rightarrow L^{\infty}_{x}$, we get
\beno \mathcal{W}^{N,j,l}(G_{1}) &=& \sum_{|\alpha|+|\beta|\leq N}
|(W_{l+|\beta|\gamma}\pa^{\alpha}_{\beta}G, W_{l+|\beta|\gamma}\pa^{\alpha}_{\beta}F^{\epsilon}_{R})|
\\&\lesssim& C_{N,l}\sqrt{\mathcal{D}_{\epsilon}^{N+3,l+18}(f^{\epsilon})}
\sqrt{\mathcal{D}_{0}^{N,l}(F^{\epsilon}_{R})}
+C_{N,l} \|f^{\epsilon}\|_{H^{N+3}_{x,v}}
\sqrt{\mathcal{D}_{0}^{N+3,l+18}(f)}
\sqrt{\mathcal{D}_{0}^{N,l}(F^{\epsilon}_{R})},
\eeno
since $\mathcal{D}_{\epsilon}^{N+3,l+18}(f^{\epsilon}) \geq \|f^{\epsilon}\|^{2}_{H^{N+3-j}_{x} \dot{H}^{j}_{\epsilon, l+18-3j+\gamma/2}} \geq \|f^{\epsilon}\|^{2}_{H^{N+3-j}_{x} \dot{H}^{j}_{l+18-3j+\gamma/2}}$ for any $0\leq j \leq N+3$. Then by the basic inequality $2 ab \leq \delta a^{2} + \delta^{-1} b^{2}$, we get the result.
\end{proof}

We are ready to prove  \eqref{error-function-uniform-estimate}.
\begin{proof}[Proof of Theorem \ref{asymptotic-result}(Asymptotic formula)]
We give a detailed proof to the case $N=4, l=14$. For the other two cases, we only illustrate the main differences.

\underline{\it{Case 1: $N=4, l=14$.}} In this case the constants  $M, L, K_{j}$ in \eqref{essential-micro-macro-error-function-2} are universal. By \eqref{only-x-with-mu-type-N-equals-4} in Lemma \ref{pure-x-term-from-I} for $\mathcal{C}^{N}_{\epsilon}(f^{\epsilon},F^{\epsilon}_{R})$ and $\mathcal{C}^{N}_{0}(F^{\epsilon}_{R},f)$, \eqref{g-h-f-N-equals-4-x-v-weight-15} in Lemma \ref{non-linear-estimate-for-error} for $\mathcal{Y}^{N,j,l}_{\epsilon}(f^{\epsilon},F^{\epsilon}_{R},F^{\epsilon}_{R})$ and $\mathcal{Y}^{N,j,l}_{0}(F^{\epsilon}_{R},f,F^{\epsilon}_{R})$,
\eqref{g-h-f-N-equals-4-x-v-no-weight} in Lemma \ref{non-linear-estimate-for-error} for
$\mathcal{Z}^{N}_{\epsilon}(f^{\epsilon},F^{\epsilon}_{R},F^{\epsilon}_{R})$ and $\mathcal{Z}^{N}_{0}(F^{\epsilon}_{R},f,F^{\epsilon}_{R})$,
Lemma \ref{difference-terms} for $\mathcal{X}^{N}(G_{1})$, Lemma \ref{difference-terms-2} for $\mathcal{V}^{N}(G_{1})$ and $\mathcal{W}^{N,j,l}(G_{1})$, we get
\beno\frac{d}{dt}\Xi^{4,14}(F^{\epsilon}_{R}) +  \frac{1}{4} \mathcal{D}_{0}^{4,14}(F^{\epsilon}_{R}) &\leq&
C(\|f^{\epsilon}\|^{2}_{H^{4}_{x,v}}+\|f^{\epsilon}\|_{H^{4}_{x,v}}+\delta) \mathcal{D}_{0}^{4,14}(F^{\epsilon}_{R})
+ \delta^{-1}C\|F^{\epsilon}_{R}\|^{2}_{H^{4}_{x,v}} \mathcal{D}_{0}^{4,14}(f)
\\&&+ \delta^{-1}C (\mathcal{D}_{\epsilon}^{7,32}(f^{\epsilon})+
\|f^{\epsilon}\|^{2}_{H^{7}_{x,v}}\mathcal{D}_{0}^{7,32}(f)).
 \eeno

For the moment, let $\delta_{0}$ be the universal constant such that Theorem \ref{a-priori-estimate-LBE},  global well-posedness \eqref{uniform-controlled-by-initial} and propagation of regularity \eqref{propagation} in Theorem \ref{asymptotic-result} are valid.

Let $\delta$ verify $C\delta = 1/16$. Let $\delta_{4}$ be the largest number verifying
$C(\delta_{4}+\delta_{4}^{1/2}) \leq 1/16,$ and $\delta_{4} \leq \delta_{0}$.
Let $\delta_{5}$ be the largest number verifying $C\delta_{5} \leq \delta_{4}$ and $\delta_{5} \leq \delta_{0}$. Then by \eqref{uniform-controlled-by-initial}, if $\mathcal{E}^{4,14}(f_{0})\le \delta_{5}$, we have $\sup_{t\ge 0} \mathcal{E}^{4,14}(f^{\epsilon}({t}))\leq C\delta_{5} \leq \delta_{4}$, which gives $\sup_{t\geq0}\|f^{\epsilon}(t)\|^{2}_{H^{4}_{x,v}} \leq \delta_{4}$ since $\|f^{\epsilon}\|^{2}_{H^{4}_{x,v}} \leq \mathcal{E}^{4,14}(f^{\epsilon})$. Therefore $C(\|f^{\epsilon}\|^{2}_{H^{4}_{x,v}}+\|f^{\epsilon}\|_{H^{4}_{x,v}}+\delta) \leq 1/8$ and thus

\beno\frac{d}{dt}\Xi^{4,14}(F^{\epsilon}_{R}) +  \frac{1}{8} \mathcal{D}_{0}^{4,14}(F^{\epsilon}_{R}) \leq
 C\|F^{\epsilon}_{R}\|^{2}_{H^{4}_{x,v}} \mathcal{D}_{0}^{4,14}(f)+ C(\mathcal{D}_{\epsilon}^{7,32}(f^{\epsilon})+
\|f^{\epsilon}\|^{2}_{H^{7}_{x,v}}\mathcal{D}_{0}^{7,32}(f)).
 \eeno

 By the propagation result \eqref{uniform-estimate-propagation-N-geq-5-l-big} in Theorem \ref{a-priori-estimate-LBE}, we have for $\epsilon \geq 0$ small enough,
\ben \label{propagation-7-33} \mathcal{E}^{7,32}(f^{\epsilon}(t)) +  \int_{0}^{t}\mathcal{D}_{\epsilon}^{7,32}(f^{\epsilon}(s))ds \leq P_{7,32}(\mathcal{E}^{7,32}(f_{0})).
\een	
Recall the natural relation $\|F^{\epsilon}_{R}\|^{2}_{H^{4}_{x,v}} \leq \Xi^{4,14}(F^{\epsilon}_{R}), \|f^{\epsilon}\|^{2}_{H^{7}_{x,v}} \leq \mathcal{E}^{7,32}(f^{\epsilon})$.
By Gronwall's inequality and the initial condition $F^{\epsilon}_{R}(0)=0$, we have
\beno &&\Xi^{4,14}(F^{\epsilon}_{R}(t)) +  \frac{1}{8} \int_{0}^{t} \mathcal{D}_{0}^{4,14}(F^{\epsilon}_{R}) d \tau
\\&\leq& \exp(C \int_{0}^{\infty}\mathcal{D}_{0}^{4,14}(f) dt) (C\int_{0}^{\infty}\mathcal{D}_{\epsilon}^{7,32}(f^{\epsilon}) dt+
C\sup_{t \geq 0} \|f^{\epsilon}(t)\|^{2}_{H^{7}_{x,v}} \int_{0}^{\infty} \mathcal{D}_{0}^{7,32}(f) dt)
\\&\leq& C \exp(CP_{7,32}(\mathcal{E}^{7,32}(f_{0})))  P_{7,32}(\mathcal{E}^{7,32}(f_{0})) \left(1+P_{7,32}(\mathcal{E}^{7,32}(f_{0}))\right) .
 \eeno
By the equivalence relation \eqref{energy-equivalence} and recalling $F^{\epsilon}_{R}(t) = |\ln \epsilon| (f^{\epsilon}-f)$, we get
\beno
&&\mathcal{E}^{4,14}(f^{\epsilon}(t)-f(t)) + \int_{0}^{t} \mathcal{D}_{0}^{4,14}(f^{\epsilon}(t)-f(t)) d \tau \\&\leq& |\ln \epsilon|^{-2} C \exp(C P_{7,32}(\mathcal{E}^{7,32}(f_{0}))  P_{7,32}(\mathcal{E}^{7,32}(f_{0})) \left(1+P_{7,32}(\mathcal{E}^{7,32}(f_{0}))\right)
:= |\ln \epsilon|^{-2} U_{4,14}(\mathcal{E}^{7,32}(f_{0})).
\eeno

\underline{\it{Case 2: $N=4, l>14$.}}  We use \eqref{g-h-f-N-equals-4-x-v-weight-larger}  in Lemma \ref{non-linear-estimate-for-error} to deal with $\mathcal{Y}^{N,j,l}_{\epsilon}(f^{\epsilon},F^{\epsilon}_{R},F^{\epsilon}_{R})$ and $\mathcal{Y}^{N,j,l}_{0}(F^{\epsilon}_{R},f,F^{\epsilon}_{R})$. The other terms can be handled in the same way as in {\it Case 1.}  We skip the details here.

\underline{\it{Case 3: $N \geq 5, l \geq 3N+2$.}}
We use \eqref{only-x-with-mu-type-N-bigger-5} in Lemma \ref{pure-x-term-from-I} to handle $\mathcal{C}^{N}_{\epsilon}(f^{\epsilon},F^{\epsilon}_{R})$ and $\mathcal{C}^{N}_{0}(F^{\epsilon}_{R},f)$, \eqref{g-h-f-N-geq-5-x-v-weight-larger} in Lemma \ref{non-linear-estimate-for-error} to handle $\mathcal{Y}^{N,j,l}_{\epsilon}(f^{\epsilon},F^{\epsilon}_{R},F^{\epsilon}_{R})$ and $\mathcal{Y}^{N,j,l}_{0}(F^{\epsilon}_{R},f,F^{\epsilon}_{R})$, \eqref{g-h-f-N-geq-5-x-v-no-weight} in Lemma \ref{non-linear-estimate-for-error} to handle
$\mathcal{Z}^{N}_{\epsilon}(f^{\epsilon},F^{\epsilon}_{R},F^{\epsilon}_{R})$ and $\mathcal{Z}^{N}_{0}(F^{\epsilon}_{R},f,F^{\epsilon}_{R})$. As in the {\it Proof of Theorem \ref{a-priori-estimate-LBE}}, we can apply mathematical induction on $N$ and a sequence of functions $U_{N,l}$ can be defined in an iterating manner. We skip the details here. However, the smallness assumption on $\mathcal{E}^{4,14}(f_{0})$ (bounded by a universal constant) is not affected in the process.
\end{proof}

\setcounter{equation}{0}

\section{Appendix}\label{appendix}
In this appendix, we prove several results for the sake of completeness.
We first recall the definition of the symbol class $S^{m}_{1,0}$.
\begin{defi}\label{psuopde} A smooth function $a(v,\xi)$ is said to be a symbol of type $S^{m}_{1,0}$ if for any multi-indices $\alpha$ and $\beta$,
\beno |(\pa^\alpha_\xi\pa^\beta_v a)(v,\xi)|\le C_{\alpha,\beta} \langle \xi\rangle^{m-|\alpha|}, \eeno
where $C_{\alpha,\beta}$ is a constant depending only on   $\alpha$ and $\beta$.
\end{defi}

The following is a result on the commutator between multipliers in frequency and phase spaces.
\begin{lem}[Lemma 5.3 in \cite{he2018sharp}]\label{operatorcommutator1}
Let $l, s, r \in \R, M(\xi) \in S^{r}_{1,0}$ and $\Phi(v) \in S^{l}_{1,0}$. There holds
\beno
|[M(D), \Phi]f|_{H^{s}} \lesssim C|f|_{H^{r+s-1}_{l-1}}.
\eeno
\end{lem}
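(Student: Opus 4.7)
The commutator can be written in kernel form as
\begin{equation*}
[M(D),\Phi]f(v) = \frac{1}{(2\pi)^3}\int\!\!\int e^{i(v-y)\cdot\xi}\, M(\xi)\bigl(\Phi(y)-\Phi(v)\bigr) f(y)\, dy\, d\xi,
\end{equation*}
so the whole task reduces to controlling this double integral in the weighted Sobolev norm $H^{s}$. The guiding idea is Kohn-Nirenberg calculus: since $M(D)$ has order $r$ in $\xi$ and $\Phi$ has order $l$ in $v$ (in the sense of the paper's $S^{l}_{1,0}$ convention, i.e.\ $|\partial^\beta_v \Phi(v)|\lesssim \langle v\rangle^{l-|\beta|}$), the leading symbol of the commutator is the Poisson bracket $\tfrac{1}{i}\nabla_\xi M(\xi)\cdot \nabla_v \Phi(v)$, which is of order $r-1$ in $\xi$ and $l-1$ in $v$. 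This matches the right-hand side of the claimed estimate precisely.

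To make this rigorous, I would expand $\Phi$ by Taylor's formula around $v$:
\begin{equation*}
\Phi(y)-\Phi(v) = \sum_{1\le|\alpha|\le N-1}\frac{(y-v)^{\alpha}}{\alpha!}\partial^{\alpha}\Phi(v) + R_{N}(v,y),
\end{equation*}
with $R_N$ the usual integral remainder involving $\partial^{\alpha}\Phi$ with $|\alpha|=N$. Using the standard identity $(y-v)^{\alpha}e^{i(v-y)\cdot\xi} = (i\partial_\xi)^{\alpha}e^{i(v-y)\cdot\xi}$ together with integration by parts in $\xi$, the $\alpha$-th term in the expansion contributes
\begin{equation*}
\frac{i^{|\alpha|}}{\alpha!}\,\partial^{\alpha}\Phi(v)\cdot \bigl(\partial^{\alpha}_{\xi}M\bigr)(D)f.
\end{equation*}
Since $\partial^{\alpha}\Phi\in S^{l-|\alpha|}_{1,0}$ (in $v$) and $\partial^{\alpha}_{\xi}M\in S^{r-|\alpha|}_{1,0}$ (in $\xi$), each such term, for $|\alpha|\ge 1$, is bounded from $H^{r+s-|\alpha|}_{l-|\alpha|}$ to $H^{s}$ by Plancherel and the pointwise bounds on weights and Fourier multipliers; for $|\alpha|\ge 2$ one uses the trivial embedding $H^{r+s-|\alpha|}_{l-|\alpha|}\hookrightarrow H^{r+s-1}_{l-1}$ to absorb it into the final norm.

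The main technical point will be the remainder $R_{N}$, which after the same integration by parts becomes an oscillatory integral operator with amplitude of the form $\partial^{\alpha}_{\xi}M(\xi)\int_{0}^{1}(1-t)^{N-1}\partial^{\alpha}\Phi(v+t(y-v))\,dt$ ($|\alpha|=N$). For this I would choose $N$ larger than $|l|+|r|+|s|+3$ (any integer past the regularity budget) and apply the standard amplitude-operator estimate (essentially a dyadic localization in $\xi$ plus non-stationary phase in $y$, or equivalently a direct application of the Calder\'on-Vaillancourt theorem after reducing to a bi-graded symbol in $(v,\xi)$). This gives an $L^{2}\to L^{2}$ bound times an appropriate power of weights, which is absorbed by $|f|_{H^{r+s-1}_{l-1}}$ since the remainder already involves $N$ derivatives of $\Phi$ and $N$ derivatives of $M$. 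Summing the finite expansion and the remainder yields the desired estimate; the genuinely hard step is controlling $R_{N}$, since for the first-order terms the Poisson-bracket heuristic is straightforward but the remainder requires a careful symbol calculus argument.
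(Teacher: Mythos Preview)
The paper does not actually prove this lemma; it is quoted verbatim as Lemma~5.3 of \cite{he2018sharp} and used as a black box throughout. So there is no ``paper's own proof'' to compare against.

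Your outline is the standard Kohn--Nirenberg asymptotic-expansion argument and is essentially how such commutator estimates are established in the pseudodifferential calculus literature (and presumably in \cite{he2018sharp} as well). One small slip: the embedding you invoke for $|\alpha|\ge 2$ is written in the wrong direction --- what you need is $|f|_{H^{r+s-|\alpha|}_{l-|\alpha|}}\lesssim |f|_{H^{r+s-1}_{l-1}}$, i.e.\ $H^{r+s-1}_{l-1}\hookrightarrow H^{r+s-|\alpha|}_{l-|\alpha|}$, not the reverse. The conclusion you draw from it is nonetheless correct, so this is only a notational flip, not a genuine gap.
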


As a direct application, we get the following result.
\begin{lem}\label{piece-to-whole} Recall the localizers $\varphi_{k}$ in \eqref{dyadic-decomposition}. There holds
$
\sum_{k \geq -1}^{\infty}|W^{\epsilon}(D)\varphi_{k}f|^{2}_{L^{2}}
\lesssim |W^{\epsilon}(D)f|^{2}_{L^{2}}.
$
\end{lem}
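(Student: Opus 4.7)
The natural decomposition is
$$W^\epsilon(D)\varphi_k f \;=\; \varphi_k W^\epsilon(D)f \;+\; [W^\epsilon(D),\varphi_k]f,$$
so it is enough to control the two pieces separately and sum over $k\ge -1$.

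For the diagonal piece I would use the finite-overlap property of the dyadic partition of unity \eqref{function-phi-psi}--\eqref{dyadic-decomposition}: the supports of $\{\varphi_k\}$ overlap only a universal number of times, hence $\sum_{k\ge -1}\varphi_k^2(v)\lesssim 1$, and Fubini gives
$$\sum_{k\ge -1}\|\varphi_k W^\epsilon(D)f\|_{L^2}^{2}
\;=\;\int_{\mathbb R^3}\Big(\sum_{k\ge -1}\varphi_k^{2}(v)\Big)|W^\epsilon(D)f(v)|^{2}\,dv
\;\lesssim\;\|W^\epsilon(D)f\|_{L^2}^{2}.$$

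For the commutator piece the key observation is that $\varphi_k(v)=\psi(2^{-k}v)$ (for $k\ge 0$) varies on scale $2^k$: on the support $|v|\sim 2^k$ one has $|\partial^\beta\varphi_k(v)|\lesssim 2^{-k|\beta|}\sim \langle v\rangle^{-|\beta|}$, and off the support the derivatives vanish, so the \emph{rescaled} symbol $2^k\varphi_k$ lies in $S^{1}_{1,0}$ with constants uniform in $k$ (the case $k=-1$ is immediate since $\varphi_{-1}=\phi$ is Schwartz). Applying Lemma \ref{operatorcommutator1} with $M=W^\epsilon\in S^{1}_{1,0}$, $\Phi=2^k\varphi_k\in S^{1}_{1,0}$ and parameters $r=l=1$, $s=0$ yields
$$\|[W^\epsilon(D),2^k\varphi_k]f\|_{L^2}\;\lesssim\;\|f\|_{H^{0}_{0}}\;=\;\|f\|_{L^2},$$
i.e.\ $\|[W^\epsilon(D),\varphi_k]f\|_{L^2}\lesssim 2^{-k}\|f\|_{L^2}$. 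Squaring and summing the geometric series $\sum_{k\ge -1}2^{-2k}<\infty$ gives
$$\sum_{k\ge -1}\|[W^\epsilon(D),\varphi_k]f\|_{L^2}^{2}\;\lesssim\;\|f\|_{L^2}^{2}.$$

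Finally, to absorb the $L^2$ norm into the right-hand side I would use the uniform lower bound $W^\epsilon(\xi)\gtrsim 1$, which follows from \eqref{lower-bound-when-small-cf}--\eqref{lower-bound-when-large-cf} (the first estimate gives $W^\epsilon\gtrsim \langle\xi\rangle\gtrsim 1$ for $|\xi|\lesssim \epsilon^{-1/2}$, the second gives $W^\epsilon\gtrsim \epsilon^{-1/2}\gtrsim 1$ for $|\xi|\gtrsim \epsilon^{-1/2}$). Plancherel then yields $\|f\|_{L^2}\lesssim \|W^\epsilon(D)f\|_{L^2}$, which combined with the two previous displays completes the proof.

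The only substantive subtlety—really the heart of the matter—is recognising the correct rescaling: a direct application of Lemma \ref{operatorcommutator1} to $\varphi_k\in S^{0}_{1,0}$ would produce only a uniform bound $\lesssim \|\langle v\rangle^{-1}f\|_{L^2}$, which is not summable in $k$. Treating $2^k\varphi_k$ rather than $\varphi_k$ as the symbol converts the commutator gain into the quantitative factor $2^{-k}$ that is needed for the geometric summation.
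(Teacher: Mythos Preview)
Your proof is correct and follows essentially the same route as the paper: both use the commutator decomposition, the key rescaling $2^{k}\varphi_{k}\in S^{1}_{1,0}$ (which you rightly identify as the heart of the matter), Lemma~\ref{operatorcommutator1} to get the $2^{-k}$ gain, finite overlap for the diagonal piece, and $W^{\epsilon}\gtrsim 1$ to absorb $\|f\|_{L^2}$. Your write-up is in fact more explicit than the paper's, which compresses all of this into a single displayed line.
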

\begin{proof}
Since $W^{\epsilon}\in S^{1}_{1,0}, 2^{k} \varphi_{k} \in S^{1}_{1,0}$,
then by Lemma \ref{operatorcommutator1},
we have
\beno
\sum_{k \geq -1}^{\infty}|W^{\epsilon}(D)\varphi_{k}f|^{2}_{L^{2}} =\sum_{k \geq -1}^{\infty}2^{-2k}|W^{\epsilon}(D)2^{k}\varphi_{k}f|^{2}_{L^{2}}
\lesssim\sum_{k \geq -1}^{\infty}2^{-2k}(|2^{k}\varphi_{k}W^{\epsilon}(D)f|^{2}_{L^{2}}+|f|^{2}_{H^{0}}).
\eeno
The lemma then follows accordingly.
\end{proof}


\begin{prop}[Theorem 3.1 in \cite{he2014well}] \label{symbol}Suppose $ A^\epsilon(\xi):= \int b^\epsilon(\f{\xi}{|\xi|}\cdot \sigma)\min\{ |\xi|^2\sin^2(\theta/2),1\} d\sigma$. Then we have
$A^\epsilon(\xi)\sim |\xi|^2\mathrm{1}_{|\xi|\le2}+\mathrm{1}_{|\xi|\ge2}(W^\epsilon(\xi))^2$.
\end{prop}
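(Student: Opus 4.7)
The plan is to reduce the angular integral defining $A^\epsilon(\xi)$ to an explicit one-dimensional integral in $u=\sin(\theta/2)$ and then perform a case analysis on $|\xi|$. Since $b^{\epsilon}$ is supported on $\theta\in[0,\pi/2]$ by assumption \textbf{(A2)}, writing $d\sigma=\sin\theta\,d\theta d\varphi$ and making the substitution $u=\sin(\theta/2)$ (so $\sin\theta\,d\theta=4u\,du$), one obtains the representation
\begin{equation*}
A^{\epsilon}(\xi)=8\pi|\ln\epsilon|^{-1}\int_{\epsilon}^{\sqrt{2}/2}u^{-3}\min\{|\xi|^{2}u^{2},1\}\,du.
\end{equation*}
This reduction is the essential first step, and from this form one can see that the threshold for the minimum in the integrand occurs at $u=1/|\xi|$, which controls the qualitative behavior as $|\xi|$ moves through the scales $1$ and $1/\epsilon$.

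The second step is to perform a three-regime case split. For $|\xi|\le 2$ (so $1/|\xi|\geq 1/2>\sqrt{2}/2$ when $|\xi|\le\sqrt 2$ and only a narrow split otherwise), the integrand is essentially $|\xi|^{2}u^{-1}$ on most of the range, giving $A^{\epsilon}(\xi)\sim|\ln\epsilon|^{-1}|\xi|^{2}\cdot|\ln\epsilon|=|\xi|^{2}$. For $2\leq|\xi|\leq 1/\epsilon$, both sub-integrals $\int_{\epsilon}^{1/|\xi|}|\xi|^{2}u^{-1}du$ and $\int_{1/|\xi|}^{\sqrt{2}/2}u^{-3}du$ are non-trivial; a direct computation gives
\begin{equation*}
A^{\epsilon}(\xi)=8\pi|\ln\epsilon|^{-1}\bigl[|\xi|^{2}(|\ln\epsilon|-\ln|\xi|)-1+|\xi|^{2}/2\bigr]\sim|\xi|^{2}\Bigl(1-\frac{\ln|\xi|}{|\ln\epsilon|}\Bigr),
\end{equation*}
which must be matched to the middle branch of $W^{\epsilon}$. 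Finally for $|\xi|\ge 1/\epsilon$ the threshold $1/|\xi|$ drops below $\epsilon$ and the minimum equals $1$ throughout the integration domain, yielding $A^{\epsilon}(\xi)\sim|\ln\epsilon|^{-1}\epsilon^{-2}$.

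The third step is to compare these three asymptotics with $(W^{\epsilon}(\xi))^{2}$ as defined in \eqref{charicter function}. Using $|\xi|\le 2$ gives $W^{\epsilon}(\xi)\sim 1$, consistent with $A^{\epsilon}(\xi)\sim|\xi|^{2}$ only in the sense that the desired upper bound takes the form $|\xi|^{2}\mathrm{1}_{|\xi|\leq 2}+\mathrm{1}_{|\xi|\geq 2}(W^{\epsilon})^{2}$. On the middle region $2\leq|\xi|\leq 1/\epsilon$, the factor $(1-\ln|\xi|/|\ln\epsilon|)$ in $A^{\epsilon}$ is precisely the logarithmic correction built into $W^{\epsilon}$ via the term $\langle y\rangle(1-\ln|y|/|\ln\epsilon|+1/|\ln\epsilon|)^{1/2}(\phi(\epsilon y)-\phi(y))$, and on $|\xi|\geq 1/\epsilon$ the high-frequency branch $|\ln\epsilon|^{-1/2}\epsilon^{-1}$ of $W^{\epsilon}$ squares to match $A^{\epsilon}$ exactly.

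The main obstacle will be bookkeeping in the middle regime: one must verify that the extra $+1/|\ln\epsilon|$ inside the square root of $W^{\epsilon}$ does not degrade the equivalence $A^{\epsilon}\sim (W^{\epsilon})^{2}$ near the transition $|\xi|\sim 1/\epsilon$, where $1-\ln|\xi|/|\ln\epsilon|$ approaches zero. The point is that for $|\xi|\le c/\epsilon$ with $c<1$ fixed, $1-\ln|\xi|/|\ln\epsilon|\gtrsim 1/|\ln\epsilon|$, so the $+1/|\ln\epsilon|$ term is absorbed into a universal constant; and for $c/\epsilon\le|\xi|\le 1/\epsilon$ one may match directly against the high-frequency asymptotics by a continuity/matching argument. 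Aside from this, the remaining work is a collection of elementary estimates on $\int u^{-1}du$ and $\int u^{-3}du$, which require no new ideas.
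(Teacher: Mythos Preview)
Your computational approach is correct: the reduction to the one-dimensional integral in $u=\sin(\theta/2)$ and the three-regime case split recover precisely the branches of $(W^\epsilon)^2$, and your treatment of the transition near $|\xi|\sim 1/\epsilon$ is the right observation --- the residual $\frac{1}{2}|\ln\epsilon|^{-1}(|\xi|^2-2)$ from the $\int u^{-3}du$ part is exactly what balances the $+1/|\ln\epsilon|$ correction in $W^\epsilon$.

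Note, however, that the paper does not supply its own proof of this proposition: it is quoted in the appendix as Theorem~3.1 of \cite{he2014well} with no argument given. So there is no ``paper proof'' to compare against here; your self-contained computation is a legitimate direct verification that the cited result holds, and is presumably close in spirit to what \cite{he2014well} does, since the statement is tailor-made for this kind of explicit calculation.
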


\begin{prop} \label{fourier-transform-cross-term} There holds
	\beno
	\int_{\R^3\times\mathbb{S}^2} b(\f{u}{|u|}\cdot \sigma) h(u)(f(u^+)-f(\f{|u|}{|u^+|}u^+)) d\sigma du
	=\int_{\R^3\times\mathbb{S}^2} b(\f{\xi}{|\xi|}\cdot \sigma)  (\hat{h}(\xi^+)-\hat{h}(\f{|\xi|}{|\xi^+|}\xi^+))\bar{\hat{f}}(\xi) d\sigma d\xi.
	\eeno
\end{prop}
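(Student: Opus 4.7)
The plan is to prove the identity by a combination of a geometric change of variables on both sides, a spherical harmonic decomposition to separate radial and angular dependence, and a Funk-Hecke / radial Plancherel argument. The underlying reason the identity holds is that the 3D Fourier transform diagonalizes on spherical harmonic components, and the angular kernel $b(\tfrac{u}{|u|}\cdot\sigma)$ together with the geometry of the post-collisional velocity $u^+$ respects this decomposition.

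First, I would perform on both sides the same change of variables $(v,\sigma)\to(r,\tau,\varsigma)$ with $r=|v|$, $\tau=v/|v|$, $\varsigma=(\tau+\sigma)/|\tau+\sigma|$, under which $v^+=r(\tau\cdot\varsigma)\varsigma$, $|v|v^+/|v^+|=r\varsigma$, and $d\sigma\,dv=4(\tau\cdot\varsigma)r^2\,dr\,d\tau\,d\varsigma$. Setting $\beta(c):=4c\,b(2c^2-1)\mathbf{1}_{c\ge1/\sqrt{2}}$, the left hand side becomes
\[
\int \beta(\tau\cdot\varsigma)\,r^2 h(r\tau)\bigl[f(r(\tau\cdot\varsigma)\varsigma)-f(r\varsigma)\bigr]dr\,d\tau\,d\varsigma,
\]
and the right hand side (after the analogous change in $\xi$ and then swapping the dummy labels of the two sphere variables, using $\beta(\tau\cdot\varsigma)=\beta(\varsigma\cdot\tau)$) takes the analogous form
\[
\int \beta(\tau\cdot\varsigma)\,R^2\bar{\hat f}(R\varsigma)\bigl[\hat h(R(\tau\cdot\varsigma)\tau)-\hat h(R\tau)\bigr]dR\,d\tau\,d\varsigma.
\]
So the roles of $h$ and $f$ (and the sphere variables $\tau,\varsigma$) are exchanged when passing from physical to Fourier side.

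Next I would expand $h(r\tau)=\sum_{l,m}h_l^m(r)Y_l^m(\tau)$ and $f(r\varsigma)=\sum_{l,m}f_l^m(r)Y_l^m(\varsigma)$, and use the fact that the 3D Fourier transform acts on such decompositions via the spherical Bessel transform: $\hat h(R\tau)=\sum_{l,m}\hat h_l^m(R)Y_l^m(\tau)$ with $\hat h_l^m(R)=4\pi(-i)^l\int_0^\infty h_l^m(r)j_l(rR)r^2dr$, and likewise for $\hat f_l^m$. Applying Funk-Hecke, $\int_{\SS^2}K(\tau\cdot\varsigma)Y_l^m(\tau)d\tau=\lambda_l(K)Y_l^m(\varsigma)$ with $\lambda_l(K)=2\pi\int_{-1}^{1}K(c)P_l(c)dc$, together with the orthonormality of the spherical harmonics, the angular integrations decouple, and both sides collapse to single sums over $(l,m)$ and a single integral in $c$. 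The identity is thus reduced to the pointwise (in $c$) statement
\[
\int_0^\infty r^2 h_l^m(r)\bigl[f_l^m(rc)-f_l^m(r)\bigr]dr=\frac{1}{(2\pi)^3}\int_0^\infty R^2\bar{\hat f_l^m}(R)\bigl[\hat h_l^m(Rc)-\hat h_l^m(R)\bigr]dR,
\]
for each $(l,m)$ and each $c\in[-1,1]$.

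This last radial identity is an immediate consequence of the Plancherel identity restricted to the $(l,m)$ spherical harmonic component (equivalent to Plancherel for the spherical Bessel transform of order $l$), combined with the radial scaling formula $\mathcal F[g(c\,\cdot)](R)=c^{-3}\mathcal F[g](R/c)$ followed by the change of variable $R\mapsto R/c$; the unwanted $(-i)^l$ factors from the Bessel transform cancel against their conjugates from $\bar{\hat f_l^m}$, leaving a clean real identity. The main obstacle is bookkeeping: one must verify that all the normalization constants from the Bessel transform $4\pi(-i)^l$, the Funk-Hecke eigenvalue $2\pi\int P_l$, and the 3D Plancherel factor $(2\pi)^{-3}$ combine consistently across the two sides, and also that one handles correctly the restriction of $\beta$ to the support $c\in[1/\sqrt{2},1]$ inherited from the assumption $\cos\theta\ge0$ in \textbf{(A2)}.
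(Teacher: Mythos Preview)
Your approach via spherical harmonics, Funk--Hecke, and a radial Plancherel/scaling identity is correct and leads to a valid proof; the reduction to the one-dimensional identity $\int_0^\infty r^2 h_l^m(r)[f_l^m(rc)-f_l^m(r)]\,dr=\int_0^\infty R^2\overline{\hat f_l^m(R)}[\hat h_l^m(Rc)-\hat h_l^m(R)]\,dR$ works exactly as you outline, and the $(-i)^l$ phases indeed cancel. The paper, however, takes a much shorter and genuinely different route. It applies Plancherel directly in $\R^3$ to $\int h(u)F(u)\,du$ with $F(u)=\int_{\SS^2}b(\tfrac{u}{|u|}\cdot\sigma)f(\tfrac{|u|}{|u^+|}u^+)\,d\sigma$, and computes $\hat F$ using the geometric fact that, writing $\tau=u/|u|$ and $\kappa=\eta/|\eta|$, the phase $\tfrac{|u|}{|u^+|}u^+\cdot\eta$ is a function only of $\tau\cdot\sigma$, $\kappa\cdot\sigma$ and the constant $\tau\cdot\kappa$. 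One then invokes the elementary sphere identity $\int_{\SS^2}H(\tau\cdot\sigma,\kappa\cdot\sigma)\,d\sigma=\int_{\SS^2}H(\kappa\cdot\sigma,\tau\cdot\sigma)\,d\sigma$ (proved by the rotation of angle $\pi$ about $(\tau+\kappa)/|\tau+\kappa|$, which swaps $\tau$ and $\kappa$); this single swap converts $b(\tfrac{u}{|u|}\cdot\sigma)e^{i\tfrac{|u|}{|u^+|}u^+\cdot\eta}$ into $b(\tfrac{\eta}{|\eta|}\cdot\sigma)e^{i\tfrac{|\eta|}{|\eta^+|}\eta^+\cdot u}$, after which the $u$-integral collapses to a Dirac delta and the identity falls out. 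The paper's method bypasses all mode decomposition and the constant bookkeeping you flag as the main obstacle; your approach, on the other hand, makes the angular-mode structure completely explicit and yields a transparent one-dimensional core identity, which could be advantageous if one later wants per-mode control.
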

\begin{proof} By Plancherel equality, we have
	\beno &&\int_{\R^3\times\SS^2} b(\f{u}{|u|}\cdot \sigma) h(u) f(\f{|u|}{|u^+|}u^+) d\sigma du\\
	&=&\int_{\R^3} h(u) \bigg(\int_{\SS^2}  b(\f{u}{|u|}\cdot \sigma)  f(\f{|u|}{|u^+|}u^+)d\sigma\bigg) du
	=\int_{\R^3} \hat{h}(\xi) \bar{\hat{F}}(\xi)d\xi,\eeno
where we set $F(u):=\int_{\SS^2}  b(\f{u}{|u|}\cdot \sigma)  f(\f{|u|}{|u^+|}u^+)d\sigma$.
Let us derive the Fourier transform $\hat{F}$ of $F$. By definition, we have
\beno
\hat{F}(\xi)=\int_{\R^3} e^{-iu\cdot \xi} F(u)du
=\f1{(2\pi)^{3/2}}\int_{\R^3} \int_{\SS^2} \int_{\R^3} e^{-iu\cdot \xi}e^{i\f{|u|}{|u^+|}u^+\cdot\eta} b(\f{u}{|u|}\cdot \sigma)  \hat{f}(\eta)  d\sigma d\eta du.
\eeno
Since $\f{|u|}{|u^+|}u^+\cdot\eta= \f12 \big((\f{u}{|u|}\cdot \sigma +1)/2\big)^{-\f12}(u
\cdot \eta+|u\|\eta| \f{\eta}{|\eta|}\cdot\sigma),$
then by the fact $\int_{\SS^2} b(\kappa\cdot \sigma) d(\tau\cdot \sigma)d\sigma=\int_{\SS^2} b(\tau\cdot \sigma) d(\kappa\cdot \sigma)d\sigma$, one has
\beno
\hat{F}(\xi)
&=&\f1{(2\pi)^{3/2}}\int_{\R^3} \int_{\SS^2} \int_{\R^3} e^{-iu\cdot \xi}e^{i\f{|\eta|}{|\eta^+|}\eta^+\cdot u} b(\f{u}{|u|}\cdot \sigma)  \hat{f}(\eta)  d\sigma d\eta du\\
&=&\f1{(2\pi)^{3/2}}\int_{\R^3} \int_{\SS^2}    b(\f{\eta}{|\eta|}\cdot \sigma)  \hat{f}(\eta)\delta [\xi=\f{|\eta|}{|\eta^+|}\eta^+]  d\sigma d\eta,
\eeno
which gives
\beno \int_{\R^3\times\SS^2} b(\f{u}{|u|}\cdot \sigma) h(u) f(\f{|u|}{|u^+|}u^+) d\sigma du
=\int_{\R^3\times\SS^2} b(\f{\xi}{|\xi|}\cdot \sigma)   \hat{h}(\f{|\xi|}{|\xi^+|}\xi^+)\bar{\hat{f}}(\xi) d\sigma d\xi.\eeno
A similar argument works for the remainder term and then we get the proposition.
\end{proof}

\begin{lem}[Lemma 5.8 in \cite{he2018sharp}]\label{comWep} Let $\mathcal{F}$ be Fourier transform, then  $\mathcal{F}W^\epsilon((-\triangle_{\mathbb{S}^2})^{1/2})=W^\epsilon((-\triangle_{\mathbb{S}^2})^{1/2})\mathcal{F}$.
\end{lem}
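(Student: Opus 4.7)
The operator $W^\epsilon((-\Delta_{\mathbb{S}^2})^{1/2})$ is defined through the spherical-harmonic decomposition \eqref{DeltaWe}: writing $v=r\sigma$ with $r=|v|$ and $\sigma\in\mathbb{S}^2$, it multiplies each angular mode $f_l^m(r)Y_l^m(\sigma)$ by the scalar $W^\epsilon(\sqrt{l(l+1)})$, leaving the radial factor $f_l^m(r)$ untouched. The strategy is to reduce the commutation claim to the classical Bochner--Hecke identity for the Euclidean Fourier transform.

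The key ingredient is the Bochner--Hecke identity: if $f(v)=\phi(r)Y_l^m(\sigma)$, then
$$\hat f(\xi)=\tilde\phi(\rho)\,Y_l^m(\eta),\qquad \xi=\rho\eta,$$
where $\tilde\phi$ is an explicit Hankel-type transform of $\phi$ (involving the Bessel function $J_{l+1/2}$). This follows from the $SO(3)$-equivariance of $\mathcal{F}$ together with Schur's lemma applied to the irreducible action of $SO(3)$ on the space of degree-$l$ spherical harmonics: $\mathcal{F}$ preserves each angular mode index $(l,m)$, sending $Y_l^m$-harmonics in the phase variable to $Y_l^m$-harmonics in the frequency variable and acting only on the radial profile.

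With this identity in hand, for a test function $f=\sum_{l,m}f_l^m(r)Y_l^m(\sigma)$ one has on one side
$$\mathcal{F}\bigl[W^\epsilon((-\Delta_{\mathbb{S}^2})^{1/2})f\bigr](\xi)=\sum_{l,m}W^\epsilon(\sqrt{l(l+1)})\,\widetilde{f_l^m}(\rho)\,Y_l^m(\eta),$$
and on the other side, since $\mathcal{F}$ does not alter the angular index $(l,m)$ while $W^\epsilon((-\Delta_{\mathbb{S}^2})^{1/2})$ acts on $\hat f$ by multiplication by the $(l,m)$-dependent scalar $W^\epsilon(\sqrt{l(l+1)})$,
$$W^\epsilon((-\Delta_{\mathbb{S}^2})^{1/2})\,\hat f(\xi)=\sum_{l,m}W^\epsilon(\sqrt{l(l+1)})\,\widetilde{f_l^m}(\rho)\,Y_l^m(\eta).$$
The two expressions coincide, giving the desired commutation.

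The main, and rather mild, obstacle is to justify rigorously the interchange between the infinite spherical-harmonic sum and the Fourier integral, and to give a correct domain interpretation of $W^\epsilon((-\Delta_{\mathbb{S}^2})^{1/2})$. This is handled by first verifying the identity on Schwartz functions, where the spherical-harmonic expansion converges rapidly and both operators are unambiguously defined, and then extending by density using the bound \eqref{upper-bound-cf}, which says $W^\epsilon\le|\ln\epsilon|^{-1/2}\epsilon^{-1}$ uniformly in the angular index; in particular $W^\epsilon((-\Delta_{\mathbb{S}^2})^{1/2})$ defines a bounded operator on $L^2(\mathbb{R}^3)$ for each fixed $\epsilon>0$, and the Fourier transform is an $L^2$-isometry, so the identity passes to the $L^2$-closure.
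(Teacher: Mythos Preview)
Your argument is correct. The paper does not supply its own proof of this lemma; it simply records the statement with a citation to Lemma~5.8 of \cite{he2018sharp}. So there is no ``paper's proof'' to compare against in detail.

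That said, it may be worth noting that the full Bochner--Hecke formula is slightly more than you need. The commutation follows already from the rotation-equivariance of $\mathcal{F}$ at the infinitesimal level: writing $-\Delta_{\mathbb{S}^2}=\sum_{i<j}\Omega_{ij}^2$ with $\Omega_{ij}=v_i\partial_{v_j}-v_j\partial_{v_i}$, one checks directly that $\mathcal{F}\Omega_{ij}=\Omega_{ij}\mathcal{F}$ (both sides equal $\xi_i\partial_{\xi_j}-\xi_j\partial_{\xi_i}$ on the transform side). Hence $\mathcal{F}$ commutes with $-\Delta_{\mathbb{S}^2}$, and therefore with any bounded Borel function of it, in particular with $W^\epsilon((-\Delta_{\mathbb{S}^2})^{1/2})$, which is bounded on $L^2$ by \eqref{upper-bound-cf}. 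Your Bochner--Hecke route is the spectral unpacking of this same observation and is equally valid; the only cosmetic point is that the classical formula carries a phase $i^{-l}$ on each angular mode, but since your multiplier depends only on $l$ this is irrelevant to the conclusion, as your argument implicitly recognizes.
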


The following lemma relies on some localization techniques in phase space and then in frequency space.

 \begin{lem}\label{a-technical-lemma} There holds
\beno \mathcal{A} := |\ln \epsilon|^{-1}\int_{\R^{3}}\int_{\epsilon}^{\pi/4} \theta^{-3}|f(v) - f(v/\cos\theta)|^{2} dv d\theta \lesssim |W^{\epsilon}(D)f|^{2}_{L^{2}} + |W^{\epsilon}f|^{2}_{L^{2}}.\eeno
\end{lem}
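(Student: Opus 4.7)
The plan is to split $f$ into pieces based on the phase-space/frequency localization encoded by $\phi(\epsilon\,\cdot)$, handle the ``off'' pieces with trivial bounds, and reduce the remaining piece to an estimate on $|v\nabla(\cdot)|_{L^2}$ that must be squeezed out of the sharp weight $W^\epsilon$.

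First I would write $f=\mathfrak{F}_\phi f+\mathfrak{F}^\phi f$ with $\mathfrak{F}_\phi f=\phi(\epsilon D)f$. For the high-frequency piece I would use the crude bound $|\mathfrak{F}^\phi f(v)-\mathfrak{F}^\phi f(v/\cos\theta)|^2\le 2(|\mathfrak{F}^\phi f(v)|^2+|\mathfrak{F}^\phi f(v/\cos\theta)|^2)$, perform the change of variable $v\mapsto v/\cos\theta$ (Jacobian $\cos^3\theta\sim 1$), and combine $\int_\epsilon^{\pi/4}\theta^{-3}d\theta\lesssim\epsilon^{-2}$ with \eqref{high-frequency-lb-cf} $|\mathfrak{F}^\phi f|_{L^2}\lesssim|\ln\epsilon|^{1/2}\epsilon\,|W^\epsilon(D)f|_{L^2}$ to bound this contribution by $|W^\epsilon(D)f|_{L^2}^2$. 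I then decompose further in phase $\mathfrak{F}_\phi f=\phi(\epsilon v)\mathfrak{F}_\phi f+(1-\phi(\epsilon v))\mathfrak{F}_\phi f$; the high-$v$ piece is handled by the same trivial argument using \eqref{high-frequency-lb-cf} applied to $v$, namely $(1-\phi(\epsilon v))\lesssim|\ln\epsilon|^{1/2}\epsilon W^\epsilon(v)$, producing a bound by $|W^\epsilon f|_{L^2}^2$ modulo a commutator $[\phi(\epsilon v),\phi(\epsilon D)]$ controlled by Lemma \ref{operatorcommutator1}.

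For the remaining ``doubly low'' piece $h:=\phi(\epsilon v)\mathfrak{F}_\phi f$ I would apply Taylor expansion along the segment joining $v$ to $v/\cos\theta$: writing $h(v)-h(v/\cos\theta)=(1-1/\cos\theta)\int_0^1 v\cdot\nabla h(v(s))\,ds$, the elementary bound $|1-1/\cos\theta|\lesssim\theta^2$ gives $|h(v)-h(v/\cos\theta)|^2\lesssim\theta^4|v|^2\int_0^1|\nabla h(v(s))|^2ds$. Performing the change of variable $v\mapsto v(s)$ (bounded Jacobian, since $s$ and $\theta$ are small) and using $\int_\epsilon^{\pi/4}\theta\,d\theta\lesssim 1$, this contribution is dominated by
\beno
|\ln\epsilon|^{-1}|v\nabla h|_{L^2}^2,
\eeno
reducing everything to the weighted estimate $|\ln\epsilon|^{-1}|v\nabla h|_{L^2}^2\lesssim|W^\epsilon f|_{L^2}^2+|W^\epsilon(D)f|_{L^2}^2$.

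The hard part is this final step, which is where the sharp shape of $W^\epsilon$ in \eqref{charicter function} must be used. The plan is: invoke the cancellation identity $|v\nabla g|_{L^2}^2=|\nabla(vg)|_{L^2}^2$ (elementary integration by parts), pass to Fourier via Plancherel so the problem becomes one about $|\xi\nabla\hat g|_{L^2}^2$, and decompose $\phi(\epsilon\xi)=\sum_{k\le K}\varphi_k(\xi)$ dyadically with $K\sim|\ln\epsilon|/\log 2$. Expanding $\partial_{\xi_k}(\xi_j\phi(\epsilon\xi)\hat f)$ produces (modulo harmless terms controlled by $|f|_{L^2}^2$) a sum $\sum_{k\le K}2^{2k}|\tilde{\mathfrak F}_k(vf)|_{L^2}^2$ by the almost orthogonality of Littlewood--Paley pieces. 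The crucial arithmetic is that \eqref{charicter function} gives $W^\epsilon(2^k)^2\sim 2^{2k}(1-k\log 2/|\ln\epsilon|+1/|\ln\epsilon|)$ for $0\le k\le K$, so $\sup_{k\le K}2^{2k}/W^\epsilon(2^k)^2\sim|\ln\epsilon|$; this logarithmic ratio exactly absorbs the prefactor $|\ln\epsilon|^{-1}$, yielding a bound by $|W^\epsilon(D)(vf)|_{L^2}^2$. A final commutator step $W^\epsilon(D)(vf)=vW^\epsilon(D)f+[W^\epsilon(D),v]f$, with $[W^\epsilon(D),v]\in\mathrm{Op}(S^0_{1,0})$ by Lemma \ref{operatorcommutator1}, reduces this to $|W^\epsilon f|_{L^2}^2+|W^\epsilon(D)f|_{L^2}^2$ (after using the phase-space truncation in $h$ to ensure that $v$ is controlled by $W^\epsilon$ on the relevant support, which is where the earlier splitting at $|v|\sim 1/\epsilon$ becomes essential). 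The main obstacle throughout is keeping track of the precise log factors so that the $|\ln\epsilon|^{-1}$ prefactor, the $|\ln\epsilon|^{1/2}$ losses from bounds like $|v|\lesssim|\ln\epsilon|^{1/2}W^\epsilon(v)$, and the saving $2^{2k}/W^\epsilon(2^k)^2\lesssim|\ln\epsilon|$ all balance exactly, making the estimate sharp.
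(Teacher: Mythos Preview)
Your reduction of the doubly-low contribution to $|\ln\epsilon|^{-1}|v\nabla h|^{2}_{L^{2}}$ with $h=\phi(\epsilon v)\phi(\epsilon D)f$ is where the argument fails: this intermediate quantity is \emph{not} controlled by $|W^{\epsilon}f|^{2}_{L^{2}}+|W^{\epsilon}(D)f|^{2}_{L^{2}}$. Take a wave packet $f(v)=e^{iv\cdot\xi_{0}}\psi(v-v_{0})$ with $\psi$ Schwartz, $|\psi|_{L^{2}}=1$, and $|v_{0}|=|\xi_{0}|=M:=\epsilon^{-1/2}$. Then $h=f$ (both truncations are inactive), $|v\nabla h|^{2}_{L^{2}}\sim M^{4}=\epsilon^{-2}$, while $W^{\epsilon}(M)\sim M$ gives $|W^{\epsilon}f|^{2}_{L^{2}}+|W^{\epsilon}(D)f|^{2}_{L^{2}}\sim M^{2}=\epsilon^{-1}$; so you would need $|\ln\epsilon|^{-1}\epsilon^{-2}\lesssim\epsilon^{-1}$, which is false. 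The structural reason is that $|v\nabla h|$ carries the \emph{product} of the phase and frequency scales, whereas the right-hand side is \emph{additive} in them. Your closing chain $|W^{\epsilon}(D)(vh)|^{2}\to|vW^{\epsilon}(D)h|^{2}\to|W^{\epsilon}W^{\epsilon}(D)h|^{2}$ cannot work: on the support $|v|\lesssim 1/\epsilon$ one only has $|v|\lesssim|\ln\epsilon|^{1/2}W^{\epsilon}(v)$ (not $|v|\lesssim W^{\epsilon}(v)$), and even granting that, the last quantity is again $\sim M^{4}$ for the wave packet.

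The paper's proof avoids this by \emph{not} doing a single Taylor expansion over $\theta\in[\epsilon,\pi/4]$. Instead one localizes dyadically in phase ($|v|\sim 2^{k}$) \emph{and}, after Plancherel, in frequency ($|\xi|\sim 2^{l}$), and then splits the $\theta$-integral at the threshold $2^{-(k+l)/2}$ adapted to \emph{both} scales. For $\theta\le 2^{-(k+l)/2}$ one Taylor-expands (the factor $\int_{\epsilon}^{2^{-(k+l)/2}}\theta\,d\theta\sim 2^{-(k+l)}$ compensates the $|\xi|^{2}\sim 2^{2l}$ from the derivative), while for $\theta\ge 2^{-(k+l)/2}$ one uses the trivial bound ($\int\theta^{-3}d\theta\sim 2^{k+l}$). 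This produces terms of the form $2^{l+k}|\varphi_{l}\widehat{\varphi_{k}f}|^{2}_{L^{2}}$ and $2^{l-k}|\varphi_{l}\widehat{v\varphi_{k}f}|^{2}_{L^{2}}$, whose exponents are \emph{linear} in $(k,l)$ rather than quadratic, and which can be summed against $(W^{\epsilon})^{2}$. The missing idea in your proposal is precisely this $\theta$-threshold that depends jointly on the phase and frequency localizations.
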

\begin{proof}
First applying dyadic decomposition in phase space and using the fact $1/\sqrt{2} \leq \cos\theta \leq 1$ when $0 \leq \theta \leq \pi/4$, we have
\beno
\mathcal{A} &=& |\ln \epsilon|^{-1}\int_{\R^{3}}\int_{\epsilon}^{\pi/4} \theta^{-3}| \sum_{k=-1}^{\infty}(\varphi_{k}f)(v)- \sum_{k=-1}^{\infty}(\varphi_{k}f)(v/\cos\theta)|^{2} dv d\theta
\\&\lesssim& |\ln \epsilon|^{-1}\sum_{k=-1}^{\infty}\int_{\R^{3}}\int_{\epsilon}^{\pi/4} \theta^{-3}| (\varphi_{k}f)(v)- (\varphi_{k}f)(v/\cos\theta)|^{2} dv d\theta := |\ln \epsilon|^{-1}\sum_{k=-1}^{\infty}\mathcal{A}_{k}.
\eeno
Using $\int_{\epsilon}^{\pi/4} \theta^{-3}d\theta \lesssim \epsilon^{-2}$, it is easy to check $|\ln \epsilon|^{-1}\sum_{2^{k} \geq 1/\epsilon} \mathcal{A}_{k} \lesssim |W^{\epsilon}f|^{2}_{L^{2}}$.
For the case $2^{k} \leq 1/\epsilon$, by By Plancherel's theorem and dyadic decomposition in frequency space, we have
\beno
\mathcal{A}_{k}&=&
\int_{\R^{3}}\int_{\epsilon}^{\pi/4} \theta^{-3}|\widehat{\varphi_{k}f}(\xi)- \cos^{3}\theta\widehat{\varphi_{k}f}(\xi\cos\theta)|^{2} d\xi d\theta
\\&\lesssim& \int_{\R^{3}}\int_{\epsilon}^{\pi/4} \theta^{-3}|\widehat{\varphi_{k}f}(\xi)- \widehat{\varphi_{k}f}(\xi\cos\theta)|^{2} d\xi d\theta + |\ln \epsilon| |\varphi_{k}f|^{2}_{L^{2}}
\\&=& \int_{\R^{3}}\int_{\epsilon}^{\pi/4} \theta^{-3}|\sum_{l=-1}^{\infty}(\varphi_{l}\widehat{\varphi_{k}f})(\xi)- \sum_{l=-1}^{\infty}(\varphi_{l}\widehat{\varphi_{k}f})(\xi\cos\theta)|^{2} d\xi d\theta + |\ln \epsilon\|\varphi_{k}f|^{2}_{L^{2}}
\\&\lesssim& \sum_{l=-1}^{\infty} \int_{\R^{3}}\int_{\epsilon}^{\pi/4} \theta^{-3}|(\varphi_{l}\widehat{\varphi_{k}f})(\xi)- (\varphi_{l}\widehat{\varphi_{k}f})(\xi\cos\theta)|^{2} d\xi d\theta + |\ln \epsilon\|\varphi_{k}f|^{2}_{L^{2}}
\\&:=& \sum_{l=-1}^{\infty}\mathcal{A}_{k,l}  + |\ln \epsilon\|\varphi_{k}f|^{2}_{L^{2}},
\eeno
where we use $\int_{\epsilon}^{\pi/4}\theta^{-3} (1-\cos^{3}\theta) d\theta \lesssim |\ln \epsilon|$. Note that $|\ln \epsilon|^{-1}\sum_{2^{l} \geq 1/\epsilon} \mathcal{A}_{k,l} \lesssim |W^{\epsilon}(D)\varphi_{k}f|^{2}_{L^{2}}$, thus \beno |\ln \epsilon|^{-1}\mathcal{A}_{k} \lesssim |\ln \epsilon|^{-1}\sum_{2^{l} \leq 1/\epsilon} \mathcal{A}_{k,l} + |W^{\epsilon}(D)\varphi_{k}f|^{2}_{L^{2}}.\eeno
By Lemma \ref{piece-to-whole}, we have
\beno
\mathcal{A}  \lesssim |\ln \epsilon|^{-1} \sum_{2^{k}, 2^{l} \leq 1/\epsilon}\mathcal{A}_{k,l}+ |W^{\epsilon}(D)f|^{2}_{L^{2}} + |W^{\epsilon}f|^{2}_{L^{2}}.
\eeno
For each pair $k,l$ such that $2^{k}, 2^{l}\leq 1/\epsilon$, we have
\beno
\mathcal{A}_{k,l}  &=&  \int_{\R^{3}}\int_{\epsilon}^{2^{-k/2-l/2}} \theta^{-3}|(\varphi_{l}\widehat{\varphi_{k}f})(\xi)- (\varphi_{l}\widehat{\varphi_{k}f})(\xi\cos\theta)|^{2} d\xi d\theta
\\&&+ \int_{\R^{3}}\int_{2^{-k/2-l/2}}^{\pi/4} \theta^{-3}|(\varphi_{l}\widehat{\varphi_{k}f})(\xi)- (\varphi_{l}\widehat{\varphi_{k}f})(\xi\cos\theta)|^{2} d\xi d\theta
\\&\lesssim& \int_{\R^{3}}\int_{\epsilon}^{2^{-k/2-l/2}} \theta^{-3}|(\varphi_{l}\widehat{\varphi_{k}f})(\xi)- (\varphi_{l}\widehat{\varphi_{k}f})(\xi\cos\theta)|^{2} d\xi d\theta
+ 2^{l+k}|\varphi_{l}\widehat{\varphi_{k}f}|^{2}_{L^{2}}
:=\mathcal{B}_{k,l}+ 2^{l+k}|\varphi_{l}\widehat{\varphi_{k}f}|^{2}_{L^{2}}.
\eeno
By Taylor expansion,
$
(\varphi_{l}\widehat{\varphi_{k}f})(\xi)- (\varphi_{l}\widehat{\varphi_{k}f})(\xi\cos\theta) = (1-\cos\theta)\int_{0}^{1}
(\nabla \varphi_{l}\widehat{\varphi_{k}f})(\xi(\kappa))\cdot \xi d\kappa,
$
where $\xi(\kappa) = (1-\kappa)\xi\cos\theta + \kappa \xi$. Thus  we obtain
\beno
\mathcal{B}_{k,l}
\lesssim \int_{\R^{3}}\int_{\epsilon}^{2^{-k/2-l/2}}\int_{0}^{1} \theta|\xi|^{2}|(\nabla \varphi_{l}\widehat{\varphi_{k}f})(\xi(\kappa)) |^{2} d\xi d\theta d\kappa.
\eeno
In the change of variable $\xi \rightarrow \eta = \xi(\kappa)$, the Jacobean is $|d \eta / d \xi| = ((1-\kappa)\cos\theta + \kappa)^{3} \sim 1$ since $1/\sqrt{2} \leq \cos\theta \leq 1$. By the change,
we have
\beno
\mathcal{B}_{k,l}
&=& \int_{\R^{3}}\int_{\epsilon}^{2^{-k/2-l/2}}\int_{0}^{1} \theta\frac{|\eta|^{2}}{((1-\kappa)\cos\theta + \kappa)^{5}}|\nabla (\varphi_{l}\widehat{\varphi_{k}f})(\eta)|^{2} d\eta d\theta d\kappa
\\&\lesssim&\int_{\R^{3}}\int_{\epsilon}^{2^{-k/2-l/2}} \theta^{}|\eta|^{2}|(\nabla \varphi_{l}\widehat{\varphi_{k}f}) (\eta)|^{2} d\eta d\theta
\\&\lesssim& 2^{-(l+k)} \int_{\R^{3}} |\eta|^{2}|\nabla \varphi_{l}\widehat{\varphi_{k}f} (\eta)|^{2} d\eta
\lesssim 2^{l+k}2^{-2k} \int_{\R^{3}} |\nabla \varphi_{l}\widehat{\varphi_{k}f} (\eta)|^{2} d\eta.
\eeno
Since
$
(\nabla \varphi_{l}\widehat{\varphi_{k}f})(\eta) = (\nabla \varphi_{l})(\eta) \widehat{\varphi_{k}f}(\eta) + (\varphi_{l} \nabla \widehat{\varphi_{k}f})(\eta)
= 2^{-l}(\nabla \varphi) (\eta/2^{l}) \widehat{\varphi_{k}f}(\eta) - i (\varphi_{l} \widehat{v\varphi_{k}f}) (\eta),
$
we have
\beno
|(\nabla \varphi_{l}\widehat{\varphi_{k}f})(\eta)|^{2} \lesssim 2^{-2l}|\nabla \varphi|^{2}_{L^{\infty}} |\widehat{\varphi_{k}f} (\eta)|^{2}
+  |\varphi_{l}\widehat{v \varphi_{k}f} (\eta)|^{2},
\eeno
which gives
$
\mathcal{B}_{k,l}
\lesssim 2^{-(l+k)} |\varphi_{k}f|^{2}_{L^{2}} + 2^{l-k}|\varphi_{l} \widehat{v\varphi_{k}f}|^{2}_{L^{2}}.
$
We finally arrive at
\beno
\mathcal{A}_{k,l} \lesssim  2^{-(l+k)} |\varphi_{k}f|^{2}_{L^{2}} + 2^{l-k}|\varphi_{l} \widehat{v\varphi_{k}f}|^{2}_{L^{2}} + 2^{l+k}|\varphi_{l}\widehat{\varphi_{k}f}|^{2}_{L^{2}}:=\mathcal{A}_{k,l,1}+\mathcal{A}_{k,l,2}+\mathcal{A}_{k,l,3}.
\eeno
The first term is estimated by $\sum_{2^{k}, 2^{l}\leq 1/\epsilon} \mathcal{A}_{k,l,1} \lesssim |f|^{2}_{L^{2}}.$
In the following, we repeatedly use the fact $W^{\epsilon}(\xi) \gtrsim |\ln \epsilon|^{-1/2} \langle \xi \rangle$ for $|\xi| \lesssim \epsilon^{-1}$ in \eqref{low-frequency-lb-cf}. For the second term $\mathcal{A}_{k,l,2}$, we have
\beno
|\ln \epsilon|^{-1}\sum_{2^{k}, 2^{l}\leq 1/\epsilon} \mathcal{A}_{k,l,2}
&=& |\ln \epsilon|^{-1}\sum_{j=1}^{3}\sum_{2^{k}, 2^{l}\leq 1/\epsilon} 2^{2l}2^{-2k}|\varphi_{l} \widehat{v_{j}\varphi_{k}f}|^{2}_{L^{2}}
\\&\lesssim& \sum_{j=1}^{3}\sum_{2^{k} \leq 1/\epsilon} 2^{-2k}|W^{\epsilon}\widehat{v_{j}\varphi_{k}f}|^{2}_{L^{2}}
= \sum_{j=1}^{3}\sum_{2^{k} \leq 1/\epsilon} 2^{-2k}|W^{\epsilon}(D) v_{j} \varphi_{k}f|^{2}_{L^{2}}
\\&\lesssim& |W^{\epsilon}(D)f|^{2}_{L^{2}} + |f|^{2}_{L^{2}}.
\eeno
In the last inequality, we apply Lemma \ref{operatorcommutator1}  to get  that $|W^{\epsilon}(D) v_{j} \varphi_{k}f|^{2}_{L^{2}} \lesssim |v_{j} \varphi_{k}W^{\epsilon}(D)f|^{2}_{L^{2}} +  |f|^{2}_{H^{0}}$
thanks to $W^{\epsilon}\in S^{1}_{1,0}, v_{j}\varphi_{k} \in S^{1}_{1,0}$.
As for the sum of the last term $\mathcal{A}_{k,l,3}$, we have
\beno
|\ln \epsilon|^{-1}\sum_{2^{k}, 2^{l}\leq 1/\epsilon} \mathcal{A}_{k,l,3}
&\lesssim& |\ln \epsilon|^{-1}\sum_{2^{k}, 2^{l}\leq 1/\epsilon} 2^{2l}|\varphi_{l}\widehat{\varphi_{k}f}|^{2}_{L^{2}}  +
|\ln \epsilon|^{-1}\sum_{2^{k}, 2^{l}\leq 1/\epsilon} 2^{2k}|\varphi_{l}\widehat{\varphi_{k}f}|^{2}_{L^{2}}
\\&\lesssim&  \sum_{2^{k} \leq 1/\epsilon} |W^{\epsilon}(D)\varphi_{k}f|^{2}_{L^{2}} + |\ln \epsilon|^{-1}\sum_{2^{k} \leq 1/\epsilon} 2^{2k}|\widehat{\varphi_{k}f}|^{2}_{L^{2}}
\lesssim |W^{\epsilon}(D)f|^{2}_{L^{2}} + |W^{\epsilon}f|^{2}_{L^{2}}.
\eeno	
Patching together the above estimates, we finish the proof.
\end{proof}

We now prepare a decomposition on the unit sphere. Let $\omega \in C^\infty_c(\R_{+})$ be a non-negative and non-increasing function. Assume that $\omega(x)=1$ on $[0, 2/3]$ and has support $[0, 3/4]$. Moreover, $\omega$ is strictly decreasing on $[2/3, 3/4]$.

In the following, let $\chi$ be a smooth non-decreasing function on $\R$ verifying
 \beno  \chi(x)=\left\{\begin{aligned} & 1, \quad\mbox{if}\quad x\ge0;\\
&  0,\quad\mbox{if}\quad x<-\f1{10}.\end{aligned}\right.\eeno
Suppose  $u=(u_1, u_2, u_3) \in \R^3$. Then it is easy to check for $u\neq 0$,
\ben \label{positive-result} \sum_{i=1}^3 \omega(\sum_{j\neq i} \f{u_j^2}{|u|^2})\ge 1.\een
Indeed, since $ \frac{u^{2}_{1}+u^{2}_{2}}{|u|^{2}} + \frac{u^{2}_{2}+u^{2}_{3}}{|u|^{2}} + \frac{u^{2}_{3}+u^{2}_{1}}{|u|^{2}} = 2,$ at least one of them is no larger than $2/3.$ Recalling $\omega(x)= 1$ when $x \in [0, 2/3],$ so we get \eqref{positive-result}.

Then for $u\neq 0, m=1,2,3,$ we define
\beno \vartheta_{m+}(u):=\f{\omega(\sum\limits_{j\neq m} \f{u_j^2}{|u|^2})}{\sum\limits_{i=1}^3 \omega(\sum\limits_{j\neq i} \f{u_j^2}{|u|^2})}\chi(\f{u_m}{|u|})\quad
\mbox{and}\quad \vartheta_{m-}(u):=\f{\omega(\sum\limits_{j\neq m} \f{u_j^2}{|u|^2})}{\sum\limits_{i=1}^3 \omega(\sum\limits_{j\neq i} \f{u_j^2}{|u|^2})}\chi(-\f{u_m}{|u|}). \eeno
It is obvious $0 \leq \vartheta_{ m \pm } \leq 1$. Note that the functions $\vartheta_{m \pm}$ are homogeneous of degree 0, so it suffices consider them on the unit sphere $\SS^2$. Taking $\vartheta_{3+}$ for example, let us summarize its property in the following lemma.
\begin{lem}\label{decomposition-on-sphere} For $u \in \mathbb{S}^{2}$, one has
\ben
\label{values}
 \vartheta_{3+}(u) = \left\{\begin{aligned} & 1, \quad\mbox{if}\quad u_{3} \ge \sqrt{3}/2;\\
&  0,\quad\mbox{if}\quad u_{3} \leq 1/2.\end{aligned}\right.
\\ \label{0-region}
\vartheta_{3+}(u) = 0 \Leftrightarrow u_{3} \leq 1/2.
\\ \label{decomposition} \sum_{m=1}^3 \big[\vartheta_{m+}(u)+\vartheta_{m-}(u)\big]=1.
\\ \label{equivalence-cut-6} 1/6 \leq \sum_{m=1}^3 \big[\vartheta^{2}_{m+}(u)+\vartheta^{2}_{m-}(u)\big] \leq 1.
\een
\end{lem}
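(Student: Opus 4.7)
My plan is to exploit 0-homogeneity to reduce everything to $u\in\SS^2$, and then to notice the single crucial compatibility between the supports of $\omega$ and $\chi$: on $\SS^2$, positivity of $\omega(\sum_{j\neq m}u_j^2)$ forces $|u_m|$ to be strictly larger than $1/2$, which is well inside the region where $\chi(u_m)$ and $\chi(-u_m)$ already take their extreme values $\{0,1\}$. This observation powers all four items.

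First I would dispatch \eqref{values} and \eqref{0-region} by explicit case analysis on $u_3$. If $u_3\ge\sqrt3/2$ then $u_1^2+u_2^2\le 1/4\le 2/3$ so $\omega=1$ in the $m=3$ slot, while $u_i^2+u_3^2\ge 3/4$ for $i=1,2$ puts those slots outside $\Supp\omega$; the denominator therefore collapses to $1$ and $\chi(u_3)=1$ gives $\vartheta_{3+}(u)=1$. For \eqref{0-region} the direction ``$u_3\le 1/2\Rightarrow\vartheta_{3+}=0$'' splits into the range $u_3\in[-1/2,1/2]$, where $u_1^2+u_2^2\ge 3/4$ annihilates the $\omega$ factor, and the range $u_3<-1/2<-1/10$, where $\chi(u_3)=0$ does the killing instead. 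The converse direction, $u_3>1/2\Rightarrow\vartheta_{3+}>0$, is immediate because both $\omega(u_1^2+u_2^2)$ (since $u_1^2+u_2^2<3/4$) and $\chi(u_3)$ are strictly positive, and the denominator is bounded below by $1$ by \eqref{positive-result}.

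For the partition of unity \eqref{decomposition}, I would write
\[
\sum_{m=1}^{3}\bigl[\vartheta_{m+}(u)+\vartheta_{m-}(u)\bigr]=\frac{\sum_{m=1}^{3}\omega\!\bigl(\sum_{j\neq m}u_j^2/|u|^2\bigr)\bigl[\chi(u_m/|u|)+\chi(-u_m/|u|)\bigr]}{\sum_{i=1}^{3}\omega\!\bigl(\sum_{j\neq i}u_j^2/|u|^2\bigr)}
\]
and invoke the key observation: whenever $\omega(\sum_{j\neq m}u_j^2/|u|^2)>0$ one has $|u_m|/|u|>1/2>1/10$, which pushes $u_m/|u|$ into one of the two regions $\{x\ge 0\}$ or $\{x<-1/10\}$ and forces $\chi(u_m/|u|)+\chi(-u_m/|u|)=1+0$ or $0+1$. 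Summands with $\omega=0$ do not contribute, so numerator and denominator agree and the ratio is $1$.

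Finally, \eqref{equivalence-cut-6} follows cheaply from \eqref{decomposition}: the upper bound uses that every $\vartheta_{m\pm}\in[0,1]$ (so $\vartheta_{m\pm}^{2}\le\vartheta_{m\pm}$, and summation gives $\le 1$), while the lower bound is Cauchy--Schwarz applied to the six non-negative numbers $\vartheta_{m\pm}$, namely $1=(\sum_{m,\pm}\vartheta_{m\pm})^{2}\le 6\sum_{m,\pm}\vartheta_{m\pm}^{2}$. No step is genuinely delicate; the only place one has to be careful is the ``only if'' in \eqref{0-region}, where the two sub-cases $u_3\in[-1/2,1/2]$ and $u_3<-1/2$ are killed by different factors (respectively the $\omega$ factor and the $\chi$ factor), so one must invoke both support conditions on $\chi$.
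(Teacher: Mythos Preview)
Your proposal is correct. The paper does not actually give a proof of this lemma; it simply writes ``The proof of Lemma \ref{decomposition-on-sphere} is elementary, so we skip it.'' Your argument is exactly the kind of elementary verification the authors had in mind, and every step checks out: the case analysis for \eqref{values} and \eqref{0-region} correctly exploits that $\omega$ vanishes at and beyond $3/4$ (so $\omega(\sum_{j\neq m}u_j^2)>0$ forces $|u_m|>1/2$), the partition of unity \eqref{decomposition} follows from this compatibility between the supports of $\omega$ and $\chi$, and \eqref{equivalence-cut-6} is the trivial Cauchy--Schwarz consequence you describe.
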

The proof of Lemma \ref{decomposition-on-sphere} is elementary, so we skip it.
We are ready to prove gain of anisotropic regularity.
\begin{prop}\label{anisotropic-key-pro} Set $K^{\epsilon}(r)= |\ln \epsilon|^{-1} r^{-4}\mathrm{1}_{2 \geq r \geq \epsilon}$.
For any smooth function $f$ defined on $\mathbb{S}^2$, we have
\begin{eqnarray}\label{similarlemma5.5}
\int_{\mathbb{S}^2\times\mathbb{S}^2}|f(\sigma)-f(\tau)|^{2} K^{\epsilon}(|\sigma-\tau|)  d\sigma d\tau + |f|^{2}_{L^{2}(\mathbb{S}^{2})}
 \sim |W^{\epsilon}((-\Delta_{\mathbb{S}^{2}})^{1/2})f|^{2}_{L^{2}(\mathbb{S}^{2})} + |f|^{2}_{L^{2}(\mathbb{S}^{2})}.
\end{eqnarray}
As a consequence, for any smooth function $f$ defined on $\mathbb{R}^3$, we have
\begin{eqnarray}\label{similarlemma5.6}
\int_{\mathbb{R}_{+}\times\mathbb{S}^2\times\mathbb{S}^2}|f(r\sigma)-f(r\tau)|^{2} K^{\epsilon}(|\sigma-\tau|) r^{2}d\sigma d\tau dr + |f|^{2}_{L^{2}}
 \sim |W^{\epsilon}((-\Delta_{\mathbb{S}^{2}})^{1/2})f|^{2}_{L^{2}} + |f|^{2}_{L^{2}}.
\end{eqnarray}
\end{prop}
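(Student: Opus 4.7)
\textbf{Proof proposal for Proposition~\ref{anisotropic-key-pro}.}

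The plan is to reduce the spherical anisotropic estimate \eqref{similarlemma5.5} to a purely Euclidean fractional-Sobolev-type inequality on $\mathbb{R}^2$, and then transfer the result back to the sphere. First I would use the partition of unity $\{\vartheta_{m\pm}\}$ of Lemma~\ref{decomposition-on-sphere} to decompose $f=\sum_{m,\pm}f_{m\pm}$ with $f_{m\pm}:=\vartheta_{m\pm}f$; each $f_{m\pm}$ is supported in a spherical cap on which one coordinate (say $u_m$) is bounded below by $1/2$. Thanks to \eqref{equivalence-cut-6} and the smoothness of $\vartheta_{m\pm}$, both sides of \eqref{similarlemma5.5} are equivalent to the corresponding sums over the pieces $f_{m\pm}$ (modulo lower-order $|f|_{L^{2}}^{2}$ terms, which are absorbed in \eqref{similarlemma5.5} and controlled using that $\vartheta_{m\pm}\in C^{\infty}(\mathbb{S}^{2})$ so that $[W^{\epsilon}((-\Delta_{\mathbb{S}^{2}})^{1/2}),\vartheta_{m\pm}]$ is bounded on $L^{2}$).

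Second, I would parametrize each cap by a chart, e.g.\ $u\mapsto x=(u_{1},u_{2})\in B_{\sqrt{3}/2}\subset\mathbb{R}^{2}$ with $u_{3}=\sqrt{1-|x|^{2}}$, and write $g(x):=f_{3+}(u(x))$. The map is a bi-Lipschitz diffeomorphism onto its image; in particular $|\sigma-\tau|\sim|x-y|$ and $d\sigma\sim dx$ uniformly on the support, so after this change of variables the bilinear integral in \eqref{similarlemma5.5} is comparable to $\int_{\mathbb{R}^{2}\times\mathbb{R}^{2}}|g(x)-g(y)|^{2}K^{\epsilon}(|x-y|)\,dx\,dy$. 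I would then prove the Euclidean statement
\begin{eqnarray*}
\int_{\mathbb{R}^{2}\times\mathbb{R}^{2}}|g(x)-g(y)|^{2}K^{\epsilon}(|x-y|)\,dx\,dy+|g|_{L^{2}}^{2}\sim|W^{\epsilon}(D)g|_{L^{2}}^{2}+|g|_{L^{2}}^{2}
\end{eqnarray*}
directly by Plancherel: the left side equals $(2\pi)^{-2}\int|\widehat{g}(\xi)|^{2}B^{\epsilon}(\xi)\,d\xi$ with $B^{\epsilon}(\xi):=\int(1-\cos(\xi\cdot z))K^{\epsilon}(|z|)\,dz$, and the same kind of dyadic-in-$|z|$ computation used for Proposition~\ref{symbol} yields $B^{\epsilon}(\xi)\sim(W^{\epsilon}(\xi))^{2}$ (one splits the $|z|$-integral at the threshold $|z|\sim|\xi|^{-1}$ and checks the matching logarithmic behaviour on $1\lesssim|\xi|\lesssim\epsilon^{-1}$ coming from the factor $|\ln\epsilon|^{-1}\int_{\epsilon}^{|\xi|^{-1}}|\xi|^{2}z^{-1}dz+|\ln\epsilon|^{-1}\int_{|\xi|^{-1}}^{2}z^{-3}dz$).

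The main obstacle is Step 3, namely showing that on the localized pieces the Euclidean norm $|W^{\epsilon}(D)g|_{L^{2}(\mathbb{R}^{2})}$ is equivalent to the spherical norm $|W^{\epsilon}((-\Delta_{\mathbb{S}^{2}})^{1/2})f_{m\pm}|_{L^{2}(\mathbb{S}^{2})}$. The point is that $-\Delta_{\mathbb{S}^{2}}$ in the chart $(u_{1},u_{2})$ has principal symbol comparable to $|\xi|^{2}$ on any compact sub-cap, so a Littlewood--Paley / pseudodifferential argument using $W^{\epsilon}\in S^{1}_{1,0}$ (via Lemma~\ref{operatorcommutator1} and Lemma~\ref{comWep}) gives this equivalence up to an $L^{2}$-bounded remainder; I will need to verify carefully that the dyadic constants do not blow up in $\epsilon$, which is where the precise form of $W^{\epsilon}$ enters. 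Once \eqref{similarlemma5.5} is established, \eqref{similarlemma5.6} follows immediately by applying it to $f_{r}(\sigma):=f(r\sigma)$ for each fixed $r>0$, multiplying by $r^{2}$, integrating in $r$, and using that $W^{\epsilon}((-\Delta_{\mathbb{S}^{2}})^{1/2})$ commutes with radial dilations.
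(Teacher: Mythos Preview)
Your route is genuinely different from the paper's. The paper never passes to a Euclidean chart or uses Plancherel on $\mathbb{R}^2$; instead it diagonalizes the difference integral in spherical harmonics (via Lemma~5.4 of \cite{he2018sharp}), reducing \eqref{similarlemma5.5} to the scalar estimate $\mathcal{A}^{\epsilon}_{l}+1\sim\big(W^{\epsilon}(\sqrt{l(l+1)})\big)^{2}+1$ with $\mathcal{A}^{\epsilon}_{l}=\int|Y^{m}_{l}(\sigma)-Y^{m}_{l}(\tau)|^{2}K^{\epsilon}(|\sigma-\tau|)\,d\sigma d\tau$. This is then proved mode-by-mode by a case split on the size of $\epsilon\sqrt{l(l+1)}$: in the low-frequency regime they use the $\vartheta_{ij}$ partition, charts, a second-order Taylor expansion and Lemma~\ref{relation-order-1-2} (which identifies chart $\dot{H}^{1},\dot{H}^{2}$ norms of localized $Y^{m}_{l}$ with powers of $l(l+1)$); in the high-frequency regime they bound the cross term $\int Y^{m}_{l}(\sigma)Y^{m}_{l}(\tau)K^{\epsilon}\zeta$ by integrating by parts twice on the sphere, trading two factors of $\epsilon^{-1}$ for $[l(l+1)]^{-1}$. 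The point is that both the operator $W^{\epsilon}((-\Delta_{\mathbb{S}^{2}})^{1/2})$ and the estimate are handled entirely on the spectral side, so no chart-transfer of the multiplier is ever needed.

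Your Step~3 is where the real gap sits. You need $|W^{\epsilon}(D)g|_{L^{2}(\mathbb{R}^{2})}\sim|W^{\epsilon}((-\Delta_{\mathbb{S}^{2}})^{1/2})f_{m\pm}|_{L^{2}(\mathbb{S}^{2})}$ uniformly in $\epsilon$, but the lemmas you cite do not deliver this: Lemma~\ref{operatorcommutator1} is a commutator bound on $\mathbb{R}^{n}$ between a Fourier multiplier and a phase-space weight, and Lemma~\ref{comWep} only says that $W^{\epsilon}((-\Delta_{\mathbb{S}^{2}})^{1/2})$ commutes with the $\mathbb{R}^{3}$ Fourier transform. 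What you actually require is that the \emph{spectral} multiplier $W^{\epsilon}((-\Delta_{\mathbb{S}^{2}})^{1/2})$, pulled back through a coordinate chart, coincides with the Euclidean multiplier $W^{\epsilon}(D_{\mathbb{R}^{2}})$ up to an $L^{2}$-bounded remainder with constants independent of $\epsilon$. Standard functional-calculus-on-manifolds theory gives this for a fixed $S^{1}_{1,0}$ symbol, but making it uniform for the family $\{W^{\epsilon}\}$, whose profile changes qualitatively across the scale $|\xi|\sim\epsilon^{-1}$ with a logarithmic correction in between, is essentially as hard as the proposition itself. The same uniformity issue already appears in your Step~1, where the $L^{2}$-boundedness of $[W^{\epsilon}((-\Delta_{\mathbb{S}^{2}})^{1/2}),\vartheta_{m\pm}]$ independently of $\epsilon$ is not automatic. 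The paper's spherical-harmonic computation sidesteps all of this because $W^{\epsilon}((-\Delta_{\mathbb{S}^{2}})^{1/2})$ is diagonal in the $Y^{m}_{l}$ basis by definition.
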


\begin{proof}
We prove it in the sprit of \cite{he2018sharp}.  By Lemma 5.4 of \cite{he2018sharp}, we have
\beno
\int_{\mathbb{S}^2\times\mathbb{S}^2} |f(\sigma)-f(\tau)|^{2} K^{\epsilon}(|\sigma-\tau|) d\sigma d\tau
=\sum_{l=0}^{\infty}\sum_{m=-l}^{l}(f^{m}_{l})^{2} \int_{\mathbb{S}^2\times\mathbb{S}^2}|Y^{m}_{l}(\sigma)-Y^{m}_{l}(\tau)|^{2}K^{\epsilon}(|\sigma-\tau|) d\sigma d\tau,
\eeno where $f^m_l=\int_{\mathbb{S}^2} f(\sigma)Y^m_l(\sigma)d\sigma$.
For simplicity, set \beno \mathcal{A}^{\epsilon}_{l} :=  \int_{\mathbb{S}^2\times\mathbb{S}^2}|Y^{m}_{l}(\sigma)-Y^{m}_{l}(\tau)|^{2}K^{\epsilon}(|\sigma-\tau|) d\sigma d\tau.\eeno

We set to analyze $\mathcal{A}^{\epsilon}_{l}$. By \eqref{equivalence-cut-6} in Lemma \ref{decomposition-on-sphere}, we have
\ben\label{equivalence-to-1-6-2} \sum_{i=1}^3 \sum_{j=+,-}
\mathcal{A}^{\epsilon}_{l,i,j} \leq \mathcal{A}^{\epsilon}_{l}
\leq 6\sum_{i=1}^3 \sum_{j=+,-} \mathcal{A}^{\epsilon}_{l,i,j}, \een
where
$
\mathcal{A}^{\epsilon}_{l,i,j}:=\int_{\SS^2 \times \SS^2} |Y^{m}_{l}(\sigma)-Y^{m}_{l}(\tau)|^2K^{\epsilon}(|\sigma-\tau|)\vartheta_{ij}^2(\sigma) d\sigma d\tau.
$
Then due to the symmetric structure, we only need to focus on the estimate of
\beno \mathcal{A}^{\epsilon}_{l,3,+} = \int_{\SS^2 \times \SS^2} |Y^{m}_{l}(\sigma)-Y^{m}_{l}(\tau)|^2K^{\epsilon}(|\sigma-\tau|)\vartheta_{3+}^2(\sigma) d\sigma d\tau. \eeno

We give a detailed proof to the $\gtrsim$ direction in \eqref{similarlemma5.5} while omit the proof of the $\lesssim$ direction in \eqref{similarlemma5.5}. The proof is divided into three steps.

{\it Step 1:  $\epsilon^{2} l(l+1) \leq \eta^{2}$.} Here $\eta>0$ is a small constant which will be determined later.
By \eqref{0-region}, we have
\beno  \mathcal{A}^{\epsilon}_{l,3,+}&=&
\int_{\SS^2 \times \SS^2} |Y^{m}_{l}(\sigma)-Y^{m}_{l}(\tau)|^2K^{\epsilon}(|\sigma-\tau|)\vartheta_{3+}^2(\sigma) \mathrm{1}_{\sigma_{3} \geq 1/2}d\sigma d\tau
\\&=&
\int_{\SS^2 \times \SS^2} |(\vartheta_{3+}Y^{m}_{l})(\sigma)-(\vartheta_{3+}Y^{m}_{l})(\tau) + \left(\vartheta_{3+}(\tau) -\vartheta_{3+}(\sigma)\right)Y^{m}_{l}(\tau)|^2 K^{\epsilon}(|\sigma-\tau|)
\mathrm{1}_{\sigma_{3} \geq 1/2}d\sigma d\tau
\\&\geq&
\frac{1}{2}\int_{\SS^2 \times \SS^2} |(\vartheta_{3+}Y^{m}_{l})(\sigma)-(\vartheta_{3+}Y^{m}_{l})(\tau) |^2 K^{\epsilon}(|\sigma-\tau|)\mathrm{1}_{\sigma_{3} \geq 1/2} \mathrm{1}_{\tau_{3} \geq \sqrt{5}/5}d\sigma d\tau
\\&&-
\int_{\SS^2 \times \SS^2} |\vartheta_{3+}(\tau) -\vartheta_{3+}(\sigma)|^2|Y^{m}_{l}(\tau)|^2 K^{\epsilon}(|\sigma-\tau|) \mathrm{1}_{\sigma_{3} \geq 1/2} \mathrm{1}_{\tau_{3} \geq \sqrt{5}/5}d\sigma d\tau
:= \frac{1}{2}\mathcal{I}_{1}-\mathcal{I}_{2}.
 \eeno
By mean value theorem, one has for some $\kappa \in [0,1]$,
\beno \vartheta_{3+}(\tau) -\vartheta_{3+}(\sigma) = (\nabla \vartheta_{3+})(\kappa \tau + (1-\kappa)\sigma)\cdot(\tau-\sigma).
\eeno
Since $\sigma_{3} \geq 1/2, \tau_{3} \geq \sqrt{5}/5$, then $\sqrt{5}/5 \leq |\kappa \tau + (1-\kappa)\sigma| \leq 1$. So we have
\beno |\vartheta_{3+}(\tau) -\vartheta_{3+}(\sigma)| \leq  |\nabla \vartheta_{3+}|_{L^{\infty}(B(\sqrt{5}/5,1))}|\sigma-\tau| := C|\sigma-\tau|.
\eeno
where $B(\sqrt{5}/5,1) : = \{x \in \R^{3}, \sqrt{5}/5 \leq |x| \leq 1\}$. Here and in the following, $C$ is a universal constant and it may change across different lines.
Note that $\vartheta_{3+}(u)$ is smooth on $|u| >0$ and then its derivative is bounded on the compact set  $B(\sqrt{5}/5,1)$.
Then we get
\beno \mathcal{I}_{2} &\leq& C\int_{\SS^2 \times \SS^2} |Y^{m}_{l}(\tau)|^2|\sigma-\tau|^{2} K^{\epsilon}(|\sigma-\tau|)d\sigma d\tau
\\&\leq& C \int_{ \SS^2} |Y^{m}_{l}(\tau)|^2 \left( \int_{\SS^2} |\sigma-\tau|^{2} K^{\epsilon}(|\sigma-\tau|) d\sigma \right) d\tau \leq C,
 \eeno
where we used the following computation,  $|\sigma-\tau| = 2 \sin(\theta/2)$, $d\sigma = \sin \theta d\theta d\varphi = 4 \sin(\theta/2) d \sin(\theta/2) d\varphi$,
\beno \int_{\SS^2} |\sigma-\tau|^{2} K^{\epsilon}(|\sigma-\tau|) d\sigma
&=&  16 \int_{0}^{\pi}\int_{0}^{2\pi}  \sin^{3}(\theta/2) K^{\epsilon}(2 \sin(\theta/2)) d \sin(\theta/2) d\varphi
\\&=& 2 \pi \int_{0}^{2} K^{\epsilon}(r) r^{3} d r \leq  C. \eeno

Set $F_{3+}(x):=(\vartheta_{3+}Y^{m}_{l})(x_1,x_2, \sqrt{1-x_1^2-x_2^2})$ for $x=(x_1,x_2) \in \{ x \in \mathbb{R}^{2}: |x| < 1\}$. We make the following change of variables:
\beno \sigma = (\sigma_1, \sigma_2, \sigma_3) \in \{ \sigma \in \mathbb{S}^{2}: \sigma_{3} \geq 1/2 \}
 \rightarrow x = (\sigma_1, \sigma_2)\in \{ x \in \mathbb{R}^{2}: |x| \leq \sqrt{3/4} \},
  \\
\tau = (\tau_1, \tau_2, \tau_3) \in \{ \tau \in \mathbb{S}^{2}: y_{3} \geq \sqrt{5}/5 \}  \rightarrow y = (\tau_1, \tau_2) \in \{ y \in \mathbb{R}^{2}: |y| \leq \sqrt{4/5} \}.
\eeno
In the changes, the Jacobean is $\frac{d \sigma}{d x} = \f{1}{\sqrt{1-|x|^{2}}}$ and $\frac{d\tau}{d y} = \f{1}{\sqrt{1-|y|^{2}}}$.
Let $K^{\epsilon, b}(r) := K^{\epsilon}(r) \mathrm{1}_{r \leq b}$ for $b \geq \epsilon$  and $m(\eta, l) := 4 \sqrt{5}\eta\left(l(l+1)\right)^{-1/2}$.
Then we have
\beno  \mathcal{I}_{1} &\geq& \int_{\SS^2 \times \SS^2} |(\vartheta_{3+}Y^{m}_{l})(\sigma)-(\vartheta_{3+}Y^{m}_{l})(\tau)|^2
 K^{\epsilon, m(\eta,l)}(|\sigma-\tau|) \mathrm{1}_{\sigma_{3} \geq 1/2} \mathrm{1}_{\tau_{3} \geq \sqrt{1/5}}d\sigma d\tau\\
&=& \int_{|x|\leq \sqrt{3/4},|y|\le \sqrt{4/5}} |F_{3+}(x)-F_{3+}(y)|^2 K^{\epsilon, m(\eta,l)}(d(x,y))
\f1{\sqrt{1-|x|^2}}\f1{\sqrt{1-|y|^2}}dxdy,
 \eeno
where $d(x,y):=\big(|x-y|^2+|\sqrt{1-|x|^2}-\sqrt{1-|y|^2}|^2\big)^{1/2} = |\sigma-\tau|$. Since $|x|\leq \sqrt{3/4},|y|\le \sqrt{4/5}$, it is elementary to derive
$|x-y| \leq  d(x,y) \leq \sqrt{5}|x-y|. $
So $\epsilon \leq |x-y| \leq 4  \eta \left(l(l+1)\right)^{-1/2}$ gives $\epsilon \leq |\sigma-\tau| \leq  4 \sqrt{5}\eta\left(l(l+1)\right)^{-1/2}$, and thus
\ben \label{intermediate-lower-bound-2}
\mathcal{I}_{1}
\geq \frac{1}{5^{2}} \int_{|x| \leq \sqrt{3/4},|y| \leq \sqrt{4/5}} |F_{3+}(x)-F_{3+}(y)|^2 K^{\epsilon, h(\eta,l)}(|x-y|)
dx dy,
 \een
where $h(\eta,l) = 4  \eta \left(l(l+1)\right)^{-1/2}$.
By Taylor expansion,
and the basic inequality $(a-b)^{2} \geq a^{2}/2-b^{2},$ similar to \eqref{main-term-and-lower-order}, we have
\beno
|F_{3+}(y)-F_{3+}(x)|^{2} \geq  \frac{1}{2}|\nabla F_{3+}(x) \cdot (y-x)|^{2} - \frac{1}{4}|y-x|^{4} \int_{0}^{1} (1-\kappa)^{2} |(\nabla^{2} F_{3+}) \left(x+\kappa(y-x)\right)|^{2} d \kappa.
\eeno
Plugging which into \eqref{intermediate-lower-bound-2}, we get
\beno  \mathcal{I}_{1}
&\geq& \frac{1}{50}  \int_{|x| \leq \sqrt{3/4},|y|\le \sqrt{4/5}} |\nabla F_{3+}(x) \cdot (y-x)|^{2} K^{\epsilon, h(\eta,l)}(|x-y|)
dxdy
\\&&- \frac{1}{100} \int_{0}^{1}\int_{|x| \leq \sqrt{3/4},|y|\le \sqrt{4/5}}  (1-\kappa)^{2} |(\nabla^{2} F_{3+}) \left(x+\kappa(y-x)\right)|^{2}  |x-y|^{4}K^{\epsilon, h(\eta,l)}(|x-y|)
d \kappa dxdy
\\&:=& \frac{1}{50}\mathcal{I}_{1,1} - \frac{1}{100}\mathcal{I}_{1,2}
. \eeno
For $x$ with $|x| \leq \sqrt{3/4}$, if $|\nabla F_{3+}(x)| \neq 0$, set the unit vector $e = \nabla F_{3+}(x)/|\nabla F_{3+}(x)|$. Then we have
\ben \label{mathcal-I-11}
\mathcal{I}_{1,1} =
\int_{|x| \leq \sqrt{3/4}}  |\nabla F_{3+}(x)|^{2}  \mathrm{1}_{|\nabla F_{3+}(x)| \neq 0}
 \big(\int_{|y|\le \sqrt{4/5}}  |e \cdot (y-x)|^{2} K^{\epsilon, h(\eta,l)}(|x-y|) dy \big) dx
. \een
Note that if $\eta$ is small enough, then for $x$ with $|x|\leq  \sqrt{3/4}$, the condition $|x-y| \leq h(\eta,l) \leq 4\eta$ gives $|y| \leq \sqrt{4/5}$. Then we have
\beno
\int_{|y|\le \sqrt{4/5}}  |e \cdot (y-x)|^{2} K^{\epsilon, h(\eta,l)}(|x-y|) dy &=& |\ln \epsilon|^{-1} \int_{\epsilon \leq |u| \leq 4 \eta \left(l(l+1)\right)^{-1/2}}  \f{|e \cdot u|^{2}}{|u|^{4}} d u
\\&=& |\ln \epsilon|^{-1} \int_{0}^{2\pi}\int_{\epsilon}^{4 \eta \left(l(l+1)\right)^{-1/2}} \frac{\cos^{2}\theta}{r} d r d\theta
\\&=&\pi|\ln \epsilon|^{-1} \big(\ln(4\eta)- \frac{1}{2}\ln(l(l+1)) + |\ln \epsilon|\big)
. \eeno
Plugging which into \eqref{mathcal-I-11}, we get
\beno \mathcal{I}_{1,1} = \pi |\ln \epsilon|^{-1} \big(\ln(4\eta)- \frac{1}{2}\ln(l(l+1)) + |\ln \epsilon|\big) \|F_{3+}\|_{\dot{H}^1(B_{\sqrt{3}/2})}^2.
\eeno

Let us deal with $\mathcal{I}_{1,2}.$ For fixed $\kappa, y$, in the change of variable $u=x+\kappa(y-x)$, one has $du = (1-\kappa)^{2} dx$. Since $u \in B(\kappa y, (1-\kappa)\sqrt{3/4}) \subset B(0, \sqrt{4/5})$, we get
\beno
\mathcal{I}_{1,2} &=& \int_{0}^{1}\int_{|x| \leq \sqrt{3/4},|y|\le \sqrt{4/5}}  (1-\kappa)^{2} |(\nabla^{2} F_{3+}) \left(x+\kappa(y-x)\right)|^{2} |x-y|^{4}K^{\epsilon, h(\eta,l)}(|x-y|)
d \kappa dxdy
\\&\leq& \int_{0}^{1}\int_{|y|\le \sqrt{4/5}}  \bigg(\int_{|u|\le \sqrt{4/5}} |(\nabla^{2} F_{3+}) (u)|^{2} (1-\kappa)^{-4}|u-y|^{4}K^{\epsilon, h(\eta,l)}((1-\kappa)^{-1}|u-y|) du\bigg)
d \kappa dy
\\&=& \int_{0}^{1} \int_{|u|\le \sqrt{4/5}} |(\nabla^{2} F_{3+}) \left(u\right)|^{2}
\bigg(\int_{|y|\le \sqrt{4/5}} (1-\kappa)^{-4}|u-y|^{4}K^{\epsilon, h(\eta,l)}((1-\kappa)^{-1}|u-y|) dy \bigg)
d \kappa du
\\&\leq& \pi 16 \eta^{2} \left(l(l+1)\right)^{-1}
|\ln \epsilon|^{-1}\int_{0}^{1} \int_{|u|\le \sqrt{4/5}} |(\nabla^{2} F_{3+}) \left(u\right)|^{2}
(1-\kappa)^{2}
d \kappa du
\\&\leq& 8\pi \eta^{2} \left(l(l+1)\right)^{-1} |\ln \epsilon|^{-1} \|F_{3+}\|_{\dot{H}^2(B_{2/\sqrt{5}})}^2,
\eeno
where we use
\beno
&&\int_{|y|\le \sqrt{4/5}} (1-\kappa)^{-4}|u-y|^{4}K^{\epsilon, h(\eta,l)}((1-\kappa)^{-1}|u-y|) dy
\\&=& |\ln \epsilon|^{-1} \int_{|y|\le \sqrt{4/5}} \mathrm{1}_{(1-\kappa) \epsilon \leq |u-y| \leq (1-\kappa)4 \eta \left(l(l+1)\right)^{-1/2}} dy
\leq
|\ln \epsilon|^{-1} \pi 16 \eta^{2} \left(l(l+1)\right)^{-1} (1-\kappa)^{2}.
\eeno

Patching together the above estimates for $\mathcal{I}_{1,1}, \mathcal{I}_{1,2}, \mathcal{I}_{2}$, we get
\beno
\mathcal{A}^{\epsilon}_{l,3,+} &\geq& |\ln \epsilon|^{-1}
C_{1}\big(\ln(4\eta)- \frac{1}{2}\ln(l(l+1)) + |\ln \epsilon|\big) \|F_{3+}\|_{\dot{H}^1(B_{2/\sqrt{5}})}^2
\\&&- |\ln \epsilon|^{-1}  C_{2} \eta^{2} \left(l(l+1)\right)^{-1} \|F_{3+}\|_{\dot{H}^2(B_{2/\sqrt{5}})}^2
- C_{3},
\eeno
for some universal constants $C_{1} , C_{2}, C_{3}$. By the equivalence \eqref{equivalence-to-1-6-2}, the relations
\eqref{order-1-lower-bound} and \eqref{order-2-upper-bound} in Lemma \ref{relation-order-1-2},
we have
\beno  \mathcal{A}^{\epsilon}_{l}
\geq |\ln \epsilon|^{-1}\big( C_{1}\big(\ln(4\eta)- \frac{1}{2}\ln(l(l+1)) + |\ln \epsilon|\big) - C_{2} \eta^{2} \big) l(l+1) -C_{3}.
 \eeno
Since $(l(l+1))^{1/2} \leq \eta \epsilon^{-1}$, we have $\ln(4\eta)- \frac{1}{2}\ln(l(l+1)) + |\ln \epsilon| \geq  \ln 4$. By taking $\eta$ small such that $C_{2} \eta^{2}  \leq \frac{1}{2}C_{1} \ln 4$, we have
\beno  \mathcal{A}^{\epsilon}_{l}
\geq  \frac{1}{2} C_{1}|\ln \epsilon|^{-1}\big(\ln(4\eta)- \frac{1}{2}\ln(l(l+1)) + |\ln \epsilon|\big) l(l+1) -C_{3}.
 \eeno
Thanks to \eqref{reduce-to-the-exact-form}, we get
\ben \label{case-1-small}
\mathcal{A}^{\epsilon}_{l} \geq   \frac{1}{2} \frac{\ln 4}{1-\ln
\eta} C_{1}|\ln \epsilon|^{-1} \big(1- \frac{1}{2}\ln(l(l+1)) + |\ln \epsilon|\big) l(l+1) -C_{3}.
 \een

{\it Step 2:  $\epsilon^{2} l(l+1) \geq R^{2}$.}  Let $\zeta$ be a smooth function on $\mathbb{R}$ with compact support verifying that $0\le \zeta\le1$, $\zeta(x)=1$ if $4\geq x \geq 2$ and $\zeta(x)=0$ if $x \leq  1$ and $x \geq 5$.
We have
\ben \label{main-and-lower-term}
\mathcal{A}^{\epsilon}_{l} &\ge& \int_{\mathbb{S}^2\times\mathbb{S}^2}\left(|Y^{m}_{l}(\sigma)|^{2}+|Y^{m}_{l}(\tau)|^{2}-2Y^{m}_{l}(\sigma)Y^{m}_{l}(\tau)\right)
K^{\epsilon}(|\sigma-\tau|) \zeta(\epsilon^{-1}|\sigma-\tau|) d\sigma d\tau
\\&\geq& \mu_{0}|\ln\epsilon|^{-1}\epsilon^{-2} - 2\int_{\mathbb{S}^2\times\mathbb{S}^2}Y^{m}_{l}(\sigma)Y^{m}_{l}(\tau) K^{\epsilon}(|\sigma-\tau|) \zeta(\epsilon^{-1}|\sigma-\tau|) d\sigma d\tau
:= \mu_{0}|\ln\epsilon|^{-1}\epsilon^{-2} - 2\mathcal{B}^{\epsilon}_{l}, \nonumber
\een
where we use
$ \int_{\mathbb{S}^{2}} K^{\epsilon}(|\sigma-\tau|) \zeta(\epsilon^{-1}|\sigma-\tau|) d\sigma \geq |\ln\epsilon|^{-1} \int_{\mathbb{S}^{2}} |\sigma-\tau|^{-4} \mathrm{1}_{2\epsilon \leq |\sigma-\tau| \leq 4\epsilon}
 d\sigma = \mu_{0} |\ln\epsilon|^{-1} \epsilon^{-2}$ for some constant $\mu_{0}$ and $|Y^{m}_{l}|_{L^{2}(\mathbb{S}^{2})}=1$.

Since $(-\Delta_{\mathbb{S}^{2}})Y^{m}_{l} = l(l+1)Y^{m}_{l}$, by the decomposition  \eqref{decomposition},
we have
\beno
\mathcal{B}^{\epsilon}_{l} &=& [l(l+1)]^{-1}\int_{\mathbb{S}^2\times\mathbb{S}^2}(-\Delta_{\mathbb{S}^{2}})Y^{m}_{l}(\sigma)Y^{m}_{l}(\tau)
K^{\epsilon}(|\sigma-\tau|) \zeta(\epsilon^{-1}|\sigma-\tau|) d\sigma d\tau
\\ &=& |\ln\epsilon|^{-1}\sum_{i=1}^3\sum_{j=+,-} [l(l+1)]^{-1}\int_{\mathbb{S}^2\times\mathbb{S}^2}(-\Delta_{\mathbb{S}^{2}})(\vartheta_{ij}Y^{m}_{l})
(\sigma)Y^{m}_{l}(\tau)|\sigma-\tau|^{-4}\zeta(\epsilon^{-1}|\sigma-\tau|) d\sigma d\tau
\\&:=& |\ln\epsilon|^{-1}[l(l+1)]^{-1} \sum_{i=1}^3\sum_{j=+,-}  \mathcal{B}^{\epsilon}_{l,i,j}.
\eeno

It suffices to consider $\mathcal{B}^{\epsilon}_{l,3,+}$. For simplicity, set $F(y):= Y^{m}_{l}(y_1,y_2, \sqrt{1-y_1^2-y_2^2})$. Recall $F_{3+}(x)=(\vartheta_{3+}Y^{m}_{l})(x_1,x_2, \sqrt{1-x_1^2-x_2^2})$. We make the following of change variables:
\ben \label{change-sigma-to-x}
\sigma = (\sigma_1, \sigma_2, \sigma_3) \in \{ \sigma \in \mathbb{S}^{2}, \sigma_{3} \geq \sqrt{1/5} \} \rightarrow x = (\sigma_1, \sigma_2)\in \{ x \in \mathbb{R}^{2}, |x| \leq \sqrt{4/5} \}, \\
\tau = (\tau_1, \tau_2, \tau_3) \in \{ \tau \in \mathbb{S}^{2}, \tau_3 \geq \sqrt{1/5} \}  \rightarrow y = (\tau_1, \tau_2)\in \{ y \in \mathbb{R}^{2}, |y| \leq \sqrt{4/5} \}.  \nonumber \een
In the above changes, the Jacobean is $\frac{d \sigma}{d x} = \f{1}{\sqrt{1-|x|^{2}}}$ and $\frac{d\tau}{d y} = \f{1}{\sqrt{1-|y|^{2}}}$.
Then we have
\beno
\mathcal{B}^{\epsilon}_{l,3,+} &=& \int_{\mathbb{S}^2\times\mathbb{S}^2} (-\Delta_{\mathbb{S}^{2}})(\vartheta_{3+}Y^{m}_{l})
(\sigma)Y^{m}_{l}(\tau)|\sigma-\tau|^{-4}\zeta(\epsilon^{-1}|\sigma-\tau|) d\sigma d\tau
\\&=&\int_{\mathbb{S}^2\times\mathbb{S}^2}(-\sum\limits_{1\le i<j\le 3} (\Omega_{ij}^2 \vartheta_{3+}Y^{m}_{l})(\sigma_1, \sigma_2, \sigma_3))Y^{m}_{l}(\tau)|\sigma-\tau|^{-4}\zeta(\epsilon^{-1}|\sigma-\tau|) d\sigma d\tau
\\&:=& \sum\limits_{1\le i<j\le 3}I_{ij} =I_{13} + I_{23} +I_{12}.
\eeno
Here $\Omega_{ij}=\sigma_i \pa_j- \sigma_j \pa_i$.
It is easy to   see that for $i=1,2$, \beno \pa_i F_{3+}(x)(x_1,x_2)=\f1{\sqrt{1-|x|^2}}\big(-\Omega_{i3}(\vartheta_{3+}Y^{m}_{l})\big)(x,\sqrt{1-|x|^2}), \eeno
which yields
\ben \label{differential-sphere-to-plane}
\big(\Omega_{i3}^2(\vartheta_{3+}Y^{m}_{l})\big)(x,\sqrt{1-|x|^2})=\big((\sqrt{1-|x|^2}\pa_i)^2F_{3+}\big)(x).\een
From which we have
\beno
I_{13} &=& -\int_{|x|,|y|\leq \sqrt{4/5}} \big((\sqrt{1-|x|^2}\pa_1)^2F_{3+}\big)(x)F(y) d^{-4}(x,y)\zeta(\epsilon^{-1}d(x,y))
\\&&\times
\f1{\sqrt{1-|x|^2}}\f1{\sqrt{1-|y|^2}}dxdy
\\&=& -\int_{|x|,|y|\leq \sqrt{4/5}} \big((\pa_1\sqrt{1-|x|^2}\pa_1)F_{3+}\big)(x)F(y)d^{-4}(x,y)\zeta(\epsilon^{-1}d(x,y))
\f1{\sqrt{1-|y|^2}}dxdy.
\eeno
For $y$ with $|y| \leq \sqrt{4/5}$, let us consider the following integral $I(y)$. Using integrating by parts formula, we have
\beno
I(y)&:=&-\int_{|x|\leq \sqrt{4/5}} \big((\pa_1\sqrt{1-|x|^2}\pa_1)F_{3+}\big)(x) d^{-4}(x,y)\zeta(\epsilon^{-1}d(x,y))
dx
\\&=&\int_{|x|\leq \sqrt{4/5}} \big((\sqrt{1-|x|^2}\pa_1)F_{3+}\big)(x)\pa_1 \left(d^{-4}(x,y)\zeta(\epsilon^{-1}d(x,y))\right)
dx
\\&=&-\int_{|x|\leq \sqrt{4/5}} F_{3+}(x) \pa_1\left(\sqrt{1-|x|^2}\pa_1 \left(d^{-4}(x,y)\zeta(\epsilon^{-1}d(x,y))\right)\right)
dx
\\&:=&-\int_{|x|\leq \sqrt{4/5}} F_{3+}(x) K(x,y)
dx,
\eeno
where $K(x,y):=\pa_1\left(\sqrt{1-|x|^2}\pa_1 \left(d^{-4}(x,y)\zeta(\epsilon^{-1}d(x,y))\right)\right)$. Note that we have used $\pa_1F_{3+}(x)=F_{3+}(x)=0$  on the boundary $|x|=\sqrt{4/5}$. Then we have
\ben \label{integrating-by-parts}
|I_{13}| &=& |\int_{|x|,|y|\leq \sqrt{4/5}} F_{3+}(x) F(y) K(x,y)\f1{\sqrt{1-|y|^2}}dxdy|
\\&\leq& \sqrt{5} \big(\int_{|x|,|y|\leq \sqrt{4/5}} |F_{3+}(x)|^{2}  |K(x,y)| dx dy \big)^{1/2}
\big(\int_{|x|,|y|\leq \sqrt{4/5}} |F(y)|^{2}  |K(x,y)| dx dy \big)^{1/2}. \nonumber
\een
Direct calculation gives
\beno
K(x,y) &=& \big(\pa_1\sqrt{1-|x|^2}\big)\left(\pa_1\zeta(\epsilon^{-1}d(x,y))\right)d^{-4} + \sqrt{1-|x|^2}\left(\pa^{2}_1\zeta(\epsilon^{-1}d(x,y))\right)d^{-4}
\\&&+
\big(\pa_1\sqrt{1-|x|^2}\big)\zeta(\epsilon^{-1}d(x,y))\pa_1d^{-4}+
\sqrt{1-|x|^2}\left(\pa_1 \zeta(\epsilon^{-1}d(x,y)) \right)\pa_1 d^{-4}
\\&&+ \sqrt{1-|x|^2}\zeta(\epsilon^{-1}d(x,y))\pa^{2}_1 d^{-4}.
\eeno
Since $|x-y| \leq d(x,y) \leq \sqrt{5}|x-y| \leq 3|x-y|$ and $\zeta$ has support $[1,5]$, we have
\beno \sqrt{1-|x|^2} \leq 1, |\pa_1\sqrt{1-|x|^2}| \leq 2, \\
|\left(\pa_1\zeta(\epsilon^{-1}d(x,y))\right)d^{-4}| \leq C\epsilon^{-5}\mathrm{1}_{\epsilon/3 \leq |x-y| \leq 5 \epsilon},
|\left(\pa^{2}_1\zeta(\epsilon^{-1}d(x,y))\right)d^{-4}| \leq C\epsilon^{-6}\mathrm{1}_{\epsilon/3 \leq |x-y| \leq 5 \epsilon},
\\
|\zeta(\epsilon^{-1}d(x,y))\pa_1d^{-4}| \leq C\epsilon^{-5}\mathrm{1}_{\epsilon/3 \leq |x-y| \leq 5 \epsilon},
|\left(\pa_1 \zeta(\epsilon^{-1}d(x,y)) \right)\pa_1 d^{-4}| \leq C\epsilon^{-5}\mathrm{1}_{\epsilon/3 \leq |x-y| \leq 5 \epsilon},
\\
|\zeta(\epsilon^{-1}d(x,y))\pa^{2}_1 d^{-4}| \leq C\epsilon^{-6}\mathrm{1}_{\epsilon/3 \leq |x-y| \leq 5 \epsilon}.
\eeno
Patching together the above estimates, we get
$
|K(x,y)| \leq C\epsilon^{-6}\mathrm{1}_{\epsilon/3 \leq |x-y| \leq 5 \epsilon},
$
which gives
\beno
\int_{|x|\leq \sqrt{4/5}}  |K(x,y)| dx \leq C\epsilon^{-4}, \int_{|y|\leq \sqrt{4/5}}  |K(x,y)| dy \leq C\epsilon^{-4}.
\eeno
Plugging which into \eqref{integrating-by-parts}, we have
\beno
|I_{13}| \leq C \epsilon^{-4} |F_{3+}|_{L^{2}(B(\sqrt{4/5}))} |F|_{L^{2}(B(\sqrt{4/5}))} C \epsilon^{-4}
\eeno
where we use $|F_{3+}|_{L^{2}(B(\sqrt{4/5}))} \leq |Y^{m}_{l}|_{L^{2}(\mathbb{S}^{2})}=1$ and
$|F|_{L^{2}(B(\sqrt{4/5}))}\leq |Y^{m}_{l}|_{L^{2}(\mathbb{S}^{2})}=1$.
One can use similar techniques to deal with $I_{23}, I_{12}$ and they also have upper bound $C \epsilon^{-4}$.
Finally, we arrive at
\beno \mathcal{B}^{\epsilon}_{l} \leq C |\ln\epsilon|^{-1} [l(l+1)]^{-1} \epsilon^{-4}  \leq C R^{-2} |\ln\epsilon|^{-1} \epsilon^{-2}.  \eeno
From which together with \eqref{main-and-lower-term}, we have
\ben \label{case-2-large}
\mathcal{A}^{\epsilon}_{l} \geq |\ln\epsilon|^{-1}\epsilon^{-2}(\mu_{0}-CR^{-2}).
\een

{\it Step 3:  $\epsilon^{2}l(l+1)\geq \eta^{2}$.} Here $\eta$ is fixed in {\it Step 1}. Since $\epsilon^{2}l(l+1)\geq \eta^{2}$, then $(M\epsilon)^{2}l(l+1)\geq M^{2}\eta^{2}$. Choosing $M$ large enough such that $C(M\eta)^{-2} \leq \mu_{0}/2$.
Applying the estimate \eqref{case-2-large} with $\epsilon:= M\epsilon, R:= M \eta$, we obtain that
\beno
\mathcal{A}^{M\epsilon}_{l} \geq |\ln (M\epsilon)|^{-1}\epsilon^{-2}M^{-2}(\mu_{0}-C(M\eta)^{-2}) \geq |\ln (M\epsilon)|^{-1}\epsilon^{-2}M^{-2}\mu_{0}/2.
\eeno
From which we get
\ben \label{case-3-middle}
\mathcal{A}^{\epsilon}_{l} \geq |\ln \epsilon|^{-1}|\ln (M\epsilon)|\mathcal{A}^{M\epsilon}_{l}  \geq |\ln \epsilon|^{-1}\epsilon^{-2}M^{-2}\mu_{0}/2.
\een

Patching together \eqref{case-1-small} and
\eqref{case-3-middle}, recalling the definition of $W^{\epsilon}$ in \eqref{charicter function}, we have
\beno
\mathcal{A}^{\epsilon}_{l} \geq C_{1} W^{\epsilon}\big((l(l+1))^{1/2}\big) - C_{2}.
\eeno
for some universal constants $C_{1}$ and $C_{2}$, which ends the proof.
\end{proof}

\begin{rmk} \label{2-epsilon-statement}
If we the function $K^{\epsilon}(r)$
in Proposition \ref{anisotropic-key-pro} is changed to $K^{\epsilon}(r) = |\ln \epsilon|^{-1} r^{-4}\mathrm{1}_{2 \geq r \geq 2\epsilon}$, the results in Proposition \ref{anisotropic-key-pro} are still valid.
\end{rmk}

\begin{lem}\label{relation-order-1-2}
Let $F_{3+}(x):=(\vartheta_{3+}Y^{m}_{l})(x_1,x_2, \sqrt{1-x_1^2-x_2^2}), F_{3-}(x):=(\vartheta_{3+}Y^{m}_{l})(x_1,x_2, -\sqrt{1-x_1^2-x_2^2})$ for $x=(x_1,x_2)$. Similarly, $F_{i+}, F_{i-}$ for $i=1,2$ can be defined. Then we have
\ben \label{order-1-lower-bound} \sum_{i=1}^{3}\sum_{j=+,-}\|F_{ij}\|_{\dot{H}^1(B_{2/\sqrt{5}})}^2 \sim (l(l+1)),
\\  \label{order-2-upper-bound}\sum_{i=1}^{3}\sum_{j=+,-}\|F_{ij}\|_{\dot{H}^2(B_{2/\sqrt{5}})}^2 \lesssim (l(l+1))^{2}. \een
\end{lem}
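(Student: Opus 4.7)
The strategy is to translate the planar Sobolev norms $\|F_{ij}\|_{\dot{H}^k(B_{2/\sqrt{5}})}$ back to spherical integrals of rotation-operator derivatives of $\vartheta_{ij}Y^m_l$, and then exploit the identity $-\Delta_{\mathbb{S}^{2}} = \sum_{1\le p<q\le 3}\Omega_{pq}^{2}$ together with the spectral relation $(-\Delta_{\mathbb{S}^{2}})Y^{m}_{l}=l(l+1)Y^{m}_{l}$. Throughout, the partition-of-unity property \eqref{decomposition}--\eqref{equivalence-cut-6} for $\{\vartheta_{ij}\}$, together with the support property \eqref{0-region} (which ensures $|\sigma_{i}| \geq 1/2$ wherever $\vartheta_{i\pm}$ is nonzero), will be used to go between $Y^m_l$ and the localized pieces $\vartheta_{ij}Y^m_l$.

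For \eqref{order-1-lower-bound}, I would start from the identity \eqref{differential-sphere-to-plane}, which gives $\partial_i F_{3\pm}(x) = \pm(1-|x|^{2})^{-1/2}\bigl(\Omega_{i3}(\vartheta_{3\pm}Y^m_l)\bigr)(x,\pm\sqrt{1-|x|^2})$ for $i=1,2$, and analogous formulas for $F_{1\pm}, F_{2\pm}$ involving $\Omega_{2j}, \Omega_{1j}$ respectively. Changing back from the disk to the sphere with Jacobian $d\sigma=dx/|\sigma_{m}|$ on the relevant hemispherical cap gives
\[
\|F_{m\pm}\|_{\dot{H}^{1}(B_{2/\sqrt{5}})}^{2}=\sum_{i\ne m}\int_{\{|\sigma_{m}|\ge 1/\sqrt{5}\}}\frac{|\Omega_{im}(\vartheta_{m\pm}Y^{m}_{l})|^{2}}{|\sigma_{m}|}\,d\sigma.
\]
On the support of $\vartheta_{m\pm}$ one has $|\sigma_{m}|\ge 1/2$, so the weight $1/|\sigma_{m}|$ is universally bounded above and below, and summing over $m,\pm$ reduces the problem to showing $\sum_{m,\pm}\sum_{i\ne m}\int|\Omega_{im}(\vartheta_{m\pm}Y^{m}_{l})|^{2}\,d\sigma\sim l(l+1)$. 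Writing $\Omega_{im}(\vartheta_{m\pm}Y^{m}_{l}) = \vartheta_{m\pm}\Omega_{im}Y^{m}_{l}+(\Omega_{im}\vartheta_{m\pm})Y^{m}_{l}$, the first piece assembles via $\sum_{m,\pm}\vartheta_{m\pm}^{2}\sim 1$ into $\sum_{p<q}\int|\Omega_{pq}Y^{m}_{l}|^{2}d\sigma = \int|\nabla_{\mathbb{S}^{2}}Y^{m}_{l}|^{2}d\sigma = l(l+1)$ (up to the universal constant from \eqref{equivalence-cut-6}), while the commutator contribution is bounded by $C\|Y^{m}_{l}\|_{L^{2}}^{2}=C$ because $|\Omega_{im}\vartheta_{m\pm}|$ is uniformly bounded on $\mathbb{S}^{2}$; for $l\ge 1$ this lower-order error is absorbed, while the $l=0$ case is trivial.

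For \eqref{order-2-upper-bound} the same dictionary is applied one more time: differentiating the formula for $\partial_i F_{m\pm}$ produces $\partial_j\partial_i F_{m\pm}(x)$ as a sum of $(1-|x|^{2})^{-1}\bigl(\Omega_{jm}\Omega_{im}(\vartheta_{m\pm}Y^{m}_{l})\bigr)$ plus first-order terms coming from differentiating the factor $(1-|x|^{2})^{-1/2}$. Converting back to the sphere, the dominant contribution to $\sum_{m,\pm}\|F_{m\pm}\|_{\dot{H}^{2}}^{2}$ is controlled by $\sum_{p,q,r,s}\int|\Omega_{pq}\Omega_{rs}Y^{m}_{l}|^{2}d\sigma$, which, by the same commutator-plus-partition-of-unity argument together with $\int|\Delta_{\mathbb{S}^{2}}Y^{m}_{l}|^{2}d\sigma=(l(l+1))^{2}$ and $\int|\nabla_{\mathbb{S}^{2}}Y^{m}_{l}|^{2}d\sigma=l(l+1)$, yields the desired bound $\lesssim (l(l+1))^{2}+l(l+1)\lesssim (l(l+1))^{2}$ for $l\ge 1$.

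The main obstacle is bookkeeping the commutator terms arising when the rotation operators $\Omega_{pq}$ must be moved past the cutoffs $\vartheta_{m\pm}$, and verifying that the weight factor $|\sigma_{m}|^{-1}$ produced by the planar-to-spherical change of variables is indeed harmless on the support of each $\vartheta_{m\pm}$; both points are settled cleanly by \eqref{0-region} and the smoothness of $\vartheta$, but they require one to carefully group the six pieces $F_{i\pm}$ so that every $\Omega_{pq}Y^{m}_{l}$ appears with total weight $\sim 1$ after summation.
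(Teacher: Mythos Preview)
Your overall strategy is correct and matches the paper's in spirit, but there is a real gap in the lower bound of \eqref{order-1-lower-bound}. Your assembly step claims that $\sum_{m,\pm}\vartheta_{m\pm}^{2}\sum_{i\ne m}|\Omega_{im}Y^{m}_{l}|^{2}$ is comparable to $|\nabla_{\mathbb{S}^{2}}Y^{m}_{l}|^{2}$ simply via $\sum_{m,\pm}\vartheta_{m\pm}^{2}\sim 1$. This is not how the algebra works: for a fixed pair $p<q$ the term $|\Omega_{pq}Y^{m}_{l}|^{2}$ only appears in the charts $m=p$ and $m=q$, hence with total pointwise weight $\sum_{\pm}(\vartheta_{p\pm}^{2}+\vartheta_{q\pm}^{2})$, and that weight vanishes identically on the cap where only $\vartheta_{r\pm}$ (the third index $r\ne p,q$) is active. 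So the pointwise lower bound fails unless you also invoke the linear dependence $\sigma_{m}\Omega_{pq}=\sigma_{q}\Omega_{pm}-\sigma_{p}\Omega_{qm}$, which on $\{|\sigma_{m}|\ge 1/2\}$ gives $|\Omega_{pq}Y^{m}_{l}|\lesssim |\Omega_{pm}Y^{m}_{l}|+|\Omega_{qm}Y^{m}_{l}|$. You flag the grouping as the main obstacle but do not supply this ingredient; without it the argument as written does not close.

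The paper sidesteps this issue by establishing, for each chart separately, the two-sided equivalence $\|F_{m\pm}\|_{\dot{H}^{1}(B_{2/\sqrt{5}})}^{2}\sim |(-\Delta_{\mathbb{S}^{2}})^{1/2}(\vartheta_{m\pm}Y^{m}_{l})|_{L^{2}(\mathbb{S}^{2})}^{2}$ with the \emph{full} spherical gradient on the right. Concretely, integrating $\langle -\Delta_{\mathbb{S}^{2}}(\vartheta_{3+}Y^{m}_{l}),\vartheta_{3+}Y^{m}_{l}\rangle$ by parts in the chart yields $\int\sqrt{1-|x|^{2}}\,|\nabla F_{3+}|^{2}\,dx$ from $\Omega_{13},\Omega_{23}$, while $\Omega_{12}$ becomes the planar angular operator $x_{1}\partial_{2}-x_{2}\partial_{1}$ and contributes $\int\frac{|(x_{1}\partial_{2}-x_{2}\partial_{1})F_{3+}|^{2}}{\sqrt{1-|x|^{2}}}\,dx\lesssim \|F_{3+}\|_{\dot{H}^{1}}^{2}$; this is precisely the chart version of the linear dependence above. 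Once each localized piece is tied to the full $H^{1}(\mathbb{S}^{2})$ norm of $\vartheta_{m\pm}Y^{m}_{l}$, the sum over $(m,\pm)$ reduces by Leibniz and \eqref{equivalence-cut-6} to $|\nabla_{\mathbb{S}^{2}}Y^{m}_{l}|_{L^{2}}^{2}+O(1)=l(l+1)+O(1)$, and the same scheme applied to $|(-\Delta_{\mathbb{S}^{2}})(\vartheta_{m\pm}Y^{m}_{l})|_{L^{2}}^{2}$ gives \eqref{order-2-upper-bound}.
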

\begin{proof}
Observe  that $ |(-\triangle_{\SS^2})^{1/2}  (\vartheta_{3+}Y^{m}_{l})|_{L^2(\SS^2)}^2=\int_{ \SS^2} \big((-\triangle_{\SS^2})(\vartheta_{3+}Y^{m}_{l})\big)(\sigma) (\vartheta_{3+}Y^{m}_{l})(\sigma)d\sigma.$
Thanks to the fact $(-\triangle_{\SS^2}f)(\sigma)=-\sum\limits_{1\le i<j\le 3} (\Omega_{ij}^2 f)(\sigma_1, \sigma_2, \sigma_3)$ with $\sigma=(\sigma_1, \sigma_2, \sigma_3)$ and $\Omega_{ij}=\sigma_i \pa_j- \sigma_j \pa_i$, by the change of variable in  \eqref{change-sigma-to-x}, we obtain that
\beno &&|(-\triangle_{\SS^2})^{1/2}  (\vartheta_{3+}Y^{m}_{l})|_{L^2(\SS^2)}^2
\\&=& - \int_{|x|\le \sqrt{\f45}} \sum\limits_{1\le i<j\le 3} \big(\Omega_{ij}^2(\vartheta_{3+}Y^{m}_{l})\big)(x,\sqrt{1-|x|^2})(\vartheta_{3+}Y^{m}_{l})(x,\sqrt{1-|x|^2})\f1{\sqrt{1-|x|^2}}dx.\eeno
Recalling \eqref{differential-sphere-to-plane}, we have
\ben  &&|(-\triangle_{\SS^2})^{1/2}  (\vartheta_{3+}Y^{m}_{l})|_{L^2(\SS^2)}^2
\nonumber\\
&=&-\int_{|x|\le \sqrt{\f45}}  \big(\pa_1(\sqrt{1-|x|^2}\pa_1)F_{3+}(x)\big)F_{3+}(x)dx -\int_{|x|\le \sqrt{\f45}}  \big(\pa_2(\sqrt{1-|x|^2}\pa_2) F_{3+}(x)\big)F_{3+}(x) dx
\nonumber\\
&&-\int_{|x|\le \sqrt{\f45}}  (\Omega_{12})^2F_{3+}(x)F_{3+}(x)\f1{\sqrt{1-|x|^2}}dx
\nonumber\\ \label{transfer-from-sphere-to-plane}
&=&\int_{|x|\le \sqrt{\f45}} \sqrt{1-|x|^2}\left(|\pa_1F_{3+}(x)|^{2} + |\pa_2F_{3+}(x)|^{2}\right) dx
+ \int_{|x|\le \sqrt{\f45}}  \f{|\Omega_{12}F_{3+}(x)|^{2}}{\sqrt{1-|x|^2}}dx
.\een
It is easy to check the following two results,
\beno \int_{|x|\le \sqrt{\f45}} \sqrt{1-|x|^2}\left(|\pa_1F_{3+}(x)|^{2} + |\pa_2F_{3+}(x)|^{2}\right) dx \leq \|F_{3+}(x)\|_{\dot{H}^1(B_{2/\sqrt{5}})}^2,
 \\
\int_{|x|\le \sqrt{\f45}}  \f{|\Omega_{12}F_{3+}(x)|^{2}}{\sqrt{1-|x|^2}}dx \leq \sqrt{5} \int_{|x|\le \sqrt{\f45}}  |(x_{1}\pa_{2}-x_{2}\pa_{1})F_{3+}(x)|^{2}dx \leq \frac{4\sqrt{5}}{5} \|F_{3+}(x)\|_{\dot{H}^1(B_{2/\sqrt{5}})}^2. \eeno
Plugging which into \eqref{transfer-from-sphere-to-plane}, we get
\beno  |(-\triangle_{\SS^2})^{1/2}  (\vartheta_{3+}Y^{m}_{l})|_{L^2(\SS^2)}^2
\leq (1+\frac{4\sqrt{5}}{5})\|F_{3+}(x)\|_{\dot{H}^1(B_{2/\sqrt{5}})}^2  \leq 3 \|F_{3+}(x)\|_{\dot{H}^1(B_{2/\sqrt{5}})}^2
.\eeno
On the other direction, we have
\beno  |(-\triangle_{\SS^2})^{1/2}  (\vartheta_{3+}Y^{m}_{l})|_{L^2(\SS^2)}^2
&\geq&\int_{|x|\le \sqrt{\f45}} \sqrt{1-|x|^2}\left(|\pa_1F_{3+}(x)|^{2} + |\pa_2F_{3+}(x)|^{2}\right) dx
\\ &\geq& \sqrt{1/5}\int_{|x|\le \sqrt{\f45}} \left(|\pa_1F_{3+}(x)|^{2} + |\pa_2F_{3+}(x)|^{2}\right) dx
= \sqrt{1/5}\|F_{3+}\|_{\dot{H}^1(B_{2/\sqrt{5}})}^2
.\eeno
In summary, we get
$  \|F_{3+}(x)\|_{\dot{H}^1(B_{2/\sqrt{5}})}^2 \sim |(-\triangle_{\SS^2})^{1/2}  (\vartheta_{3+}Y^{m}_{l})|_{L^2(\SS^2)}^2
.$
Then by \eqref{decomposition} and \eqref{equivalence-cut-6}, we have
\beno
\sum_{i=1}^{3}\sum_{j=+,-}\|F_{ij}\|_{\dot{H}^1(B_{2/\sqrt{5}})}^2 \sim \sum_{i=1}^{3}\sum_{j=+,-}
|(-\triangle_{\SS^2})^{1/2}  (\vartheta_{ij}Y^{m}_{l})|_{L^2(\SS^2)}^2 \sim |(-\triangle_{\SS^2})^{1/2}  Y^{m}_{l}|_{L^2(\SS^2)}^2.
\eeno
We get \eqref{order-1-lower-bound} by the fact $|(-\triangle_{\SS^2})^{1/2}  Y^{m}_{l}|_{L^2(\SS^2)}^2 = l(l+1)$.
The second result \eqref{order-2-upper-bound} can be derived similarly by working on
$ |(-\triangle_{\SS^2}) (\vartheta_{3+}Y^{m}_{l})|_{L^2(\SS^2)}^2=\int_{ \SS^2} |\big(-\triangle_{\SS^2} (\vartheta_{3+}Y^{m}_{l})\big)(\sigma)|^{2} d\sigma.$
We skip the details.
\end{proof}

 {\bf Acknowledgments.} Ling-Bing He is supported by NSF of China under the grant 11771236. Yu-Long Zhou is supported by the Fundamental Research Funds for the Central Universities, under the grant 19lgpy242.

\bibliographystyle{plain}
\bibliography{CP4}

\begin{thebibliography}{10}

\bibitem{alexandre2000entropy}
Radjesvarane Alexandre, Laurent Desvillettes, C{\'e}dric Villani, and Bernt
  Wennberg.
\newblock Entropy dissipation and long-range interactions.
\newblock {\em Archive for Rational Mechanics and Analysis}, 152(4):327--355,
  2000.

\bibitem{alexandre2011global}
Radjesvarane Alexandre, Yoshinori Morimoto, Seiji Ukai, Chao-Jiang Xu, and Tong
  Yang.
\newblock {Global existence and full regularity of the Boltzmann equation
  without angular cutoff}.
\newblock {\em Communications in Mathematical Physics}, 304(2):513, 2011.

\bibitem{alexandre2012boltzmann}
Radjesvarane Alexandre, Yoshinori Morimoto, Seiji Ukai, Chao-Jiang Xu, and Tong
  Yang.
\newblock {The Boltzmann equation without angular cutoff in the whole space: I,
  Global existence for soft potential}.
\newblock {\em Journal of Functional Analysis}, 262(3):915--1010, 2012.

\bibitem{alexandre2002boltzmann}
Radjesvarane Alexandre and C{\'e}dric Villani.
\newblock {On the Boltzmann equation for long-range interactions}.
\newblock {\em Communications on Pure and Applied Mathematics}, 55(1):30--70,
  2002.

\bibitem{alexandre2004landau}
Radjesvarane Alexandre and C{\'e}dric Villani.
\newblock {On the Landau approximation in plasma physics}.
\newblock In {\em Annales de l'Institut Henri Poincare (C) Non Linear
  Analysis}, volume~21, pages 61--95. Elsevier, 2004.

\bibitem{bargermouhot}
C{\'e}line Baranger, Cl{\'e}ment Mouhot, et~al.
\newblock {Explicit spectral gap estimates for the linearized Boltzmann and
  Landau operators with hard potentials}.
\newblock {\em Revista Matem{\'a}tica Iberoamericana}, 21(3):819--841, 2005.

\bibitem{degond1992fokker}
Pierre Degond and Brigitte Lucquin-Desreux.
\newblock {The Fokker-Planck asymptotics of the Boltzmann collision operator in
  the Coulomb case}.
\newblock {\em Mathematical Models and Methods in Applied Sciences},
  2(02):167--182, 1992.

\bibitem{desvillettes1992asymptotics}
Laurent Desvillettes.
\newblock {On asymptotics of the Boltzmann equation when the collisions become
  grazing}.
\newblock {\em Transport Theory and Statistical Physics}, 21(3):259--276, 1992.

\bibitem{duan2008cauchy}
Renjun Duan.
\newblock {On the Cauchy problem for the Boltzmann equation in the whole space:
  Global existence and uniform stability in $L^{2}_{\xi}(H^{N}_{x})$}.
\newblock {\em Journal of Differential Equations}, 244(12):3204--3234, 2008.

\bibitem{gressman2011global}
Philip Gressman and Robert Strain.
\newblock {Global classical solutions of the Boltzmann equation without angular
  cut-off}.
\newblock {\em Journal of the American Mathematical Society}, 24(3):771--847,
  2011.

\bibitem{guo2002landau}
Yan Guo.
\newblock {The Landau equation in a periodic box}.
\newblock {\em Communications in Mathematical Physics}, 231(3):391--434, 2002.

\bibitem{guo2003classical}
Yan Guo.
\newblock {Classical solutions to the Boltzmann equation for molecules with an
  angular cutoff}.
\newblock {\em Archive for Rational Mechanics and Analysis}, 169(4):305--353,
  2003.

\bibitem{guo2012vlasov}
Yan Guo.
\newblock The vlasov-poisson-landau system in a periodic box.
\newblock {\em Journal of the American Mathematical Society}, 25(3):759--812,
  2012.

\bibitem{he2018sharp}
Ling-Bing He.
\newblock {Sharp bounds for Boltzmann and Landau collision operators}.
\newblock {\em Annales Scientifiques de l'{\'E}cole Normale Sup{\'e}rieure},
  51(5):1253--1341, 2018.

\bibitem{he2014well}
Lingbing He and Xiongfeng Yang.
\newblock {Well-posedness and asymptotics of grazing collisions limit of
  Boltzmann equation with Coulomb interaction}.
\newblock {\em SIAM Journal on Mathematical Analysis}, 46(6):4104--4165, 2014.

\bibitem{landau1936transport}
L.~Landau.
\newblock {The transport equation in the case of Coulomb interactions}.
\newblock {\em Physik.Z.Sowjetunion}, 10(154), 1936.

\bibitem{lifshitz1981physical}
E.M. Lifshitz and L.P. Pitaevskii.
\newblock {\em {Physical Kinetics, Landau and Lifshitz Course of Theoretical
  Physics, Vol. 10 Pergamon}}.
\newblock 1981.

\bibitem{montgomery1964plasma}
David~C Montgomery and Derek~A Tidman.
\newblock Plasma kinetic theory.
\newblock {\em McGraw-Hill Advanced Physics Monograph Series, New York:
  McGraw-Hill}, 1964.

\bibitem{mouhot2006explicit}
Cl{\'e}ment Mouhot.
\newblock {Explicit coercivity estimates for the linearized Boltzmann and
  Landau operators}.
\newblock {\em Communications in Partial Differential Equations},
  31(9):1321--1348, 2006.

\bibitem{swanson2008plasma}
Donald~Gary Swanson.
\newblock {\em Plasma kinetic theory}.
\newblock Chapman and Hall/CRC, 2008.

\bibitem{villani1998new}
C{\'e}dric Villani.
\newblock {On a new class of weak solutions to the spatially homogeneous
  Boltzmann and Landau equations}.
\newblock {\em Archive for Rational Mechanics and Analysis}, 143(3):273--307,
  1998.

\bibitem{Chang-Uhlenbeck}
CS~Wang~Chang and GE~Uhlenbeck.
\newblock {On the propagation of sound in monatomic gases}.
\newblock {\em Studies in Statistical Mechanics}, pages 43--75, 1952.

\bibitem{zhou2020refined}
Yu-Long Zhou.
\newblock {A refined estimate of the grazing limit from Boltzmann to Landau
  operator in Coulomb potential}.
\newblock {\em Applied Mathematics Letters}, 100:106039, 2020.

\end{thebibliography}

\end{document}